\numberwithin{equation}{section}
\newtheorem{conjmain}[mainthm]{Conjecture}
\crefname{thm}{Theorem}{Theorems}
\crefname{prop}{Proposition}{Propositions}
\def\cG{\mathcal{G}}
\def\cL{\mathcal{L}}
\def\hookar{\ar@{^{(}->}}
\def\dbdg{D^b_{\mathsf{dg}}}
\def\val{\mathrm{val}}
\def\LS{\mathcal{S}}
\def\ainf{A_\infty}
\def\fuk{\EuF}
\def\bc{\mathsf{bc}}
\def\dm{r}
\def\Nef{\mathsf{N}}
\def\eps{\epsilon}
\def\sF{\mathbf{F}}
\def\cY{\mathcal{Y}}
\def\cI{\mathcal{I}}
\def\cP{\mathcal{P}}
\def\mafuk{\mathcal{F}_{mon}}
\def\mir0{F_0}
\def\lf{\lambda}
\def\ess{S_{0}}
\def\Sprime{S}
\def\aye{I_0}
\def\Iprime{I}
\def\fo{F_0}
\def\eff{F}
\newcommand{\fm}{\mathfrak{m}}
\renewcommand{\Im}{\textup{Im}}
\DeclareMathOperator{\Log}{Log}
\DeclareMathOperator{\Nbd}{Nbd}
\DeclareMathOperator{\st}{star}
\DeclareMathOperator{\Obs}{Obs}
\newcommand{\calC}{\mathcal{C}}
\def\and{\, \& \,}
\def\cH{\mathcal{H}}
\def\nov{r}
\renewcommand{\vec}[1]{\mathsf{#1}}
\def\cW{\mathcal{W}}
\def\cO{\mathcal{O}}
\def\cL{\mathcal{L}}
\def\cF{\mathcal{F}}
\def\cA{\mathcal{A}}
\def\cB{\mathcal{B}}
\def\cC{\mathcal{C}}
\def\cD{\mathcal{D}}
\def\NE{\mathrm{NE}}
\def\Perf{\operatorname{Perf}}
\def\Hom{\mathrm{Hom}}
\def\HH{\mathrm{HH}}
\def\charac{\mathrm{char}\,}
\def\Ldom{B}
\def\Xizero{P}
\def\Xinonzero{P_0}
\def\kaehl{\kappa}
\def\tee{\mathbf{t}}
\renewcommand{\Re}{\operatorname{Re}}
\renewcommand{\Im}{\operatorname{Im}}
\newcommand{\W}{\mathcal W}
\newcommand{\ZZ}{\mathbb Z}
\newcommand{\vX}{\vec X}
\newcommand{\Lconic}{\tilde L_{cyl}}
\newcommand{\Lmon}{\tilde L_{mon}}
\newcommand{\Lmc}{\tilde L_{mc}}
\newcommand{\tildeoi}{\mathcal{E}_i}
\begin{document}

\title{Homological mirror symmetry for Batyrev mirror pairs}

\author{Sheel Ganatra, Andrew Hanlon, Jeff Hicks, Daniel Pomerleano, and Nick Sheridan}

\begin{abstract} We prove Kontsevich's homological mirror symmetry conjecture for a large class of mirror pairs of Calabi--Yau hypersurfaces in toric varieties. These mirror pairs were constructed by Batyrev from dual reflexive polytopes. The theorem holds in characteristic zero and in all but finitely many positive characteristics.  \end{abstract}
\maketitle

\section{Introduction}
\label{sec:bat}
The earliest constructions of conjectural mirror pairs concern combinatorially defined Calabi--Yau subvarieties of toric varieties. Batyrev conjectured that, if $\Delta$ and $\Delta^*$ are dual reflexive polytopes, then the (crepant resolutions of) anticanonical hypersurfaces in the corresponding toric varieties should be mirror to each other \cite{Batyrev1993}. Batyrev's work generalized Greene--Plesser's earlier construction of conjectural mirror pairs of hypersurfaces in quotients of weighted projective spaces \cite{Greene1990}; and it was subsequently generalized by Batyrev--Borisov's construction of conjectural mirror pairs of complete intersections in toric varieties \cite{Batyrev1994}, which in turn was generalized by Gross--Siebert's construction of conjectural mirror pairs of toric degenerations \cite{GrossSiebertI,GrossSiebertII}. Various other related constructions have also extended mirror symmetry beyond the Calabi--Yau setting.

Kontsevich's homological mirror symmetry conjecture \cite{kontsevich1995homological} posits an equivalence between natural algebro-geometric and symplectic categories associated to mirror pairs. Although homological mirror symmetry has been highly influential in algebraic and symplectic geometry, it has been proved only for a small number of cases in the original setting of mirror pairs of compact Calabi--Yau varieties. Namely, homological mirror symmetry has been proved for elliptic curves in \cite{polishchuk1998categorical,polishchuk2004structures}, higher-dimensional abelian varieties in \cite{kontsevich2001homological,fukaya2002mirror,abouzaid2010homological,abouzaid2021homological}, quartics in $\C\mathbb{P}^3$ in \cite{Seidel2003}, Calabi--Yau hypersurfaces in higher dimensional projective spaces in \cite{sheridan2015homological}, and for Greene--Plesser mirror pairs, together with certain Batyrev--Borisov mirror pairs (`generalized Greene--Plesser mirror pairs', which are complete intersections in quotients of products of weighted projective spaces) in \cite{sheridan2021homological}. In the present work, we prove homological mirror symmetry for a large new class of Batyrev mirror pairs. One noteworthy feature of our result is that it holds not only in characteristic zero, but also in all but finitely many positive characteristics.

The rest of the introduction is organized as follows: in \cref{subsec:tordata}, we introduce the toric data on which our construction of Batyrev mirror pairs depends; and in \cref{subsec:Aintro,subsec:Bintro}, we construct the symplectic and algebraic sides of the mirror pair. We then state the homological mirror symmetry conjecture for Batyrev mirrors in \cref{subsec:hmsnomap} and our HMS statement for Batyrev mirrors (\cref{main:higherdim}) in  \cref{subsec:mainthm}. \Cref{subsec:overview} gives an overview of the proof of \cref{main:higherdim}; we also include a table of notation for the reader's convenience.

\subsection{Toric data}
\label{subsec:tordata}

We start by introducing some notation used in the Batyrev construction, following \cite{Batyrev1993,coxkatz}. 
Let:
\begin{itemize}
\item $\Bbbk$ be a field;
\item $M \cong \Z^n$ be a lattice;
\item $\Delta \subset M_\R$ a reflexive polytope, $\Delta^* \subset M^*_\R$ its polar dual;
\item $\Sigma$ be the fan in $M^*_\R$ dual to $\Delta$ (its rays point along the vertices of $\Delta^*$), and $\Sigma^*$ the fan in $M_\R$ dual to $\Delta^*$;
\item $Y$ be the toric $\C$-variety corresponding to $\Sigma$, and $Y^*$ the toric $\Bbbk$-variety corresponding to $\Sigma^*$. 
\end{itemize}

Note that there is a toric line bundle $\cL_{\Delta^*} \to Y^*$, with a basis of sections indexed by the set $\Xinonzero := \Delta^* \cap M^*$. 
We denote the section corresponding to $\vec{p} \in \Xinonzero$ by $z^{\vec{p}}$; we similarly denote the section of $\cL_{\Delta} \to Y$ corresponding to $\alpha \in A:=\Delta \cap M$ by $z^\alpha$. 

Let $\Xizero \subset \Xinonzero$ be the subset of lattice points lying on a face of $\Delta^*$ which has codimension $\ge 2$. 
Let $\kaehl \in (\R_{>0})^{\Xizero}$ and $K \subset \R$ be a subgroup which contains $\kaehl_{\vec{p}}$ for all $\vec{p} \in \Xizero$. 
Our construction will depend on the data $(\Bbbk,M,\Delta,\kaehl,K)$. 

\subsection{Symplectic construction}
\label{subsec:Aintro}

Consider the following sections of $\cL_{\Delta} \to Y$:
$$W_0 := z^{\vec{0}},\qquad W_1 := \sum_{\alpha \in A \setminus \{0\}}  z^{\alpha}.$$
If $\Sigma^*$ is simplicial, then for sufficiently large $t \in \R_{>0}$, the hypersurface 
$$X_t = \{tW_0 = W_1\} \subset Y$$
intersects each toric stratum smoothly (this is proved using ideas from tropical geometry, as we explain in Section \ref{sec:trop_setup}). 
When $Y$ is not smooth, the hypersurface $X_t$ need not be smooth; we need therefore to take a crepant resolution of $Y$. 
The data for this is determined by $\kaehl$. 

Let $\psi_\kaehl: M^*_\R \to \R$ be the largest convex\footnote{We use the convention that $f(x)=x^2$ is convex, in contrast to some of the literature on toric varieties (and \cite{Ganatra2023integrality}).} function such that $\psi_\kaehl(t \cdot \vec{p}) \le t\cdot \kaehl_{\vec{p}}$ for all $t \in \R_{\ge 0}$ and all $\vec{p} \in \Xizero$. 
The decomposition of $M^*_\R$ into domains of linearity of $\psi_\kaehl$ induces a fan $\Sigma_\kaehl$. 

\begin{defn}
\label{defn:mpcp}
We say that the \emph{MPCP condition} holds if $\Sigma_\kaehl$ is a projective simplicial refinement of $\Sigma$ whose rays are generated by the elements of $\Xizero$.
\end{defn}

\begin{rmk}
MPCP stands for Maximal Projective Crepant Partial desingularization (see \cite[Definition 2.2.13]{Batyrev1993}). 
In the language of \cite[Section 6.2.3]{coxkatz}, $\Sigma_\kaehl$ is called a \emph{simplified projective subdivision} of $\Sigma$, and the MPCP condition holds if and only if $\kaehl$ lies in the interior of a top-dimensional cone $\text{cpl}(\tilde{\Sigma}_\kaehl)$ of the secondary fan (or Gelfand--Kapranov--Zelevinskij decomposition) associated to $\Xizero \subset M_\R$, where $\Sigma_\kaehl$ is a projective simplicial refinement of $\Sigma$ whose rays are generated by $\Xizero$.
\end{rmk}

The morphism of fans $\Sigma_\kaehl \to \Sigma$ induces a toric morphism corresponding to the resolution $Y_\kaehl \to Y$. 
Let $X_{t,\kaehl} \subset Y_\kaehl$ be the proper transform of $X_t$. 
The intersection of $X_{t,\kaehl}$ with each toric stratum is smooth; so if the MPCP condition holds, then $X_{t,\kaehl}$ is a maximal projective crepant partial desingularization of $X_t$ (hence the name of the condition). 

Observe that even if the MPCP condition holds, $Y_\kaehl$ may have finite quotient singularities since $\Sigma_\kaehl$ is only assumed to be simplicial. 
Therefore, $X_{t,\kaehl}$ may also have finite quotient singularities. 
We would like to understand when $X_{t,\kaehl}$ is in fact smooth. 
Let $Y_{\kaehl,0} \subset Y_\kaehl$ denote the pre-image of the toric fixed points of $Y$. 
Observe that $X_{t,\kaehl}$ avoids $Y_{\kaehl,0}$. 

\begin{defn}
We say that the \emph{MPCS condition} holds if the MPCP condition holds, and, furthermore, $Y_\kaehl$ is smooth away from $Y_{\kaehl,0}$. 
MPCS stands for Maximal Projective Crepant Smooth desingularization.
\end{defn}

We remark that the MPCS and MPCP conditions are equivalent when $\dim_\C(Y_\kaehl) \le 4$ (see \cite[\S 2.2]{Batyrev1993}).
If the MPCS condition holds, then $X_\kaehl$ avoids the non-smooth locus of $Y_\kaehl$, and if $\Sigma^*$ is simplicial, then $X_{t,\kaehl}$ is in fact smooth (again using tropical geometry), and Calabi--Yau by \cite[Theorem 4.1.9(i) and Proposition 3.2.2]{Batyrev1993}.

\begin{defn}
The simplicial fan $\Sigma_\kaehl$ determines a subdivision of $\Delta^*$ into simplices. 
We say that the \emph{condition on characteristic} is satisfied if $\charac(\Bbbk)$ does not divide 
\begin{itemize}
    \item the affine volume of any of the simplices in the subdivision of $\Delta^*$ determined by $\Sigma_\kaehl$;\footnote{We normalize the affine volume so that the minimum possible non-zero affine volume of a lattice simplex is $1$.}
    \item the order of the finite abelian group $M^*/L$, where $L$ is the subgroup generated by $P$.
\end{itemize}
\end{defn}

We denote the toric boundary divisor of $Y_\kaehl$ by $D^Y$. 
Note that it has irreducible components $D^Y_{\vec{p}}$ indexed by $\vec{p} \in \Xizero$. 
Let $D^Y_\kaehl := \sum_{\vec {p} \in \Xizero} \kaehl_{\vec{p}} \cdot D^Y_{\vec{p}}$ be the toric $\R$-Cartier divisor with support function $\psi_\kaehl$. 
Because $\psi_\kaehl$ is strictly convex (by our assumption that its domains of linearity are the cones of the fan $\Sigma_\kaehl$), this divisor is ample, so the first Chern class of the corresponding line bundle is represented in de Rham cohomology by a toric orbifold K\"ahler form $\omega^Y_\kaehl$ (see the discussion in \cite[\S 4]{Aspinwall1993b}). 
We denote its restriction to $X_{t,\kaehl}$ by $\omega_\kaehl$. 
Because $X_{t,\kaehl}$ avoids the non-smooth locus of $Y_\kaehl$, $\omega_\kaehl$ is an honest K\"ahler form. 
Its cohomology class is Poincar\'e dual to $\sum_{\vec {p} \in \Xizero} \kaehl_{\vec{p}} \cdot [D_\vec{p}]$, where $D_\vec{p} := X_{t,\kaehl} \cap D^Y_{\vec{p}}$. 

We now introduce the following notation: for any face $S$ of $\Delta$ (or $\Delta^*$), we define $i(S)$ to be the number of interior points on that face.  Furthermore, we define $S^*$ to be the face of $\Delta^*$ (or $\Delta$) which is dual to $S$.

\begin{lem}\label{lem:connectedness}
For $\vec{p} \in \Xinonzero$, let $S_\vec{p}$ denote the face of $\Delta^*$ containing $\vec{p}$ in its interior. Then
$$\#(\text{connected components of 
$D_{\vec{p}}$}) = \left\{ 
    \begin{array}{rl} 0 & \text{if $\dim(S^*_{\vec{p}}) = 0$}\\ 
        i(S^*_{\vec{p}}) & \text{if $\dim(S^*_{\vec{p}}) = 1$}\\ 
        1 & \text{otherwise.} 
\end{array} \right.  $$
\end{lem}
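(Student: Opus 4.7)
The plan is to describe $D_{\vec{p}}$ explicitly as a hypersurface in the toric variety $D^Y_{\vec{p}}$ and count its connected components via a Newton polytope analysis. Case 1 is immediate: if $\dim S^*_{\vec{p}} = 0$ then $\vec{p}$ lies in the relative interior of a facet of $\Delta^*$ and so is not in $\Xizero$; hence no ray through $\vec{p}$ is present in $\Sigma_{\kaehl}$, the divisor $D^Y_{\vec{p}}$ does not exist, and $D_{\vec{p}} = \emptyset$.

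For $\vec{p} \in \Xizero$, note that $\vec{p}$ lies on $\partial \Delta^*$, so $\min_{\beta \in \Delta}\langle \beta, \vec{p}\rangle = -1$; consequently the vanishing order of $z^\alpha$ along $D^Y_{\vec{p}}$ is $\langle \alpha, \vec{p}\rangle + 1$. It follows that $W_0 = z^{\vec{0}}$ has order $1$ along $D^Y_{\vec{p}}$ while $W_1$ has order $0$, the latter achieved precisely by monomials $z^\alpha$ with $\alpha \in S^*_{\vec{p}} \cap M$. Hence the defining section $tW_0 - W_1$ of $X_{t,\kaehl}$ restricts to (a nowhere-vanishing factor times) $W_1|_{D^Y_{\vec{p}}} = \sum_{\alpha \in S^*_{\vec{p}} \cap M} z^\alpha|_{D^Y_{\vec{p}}}$, and this Laurent polynomial, viewed on the dense torus of $D^Y_{\vec{p}}$, has Newton polytope equal to (a translate of) $S^*_{\vec{p}}$ lying in $\vec{p}^\perp \otimes \R$.

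Next I exploit the toric morphism $D^Y_{\vec{p}} \to \overline{T_{S_{\vec{p}}}}$, which is surjective onto a toric subvariety of $Y$ of dimension $\dim S^*_{\vec{p}}$ and has connected fibers, and along which $W_1|_{D^Y_{\vec{p}}}$ is pulled back. Thus the connected components of $D_{\vec{p}}$ are in bijection with those of $X_t \cap \overline{T_{S_{\vec{p}}}}$, cut out by a Laurent polynomial on $T_{S_{\vec{p}}}$ whose Newton polytope $S^*_{\vec{p}}$ is full-dimensional in the relevant character lattice. For $\dim S^*_{\vec{p}} \ge 2$, this hypersurface is irreducible by a standard irreducibility result for toric hypersurfaces with saturated full-dimensional Newton polytope (the specific coefficients of $W_1$ can be checked to be sufficiently non-degenerate, using that $t$ is sufficiently large and the hypothesis on $\mathrm{char}(\Bbbk)$). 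Hence there is exactly one connected component, as claimed.

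The main obstacle is the case $\dim S^*_{\vec{p}} = 1$, in which $W_1|_{T_{S_{\vec{p}}}}$ is a univariate Laurent polynomial on $T_{S_{\vec{p}}} \cong \G_m$, with Newton polytope the lattice segment $S^*_{\vec{p}}$. Identifying the zeros of this polynomial in $\overline{T_{S_{\vec{p}}}} \cong \PP^1$, tracking how their closures in $D^Y_{\vec{p}}$ interact with the boundary toric strata (which is governed by how the refinement $\Sigma_{\kaehl}$ triangulates $\sigma_{S_{\vec{p}}}$), and matching the resulting count with the interior lattice point count $i(S^*_{\vec{p}})$ of the segment constitutes the technical heart of the proof. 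This combinatorial identification depends essentially on the prescribed form of the coefficients of $W_1$ and on the specific compactification geometry of $D^Y_{\vec{p}}$.
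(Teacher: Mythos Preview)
Your reduction via the surjection $D^Y_{\vec{p}} \to Y_{\vec{p}}$ with connected fibres is precisely the paper's argument: the paper uses the blowdown $Y_\kaehl \to Y$ to identify connected components of $D_{\vec{p}}$ with those of $X \cap Y_{\vec{p}}$, where $Y_{\vec{p}}$ is the $(\dim S^*_{\vec{p}})$-dimensional stratum of $Y$, and then invokes connectedness of preimages under a toric morphism.

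The treatments diverge in how $X \cap Y_{\vec{p}}$ is handled for $\dim S^*_{\vec{p}} \ge 2$. You invoke an irreducibility criterion for Laurent polynomials with full-dimensional saturated Newton polytope, deferring the check that the specific coefficients are nondegenerate. In fact irreducibility can fail: take $\Delta = [-1,1]^3$ and $\vec{p} = e_1$, a vertex of the octahedron $\Delta^*$; then $S^*_{\vec{p}}$ is the $2\times 2$ square facet $\{-1\}\times[-1,1]^2$, and the restriction of $W_1$ becomes (up to a unit) $(1+u+u^2)(1+v+v^2)$, which is reducible. The paper instead observes that $X \cap Y_{\vec{p}}$ is a section of an \emph{ample} line bundle on the projective toric variety $Y_{\vec{p}}$, and an ample effective divisor on a projective variety of dimension $\ge 2$ is always connected---no genericity hypothesis is needed, and in the example the four irreducible components indeed form a connected grid. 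So replacing your irreducibility step with this ampleness observation both simplifies and repairs the argument.

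For $\dim S^*_{\vec{p}} = 1$, once the connected-fibres reduction is in place there is nothing left but to count the points of $X_t \cap Y_{\vec{p}}$ on a rational curve; the combinatorics of how $\Sigma_\kaehl$ refines the cone over $S_{\vec{p}}$ plays no role, since each fibre of $D^Y_{\vec{p}} \to Y_{\vec{p}}$ is already connected regardless of that refinement. The paper accordingly treats this case as a one-line root count, not a technical heart.
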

\begin{proof}
The map $Y_\kaehl \to Y$ blows $D^Y_{\vec{p}}$ down to the stratum $Y_{\vec{p}}$ corresponding to $S^*_{\vec{p}}$. Then, $D_{\vec{p}}$ is the preimage of $X \cap Y_{\vec{p}}$. This is an ample hypersurface and therefore connected unless $Y_{\vec{p}}$ is 0-dimensional (in which case it is empty) or $1$-dimensional (in which case it is a union of $i(S^*_{\vec{p}})$ points). 
The preimage of a connected set under a toric morphism is connected, so the result follows.
\end{proof}

\begin{defn}
We say that the \emph{connectedness condition} is satisfied if for every one-dimensional face $S^*$ of $\Delta^*$, either $i(S^*) = 0$ (i.e., $S^*$ has affine length $1$) or $i(S) = 0$. 
\end{defn}

\subsection{Algebraic construction}
\label{subsec:Bintro}

We work over a universal Novikov field:
\begin{equation} \Lambda_{\Bbbk,K} := \left\{ \sum_{j=0}^\infty c_j \cdot q^{\kaehl_j}: c_j \in \Bbbk, \kaehl_j \in K, \lim_{j \to \infty} \kaehl_j = +\infty\right\}.\end{equation}
As $\Bbbk$ and $K$ will be fixed throughout the paper, we usually omit them from the notation. Note that $\Lambda$ is a field extension of $\Bbbk$, and if $\Bbbk$ is algebraically closed and $K$ is divisible, it is algebraically closed. Moreover, it has a valuation
\begin{align} 
\val: \Lambda & \to K \cup \{\infty\} \\
\label{eqn:valdef} \val\left(\sum_{j=0}^\infty c_j \cdot q^{\kaehl_j}\right) &:= \min_j\{\kaehl_j: c_j \neq 0\}.
\end{align}
 
We consider sections of $\cL_{\Delta^*} \to Y^*$:
\begin{equation}
\label{eqn:Wb}
W_\dm(z) := -z^{\vec{0}} + \sum_{\vec{p} \in \Xizero} \dm_{\vec{p}} \cdot z^{\vec{p}},
\end{equation}
for $\dm =(\dm_{\vec{p}}) \in \mathbb{A}^{\Xizero}_\Lambda$. 
We define $X^*_\dm := \{W_\dm = 0\} \subset Y^*_\Lambda$.

\begin{rmk}
Observe that we have a correspondence
\begin{equation} 
\text{monomial $z^{\vec{p}}$ of $W_\dm$} \leftrightarrow \text{divisor $D_\vec{p} \subset X$}.
\end{equation} 
This correspondence is called the `monomial--divisor mirror map' (see \cite{Aspinwall1993b}).
\end{rmk}

\subsection{HMS: conjecture}
\label{subsec:hmsnomap}

On the $B$-side of mirror symmetry, we consider the $\Lambda$-linear $\Z$-graded dg category of coherent complexes $D^b_{dg}Coh(X^*_\dm)$.\footnote{By the results of \cite{orlovlunts2009, stellariuniqueness}, dg-enhancements of the derived category of coherent sheaves are unique up to quasi-equivalence.} On the $A$-side of mirror symmetry, we consider (a version of) the Fukaya category $\fuk(X_{t,\kaehl},\omega_\kaehl).$ We note that most definitions of the Fukaya category work over ground fields of characteristic zero, whereas we consider mirror symmetry over fields of arbitrary characteristic. We give more details about the version of the Fukaya category we are working with below and in  \cref{subsec:relativeFukayaCategory}. For now, we simply recall that the Fukaya category may be curved, i.e., it may have non-vanishing $\mu^0$ and therefore not be an honest $A_\infty$ category. 
Therefore, we consider the version whose objects are bounding cochains on objects of the Fukaya category, which we denote by $ \fuk(X_{t,\kaehl},\omega_\kaehl)^\bc$. 
It is a $\Lambda$-linear $\Z$-graded cohomologically unital $A_\infty$ category, whose idempotent completed pre-triangulated closure we denote $\Perf\fuk(X_{t,\kaehl},\omega_\kaehl)^\bc$.
One part of Kontsevich's homological mirror symmetry conjecture for Batyrev mirrors then reads:

\begin{conjmain}
\label{conj:hgpms}
Suppose $\Sigma^*$ is simplicial. There is a quasi-equivalence of $\Lambda$-linear $\Z$-graded $A_\infty$ categories
\begin{equation} 
\Perf \fuk(X_{t,\kaehl},\omega_\kaehl)^\bc \simeq D^b_{dg}Coh(X^*_{b(\kaehl)}),
\end{equation}
for some $b(\kaehl) \in \mathbb{A}^{\Xizero}_\Lambda$ with $\val(b(\kaehl)_{\vec{p}}) = \kaehl_{\vec{p}}$.
\end{conjmain}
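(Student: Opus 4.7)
The overall strategy follows the deformation-theoretic template used by Seidel and by Sheridan in earlier HMS proofs for Calabi--Yau hypersurfaces, but now adapted to the Batyrev setting with its richer combinatorial structure coming from $\Xizero$. The plan is to compare a \emph{relative} Fukaya category $\fuk(X_{t,\kaehl}, D)^\bc$ on the A-side, defined over a power-series ring $R = \Bbbk\power{u_{\vec{p}} : \vec{p}\in \Xizero}$ and measuring the intersection of holomorphic discs with the toric boundary divisors $D_{\vec{p}} = X_{t,\kaehl} \cap D^Y_\vec{p}$, with the B-side family of Batyrev mirrors $\{X^*_\dm\}_{\dm \in \mathbb{A}^\Xizero}$. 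The relative Fukaya category specialises to the absolute one over $\Lambda$ after substituting $u_{\vec{p}} = b(\kaehl)_{\vec{p}}$ for some suitable power series, and the goal is to show that this substitution realises the B-side family after restriction to a formal neighbourhood of an appropriate basepoint.

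First I would choose mirror split-generators on the two sides. On the A-side one takes a tropical Lagrangian section or family of Lagrangians adapted to the simplicial subdivision $\Sigma_\kaehl$ of $\Delta^*$ (hence the condition on characteristic via the affine volumes of the simplices), and one verifies split-generation using an Abouzaid-type open-closed criterion. On the B-side one picks a tilting object or an analogous explicit generator for $D^b_{dg}Coh(X^*_\dm)$ --- for Batyrev mirrors a natural choice is built from line bundles on the toric ambient pulled back to $X^*_\dm$, or from structure sheaves of a zero-dimensional stratum. Both endomorphism $A_\infty$-algebras can then be identified at the \emph{central fibre} ($u = 0$ on the A-side, large complex structure limit on the B-side) with the same combinatorial model associated to the subdivision of $\Delta^*$; morally this is the `tropical' or `Batyrev stable limit' matching, and the monomial--divisor mirror map provides the bijection between formal deformation parameters.

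The key algebraic input is then a versality statement: one checks, using $\HH^2$ and $\HH^1$ of the central fibre $A_\infty$-algebra together with the Kaledin/Getzler obstruction classes, that the relative Fukaya algebra is a versal $R$-deformation of the central fibre, and that so is the B-side family as a deformation over $\Bbbk\power{\dm_\vec{p}}$. Versality together with the matching at the central fibre forces an isomorphism of formal families, and plugging in the Novikov exponentials $q^{\kaehl_\vec{p}}$ produces the mirror map $b(\kaehl)$ with valuations $\val(b(\kaehl)_\vec{p}) = \kaehl_\vec{p}$, as required. Split-generation then upgrades the isomorphism of endomorphism algebras to the full quasi-equivalence of idempotent-completed pre-triangulated categories.

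The main obstacle is the identification at the central fibre. One must show that the ($\mu^0 = 0$ reduction of) the Fukaya algebra of the chosen generator, computed purely from constant discs and the combinatorics of $\Sigma_\kaehl$, matches the Koszul-type algebra that appears on the B-side in the Batyrev stable limit; the bounding cochain formalism is essential here to handle the curvature introduced by the anticanonical divisor. Closely tied to this are (i) the generation results, which are delicate because $Y_\kaehl$ is only an orbifold away from the non-MPCS locus, and (ii) the verification of versality in positive characteristic, where the exclusion of finitely many primes enters through the denominators appearing in the affine volumes of the simplices and in $|M^*/L|$. Once these central-fibre computations are in hand, the deformation-theoretic machinery propagates the equivalence to the generic fibre $X^*_{b(\kaehl)}$ essentially formally.
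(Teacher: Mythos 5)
Your high-level template---relative Fukaya category as a deformation of the affine Fukaya category, matched against a B-side family over $R_B$, with versality and split-generation doing the rest---is indeed the one the paper uses, and some of your details are correct (powers of the anticanonical bundle as B-side generators, the condition on characteristic entering through the affine volumes and $|M^*/L|$, the role of a versality criterion). But the proposal would fail at exactly the step you flag as the ``main obstacle'': for general Batyrev mirrors there is \emph{no} direct combinatorial/Koszul-type computation of the central-fibre $A_\infty$-algebras that one can match by inspection. The affine hypersurface $X_{t,\kaehl}\setminus D$ is combinatorially complicated, and the paper does not compute its Fukaya algebra; instead, the central-fibre identification $\cA_0\simeq\cB_0$ is obtained by importing the proven HMS statements for the affine hypersurface due to Gammage--Shende (building on Kuwagaki and Ganatra--Pardon--Shende), then restricting to a finite subcategory. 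Your description of a ``Batyrev stable limit'' computation is an analogue of the Fermat-type arguments in the Seidel/Sheridan quartic and hypersurface papers, which relied heavily on special symmetry; it does not transfer to this setting.

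The second and larger gap is that you never address how the B-side generators $\cO(i)$ get represented on the A-side by \emph{compact} Lagrangians in $X_{t,\kaehl}\setminus D$. The relative Fukaya category only has compact objects, whereas a tropical Lagrangian section is a noncompact Lagrangian in $M^*_{\C^*}$, not in the hypersurface. Bridging this is where the bulk of the paper's technical work lies: one must show the relevant sections are ``arclike'' with respect to the boundary sector, construct a cap functor as a partially-defined right adjoint to the Orlov cup functor (a new result in partially wrapped Fukaya category theory, \cref{thm:arclikerepresentable} and \cref{lem:def_capl}), and---since the Kuwagaki/GPS/Gammage--Shende mirror equivalence gives no control over arclikeness---first prove that the sheaf-theoretic HMS agrees with the Floer-theoretic Abouzaid--Hanlon--Hicks HMS, using Bondal--Orlov's classification of derived autoequivalences of smooth Fano varieties (\cref{prop:sameHMS}). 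This is also precisely where the smoothness of $\Sigma^*$ (not just simpliciality, as in the conjecture's hypotheses) enters the proof. Without these ingredients the central objects of $\cA_0$ cannot even be defined, so the deformation-theoretic machinery has nothing to act on.
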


\subsection{HMS: statement}
\label{subsec:mainthm}
The version of the Fukaya category that we work with is the relative Fukaya category, $\fuk(X_{t,\kaehl},D,\kaehl;\Lambda_{\Bbbk,K})$, which is constructed in \cite[Section 1.4]{perutz2022constructing}.
\begin{main}
\label{main:higherdim}
Suppose that $\operatorname{dim}(X_{t,\kaehl}) \geq 3$, the condition on characteristic is satisfied, $\Sigma^*$ is smooth, and the MPCS and connectedness conditions hold. Then: 
\begin{enumerate} 
    \item \label{it:HMS_embed} There is a fully-faithful embedding $D^b_{dg}Coh(X^*_{b(\kaehl)})\hookrightarrow \Perf \fuk(X_{t,\kaehl},D,\kaehl;\Lambda_{\Bbbk,K})^\bc.$  

    \item \label{it:HMS_generate} Suppose the relative Fukaya category $\fuk(X_{t,\kaehl},D,\kaehl;\Lambda_{\Bbbk,K})^{\bc}$ satisfies the hypotheses enumerated in \cite[Section 2.5]{sheridan2021homological}. Then Conjecture \ref{conj:hgpms} holds with $\fuk(X_{t,\kaehl},\omega_\kaehl)=\fuk(X_{t,\kaehl},D,\kaehl;\Lambda_{\Bbbk,K}).$ 
\end{enumerate}
\end{main}
Various hypotheses appear in Theorem \ref{main:higherdim} that are not present in Conjecture \ref{conj:hgpms}. We now provide a short discussion of these hypotheses and where they are used in our proof. 

\begin{rmk}
Theorem \ref{main:higherdim} holds for any version of the Fukaya category which satisfies the hypotheses enumerated in \cite[Section 2.5]{sheridan2021homological}. These hypotheses are expected to hold in whatever version of the Fukaya category one works with (e.g., the general version defined in \cite{FOOO,fukaya2017unobstructed}). 
It is shown in \cite{perutz2022constructing} that the relative Fukaya category satisfies some of the required properties; the remaining ones will be verified in forthcoming work. 
These remaining hypotheses are only needed in the proof of split-generation of the Fukaya category by the image of the embedding from \cref{main:higherdim} \eqref{it:HMS_embed}.
\end{rmk}

\begin{rmk}
One might hope that the MPCS condition could be replaced by the weaker MPCP condition in Theorem \ref{main:higherdim} and that the proof would go through with minimal changes. 
In that case, $(X_{t,\kaehl},\omega_\kaehl)$ would be a symplectic orbifold, so the definition of the Fukaya category would need to be adjusted accordingly (compare to \cite{Cho2014}). 
\end{rmk}

\begin{rmk}\label{rmk:nonamb}
If the connectedness condition does not hold, then we need something more to prove Conjecture \ref{conj:hgpms}. 
That is because there is more than one component of the divisor $D$ corresponding to $\vec{p}$, leading to more than one direction we can deform $\omega_\kaehl$ and more than one class in symplectic cohomology of $X_{t,\kaehl} \setminus D$ corresponding to $\vec{p}$. 
(Deformations in these directions are not ambient: an ambient K\"ahler class would have the same coefficient for each component of $D$ corresponding to $\vec{p}$.) 
On the other hand, there is only one corresponding direction in which we can deform $X^*_\dm$ by deforming $\dm$: we can only deform the coefficient in front of $z^{\vec{p}}$. 
Batyrev shows that the space of deformations of $X^*_\dm$ does have the right dimension, but it must be that some deformations cannot be embedded in $Y^*$. 
\end{rmk}

\begin{rmk}
Smoothness of the fan $\Sigma^*$ is employed most essentially in three places:
\begin{itemize} 
    \item To match various quasi-equivalences of categories, we use the classification of derived autoequivalences of the derived category of smooth Fano varieties from \cite{bondal2001reconstruction}.
  \item The Floer-theoretic proofs of homological mirror symmetry for toric varieties due to \cite{abouzaid2009morse,hanlon2022aspects} work under the assumption that $\Sigma^*$ is smooth. We note that the results from \cite{kuwagaki2020nonequivariant,gammage2017mirror} obtained using microlocal sheaves only require $\Sigma^*$ to be simplicial. 
  \item The assumption is used in the proof of B-side versality in \cref{lem:points} (which is an ingredient in the proof of \cref{lem:Bvers}).
  \end{itemize} 
  Smoothness of $\Sigma^*$ is also used in various other places to simplify arguments (e.g., \cref{lem:tailor_comp}, 
 \cref{lem:Btopologicallfree}, and \cref{lem:smooth}). However, in these latter cases, there  exist straightforward ways to get around the smoothness assumption.  
\end{rmk}

\begin{rmk}
We can refine Conjecture \ref{conj:hgpms} by specifying that the mirror map $b(\kaehl)$ should be given by the formulae in \cite[\S 6.3.4]{coxkatz}.
We can prove this refined version using the results announced in \cite{Ganatra2015}, cf. \cite[Appendix C]{Sheridan2017}. 
By taking care to work throughout over $\Z$, rather than over the coefficient field $\Bbbk$ as we have done, we expect that it would be possible to prove the `integrality of mirror maps' conjecture for the mirror pairs considered in this paper, as was done in the case of Greene--Plesser mirror pairs in \cite{Ganatra2023integrality}. 
However, we have not done this.
\end{rmk}

\subsection{Overview of proof}
\label{subsec:overview}

\begin{table}

\begin{center}
\scalebox{.7}{
\begin{tabular}{c|c|c|c|c}
&& $\overset{MS}{\leftrightarrow}$ && \\ \hline
$M$ & lattice && $M^*$ & dual lattice  \\
$\Delta \subset M_\R$ & reflexive polytope && $\Delta^* \subset M^*_\R$ & dual reflexive polytope \\
$\Sigma \subset M^*_\R$ & fan dual to $\Delta$ && $\Sigma^* \subset M_\R$ & fan dual to $\Delta^*$\\
$Y$ & toric variety corr. to $\Sigma$ && $Y^*$ & toric variety corre. to $\Sigma^*$ \\
$\cL_\Delta \to Y$ & line bundle corr. to $\Delta$ &&  $\cL_{\Delta^*} \to Y^*$ & line bundle corr. to $\Delta^*$ \\
$A$& $\Delta\cap M$&& $\Xinonzero$  &$\Delta^* \cap M^*$, a basis for $\Gamma(\cL_\Delta)$.  \\
$z^\alpha$ & section corr. to $\alpha\in A$ && $z^\vec{p}$ & section corr. to $p\in \Xinonzero$\\
&&& $\Xizero \subset \Xinonzero$ & points on codim $\ge 2$ faces \\
$X_t \subset Y$ & (singular) CY hypersurface  && $X^*_\dm \subset Y^*$ & CY hypersurface corr. to $\dm \in \mathbb{A}^{\Xizero}$ \\ 
$\Sigma_\kaehl \to \Sigma$ & refinement det. by $\kaehl \in \R_{>0}^{\Xizero}$ &&& \\
$Y_\kaehl \to Y$ & resolution &&& \\
$X_{t,\kaehl} \subset Y_\kaehl$ & proper transform &&& \\
$D \subset X_{t,\kaehl}$ & $X_{t,\kaehl} \cap \partial Y_\kaehl$  &&& \\
\hline
$\fo$ & Liouville dom. corr. to $X_{t, \kaehl}\setminus D$ && $\partial Y^*$ & toric boundary\\
$(\Ldom,\fo)$ & Liouville pair && $i:\partial Y^* \hookrightarrow Y^*$ & inclusion of toric boundary  \\
$\ess$ & Liouville sector corr. to $(B, \fo)$ && $Y^*$&\\
\hline
$\iota: \eff \times \C_{Re \ge 0} \hookrightarrow \Sprime$ & inclusion of subsector&&\\
$\cup: \mathcal W(\eff)\to \mathcal W(\Sprime) $ & Cup functor && $ i_* : D^bCoh(\partial Y^*)\to D^bCoh(Y^*)$ & pushforward along inclusion\\
$\cW(\Sprime)$ & Wrapped Fukaya cat. of $\Sprime$ && $D^bCoh(Y^*)$& \\
$\cF(\eff)$ & Fukaya cat. of compact Lag. &&\\
$\mathcal P(\Sprime) \subset \cW(\Sprime)$ & Subcat. of arclike Lagrangians && \\ 
$\cW(\Sprime)^{M^*}, \cW(\eff)^{M^*}, ...$ & $M^*$- graded categories && $D^b_{dg}Coh(Y^*)^{M^*} , D^b_{dg}Coh(\partial Y^*)^{M^*}, ...$&\\
$\cap: \mathcal P(\Sprime)\to \mathcal F(\eff)$ & Geometric cap functor &&\\\hline
$\tilde{L}_i \subset \Sprime$ & ob. of $\cW(\Sprime)$ corr. to $\tildeoi$ && $\tildeoi$ & line bun. on $Y^*$ s.t. $i^!\tildeoi = \cO(i)$ \\
$\mafuk$ & cat. of trop. Lag. sections && $\Pic_{dg}(Y^*)$\\
$L_i = \partial \tilde{L}_i$ & ob. of $\cF(F) = \cF(X_{t,\kaehl} \setminus D)$ && $\cO(i)$ & pullback of line bun. corr. to $i\Delta$ to $\partial Y^*$ \\
$\partial \mafuk$ & subcategory of boundaries && $\Pic_{dg}(\partial Y^*)$\\
\hline
$R_A$ & coefficient ring of $\cF(X_{t,\kaehl},D)$ &&$R_B$& $\Bbbk[[\nov_{\vec{p}}]]_{\vec{p} \in P}$ \\
$\cA_0$ & subcat. of $\cF(X_{t,\kaehl} \setminus D)$ & & $\cB_0$ & subcat. of $D^b_{dg}Coh(X^*_0) = D^b_{dg}Coh(\partial Y^*)$ \\
$\cA_R$ & subcat. of $\cF(X_{t,\kaehl},D)$ & & $\cB_R$ & subcat. of $D^b_{dg}Coh(X^*_R)$ 
\end{tabular}}
\end{center}
\caption{Notation, as well as a rough identification of constructions under the mirror correspondence.}
\end{table}

The proof of \cref{main:higherdim} proceeds as follows. 
On the A-side, we construct certain compact Lagrangians $L_i$ in $X_{t,\kaehl} \setminus D$, and let $\cA_R \subset \fuk(X_{t,\kaehl},D)$ denote the full subcategory with these objects. On the B-side, we let $\mathcal{O}(i)$ denote the restriction to $X_r^*$ of the $i$-th power of the anticanonical bundle on $Y^*_{\Lambda}$ and consider the full subcategory $\cB_R \subset D^b_{dg}Coh(X^*_r)$ with objects $\mathcal{O}(i)$. 
These categories are deformations of the $\Bbbk$-linear categories $\cA_0 \subset \fuk(X_{t,\kaehl} \setminus D)$ and $\cB_0 \subset D^b_{dg}Coh(\partial Y^*)$ with the corresponding objects. 

The bulk of the paper is devoted to constructing a quasi-isomorphism $\cA_0 \simeq \cB_0$. 
Assume for the moment that we have constructed this quasi-isomorphism. 
We then apply the versality result from \cite[Proposition A.6]{Ganatra2023integrality} to upgrade it to a curved filtered quasi-isomorphism $\Psi^*\cB_R \simeq \cA_R$, where $\Psi$ is an appropriate mirror map. 
In order to verify the versality hypotheses on the $A$-side, we need to make certain computations in the symplectic cohomology of $X_{t,\kaehl} \setminus D$, which are accomplished using the results of \cite{ganatra2020symplectic, ganatra2021log} (see \cref{sec:Adef}). 
The versality hypotheses on the $B$-side are more routine (see \cref{sec:Bdef}).

Specializing to appropriate $\Lambda$-points allows us to identify the  subcategory of $D^b_{dg}Coh(X^*_r)$ with objects $\mathcal{O}(i)$, and the subcategory of $\Perf \fuk(X_{t,\kaehl},D,\kaehl;\Lambda_{\Bbbk,K})^\bc$ with objects $L_i$. 
Because $X^*_r$ is smooth, the objects $\mathcal{O}(i)$ split-generate; by the `automatic generation criterion' of \cite{Ganatra2016, Sanda2021}, the objects $L_i$ also split-generate. 
This completes the proof of \cref{main:higherdim} (see \cref{sec:main_result}).

Now, let us explain how the quasi-isomorphism $\cA_0 \simeq \cB_0$ is constructed. 
We start from the homological mirror symmetry result proved by Gammage--Shende in \cite{gammage2017mirror}, building on work of Kuwagaki \cite{kuwagaki2020nonequivariant} and Ganatra--Pardon--Shende \cite{ganatra2018microlocal}. 
Their work produces a commutative diagram
\begin{equation}\label{eq:GS_HMS}
\begin{tikzpicture}[xscale=1.5, yscale=.75,baseline=-1.125cm]
   \node (v1) at (0,0) {$\Perf \cW(X_{t,{\kaehl}} \setminus D)$};
   \node (v2) at (3, 0) {$D^b_{dg}Coh(\partial Y^*) $};
  \node (v3) at (0,-3) {$\Perf \cW(M^*_{\C^*}, \mathfrak{f}_{\Sigma^*})$};
  \node (v4) at (3, -3) {$D^b_{dg}Coh(Y^*)$};
  \draw[<->] (v1) -- (v2);
  \draw[<->] (v3) -- (v4);
  \draw[->] (v1) -- node[midway, left, fill=white]{$\cup$}(v3);
  \draw[->] (v2) -- node[midway, right, fill=white]{ $ i_*$}(v4);
\end{tikzpicture}
\end{equation}
in which $\mathfrak{f}_{\Sigma^*}$ is the boundary at infinity of the Fang--Liu--Treumann--Zaslow skeleton, the horizontal functors are quasi-equivalences, the left vertical arrow is the so-called `cup functor' or `Orlov functor', the right vertical arrow is the pushforward along the inclusion $i:\partial Y^* \hookrightarrow Y^*$, and the diagram commutes up to natural quasi-isomorphism. 
In order to place ourselves in a setting where \eqref{eq:GS_HMS} holds, we first need to deform the affine variety $X_{t,\kaehl} \setminus D$ and its symplectic structure by `tailoring' it, as in \cite{abouzaid2006homogeneous}, to a Weinstein manifold whose Lagrangian skeleton is computed in \cite{zhou2020lagrangian}. 
This is accomplished in \cref{sec:symp_setup}, using general results from \cref{sec:liouville_struc}.

One might think that the subcategory $\cA_0 \subset \cW(X_{t,\kaehl} \setminus D)$ could then be defined to be the category mirror to the subcategory $\cB_0 \subset D^bCoh_{dg}(\partial Y^*)$ under the quasi-equivalence from \eqref{eq:GS_HMS}. 
Unfortunately, life is not so easy: \emph{a priori}, the mirror to this subcategory may consist of non-compact Lagrangians, whereas the objects of the relative Fukaya category $\fuk(X_{t,\kaehl},D)$ are compact Lagrangians (at least as it has been defined to date, cf. \cite{perutz2022constructing}). 
Thus, our main task is to show that the objects mirror to $\mathcal{O}(i)$ are representable by compact Lagrangians.
We do this by constructing a functor $\cap$ which is a `partial adjoint' to $\cup$, and which by construction takes values in the subcategory of compact Lagrangians. 
It is `partial' because it is only defined on a certain subcategory $\cP \subset \Perf \cW(M^*_{\C^*},\mathfrak{f}_{\Sigma^*})$ consisting of `arclike' Lagrangians. 

\begin{rmk} The construction of $\cap$ is carried out using a new result in the theory of partially wrapped Fukaya categories, proved in \cref{sec:adjoints}, which may be of independent interest, and which we now summarize. 
Let $j:S_0 \to S_1$ be an inclusion of Liouville sectors, inducing a functor $j_*:\cW(S_0) \to \cW(S_1)$ in accordance with \cite{ganatra2020covariantly}. 
Given a Lagrangian $L \subset S_1$, one can form its Yoneda module $\cY_L$, and pull it back via $j_*$ to a module over $\cW(S_0)$. 
In general, this pullback module need not be representable, i.e., quasi-isomorphic to the Yoneda module of some geometric Lagrangian in $S_0$. 
The general form of the result proved in \cref{sec:adjoints} gives a criterion under which the pullback module is representable: if $L$ is `arclike' with respect to the inclusion $j$, then we can form a Lagrangian called the `negative pushoff in $S_0$ of $L \cap S_0$'; and we prove that its Yoneda module is isomorphic to the pullback of the Yoneda module of (the negative pushoff in $S_1$ of) $L$ under $j_*$.
The functor $\cup$ is defined to be the pushforward functor associated to the inclusion of a standard neighbourhood of the boundary of a Liouville sector. Applying our general result in this case gives the construction of $\cap$ on the subcategory of arclike Lagrangians (see \cref{sec:fukBackground}). 
\end{rmk}

We consider certain (shifts of) line bundles on $Y^*$,  denoted by $\tildeoi$, which have the property that $i^! \tildeoi = \cO(i)$ (see \S \ref{sec:A0_B0} for the precise description of $\tildeoi$). 
If the objects $\tilde L_i$ of $\cW(M^*_{\C^*},\mathfrak{f}_{\Sigma^*})$ mirror to $\tildeoi$ were arclike, then we could define $\cA_0$ to have objects $L_i = \cap \tilde L_i$, which are compact. 
Because the homological mirror equivalence matches $\cap$ with $i^!$ (by the uniqueness of adjoints), it would necessarily match $\cA_0$ with $\cB_0$ as required.

Unfortunately, we're still not done: under Kuwagaki's construction of the homological mirror equivalence which fits in as the top horizontal arrow in \eqref{eq:GS_HMS}, there is no reason for the objects mirror to $\tildeoi$ to be arclike. 
However, there is a different approach to homological mirror symmetry for Fano toric varieties, due to Abouzaid \cite{abouzaid2006homogeneous,abouzaid2009morse}, Hanlon \cite{hanlon2019monodromy}, and Hanlon--Hicks \cite{hanlon2022aspects}. 
They give a different construction of a mirror equivalence $\Perf \cW(T^* M^*_{S^1},\mathfrak{f}_{\Sigma^*}) \simeq D^b_{dg}Coh(Y^*)$ under which the objects mirror to $\tildeoi$ (and all other toric line bundles) are arclike. 
In order to make use of this, we need to show that the homological mirror equivalence constructed by Abouzaid--Hanlon--Hicks agrees with that constructed by Kuwagaki--Ganatra--Pardon--Shende. 
We prove this in \cref{sec:ASideSetup}, using the classification of derived autoequivalences of smooth Fano varieties due to Bondal--Orlov.

This completes the construction of the quasi-isomorphism $\cA_0 \simeq \cB_0$, and hence the proof of \cref{main:higherdim}: we take $L_i$ to be the objects $\cap \tilde L_i$, where $\tilde L_i$ are the arclike Lagrangians mirror to $\tildeoi$ under the Abouzaid--Hanlon--Hicks mirror equivalence. 
We still need to verify that this equivalence respects certain gradings which are needed in order to apply the versality result; this is done in \cref{sec:A0_B0}, using results from \cref{sec:qiso_grad}.

\begin{rmk} It has long been anticipated that the Floer theoretic approach to toric HMS from \cite{abouzaid2006homogeneous,abouzaid2009morse} and the microlocal sheaf approach initiated in \cite{fang2011categorification,fang2012t} coincide (cf. \cite[Appendix C.2]{fang2012t}). As noted above, Proposition \ref{prop:sameHMS} below settles this question in the affirmative in the Fano case.  \end{rmk}

\begin{rmk}
    We also prove that the subcategories $\cA_0 \subset \fuk(X_{t,\kaehl} \setminus D)$ and $\cB_0 \subset \perf_{\mathsf{dg}}(\partial Y^*)$ split-generate, where  $\perf_{\mathsf{dg}}(\partial Y^*) \subset \dbdg Coh(\partial Y^*)$ denotes the full subcategory of perfect complexes. In particular, this implies (see   \cref{prop:perf_hms}) that Gammage--Shende's quasi-equivalence
    $$\perf \cW(X_{t,\kaehl} \setminus D) \simeq \dbdg Coh(\partial Y^*)$$
    restricts to a quasi-equivalence
    $$\perf \fuk(X_{t,\kaehl} \setminus D) \simeq \perf_{\mathsf{dg}}(\partial Y^*).$$
   
\end{rmk}

\subsection{Acknowledgements}
\label{subsec:acknowledgements}
We are very grateful to Danil Ko\v{z}evnikov for pointing out an error in an earlier version; and to Tatsuki Kuwagaki and Alexander Polishchuk for helpful conversations.
S.G. was supported by NSF grant CAREER DMS-2048055 and a Simons Fellowship (award number 1031288), and would like to thank UCLA and Caltech for their hospitality during visits in which some of this work was completed.
A.H. was supported by the Simons Foundation (Grant Number 814268 via the Mathematical Sciences Research Institute, now renamed to SLMath) and NSF grant DMS-2404882.
J.H. is supported by an EPSRC Postdoctoral Fellowship (project reference: EP/V049097/1). D.P. was partially funded by the Simons Collaboration on Homological Mirror Symmetry (award number 652299) and NSF grant
DMS-2306204 while working on this project. 
N.S. is supported by a Royal Society University Research Fellowship, an ERC Starting Grant (award number 850713 -- HMS), the Simons Collaboration on Homological Mirror Symmetry (award number 652236), the Leverhulme Prize, and a Simons Investigator award (award number 929034). 

\section{Symplectic setup}\label{sec:symp_setup}
We would like to study the Fukaya category of $X_{t, \kaehl}\setminus D = \{tW_0 = W_1\}$ by viewing it as the fiber over $t$ of the potential $W_1/W_0$ on $M^*_{\C^*}$. 
Following \cite{gammage2017mirror}, we replace the spaces $M^*_{\C^*}$ and $X_{t, \kaehl}\setminus D$ with $\Ldom$, a Liouville domain, and $\fo \subset \partial \Ldom$, a Liouville hypersurface in its boundary. 
We define a Liouville sector $\ess$ by removing a neighbourhood of $\fo$ from $\Ldom$.

This construction occurs in several steps. \Cref{sec:rel_kahl} describes the Liouville structure on $X_{t, \kaehl}\setminus D$. \Cref{sec:trop_setup} deforms the hypersurface to its tropical limit while preserving its smooth toric compactification. \Cref{subsec:skeleton} shows that this tropical limit has skeleton described by  \cite{zhou2020lagrangian}. Finally, \Cref{sec:Liouvsec} describes the construction of the sector $\ess$ and its relation to $\fo$. 
\subsection{Relative K\"ahler manifold}\label{sec:rel_kahl}
Assume the MPCS condition holds; then we have a complex manifold $X_{t,\kaehl}$ embedded inside the complex orbifold $Y_\kaehl$. 
Furthermore, we have an orbifold simple normal crossings divisor $D^Y \subset Y_\kaehl$,\footnote{Everything in \cite{Sheridan2017} is written for manifolds, rather than orbifolds. Nevertheless, we make use of the corresponding notions for orbifolds, because it makes the exposition clearer. It would be straightforward to remove the orbifold language from any argument in which it appears because we essentially only work with the symplectic manifolds $X_{t,\kaehl}$ and $Y_\kaehl \setminus D^Y$, both of which avoid the orbifold locus.} whose intersection with $X_{t,\kaehl}$ is a simple normal crossings divisor $D \subset X_{t,\kaehl}$. 
In the terminology of \cite[Definition 3.32]{Sheridan2017}, $(X_{t,\kaehl},D)$ is a sub-$\mathsf{snc}$ pair of $(Y_\kaehl,D^Y)$.

We recall the notion of a relative K\"ahler form on a $\mathsf{snc}$ pair $(X_{t,\kaehl},D)$ from \cite[Definition 3.2]{Sheridan2017}: it consists of a K\"ahler form on $X_{t,\kaehl}$, together with a K\"ahler potential on the complement of $D$, having a prescribed form near $D$. 
It determines `linking numbers', which are strictly positive real numbers associated with the components of $D$. 
One may generalize this to the notion of an orbifold relative K\"ahler form on $(Y_\kaehl,D^Y)$.

Note that $Y_\kaehl \setminus D^Y \cong M^*_{\C^*} = M^*_\R \times M^*_{S^1}$. 
A choice of basis for $M^*$ determines coordinates $x_j$ on $M^*_\R$, and $\theta_j$ on $M^*_{S^1}$, so that the functions $z_j = \exp(x_j+i\theta_j)$ are holomorphic. 

We introduce a restricted class of relative K\"ahler forms on $(Y_\kaehl,D^Y)$ in Definition \ref{def:comp_pot}. 
These have the property that the potential $h^Y$ on $M^*_{\C^*}$ is pulled back from some $\varphi^Y:M^*_\R \to \R$, called a compactification K\"ahler potential. 
The MPCS condition implies that the divisor $D^Y_\kaehl$ is ample, and so by \Cref{lem:comp_pot_exist} there exists a compactification K\"ahler potential $\varphi^Y$ with corresponding linking numbers $\kaehl_{\vec{p}}$. 
We obtain a relative K\"ahler form $(\omega^X,h^X)$ on $(X_{t,\kaehl},D)$ by restricting this one. 

We endow $Y_\kaehl$ with a meromorphic volume from whose restriction to $Y_\kaehl \setminus D^Y = M^*_{\C^*}$ is
$$\Omega^Y|_{M^*_{\C^*}} = d\log z_1 \wedge \ldots \wedge d\log z_n.$$

Observe that $X_{t,\kaehl} \setminus D  \subset M^*_{\C^*}$ is cut out by the equation $W_1/W_0 = t$. 
Therefore, we have a holomorphic volume form $\Omega^Y/d(W_1/W_0)$ on $X_{t,\kaehl} \setminus D$, and this extends to a holomorphic volume form $\Omega^X$ on $X_{t,\kaehl}$. 

\subsection{Liouville domain}\label{sec:trop_setup}

Associated to the relative K\"ahler manifold $(X_{t,\kaehl},D,h)$ from Section \ref{sec:rel_kahl}, we have an exact symplectic manifold $\cH_{t,0} := X_{t,\kaehl} \setminus D \subset M^*_{\C^*}$. 
The Liouville one-form is $\lf^X = -d^ch^X$.  
We denote its Liouville completion by $\widehat{\cH}_{t,0}$ (this is well-defined because the Liouville vector field applied to $h^X$ is positive near $\infty$, cf. Lemma \ref{lem:hol_comp_hom}). 

We wish to make computations in the exact Fukaya category of the Liouville manifold $\widehat{\cH}_{t,0}$. 
To do this, we deform $\cH_{t, 0}$ to a Liouville manifold with a combinatorially described skeleton.

The first step is to `tailor' the affine variety $\cH_{t,0}$, following \cite{abouzaid2006homogeneous}. 
Recall that $A = \Delta \cap M$. 
We define functions $(\nu,c): A\to \Z \times \{\pm 1\}$ which are equal to $(1,-1)$ for $\alpha = 0$, and $(0,+1)$ otherwise.  
Then $\cH_{t,0} \subset M^*_{\C^*}$ is the vanishing locus of the function
$$W_{t,0}(z) = \sum_{\alpha \in A} c(\alpha)\cdot t^{\nu(\alpha)} \cdot z^{\alpha}.$$

We define the corresponding \emph{tropical amoeba} $\Pi \subset M^*_\R$ to be the locus of discontinuity of the piecewise-linear function $M^*_\R \to \R$ sending 
$$ x \mapsto \max_{\alpha \in A} \left(-\nu(\alpha) + \langle \alpha,x\rangle\right).$$
Smoothness of $\Sigma^*$ (which is a hypothesis of Theorem \ref{main:higherdim}) is equivalent to the statement that the subdivision of $\Delta$ induced by $\nu$ is a unimodular triangulation, i.e., it is a subdivision into simplices of the minimal possible volume. 

The components of $M_\R^* \setminus \Pi$ are in one-to-one correspondence with $A$. 
We let $C_\alpha$ denote the closure of the component indexed by $\alpha \in A$. The only interior lattice point of $\Delta$ is $0$, so the only bounded component is $C_0$, which is equal to $\Delta^*$.

We introduce the map
\begin{align*}
\Log_t: M^*_{\C^*} & \to M^*_\R \\
\Log_t(x,\theta) &:= \frac{1}{\log t} x.
\end{align*}
The image $\cA_t := \Log_{t}(W_t^{-1}(0))$ is referred to as the amoeba. As shown in \cite{mikhalkin2004decomposition}, the Gromov-Hausdorff limit of $\cA_t$ as $t$ goes to infinity is $\Pi$. 

Following \cite{abouzaid2006homogeneous}, we can further tailor this convergence by considering an interpolating family of potentials. Namely, we set 
$$ W_{t,s} = \sum_{\alpha \in A} (1 - s\psi_\alpha (t, x)) \cdot c(\alpha) \cdot t^{\nu(\alpha)} \cdot z^\alpha $$
for $t > 0$ and $s \in [0,1]$ and where $\psi_\alpha$ are appropriately chosen cutoff functions (we recall that $x_i = \log |z_i|$). In particular, for large $t$, $\psi_\alpha(t,x/\log t)$ is equal to $0$ for $x$ in a neighbourhood of $ C_\alpha$, and $1$ outside of a larger neighbourhood of $C_\alpha$. In fact, we can use explicitly defined cutoff functions as in \cite{zhou2020lagrangian}. 

We now consider the hypersurfaces $\mathcal{H}_{t,s} = W_{t,s}^{-1}(0)$. 

\begin{lem}\label{lem:tailor_approx_hol}
For any $\delta>0$, the family $(\cH_{t,s})_{s \in [0,1]}$ is $\delta$-approximately holomorphic (in the sense of Definition \ref{def:approx_hol}) for $t$ sufficiently large.
\end{lem}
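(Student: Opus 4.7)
The strategy is to show that the antiholomorphic derivative $\bar\partial W_{t,s}$ becomes arbitrarily small compared to the holomorphic derivative $\partial W_{t,s}$ as $t \to \infty$. The only source of non-holomorphicity in $W_{t,s}$ is the cutoff factor $(1-s\psi_\alpha(t,x))$ multiplying each holomorphic monomial, and $\psi_\alpha$ depends only on the rescaled coordinate $x/\log t$, so its $x$-derivatives acquire a uniform factor $O(1/\log t)$. This is the mechanism producing approximate holomorphicity, and the rest of the argument is a quantitative bookkeeping of this observation.

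First I would write
\[ \bar\partial W_{t,s} = -s \sum_{\alpha \in A} (\bar\partial \psi_\alpha) \cdot c(\alpha)\, t^{\nu(\alpha)} z^\alpha, \]
and the analogous expression for $\partial W_{t,s}$, which splits as the sum of a purely holomorphic term $\sum_\alpha (1-s\psi_\alpha)\,\partial(c(\alpha) t^{\nu(\alpha)} z^\alpha)$ and a cutoff-derivative correction of the same shape as $\bar\partial W_{t,s}$. Since $\psi_\alpha$ is real-valued and depends only on $x$, one has $|\partial \psi_\alpha| = |\bar\partial \psi_\alpha| = O(1/\log t)$ (relative to the canonical flat metric $|dz_j/z_j|$); so both $\bar\partial W_{t,s}$ and the cutoff-derivative part of $\partial W_{t,s}$ are bounded pointwise by $C/\log t$ times $\max_\alpha |c(\alpha) t^{\nu(\alpha)} z^\alpha|$. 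The lemma is then reduced to a uniform lower bound for the strictly holomorphic part of $\partial W_{t,s}$ in a neighbourhood of $\cH_{t,s}$, by a fixed constant times $\max_\alpha|c(\alpha) t^{\nu(\alpha)} z^\alpha|$.

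For this lower bound, I would use the structure of the tropical limit: near any point of $\cH_{t,s}$ the monomials with $(1-s\psi_\alpha) \neq 0$ and $|c(\alpha) t^{\nu(\alpha)} z^\alpha|$ comparable to the maximum are (for $t$ large) exactly those indexed by vertices of a single simplex of the triangulation of $\Delta$ induced by $\nu$. Smoothness of $\Sigma^*$ is equivalent to this triangulation being unimodular, so these vertices together with $0$ form a lattice basis of $M$. A linear-algebra computation then shows that, on the vanishing locus $W_{t,s} = 0$, the vectors $\{c(\alpha) t^{\nu(\alpha)} z^\alpha \cdot \alpha\}$ associated to the competing monomials cannot all simultaneously cancel out; consequently the holomorphic gradient is bounded below as desired, and the ratio $|\bar\partial W_{t,s}|/|\partial W_{t,s}| = O(1/\log t)$ yields $\delta$-approximate holomorphicity for $t$ sufficiently large.

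The main obstacle is the uniformity of this lower bound near higher-codimension strata of $\Pi$, where several monomials compete at once and $\psi_\alpha$ can vary in several directions simultaneously. The unimodular triangulation hypothesis is exactly what is needed here: it ensures that the Jacobian of the system of competing monomials is non-degenerate uniformly in $t$, so that no accidental cancellation can drive $|\partial W_{t,s}|$ below the scale of the dominant monomial. Once this is established in a uniform neighbourhood of $\Pi$ (which contains $\cH_{t,s}$ for $t$ large), the estimate above completes the proof.
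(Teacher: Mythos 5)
Your approach is the same as the paper's: the paper's proof is a two-line citation to the proof of Proposition 1.7 in \cite{zhou2020lagrangian} (which contains the ratio estimate $|\bar\partial W_{t,s}|/|\partial W_{t,s}| \to 0$) together with Lemma 8.3(b) of \cite{cieliebak2007symplectic}, and you are reconstructing exactly that content.

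There is one genuine gap: the final sentence ``the ratio $|\bar\partial W_{t,s}|/|\partial W_{t,s}| = O(1/\log t)$ yields $\delta$-approximate holomorphicity'' passes from a bound on the antiholomorphic-to-holomorphic ratio of $dW_{t,s}$ to a bound on the K\"ahler angle $\angle(T\cH_{t,s})$ (in the sense of Definition~\ref{def:approx_hol}) without justification. This is not an identity; it is the substance of the \emph{second} citation in the paper's proof, namely \cite[Lemma 8.3(b)]{cieliebak2007symplectic}, which gives the quantitative relation between the K\"ahler angle of $\ker(dW)$ and $|\bar\partial W|/|\partial W|$ when the latter is strictly less than $1$. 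You should either invoke that lemma explicitly or prove the linear-algebra fact it encodes. Two smaller points worth tightening: (i) you compute norms in the flat cylindrical metric $|dz_j/z_j|$, whereas Definition~\ref{def:approx_hol} measures the angle with respect to the K\"ahler metric of the chosen weight-$2$-homogeneous potential $\varphi$; this is fine because all $2$-homogeneous metrics are uniformly equivalent (see Lemma~\ref{lem:equiv_angles} and the remark following Definition~\ref{def:homogeneous}), but you should say so. (ii) The claim that both $\bar\partial W_{t,s}$ and the cutoff-derivative piece of $\partial W_{t,s}$ are bounded by $C/\log t$ times $\max_\alpha |c(\alpha)t^{\nu(\alpha)}z^\alpha|$ depends on the specific form of Zhou's cutoff functions (their derivatives are supported only where the corresponding monomial is already subdominant), which deserves a sentence; this is where the heavy lifting in \cite[Proposition 1.7]{zhou2020lagrangian} lives, and it also makes the unimodularity hypothesis less central than your write-up suggests --- Zhou's estimate does not require it, though using it as you do is not incorrect.
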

\begin{proof}
Follows from the proof of \cite[Proposition 1.7]{zhou2020lagrangian}, together with \cite[Lemma 8.3(b)]{cieliebak2007symplectic}.
\end{proof}

\begin{defn}[Restatement of Definition \ref{def:smtorcomp}]
A \emph{smooth toric compactification} of a submanifold $\cH \subset M^*_{\C^*}$ is a simplicial fan $\Sigma_{\cH}$ and smooth submanifold $X_{\cH} \subset Y_{\Sigma_\cH}$, such that $\cH = X_{\cH} \cap M^*_{\C^*}$, $X_{\cH}$ avoids the orbifold points, and the intersection of $X_{\cH}$ with every stratum of the toric boundary divisor is transverse. 
\end{defn}

\begin{lem}\label{lem:tailor_comp}
For sufficiently large $t>0$, the family $(\cH_{t,s})_{s \in [0,1]}$ admits a family of smooth toric compactifications. 
\end{lem}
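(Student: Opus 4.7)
The plan is to take $\Sigma_\cH := \Sigma_\kaehl$, independent of $s$, and let $X_{\cH_{t,s}} \subset Y_\kaehl$ be the closure of $\cH_{t,s}$. Since the MPCS condition confines the orbifold locus of $Y_\kaehl$ to $Y_{\kaehl,0}$ (the preimage of the toric fixed points of $Y$), it suffices to show, for sufficiently large $t$ and uniformly in $s \in [0,1]$: (a) $X_{\cH_{t,s}}$ avoids $Y_{\kaehl,0}$, and (b) $X_{\cH_{t,s}}$ intersects every positive-dimensional toric stratum of $Y_\kaehl$ transversely. Smoothness of $X_{\cH_{t,s}}$ in the open torus then follows from \Cref{lem:tailor_approx_hol}, and smoothness at the boundary follows from (b).

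The heart of the argument is a local analysis near each toric stratum $Y_\sigma$ corresponding to a cone $\sigma$ of $\Sigma_\kaehl$ with rays generated by $\vec{p}_1,\ldots,\vec{p}_k \in \Xizero$. The section $W_{t,s}$ of $\cL_\Delta$ vanishes to order $\langle \alpha,\vec{p}_i\rangle + 1$ along $D^Y_{\vec{p}_i}$, so after trivializing $\cL_\Delta$ near $Y_\sigma$ the restriction $W_{t,s}|_{Y_\sigma}$ is a Laurent polynomial involving only the monomials $z^{\alpha}$ for $\alpha$ in the face $\tau_\sigma := \{\alpha \in \Delta : \langle \alpha, \vec{p}_i\rangle = -1 \text{ for every } i\}$. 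Smoothness of $\Sigma^*$ (equivalently, unimodularity of the triangulation of $\Delta$ induced by $\nu$) implies that $\tau_\sigma$ is itself a unimodular simplex; combined with the fact that $X_{t,\kaehl}$ is already a smooth toric compactification of $\cH_{t,0}$, this gives that $\{W_{t,0} = 0\}$ meets $Y_\sigma$ transversely.

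The key claim is that $W_{t,s}|_{Y_\sigma} = W_{t,0}|_{Y_\sigma}$. The coefficient of $z^\alpha$ in $W_{t,s}$ differs from that in $W_{t,0}$ by the factor $(1 - s\psi_\alpha(t, x/\log t))$, and a tropical computation shows that for each $\alpha \in \tau_\sigma$ the cutoff $\psi_\alpha(t, x/\log t)$ vanishes identically on a neighbourhood of $Y_\sigma$ for $t$ sufficiently large. Indeed, approaching $Y_\sigma$ corresponds to $x/\log t \to \infty$ along a direction $-v$ with $v$ in the interior of $\sigma$; and the defining condition $\langle \alpha, \vec{p}_i\rangle = -1$ of $\tau_\sigma$, combined with the reflexivity inequalities $\langle \beta, \vec{p}\rangle \ge -1$ for every $\beta \in \Delta$ and $\vec{p} \in \Delta^*$, makes $\alpha$ a maximizer of $\langle \cdot, -v\rangle$ over $A$. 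Hence $x/\log t$ asymptotically lies in $C_\alpha$, where the explicit cutoffs of \cite{zhou2020lagrangian} take the value $0$. At a top-dimensional cone $\sigma$, unimodularity forces $\tau_\sigma$ to be a single vertex $\alpha^* \neq 0$ of $\Delta$, so $W_{t,s}|_{Y_\sigma}$ is a nonvanishing constant, yielding (a); for lower-dimensional $\sigma$, the $s = 0$ transversality yields (b).

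The main obstacle is the combinatorial bookkeeping: verifying that the cutoff neighbourhoods nest compatibly with the stratification of $Y_\kaehl$ so that a single threshold for $t$ suffices uniformly in $s \in [0,1]$ and across all cones $\sigma$. This mirrors the $s = 0$ tropical analysis already used to see that $X_{t,\kaehl}$ is a smooth toric compactification, so it is a standard if slightly fiddly exercise; the new input is essentially only the explicit form of the cutoffs from \cite{zhou2020lagrangian}.
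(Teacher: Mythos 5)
Your overall strategy is the same as the paper's (compactify in $Y_\kaehl$ and check stratum-by-stratum transversality using the explicit cutoffs of \cite{zhou2020lagrangian}), but the pivotal claim that $W_{t,s}|_{Y_\sigma}=W_{t,0}|_{Y_\sigma}$ is false whenever $\tau_\sigma$ has more than one lattice point, which is exactly the situation at every positive-dimensional stratum that $X_{t,s}$ can meet. As you go to infinity in the direction $-v$ with $v$ interior to $\sigma$, \emph{every} $\alpha\in\tau_\sigma$ simultaneously maximizes $\langle\cdot,-v\rangle-\nu(\cdot)$, so the ray $-Tv$ lies on the common boundary of all $C_\alpha$, $\alpha\in\tau_\sigma$; which of these chambers $x/\log t$ actually lands near is governed by the component of $x$ parallel to $Y_\sigma$ (the $z_\sigma$ coordinate). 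Consequently the cutoffs $\psi_\alpha$ for $\alpha\in\tau_\sigma$ do \emph{not} all vanish near $Y_\sigma$: they become nontrivial functions of $z_\sigma$, and the restriction of $W_{t,s}$ to $Y_\sigma$ genuinely depends on $s$. Your reduction of (b) to the $s=0$ transversality therefore does not go through.

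What \emph{is} true, and is the crux of the paper's proof, is that near $Y_\sigma$ the cutoffs $\psi_\alpha$ are independent of the transverse coordinate $w_\sigma$ (so they descend to smooth functions $\psi_\alpha^\sigma(t,z_\sigma)$ on a Zariski neighbourhood of the orbit). This makes $W_{t,s}$ a smooth function across $Y_\sigma$, and its restriction there is
\[
\sum_{\alpha\in\Delta_\sigma}(1-s\psi_\alpha^\sigma(t,z_\sigma))\,z_\sigma^\alpha,
\]
which is the tailored potential for the face $\Delta_\sigma$ -- a \emph{tailored pair of pants} times a torus factor, not the untailored $s=0$ hypersurface. Smoothness and transversality then come from the unimodularity of $\Delta_\sigma$ together with the fact (from \cite{zhou2020lagrangian}) that tailored pairs of pants are smooth. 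Your case analysis for the top-dimensional cones, giving (a), is fine -- there $\tau_\sigma$ is a single vertex and the argument you give does work -- but that case is not where the content lies. To repair the proof you would replace the claim $W_{t,s}|_{Y_\sigma}=W_{t,0}|_{Y_\sigma}$ with the statement that $W_{t,s}|_{Y_\sigma}$ is a tailored hypersurface associated to the unimodular simplex $\Delta_\sigma$, and invoke smoothness of tailored pairs of pants rather than the $s=0$ transversality.
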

\begin{proof}
Note that it follows from \Cref{lem:tailor_approx_hol} that $\cH_{t,s}$ is a smooth submanifold of $M^*_{\C^*}$ for sufficiently large $t>0$.

We now consider the closure $X_{t,s}$ of $\cH_{t,s}$ in $Y_\kaehl$. 
For each cone $\sigma$ of $\Sigma_\kaehl$, such that $X_{t,s}$ intersects the corresponding torus orbit $Y_\sigma$, the latter admits a Zariski neighbourhood with coordinates $(w_\sigma,z_\sigma) \in \C^{\dim \sigma} \times (\C^*)^{\dim Y - \dim \sigma}$. 
From the construction of the cutoff functions $\psi_\alpha$ in \cite[Section 1.2]{zhou2020lagrangian}, it is straightforward to check that in a neighbourhood of $Y_\sigma$ they are independent of $w_\sigma$; in other words, there exist cutoff functions $\psi^\sigma_\alpha(t,z_\sigma)$ such that $\psi_\alpha(t,(w_\sigma,z_\sigma)) = \psi^\sigma_\alpha(t,z_\sigma)$. 
As a result, in this neighbourhood of $Y_\sigma$, we have $X_{t,s} = \left(W_{t,s}^{\sigma}\right)^{-1}(0)$ where
\[ W_{t,s}^{\sigma}(w_\sigma,z_\sigma) = \sum_{\alpha \in \Delta}  (1 - s\psi^\sigma_\alpha (t, z_\sigma)) \cdot c(\alpha) \cdot t^{\nu(\alpha)} \cdot (w_\sigma,z_\sigma)^\alpha\]
is a smooth function. 

The toric morphism $Y_\kaehl \to Y$ blows $Y_\sigma$ down to $Y_{\hat \sigma}$, where $\hat \sigma$ is the smallest cone of $\Sigma$ containing $\sigma$. 
Let $\Delta_\sigma$ denote the face of $\Delta$ dual to $\hat \sigma$. 
Then $z^\alpha$ vanishes along $Y_\sigma$ for $\alpha \notin \Delta_\sigma$. 
To show that $X_{t,s}$ is smooth in a neighbourhood of $Y_\sigma$, and intersects it transversely, it then suffices to show that 
\[ X_{t,s} \cap Y_\sigma = \sum_{\alpha \in \Delta_\sigma}  (1 - s\psi^\sigma_\alpha (t, z_\sigma)) \cdot z_\sigma^\alpha\]
is smooth. 
However, since $\Delta_\sigma$ is unimodular, this is simply the product of a tailored pair of pants with $(\C^*)^j$ for some $j$, which is smooth. 
\end{proof}

We now choose a potential $\varphi:M^*_\R \to \R$ which is homogeneous of weight $2$ and adapted to $\Delta^*$ in the sense of \cite[Definition 2.8]{zhou2020lagrangian}, which induces a Liouville manifold structure on $M^*_{\C^*}$. 
By restricting, we then have a Liouville manifold structure on $\cH_{t,1}$, which has well-defined Liouville completion $\widehat{\cH}_{t,1}$ by Lemma \ref{lem:hom_Liouv}. 

\begin{lem} \label{lem:tailor_equivalence}
For sufficiently large $t>0$, we have a Liouville isomorphism $\widehat{\cH}_{t,0} \cong \widehat{\cH}_{t,1}$.
\end{lem}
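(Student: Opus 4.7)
The plan is to interpolate both the Kähler potential and the hypersurface in two successive deformations, applying a Liouville Moser argument at each stage. First, I interpolate the Kähler potential while keeping the hypersurface fixed at $\cH_{t,0}$: define $\varphi_r := (1-r)h^Y + r\varphi$ for $r \in [0,1]$; both $h^Y$ and $\varphi$ are strictly plurisubharmonic on $M^*_{\C^*}$ (via their strict convexity on $M^*_\R$), so $\varphi_r$ is as well. Since $\cH_{t,0}$ is an honest complex submanifold of $M^*_{\C^*}$, the restriction $-d^c \varphi_r|_{\cH_{t,0}}$ is a Liouville form with symplectic differential for every $r$. At infinity on $M^*_{\C^*}$, the Liouville vector field of $-d^c \varphi_r$ is outward-pointing along a uniform exhaustion: this uses Lemma \ref{lem:hol_comp_hom} at $r=0$ and the weight-$2$ homogeneity of $\varphi$ at $r=1$ (hence Lemma \ref{lem:hom_Liouv}), together with convexity to interpolate. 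A parametrized Moser argument then gives a Liouville isomorphism $\widehat{\cH}_{t,0} \cong \widehat{\cH}_{t,0}^{\varphi}$, where I write $\widehat{\cH}_{t,0}^{\varphi}$ for the completion of $\cH_{t,0}$ with respect to $-d^c\varphi|_{\cH_{t,0}}$.

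Second, I interpolate the hypersurface while keeping the potential fixed at $\varphi$. Consider the family $\{\cH_{t,s}\}_{s \in [0,1]}$ from Lemma \ref{lem:tailor_comp}, each equipped with the Liouville form $\lambda_s := -d^c\varphi|_{\cH_{t,s}}$. The approximate holomorphicity from Lemma \ref{lem:tailor_approx_hol}, together with the strict plurisubharmonicity of $\varphi$, guarantees that $d\lambda_s$ is symplectic for all $s$ when $t$ is sufficiently large. The weight-$2$ homogeneity of $\varphi$ makes the Liouville vector field radial in $M^*_\R$ outside a compact set, so the restricted Liouville vector fields form a smooth family transverse to a common exhaustion at infinity; a second application of Moser yields a Liouville isomorphism $\widehat{\cH}_{t,0}^{\varphi} \cong \widehat{\cH}_{t,1}$. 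Composing with the isomorphism from the first stage produces the desired $\widehat{\cH}_{t,0} \cong \widehat{\cH}_{t,1}$.

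The main obstacle is verifying that the Liouville vector fields remain uniformly outward-pointing along an exhaustion throughout both homotopies, which is what is needed to run the Moser argument. At the endpoints this follows from the existing Lemmas \ref{lem:hol_comp_hom} and \ref{lem:hom_Liouv}; for the first homotopy it follows from convexity of the interpolation on $M^*_\R$; and for the second it follows from approximate holomorphicity combined with the control on the cutoff functions $\psi_\alpha$ in the tailoring construction of \cite{zhou2020lagrangian}, which ensure that each $\cH_{t,s}$ is asymptotically modeled on a piece of $\cH_{t,1}$ near every face of $\Delta^*$.
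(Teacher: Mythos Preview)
Your proposal is correct and follows essentially the same two-step strategy as the paper: first interpolate the potential on the fixed holomorphic hypersurface $\cH_{t,0}$, then interpolate the hypersurface through the tailoring family while keeping the weight-$2$ potential $\varphi$ fixed. The paper packages these steps as direct applications of Lemmas~\ref{lem:change_comp_hom} and~\ref{lem:change_tailoring} from Appendix~\ref{sec:liouville_struc}, which encode precisely the Moser arguments you describe (ultimately via \cite[Proposition~11.8]{cieliebakeliashberg2012}); your informal justification that the Liouville fields remain outward-pointing is the content of Lemma~\ref{lem:hol_crit} together with Corollary~\ref{cor:comb_ang} for the first homotopy, and Lemma~\ref{lem:hom_crit} together with Lemma~\ref{lem:comp_con} (via the smooth toric compactifications of Lemma~\ref{lem:tailor_comp}) for the second.
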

\begin{proof}
The Liouville isomorphism is constructed in two steps. 
First, we fix $\cH_{t,0}$ and linearly interpolate between the potential $\varphi^Y$ coming from the toric compactification of $\cH_{t,0}$, and the potential $\varphi$ which is homogeneous of weight $2$ and adapted to $\Delta^*$. 
By Lemma \ref{lem:hol_comp_hom}, the latter also has a well-defined Liouville completion; and this completion is Liouville isomorphic to that of $\cH_{t,0}$ by Lemma \ref{lem:change_comp_hom}.

The second step is to deform $\cH_{t,0}$ to $\cH_{t,1}$ using the family $\cH_{t,s}$, equipped with the fixed homogeneous potential $\varphi$. 
This deformation induces a Liouville isomorphism by Lemma \ref{lem:change_tailoring} (whose hypotheses are verified by Lemmas \ref{lem:tailor_comp} and \ref{lem:tailor_approx_hol}).
\end{proof}

\subsection{Skeleton}
\label{subsec:skeleton}

The differential of the potential $\varphi:M^*_\R \to \R$ (which we recall is homogeneous of weight $2$ and adapted to $\Delta^*$) defines a diffeomorphism 
\begin{align*}
\Phi:M^*_\R &\xrightarrow{\sim} M_\R,\\
\Phi(x) &= d\varphi_x.
\end{align*} 
We use it to identify
\begin{equation}\label{eq:Leg_trans}
M^*_{\C^*} = M^*_\R \times M^*_{S^1} \xrightarrow{(d\varphi,\id)} M_\R \times M^*_{S^1}  = T^* M^*_{S^1}     ,
\end{equation}
which fits into a commutative diagram
\begin{equation}\label{eq:Leg_trans_cd} \begin{tikzcd}  M^*_{\C^*} \arrow[d, "\Log"] \arrow[r,"\eqref{eq:Leg_trans}"] & T^*M^*_{S^1}  \arrow[d, "\pi"]& \\
		M^*_\R \arrow[r, "\Phi"] & M_\R 
	\end{tikzcd}
	\end{equation}
	where $\pi$ is the natural projection. 
This identification identifies the Liouville one-form $-d^c \varphi$ on $M^*_{\C^*}$ with the standard Liouville one-form on the cotangent bundle. 

To be more concrete, let us choose a basis for $M^*$, inducing coordinates $(x_1,\ldots,x_n)$ on $M^*_\R$, $(\theta_1,\ldots,\theta_n)$ on $M^*_{S^1}$, and $(u_1,\ldots,u_n)$ on $M_\R$. 
With respect to these coordinates, we have
$$\Phi(x_1,\ldots,x_n) = \left(\frac{\partial \varphi}{\partial x_1},\ldots,\frac{\partial \varphi}{\partial x_n}\right)$$
and
$$-d^c \varphi = \sum_{j=1}^n \frac{\partial \varphi}{\partial x_j} d\theta_j = \sum_{j=1}^n u_j d\theta_j.$$

Associated to the fan $\Sigma^* \subset M_\R$ of $Y^*$ is a combinatorially-defined (negative) FLTZ skeleton
\begin{equation} \label{lsigma}
{\mathbb L}_{\Sigma^*} = \bigcup_{\sigma \in \Sigma^*} \sigma \times \sigma^\perp \subset M_\R \times M^*_{S^1}= T^*M^*_{S^1},
\end{equation}
with boundary at infinity
\begin{equation} \label{fsigma}
    \mathfrak{f}_{\Sigma^*} = \left( {\mathbb L}_{\Sigma^*} \setminus \left(\{0\} \times M^*_{S^1}\right) \right)/\R \subset \partial_\infty (T^*M^*_{S^1}).
\end{equation}

For $t \gg 0$, the complement of the tailored amoeba $\Log_t(\cH_{t,1})$ has a unique bounded component, whose closure we denote by $\mathcal{C}_0$. 
By \cite[Proposition 1.9]{zhou2020lagrangian}, $\cC_0$ can alternatively be characterized as the sublevel set of the function $H_{\mathcal{C}_0}: M^*_\R\to \R_{\geq 0}$:
\begin{equation}
	1 \ge H_{\mathcal{C}_0}:=\sum_{\alpha\in A \setminus \{0\}} (1-\psi_\alpha(t,x))t^{\langle\alpha, x \rangle -1}
	\label{eq:cutoffRealFunction}
\end{equation}
The region $\mathcal C_0$ is not necessarily convex. However, it is star-shaped:

\begin{prop}\label{prop:star}
	For $t$ sufficiently large, there exists $\eps>0$ so that $H_{\mathcal C_0}$ is radially increasing, that is,
	\[  dH_{\mathcal{C}_0}(x  \partial_x)  > 0\]
	at all points $x$ such that $1-\eps\leq H_{\mathcal C_0}(x)\leq 1$ where $x\partial_x$ is the radial vector field on $M_\R^*$.
	\label{prop:cutoffRealFunctionProperties}
\end{prop}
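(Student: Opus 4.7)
The plan is to expand $dH_{\mathcal{C}_0}(x\partial_x)$ directly, separate it into a principal contribution that scales as $\log t$ and a cutoff error, and show the principal contribution dominates for $t$ large. By the product rule,
\[ dH_{\mathcal{C}_0}(x\partial_x) = \sum_{\alpha \in A \setminus \{0\}} \Bigl[(1-\psi_\alpha(t,x))\,\langle\alpha,x\rangle\,\log t \;-\; x\cdot\nabla_x\psi_\alpha(t,x)\Bigr]\,t^{\langle\alpha,x\rangle-1}. \]
Setting $w_\alpha := (1-\psi_\alpha)\,t^{\langle\alpha,x\rangle-1}$, the principal contribution equals $\log t \cdot \sum_\alpha w_\alpha\langle\alpha,x\rangle$ and the cutoff error equals $-\sum_\alpha (x\cdot\nabla_x\psi_\alpha)\,t^{\langle\alpha,x\rangle-1}$.

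For the principal contribution on $\{H_{\mathcal{C}_0}=1\}$, the nonnegative weights $w_\alpha$ sum to $1$ by definition. The key geometric input is that $0$ lies in the interior of the tropical polytope $\Delta^*$: a weight $w_\alpha$ can be nonnegligible only when both $1-\psi_\alpha$ is bounded away from $0$ (so that $x$ lies near $C_\alpha$) and $\langle\alpha,x\rangle$ is within $O(1/\log t)$ of $1$ (otherwise $t^{\langle\alpha,x\rangle-1}$ is exponentially suppressed in $\log t$). Such indices $\alpha$ force $x$ to lie uniformly close to a facet $\{\langle\alpha,\cdot\rangle = 1\}$ of $\Delta^*$, and a compactness argument then yields a fixed constant $c_0 > 0$ with $\sum_\alpha w_\alpha\langle\alpha,x\rangle \geq c_0$ uniformly for $t$ large; the principal contribution is therefore at least $c_0 \log t$.

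For the cutoff error, I would use that the cutoff functions $\psi_\alpha$ of \cite{zhou2020lagrangian} are constructed with fixed transition widths in the tropical coordinate, so $|\nabla_x\psi_\alpha|$ is uniformly bounded in $t$; moreover $\nabla\psi_\alpha$ is supported near $\partial C_\alpha$, and the portion that meets $\mathcal{C}_0$ lies in a small neighborhood of a facet of $\Delta^*$, where $\langle\alpha,x\rangle$ stays close to $1$ and hence $t^{\langle\alpha,x\rangle-1}$ stays bounded. Since $\mathcal{C}_0$ has uniformly bounded diameter, $|x|$ is also bounded, and summing over the finite set $A\setminus\{0\}$ bounds the error by a constant independent of $t$. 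Combining, $dH_{\mathcal{C}_0}(x\partial_x) \geq c_0\log t - O(1) > 0$ on the compact hypersurface $\{H_{\mathcal{C}_0}=1\}$ for all $t$ sufficiently large, and continuity extends this positivity to a neighborhood $\{1-\eps \leq H_{\mathcal{C}_0} \leq 1\}$.

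The main obstacle is the concentration statement underlying the principal estimate: making precise and uniform the claim that the mass of the weights $w_\alpha$ lies on indices $\alpha$ with $\langle\alpha,x\rangle$ bounded away from $0$. This requires a careful analysis of how $\partial\mathcal{C}_0$ converges to $\partial\Delta^*$ as $t \to \infty$ and the interaction of the cutoffs with the tropical faces, drawing on the explicit cutoff construction in \cite{zhou2020lagrangian}.
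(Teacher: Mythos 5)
Your approach is genuinely different from the paper's and is worth comparing carefully. The paper's proof is essentially geometric and short: it invokes Propositions 1.12(3) and 1.13(2) of Zhou's paper to show that for large $t$ the outward unit conormal bundle of $\mathcal{C}_0$ is $C^0$-close to the conormal variety of the polytope $C_0 = \Delta^*$, and then uses that $\Delta^*$ is a convex polytope containing the origin, so the radial vector pairs positively with the outward conormals to all its strata. Your route is direct and computational, decomposing $dH_{\mathcal{C}_0}(x\partial_x)$ into a principal part of size $\sim\log t$ and a cutoff error. The correct exact expansion you write, the observation that the nonnegative weights $w_\alpha$ sum to $1$ on $\{H_{\mathcal{C}_0}=1\}$, and the concentration estimate for the principal contribution (the indices with appreciable weight have $\langle\alpha,x\rangle = 1 + O(1/\log t)$ while the remainder are exponentially suppressed, giving $\sum w_\alpha\langle\alpha,x\rangle \ge c_0$) are all sound; in effect your principal-term estimate re-derives the conormal-direction positivity that the paper imports from Zhou. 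What the paper's approach buys is that it bypasses all the quantitative analysis of the cutoff functions: by phrasing things in terms of convergence of conormal bundles, the derivative of $\psi_\alpha$ never appears.

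The gap is precisely in your cutoff-error bound. You write that in the support of $\nabla\psi_\alpha$ meeting $\partial\mathcal{C}_0$, ``$\langle\alpha,x\rangle$ stays close to $1$ and hence $t^{\langle\alpha,x\rangle-1}$ stays bounded.'' If ``close to $1$'' only means within a fixed transition width $\delta$ independent of $t$ (which is what you then assume to get $|\nabla_x\psi_\alpha| = O(1)$), the conclusion does not follow: $t^{\delta}\to\infty$. What you actually need is that on the portion of $\partial\mathcal{C}_0$ meeting the transition region, $\langle\alpha,x\rangle - 1 = O(1/\log t)$, which amounts to a quantitative bound on how far $\partial\mathcal{C}_0$ can stick out past the facet of $\Delta^*$. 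That quantitative convergence of $\partial\mathcal{C}_0$ to $\partial\Delta^*$ at rate $O(1/\log t)$ is plausible but is precisely the kind of estimate that the paper outsources to Zhou's conormal-bundle statements rather than reproving. There is also a hidden tension: if the cutoff transition widths are chosen to shrink like $1/\log t$ so that $\partial\mathcal{C}_0$ hugs $\partial\Delta^*$, then $|\nabla_x\psi_\alpha|$ grows like $\log t$ and your $O(1)$ bound on the error term fails again, so one has to check the two effects don't conspire. You flag this as ``the main obstacle,'' and indeed it is the step that needs the explicit construction of Zhou's cutoffs; without filling it, the proof is incomplete.
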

\begin{proof}
	By combining Propositions 1.12(3) and 1.13(2) of \cite{zhou2020lagrangian}, we can choose $t$ sufficiently large so that the outward unit conormal bundle on $\mathcal{C}_0$ is arbitrarily close in the unit conormal bundle of $M_\R^*$ to the Legendrian given by the union of the outward unit conormals to the strata of $C_0$. The vector field $x \partial_x$ pairs positively with the latter as $C_0$ is a convex polytope containing the origin. The result then follows from the compactness of the level sets of $H_{\mathcal{C}_0}$.
\end{proof}

\begin{cor}
For $t$ sufficiently large, $\Log_{t}^{-1}(\mathcal{C}_0)$ is a Liouville domain completing to $M^*_{\C^*}$.
\end{cor}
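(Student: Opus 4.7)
The plan is to verify the three defining properties of a Liouville domain for $\Log_t^{-1}(\mathcal{C}_0)$ inside $M^*_{\C^*}$: compactness, outward-transversality of the Liouville vector field along the boundary, and exhaustion of the complement by the positive Liouville flow (so that the abstract completion recovers $M^*_{\C^*}$).

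First, I would identify the Liouville vector field of $(M^*_{\C^*}, -d^c\varphi)$. Since $\varphi$ is homogeneous of weight $2$, Euler's identity gives $\Phi(\lambda x) = \lambda \Phi(x)$, so under the identification \eqref{eq:Leg_trans_cd} the radial vector field $\sum u_j \partial_{u_j}$ on $T^*M^*_{S^1}$ (the Liouville vector field dual to $\sum u_j d\theta_j$) corresponds to the radial vector field $x\partial_x$ on $M^*_\R$, extended trivially in the $M^*_{S^1}$ direction on $M^*_{\C^*}$. In particular the Liouville vector field is complete on $M^*_{\C^*}$, with flow $(x,\theta) \mapsto (e^s x, \theta)$.

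Compactness is immediate: $\mathcal{C}_0$ is the closure of a bounded component of $M^*_\R \setminus \Log_t(\cH_{t,1})$, hence compact, and its preimage under $\Log_t$ is a compact torus bundle over $\mathcal{C}_0$. For transversality, the boundary of $\Log_t^{-1}(\mathcal{C}_0)$ is $\Log_t^{-1}\{H_{\mathcal{C}_0} = 1\}$, and \Cref{prop:star} says precisely that $dH_{\mathcal{C}_0}(x\partial_x) > 0$ along this level set, so the Liouville vector field is outward pointing.

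For the completion, I need each point of $M^*_\R \setminus \mathcal{C}_0$ to lie on a unique forward orbit of $x\partial_x$ originating on $\partial\mathcal{C}_0$, i.e., each ray from the origin meets $\partial\mathcal{C}_0$ exactly once. Existence: by \eqref{eq:cutoffRealFunction}, $H_{\mathcal{C}_0}(0) = \sum_{\alpha \ne 0}(1-\psi_\alpha(t,0)) t^{-1}$, which is less than $1$ for large $t$, so $0$ is interior to $\mathcal{C}_0$; since $\mathcal{C}_0$ is bounded, any ray from the origin eventually leaves $\mathcal{C}_0$ and so crosses $\partial\mathcal{C}_0$. Uniqueness: at any putative re-entry point $p \in \partial\mathcal{C}_0$, the radial vector $x\partial_x|_p$ would point into $\mathcal{C}_0$, giving $dH_{\mathcal{C}_0}(x\partial_x)|_p \le 0$, contradicting \Cref{prop:star}. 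Together with the transversality above, this identifies $M^*_{\C^*} \setminus \mathrm{int}\,\Log_t^{-1}(\mathcal{C}_0)$ with $\partial \Log_t^{-1}(\mathcal{C}_0) \times [0,\infty)$ via the Liouville flow, in a manner compatible with the Liouville form, so the abstract Liouville completion of $\Log_t^{-1}(\mathcal{C}_0)$ is canonically isomorphic to $M^*_{\C^*}$.

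The main obstacle is the uniqueness-of-crossing step; the bounded collar information provided by \Cref{prop:star} does not a priori control $H_{\mathcal{C}_0}$ far from its $1$-level set, but the radial-transversality statement is formulated exactly on the boundary level set, which is all that is required to rule out re-entry.
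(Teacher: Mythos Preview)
Your approach is the same as the paper's: reduce to showing $\mathcal{C}_0$ is star-shaped with respect to the origin, then invoke \cref{prop:star}. You supply more detail than the paper does (that $0\in\operatorname{int}\mathcal{C}_0$, and the re-entry contradiction), which is fine.

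There is one inaccuracy. You assert that the Liouville vector field on $M^*_{\C^*}$ is globally $x\partial_x$, with flow $(x,\theta)\mapsto(e^sx,\theta)$, deducing this from $\Phi(\lambda x)=\lambda\Phi(x)$. But in the paper's setup $\varphi$ is only homogeneous of weight~$2$ \emph{outside a compact set} (see \cref{def:homogeneous} and the passage choosing $\varphi$ adapted to $\Delta^*$), so $\Phi$ need not be degree-$1$ homogeneous near the origin and the Liouville field need not be radial there. The paper handles this by working on the $T^*M^*_{S^1}$ side, where the Liouville field is globally $u\partial_u$, and then noting that $\Phi_*(x\partial_x)$ is positively proportional to $u\partial_u$ \emph{near} $\log t\cdot\partial\mathcal{C}_0$, which for large $t$ lies in the homogeneous region. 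Your argument is repaired the same way: since $0\in\operatorname{int}\mathcal{C}_0$ (as you show), for $t$ large the rescaled domain $\log t\cdot\mathcal{C}_0$ swallows the non-homogeneous compact set, so both $\partial\Log_t^{-1}(\mathcal{C}_0)$ and its complement in $M^*_{\C^*}$ lie in the region where your description of the Liouville flow is valid, and the rest of your argument goes through unchanged.
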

\begin{proof}
By the commutative diagram \eqref{eq:Leg_trans_cd}, $\Log_t^{-1}(\cC_0) = \pi^{-1}(\Phi^{-1}(\log t \cdot \cC_0))$. 
The Liouville vector field for the standard Liouville one-form on $T^*M^*_{S^1}$ is equal to the radial vector field $u \partial_u$. 
Thus it suffices to show that $\Phi^{-1}(\log t \cdot \cC_0)$ is star-shaped. 
As $\varphi$ is homogeneous of weight $2$ outside a small compact set, and in particular near $\log t \cdot \partial \cC_0$, we have that $\Phi_* (x \partial_x)$ is (pointwise) positively proportional to $u \partial_u$ in that region (cf. \cite[Proposition 2.7]{zhou2020lagrangian}). 
Thus it suffices to check that $\log t \cdot \cC_0$ is star-shaped, which is true by Proposition \ref{prop:star}.
\end{proof}

We then have the following result of \cite{zhou2020lagrangian}, which is a generalization of \cite[Theorem 6.2.4 and Lemma 6.2.5]{gammage2017mirror}:

\begin{prop} \label{prop:GSZ} For $t$ sufficiently large, the intersection $\Lambda_{\Sigma^*} \coloneqq {\mathbb L}_{\Sigma^*} \cap \partial \Log_{t}^{-1}(\mathcal{C}_0)$ is the skeleton of $\mathcal{H}_{t,1}$.
\end{prop}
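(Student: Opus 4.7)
This statement is essentially a specialization of a theorem of \cite{zhou2020lagrangian}, so the plan is to translate our setup into Zhou's framework, verify that his hypotheses hold, and apply his skeletal theorem directly.

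First, I would move the entire picture into the cotangent bundle via the Legendre-type identification \eqref{eq:Leg_trans}, which sends the Liouville one-form $-d^c\varphi$ on $M^*_{\C^*}$ to the canonical Liouville one-form on $T^*M^*_{S^1}$. Under this identification and the commutative diagram \eqref{eq:Leg_trans_cd}, the Liouville domain $\Log_t^{-1}(\mathcal{C}_0)$ becomes $\pi^{-1}(\Phi^{-1}(\log t \cdot \mathcal{C}_0))$, and the tailored hypersurface $\mathcal{H}_{t,1}$ becomes a hypersurface in $T^*M^*_{S^1}$ of precisely the type that Zhou studies. The FLTZ skeleton $\mathbb{L}_{\Sigma^*}$ already lives naturally in the cotangent bundle by \eqref{lsigma}.

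Second, I would verify the hypotheses of Zhou's skeletal theorem in our context. Smoothness of $\Sigma^*$ (a standing hypothesis of \cref{main:higherdim}) ensures that the subdivision of $\Delta$ induced by $\nu$ is unimodular, so each local model of $\mathcal{H}_{t,1}$ in a toric chart near a stratum is (a product of $(\C^*)^j$ with) a tailored pair of pants, exactly as in the proof of \cref{lem:tailor_comp}. The potential $\varphi$ has been chosen homogeneous of weight $2$ and adapted to $\Delta^*$ in the sense of \cite[Definition 2.8]{zhou2020lagrangian}, and the cutoff functions $\psi_\alpha$ are the explicit ones from \cite{zhou2020lagrangian}. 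Finally, \cref{lem:tailor_approx_hol} supplies the required $\delta$-approximate holomorphicity of $\mathcal{H}_{t,1}$ for $t$ sufficiently large, so that the symplectic geometry of $\mathcal{H}_{t,1}$ is pinned down by the combinatorial picture.

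Third, I would invoke Zhou's theorem, whose conclusion is that the skeleton of the induced Liouville structure on $\mathcal{H}_{t,1}$ is exactly $\mathbb{L}_{\Sigma^*} \cap \partial \Log_t^{-1}(\mathcal{C}_0) = \Lambda_{\Sigma^*}$. The essential geometric input is that, by \cite[Propositions 1.12--1.13]{zhou2020lagrangian} (the same statements invoked in the proof of \cref{prop:star}), the outward conormal structure along $\partial \mathcal{C}_0$ converges $C^0$ to the union of the conormals to the faces of $C_0 = \Delta^*$, so the attracting set of the inward Liouville flow on $\mathcal{H}_{t,1}$ is forced to coincide with the combinatorially-defined intersection. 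The special case where $\Sigma^*$ has a single maximal cone is \cite[Theorem 6.2.4 and Lemma 6.2.5]{gammage2017mirror}, and the general case of \cite{zhou2020lagrangian} glues these local pants models along the toric stratification determined by $\Sigma^*$. The main thing to check is the consistency of conventions (sign of the Liouville form, orientation of conormals, scaling of $\varphi$) between \cite{zhou2020lagrangian}, \cite{gammage2017mirror}, and ours; once these are aligned the result is a direct citation.
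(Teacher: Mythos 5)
Your instinct — that this proposition is essentially a citation to \cite{zhou2020lagrangian} — is correct, and you correctly identify the relevant geometric setup. However, your plan asserts that there is a single theorem of Zhou whose conclusion is literally the statement to be proved, and this is where it goes slightly wrong: Zhou does not directly produce the identity $\mathrm{Skel}(\mathcal{H}_{t,1}) = \Lambda_{\Sigma^*}$ in one stroke, and the paper has to do a small (but essential) synthesis.

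The actual proof combines two separate results of Zhou. First, \cite[Theorem 2]{zhou2020lagrangian} gives the \emph{containment} $\mathrm{Skel}(\mathcal{H}_{t,1}) \subset \partial \Log_t^{-1}(\mathcal{C}_0)$ for $t$ large. Second, \cite[Theorem 3]{zhou2020lagrangian} identifies the \emph{projectivization} of the skeleton — its image in $\partial_\infty T^*M^*_{S^1}$ — with $\mathfrak{f}_{\Sigma^*}$. Neither result by itself determines the skeleton as a subset of the interior. The gap is closed by a third ingredient, supplied not by Zhou but by the corollary to \cref{prop:star} immediately preceding the proposition: $\Log_t^{-1}(\mathcal{C}_0)$ is a Liouville domain, equivalently $\partial \Log_t^{-1}(\mathcal{C}_0)$ is star-shaped with respect to the radial Liouville vector field $u\partial_u$ on $T^*M^*_{S^1}$. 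This star-shapedness is exactly what makes the radial projection to $\partial_\infty T^*M^*_{S^1}$ injective on $\partial \Log_t^{-1}(\mathcal{C}_0)$, so the two subsets of that hypersurface — the skeleton (contained there by Theorem 2) and $\Lambda_{\Sigma^*}$ (contained there by definition) — are forced to coincide once their images at infinity agree (by Theorem 3 and the definition of $\mathfrak{f}_{\Sigma^*}$ as the projectivization of $\mathbb{L}_{\Sigma^*}$).

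So your proposal needs the following repair: instead of claiming that Zhou proves the proposition verbatim, you must quote Theorems 2 and 3 of \cite{zhou2020lagrangian} separately and then explicitly use the star-shapedness of $\partial \Log_t^{-1}(\mathcal{C}_0)$ (which was the whole point of \cref{prop:star}) to upgrade the equality at infinity to an equality of subsets. Without this last observation, knowing the skeleton's image at infinity does not pin it down. The other ingredients you list — the Legendre identification, unimodularity of the subdivision, $\delta$-approximate holomorphicity, Propositions 1.12--1.13 — are genuine background for Zhou's theorems but are not re-verified in this proof; they either already appear in \cref{lem:tailor_approx_hol,lem:tailor_comp,prop:star} or are internal to Zhou's paper.
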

\begin{proof} 
First, \cite[Theorem 2]{zhou2020lagrangian} shows that for $t$ sufficiently large, the skeleton of $\mathcal{H}_{t,1}$ is a subset of $\partial \Log_{t}^{-1}(\mathcal{C}_0)$. Next, \cite[Theorem 3]{zhou2020lagrangian} identifies the projection of the skeleton of $\mathcal{H}_{t,1}$ to $\partial_\infty T^*M^*_{S^1}$ with $\mathfrak{f}_{\Sigma^*}$. As this is also true for $\Lambda_{\Sigma^*}$ and both are contained in $\partial \Log^{-1}_t (\mathcal{C}_0)$, they must coincide.
\end{proof}

\subsection{Liouville sector}\label{sec:Liouvsec}

From this point forward, we assume that $t$ is chosen sufficiently large so that all the statements in the previous section hold. The following result is due to \cite{zhou2020lagrangian}:
 
\begin{prop} \label{prop:notangentZ}
The Liouville vector field $Z = u \partial_u$ is nowhere tangent to $\mathcal{H}_{t,1}$ in a neighbourhood of $\Lambda_{\Sigma^*}$.
\end{prop}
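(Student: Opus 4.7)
The plan is to reduce the tangency condition to a critical point question for the K\"ahler potential, and then to rule out critical points using the star-shaped structure of $\mathcal{C}_0$ together with approximate holomorphicity of $\mathcal{H}_{t,1}$.

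First, observe that the Liouville vector field on $M^*_{\C^*}$ with respect to $-d^c\varphi$ is $J\nabla^g\varphi$, where $J$ is the standard complex structure and $g$ is the K\"ahler metric $dd^c\varphi$; under the identification \eqref{eq:Leg_trans} this is exactly $Z = u\partial_u$ (outside a compact set, where $\varphi$ is weight-$2$ homogeneous so $\Phi_*(x\partial_x)$ is positively proportional to $u\partial_u$, cf.\ the proof of the preceding Corollary). If $\mathcal{H}_{t,1}$ were genuinely $J$-holomorphic then $T_p\mathcal{H}_{t,1}$ would be $J$-invariant, so $Z_p\in T_p\mathcal{H}_{t,1}$ would be equivalent to $\nabla^g\varphi\in T_p\mathcal{H}_{t,1}$, i.e.\ to $p$ being a critical point of $\varphi|_{\mathcal{H}_{t,1}}$. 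By \cref{lem:tailor_approx_hol}, $\mathcal{H}_{t,1}$ is $\delta$-approximately holomorphic for arbitrarily small $\delta>0$ once $t$ is large, so tangency of $Z$ at $p$ still implies $d(\varphi|_{\mathcal{H}_{t,1}})_p$ is small compared to $\|d\varphi_p\|$, with an error controlled by $\delta$.

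Next, I would show that $\varphi|_{\mathcal{H}_{t,1}}$ has no (even approximate) critical points in a neighbourhood of $\Lambda_{\Sigma^*}$. Since $\varphi$ is $M^*_{S^1}$-invariant, a critical point of $\varphi|_{\mathcal{H}_{t,1}}$ at $p$ forces $d\varphi_{\Log_t(p)}$ to annihilate the tangent space of the amoeba image $\Log_t(\mathcal{H}_{t,1})$ at $\Log_t(p)$. By \cref{prop:GSZ} and its proof (via \cite{zhou2020lagrangian}), for $t$ large, points $p\in\mathcal{H}_{t,1}$ that lie in a small neighbourhood of $\Lambda_{\Sigma^*}$ project under $\Log_t$ into an arbitrarily small neighbourhood of $\partial \mathcal{C}_0$, with the local amoeba tangent space $C^1$-close to the tangent space of $\partial \mathcal{C}_0$. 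By weight-$2$ homogeneity of $\varphi$ we have $d\varphi_x(x\partial_x) = 2\varphi(x) > 0$ outside a compact set, and by \cref{prop:cutoffRealFunctionProperties} the radial field $x\partial_x$ pairs strictly positively with the outward conormal of $\partial\mathcal{C}_0$; combining these, $d\varphi$ pairs strictly positively with the outward conormal of $\partial\mathcal{C}_0$. This quantitative non-annihilation survives the $C^1$-perturbation to $\Log_t(\mathcal{H}_{t,1})$, so $\varphi|_{\mathcal{H}_{t,1}}$ has a uniform non-critical margin near $\Lambda_{\Sigma^*}$.

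Finally, combining the two steps: choose $t$ large enough that the approximate-holomorphicity error $\delta$ from the first step is dominated by the non-critical margin from the second step. Then $Z$ cannot be tangent to $\mathcal{H}_{t,1}$ in a neighbourhood of $\Lambda_{\Sigma^*}$. The main obstacle is calibrating these two quantitative estimates uniformly in $t$; however, both are consequences of convergence to the tropical limit and are controlled by the same parameter, which is why the argument goes through. The cited results of \cite{zhou2020lagrangian} package these estimates in the appropriate form.
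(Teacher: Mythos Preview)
Your approach has a fundamental logical error in the first step. You assert that, for a $J$-holomorphic hypersurface $H$, the condition $Z_p\in T_pH$ (equivalently $\nabla^g\varphi\in T_pH$, by $J$-invariance of $T_pH$) is the same as $p$ being a critical point of $\varphi|_H$. This is not correct: $\nabla^g\varphi\in T_pH$ means the \emph{normal} component of $\nabla^g\varphi$ vanishes, whereas criticality of $\varphi|_H$ means the \emph{tangential} component $\nabla^{g_H}(\varphi|_H)$ vanishes. These are orthogonal, independent conditions. (Incidentally, the Liouville vector field here is $Z=\nabla^g\varphi$, not $J\nabla^g\varphi$; see the appendix. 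But this is not the essential issue, since either way you arrive at $\nabla^g\varphi\in T_pH$.) Your step 2 then establishes non-vanishing of the tangential component---no critical points of $\varphi|_{\mathcal{H}_{t,1}}$ near the skeleton---which is the wrong quantity; it does not control non-tangency of $Z$, and the calibration in step 3 is therefore comparing irrelevant estimates.

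The paper's proof is a one-line citation: \cite[Proposition 5.3]{zhou2020lagrangian} shows directly that the component of $Z$ symplectically orthogonal to $\mathcal{H}_{t,1}$ is positively proportional to the Hamiltonian vector field $X_{\Im W_{t,1}}$ along $\Log_t^{-1}(\mathcal{C}_0)$. Since $\mathcal{H}_{t,1}$ is a regular fibre, $X_{\Im W_{t,1}}$ is nowhere zero there, so $Z$ has nonzero symplectic-normal component and hence $Z\notin T\mathcal{H}_{t,1}$ near $\Lambda_{\Sigma^*}$.
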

\begin{proof}
The component of $Z$ that is symplectically orthogonal to $\mathcal{H}_{t,1}$ is positively proportional to the Hamiltonian vector field of $\Im(W_{t,1})$ along $\Log_t^{-1}(\cC_0)$ (and hence along the skeleton, by Proposition \ref{prop:GSZ}) by \cite[Proposition 5.3]{zhou2020lagrangian}, and thus nonvanishing in a neighbourhood of the skeleton.
\end{proof}

We now follow \cite[Corollary 6.2.6]{gammage2017mirror} to construct a sutured Liouville domain (in the sense of \cite[Definition 2.14]{ganatra2020covariantly}) from $(\Log_{t}^{-1}(\mathcal{C}_0), \Lambda_{\Sigma^*})$. We repeat the argument for clarity and later reference. 

\begin{lem} \label{lem:constructDF}
There exists a Liouville subdomain $\Ldom \subset M^*_{\C^*}$ completing to $M^*_{\C^*}$, together with a Liouville subdomain $\fo \subset \cH_{t,1}$ completing to $\widehat{\cH}_{t,1}$, such that $\fo \subset \partial \Ldom$.
\end{lem}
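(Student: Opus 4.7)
The plan follows \cite[Corollary 6.2.6]{gammage2017mirror}, transposed to the tailored setting. By \Cref{prop:notangentZ}, the Liouville vector field $Z = u\partial_u$ on $M^*_{\C^*}$ is transverse to $\cH_{t,1}$ in a neighbourhood of the skeleton $\Lambda_{\Sigma^*}$; flowing by $Z$ therefore yields an embedding $\Phi\colon N \times (-\varepsilon, \varepsilon) \hookrightarrow M^*_{\C^*}$ for some compact neighbourhood $N \subset \cH_{t,1}$ of $\Lambda_{\Sigma^*}$, with $\Phi(\cdot, 0)$ the inclusion and $\Phi_*(\partial_\tau) = Z$. In particular, a neighbourhood of $\Lambda_{\Sigma^*}$ in $M^*_{\C^*}$ is foliated by short arcs of $Z$-flow through $\cH_{t,1}$.

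The Liouville domain $\Ldom$ is then produced from $\Log_t^{-1}(\cC_0)$ by pushing its boundary inward along $\cH_{t,1}$ near the skeleton. Concretely, I would modify $\partial \Log_t^{-1}(\cC_0)$ inside $\Phi(N \times (-\varepsilon,\varepsilon))$ by replacing it with a hypersurface which coincides with $\Phi(N_0 \times \{-\delta\})$ on a slightly smaller compact neighbourhood $N_0 \Subset N$ of $\Lambda_{\Sigma^*}$ (for some $0 < \delta \ll \varepsilon$), and smoothly interpolates back to the unmodified portion of the original boundary outside $N$. The Liouville subdomain $\fo$ is then taken to be $\Phi(N_0 \times \{-\delta\})$, regarded (via the Liouville flow of $\cH_{t,1}$) as a compact subdomain of $\cH_{t,1}$ equipped with the restricted one-form $\lf = -d^c\varphi$. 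Since $\fo$ contains a copy of $\Lambda_{\Sigma^*}$ in its interior, \Cref{prop:GSZ} implies that its completion is $\widehat{\cH}_{t,1}$, and by construction $\fo \subset \partial \Ldom$. Because $\Ldom$ differs from $\Log_t^{-1}(\cC_0)$ only by a compactly supported isotopy of its boundary, it still completes to $M^*_{\C^*}$.

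I expect the main obstacle to be the corner-rounding step, which packages two conditions: one needs the interpolating hypersurface to be of contact type, so that $Z$ points strictly outward along $\partial \Ldom \setminus \fo$ (making $\Ldom$ a genuine Liouville domain), and one needs the induced Liouville vector field on $\cH_{t,1}$ to point outward along $\partial \fo$ (making $\fo$ a genuine Liouville subdomain of $\widehat{\cH}_{t,1}$). Both statements reduce, in the coordinates provided by $\Phi$, to a local model computation near $\partial N_0 \times \{-\delta\}$, which is controlled precisely by the transversality of $Z$ to $\cH_{t,1}$ established in \Cref{prop:notangentZ}; the smoothing is carried out in detail in \cite[Corollary 6.2.6]{gammage2017mirror} and adapts verbatim to our situation.
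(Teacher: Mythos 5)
Your overall strategy is the right one (it is the same as the paper's, which follows \cite[Corollary 6.2.6]{gammage2017mirror}: use \cref{prop:notangentZ} to see a neighbourhood of $\Lambda_{\Sigma^*}$ in $\cH_{t,1}$ as transverse to the Liouville flow, and deform $\partial\Log_t^{-1}(\cC_0)$ so that it contains that neighbourhood), but the specific construction you propose for $\fo$ does not satisfy the conclusion of the lemma. You set $\fo = \Phi(N_0\times\{-\delta\})$ with $\delta>0$, i.e.\ the image of $N_0\subset\cH_{t,1}$ under the time-$(-\delta)$ flow of the \emph{ambient} Liouville vector field $Z=u\partial_u$. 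Precisely because \cref{prop:notangentZ} says $Z$ is nowhere tangent to $\cH_{t,1}$ near the skeleton, this flow moves points off the hypersurface: $\Phi(N_0\times\{-\delta\})\not\subset\cH_{t,1}$ for any $\delta\neq 0$. The lemma requires $\fo$ to be literally a Liouville subdomain of $\cH_{t,1}$ (this is used later, e.g.\ in \eqref{eq:IandWonFiber} and \cref{lem:compatibleIandW}, where $\fo$ must lie in $(\Im W_{t,1})^{-1}(0)$), and the parenthetical ``regarded via the Liouville flow of $\cH_{t,1}$'' does not repair this: the intrinsic Liouville flow of $(\cH_{t,1},\lf|_{\cH_{t,1}})$ preserves $\cH_{t,1}$ and is unrelated to $\Phi$, which is flow by the ambient $Z$. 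There is also a dimension slip in ``replacing $\partial\Log_t^{-1}(\cC_0)$ with a hypersurface which coincides with $\Phi(N_0\times\{-\delta\})$'': the latter has real codimension $2$, so the new boundary can only \emph{contain} it.

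The fix is to take $\delta=0$, i.e.\ choose $\fo=N_0$ to be an honest Liouville subdomain of $\cH_{t,1}$ containing $\Lambda_{\Sigma^*}$ and contained in the region where $Z$ is transverse to $\cH_{t,1}$, and then build $\partial\Ldom$ through $\fo$ itself. The clean way to do the interpolation --- and the way the paper does it --- is to use the cylindrical coordinates $M^*_{\C^*}\setminus M^*_{S^1}\cong\partial\Log_t^{-1}(\cC_0)\times\R$ furnished by the Liouville flow. Transversality of $Z$ to $\cH_{t,1}$ makes the projection of $\fo$ to $\partial\Log_t^{-1}(\cC_0)$ an immersion, and after shrinking $\fo$ (this injectivity check is a step you should not omit: your flow-box embedding $\Phi$ controls the normal direction to $N_0$ but does not by itself prevent two points of $N_0$ from lying on the same $Z$-flow line through $\partial\Log_t^{-1}(\cC_0)$) it becomes an embedding, so $\fo$ is the graph of a function on a hypersurface in $\partial\Log_t^{-1}(\cC_0)$. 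Extending that function arbitrarily and taking its graph gives $\partial\Ldom$. This formulation also dissolves what you identify as the ``main obstacle'': any graph over $\partial\Log_t^{-1}(\cC_0)$ in symplectization coordinates is automatically transverse to $Z=\partial_\tau$, so the contact-type condition for the interpolating hypersurface requires no local model computation. The remaining condition --- that the induced Liouville field of $\cH_{t,1}$ point outward along $\partial\fo$ --- is arranged by choosing $\fo$ to be a Liouville subdomain of $\cH_{t,1}$ completing to $\widehat{\cH}_{t,1}$ from the start, rather than extracted from the corner-rounding.
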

\begin{proof}
The Liouville flow gives an identification $M^*_{\C^*} \setminus M^*_{S^1} \simeq \partial \Log_t^{-1}(\mathcal{C}_0) \times \R$, which identifies $\partial \Log_t^{-1}(\cC_0)$ with $\partial \Log_t^{-1}( \cC_0) \times \{0\}$. 
We choose a Liouville subdomain $\fo \subset \mathcal{H}_{t,1}$, completing to $\cH_{t,1}$, which is a neighbourhood of $\Lambda_{\Sigma^*}$ on which the Liouville vector field is not tangent to $\cH_{t,1}$. 
Then, the projection of $\fo$ to $ \partial \Log_t^{-1}( \cC_0)$ is an immersion, and the projection of $\Lambda_{\Sigma^*}$ is an embedding; so by shrinking $\fo$ further, we may arrange that the projection of $\fo$ is also an embedding. 

Thus, $\fo$ is identified with the graph of a function on a hypersurface in $ \partial \Log^{-1}_t( \mathcal{C}_0)$. 
Extending the function to all of $\partial \Log_t^{-1}$ and taking its graph produces the boundary of a new Liouville domain $\Ldom$ which contains $\fo$ in its boundary. \end{proof}

From the sutured Liouville domain $(\Ldom,\fo)$, we construct a Liouville sector $\ess \subset M^*_{\C^*}$, in the sense of \cite[Definition 2.4]{ganatra2020covariantly}, in accordance with \cite[Definition 2.14]{ganatra2020covariantly}. 
We briefly recall the construction. 
We apply the Reeb flow on $\partial \Ldom$ to obtain a neighbourhood of the form $\fo \times \R_{|t| < \eps} \subset \partial \Ldom$ and set $\ess \subset M^*_{\C^*}$ to be the complement of the set $\R_{>0} \times \R_{|t| < \eps} \times \fo$ where the first coordinate is the flow coordinate for the Liouville vector field from $\fo \times \R_{|t| < \eps} \subset \partial \Ldom$. The $1$-defining function $\aye$ is given (on the boundary of the cornered sector) by the linear extension of $-t$.
By construction, we have that there is a small neighbourhood $U$ of $\fo \subset \partial \ess$ on which 
\begin{equation}
  \label{eq:IandWonFiber} \fo = \aye^{-1}(0) \cap U = \left(\Im \, W_{t,1} \right)^{-1}(0) \cap U.
\end{equation}

The boundary can then be smoothed as in \cite[Remark 2.12]{ganatra2020covariantly}.
    By \cite[Lemma 2.32]{ganatra2020covariantly}, the sector embedding $\ess \hookrightarrow (M^*_{\C^*}, \fo)$ presents the pair $(M^*_{\C^*}, \fo)$, where $\fo$ here denotes the projection to $\partial_{\infty} M^*_{\C^*}$ of $\fo$, as the stopped convex completion of the sector $\ess$, that is, as the inclusion of $\ess$ into the pair $(\bar{\ess}, \fo)$ where $\bar{\ess}$ is the convex completion in the sense of \cite[\S 2.7]{ganatra2020covariantly}, and $\fo$ is the copy of $\fo$ lying at $-\infty$ in the completion region $\eff \times \C_{\Re \leq 0}$, where $\eff:= \widehat{\fo}$ is the Liouville completion of $\fo$.

For the construction in \Cref{subsec:productLagconstr}, we will need one further geometric feature of the sector $\ess$.
\begin{lem} \label{lem:compatibleIandW}
We may choose the pair $(\Ldom, \fo)$ such that there exists a neighbourhood $V$ of ${\mathbb L}_{\Sigma^*}$ and an extension of the $1$-defining function $\aye$ to a cylindrical neighbourhood $\Nbd^Z(\partial \ess)$ of $\partial \ess$ in $M^*_{\C^*}$ such that 
$$ \aye^{-1}(0) \cap V \cap \Nbd^Z(\partial \ess) = \left(\Im W_{t,1}\right)^{-1}(0) \cap V \cap \Nbd^Z(\partial \ess).$$
\end{lem}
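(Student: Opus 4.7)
The plan is to exploit the fact, recorded in \eqref{eq:IandWonFiber}, that on a neighbourhood $U$ of $\fo \subset \partial \ess$ the functions $\aye$ and $\Im W_{t,1}$ already have the same zero set, and then to extend $\aye$ off $\partial \ess$ by declaring it equal to a positive multiple of $\Im W_{t,1}$ in a neighbourhood of ${\mathbb L}_{\Sigma^*}$ and to the standard Liouville-invariant extension elsewhere, gluing the two by a partition of unity.

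First I would verify the required transversality in a Liouville-flow-invariant neighbourhood $V$ of ${\mathbb L}_{\Sigma^*}$: namely that $d\Im W_{t,1}$ is non-vanishing on $V$ and that $(\Im W_{t,1})^{-1}(0)\cap\partial \ess\cap V = \fo\cap V$, with the two hypersurfaces meeting transversely. The key input is Proposition \ref{prop:notangentZ}, which shows that the $\omega$-orthogonal component of $Z$ to $\cH_{t,1}$ near $\Lambda_{\Sigma^*}$ is positively proportional to the Hamiltonian vector field of $\Im W_{t,1}$; in particular $d\Im W_{t,1}$ is non-zero along $\cH_{t,1}$ near $\Lambda_{\Sigma^*}$, and hence on a neighbourhood. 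The transversality of $(\Im W_{t,1})^{-1}(0)$ with $\partial \ess$ inside $V$ can be arranged via a small perturbation of the hypersurface $\partial \Ldom$ from the proof of Lemma \ref{lem:constructDF}; this freedom is exactly what the phrase \emph{we may choose the pair $(\Ldom,\fo)$} accommodates. Under these choices, on $U\cap V$ one may write $\aye = g\cdot \Im W_{t,1}$ for a strictly positive smooth function $g$.

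Next I would build the extension. Extend $g$ to a strictly positive smooth function $\tilde g$ defined on all of $V\cap \Nbd^Z(\partial \ess)$ (say, by Liouville-flow-invariant transport from $U \cap V$ followed by a cutoff), and set $\aye := \tilde g\cdot \Im W_{t,1}$ on this open set. On the complement of a slightly smaller neighbourhood $V'\subset V$ of ${\mathbb L}_{\Sigma^*}$ with $\overline{V'}\subset V$, extend $\aye$ by the standard Liouville-invariant extension of $\aye|_{\partial \ess}$. These two prescriptions agree on $\partial \ess$, and after shrinking $\Nbd^Z(\partial \ess)$ one may arrange them to differ on the overlap $V\setminus \overline{V'}$ by a smooth factor close to $1$. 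A partition of unity subordinate to the cover $\{V,\Nbd^Z(\partial \ess)\setminus \overline{V'}\}$ then yields a smooth global extension of $\aye$ whose zero set on $V'\cap \Nbd^Z(\partial \ess)$ coincides with $(\Im W_{t,1})^{-1}(0)$. Renaming $V'$ as $V$ gives the statement of the lemma.

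The main obstacle is the compatibility of the two extensions on the gluing region: one needs to ensure that the partition-of-unity interpolation does not create spurious zeros of $\aye$ inside $V' \cap \Nbd^Z(\partial \ess)$. This is controlled by the facts that both extensions restrict to $\aye|_{\partial \ess}$ on $\partial \ess$ and that, away from $\fo$, $\aye|_{\partial \ess}$ is nowhere zero; hence on a sufficiently thin cylindrical collar both extensions remain non-zero and of the same sign outside any neighbourhood of $\fo$, so their interpolation is non-zero there, and near $\fo$ it vanishes precisely along $(\Im W_{t,1})^{-1}(0)$. Combined with the transversality furnished by Proposition \ref{prop:notangentZ}, this preserves the required equality of zero sets.
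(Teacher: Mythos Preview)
Your overall strategy—write $\aye|_{\partial \ess}$ as a nonvanishing multiple of $\Im W_{t,1}|_{\partial \ess}$ near $\fo$, then extend the multiplier off $\partial \ess$—is a legitimate alternative to the paper's argument, but there is a real gap in how you obtain the transversality needed for the factorization.

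You assert that the transversality of $(\Im W_{t,1})^{-1}(0)$ with $\partial \ess$ can be arranged by perturbing $\partial \Ldom$. This fails: the construction of Lemma~\ref{lem:constructDF} forces $\fo \subset \partial \Ldom$, and the transversality you need is exactly \emph{along} $\fo$, where no perturbation is available. What the factorization $\aye = (\text{nonvanishing})\cdot \Im W_{t,1}$ on $\partial \ess$ actually requires is that $\Im W_{t,1}|_{\partial \ess}$ vanish to first order on $\fo$, i.e.\ that $d(\Im W_{t,1})(R) \neq 0$ where $R$ is the Reeb vector field on $\partial \Ldom$. The paper verifies this directly: writing $g = \Im W_{t,1}$, one computes along $\Lambda_{\Sigma^*}$ that $dg(R) = \omega(X_g, R) \propto \omega(Z^\perp, R) = \omega(Z, R) - \omega(Z_{\fo}, R) = \lambda(R) - 0 = 1$, using precisely the fact you cite ($X_g \propto Z^\perp$). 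You had the ingredient in hand but did not carry out this computation, substituting an illegitimate perturbation claim. (A minor related point: since $\aye = -t$ while $dg/dt > 0$, the factor in your factorization is negative, not positive; harmless for the zero-set argument.)

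Once $dg(R) > 0$ is established, your route does work, though the paper's implementation is more direct. Rather than factor and glue, it observes that $(f,t) \mapsto (f,g)$ is a diffeomorphism on $\fo \times \R_{|t| < \eps}$ by monotonicity, inverts to get $t = t(f,g)$ with $t(f,0) = 0$, and simply sets $\aye(x) = -t(\pi_F(x), g(x))$ for a retraction $\pi_F$ onto $\fo$. The identity $\aye = 0 \iff g = 0$ is then immediate, with no gluing: outside $V$ the paper just extends $\aye$ arbitrarily, since the lemma imposes no constraint on $\aye^{-1}(0)$ there. Your partition-of-unity step and the worry about spurious zeros are thus addressing a non-issue; if you take $\rho_1 \equiv 1$ on $V'$ then $\aye = \tilde g \cdot \Im W_{t,1}$ there and the zero set is automatically correct.
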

\begin{proof} 
We have $Z = Z_{\fo} + Z^\perp$, where $Z_{\fo}$ is the Liouville vector field of $F$ and $Z^\perp$ is symplectically orthogonal to $T\fo$. 
Let $g = \Im W_{t,1}$; then we have that $X_g$ is positively proportional to $Z^\perp$ along $\Lambda_{\Sigma^*}$ by \cite[Proposition 5.3]{zhou2020lagrangian}. 
If $R$ is the Reeb vector field of $\partial B$, then we have
\begin{align*}
    dg(R) &= \omega(X_g,R) \\
    &\propto \omega(Z^\perp,R) \\
    & = \omega(Z,R) \quad \text{as $\omega(Z_{\fo},R) = 0$} \\
    & = \lf(R) = 1
\end{align*}
along $\Lambda_{\Sigma^*}$. 
It follows that $dg(R)>0$ in a neighbourhood of $\Lambda_{\Sigma^*}$ in $\partial B$. 
Hence, by shrinking $\fo$ and choosing $\eps$ sufficiently small, we may arrange that $dg(R) > 0$ along $\fo \times \R_{|t|< \eps}$.  
It follows that the map 
$$\fo \times \R_{|t|<\eps} \xrightarrow{(\id, g)} \fo \times \R_g$$
is a diffeomorphism onto its image. 
As $g(f,0) = 0$, we may write $t = t(f,g)$ as a function of $f \in \fo$, $g \in \R$, with $t(f,0) = 0$.

We now choose $\Nbd^Z(\partial \ess)$, so that there is a smooth retraction $\pi:\Nbd^Z(\partial \ess) \to \partial \ess$. 
We choose $V$ so that $V \cap \Nbd^Z(\partial \ess) = \pi^{-1}(F \times \R_{|t|< \eps})$. 
We define $\pi_F: V \cap \Nbd(\partial \ess) \to \fo$ to be the composition of $\pi$ with the projection to $\fo$. 
We now extend the defining function $\aye$ to $V \cap \Nbd^Z(\partial \ess)$ by setting it equal to $\aye(x) = -t(\pi_F(x),g(x))$. 
Because $\aye = -t$ along $\fo \times \R_{|t|<\eps}$ by construction, this function extends $\aye$; it also has the property that $\aye=0$ if and only if $g=0$. 
We now extend $\aye$ arbitrarily over the rest of $\Nbd^Z(\partial \ess)$.
\end{proof} 

\section{(Partially) wrapped Fukaya categories, cup and cap functors}

\label{sec:fukBackground}
\subsection{(Partially) wrapped Fukaya categories} \label{subsec:fukBackground1}
Recall from \cite{ganatra2018sectorial, ganatra2020covariantly} the construction of the partially wrapped Fukaya category $\W(X,\mathfrak{f})$ of a Liouville sector $X$ stopped along a closed subset $\mathfrak{f} \subset (\partial_{\infty} X)^{\circ}$; when $\mathfrak{f} = \emptyset$ we write $\W(X):=\W(X,\emptyset)$. Objects are cylindrical exact Lagrangians $L$ in $X$ disjoint from $\partial X$ and $\mathfrak{f}$. As described in \cite[Section 5.3]{ganatra2018microlocal}, by equipping $X$ and then its Lagrangians with grading/orientation data, one can ensure the resulting category is $\Z$-graded and defined over a ring of arbitrary characteristic.

In particular, to the Liouville sector $\ess$ introduced in Section \ref{sec:Liouvsec}, together with the holomorphic volume form $\Omega_{\ess}:= \Omega^Y|_{\ess}$, we have a $\Z$-graded $\Z$-linear partially wrapped Fukaya category $\cW(\ess)$ whose objects are cylindrical exact Lagrangians $L$ disjoint from $\partial \ess$, equipped with spin structures and gradings relative to $\Omega_{\ess}$. 
To be precise about the latter choice, we have a squared phase function $L \to S^1$ defined by the argument of $\Omega_{\ess}(v_1 \wedge \ldots \wedge v_{n+1})^2$, where $(v_1,\ldots,v_n)$ is a basis of $TL$; and the grading of $L$ consists of a lift of this squared phase function to $\R$. Note that there are natural maps
\begin{equation} \label{eq:wrappedstoskel}
\mathcal{W}(\ess) \to \mathcal{W}(M^*_{\C^*}, \fo) \to \mathcal{W}(M^*_{\C^*}, \mathfrak{f}_{\Sigma^*} ) 
\end{equation}
which are quasi-equivalences as shown in \cite[Corollary 3.9]{ganatra2018sectorial} (using the fact that $(M^*_{\C^*}, \fo)$ is the stopped convex completion of $\ess$). These quasi-equivalences make it clear that $\mathcal{W}(\ess)$ does not depend on any of the choices involved in constructing $\ess$. 

\begin{rmk} \label{rmk:Sgrading}
    The approach taken to gradings and orientations in \cite[Section 5.3]{ganatra2018microlocal} is equivalent, but formulated in a slightly different language. 
    The grading/orientation data we consider for $\ess$ consist of the constant map $\ess \to K(\Z/2,2)$ (we consider spin structures, rather than relative spin structures), together with the map $\mathrm{LGr}(\ess) \to S^1$ given by the squared phase map. 
    It is easy to see from the definitions how our choice of spin structure and grading on a Lagrangian $L$ induces a choice of grading/orientation data on $L$ in the sense of \cite[Section 5.3]{ganatra2018microlocal}. Moreover, our choice of grading/orientation conventions is compatible with that made in \cite[Section 5.3]{ganatra2018microlocal}, as the tautological polarization of $\ess \subset T^*M^*_{S^1}$ is a trivial bundle (agreeing with our choice of trivial map to $K(\Z/2,2)$) and has phase $0$ with respect to our choice of holomorphic volume form.
\end{rmk}

We also have a $\Z \oplus M^*$-graded version $\cW(\ess)^{M^*}$, whose objects are objects of $\cW(\ess)$ equipped with an additional lift of $L$ to the universal cover of $\ess$ (an `anchoring', in the terminology of \cite{fukaya2010anchored}). 
The additional grading then arises from $\pi_1(\ess) = H_1(\ess) = M^*$ (cf. \cite[Section 3.1]{sheridan2015homological}). 
There is a fully faithful functor $\cW(\ess)^{M^*} \to \cW(\ess)$ which forgets the anchoring on objects, and the $M^*$-grading on morphisms. We can, of course, apply the same construction to obtain $Z\oplus M^*$-graded categories $\cW(M^*_{\C^*})^{M^*}$ and $\cW(M^*_{\C^*}, \mathfrak{f}_{\Sigma^*})^{M^*}$ and a $\Z \oplus M^*$-graded analog of \eqref{eq:wrappedstoskel}. 

Associated to the (finite-type, convex) exact symplectic manifold $\cH_{t,0}$, together with the holomorphic volume form $\Omega_{\cH_{t,0}}:=\Omega^X|_{\cH_{t,0}}$, we have a $\Z$-graded $\Z$-linear wrapped Fukaya category $\cW(\widehat{\cH}_{t,0})$, whose objects are cylindrical exact Lagrangians equipped with spin structures and gradings relative to $\Omega_{\cH_{t,0}}$.
As before, we have a $\Z \oplus M^*$-graded version $\cW(\widehat{\cH}_{t,0})^{M^*}$, whose objects are objects of $\cW(\widehat{\cH}_{t,0})$ equipped with a lift of the composition $L \to \widehat{\cH}_{t,0} \hookrightarrow M^*_{\C^*}$ to the universal cover of $M^*_{\C^*}$. 

\begin{rmk} To simplify notation, we will typically write $\cW(\cH_{t,0})$ for the wrapped category $\cW(\widehat{\cH}_{t,0})$. More generally, we will apply the same convention when discussing wrapped categories of other incomplete exact symplectic manifolds that arise in our arguments (which we may do because in all such cases, the completion is well-defined).  \end{rmk}

\begin{rmk} \label{rmk:Hgrading}
    As in \Cref{rmk:Sgrading}, our choice of grading/orientation data on $\cH_{t,0}$ can be interpreted in the language of \cite[Section 5.3]{ganatra2018microlocal}. From the equation $\Omega_{\cH_{t,0}} \wedge dW_{t,0} = \Omega_{M^*_{\C^*}}$, we have that our grading/orientation data on $\cH_{t,0}$ agrees with the grading/orientation data induced by the stable polarization of $\cH_{t,0}$ coming from the tautological polarization of $T^*M^*_{S^1}$ and the trivialization of the normal bundle of $\cH_{t,0}$ by $dW_{t,0}$. 
\end{rmk}

As it will play a central role going forward, we introduce the abbreviated notation $\eff = \widehat{\fo} = \widehat{\cH}_{t,1}$. We can then similarly associate to $\eff$ a $\Z$-graded $\Z$-linear  wrapped Fukaya category $\cW(\eff)$ and a $\Z \oplus M^*$-graded version $\cW(\eff)^{M^*}$. 
However, $\fo$ is not a holomorphic submanifold, and thus it is more natural to induce grading/orientation data on $\eff$ directly from a stable polarization. 
Indeed, as a Liouville hypersurface of $\Ldom \subset M^*_{\C^*}$, $\fo$ has a natural stable polarization induced by the tautological polarization of $T^*M^*_{S^1} \simeq M^*_{\C^*}$ and the trivialization of the normal bundle of $\fo$ by the pair $(Z^\perp, R_\varphi)$ where $Z^\perp$ is the projection of the Liouville vector field on $M^*_{\C^*}$ to the symplectic orthogonal of $\fo$ and $R_\varphi$ is the Reeb vector field on $\partial \Ldom$. Equivalently, the trivialization of the normal bundle can be seen as coming from a standard neighbourhood of $\partial \ess$ produced in \cite[Proposition 2.25]{ganatra2020covariantly}.

In fact, the wrapped Fukaya categories of $\cH_{t,0}$ and $\eff$ are quasi-equivalent:
\begin{lem}\label{lem:compare_vol_form}
There is a $\Z \oplus M^*$-graded quasi-equivalence 
$$ \cW(\cH_{t,0})^{M^*} \simeq \cW(\eff)^{M^*} $$
for any sufficiently large $t$. 
\end{lem}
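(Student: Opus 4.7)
The plan is to promote the Liouville equivalence between $\widehat{\cH}_{t,0}$ and $\eff$ already implicit in the preceding lemmas to a $\Z\oplus M^*$-graded quasi-equivalence of wrapped Fukaya categories. The underlying Liouville isomorphism is assembled from \cref{lem:constructDF}, which gives $\eff = \widehat{\fo}\cong \widehat{\cH}_{t,1}$, composed with \cref{lem:tailor_equivalence}, which gives $\widehat{\cH}_{t,1}\cong \widehat{\cH}_{t,0}$ for $t$ sufficiently large (absorbing the largeness required in earlier lemmas of this subsection). The standard invariance of the wrapped Fukaya category under Liouville isomorphisms then produces a quasi-equivalence of the underlying ungraded $A_\infty$-categories.

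The core step is matching the $\Z$-gradings. By \cref{rmk:Hgrading}, the grading on $\cW(\cH_{t,0})$ is recorded by a stable polarization built from the tautological polarization of $T^*M^*_{S^1}$ (via \eqref{eq:Leg_trans}) together with the complex normal-bundle framing $dW_{t,0}$ of $\cH_{t,0}$, while the grading on $\cW(\eff)$ uses the same ambient polarization with the real framing $(Z^\perp, R_\varphi)$ of $\fo$. I homotope the first framing to the second in two legs. First, the family $s\mapsto dW_{t,s}$ is continuous and nondegenerate along $\cH_{t,s}$ by \cref{lem:tailor_approx_hol}, giving a homotopy from $dW_{t,0}$ on $\cH_{t,0}$ to $dW_{t,1}|_\fo$ on $\fo$; here I use that $\fo$ is open in $\cH_{t,1}$, so $\nu_{\fo/M^*_{\C^*}} = \nu_{\cH_{t,1}/M^*_{\C^*}}|_\fo$. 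Second, I compare the real framing $(X_{\Re W_{t,1}}, X_{\Im W_{t,1}})$ of $\nu_{\fo/M^*_{\C^*}}$ coming from $dW_{t,1}$ with $(Z^\perp, R_\varphi)$. By \cite[Proposition 5.3]{zhou2020lagrangian}, $Z^\perp$ is a positive multiple of $X_{\Im W_{t,1}}$ in a neighbourhood of the skeleton $\Lambda_{\Sigma^*}$; and the inequality $dg(R_\varphi) > 0$ with $g = \Im W_{t,1}$ from the proof of \cref{lem:compatibleIandW}, combined with the standard Kähler identity $\omega(X_{\Re W_{t,1}}, X_{\Im W_{t,1}}) = |\partial W_{t,1}|^2$, pins down the sign of the $X_{\Re W_{t,1}}$-component of $R_\varphi$. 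A short computation shows that the resulting change-of-basis matrix between the two framings has positive determinant, so they are homotopic through nondegenerate framings in a neighbourhood of $\Lambda_{\Sigma^*}$. Since $\eff$ deformation-retracts onto its skeleton, this local matching identifies the two stable polarizations globally.

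The $M^*$-grading is immediate: it is recorded by anchorings, that is, by lifts of Lagrangians to the universal cover of $M^*_{\C^*}$, and every construction above (the family $\cH_{t,s}$, the Liouville flow in \cref{lem:tailor_equivalence}, and the inclusion $\fo\subset \partial \Ldom\subset M^*_{\C^*}$) takes place inside $M^*_{\C^*}$, so anchorings transport canonically. The principal anticipated obstacle lies in the middle step: beyond the sign and orientation bookkeeping, one must extend the local homotopy of framings from a neighbourhood of $\Lambda_{\Sigma^*}$ over all of $\fo$ through nondegenerate framings, which can be arranged by choosing $\fo$ in \cref{lem:constructDF} to be a sufficiently small neighbourhood of the skeleton.
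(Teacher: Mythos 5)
Your approach is essentially the same as the paper's: use the Liouville isomorphism from \cref{lem:tailor_equivalence}, compare stable polarizations as in \cref{rmk:Sgrading,rmk:Hgrading}, homotope through the family $\cH_{t,s}$, and invoke \cite[Proposition 5.3]{zhou2020lagrangian} to relate $Z^\perp$ and $\vX_{\Im W_{t,1}}$ near the skeleton. The one place you over-complicate is the final sign check: your appeal to the K\"{a}hler identity $\omega(\vX_{\Re W_{t,1}},\vX_{\Im W_{t,1}})=|\partial W_{t,1}|^2$ is shaky because $W_{t,1}$ is only approximately holomorphic, and it is also unnecessary --- the paper instead orders the frames as $(Z^\perp,R_\varphi)$ and $(\vX_{\Im W_{t,1}},\vX_{\Re W_{t,1}})$, so that the \emph{first} vectors agree up to positive scale; this forces the change-of-basis matrix to be triangular with positive top-left entry, and since such matrices lie in a contractible family modulo a constant sign, the two trivializations cannot rotate relative to each other around any loop and hence induce the same stable polarization, with no determinant computation required.
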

\begin{proof}
    From \Cref{lem:tailor_equivalence}, we have a Liouville isomorphism $\widehat{\cH}_{t,0} \simeq \widehat{\cH}_{t,1} = \eff$, resulting in an $M^*$-graded quasi-equivalence. It remains only to check that the Liouville isomorphism respects our chosen grading/orientation data. To show this, we work in terms of stable polarizations (see \Cref{rmk:Sgrading} and  \Cref{rmk:Hgrading}).  

    Through the deformation from $\cH_{t,0}$ to $\cH_{t,1}$ via the family $\cH_{t,s}$, we have a homotopy of grading/orientation data via the stable polarization of $\cH_{t,s}$ obtained from the tautological polarization of $T^*M^*_{S^1}$ and the trivialization of the normal bundle to $\cH_{t,s}$ by $dW_{t,s}$. We now must compare this grading/orientation data on $\cH_{t,1}$ with that on $\eff$. However, \cite[Proposition 5.3]{zhou2020lagrangian} shows that on a subset of $\cH_{t,1}$ homotopy equivalent to both $\eff$ and $\cH_{t,1}$ the Hamiltonian vector field $\vX_{\Im \, W_{t,1}}$ of $\Im \, W_{t,1}$ is positively proportional to $Z^\perp$. Thus, the oriented frames $\left(Z^\perp, R_\varphi\right)$ and $\left(\vX_{\Im \, W_{t,1}}, \vX_{\Re \, W_{t,1}}\right)$ cannot rotate relative to each other around any cycle, i.e., they induce homotopic trivializations of the normal bundle.
\end{proof}

\begin{rmk}
    The equivalence of the grading/orientation data in the proof of \cref{lem:compare_vol_form} and part of the isomorphism of Liouville structures have recently also been proved in \cite[Theorem 9.2]{spenko2024hms}. 
\end{rmk}

\subsection{Cup functor}
\label{subsec:cupFunctor}

By virtue of the fact that $\ess \subset (M^*_{\C^*}, \fo)$ is the inclusion into the stopped convex completion of $\ess$, the complement of $\ess$ can be identified with the stopped Liouville sector $(\eff \times \C_{\Re \leq 0}, \fo)$, where $\eff \times \C_{\Re \leq 0} \hookrightarrow M^*_{\C^*}$ is the canonical neighborhood of $\fo \subset \partial_{\infty} M^*_{\C^*}$.
The sector inclusion $(\eff \times \C_{\Re \leq 0}, \fo) \hookrightarrow (M^*_{\C^*}, \fo)$ induces (after appeal to \cite[Corollary 3.9]{ganatra2018sectorial}) a functor $\W(\eff \times \C_{\Re \leq 0}, \mathfrak{f}_{\Sigma^*}) \to \W(M^*_{\C^*},\mathfrak{f}_{\Sigma^*})$. By composing with stabilization \cite[\S 8.4]{ganatra2018sectorial}, we obtain a functor $\W(\eff) \to \W(M^*_{\C^*},\mathfrak{f}_{\Sigma^*})$, which we call in this paper the cup functor $\cup$ (compare \cite[\S 7.3]{ganatra2018microlocal}, \cite[Example 10.7]{ganatra2018sectorial}, \cite[\S 2.4]{sylvan2019orlov}).

There is an equivalent realization of the $\cup$ functor with target the quasi-equivalent $\cW(\Sprime)$, where $\Sprime$ is a small deformation of $\ess$ (deformed along the lines of \cite[Proposition 2.28]{ganatra2020covariantly}) which has {\em exact boundary} in the sense of \cite[Definition 2.10]{ganatra2020covariantly}. From the fact that $\Sprime$ has exact boundary, one obtains an embedding of Liouville sectors $\eff \times T^*[0,1] \hookrightarrow \Sprime$ covering $\Nbd^Z\partial \Sprime$ --- called here a {\em boundary sector embedding} --- whose induced pushforward can then be composed with stabilization to define $\cup: \cW(\eff) \to \cW(\Sprime)$. 

In \cref{adjustedsectors} below, we recall a particular method of constructing the deformation $\Sprime$, close to that appearing in the proof of \cite[Theorem 1.28]{ganatra2018sectorial}, as a subsector of $(M^*_{\C^*}, \fo)$ contained in an arbitrarily small cylindrical neighborhood of $\ess$ with boundary outside $\ess$. We also review how the deformation argument provides us with an extended defining function $\Iprime$ on $\Sprime$ which has controlled deviation from, and some compatibilities with, our previously fixed extended 1-defining function $\aye$ for $\partial \ess$. We define $(\Sprime,\Iprime)$ to be the outcome of applying the following lemma to our $(B,\fo) \subset M^*_{\C^*}$ with associated sector and 1-defining function $(\ess,\aye)$ constructed in \cref{sec:Liouvsec} and \cref{lem:compatibleIandW}. We will use the third property of $(\Sprime,\Iprime)$ below in \cref{subsecHHP}.

\begin{lem}\label{adjustedsectors}
    Let $(B,\fo)$ be a sutured Liouville domain with $B$ completing to a Liouville manifold $M$, with $\ess \subset M$ the associated Liouville sector. 
        Let $\aye$ be any extended defining function defined in a neighbourhood of $\partial \ess$ within $M$ (for instance, the one from \S \ref{sec:Liouvsec} and \cref{lem:compatibleIandW}).
    In that setting, there exists a small deformation $\ess \subset \Sprime \subset (M,\fo)$, along with an extended 1-defining function $\Iprime$ defined near $\partial \Sprime$ satisfying the following properties:
    \begin{enumerate}
        \item \label{Sclose} $\Sprime$ is contained in $\ess^+:=$ an arbitrarily small outward $X_{\aye}$ pushoff of $\ess$. 
        \item \label{boundaryexact} 
            There is a boundary sector embedding $\eff \times T^*[0,1] \hookrightarrow \Sprime$ (and in particular $\Sprime$ has exact boundary) living in the complement of $\ess$ and fitting into a commutative diagram:
    \begin{equation}\label{cupsquare}
         \begin{tikzcd}
             \eff \times T^*[0,1]\arrow[hookrightarrow]{d}{}  \arrow[hookrightarrow]{r}{} & (M \backslash \ess^{\circ}, \fo) \arrow[hookrightarrow]{d}{}   \\
         \Sprime  \arrow[hookrightarrow]{r}{} & (M,\fo) \end{tikzcd}
        \end{equation}
        where the horizontal inclusions induce equivalences on wrapped Fukaya categories.

    \item \label{arcliketransport} There is a Hamiltonian vector field $V$ on $\Sprime \backslash \ess^{\circ}$ which (while not necessarily linear near infinity on the whole region) agrees with $X_{\Iprime}$ near $\partial \Sprime$ and $X_{\aye}$ near $\partial \ess$ and carries $\partial \ess \stackrel{\sim}{\to} \partial \Sprime$.

    \end{enumerate}
\end{lem}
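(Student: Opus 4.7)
The plan is to construct $\Sprime$ as a small outward Hamiltonian pushoff of $\ess$ along the flow of (a cutoff version of) $\aye$. Since $\aye$ is an extended $1$-defining function, its Hamiltonian vector field $X_{\aye}$ is defined in a neighbourhood $\Nbd^Z(\partial \ess)$ of $\partial \ess$ and points transversally outward from $\ess$. Multiplying $\aye$ by a smooth cutoff $\chi$ supported inside this neighbourhood and away from $\fo$, we let $\phi^{s}$ denote the time-$s$ flow of $X_{\chi \aye}$ and set $\Sprime := \phi^{\epsilon}(\ess)$ for $\epsilon$ sufficiently small. The function $\Iprime := \aye \circ \phi^{-\epsilon}$ is then an extended $1$-defining function for $\partial \Sprime$, since $\phi^{\epsilon}$ is a Hamiltonian diffeomorphism sending the level sets of $\aye$ to those of $\Iprime$, and in particular $X_{\Iprime} = \phi^{\epsilon}_{*} X_{\aye}$. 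Property (\ref{Sclose}) is immediate from this construction for $\epsilon$ sufficiently small.

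For property (\ref{boundaryexact}), note that $\Sprime$ inherits an exact boundary structure from $\ess$ via the Liouville automorphism $\phi^{\epsilon}$ (supported where $\chi \neq 0$), using that $\partial \ess$ has exact boundary by virtue of the sutured structure on $(B, \fo)$. Applying the standard neighbourhood theorem for Liouville sectors with exact boundary \cite[Proposition 2.25]{ganatra2020covariantly} to a neighbourhood of $\partial \Sprime$ in $\Sprime$ yields a boundary sector embedding $\eff \times T^*[0,1] \hookrightarrow \Sprime$, and this embedding may be arranged to lie entirely in $\Sprime \setminus \ess^{\circ}$. The bottom horizontal arrow of (\ref{cupsquare}) induces a quasi-equivalence of wrapped Fukaya categories by \cite[Corollary 3.9]{ganatra2018sectorial}, since $(M, \fo)$ is the stopped convex completion of $\Sprime$; and the top horizontal arrow is likewise a quasi-equivalence because $(M \setminus \ess^{\circ}, \fo)$ is canonically identified, as a stopped sector, with $(\eff \times \C_{\Re \leq 0}, \fo)$, which is obtained from $\eff \times T^*[0,1]$ by the stabilization procedure of \cite[\S 8.4]{ganatra2018sectorial}.

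For property (\ref{arcliketransport}), we construct $V$ by interpolating Hamiltonians in the collar $\Sprime \setminus \ess^{\circ}$. The flow $\phi^{s}$ identifies this collar with $\partial \ess \times [0,\epsilon]_{s}$; we choose a smooth cutoff $\rho(s)$ which is identically $0$ near $s = 0$ and identically $1$ near $s = \epsilon$, and set $H := (1 - \rho)\aye + \rho \Iprime$ and $V := X_H$. By construction $V$ agrees with $X_{\aye}$ near $\partial \ess$ and with $X_{\Iprime}$ near $\partial \Sprime$. The main obstacle will be to verify that the flow of $V$ actually carries $\partial \ess$ onto $\partial \Sprime$: because $\Iprime = \aye \circ \phi^{-\epsilon}$, the two summands of $H$ have parallel level sets in the collar, so the dynamics of $V$ transverse to $\partial \ess$ is a time-rescaled translation along $\phi^{s}$. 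By choosing the cutoff $\rho$ (or a time-reparametrization of $V$) so that the total transit time across the collar equals $\epsilon$, we arrange that the integral curves of $V$ interpolate between $\partial \ess$ and $\partial \Sprime$ as required; this is a direct one-dimensional ODE check once the collar dynamics is set up, completing the lemma.
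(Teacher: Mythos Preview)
Your proposal has a genuine gap at the heart of property (\ref{boundaryexact}). You claim that $\Sprime$ inherits an exact boundary structure from $\ess$ via the Hamiltonian pushoff $\phi^{\epsilon}$, ``using that $\partial \ess$ has exact boundary by virtue of the sutured structure on $(B,\fo)$.'' But the sutured construction does \emph{not} give $\ess$ an exact boundary: in the local coordinates $(s,t,f)$ the Liouville form is $e^{s}(dt-\lambda_{\fo})$, and under the identification $\Nbd^{Z}(\partial \ess)\cong \eff\times T^{*}[0,1]$ this is $\lambda_{\eff}+\lambda_{T^{*}[0,1]}+df$ with $f$ generally \emph{not} compactly supported. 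Arranging exact boundary is precisely what the lemma is for, and the paper achieves it by a convex-at-infinity deformation of the Liouville one-form (turning off $df$ in a subshell, then pushing forward by the induced Liouville isomorphism $\Phi$). Your Hamiltonian flow $\phi^{\epsilon}$ is only a symplectomorphism: it preserves $\omega$ but changes $\lambda$ by an exact form, so it cannot manufacture the boundary-sector embedding $\eff\times T^{*}[0,1]\hookrightarrow \Sprime$ from nothing. Relatedly, $\Iprime:=\aye\circ\phi^{-\epsilon}$ need not be $1$-linear at infinity once you compose with the flow of a cutoff Hamiltonian, so it is not obviously an extended $1$-defining function.

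A secondary issue is your treatment of the top horizontal equivalence in the diagram. The identification of $(M\setminus \ess^{\circ},\fo)$ with the canonical neighbourhood $(\eff\times \C_{\Re\le 0},\fo)$ is only available up to deformation, and matching a specific embedding $\eff\times T^{*}[0,1]\hookrightarrow \Sprime$ (living in $\Sprime\setminus \ess$) with this requires an argument. The paper handles this by a sandwiching argument comparing flows of different extended defining functions, together with a careful choice of $\aye'$ for which the complement is visibly a stopped convex completion; your appeal to stabilization in \cite{ganatra2018sectorial} does not by itself establish that \emph{this particular} inclusion is an equivalence.
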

\begin{proof}

We begin with $\ess \hookrightarrow (M, \fo)$ and $\aye$ as previously constructed. Let $\ess^+$ be the outward flow of $\ess$ by $X_{\aye}$ for some (unspecified)
small time $a > 0$. 
On the region
$\ess^+ \backslash \ess^{\circ} = \Nbd^{Z}(\partial \ess^+)$, $\aye$ determines
an identification with $(\eff \times T^*[0,a], \lambda_{\eff} + \lambda_{T^*[0,a]} +
df)$ for some function $f$ satisfying properties as in \cite[Proposition
2.25]{ganatra2020covariantly} and where $\eff = \widehat{\fo}$; importantly, $f$ may not be compactly supported
(otherwise we'd be done). After rescaling $\aye$, we identify $T^*[0,a] =
T^*[0,1]$ for simplicity.

Next, as in the proofs of \cite[Theorem 1.28]{ganatra2018sectorial} and
\cite[Proposition 2.28]{ganatra2020covariantly} (and closer in construction to
the former), we deform the Liouville one-form $\lambda
\stackrel{\lambda_t}{\leadsto} \lambda'$ within the $\eff \times T^*[0,1]$ region above to turn off the $df$ term in a
subshell $\eff \times T^*[\frac{2}{5},\frac{3}{5}]$ without changing $\lambda$
outside of $\eff \times T^*[\frac{1}{5},\frac{4}{5}]$. Concretely we take the straight line homotopy between $\lambda_0:=\lambda$ and $\lambda_1:= \lambda'= \lambda_{\eff} + \lambda_{T^*[0,1]} + d(\varphi(s) f)$ where $\varphi(s) = 1$ for $s$ outside $[\frac{1}{5},\frac{4}{5}]$ and $\varphi(s) = 0$ for $s$ inside $[\frac{2}{5},\frac{3}{5}]$, and extend this to a homotopy $\lambda_t$ on $M$ which is constant outside this region. 
The family $\lambda_t$ is a convex-at-infinity deformation of Liouville structures as in {\em loc. cit.} It therefore induces a Liouville isomorphism $\Phi: (M, \lambda_1) \to (M,\lambda_0)$, which is identity inside $\ess$ and outside $\ess^+$ by \cite[Proposition 11.8]{cieliebakeliashberg2012}.
Now with respect to the new Liouville form $\lambda_1$ consider  $S_1:= \ess \cup (\eff \times T^*[0,\frac{3}{5}])$ which sits between $\ess$ and $\ess^+$ and is a Liouville subsector of $(M,\lambda_1)$. We observe the following:
\begin{itemize}
    \item $S_1$ is contained in $\ess^+$;
    \item $S_1$ has a boundary sector embedding $(\eff \times T^*[\frac{2}{5},\frac{3}{5}],\lambda_{\eff} + \lambda_{T^*[\frac{2}{5},\frac{3}{5}]}) \hookrightarrow S_1$ which lives in $S_1 \backslash \ess^{\circ}$.  Now identify $T^*[\frac{2}{5},\frac{3}{5}]$ (by scaling up in the base and shrinking in the fiber) with $T^*[0,1]$.

    \item The inclusion of $\eff \times T^*[\frac{2}{5},\frac{3}{5}]$ into $\ess^+ \backslash \ess^{\circ}$ is deformation equivalent to a trivial inclusion, and hence by \cite[Lemma 3.4]{ganatra2018sectorial} induces an equivalence on wrapped Fukaya categories;

    \item Similarly, the inclusion of $S_1$ into $\ess^+$ is deformation equivalent to a trivial inclusion, and hence induces an equivalence on wrapped Fukaya categories.

    \item The vector field $X_{\aye}$ is a Hamiltonian vector field (indeed we haven't changed $\omega$!) taking $\partial \ess$ to $\partial S_1$. The associated Hamiltonian function $\aye$ is an extended 1-defining function near $\partial \ess$ and $\partial S_1$, but may fail to be linear on all of $S_1 \backslash \ess^{\circ}$ (given our modifications to $\lambda$).

\end{itemize}

Now push forward by $\Phi$: let $\Sprime:= \Phi(S_1)$ with extended 1-defining function $\Iprime := \Phi_*(\aye)$. Since $\Phi|_{\ess} = id$ and $\Phi|_{M \backslash (\ess^+)^\circ} = id$, we see that $\Sprime$ satisfies \cref{Sclose}.
The remaining conditions other than the top horizontal arrow of \cref{cupsquare} inducing an equivalence follow immediately from taking $\Phi$ of the above bullet points. 
To study the effect of the top horizontal arrow of \cref{cupsquare} on wrapped Fukaya categories, first use $\Phi$ of the third bullet point to obtain an equivalence $\W(\eff \times T^*[0,1]) \stackrel{\sim}{\to} \W(\ess^+ \backslash \ess^{\circ})$. 

The desired equivalence therefore follows from (by composing with) the claim that the sector embedding $(\ess^+ \backslash \ess^{\circ}) \hookrightarrow (M \backslash \ess^{\circ}, \fo)$ induces an isomorphism on wrapped Fukaya categories. 
To establish the claim, it suffices to replace $\ess^+$ by the sufficiently small time outward flow $(\ess^+)'$ of any other extended 1-defining function $I'$ by the following sandwiching argument, compare \cite[Lemma 3.4]{ganatra2018sectorial}: Denote by $\Nbd^Z(\partial \ess)_{I'}^{+a}$ the complement of $\ess^{\circ}$ in the outward time-$a$ flow of $\ess$ by $X_{I'}$. Then observe that for any $I_1$, $I_2$ there exist sufficiently small $a <c$, $b<d$ and a sequence of inclusions $\Nbd^Z(\partial \ess)_{I_1}^{+a} \hookrightarrow \Nbd^Z(\partial \ess)_{I_2}^{+b} \hookrightarrow \Nbd^Z(\partial \ess)_{I_1}^{+c} \hookrightarrow \Nbd^Z(\partial \ess)_{I_2}^{+d}$. Since any composition of adjacent arrows $\Nbd^Z(\partial \ess)_{I_j}^{+i_1} \hookrightarrow \Nbd^Z(\partial \ess)_{I_j}^{+i_2}$ is a trivial inclusion, inducing an equivalence on wrapped Fukaya categories \cite[Lemma 3.41]{ganatra2020covariantly}, it follows that $\cW(\Nbd^Z(\partial \ess)_{I_1}^{+a}) \stackrel{\sim}{\to} \cW(\Nbd^Z(\partial \ess)_{I_2}^{+b})$. 

It is now enough to show the claim holds after replacing $\aye$ with the extended 1-defining function  $\aye'= -e^s t$, with associated
vector field $X_{\aye'} = -\frac{\partial}{\partial s} + t
\frac{\partial}{\partial t} + Z_{\lambda_{F_0}}$. 
Here recall that in local coordinates $\ess$ is the complement
of the set $(\R_{>0})_s \times \R_{|t| < \eps} \times \fo$ in $M$ (with smoothed boundary) where $s$ is the symplectization and $t$ is a Reeb coordinate as in
the previous section, and the Liouville one-form is $\lambda = e^s (dt - \lambda_{\fo})$.
The outward flow $\ess^+$ of $\ess$ by $X_{\aye}$ for a small
 time $a > 0$
has $\partial \ess^+$ equal to (a smoothing of) the boundary of
the set $(\R_{>a})_s \times \R_{|t| < e^{-a}\eps} \times {\fo}^{-a}$, where
$\fo^{-a}$ denotes the result of inward flow by $Z_{\lambda_{\fo}}$. 
In particular, $S^+$ is the sector constructed from the sutured pair $(B^{+a}, \fo^{-a}) \subset M$ using the neighborhood $\R_{|t| < e^{-a}\eps} \times {\fo}^{-a}$. Hence $(M,\fo)$ is up to deformation the stopped convex completion of $S^+$ by \cite[Lemma 2.32]{ganatra2020covariantly}, and therefore also the convex completion along the boundary component $\partial S^+$ of $S^+ \backslash \ess^{\circ}$. In particular, we may apply \cite[Corollary 3.9]{ganatra2018sectorial} to the inclusion $(\ess^+ \backslash \ess^{\circ}) \hookrightarrow (M \backslash \ess^{\circ},
\fo)$ (note while {\em loc. cit.} is stated for the ``full'' stopped completion of a sector, it also applies immediately to the stopped convex completion along a single boundary component of a sector with disconnected boundary) to obtain
an equivalence $\W(\ess^+ \backslash \ess^{\circ}) \stackrel{\sim}{\to} \W(M \backslash \ess^{\circ}, \fo^{-a})$ which factors through the equivalence (by {\em loc. cit.} again) $\W(M \backslash \ess^{\circ}, \fo) \to \W(M \backslash \ess^{\circ}, \fo^{-a})$. Hence $\W(\ess^+ \backslash \ess^{\circ}) \stackrel{\sim}{\to} \W(M \backslash \ess^{\circ}, \fo)$ is an equivalence.
\end{proof}

Using $\Sprime$ and $\eff \times T^*[0,1] \hookrightarrow \Sprime$ given to us in \cref{adjustedsectors}, we immediately obtain a boundary inclusion functor 
\begin{equation}\label{boundaryinclusionfunctor}
    \W(\eff \times T^*[0,1]) \to \W(\Sprime)
\end{equation}
fitting into a commutative diagram
\begin{equation}\label{boundaryinclusionsquare}
 \begin{tikzcd}
     \W(\eff \times T^*[0,1])\arrow{d}{}  \arrow{r}{} & \W(\eff \times \C_{\Re \leq 0}, \mathfrak{f}_{\Sigma^*}) \arrow{d}{}   \\
 \W(\Sprime)  \arrow{r}{} & \W(M^*_{\C^*}, \mathfrak{f}_{\Sigma^*}) \end{tikzcd}
\end{equation}
where the horizontal arrows are equivalences. Here we have implicitly applied \cite[Corollary 3.9]{ganatra2018sectorial} and also used the fact that $(M \backslash \ess^{\circ}, \fo)$
up to deformation is a model for the canonical $\eff \times \C_{\Re \geq 0}$ neighborhood of $\fo$ in $M$ at infinity \cite[Lemma 2.32]{ganatra2020covariantly}. 

Next, there is a K\"{u}nneth/stabilization embedding $stab: \cW(\eff) \stackrel{\sim}{\to}\cW(\eff \times T^*[0,1])$  \cite[\S 8.4]{ganatra2018sectorial}, defined by multiplying with a cotangent fiber using the K\"{u}nneth embedding
$$
stab: \cW(\eff) \xrightarrow{- \otimes \R} \cW(\eff) \otimes \cW(T^*[0,1]) \xrightarrow{\text{K\"{u}nneth}} \cW(\eff \times T^*[0,1]).
$$
Here we are considering the object $\R$ of $\cW(T^*[0,1])$ given by the cotangent fibre $\{x=\frac{1}{2}\}$, equipped with the standard spin structure (which in particular induces the standard orientation on $\R$). 
The squared phase function is then constant equal to $2\mathrm{arg}(\Omega_{T^*[0,1]}(\partial_y)) = \pi \in S^1$. 
We choose the grading to be equal to $\pi \in \R$. 
Note that these choices agree with the grading/orientation data on $T^*[0,1]$ and $\R$ induced by the tautological polarization of $T^*[0,1]$. 

Composing with the inclusion/equivalence $\W(\eff \times T^*[0,1]) \stackrel{\sim}{\to} \W(\eff \times \C_{\Re \leq 0}, \mathfrak{f}_{\Sigma^*})$ gives a stabilization functor $stab: \cW(\eff) \stackrel{\sim}{\to} \W(\eff \times \C_{\Re \leq 0}, \mathfrak{f}_{\Sigma^*})$.\footnote{It is tempting to pick the stabilization functors $stab: \cW(\eff) \stackrel{\sim}{\to}\cW(\eff \times T^*[0,1])$ and $stab: \cW(\eff) \stackrel{\sim}{\to} \W(\eff \times \C_{\Re \leq 0}, \mathfrak{f}_{\Sigma^*})$ completely independently and then argue that they are the ``same'' after including the first codomain into the second. While this is expected to hold, it is not shown in  \cite{ganatra2018sectorial} that K\"{u}nneth is functorial under arbitrary sector inclusions or independent of all choices made. However, in order to apply the results of \cite{ganatra2018microlocal} later, we are free to pick one such stabilization functor once and for all.
The needed structural results from {\em loc. cit.} --- articulated in \cref{thm:GPSmicro} below --- hold for any such choice.} 
As in \cite[Section 2.4]{sylvan2019orlov}, we then define the cup functor 
\begin{equation}\label{cupeqn}
\cup: \cW(\eff) \to \cW(\Sprime)
\end{equation} 
by composing $stab$ with \eqref{boundaryinclusionfunctor}. At the start of this subsection, we defined $\cup$ as the composition $\cW(\eff) \stackrel{stab}{\to} \W(\eff \times \C_{\Re \leq 0}, \mathfrak{f}_{\Sigma^*}) \to \W(M^*_{\C^*}, \mathfrak{f}_{\Sigma^*})$. The commutative diagram \eqref{boundaryinclusionfunctor} immediately implies:
\begin{cor}\label{cupfunctorcommutativity}
For $\Sprime$ as in the previous Lemma, there is a commutative diagram:
\begin{equation}\label{cupfunctorsquare}
 \begin{tikzcd}
     \W(\eff)\arrow{d}{\cup}  \arrow{r}{=} & \W(\eff) \arrow{d}{\cup}   \\
 \W(\Sprime)  \arrow{r}{\sim} & \W(M^*_{\C^*}, \mathfrak{f}_{\Sigma^*}) \end{tikzcd}
\end{equation}
\end{cor}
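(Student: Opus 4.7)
The plan is a straightforward bookkeeping argument: I would unpack the two cup functors appearing as the vertical arrows of \eqref{cupfunctorsquare} into their defining compositions and reduce the desired commutativity to the previously established commutativity of the boundary-inclusion square \eqref{boundaryinclusionsquare}.

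First, I would expand the left vertical arrow. By the definition \eqref{cupeqn} introduced just above the corollary, $\cup: \W(\eff) \to \W(\Sprime)$ is the composition of $stab: \W(\eff) \to \W(\eff \times T^*[0,1])$ with the boundary inclusion functor \eqref{boundaryinclusionfunctor}. Next, I would expand the right vertical arrow: by the definition recalled at the start of \cref{subsec:cupFunctor}, $\cup: \W(\eff) \to \W(M^*_{\C^*}, \mathfrak{f}_{\Sigma^*})$ is the composition of $stab: \W(\eff) \xrightarrow{\sim} \W(\eff \times \C_{\Re \leq 0}, \mathfrak{f}_{\Sigma^*})$ with the sector inclusion $\W(\eff \times \C_{\Re \leq 0}, \mathfrak{f}_{\Sigma^*}) \to \W(M^*_{\C^*}, \mathfrak{f}_{\Sigma^*})$. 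Crucially, the second stabilization functor was defined, in the paragraph just before the corollary, as the first $stab$ composed with the top horizontal equivalence of \eqref{boundaryinclusionsquare}, so both cup functors share the common initial factor $stab: \W(\eff) \to \W(\eff \times T^*[0,1])$.

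With this in hand, the two compositions $\W(\eff) \to \W(M^*_{\C^*}, \mathfrak{f}_{\Sigma^*})$ arising from the two paths around \eqref{cupfunctorsquare} differ only by which of the two paths around \eqref{boundaryinclusionsquare} is taken after applying $stab$; the commutativity of \eqref{boundaryinclusionsquare} therefore yields the commutativity of \eqref{cupfunctorsquare}. No genuine obstacle arises here, since \Cref{adjustedsectors} and the footnoted convention fixing a single stabilization functor once and for all were both arranged in the preceding discussion precisely to enable this unpacking of definitions.
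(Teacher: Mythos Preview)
Your proposal is correct and matches the paper's own argument, which simply states that the commutativity of \eqref{boundaryinclusionsquare} immediately implies the corollary. You have spelled out in more detail exactly the bookkeeping the paper leaves implicit: both cup functors factor through the common $stab: \W(\eff) \to \W(\eff \times T^*[0,1])$, and the remaining discrepancy is precisely the two routes around \eqref{boundaryinclusionsquare}.
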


Note that $\cup$ is a $\Z$-graded functor with respect to the natural
grading/orientation data on $F$ described in \Cref{subsec:fukBackground1}. As
described in \cref{subsec:fukBackground1} for $\cW(\ess)$, there is also a $\Z
\oplus M^*$ graded version $\cW(\Sprime)^{M^*}$ of the wrapped Fukaya category of
$\Sprime$ (with $\cW(\ess)^{M^*} \to \cW(\Sprime)^{M^*}$ a quasi-equivalence), and it is straightforward
to enhance \eqref{cupeqn} to a $\Z \oplus M^*$-graded functor
\begin{equation}
   \cup^{M^*}: \cW(\eff)^{M^*} \to \cW(\Sprime)^{M^*}
   \label{eq:gradedcup}
\end{equation}

\subsection{Cap functor}

We can associate to the cup functor \eqref{cupeqn} a formal adjoint functor $\cup^*:\mathcal W(\Sprime)\to \operatorname{Mod} \mathcal W(\eff)$ by composing the pullback of modules with Yoneda. In good situations, this functor is representable (i.e., there is an adjoint to the $\cup$-functor). 
For example, in the setting of symplectic Landau-Ginzburg models, it is known that the cup functor is spherical \cite{abouzaidganatra, sylvan2019orlov}, and therefore has both left and right adjoints. The right adjoint is usually called the \emph{cap} functor as for a class of Lagrangian submanifolds $L\subset \ess$ it can be geometrically described as ``intersection of $L$ with the fiber $\eff$''.
Several incarnations of this functor have been studied by various authors such as \cite{abouzaidseidel,abouzaidganatra,abouzaidsmith,SeidelLefschetz6} with different geometric hypotheses.
In the remainder of this section, we define the version of the cap functor that we will use on a particular class of Lagrangians that we now define.

\begin{defn}[Restatement of \cref{def:Iarclike}]
    Let $L\subset \Sprime$ be a compact Lagrangian submanifold with $\partial L\subset \partial \Sprime$. We say that $L$ is \emph{$\Iprime$-arclike} if $L$ is transverse to $\partial \Sprime$, and $I$ is constant along some neighbourhood of $\partial L$ in $L$. Here $I$ is the same extended $1$-defining function which was used in the definition of the cup functor.
\end{defn}

In \Cref{sec:adjoints}, we introduce a subcategory  $\cP(\Sprime) \subset \cW(\Sprime)$ of \emph{negative pushoffs of ($I$-)arclike Lagrangians}, and show that intersection with the fiber $\eff$ is adjoint to the cup functor. More precisely, in \Cref{lem:def_capl}, we show there exists an $A_\infty$ functor
$$\cap: \cP(\Sprime) \to \cF(\eff)$$
which sends a negative-pushoff $L^-$ of an arclike Lagrangian $L$ to the boundary $\partial L$ (which is compact, and in particular defines an object of the compact Fukaya category $\cF(\eff) \subset \cW(\eff)$). 
Furthermore, there exist functorial isomorphisms 

\[\Hom_{H\cW(\Sprime)}(\cup K,L) \cong \Hom_{H\cW(\eff)}(K,\cap L)\]
for all objects $K$ of $\cW(\eff)$, $L$ of $\cP(\Sprime)$ which lift to an equivalence of suitable $\cW(\eff)\!-\!\cP(\Sprime)$ bimodules on the chain level.
Thus, $\cap$ behaves like a right adjoint to $\cup$ restricted to $\cP(\Sprime)$. We say that $\cap$ is a {\em partially-defined right adjoint} to $\cup$ defined along $\cP(\Sprime)$, in the terminology of \cref{sec:adjoints}. The construction of cap is agnostic as to the grading structures imposed upon our categories; in particular, it also gives us a $\Z \oplus M^*$ graded version of $\cap$:
\begin{equation}\label{gradedcap}
   \cap^{M^*}: \cP(\Sprime)^{M^*} \to \cF(\eff)^{M^*}.
\end{equation}

\section{Homological mirror symmetry at large radius}
\label{sec:ASideSetup}
In this section, we recall and analyze homological mirror symmetry relating the sector $\ess$ and its boundary fiber $\eff$ on the A-side to $Y^*$ and its toric boundary $\partial Y^*$ on the B-side.

The main result of this section is \cref{prop:GS_Li}, which (in part) constructs a quasi-equivalence $\Pic_{dg}^{M^*}(\partial Y^*) \simeq \partial \mafuk^{M^*}$ fitting into a diagram of $A_\infty$ categories and functors:
	\[\begin{tikzcd}
            D^b_{dg} Coh(\partial Y^*) \arrow[leftrightarrow]{r}{GS} & \Perf \cW(\eff) \\
			\Pic_{dg}^{M^*}(\partial Y^*)  \arrow[leftrightarrow]{r} \arrow[hookrightarrow]{u} & \partial \mafuk^{M^*}  \arrow[hookrightarrow]{u} \\
			\Pic_{dg}^{M^*}(Y^*) \arrow{u}{ i^!}  \arrow[leftrightarrow]{r}{A} & \mafuk^{M^*} \arrow{u}{\cap_{mon}},\end{tikzcd}.
   \]
The top horizontal arrow is a quasi-equivalence constructed by Gammage--Shende \cite{gammage2017mirror} and the bottom horizontal arrow is a quasi-equivalence constructed by Abouzaid \cite{abouzaid2006homogeneous,abouzaid2009morse}. 
On the lower right-hand side, we have the ``monomially admissible Fukaya categories'':  $\mafuk$ is the category of tropical Lagrangian sections of $S$; $\partial \mafuk$ is the subcategory of the compact Fukaya category $\cF(\eff)$ consisting of boundaries of tropical Lagrangian sections. On the left-hand side, $\Pic_{dg}(Y^*)$ is the category of line bundles on $Y^*$, and $\Pic_{dg}(\partial Y^*)$ is the category of line bundles on $\partial Y^*$ that are restrictions of line bundles on $Y^*$. On both sides, the $M^*$ superscript indicates that these categories are $M^*$-graded. The functors $ i^! $, $A$, and $\cap_{mon}$ are $M^*$-graded; we show that the middle horizontal arrow can also be made $M^*$-graded, on the subcategory we are interested in, in \cref{sec:A0_B0}. 

\subsection{Homological mirror symmetry for Fano toric varieties}\label{sec:hmstoric}

We begin with homological mirror symmetry where the toric variety $Y^*$ is on the $B$-side. The relevant dg category associated with $Y^*$ is $D^b_{dg}Coh(Y^*)$. 
On the $A$-side, the relevant $A_\infty$ category is the wrapped Fukaya category $\mathcal{W}(\Sprime)$ of $\Sprime$ from \Cref{subsec:fukBackground1}. We equip cotangent fibers (and Lagrangians Hamiltonian isotopic to them) and the zero section $M^*_{S^1}$ with canonical grading/orientation data coming from the tautological polarization of $T^*M^*_{S^1} \simeq M^*_{\C^*}$ (see \Cref{rmk:Sgrading}).

We will recall two independently established statements of homological mirror symmetry for toric varieties from the literature relating $\mathcal{W}(\Sprime)$ and $D^b_{dg}Coh(Y^*)$. We will then prove a few properties of these quasi-equivalences and use a theorem of Bondal and Orlov \cite{bondal2001reconstruction} to show that they coincide for smooth Fano toric varieties.

We will use a slight enlargement of the category $\mathcal{W}(\Sprime)$.
Namely, the objects will be pairs $(L,\zeta)$ where $L$ is an exact and cylindrical Lagrangian brane and $\zeta$ is a $\Bbbk$-local system on $L$; counts of holomorphic discs are weighted by the holonomy of the local systems around the boundary. In fact, we will only need to consider local systems on the exact Lagrangian torus $M^*_{S^1}$. Note that the $\Bbbk$-local systems on $\Sprime$
are in natural correspondence with $M_{\Bbbk^*}$ from the isomorphisms $\pi_1(\Sprime) \simeq \pi_1(M^*_{S^1}) \simeq M^*$. 

\begin{rmk} Allowing Lagrangian objects to be equipped with local systems does not change any of the properties that we will use about $\mathcal{W}(\Sprime)$ essentially due to the fact that $\mathcal{W}(\Sprime)$ is generated by simply connected Lagrangians. In particular, the main tools of \cite{ganatra2018sectorial} such as the K\"{u}nneth embedding, wrapping exact triangle, and stop removal are still valid.
\end{rmk}

One powerful approach to proving homological mirror symmetry for toric varieties is to pass through a microlocal sheaf category.
This method has given the most general proofs, and for us, has the important benefit of having been proven to be compatible with restriction to the toric boundary (see \Cref{subsec:boundaryhms}).
There is a long history of computing microlocal sheaf categories corresponding to derived categories of toric varieties beginning with the announcement \cite{bondal2006derived} and results of \cite{fang2011categorification,fang2012t}, and culminating in the general proof of \cite{kuwagaki2020nonequivariant}.
In these works, the category of interest is the dg category $\mathrm{Sh}_{-\mathbb{L}_{\Sigma^*}}(M^*_{S^1})^c$ of compact objects in the category $\mathrm{Sh}_{-\mathbb{L}_{\Sigma^*}}(M^*_{S^1})$ of sheaves of dg modules on $M^*_{S^1}$ with microsupport in $-\mathbb{L}_{\Sigma^*}$, where the negation denotes applying multiplication by $-1$ only on the $M_\R$ factor of $M^*_{S^1} \times M_\R = T^*M^*_{S^1} \simeq M^*_{\C^*}$. When combined with the general correspondence \cite{ganatra2018microlocal} between microlocal sheaf categories and partially wrapped Fukaya categories, we collectively obtain the following (after restricting to our setting): 

\begin{thm}[\cite{kuwagaki2020nonequivariant,ganatra2018microlocal}]
	There are quasi-equivalences of $A_\infty$ categories 
	\[
		\begin{tikzcd} D^b_{dg}Coh(Y^*) \arrow{d}{K }\arrow{dr}{KGPS}\\
			\mathrm{Sh}_{-\mathbb{L}_{\Sigma^*}}(M^*_{S^1})^c \arrow{r}{GPS} &  \Perf \mathcal{W}(M^*_{\C^*}, \mathfrak{f}_{\Sigma^*}) \simeq \Perf \mathcal{W}(\Sprime).
		\end{tikzcd}
	\]
  \label{thm:kgps}
\end{thm}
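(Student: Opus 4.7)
The plan is to assemble the theorem from two independently-established results in the literature, together with the bookkeeping needed to match conventions; this is a cited theorem, so the task is to identify the right inputs and verify they compose correctly.

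First I would invoke Kuwagaki's main theorem \cite{kuwagaki2020nonequivariant}, which supplies the vertical arrow $K$ as a quasi-equivalence $D^b_{dg}Coh(Y^*) \simeq \mathrm{Sh}_{-\mathbb{L}_{\Sigma^*}}(M^*_{S^1})^c$. The negation in $-\mathbb{L}_{\Sigma^*}$ arises from comparing the convention \eqref{lsigma} used here for the FLTZ skeleton (in which the cotangent coordinate and the $M_\R$ factor are identified without a sign) with the standard convention for microsupport; the sign is forced by the requirement that Kuwagaki's equivalence intertwine the natural t-structures on the two sides.

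Second, I would invoke the Ganatra--Pardon--Shende microlocal-to-wrapped correspondence \cite{ganatra2018microlocal}. Specialized to the Weinstein pair given by $M^*_{\C^*}$ stopped by $\mathfrak{f}_{\Sigma^*}$, this produces the horizontal arrow $GPS$ as a quasi-equivalence from compact microsheaves along the stop to perfect modules over the partially wrapped Fukaya category. The composition $KGPS$ is then well-defined, and the auxiliary identification $\Perf \mathcal{W}(M^*_{\C^*}, \mathfrak{f}_{\Sigma^*}) \simeq \Perf \mathcal{W}(\Sprime)$ is obtained by running through the chain \eqref{eq:wrappedstoskel}, each arrow being a quasi-equivalence by \cite[Corollary 3.9]{ganatra2018sectorial} thanks to the fact (recorded in \Cref{sec:Liouvsec}) that $(M^*_{\C^*},\fo)$ is the stopped convex completion of $\Sprime$.

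The only nontrivial step beyond citation is checking that the three categories carry compatible $\Z$-gradings: on the Fukaya side via the tautological polarization of $T^*M^*_{S^1}$ (\Cref{rmk:Sgrading}), on the microsheaf side via the natural dg grading on constructible sheaves, and on the coherent side via the standard cohomological grading. These agree by construction in each cited result. The only genuine obstacle is keeping the sign conventions straight between microsupport and Liouville stops, and making sure the orientation/spin data implicit in \cite{ganatra2018microlocal} lines up with the trivial $K(\Z/2,2)$ class and squared phase function chosen here; both checks are routine once the conventions are explicit.
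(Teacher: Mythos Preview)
Your proposal is correct and matches the paper's treatment: the paper gives no proof at all for this theorem, presenting it purely as a citation of \cite{kuwagaki2020nonequivariant} and \cite{ganatra2018microlocal}, with the identification $\Perf\mathcal{W}(M^*_{\C^*},\mathfrak{f}_{\Sigma^*})\simeq\Perf\mathcal{W}(\Sprime)$ already established via \eqref{eq:wrappedstoskel}. Your expansion correctly identifies the two inputs and the bookkeeping, though the grading and sign-convention checks you flag go somewhat beyond what the paper records here.
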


\begin{rmk} \label{rmk:k=fltz}
    By \cite[Remark 9.3]{kuwagaki2020nonequivariant}, the functor $K$ agrees with the non-equivariant coherent-constructible correspondence in \cite{treumann2010remarks, fang2012t}.
\end{rmk}

The equivalence $GPS$ holds more generally for cotangent bundles of analytic manifolds. We will use the following property of this equivalence.

\begin{lem} \label{lem:gpslocsys}
    Let $Q$ be an oriented real analytic manifold and let $\Lambda \subset S^*Q$ be a closed subanalytic isotropic subset. The image of a local system $\zeta$ on $Q$ under the quasi-equivalence 
        $$ \mathrm{Sh}_{-\Lambda}(Q)^c \xrightarrow{GPS} \Perf \mathcal{W} (T^*Q, \Lambda) $$
    from \cite[Theorem 1.1]{ganatra2018microlocal} is quasi-isomorphic to $(Q, \zeta)$. 
\end{lem}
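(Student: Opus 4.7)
My plan is to reduce the statement to the unstopped case $\Lambda=\emptyset$ and then identify both sides by pairing against a generating collection of cotangent fibres.

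A local system $\zeta$ on $Q$ has microsupport contained in the zero section $0_Q$, so $\zeta$ lies in the subcategory $\mathrm{Sh}_\emptyset(Q)^c\subset\mathrm{Sh}_{-\Lambda}(Q)^c$ for any closed subanalytic isotropic $\Lambda\subset S^*Q$. On the A-side, the zero section $(Q,\zeta)$ is disjoint from $\Lambda$ at infinity and so defines an object of $\Perf\cW(T^*Q,\Lambda)$ which is the image of $(Q,\zeta)\in\Perf\cW(T^*Q)$ under the stop-addition (fully faithful) functor $\Perf\cW(T^*Q)\to\Perf\cW(T^*Q,\Lambda)$. The naturality of the microlocalisation construction of \cite{ganatra2018microlocal} intertwines the sheaf inclusion $\mathrm{Sh}_\emptyset(Q)^c\hookrightarrow\mathrm{Sh}_{-\Lambda}(Q)^c$ with this stop-addition functor, so it suffices to prove $GPS(\zeta)\simeq(Q,\zeta)$ inside $\Perf\cW(T^*Q)$.

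To do this, I would pair both sides with the split-generating collection of cotangent fibres $\{T^*_qQ\}_{q\in Q}$. By \cite[Theorem 1.4]{ganatra2018microlocal}, the equivalence $GPS$ identifies the Yoneda functor represented by $T^*_qQ$ with the microstalk functor at $q$; for a locally constant sheaf $\zeta$ the microstalk is simply the stalk $\zeta_q$, giving
\[\Hom_{\Perf\cW(T^*Q)}(T^*_qQ,GPS(\zeta))\simeq \zeta_q.\]
On the Fukaya side, $T^*_qQ$ meets the zero section transversely in the single point $q$, contributing a single generator in degree zero by our canonical grading/orientation data (this is where orientability of $Q$ is used), and with coefficient $\zeta_q$ coming from the local system; hence
\[\Hom_{\cW(T^*Q)}(T^*_qQ,(Q,\zeta))\simeq \zeta_q\]
as well.

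Finally, to promote pointwise agreement to a quasi-isomorphism of modules over the subcategory of cotangent fibres (and hence, by Yoneda and split-generation, an isomorphism in $\Perf\cW(T^*Q)$), the two identifications above must be compatible with the action of morphisms $T^*_qQ\to T^*_{q'}Q$. On the Fukaya side this action is, by definition of Floer theory with local-system coefficients, parallel transport of $\zeta$ along the path in $Q$ encoded by the wrapping; on the sheaf side it is the corresponding monodromy action of $\pi_1(Q)$ on stalks of $\zeta$. Matching these two is the main technical step, and I would handle it by invoking naturality of $GPS$ in the sheaf argument: the case of the trivial local system $\zeta=\underline{\Bbbk}_Q$ is the standard Nadler--Zaslow identification of $\underline{\Bbbk}_Q$ with the zero section, and the general case follows by applying $GPS$ to any endomorphism of $\underline{\Bbbk}_Q^{\oplus r}$ realising $\zeta$ locally and using that $GPS$ is a functor of dg categories.
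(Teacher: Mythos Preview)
There is a genuine gap in your first reduction step. You assert the existence of a fully faithful ``stop-addition'' functor $\Perf\cW(T^*Q)\to\Perf\cW(T^*Q,\Lambda)$ intertwined under $GPS$ with the sheaf inclusion $\mathrm{Sh}_\emptyset(Q)^c\hookrightarrow\mathrm{Sh}_{-\Lambda}(Q)^c$. No such functor exists in the framework of \cite{ganatra2020covariantly,ganatra2018sectorial}: the covariant functoriality of wrapped categories goes the other way, via \emph{stop removal} $\cW(T^*Q,\Lambda)\to\cW(T^*Q)$, which is a localization (quotient by linking disks) and is certainly not fully faithful in general. The compact object $(Q,\zeta)$ happens to live in both categories, but this does not give you a functor into which you can feed the $GPS$ naturality you need. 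Consequently, your reduction to $\Lambda=\emptyset$ does not go through as written.

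The paper's proof handles the stop by going in the opposite direction: it \emph{refines} $\Lambda$ to the conormal of a Whitney triangulation $\mathcal{S}$, so that $GPS$ factors through the explicit intermediate category $\operatorname{Perf}\mathcal{S}$ where one can compute directly; both $(Q,\Bbbk_Q)$ and the sheaf $\Bbbk_Q$ are identified with the indicator module $1_Q$ there (using \cite[Theorem 5.36 and Lemmas 4.6--4.7]{ganatra2018microlocal}). One then returns to the original $\Lambda$ by stop removal, noting that $(Q,\zeta)$ is orthogonal to the linking disks of the removed strata so nothing is lost. For general $\zeta$, the paper runs the same calculation to check that the local system $q\mapsto HF^*((Q,\zeta),L_q)$ built from continuation elements (as in \cite[Lemma 5.10]{ganatra2018microlocal}) recovers $\zeta$; this is the rigorous form of your hand-wavy final paragraph about matching parallel transport with monodromy. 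Your idea of testing against cotangent fibres is morally right, but to make it precise you must work inside the concrete construction of $GPS$ rather than appeal to an unavailable functoriality.
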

\begin{proof} 
    The quasi-equivalence $GPS$ is constructed by refining $\Lambda$ to be the conormal to a Whitney triangulation $\mathcal{S}$. We can assume we are in that setting as all linking disks are orthogonal to $(Q, \xi)$ and the stop removal functor is the quotient by linking disks by \cite[Theorem 1.20]{ganatra2018sectorial}.

    In that case, $GPS$ passes through an intermediate quasi-equivalence with perfect complexes over $\mathcal{S}$. The proof of \cite[Theorem 5.36]{ganatra2018microlocal} shows that $(Q, \Bbbk_Q)$ is identified with the indicator module $1_Q$ in $\text{Perf }\mathcal{S}$ where $\Bbbk_Q$ is the trivial local system. The sheaf $\Bbbk_Q$ is also identified with $1_Q$ by \cite[Lemmas 4.6 and 4.7]{ganatra2018microlocal}. For general local systems, the same calculations apply to show that the local system $HF^*((Q, \zeta), L_q)$ for $q \in Q$ defined using continuation elements as in \cite[Lemma 5.10]{ganatra2018microlocal} coincides with $\zeta$.
\end{proof}

Now, we prove two properties of $KGPS$. We let $[0] \in M_{S^1}^*$ be the additive identity and set $L_{[0]}$ to be the inward conormal to a small ball around $[0]$.\footnote{Similar Lagrangians are considered in \cite[Section 5.7]{ganatra2018microlocal}. However, we take the opposite convention as $L_s$ in \cite{ganatra2018microlocal} denotes the outward conormal to a small ball around $s$.}

\begin{prop} \label{prop:kgpsproperties}
	The image of the structure sheaf $\mathcal O_{Y^*}$ under $KGPS$ is quasi-isomorphic to $L_{[0]}$, and for all $\xi \in M_{\Bbbk^*}$, the image of the shifted skyscraper sheaf $\mathcal{O}_{\xi}[-n]$ is quasi-isomorphic to $(M^*_{S^1}, \xi)$. 
\end{prop}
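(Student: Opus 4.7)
My plan is to use Remark~\ref{rmk:k=fltz} to replace $K$ with the non-equivariant coherent--constructible correspondence (CCC) of \cite{fang2012t,treumann2010remarks,kuwagaki2020nonequivariant}, compute the image of each object under the CCC, and then invoke \cite[Theorem~1.1]{ganatra2018microlocal} (together with Lemma~\ref{lem:gpslocsys}) to transport the answer across $GPS$ to the Lagrangian side.

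For the structure sheaf, I expect $K(\mathcal{O}_{Y^*})$ to be quasi-isomorphic to the extension by zero of the constant sheaf $\Bbbk_U$ on a sufficiently small open ball $U \subset M^*_{S^1}$ centered at $[0]$. This is consistent with the general CCC recipe which sends a torus-equivariant line bundle on $Y^*$ to a constructible sheaf built from its polytope of sections: for $\mathcal{O}_{Y^*}$ this polytope degenerates to the origin. Completeness of $\Sigma^*$ ensures that the microsupport of $\Bbbk_U$ at infinity (the outward conormal sphere to $\partial U$, a small Legendrian sphere at $[0]$) is contained in $-\mathbb{L}_{\Sigma^*}$ since for any complete fan the fibre of $\mathbb{L}_{\Sigma^*}$ over $[0]$ is all of $M_\R$. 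Under $GPS$, the indicator sheaf of an open ball is sent to the Lagrangian filling of this conormal Legendrian, and the sign flip in $-\mathbb{L}_{\Sigma^*}$ matches the ``inward'' orientation in the definition of $L_{[0]}$, yielding $GPS \circ K(\mathcal{O}_{Y^*}) \simeq L_{[0]}$.

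For the shifted skyscraper, note that $\xi \in M_{\Bbbk^*}$ lies in the open torus complement of the toric boundary divisor, so $\mathcal{O}_\xi$ is set-theoretically supported in the dense open orbit. Under the CCC, torus-point skyscrapers correspond to local systems on the whole zero section $M^*_{S^1} \subset T^*M^*_{S^1}$: specifically, $\xi$ determines a character $M^* = \pi_1(M^*_{S^1}) \to \Bbbk^*$ by $\alpha \mapsto \xi^{\alpha}$, giving a rank-one local system $\zeta_\xi$, and one verifies $K(\mathcal{O}_\xi[-n]) \simeq (M^*_{S^1},\zeta_\xi)$ -- the shift $[-n]$ absorbing the natural grading discrepancy between a skyscraper on a smooth $n$-fold and an un-shifted local system on an $n$-torus. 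Lemma~\ref{lem:gpslocsys} then identifies $GPS((M^*_{S^1},\zeta_\xi)) \simeq (M^*_{S^1},\xi)$ on the Fukaya side.

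The main technical obstacle I anticipate is reconciling the various grading, orientation, and sign conventions across Kuwagaki's non-equivariant CCC, the FLTZ picture, and the GPS microlocalization, in particular justifying the specific shift $[-n]$ in the second claim and carefully tracking the sign flip in $-\mathbb{L}_{\Sigma^*}$. A cleaner alternative if these computations prove cumbersome would be to verify both identifications by pairing each side against a split-generating collection on the mirror (e.g. cotangent fibres at generic base-points and the zero section on the $A$-side, matched to a line-bundle resolution on the $B$-side) and concluding via Yoneda.
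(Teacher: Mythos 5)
Your overall strategy --- compute $K$ on both objects and transport across $GPS$ --- matches the paper's. However there are concrete gaps in the execution of the structure-sheaf case, and the skyscraper case leans on unproved plausibility arguments.

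For $\mathcal O_{Y^*}$, the identification $K(\mathcal O_{Y^*}) \simeq \Bbbk_U$ (extension by zero on a small ball around $[0]$) is not justified, and the microsupport argument you give for it is incorrect. The microsupport of $\Bbbk_U$ is the outward conormal along $\partial U$, lying over points $\theta \in \partial U$ \emph{near but not equal to} $[0]$; for generic such $\theta$ the fiber of $-\mathbb{L}_{\Sigma^*}$ is only the zero covector (only the zero cone $\sigma=\{0\}$ contributes to $\mathbb L_{\Sigma^*}$ over a generic point of $M^*_{S^1}$), so $\Bbbk_U$ does not lie in $\mathrm{Sh}_{-\mathbb{L}_{\Sigma^*}}(M^*_{S^1})$ for a generic ball. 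The paper instead cites \cite[Lemma 10.1]{kuwagaki2020nonequivariant} for the actual image and then uses only its abstract characterization as co-representing the stalk functor at $[0]$. Similarly, the step ``under $GPS$ the indicator sheaf is sent to the Lagrangian filling of the conormal Legendrian'' is not a general feature of the $GPS$ equivalence and would need an argument. The paper handles this by appealing to the concrete calculations in \cite[Section 5.7]{ganatra2018microlocal} --- which show $GPS^{-1}(L_{[0]})$ co-represents the stalk functor once $\mathbb{L}_{\Sigma^*}$ is refined to the conormal of a Whitney triangulation --- and then noting this persists under stop removal because $L_{[0]}$ is left-orthogonal to the linking disks of the removed strata. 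You would need some such persistence argument because $GPS$ is constructed via the refined skeleton.

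For the skyscraper, your sketch is essentially the paper's route and will work, but the shift $[-n]$ should be pinned down by a reference or a computation rather than by the assertion that it ``absorbs the grading discrepancy''; the paper cites \cite[Section 2]{treumann2019kasteleyn} for $K(\mathcal O_\xi[-n]) \simeq \xi$, after which Lemma~\ref{lem:gpslocsys} finishes as you say. Your proposed fallback --- pairing against generators and concluding via Yoneda --- is actually close in spirit to what the paper does (the co-representing characterization is a Yoneda-style argument), so that instinct is sound.
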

\begin{proof}
    For the structure sheaf, \cite[Lemma 10.1]{kuwagaki2020nonequivariant} shows that $K$ sends $\mathcal{O}_{Y^*}$ to the sheaf $\Bbbk_{[0]}$ where $[0] \in M_{S^1}^*$ is the unit. In particular, this object co-represents the stalk functor at $[0]$. On the other hand, the calculations in \cite[Section 5.7]{ganatra2018microlocal} imply that the image of $L_{[0]}$ under $GPS^{-1}$ co-represents the stalk functor at $[0]$ when $\mathbb{L}_{\Sigma^*}$ is refined to be the conormal to a Whitney triangulation. However, this property persists under stop removal as $L_{[0]}$ is left-orthogonal to the linking disks of the removed strata. 
    
    For $\xi \in M_{\Bbbk^*}$, \cite[Section 2]{treumann2019kasteleyn} shows that $K$ takes $\mathcal{O}_\xi[-n]$ to $\xi$. We can then apply \Cref{lem:gpslocsys} to deduce the result.
  \end{proof}

A second approach to homological mirror symmetry for toric varieties is to stay within the realm of Floer theory. This route will be beneficial to us for its direct access to Lagrangian submanifolds as objects. Following \cite{abouzaid2006homogeneous,abouzaid2009morse,hanlon2019monodromy}, we consider $A$- and $B$-side categories whose objects are integral piecewise-linear functions $\sF:M_\R \to \R$ which are linear on each cone of $\Sigma^*$. 

On the $A$-side, one constructs a Lagrangian submanifold $\Lmon(\sF) \subset T^*M^*_{S^1} \simeq M^*_{\C^*}$ corresponding to each such $\sF$, which comes equipped with a canonical grading as well as an `anchoring' in the sense of \cite{fukaya2010anchored}, i.e., a lift to the universal cover $T^*M^*_\R \to T^*M^*_{S^1}$. 
The anchoring of $\Lmon(\sF)$ is the graph of the differential of a smoothing of $\sF$, under the identification $T^*M^*_\R = M^*_\R \times M_\R = T^*M_\R$ (see \Cref{def:tropLagSection} for more details). 
As a result, the Lagrangians $\Lmon(\sF)$ are sections of $\pi$ (and hence also $\Log$) and are referred to as tropical Lagrangian sections. 
The category of such tropical Lagrangians sections is denoted $\mafuk^{M^*}$, and comes equipped with a $\Z \oplus M^*$-grading (the $\Z$-grading is the cohomological grading, and the $M^*$-grading comes from the anchorings). 

On the $B$-side, one considers the equivariant line bundles $\cO(\sF)$ corresponding to support functions. 
The category of such equivariant line bundles is denoted $\Pic_{dg}^{M^*}(Y^*)$, and comes equipped with a $\Z \oplus M^*$-grading (the $\Z$-grading is the cohomological grading, and the $M^*$-grading is given by characters of the algebraic torus whose action on the morphism spaces is determined by the equivariant structures). 
In \cite{abouzaid2009morse}, Abouzaid constructs a $\Z \oplus M^*$-graded $A_\infty$ quasi-isomorphism
\begin{align*}
\mafuk^{M^*} &\xrightarrow{A} \Pic_{dg}^{M^*}(Y^*),\qquad \text{which identifies} \\
\Lmon(\sF) & \leftrightarrow \cO_{Y^*}(\sF).
\end{align*}
In \cite[Theorem A]{hanlon2022aspects}, a $\Z$-graded fully faithful embedding
\begin{align*}
\mafuk^{M^*} & \xhookrightarrow{HH^{pre}} \Perf \cW(M^*_{\C^*}, \mathfrak{f}_{\Sigma^*}) \qquad \text{sending}\\
\Lmon(\sF) & \mapsto \Lconic(\sF)
\end{align*} 
(which forgets the $M^*$-grading) is constructed, and its image is shown to generate in \cite[Corollary C]{hanlon2022aspects}. 
Denote the composition with the quasi-inverse of the quasi-equivalence $\Perf \cW(\Sprime) \to \Perf \cW(M^*_{\C^*}, \mathfrak{f}_{\Sigma^*})$ by  $HH:\mafuk^{M^*}\to \Perf\cW(\Sprime)$.
Combining that result with the fact that $D^b_{dg}Coh(Y^*)$ is generated by line bundles \cite[Proposition 1.3]{abouzaid2009morse}, one obtains the following when restricting to our setting:

\begin{thm}[\cite{abouzaid2009morse,hanlon2022aspects}] \label{thm:ahh}
	There is a diagram
	\[
		\begin{tikzcd}
			\Pic_{dg}^{M^*}(Y^*) \arrow{r}{A} \arrow[d,hook] & \mafuk^{M^*} \arrow[hook]{d}{HH} \\
   D^b_{dg} Coh(Y^*)  \arrow{r}{AHH} &  \Perf\mathcal{W}(\Sprime)
		\end{tikzcd}
	\]
 which commutes up to quasi-isomorphism of $A_\infty$ functors. 
The top functor is a $\Z \oplus M^*$-graded quasi-isomorphism, the vertical functors are $\Z$-graded and cohomologically fully faithful, and the bottom functor is a $\Z$-graded quasi-equivalence.
\end{thm}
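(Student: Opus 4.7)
My plan is to assemble the diagram from the cited works and to construct the bottom horizontal arrow $AHH$. The top arrow $A$ is taken to be the quasi-inverse of Abouzaid's $\Z \oplus M^*$-graded $A_\infty$-quasi-isomorphism $\mafuk^{M^*} \xrightarrow{\sim} \Pic_{dg}^{M^*}(Y^*)$ from \cite{abouzaid2009morse}, which is automatically $\Z \oplus M^*$-graded quasi-invertible. The right vertical arrow $HH$ is defined by composing the $\Z$-graded cohomologically fully faithful embedding $HH^{pre} \colon \mafuk^{M^*} \hookrightarrow \Perf \cW(M^*_{\C^*}, \mathfrak{f}_{\Sigma^*})$ of \cite[Theorem A]{hanlon2022aspects} with a chosen quasi-inverse of the equivalence $\Perf \cW(\Sprime) \xrightarrow{\sim} \Perf \cW(M^*_{\C^*}, \mathfrak{f}_{\Sigma^*})$ provided in \cref{subsec:fukBackground1}. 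The left vertical arrow is the tautological inclusion of toric line bundles, which is cohomologically fully faithful since $\Pic_{dg}(Y^*)$ sits as a full subcategory of $D^b_{dg}Coh(Y^*)$.

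To produce $AHH$, I first define it on $\Pic_{dg}(Y^*) \subset D^b_{dg}Coh(Y^*)$ (the $\Z$-graded analogue, with $M^*$-grading forgotten) as the composition $HH \circ A$, yielding a cohomologically fully faithful $\Z$-graded $A_\infty$-functor into $\Perf \cW(\Sprime)$; the square commutes strictly on line bundles by construction. I then invoke two split-generation inputs: $\Pic_{dg}(Y^*)$ split-generates $D^b_{dg}Coh(Y^*)$ by \cite[Proposition 1.3]{abouzaid2009morse}, and the image of $\mafuk^{M^*}$ under $HH^{pre}$ split-generates $\Perf \cW(M^*_{\C^*}, \mathfrak{f}_{\Sigma^*})$ by \cite[Corollary C]{hanlon2022aspects}, so the image of $HH$ split-generates $\Perf \cW(\Sprime)$.

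By standard $A_\infty$-category theory (via Yoneda embeddings and idempotent-completed pre-triangulated closures), a cohomologically fully faithful $A_\infty$-functor between small categories extends uniquely, up to quasi-isomorphism, to an $A_\infty$-functor between their $\Perf$-closures, and this extension is a quasi-equivalence whenever its essential image split-generates the target. Applied to $HH \circ A \colon \Pic_{dg}(Y^*) \to \Perf \cW(\Sprime)$, this produces the desired $\Z$-graded quasi-equivalence $AHH$, and the outer square commutes up to natural quasi-isomorphism since $AHH$ agrees with $HH \circ A$ on the split-generating subcategory of line bundles, and extensions are determined up to quasi-isomorphism by their restrictions to such subcategories.

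The main obstacle is one of consistency rather than substance: nearly all input is imported from \cite{abouzaid2009morse,hanlon2022aspects}, so the real work is to ensure that gradings, anchorings, and chosen quasi-inverses are matched compatibly across the two references. In particular, one must verify that the $\Z$-grading on $HH^{pre}$, obtained from Abouzaid's $\Z \oplus M^*$-grading by forgetting the $M^*$-component, is compatible with the grading/orientation data on $\cW(\Sprime)$ coming from the tautological polarization of $T^*M^*_{S^1} \simeq M^*_{\C^*}$ as recorded in \cref{rmk:Sgrading}, and that the split-generation statements can be transported through the equivalence $\Perf \cW(\Sprime) \simeq \Perf \cW(M^*_{\C^*}, \mathfrak{f}_{\Sigma^*})$ without loss.
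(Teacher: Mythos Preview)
Your proposal is correct and follows essentially the same approach as the paper. In fact, the paper does not supply a formal proof of this theorem at all; it is stated as a citation to \cite{abouzaid2009morse,hanlon2022aspects}, with the preceding paragraph explaining exactly the assembly you describe: Abouzaid's $\Z \oplus M^*$-graded quasi-isomorphism gives $A$, the Hanlon--Hicks embedding $HH^{pre}$ composed with the quasi-inverse of $\Perf\cW(\Sprime)\to\Perf\cW(M^*_{\C^*},\mathfrak{f}_{\Sigma^*})$ gives $HH$, and split-generation on both sides (line bundles via \cite[Proposition 1.3]{abouzaid2009morse}, image of $HH^{pre}$ via \cite[Corollary C]{hanlon2022aspects}) yields the quasi-equivalence $AHH$.
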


We now check that $AHH$ satisfies the analogue of \Cref{prop:kgpsproperties}.

\begin{prop} \label{prop:ahhproperties}
	The image of the structure sheaf $\mathcal{O}_{Y^*}$ under $AHH$ is quasi-isomorphic to $L_{[0]}$, and for all $\xi \in M_{\Bbbk^*}$, the image of the shifted skyscraper sheaf $\mathcal{O}_\xi[-n]$ under $AHH$ is quasi-isomorphic to $(M^*_{S^1}, \xi)$.
\end{prop}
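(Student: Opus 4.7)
The plan is to trace each of the named objects through the composition $AHH = HH \circ A$ using the explicit construction of both functors from \cite{abouzaid2009morse,hanlon2022aspects}, and then geometrically compare the resulting Lagrangians with $L_{[0]}$ and $(M^*_{S^1},\xi)$ respectively.

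For the structure sheaf $\mathcal{O}_{Y^*}$: under the Abouzaid quasi-isomorphism $A$ of \cref{thm:ahh}, this corresponds to the tropical Lagrangian section $\Lmon(0)$ of the trivial support function, and under $HH$ it is then sent to the cylindricalization $\Lconic(0) \in \Perf \mathcal{W}(\Sprime)$. I would then identify $\Lconic(0)$ with $L_{[0]}$ by a direct geometric argument: the cylindricalization procedure of \cite{hanlon2022aspects} applied to the zero support function produces a Lagrangian whose front projection in $M^*_{S^1}$ is contained in a small neighbourhood of $[0]$, and a Hamiltonian isotopy, tracking the grading and spin structure inherited from the tautological polarization of $T^*M^*_{S^1}$, matches it with the inward conormal $L_{[0]}$.

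For the skyscraper sheaf $\mathcal{O}_\xi[-n]$, the strategy is to reduce to the case $\xi = [1]$ using torus equivariance. Translation by $\xi \in M_{\Bbbk^*}$ is an automorphism of $Y^*$ carrying $\mathcal{O}_{[1]}$ to $\mathcal{O}_\xi$, and on the A-side this operation acts by twisting Lagrangians with the local system $\xi$ under the identification $M_{\Bbbk^*} \cong H^1(M^*_{S^1};\Bbbk^*)$. It then suffices to establish $AHH(\mathcal{O}_{[1]}[-n]) \simeq M^*_{S^1}$ equipped with the trivial local system. I would verify this by resolving $\mathcal{O}_{[1]}[-n]$ by a Koszul-type complex of equivariant toric line bundles in $\Pic_{dg}^{M^*}(Y^*)$, applying $A$ and $HH$ termwise to obtain an iterated cone of cylindricalized tropical Lagrangian sections, and simplifying this cone to the zero section. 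This is the Floer-theoretic mirror of the classical Koszul argument, paralleling the perspective in \cite{abouzaid2006homogeneous,abouzaid2009morse}.

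The main obstacle will be the first step, namely the geometric matching of $\Lconic(0)$ with $L_{[0]}$. This requires a careful comparison of two a priori different cylindrical Lagrangians: the one produced by the cylindricalization procedure near the zero section, and the inward conormal to a small ball around $[0]$. In particular, one must check that the two constructions yield Lagrangians Hamiltonian isotopic through cylindrical Lagrangians and carrying compatible grading and spin structures. Once this base identification is in place, the skyscraper statement reduces to the equivariance and Koszul-resolution step, which is comparatively routine.
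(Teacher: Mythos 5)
Your first step matches the paper's: the proof of \cref{prop:ahhproperties} in the text identifies $AHH(\mathcal{O}_{Y^*})$ with $\Lconic(0)$ by definition and then Hamiltonian isotopes $\Lconic(0)$ to $L_{[0]}$ in the complement of $\mathfrak{f}_{\Sigma^*}$ by a linear isotopy of primitives, citing \cite[Lemma 3.14]{hanlon2022aspects}. Your description of the front projection and tracking of grading/spin data is in the same spirit, and would be made precise by that lemma.

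For the skyscraper sheaves you propose a genuinely different route, and this is where the gaps appear. Your reduction to $\xi = [1]$ by ``torus equivariance'' presupposes that $AHH$ intertwines translation by $\xi$ on $D^b_{dg}Coh(Y^*)$ with twisting Lagrangians by the local system $\xi$. That compatibility is not established in \cite{abouzaid2009morse,hanlon2022aspects} and is not automatic: the $M^*$-grading in $\mafuk^{M^*}$ comes from equivariant structures/anchorings, which are a separate kind of torus data from the translation action on closed points of $Y^*$, and there is no obvious torus action on $\cW(\Sprime)$ with respect to which $HH$ could be equivariant. You would effectively be proving a statement of comparable difficulty to the proposition itself. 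Your Koszul step is also less routine than claimed: a closed point in the open orbit of a projective toric $Y^*$ is not cut out as a global complete intersection of toric divisors, so a Koszul resolution of $\mathcal{O}_{[1]}$ by objects of $\Pic_{dg}^{M^*}(Y^*)$ does not exist on the nose; one would need either to pass to an infinite/Ext-style resolution or to adjust by ample twists, and even then carrying the resulting iterated cone of cylindrized sections through $HH$ and simplifying it to the zero section with the correct local system amounts to redoing a large part of the mirror-symmetry computation. The paper instead uses \cite[Proposition 6.8]{hanlon2022aspects} to intertwine stop removal with restriction to $\mathbb{G}_m^n$, observes that linking disks are orthogonal to $(M^*_{S^1},\xi)$ so the skyscraper computation survives stop removal, appeals to compatibility of $AHH$ with products to reduce to $T^*S^1$, and then reads off the local system $\xi$ from a single explicit disk count $\mu^2(z,x) = \xi x$; this makes both the reduction and the appearance of $\xi$ completely concrete, and is the step your outline is missing.
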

\begin{proof}
    By definition, $AHH$ takes $\mathcal{O}_{Y^*}$ to $\Lconic(0)$. However, $\Lconic(0)$ is Hamiltonian isotopic to $L_{[0]}$ in the complement of $\mathfrak{f}_{\Sigma^*}$ via a linear isotopy of their primitives using the argument in \cite[Lemma 3.14]{hanlon2022aspects}. 

    We now consider the stop removal functor $\mathcal{W}(M^*_{\C^*}, \mathfrak{f}_{\Sigma^*}) \to \mathcal{W}(M^*_{\C^*})$ which is intertwined with the restriction $D^b_{dg}Coh(Y^*) \to D^b_{dg}Coh(M_{\mathbb{G}_m})$ under $AHH$ by \cite[Proposition 6.8]{hanlon2022aspects}. Moreover, the stop removal functor is the quotient by linking disks by \cite[Theorem 1.20]{ganatra2018sectorial}, and the linking disks are orthogonal to $(M^*_{S^1}, \xi)$. In addition, $AHH$ is compatible with taking products. Thus, we are reduced to checking the claim for the quasi-equivalence between $D^b_{dg} Coh(\mathbb{G}_m)$ and $\Perf \mathcal{W}(T^*S^1)$ which identifies $\mathcal{O}_{\mathbb{G}_m}$ with the cotangent fiber $L_{[0]}$. 
    
    In that case, we have $HW(L_{[0]}, L_{[0]}) \simeq \Bbbk[z^{\pm}]$ respecting product structures and concentrated in degree zero. In fact, we can choose a wrapping of $L_{[0]}$ so that the powers of $z$ are in bijection with intersection points and this isomorphism holds on the chain level. We therefore need to calculate $HW(L_{[0]}, (T, \xi)) \simeq \Bbbk \langle x \rangle$ as a left $\Bbbk[z^{\pm}]$-module where $x$ is a formal basis element of degree $n$. That is, we need to compute $\mu_2(z,x) $ as a multiple of $x$. There is exactly one disc on the cylinder contributing to this product and the portion of its boundary on $S^1$ sweeps out the circle once positively. Moreover, with the chosen orientation data on $S^1$, this disk contributes positively, and we obtain $\mu_2(z,x) = \xi x$ as desired.
\end{proof}

The proof of Proposition \ref{prop:sameHMS} will make use of some facts concerning autoequivalences of $D^b_{dg}Coh(Y^*).$ We first record the following preliminary technical result concerning Fourier-Mukai functors.

\begin{lem} \cite[Theorem 1.2]{CannocoStellari} Let $Y_1$ and $Y_2$ be two smooth projective varieties and let \begin{align*} G: H^0(D^b_{dg}Coh(Y_1)) \to H^0(D^b_{dg}Coh(Y_2))\end{align*} be an exact functor between triangulated categories.  If $G$ is a Fourier-Mukai functor,  meaning $G \cong \phi_\mathcal{E}$ for some $\mathcal{E} \in D^b_{dg}Coh(Y_1 \times Y_2)$,  then the cohomology sheaves of $\mathcal{E}$ are uniquely determined (up to isomorphism) by $G$.  \end{lem}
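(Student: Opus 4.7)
The plan is to leverage the way a Fourier--Mukai kernel is probed by its action on structure sheaves of closed points (and more generally of small subschemes), and then reconstruct the cohomology sheaves of the kernel from this fiberwise data.

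First, for any closed point $y_1 \in Y_1$, I would compute $G(\mathcal{O}_{y_1})$ via base change along the regular immersion $\iota_{y_1}: \{y_1\} \times Y_2 \hookrightarrow Y_1 \times Y_2$. Since $\pi_2 \circ \iota_{y_1}$ identifies $\{y_1\} \times Y_2$ with $Y_2$, the projection formula and base change (valid because $Y_1$ is smooth) give a canonical isomorphism $G(\mathcal{O}_{y_1}) \cong L\iota_{y_1}^* \mathcal{E}$. Hence, if $G \cong \phi_{\mathcal{E}_1} \cong \phi_{\mathcal{E}_2}$, then $L\iota_{y_1}^* \mathcal{E}_1 \cong L\iota_{y_1}^* \mathcal{E}_2$ for every closed point $y_1$. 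The same argument, applied to structure sheaves $\mathcal{O}_Z$ of positive-dimensional subschemes $Z \subset Y_1$, or to their first-order thickenings, records the derived restrictions of $\mathcal{E}$ to preimages $Z \times Y_2$ and their formal neighborhoods.

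Second, to pass from fiberwise data to cohomology sheaves, I would exploit the hypercohomology spectral sequence
\begin{equation*}
E_2^{p,q} = \mathcal{T}or_{-p}^{\mathcal{O}_{Y_1 \times Y_2}}\!\bigl(H^q(\mathcal{E}), \iota_{y_1*}\mathcal{O}_{\{y_1\} \times Y_2}\bigr) \Rightarrow H^{p+q}(L\iota_{y_1}^* \mathcal{E}).
\end{equation*}
On a dense open subset of $Y_1$, the cohomology sheaves of $\mathcal{E}$ are flat along $\pi_1$ (by generic flatness), the spectral sequence degenerates, and the equalities $H^q(G(\mathcal{O}_{y_1}))|_{Y_2} \cong H^q(\mathcal{E})|_{\{y_1\} \times Y_2}$ pin down each cohomology sheaf generically. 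Combining this with the isomorphisms obtained in the previous step from thickened test objects allows one to inductively reconstruct each $H^q(\mathcal{E})$ up to isomorphism, starting from the extremal cohomology sheaves (the highest and lowest nonzero $q$), which appear as unambiguous corner terms of the spectral sequence, and then peeling off subsequent cohomology sheaves using truncation triangles.

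The main obstacle is the non-generic locus in $Y_1$, where flatness of the $H^q(\mathcal{E})$ along $\pi_1$ fails and the spectral sequence has genuine differentials mixing different cohomology sheaves. To handle this, I would test $G$ against a spanning collection that is rich enough to detect formal neighborhoods: using $\mathcal{O}_{Z_n}$ for infinitesimal thickenings $Z_n$ of a point (or, more intrinsically, perfect complexes representing Koszul resolutions of regular systems of parameters) and taking limits allows one to recover the completed stalks of each $H^q(\mathcal{E})$ at every point of $Y_1 \times Y_2$. Smoothness of the $Y_i$ is used throughout to guarantee that all derived functors involved are bounded and compatible, and that every point admits a regular Koszul resolution. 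The upshot is that although the kernel $\mathcal{E}$ itself need not be unique in $D^b_{dg}Coh(Y_1 \times Y_2)$ (two non-isomorphic complexes may induce the same functor), its cohomology sheaves are intrinsic invariants of $G$.
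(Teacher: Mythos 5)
The paper does not prove this statement; it is cited directly from Canonaco--Stellari (``Non-uniqueness of Fourier--Mukai kernels''), so there is no proof in the paper to compare against. I will therefore evaluate your proposal on its own terms.

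Your opening step is correct: $\Phi_\mathcal{E}(\mathcal{O}_{y_1}) \cong L\iota_{y_1}^*\mathcal{E}$ via projection formula and the isomorphism $\{y_1\}\times Y_2 \cong Y_2$, and similarly for Artinian thickenings $Z_n$ of $y_1$. But there is a genuine gap at the step where you pass from this fiber-and-formal data to the global isomorphism class of each $H^q(\mathcal{E})$. Knowing the derived restrictions of $\mathcal{E}$ to $Z_n\times Y_2$ for all infinitesimal thickenings of all points of $Y_1$ does \emph{not} determine the cohomology sheaves of $\mathcal{E}$ up to isomorphism on $Y_1\times Y_2$, because it only records formal-local information in the $Y_1$-direction. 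Concretely: take $\mathcal{E}_1=\mathcal{O}_{Y_1\times Y_2}$ and $\mathcal{E}_2 = \pi_1^*L$ for any nontrivial line bundle $L$ on $Y_1$. For every Artinian subscheme $Z_n\subset Y_1$ supported at a point, the restriction $L|_{Z_n}$ is trivial (Picard group of an Artinian local ring vanishes), so $\mathcal{E}_1|_{Z_n\times Y_2}\cong\mathcal{E}_2|_{Z_n\times Y_2}$ for all $n$ and all base points. Your reconstruction procedure therefore cannot tell $H^0(\mathcal{E}_1)=\mathcal{O}$ apart from $H^0(\mathcal{E}_2)=\pi_1^*L$, even though they are non-isomorphic. (The statement of the lemma is not violated here because $\Phi_{\mathcal{E}_1}\not\cong\Phi_{\mathcal{E}_2}$ --- they disagree already on $\mathcal{O}_{Y_1}$ --- but this is precisely information your test class of skyscrapers and their thickenings does not see.)

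In other words, the class of test objects you propose is too small: skyscrapers and their thickenings detect only punctual/formal information along $Y_1$. The Canonaco--Stellari argument uses a substantially richer strategy. Very roughly, they test against twists by high powers of an ample line bundle on $Y_1$ (combined with relative Serre duality and vanishing for $n\gg 0$) to recover the cohomology sheaves globally on an open stratum, and then run a Noetherian induction over the stratification of $Y_1$ by the flatness defect of the $H^q(\mathcal{E})$, peeling off one cohomology sheaf at a time via the truncation triangles. To repair your argument you would at minimum need to augment the test class with objects such as $L^{-n}$ and $\mathcal{O}_{Y_1}$ (not just their restrictions to formal neighborhoods), and then carry out a genuine global patching/induction rather than concluding from stalk data alone.
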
 

We apply this result to obtain the following dg-lift of a celebrated result of Bondal and Orlov: 

\begin{thm} \label{thm:BO} Let $Y^*$ be any smooth Fano variety.  The autoequivalences of $D^b_{dg} Coh(Y^*)$ are generated by automorphisms of $Y^*$, twists by line bundles, and the shift functor.  In other words,  we have 
    \begin{align*} \label{eq:BO} 
    Aut(D^b_{dg} Coh(Y^*)) \cong Aut(Y^*) \ltimes (Pic(Y^*) \oplus \Z) 
\end{align*} 
    where $\Z$ is generated by the shift functor.  \end{thm}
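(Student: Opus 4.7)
The strategy is to reduce the dg statement to the classical triangulated Bondal--Orlov theorem, using the preceding Canonaco--Stellari lemma as the bridge between triangulated and dg data.

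First, any dg autoequivalence $G$ of $\dbdg Coh(Y^*)$ descends to a triangulated autoequivalence $H^0(G)$ of $D^b Coh(Y^*)$. Because $Y^*$ is smooth Fano, its anticanonical bundle is ample, so the classical Bondal--Orlov theorem applies and yields $f \in \Aut(Y^*)$, $L \in \mathrm{Pic}(Y^*)$, and $n \in \Z$ together with a natural isomorphism $H^0(G) \cong f_* \circ (- \otimes L) \circ [n]$. Each of $f_*$, $- \otimes L$, and $[n]$ admits an evident dg lift, so their composition defines a dg autoequivalence $\Psi_{f,L,n}$ of $\dbdg Coh(Y^*)$ that induces the same triangulated autoequivalence as $G$. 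The remaining task is to upgrade this triangulated-level isomorphism into a quasi-isomorphism $G \simeq \Psi_{f,L,n}$ of dg functors.

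To do this, I would invoke the fact that every dg autoequivalence of $\dbdg Coh(Y^*)$ is of Fourier--Mukai type (which follows from Orlov's representability theorem together with its dg enhancements by To\"en and Lunts--Orlov); let $\mathcal{E}_G, \mathcal{E}_\Psi \in \dbdg Coh(Y^* \times Y^*)$ denote the respective kernels. Applying the preceding Canonaco--Stellari lemma to the shared underlying triangulated functor $H^0(G) \cong H^0(\Psi_{f,L,n})$ then forces $\mathcal{E}_G$ and $\mathcal{E}_\Psi$ to have isomorphic cohomology sheaves. But $\mathcal{E}_\Psi$ is, up to a shift by $n$, the pushforward of $L$ along the graph embedding of $f$, so its cohomology is concentrated in a single degree. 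The same must therefore hold for $\mathcal{E}_G$, which forces a quasi-isomorphism $\mathcal{E}_G \simeq \mathcal{E}_\Psi$ in $\dbdg Coh(Y^* \times Y^*)$, and hence a natural quasi-isomorphism $G \simeq \Psi_{f,L,n}$ of dg functors.

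Finally, the semidirect product structure in the statement reflects the standard relation $f_* \circ (- \otimes L) \cong (- \otimes f_* L) \circ f_*$ inside $\Aut(\dbdg Coh(Y^*))$, together with the centrality of the shift functor. The principal obstacle is the dg Fourier--Mukai representability step and the subsequent matching of kernels at the dg rather than merely cohomological level; the cited Canonaco--Stellari lemma is precisely the ingredient designed to bridge this gap, which is why the argument becomes clean once one has restricted to a kernel whose cohomology is concentrated in a single degree.
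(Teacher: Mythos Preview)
Your proposal is correct and follows essentially the same approach as the paper: reduce to the triangulated Bondal--Orlov theorem, then use the fact that the relevant Fourier--Mukai kernels are concentrated in a single cohomological degree (together with the Canonaco--Stellari lemma) to upgrade to the dg level. You are somewhat more explicit than the paper's terse proof about the surjectivity direction---in particular, you spell out the need for dg Fourier--Mukai representability of an arbitrary dg autoequivalence and the comparison of the two kernels $\mathcal{E}_G$ and $\mathcal{E}_\Psi$---whereas the paper simply notes that the kernels for automorphisms, twists, and shifts are concentrated in a single degree and hence lift uniquely.
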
 
\begin{proof} 
In \cite[Theorem 3.1]{bondal2001reconstruction},  Bondal and Orlov prove this result at the level of triangulated categories.  To enhance this to the dg-level,  we note that the Fourier-Mukai kernels $\mathcal{E}$ corresponding to each of these autoequivalences are cohomologically concentrated in a single degree.  They thus lift uniquely (up to quasi-isomorphism) to $D^b_{dg} Coh(Y^* \times Y^*)$.   
\end{proof} 

\begin{prop} \label{prop:sameHMS}
	The quasi-equivalences $KGPS$ and $AHH$ are quasi-isomorphic. In particular, $KGPS$ sends $\mathcal {O}_{Y^*}(\sF)\mapsto \Lconic(\sF)$ for every toric divisor on $Y^*$.
\end{prop}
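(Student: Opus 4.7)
The strategy is to apply the dg Bondal--Orlov classification (\cref{thm:BO}) to the autoequivalence $G := AHH^{-1} \circ KGPS$ of $D^b_{dg}Coh(Y^*)$ and use \cref{prop:kgpsproperties} and \cref{prop:ahhproperties} to rule out all nontrivial factors.

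First, I would observe that since $AHH$ is a quasi-equivalence (\cref{thm:ahh}), one can form a quasi-inverse and hence the $A_\infty$ autoequivalence $G$ of $D^b_{dg}Coh(Y^*)$. By \cref{thm:BO}, $G$ is quasi-isomorphic to a functor of the form $\alpha^*(-)\otimes \mathcal{L}[k]$ for some $\alpha \in \Aut(Y^*)$, $\mathcal{L} \in \Pic(Y^*)$, and $k \in \Z$.

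Next I would pin down $\alpha$ and $k$ using the skyscraper calculation. By \cref{prop:kgpsproperties} and \cref{prop:ahhproperties}, for every $\xi \in M_{\Bbbk^*}$ we have $KGPS(\mathcal{O}_\xi[-n]) \simeq (M^*_{S^1},\xi) \simeq AHH(\mathcal{O}_\xi[-n])$, so $G(\mathcal{O}_\xi) \simeq \mathcal{O}_\xi$. On the other hand, since $\mathcal{L}$ is a line bundle, $\mathcal{L}\otimes \mathcal{O}_\xi \simeq \mathcal{O}_\xi$, so $G(\mathcal{O}_\xi) \simeq \mathcal{O}_{\alpha^{-1}(\xi)}[k]$. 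Matching these across all torus points forces $k=0$ and $\alpha$ to restrict to the identity on the open torus $M_{\Bbbk^*} \subset Y^*$, hence $\alpha = \id$. Finally, applying $G(\mathcal{O}_{Y^*}) \simeq \mathcal{O}_{Y^*}$ (again from \cref{prop:kgpsproperties} and \cref{prop:ahhproperties}) gives $\mathcal{L} \simeq \mathcal{O}_{Y^*}$. Thus $G$ is quasi-isomorphic to the identity, i.e.\ $KGPS \simeq AHH$ as $A_\infty$ functors. The ``in particular'' clause then follows from the definition of $AHH$, which sends $\mathcal{O}_{Y^*}(\sF) \mapsto \Lconic(\sF)$ by construction.

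The main subtlety I would expect is ensuring that the Bondal--Orlov classification can be applied at the dg/$A_\infty$ level rather than only at the triangulated level, and that natural isomorphisms of the underlying exact functors lift to quasi-isomorphisms of $A_\infty$ functors; this is precisely the content of \cref{thm:BO} together with the Fourier--Mukai kernel uniqueness recalled just before it, so once those are in place the argument above is essentially a matching of a few distinguished objects.
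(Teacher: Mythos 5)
Your approach is essentially the paper's: apply the dg Bondal--Orlov classification (\cref{thm:BO}) to the composite autoequivalence, then use \cref{prop:kgpsproperties} and \cref{prop:ahhproperties} to kill each factor. You do the two object computations in the opposite order (skyscrapers first to pin down $\alpha$ and $k$, then the structure sheaf for $\mathcal{L}$; the paper uses the structure sheaf first to conclude $\mathcal{L} \simeq \mathcal{O}$ and $k=0$, leaving an automorphism $f$, and then skyscrapers to show $f=\id$), but this reordering is harmless.

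There is, however, a genuine gap in the step ``matching these across all torus points forces \ldots $\alpha$ to restrict to the identity on the open torus $M_{\Bbbk^*}\subset Y^*$, hence $\alpha=\id$.'' Knowing that $\alpha$ fixes every $\Bbbk$-rational point of the dense torus only forces $\alpha=\id$ when those points are Zariski dense, which requires $\Bbbk$ to be infinite (and is most transparent when $\Bbbk=\overline\Bbbk$). Over a finite field the $\Bbbk$-points can be far too sparse: e.g.\ $\mathbb{G}_m(\mathbb{F}_2)=\{1\}$, and the nontrivial automorphism $t\mapsto t^{-1}$ fixes this lone point. Since the paper proves HMS in all but finitely many positive characteristics and explicitly works over arbitrary fields $\Bbbk$, this case must be handled. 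The paper's proof does so in two steps: first it assumes $\Bbbk$ is algebraically closed, where your argument on closed points is valid; then, for general $\Bbbk$, it base-changes to $\overline{\Bbbk}$, uses that both $AHH$ and $KGPS$ are compatible with base change to conclude that $f$ becomes the identity over $\overline\Bbbk$, and finally notes that an automorphism of $Y^*$ induced by a map of homogeneous coordinate rings that becomes the identity after $\otimes_\Bbbk\overline\Bbbk$ is already the identity. You would need to add this base-change argument (or an equivalent) to close the gap.
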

\begin{proof} 
	By Theorem \ref{thm:BO},  $G = KGPS^{-1} \circ AHH$ determines an element of $Aut(Y^*) \ltimes (Pic(Y^*) \oplus \Z).$  By the first parts of \Cref{prop:kgpsproperties} and \Cref{prop:ahhproperties},  $G$ preserves the isomorphism class of the structure sheaf.  Thus, $G$ is induced by an automorphism $f$ of $Y^*$. 

    Begin by assuming that $\Bbbk$ is algebraically closed. Then, we can identify this automorphism by its action on closed points. The second parts of \Cref{prop:kgpsproperties} and \Cref{prop:ahhproperties} show that $f$ preserves every point in the dense torus orbit of $Y^*$. Thus, $f=id$ and hence  $G$ is quasi-isomorphic to the identity.
    
    For general fields, we base change to the algebraic closure $\overline{\Bbbk}$. Then because both $AHH$ and $KGPS$ are compatible with base-change, the autoequivalence $G$ is quasi-isomorphic to the identity and the automorphism $f$ becomes the identity after base change to $\overline{\Bbbk}$. To conclude, note that $f$ is induced by a map on homogeneous coordinate rings which becomes the identity after tensoring with $\overline{\Bbbk}$. This implies that $f$ is already the identity. 
\end{proof}

\begin{rmk} A careful reader will note that \cite{abouzaid2009morse,hanlon2022aspects,kuwagaki2020nonequivariant} are all written with $\Bbbk =\C$ for simplicity. However, their arguments work with no modifications over any other coefficient field. For \cite{kuwagaki2020nonequivariant}, this has been previously claimed in \cite{gammage2017mirror,treumann2019kasteleyn}.
\end{rmk}

\subsection{Homological mirror symmetry for the affine hypersurface} \label{subsec:boundaryhms}
We now discuss homological mirror symmetry with the toric boundary $\partial Y^*$ on the $B$-side and $\eff$ on the $A$-side, where $\eff$ is the completion of the Liouvile domain $\fo \subset \partial B$ from \Cref{sec:Liouvsec}. The relevant categories are $D^b_{dg}Coh(\partial Y^*)$ and $\mathcal{W}(\eff)$ (see \Cref{subsec:fukBackground1}). 

Homological mirror symmetry in this setting was proved by Gammage and Shende \cite{gammage2017mirror} using \cite{ganatra2018microlocal} to convert sheaf-theoretic calculations to Fukaya categories and \cite{zhou2020lagrangian} to calculate skeleta. 
In line with \eqref{fsigma}, we let $-\mathfrak{f}_{\Sigma^*}$ be the boundary at infinity of $-\mathbb{L}_{\Sigma^*}$.
The microlocal sheaf analog of $\mathcal{W}(\eff)$ is the dg category $\mu\mathrm{sh}_{-\mathfrak{f}_{\Sigma^*}}(-\mathfrak{f}_{\Sigma^*})^c$ of compact objects in the dg category of microlocal sheaves on $-\mathfrak{f}_{\Sigma^*}$ (see \cite[Section 7.5]{ganatra2018microlocal} for a complete definition). 
We let $\mu^* : \mu\mathrm{sh}_{-\mathfrak{f}_{\Sigma^*}}(-\mathfrak{f}_{\Sigma^*})^c \to \mathrm{Sh}_{- \mathbb{L}_{\Sigma^*}}(M^*_{S^1})^c$ be the left adjoint to microlocalization. 
Then, \citeauthor{zhou2020lagrangian}'s computation of the skeleton of $F$ suffices to compute the cup functor in terms of these microlocal sheaf categories using the following result of \citeauthor{ganatra2018microlocal}:

\begin{thm}[Cf. Equations (1.4) and (7.31) of \cite{ganatra2018microlocal}] \label{thm:GPSmicro}
There is a diagram of $A_\infty$ functors 
$$\begin{tikzcd}
    \Perf \cW(\eff) \ar{d}{\cup} \ar[leftrightarrow]{r} & \mu\mathrm{sh}_{-\mathfrak{f}_{\Sigma^*}}(-\mathfrak{f}_{\Sigma^*})^c \ar{d}{\mu^*}\\
    \Perf \W(M^*_{\C^*}, \mathfrak{f}_{\Sigma^*})  \ar[leftrightarrow]{r}{GPS} & \mathrm{Sh}_{- \mathbb{L}_{\Sigma^*}}(M^*_{S^1})^c 
\end{tikzcd}$$
in which the horizontal functors are quasi-equivalences, and the diagram commutes up to natural quasi-isomorphism.  
\end{thm}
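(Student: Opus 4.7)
The plan is to deduce the theorem essentially directly from results of Ganatra--Pardon--Shende \cite{ganatra2018microlocal}, after aligning our geometric setup with theirs. The key preparatory input is already in hand from the preceding sections: by Proposition \ref{prop:GSZ} together with Lemma \ref{lem:tailor_equivalence}, the Liouville completion $\eff = \widehat{\fo}$ is Liouville isomorphic to a Weinstein manifold whose skeleton, together with its boundary at infinity, matches (up to the sign conventions implicit in the identification \eqref{eq:Leg_trans}) the data $\Lambda_{\Sigma^*}$ and $-\mathfrak{f}_{\Sigma^*}$ sitting inside $T^*M^*_{S^1} \simeq M^*_{\C^*}$. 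With this skeleton identification in place, the top horizontal equivalence follows from applying the microlocal sheaf equivalence of \cite[Theorem 1.4]{ganatra2018microlocal} to $\eff$, in which case the ambient microlocal sheaf category is computed on the skeleton and therefore equals $\mu\mathrm{sh}_{-\mathfrak{f}_{\Sigma^*}}(-\mathfrak{f}_{\Sigma^*})^c$. The bottom horizontal equivalence is a direct instance of \cite[Theorem 1.1]{ganatra2018microlocal} applied to the stopped cotangent bundle $(T^*M^*_{S^1}, \mathfrak{f}_{\Sigma^*})$, which is exactly the main case $GPS$ treated in \cref{thm:kgps}.

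The core of the proof is matching the left vertical functor $\cup$ with the right vertical functor $\mu^*$. By the construction in \eqref{cupeqn} and Corollary \ref{cupfunctorcommutativity}, $\cup$ is, up to the equivalence $\cW(\Sprime) \simeq \cW(M^*_{\C^*}, \mathfrak{f}_{\Sigma^*})$, the composition of the stabilization functor $stab$ with the pushforward along a boundary sector embedding $\eff \times T^*[0,1] \hookrightarrow \Sprime$, and this pushforward models the sector inclusion of a standard Weinstein neighbourhood of the stop $\fo \subset \partial_{\infty}M^*_{\C^*}$. The Ganatra--Pardon--Shende sectorial framework --- specifically the compatibility results summarised in equations (1.4) and (7.31) of \cite{ganatra2018microlocal} --- asserts precisely that, under the two sheaf equivalences fixed in the previous step, such a pushforward from a standard neighbourhood of a stop corresponds to the left adjoint $\mu^*$ of the microlocalization functor along that stop. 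Assembling the squares yields the desired commutative diagram, with commutativity up to a natural quasi-isomorphism supplied by \emph{loc.\ cit.}

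The principal obstacle is purely bookkeeping: one has to verify that the stabilization, grading and orientation/spin conventions we fixed in \Cref{subsec:fukBackground1} and \Cref{subsec:cupFunctor} (the choice of cotangent fibre $\R \subset T^*[0,1]$ with its standard spin structure and grading $\pi \in \R$, and the tautological polarization of $T^*M^*_{S^1}$) are compatible with those in \cite{ganatra2018microlocal}, and that the sign distinguishing $-\mathbb{L}_{\Sigma^*}$ from $\mathbb{L}_{\Sigma^*}$ is propagated coherently through the Legendre transform \eqref{eq:Leg_trans}. These checks are routine and do not introduce new geometric content beyond what is already recorded in \Cref{sec:symp_setup} and \Cref{subsec:cupFunctor}, so the proof reduces to a careful citation of \cite{ganatra2018microlocal} once the skeleton of $\eff$ is identified with $-\mathfrak{f}_{\Sigma^*}$.
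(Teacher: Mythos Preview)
The paper does not supply its own proof of this theorem: it is stated as a direct citation of \cite{ganatra2018microlocal} (equations (1.4) and (7.31)), with the only preparatory remark being the sentence immediately preceding it, that Zhou's computation of the skeleton of $\eff$ (Proposition \ref{prop:GSZ}) is what places us in the setting where those results apply. Your proposal is a correct and reasonable elaboration of exactly this---you invoke the skeleton identification and then cite the relevant GPS theorems for the horizontal equivalences and the compatibility of $\cup$ with $\mu^*$---so your approach agrees with the paper's.
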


Then, \citeauthor{gammage2017mirror} compare these microlocal sheaf categories with coherent sheaves:

\begin{thm}[\cite{gammage2017mirror}, Theorem 7.4.1]\label{thm:GSmicro}
	There is a diagram of $A_\infty$ functors
	\[\begin{tikzcd}
			\mu\mathrm{sh}_{-\mathfrak{f}_{\Sigma^*}}(-\mathfrak{f}_{\Sigma^*})^c \ar{d}{\mu^*} \ar[leftrightarrow]{r} & D^b_{dg}Coh(\partial Y^*) \arrow{d}{ i_*}  \\
			\mathrm{Sh}_{- \mathbb{L}_{\Sigma^*}}(M^*_{S^1})^c \ar[leftrightarrow]{r}{K} & D^b_{dg}Coh(Y^*) \end{tikzcd}\]
	in which the horizontal functors are quasi-equivalences, and the diagram commutes up to natural quasi-isomorphism. 
\end{thm}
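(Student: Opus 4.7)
The plan is to deduce the theorem from Kuwagaki's non-equivariant coherent-constructible correspondence $K$ (the bottom horizontal equivalence, already recorded in \cref{thm:kgps} and identified with the FLTZ functor in \cref{rmk:k=fltz}) by realizing both vertical arrows $i_*$ and $\mu^*$ as fully faithful inclusions of compatible \emph{kernels} of matching restriction functors on the two sides of the square, and then matching these kernel subcategories under $K$.

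First I would set up the two recollement-type descriptions. On the B-side, let $j: M^*_{\mathbb{G}_m} \hookrightarrow Y^*$ denote the inclusion of the dense torus orbit, whose complement is the closed immersion $i: \partial Y^* \hookrightarrow Y^*$. Since $i$ is a closed immersion into a smooth variety, $i_*$ is fully faithful with essential image exactly the full subcategory of $E \in D^b_{dg}Coh(Y^*)$ satisfying $j^*E = 0$; thus $D^b_{dg}Coh(\partial Y^*) \xrightarrow{i_*} D^b_{dg}Coh(Y^*)$ identifies the source with $\ker(j^*)$. On the A-side, the subskeleton $\{0\} \times M^*_{S^1} \subset -\mathbb{L}_{\Sigma^*}$ corresponds to locally constant sheaves, and microstalk restriction yields a functor $r: \mathrm{Sh}_{-\mathbb{L}_{\Sigma^*}}(M^*_{S^1})^c \to \mathrm{Loc}(M^*_{S^1})^c$. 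By the general microsheaf formalism (see, e.g., \cite{ganatra2018microlocal}), the left adjoint to microlocalization $\mu^*$ is fully faithful with essential image exactly $\ker(r)$, identifying $\mu\mathrm{sh}_{-\mathfrak{f}_{\Sigma^*}}(-\mathfrak{f}_{\Sigma^*})^c$ with $\ker(r)$.

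Next I would prove the key compatibility $r \circ K \simeq K_0 \circ j^*$, where $K_0: D^b_{dg}Coh(M^*_{\mathbb{G}_m}) \xrightarrow{\sim} \mathrm{Loc}(M^*_{S^1})^c$ is the standard mirror equivalence identifying $\mathcal{O}_\xi$ with the rank-one local system $\xi$ via Fourier--Mukai. This compatibility is built into Kuwagaki's construction: the CCC is functorial for toric morphisms, and the open inclusion of the dense torus orbit corresponds on the sheaf side to the functor that forgets the microsupport condition and restricts to locally constant sheaves. Granting this, the equivalence $K$ restricts to an equivalence $\ker(j^*) \xrightarrow{\sim} \ker(r)$, which via the identifications of the previous paragraph gives the top horizontal equivalence; commutativity of the square is then automatic, since the two vertical functors are, by construction, the tautological inclusions of these identified kernel subcategories.

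The main obstacle is establishing the compatibility $r \circ K \simeq K_0 \circ j^*$. The most direct route is to unwind Kuwagaki's construction and verify that restriction of microsupport away from $-\mathfrak{f}_{\Sigma^*}$ matches restriction of coherent sheaves to the dense torus on the nose. An alternative is a stratum-by-stratum gluing argument: stratify $\partial Y^*$ by toric orbits indexed by non-zero cones $\sigma \in \Sigma^*$, identify each orbit closure with the toric variety of the quotient fan $\Sigma^*/\sigma$, apply Kuwagaki's CCC to each stratum, and then match the gluing data on the two sides (which amount to restriction functors between strata of $\partial Y^*$ versus microlocal restriction functors between strata of $-\mathfrak{f}_{\Sigma^*}$). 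Either route ultimately reduces to the same combinatorial verification that the two stratifications and their functorial relationships are mirror.
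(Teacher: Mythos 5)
This statement is a citation in the paper, not a theorem proved there: the paper simply quotes \cite[Theorem 7.4.1]{gammage2017mirror} and attributes the result to Gammage--Shende. Your proposal is therefore an attempt at an original proof, and it should be evaluated on its own terms.

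Your primary plan contains a genuine error. You assert that, since $i:\partial Y^*\hookrightarrow Y^*$ is a closed immersion into a smooth variety, $i_*\colon D^b_{dg}Coh(\partial Y^*)\to D^b_{dg}Coh(Y^*)$ is fully faithful with essential image $\ker(j^*)$. This is false. For a closed immersion $i$ of positive codimension, $Li^*i_*\neq\mathrm{id}$: there are correction terms coming from the conormal bundle of $\partial Y^*$ in $Y^*$, so $i_*$ is not fully faithful (e.g.\ $\Hom_{Y^*}(i_*\cO_{\partial Y^*},i_*\cO_{\partial Y^*}[1])$ contains classes from the normal bundle which have no preimage on $\partial Y^*$). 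The category that does form a recollement with $D^b_{dg}Coh(Y^*)$ and $D^b_{dg}Coh(M^*_{\mathbb{G}_m})$ is $D^b_{\partial Y^*}Coh(Y^*)$ --- complexes with cohomology set-theoretically supported on the boundary --- and this strictly contains the image of $i_*$ (it contains, for instance, the structure sheaves of all infinitesimal thickenings of $\partial Y^*$). So the identification $D^b_{dg}Coh(\partial Y^*)\simeq\ker(j^*)$ fails, and the subsequent deduction ``$K$ restricts to an equivalence of kernels, so we get the top horizontal arrow'' does not give the theorem. The corresponding microsheaf-side claim should also be treated with suspicion: the image of a left adjoint to a localization is a \emph{left orthogonal complement}, not a kernel, and these need not coincide, so the symmetry of the two ``kernel'' identifications is not automatic.

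What you present as the alternative --- stratify $\partial Y^*$ by toric orbits indexed by nonzero cones $\sigma\in\Sigma^*$, stratify $-\mathfrak{f}_{\Sigma^*}$ by cones, apply Kuwagaki's CCC on each stratum, and match the gluing/descent data --- is in fact the route Gammage--Shende take, and it is the one that works. You should promote it from fallback to main argument. But note that the phrase ``either route ultimately reduces to the same combinatorial verification'' undersells the content: checking that the Zariski descent data on the B-side agrees with the microsheaf descent data on the A-side is precisely the substance of Gammage--Shende's proof, and it is nontrivial because the two gluings live in a priori unrelated formalisms (coherent sheaf theory vs.\ constructible sheaves of categories on a Legendrian). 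That comparison is what would need to be spelled out for the proposal to be a proof rather than a plan.
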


Putting \Cref{thm:GPSmicro,thm:GSmicro} together and using \cref{cupfunctorcommutativity} to trade $\cW(\eff) \stackrel{\cup}{\to} \W(M^*_{\C^*}, \mathfrak{f}_{\Sigma^*})$ for $\cW(\eff) \stackrel{\cup}{\to} \cW(\Sprime)$, we obtain homological mirror symmetry for the affine hypersurface intertwining $\cup$ and $i_* $ :

\begin{thm}[\cite{gammage2017mirror}]\label{thm:GS}
	There is a diagram of $A_\infty$ functors
	\[\begin{tikzcd}
			D^b_{dg}Coh(\partial Y^*) \arrow{d}{ i_*}  \arrow{r}{GS} & \Perf\mathcal W(\eff) \arrow{d}{\cup} \\
			D^b_{dg}Coh(Y^*) \arrow{r}{KGPS} & \Perf \mathcal W(\Sprime)\end{tikzcd}\]
	in which the horizontal functors are quasi-equivalences, and the diagram commutes up to natural quasi-isomorphism.
\end{thm}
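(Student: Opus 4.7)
The plan is essentially a diagram chase: to assemble Theorem \ref{thm:GPSmicro} and Theorem \ref{thm:GSmicro} and then transport the result across the equivalence $\cW(\Sprime) \simeq \cW(M^*_{\C^*}, \mathfrak{f}_{\Sigma^*})$ using Corollary \ref{cupfunctorcommutativity}. First I would paste the two cited diagrams side by side along their shared column, which contains the microlocalization left adjoint $\mu^*: \mu\mathrm{sh}_{-\mathfrak{f}_{\Sigma^*}}(-\mathfrak{f}_{\Sigma^*})^c \to \mathrm{Sh}_{-\mathbb{L}_{\Sigma^*}}(M^*_{S^1})^c$. This produces a single commutative-up-to-natural-quasi-isomorphism rectangle
\[\begin{tikzcd}
    \Perf \cW(\eff) \ar{d}{\cup} \ar[leftrightarrow]{r} & \mu\mathrm{sh}_{-\mathfrak{f}_{\Sigma^*}}(-\mathfrak{f}_{\Sigma^*})^c \ar{d}{\mu^*} \ar[leftrightarrow]{r} & D^b_{dg}Coh(\partial Y^*) \arrow{d}{i_*} \\
    \Perf \cW(M^*_{\C^*}, \mathfrak{f}_{\Sigma^*}) \ar[leftrightarrow]{r}{GPS} & \mathrm{Sh}_{-\mathbb{L}_{\Sigma^*}}(M^*_{S^1})^c \ar[leftrightarrow]{r}{K} & D^b_{dg}Coh(Y^*)
\end{tikzcd}\]
whose horizontal arrows are quasi-equivalences in each row.

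Next I would erase the middle column by composing the two horizontal equivalences in each row. On the bottom row this composition is by definition $KGPS$. On the top row, the composition is a quasi-equivalence $D^b_{dg}Coh(\partial Y^*) \xrightarrow{\sim} \Perf \cW(\eff)$, which I would name $GS$. Commutativity of the resulting outer square up to natural quasi-isomorphism follows from pasting the two given natural quasi-isomorphisms of functors.

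Finally, I would splice Corollary \ref{cupfunctorcommutativity} onto the left-hand column. That corollary supplies a commutative square identifying $\cup: \cW(\eff) \to \cW(\Sprime)$ with $\cup: \cW(\eff) \to \cW(M^*_{\C^*}, \mathfrak{f}_{\Sigma^*})$ via the quasi-equivalence $\cW(\Sprime) \xrightarrow{\sim} \cW(M^*_{\C^*}, \mathfrak{f}_{\Sigma^*})$ of \eqref{eq:wrappedstoskel}. Passing to $\Perf$ and pasting with the previous rectangle replaces $\Perf \cW(M^*_{\C^*}, \mathfrak{f}_{\Sigma^*})$ by $\Perf \cW(\Sprime)$ and the bottom horizontal $KGPS$ by the composition with the equivalence $\Perf \cW(\Sprime) \to \Perf \cW(M^*_{\C^*}, \mathfrak{f}_{\Sigma^*})$ (which I also call $KGPS$, consistent with the notation introduced immediately before Theorem \ref{thm:ahh}). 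This gives exactly the diagram in the statement.

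There is no substantive obstacle: both ingredients \ref{thm:GPSmicro} and \ref{thm:GSmicro} are cited from the literature, and the required interchange with $\cW(\Sprime)$ is precisely the content of Corollary \ref{cupfunctorcommutativity}. The only nontrivial point to keep in mind is that the horizontal arrows in the cited diagrams are only commutative up to natural quasi-isomorphism of $A_\infty$ functors, so when composing two such squares one must paste the natural transformations and verify that the resulting composite quasi-isomorphism is again natural — this is formal but worth stating explicitly.
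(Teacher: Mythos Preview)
Your proposal is correct and is exactly the approach the paper takes: the paper states the theorem as an immediate consequence of pasting Theorems \ref{thm:GPSmicro} and \ref{thm:GSmicro} along their shared microlocal-sheaf column and then invoking Corollary \ref{cupfunctorcommutativity} to trade $\cW(M^*_{\C^*},\mathfrak{f}_{\Sigma^*})$ for $\cW(\Sprime)$. Your write-up is in fact more explicit than the paper's one-line justification.
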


Before proceeding further, we need to discuss the shriek pull-back functor $ i^! $. Note that there is a formal dg-adjoint (see Appendix \ref{section:adjoints}) to $ i_* $: 
\begin{align*} 
( i_* )^*: \Perf D^b_{dg}Coh(Y^*)  \to \operatorname{Mod} D^b_{dg}Coh(\partial Y^*). 
\end{align*} 
    Because $D^b_{dg} Coh(Y^*)$ is already triangulated and split-closed, we may remove `$\operatorname{Perf}$' from the source. 
    Observe that by the adjunction between $H( i_* )$ and (derived) shriek pull-back, we have that this adjoint functor agrees with the shriek  pull-back functor on the level of cohomology, and in particular lands in $\Perf D^b_{dg} Coh(\partial Y^*) = D^b_{dg} Coh(\partial Y^*)$. We take $ i^! $ to be the resulting functor: 
    \begin{align*} 
         i^! : D^b_{dg}Coh(Y^*)  \to D^b_{dg}Coh(\partial Y^*). 
    \end{align*}  
    \begin{rmk} Because dg-adjoints are unique up to quasi-isomorphism of functors (see Appendix \ref{section:adjoints}), the above definition of $ i^! $ will agree (up to quasi-isomorphism) with any other definition of shriek  pull-back that satisfies a dg-adjunction with $ i_* $.  \end{rmk}
\begin{cor}
There is a diagram of $A_\infty$ functors
	\[\begin{tikzcd}
			D^b_{dg}Coh(\partial Y^*)  \arrow{r}{GS} & \Perf \mathcal W(\eff)   \\
			D^b_{dg}Coh(Y^*) \arrow{u}{ i^! } \arrow{r}{KGPS} & \Perf \mathcal W(\Sprime) \arrow{u}{\cup^*}\end{tikzcd}\]
	which commutes up to natural quasi-isomorphism, and where $\cup^*$ is a right-adjoint to $\cup$. 
 \label{cor:adjoint_HMS}
\end{cor}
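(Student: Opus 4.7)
The plan is to deduce the corollary from Theorem \ref{thm:GS} by passing to formal right dg-adjoints throughout the commutative square. Because $GS$ and $KGPS$ are quasi-equivalences, they automatically serve as their own left and right adjoints (via their quasi-inverses); and because dg-adjoints, when they exist as functors rather than merely as module-valued constructions, are unique up to quasi-isomorphism (see Appendix \ref{section:adjoints}), the adjoint of a natural quasi-isomorphism of compositions is again a natural quasi-isomorphism.

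Concretely, first let $\cup^*: \Perf \cW(\Sprime) \to \operatorname{Mod} \cW(\eff)$ denote the formal right adjoint of $\cup$ obtained by composing pullback of modules along $\cup$ with the Yoneda embedding; and recall that $i^!$ was defined in the paragraph preceding the corollary as the analogous formal right adjoint of $i_*$, which happens to land in $\Perf D^b_{dg} Coh(\partial Y^*) = D^b_{dg} Coh(\partial Y^*)$. Starting from the natural quasi-isomorphism $\cup \circ GS \simeq KGPS \circ i_*$ furnished by Theorem \ref{thm:GS}, the plan is then to take right adjoints of both sides. Using that the right adjoint of a composition is the composition of the right adjoints in reverse order, and that the right adjoint of a quasi-equivalence is its quasi-inverse, this yields a natural quasi-isomorphism
\[ GS^{-1} \circ \cup^{*} \;\simeq\; i^{!} \circ KGPS^{-1} \]
of module-valued functors, equivalently $GS \circ i^{!} \simeq \cup^{*} \circ KGPS$.

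The remaining task is to justify that $\cup^*$ actually takes values in the subcategory of representable modules $\Perf \cW(\eff) \subset \operatorname{Mod} \cW(\eff)$, so that the statement of the corollary makes sense. This is forced by the displayed quasi-isomorphism: the right-hand side $i^{!} \circ KGPS^{-1}$ manifestly lands in $\Perf D^b_{dg} Coh(\partial Y^*)$, which is identified with $\Perf \cW(\eff)$ via the quasi-equivalence $GS$, so $\cup^*$ restricts to a functor $\Perf \cW(\Sprime) \to \Perf \cW(\eff)$ fitting into the claimed diagram.

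I do not anticipate a real obstacle: the argument is a formal consequence of Theorem \ref{thm:GS}, the definitions of $\cup^*$ and $i^!$ as formal dg-adjoints, and the uniqueness of dg-adjoints up to quasi-isomorphism. The only small subtlety is ensuring representability of $\cup^*$ on objects coming from $\Perf \cW(\Sprime)$, which is handled automatically by matching with $i^!$ through the horizontal equivalences; one should double-check in writing this up that the adjunction framework in Appendix \ref{section:adjoints} is set up uniformly enough to handle both sides simultaneously, but this is a bookkeeping matter rather than a substantive difficulty.
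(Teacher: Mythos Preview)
Your proposal is correct and matches the paper's proof essentially line for line: the paper also starts from Theorem~\ref{thm:GS}, passes to formal adjoints (phrased as ``naturality of taking the pullback of modules'') to obtain the square with $\cup^*$ landing in $\operatorname{Mod}\Perf\cW(\eff)$, and then uses commutativity together with the fact that $i^!$ already lands in representables to conclude that $\cup^*$ does too. The only cosmetic difference is that the paper displays the intermediate module-valued diagram explicitly rather than arguing via ``adjoint of a composition,'' but the content is identical.
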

\begin{proof}
	By Theorem \ref{thm:GS} and the naturality of taking the pullback of modules, we have a commutative diagram
    \begin{equation}\label{adjointsquare}
         \begin{tikzcd}
			D^b_{dg}Coh(\partial Y^*)  \arrow{r}{GS} & \operatorname{Mod} \Perf \mathcal W(\eff)   \\
			D^b_{dg}Coh(Y^*) \arrow{u}{ i^! } \arrow{r}{KGPS} & \Perf \Perf \mathcal W(\Sprime). \arrow{u}{\cup^*}\end{tikzcd}
        \end{equation}
    Because $\Perf \mathcal W(\Sprime)$ is triangulated and split-closed, we may remove the outer `$\Perf$' from the bottom row. It follows by commutativity of the diagram that $\cup^*$ also lands in $\Perf \Perf \mathcal W(\Sprime) = \Perf \mathcal W(\Sprime)$, which yields the claimed commutative diagram.

\end{proof}

\subsection{The cap functor}

We need a geometric understanding of the equivalence $GS$. 
In this subsection, we will show that the restriction of a line bundle to $\partial Y^*$ is mirror to a compact Lagrangian. 
We do this by calculating the adjoint $\cap$ to $\cup$ on the subcategory of Lagrangians mirror to line bundles. This $\cap$ functor will be $\ZZ \oplus M^*$ graded.  
The results of this subsection invoke one technical result, \cref{cor:constructionOfP}, which we temporarily defer to \cref{subsecHHP}.

\begin{prop} \label{lem:innerSquare}
  There exists a diagram of $A_\infty$ categories and functors
  \begin{equation}
   \begin{tikzcd}
    \partial \mafuk^{M^*} \arrow{r}{j_A}& \Perf \mathcal W(\eff)\\
    \mafuk^{M^*} \arrow{r}{HH} \arrow{u}{\cap_{mon}} & \Perf \mathcal W(\Sprime) \arrow{u}{\cup^*}
  \end{tikzcd}
\end{equation}
  which commutes up to natural quasi-isomorphism. Furthermore:
  \begin{enumerate}
      \item The top and bottom arrows are injective on objects and homotopy equivalences on morphisms (in particular, they are cohomologically fully faithful);
      \item $\cap_{mon}$ is $\ZZ\oplus M^*$ graded.
  \end{enumerate}
\end{prop}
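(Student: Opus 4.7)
The plan is to construct $\cap_{mon}$ as the composition of an enrichment of the Hanlon--Hicks functor $HH$ with the graded cap functor \eqref{gradedcap}. First I would invoke \cref{cor:constructionOfP} (deferred to \cref{subsecHHP}), which for each tropical Lagrangian section $\Lmon(\sF)$ should produce an $\Iprime$-arclike Lagrangian $\tilde L(\sF) \subset \Sprime$ whose negative pushoff is Hamiltonian isotopic to $\Lconic(\sF)$ in $\cW(\Sprime)$, and whose boundary, viewed via \cref{lem:compatibleIandW} as a Lagrangian in $\eff$, coincides with $\partial \Lmon(\sF)$. Equipped with the canonical anchorings of tropical Lagrangian sections, this yields a refined functor $HH^{\mathrm{arc}}: \mafuk^{M^*} \to \cP(\Sprime)^{M^*}$ that is quasi-isomorphic to $HH$ after post-composition with the inclusion $\cP(\Sprime) \hookrightarrow \Perf \cW(\Sprime)$. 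I would then set $\cap_{mon} := \cap^{M^*} \circ HH^{\mathrm{arc}}$, a $\ZZ \oplus M^*$-graded functor whose action on objects is $\Lmon(\sF) \mapsto \partial \Lmon(\sF)$, which lies in $\partial \mafuk^{M^*}$ by definition.

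Commutativity of the square up to natural quasi-isomorphism will then follow from the partial-adjunction equivalence of chain-level bimodules supplied by \cref{lem:def_capl}, which identifies, functorially in its arguments, the pullback module $\cup^* \cY_{\Lconic(\sF)}$ with the Yoneda module $\cY_{\partial \Lmon(\sF)}$ of the cap; this is exactly the required $\cup^* \circ HH \simeq j_A \circ \cap_{mon}$.

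For the remaining properties, both horizontal arrows are injective on objects essentially by definition, since distinct piecewise-linear $\sF$'s produce distinct Lagrangians (respectively, distinct boundaries) equipped with distinct canonical anchorings. That $HH$ is a homotopy equivalence on morphism complexes is part of \cref{thm:ahh}. The analogous statement for $j_A$ follows from the general principle that for compact exact Lagrangians in a Liouville manifold, Floer cochains compute wrapped Floer cochains without modification, so the embedding $\cF(\eff) \hookrightarrow \cW(\eff)$ is cohomologically fully faithful on the subcategory spanned by the $\partial \Lmon(\sF)$.

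The principal difficulty, and the reason for deferring to \cref{cor:constructionOfP}, is the first step: realising $\Lconic(\sF)$ (or a Hamiltonian-isotopic representative) inside the subcategory $\cP(\Sprime)$ of negative pushoffs of arclike Lagrangians. This requires matching the extended $1$-defining function $\Iprime$ of the deformed sector $\Sprime$ produced by \cref{adjustedsectors} with the conic Lagrangian construction of \cite{hanlon2022aspects}, exploiting the compatibility between $\Iprime$ and $\Im W_{t,1}$ near $\Lambda_{\Sigma^*}$ established in \cref{lem:compatibleIandW}; one also needs to verify that the Hamiltonian isotopy carrying $\Lconic(\sF)$ to the negative pushoff of $\tilde L(\sF)$ is compatible with the canonical anchorings, in order to obtain the $M^*$-graded lift.
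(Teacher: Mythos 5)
Your proposal matches the paper's proof in substance: both construct $\cap_{mon}$ as the composition of the arclike lift $HH_P$ (your $HH^{\mathrm{arc}}$) from \cref{cor:constructionOfP} with the graded cap functor $\cap^{M^*}$ of \eqref{gradedcap}, define $\partial\mafuk^{M^*}$ as the essential image, and deduce commutativity of the square from the partial adjunction supplied by \cref{lem:def_capl}. Your treatment is a bit more explicit than the paper's (which essentially just references the large commutative diagram), and your discussion of property (1) correctly reduces the claim for $j_A$ to the fact that compact exact Lagrangians have $CF^* = CW^*$ in $\cW(\eff)$ and invokes \cref{thm:ahh} for $HH$.
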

\begin{proof}

    The desired diagram follows from commutativity of the outer face of the following  diagram: 
    \[	
\begin{tikzpicture}
	     \node (v4) at (-7,1) {$\partial \mathcal F_{mon}^{M^*}$};
	     \node (v3) at (-7,-5) { $\mathcal F_{mon}^{M^*}$};
	     \node (v5) at (1,1) {$\Perf \mathcal W(\eff)$};
	     \node (v10) at (1,-5) {$\Perf \mathcal W(\Sprime)$};
	     
	     \node (v7) at (-3,-2) {$\mathcal P(\Sprime)^{M^*}$};
	    	     \draw  (v3) edge[->,dashed] node[midway, fill=white] {$\cap_{mon}$}(v4);
	   \draw  (v3) edge[->]  node[midway, fill=white] {$HH$} (v10);
	   \node (v1) at (-3,1) {$\mathcal W(\eff)^{M^*}$};
	   \draw  (v1) edge[->] (v5);
	   \draw  (v7) edge[->] node[midway, fill=white]{$\cap^{M^*}$} (v1);

	   \node at (-1,-1) {\Cref{lem:def_capl}};
	
	     \draw  (v4) edge[->,  dashed] (v1);
	 \draw  (v7) edge[ ->] (v10);
	  \draw (v10) edge[->] node[midway, fill=white]{$\cup^*$} (v5);
\node at (-3,-4) {\Cref{cor:constructionOfP}};
\draw  (v3) edge[->] node[fill=white, midway]{$HH_P$} (v7);
\end{tikzpicture}
    \]
 We define $\partial \mafuk^{M^*} \subset \mathcal \cF(F)^{M^*}$ to be the full subcategory whose objects lie in the image of  $ \cap^{M^*}\circ CF^*(\Lmon^+, -)$ where $\cap^{M^*}$ is as in \cref{gradedcap}.   We thus obtain a $\Z \oplus M^*$-graded functor  $\cap_{mon}:\mafuk^{M^*} \to \partial \mafuk^{M^*}$ which makes the upper left square of the following diagram commute.

\end{proof}

Let $\Pic_{dg}^{M^*}(\partial Y^*)$ be the full subcategory on objects in the image of $\Pic_{dg}^{M^*}(Y^*)\to D^b_{dg}Coh(Y^*)\to D^b_{dg}Coh(\partial Y^*)$.
\begin{prop}\label{prop:GS_Li}
There is a diagram of $A_\infty$ functors
	\[\begin{tikzcd}
			\Pic_{dg}^{M^*}(\partial Y^*)  \arrow[leftrightarrow]{r} & \partial \mafuk^{M^*}  \\
			\Pic_{dg}^{M^*}(Y^*) \arrow{u}{ i^! } \arrow[leftrightarrow]{r}{A} & \mafuk^{M^*} \arrow{u}{\cap_{mon}},\end{tikzcd}
   \]
in which 
\begin{enumerate}
    \item the bottom and vertical functors are $\Z \oplus M^*$-graded; \label{item:gradedMZ}
    \item the top and bottom functors are quasi-isomorphisms; \label{item:qi}
    \item the diagram commutes up to natural quasi-isomorphism;\label{item:commutes}
    \item \label{it:top_arr_GS} the top arrow of the diagram fits into a diagram
    \[\begin{tikzcd}
			D^b_{dg}Coh(\partial Y^*) \arrow[leftrightarrow]{r}{GS} & \Perf \mathcal W(\eff) \\
   \Pic_{dg}^{M^*}(\partial Y^*)  \arrow[leftrightarrow]{r}  \arrow[hookrightarrow]{u} & \partial \mafuk^{M^*}  \arrow[hookrightarrow]{u}
			\end{tikzcd}
   \]
   which commutes up to natural quasi-isomorphism.
\end{enumerate}   
\end{prop}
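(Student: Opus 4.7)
My plan is to assemble the top horizontal arrow by stacking the commutative squares already produced in \Cref{thm:ahh}, \Cref{prop:sameHMS}, \Cref{cor:adjoint_HMS}, and \Cref{lem:innerSquare}, and then verify that the resulting functor realizes the restriction of $GS$ to $\Pic_{dg}^{M^*}(\partial Y^*)$. Concretely, on $\Pic_{dg}^{M^*}(Y^*)$ (viewed inside $D^b_{dg}Coh(Y^*)$) these squares produce a chain of natural quasi-isomorphisms
\[
GS \circ i^! \;\simeq\; \cup^* \circ KGPS \;\simeq\; \cup^* \circ AHH \;\simeq\; \cup^* \circ HH \circ A \;\simeq\; j_A \circ \cap_{mon} \circ A,
\]
where the first step is \Cref{cor:adjoint_HMS}, the second uses \Cref{prop:sameHMS}, the third is the outer rectangle of \Cref{thm:ahh} (using $A$ in the direction $\Pic_{dg}^{M^*}(Y^*) \to \mafuk^{M^*}$ as a quasi-inverse), and the last is \Cref{lem:innerSquare}.

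Since $\Pic_{dg}^{M^*}(\partial Y^*)$ is by definition the essential image of $\Pic_{dg}^{M^*}(Y^*)$ under $i^!$, the display above shows that $GS$ sends every object of $\Pic_{dg}^{M^*}(\partial Y^*)$ into the essential image of $j_A$. Because $j_A$ is cohomologically fully faithful (\Cref{lem:innerSquare}), this factorization defines the desired top arrow $\Pic_{dg}^{M^*}(\partial Y^*) \to \partial \mafuk^{M^*}$ uniquely up to natural quasi-isomorphism. Property \eqref{item:commutes} then holds by construction from the chain of quasi-isomorphisms, and property \eqref{it:top_arr_GS} is tautological.

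For \eqref{item:qi}, cohomological fully faithfulness of the top arrow follows from the quasi-equivalence property of $GS$ combined with the cohomological fully faithfulness of $j_A$. For essential surjectivity, note that every object of $\partial \mafuk^{M^*}$ is, by the construction in the proof of \Cref{lem:innerSquare}, quasi-isomorphic to some $\cap_{mon}(\Lmon(\sF)) \simeq \cap_{mon}(A(\cO_{Y^*}(\sF)))$; the chain above shows that this is the image of $i^!\cO_{Y^*}(\sF) \in \Pic_{dg}^{M^*}(\partial Y^*)$ under the top arrow, which completes \eqref{item:qi}. The grading claim \eqref{item:gradedMZ} is inherited directly: $A$ is $\Z \oplus M^*$-graded by \Cref{thm:ahh}, $\cap_{mon}$ is $\Z \oplus M^*$-graded by \Cref{lem:innerSquare}, and $i^!$ is $\Z \oplus M^*$-graded because all line bundles in question carry their canonical $M^*$-equivariant structures.

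The main obstacle is bookkeeping: one must consistently track the direction in which $A$ is used (Abouzaid's theorem gives a quasi-isomorphism, but the proof requires a chosen quasi-inverse), and verify that all of the natural quasi-isomorphisms above glue together coherently as $A_\infty$ functors rather than merely at the level of objects. The linchpin is \Cref{prop:sameHMS}: it is the unique point at which the microlocal-sheaf construction of Kuwagaki--Ganatra--Pardon--Shende and the Floer-theoretic construction of Abouzaid--Hanlon--Hicks are directly identified, and without it the two commutative squares (\Cref{cor:adjoint_HMS} and \Cref{lem:innerSquare}) would not align to produce the desired factorization. Because $GS$ itself is only $\Z$-graded (not $\Z \oplus M^*$-graded), we do not claim more for the top arrow; the $M^*$-grading enhancement is postponed to the more refined analysis in \cref{sec:A0_B0}.
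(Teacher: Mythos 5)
Your proof follows essentially the same route as the paper: you assemble the chain of natural quasi-isomorphisms $GS\circ i^! \simeq \cup^*\circ KGPS \simeq \cup^*\circ AHH \simeq \cup^*\circ HH\circ A \simeq j_A\circ\cap_{mon}\circ A$ from the same four ingredients (\cref{cor:adjoint_HMS}, \cref{prop:sameHMS}, \cref{thm:ahh}, \cref{lem:innerSquare}) and then factor the resulting functor through the cohomologically fully faithful $j_A$ via what is in effect \cref{lem:const_fun_easy}, which is exactly the paper's cube argument written out as a linear chain. The treatment of essential surjectivity, grading, and property~\eqref{it:top_arr_GS} also matches the paper's, so the only difference is presentational.
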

\begin{proof}
The bottom functor is constructed and shown to be a $\Z \oplus M^*$-graded quasi-isomorphism in \cref{thm:ahh}. 
The left vertical functor is manifestly $\Z \oplus M^*$-graded; the right vertical functor is constructed, and shown to be $\Z \oplus M^*$-graded, in \cref{lem:innerSquare}.

In order to construct the top functor, we observe that both desired commutative diagrams fit in as the left and top faces of the following cube, where every map apart from the dashed line has been defined, and $j_A, j_B, \tilde j_B$ are inclusions of subcategories.
\begin{equation}
  \begin{tikzpicture}    \node (v2) at (-7.5,3) {$Pic_{dg}^{M^*}(\partial Y^*)$};
    \node (v1) at (-7.5,-2) {$Pic_{dg}^{M^*}(Y^*)$};
    \node (v4) at (-5,1.5) {$\partial \mathcal F_{mon}^{M^*}$};
    \node (v3) at (-5,-0.5) { $\mathcal F_{mon}^{M^*}$};
    \node (v5) at (-1,-0.5) {$\Perf \mathcal W(S)$};
    \node (v6) at (-1,1.5) {$\Perf \mathcal W(\eff)$};
    \node (v7) at (2.5,-2) {$D^b_{dg}Coh(Y^*)$};
    \node (v8) at (2.5,3) {$D^b_{dg}Coh(\partial Y^*)$};
    \draw  (v1) edge[->] node[midway, fill=white] {$ i^! $} (v2);
    \draw  (v3) edge[->] node[midway, fill=white] {$\cap_{mon}$}(v4);
    \draw  (v5) edge[->] node[midway, fill=white] {$\cup^*$} (v6);
    \draw  (v7) edge[->] node[midway, fill=white] {$ i^! $} (v8);
    \draw  (v3) edge[->] node[midway, fill=white] {$HH$} (v5);
    \draw  (v4) edge[->] node[midway, fill=white]{$j_A$}(v6);
    \draw  (v2) edge[->] node[midway, fill=white]{$j_B$} (v8);
    \draw  (v1) edge[->] node[midway, fill=white]{$\tilde j_B$}(v7);
    \draw  (v1) edge[->] node[midway, fill=white,] {A} (v3);
    \draw  (v2) edge [dashed, ->] (v4);
    \draw  (v6) edge[<-] node[midway, fill=white] {$GS$} (v8);
    \draw  (v5) edge[<-] node[midway, fill=white] {$KGPS$}(v7);
    \node at (-6.5,0.5) {};
    \node at (-3,-1.5) {\Cref{thm:ahh} and \Cref{prop:sameHMS}};
    \node at (0.5,0.5) {\Cref{cor:adjoint_HMS}};
    \node at (-3,0.5) {\Cref{lem:innerSquare}};
  \end{tikzpicture}	
  \label{eq:bigDiagram}
\end{equation}

The outer face commutes by definition of the category $\Pic_{dg}^{M^*}(\partial Y^*)$.
The bottom, inner, and right faces commute up to natural quasi-isomorphism by \Cref{thm:ahh,prop:sameHMS}, \Cref{lem:innerSquare}, and \Cref{cor:adjoint_HMS} respectively. 
We construct the dashed arrow so that the top face of the commutative diagram commutes up to natural quasi-isomorphism, by applying \Cref{lem:const_fun_easy} to the functors $\cF = GS \circ j_B$ and $\cG = j_A$. 
To check the hypotheses of the Lemma, we first observe that both $j_A$ and $GS \circ j_B$ are cohomologically fully faithful, as $j_A$ and $j_B$ are inclusions of subcategories and $GS$ is a quasi-equivalence.  
To check that $GS \circ j_B$ and $j_A$ have the same essential image, we use the commutativity of the bottom, inner, right, and outer faces, to show that $j_A \circ \cap_{mon} \circ A$ is naturally isomorphic to $GS \circ j_B \circ  i^! $; together with the fact that $A$, $\cap_{mon}$, and the left-hand version of $ i^! $ are essentially surjective by definition. 
Thus we may construct the dashed arrow, and conclude that it is a quasi-equivalence, by \Cref{lem:const_fun_easy}. 
The commutativity of the left-hand and top square then follows from the commutativity of the remaining squares.

\end{proof}

\subsection{Arclike Lagrangians mirror to line bundles on toric varieties}\label{subsecHHP}

In \S \ref{subsec:productLagconstr}, we recall the construction of the tropical Lagrangian section $\tilde L_{mon}(\sF)$ from \cite{hanlon2019monodromy}. We prove it is arclike, and define $\tilde {L}(\sF)$ to be its negative pushoff in $\Sprime$ in the sense of Appendix \ref{sec:adjoints}. In \S \ref{subsec:HanlonHicksLag}, we recall the construction of the cylindrical (but not arclike) tropical Lagrangian section $\Lconic(\sF)$ from \cite{hanlon2022aspects}. Then, in \S \ref{subsec:lcon_lmon}, we prove \cref{cor:HHvsArclike}, which says that the Lagrangians $\Lconic(\sF)$ and $\tilde L(\sF)$ (together with their canonical anchorings) define quasi-isomorphic objects of $\mathcal W(M^*_{\C^*} , \mathfrak f_{\Sigma^*})^{M^*}$. Using this, we easily deduce the following result, which is the main result of this section, and a necessary input for the last subsection's \cref{lem:innerSquare}. 

\begin{prop}
\label{cor:constructionOfP}
There exists a $\Z \oplus M^*$-graded, cohomologically fully faithful $A_\infty$ functor $HH_P$ which makes the following diagram commute up to natural quasi-isomorphism:
\[\begin{tikzpicture}
\node (v1) at (-1.5,0.5) {$\mafuk^{M^*}$};
\node (v3) at (5.5,0.5) {$\Perf \mathcal W(\Sprime)^{M^*}$};
\node (v4) at (5.5,3) {$\mathcal P(\Sprime)$};
\draw  (v1) edge[->] node[midway, below]{$HH$} (v3);
\draw  (v4) edge[->] node[midway, right]{$\mathcal Y_{(-)}$} (v3);
\draw  (v1) edge[dotted, ->] node[midway, left]{$HH_P$} (v4);
\end{tikzpicture}.
\]
\end{prop}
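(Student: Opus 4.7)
The plan is to define $HH_P$ on objects by $\Lmon(\sF) \mapsto \tilde L(\sF)$, where $\tilde L(\sF)$ is the negative pushoff in $\Sprime$ of the tropical Lagrangian section $\tilde L_{mon}(\sF)$ from \cite{hanlon2019monodromy}, and then lift $HH$ to a functor with values in $\mathcal{P}(\Sprime)$ by a standard $A_\infty$-categorical argument.

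First, I would show that $\tilde L_{mon}(\sF)$ is $\Iprime$-arclike in the sense of \cref{def:Iarclike}: its construction is local near $\partial \Sprime$ and patterned on the product structure $\eff \times T^*[0,1]$ provided by \cref{adjustedsectors}, so by combining this local form with the compatibility $\aye^{-1}(0) = (\Im W_{t,1})^{-1}(0)$ near $\mathbb{L}_{\Sigma^*}$ from \cref{lem:compatibleIandW} (and the controlled deviation between $\aye$ and $\Iprime$), one sees that $\Iprime$ is indeed constant near the boundary of $\tilde L_{mon}(\sF)$. Taking the negative pushoff in $\Sprime$ as defined in \cref{sec:adjoints} yields an object $\tilde L(\sF) \in \mathcal{P}(\Sprime)$, and this object carries the same canonical anchoring as $\tilde L_{mon}(\sF)$ and hence a natural $\Z\oplus M^*$-grading.

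Next, I would invoke the identification $\tilde L(\sF) \simeq \Lconic(\sF)$ in $\mathcal{W}(M^*_{\C^*}, \mathfrak{f}_{\Sigma^*})^{M^*}$ recorded in \cref{cor:HHvsArclike} (proved in \S\ref{subsec:lcon_lmon}). Under the equivalence $\mathcal W(\Sprime) \simeq \mathcal W(M^*_{\C^*},\mathfrak{f}_{\Sigma^*})$, this says that the Yoneda module of $\tilde L(\sF)$ is quasi-isomorphic to $HH(\Lmon(\sF))$ in $\Perf \mathcal W(\Sprime)^{M^*}$. Consequently the essential image of $\mathcal{Y}_{(-)}\colon \mathcal{P}(\Sprime) \to \Perf \mathcal W(\Sprime)^{M^*}$ contains the essential image of $HH$. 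Since $\mathcal{Y}_{(-)}$ is cohomologically fully faithful (Yoneda into modules is fully faithful, and we are restricting to a full subcategory), the hypotheses of \cref{lem:const_fun_easy} are satisfied with $\cF = HH$ and $\cG = \mathcal{Y}_{(-)}$, yielding a $\Z\oplus M^*$-graded $A_\infty$ functor $HH_P \colon \mafuk^{M^*} \to \mathcal P(\Sprime)$ and a natural quasi-isomorphism $\mathcal{Y}_{(-)} \circ HH_P \simeq HH$. Cohomological full faithfulness of $HH_P$ follows from that of $HH$ (from \cref{thm:ahh}) together with cohomological full faithfulness of $\mathcal{Y}_{(-)}$.

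The main obstacle in this program is the identification $\tilde L(\sF) \simeq \Lconic(\sF)$ in the $M^*$-graded wrapped category, which requires comparing two rather different geometric models of the mirror Lagrangian to a toric line bundle (the Hanlon monodromy section, which is arclike, versus the Hanlon--Hicks cylindrical model used to define $HH$) and also matching their anchorings so that the identification is $M^*$-graded rather than merely $\Z$-graded. Everything else---arclikeness, the negative pushoff construction, and the final $A_\infty$ lifting---is formal once this geometric comparison is in place.
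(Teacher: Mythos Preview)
Your proposal is correct and follows essentially the same approach as the paper: both apply \cref{lem:const_fun_easy} to $\cF = HH$ and $\cG = \mathcal{Y}_{(-)}$, using \cref{cor:HHvsArclike} to verify that the essential image of $HH$ lands in that of $\mathcal{Y}_{(-)}$, and deduce cohomological full faithfulness of $HH_P$ from that of $HH$ and of Yoneda. The preliminary steps you outline (arclikeness of $\tilde L_{mon}(\sF)$ and construction of the negative pushoff $\tilde L(\sF)$) are handled in the paper in the subsections immediately preceding the proposition (\S\ref{subsec:productLagconstr}, culminating in \cref{cor:LA_arclike} and \eqref{lmonnegativepushoff}), so your inclusion of them in the proof sketch is appropriate context rather than a deviation; one small imprecision is that $\tilde L_{mon}(\sF)$ is first shown to be $\aye$-arclike in $\ess$ and then extended via the flow of $V$ from \cref{adjustedsectors} to an $\Iprime$-arclike Lagrangian $(\tilde L_{mon}(\sF))'$ in $\Sprime$, rather than being directly $\Iprime$-arclike.
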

\begin{proof}
    We apply \Cref{lem:const_fun_easy} to the $\Z \oplus M^*$-graded versions of the functors $\cF = HH$ and $\cG =  \mathcal Y_{(-)}$. 
    These are both cohomologically fully faithful.
    
    By \Cref{cor:HHvsArclike}, there exists a quasi-isomorphism in $\cW(M^*_{\C^*}, \mathfrak f_{\Sigma^*})^{M^*}$ between the Lagrangian submanifolds $\Lconic(\sF)$ and $\tilde L(\sF)$. By pulling back this isomorphism along the inclusion (i.e., by applying the quasi-inverse of the inclusion quasi-equivalence) $(i')^*:\Perf\cW(M^*_{\C^*}, \mathfrak f _{\Sigma^*})^{M^*}\to \Perf \cW(\Sprime)^{M^*}$, we obtain quasi-isomorphisms $HH(\Lmon(\sF))= (i')^*\Lconic(\sF) \simeq \mathcal Y_{\tilde L(\sF)}$. 
    Thus the essential image of $HH$ is contained in that of $\mathcal Y_{(-)}$, so the result follows by \Cref{lem:const_fun_easy}.

\end{proof}

\subsubsection{Arclike tropical Lagrangian sections\texorpdfstring{: $\tilde L_{mon}(\sF)$}{} and their negative pushoffs} \label{subsec:productLagconstr}

In this section, we give an explicit construction of Lagrangian sections $\Lmon(\sF)$ which are arclike in $\ess$.
We construct these sections following \cite[Proposition 2.13]{hanlon2019monodromy}. Let $\sF: M_\R \to \R$ be an integral piecewise linear function on $\Sigma^*$, that is, $\sF$ is the support function of a divisor on $Y^*$. Let $\eta: M_\R \to \R$ be a smooth symmetric mollifier function supported on the ball of radius $1$ around the origin with respect to an arbitrarily chosen Euclidean metric. Then, for small $\delta >0$, we set $\eta_\delta(u) := \delta^{-n}\eta(u/\delta)$ and
\[ H_{mon}^{\sF,\delta}(u) := \int_{M_\R} \sF(u-y) \eta_\delta(y) \, dy .\]
\begin{defn} \label{def:tropLagSection}
	The tropical Lagrangian section associated with $\sF$ is
	\[  \Lmon^\delta(\sF) := \text{graph}\left(dH_{mon}^{\sF,\delta}\right) = \left\{ \left(u, dH_{mon}^{\sF,\delta}(u)\right) : u \in M_\R \right\} \subset M_\R \times M^*_{S^1} = T^*M^*_{S^1} \simeq M^*_{\C^*}. \]
 Sometimes we will drop $\delta$ from the notation.
\end{defn}

In \cite{hanlon2019monodromy}, these Lagrangian sections are shown to be mirror objects to line bundles in a modified version of $\mafuk^{M^*}$. 
However, it is observed in \cite[Section 4.4]{hanlon2019monodromy} that in the Fano setting, $\Lmon(\sF)$ is also a tropical Lagrangian section in the sense of \cite{abouzaid2009morse}, i.e., is an object of $\mafuk^{M^*}$.

\begin{prop}
	\label{prop:tropLagInNeighOfSkeleton} 
Let $\eps > 0$. We may choose $\delta$ sufficiently small so that:
\begin{enumerate}
    \item \label{it:Lmonisreal} We have
 \begin{equation} \label{eq:Lmonisreal} \left( (1 - \psi_\alpha (t,x)) \cdot z^\alpha\right)|_{\Lmon^\delta(\sF)} \in \R_{\geq 0}
 \end{equation}
 for all $\alpha \in A$ and all $x$. In particular, $ \left( \Im \, W_{t,1} \right)|_{\Lmon^\delta(\sF)} = 0$.
 \item \label{it:Lmon_in_neigh} $\Lmon^\delta(\sF) $ is contained in the $\eps$-neighbourhood of ${\mathbb L}_{\Sigma^*}$.
 \item \label{it:TLmon} The tangent space to $\Lmon^\delta(\sF)$ is contained in the $\eps$-neighbourhood of the set
\begin{align}\label{eq:where_TL_lies} Q:=\bigcup_{\sigma \in \Sigma^*} \sigma \times \sigma^\perp \times M_\R \times \sigma^\perp \subset M_\R \times M^*_{S^1} \times M_\R \times M^*_\R = T(M_\R \times M^*_{S^1}).
\end{align}
\end{enumerate}
\end{prop}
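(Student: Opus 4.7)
The plan is to analyze $dH_{mon}^{\sF,\delta}(u)$ and its Hessian via the convolution formula, and apply the analysis to each of the three assertions.

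I would begin with a structural lemma. Since $\sF$ is continuous and piecewise linear on $\Sigma^*$ with slopes $\sF|_\sigma \in M^*$ on each cone, and since continuity across any cone $\tau$ forces the differences $\sF|_{\sigma_1} - \sF|_{\sigma_2}$ (for top-dimensional $\sigma_1, \sigma_2 \supset \tau$) to annihilate $\tau$, convolution with $\eta_\delta$ yields a smooth $dH_{mon}^{\sF,\delta}$ satisfying: (i) $dH_{mon}^{\sF,\delta}(u) = \sF|_\sigma$ whenever $u$ is $\delta$-deep inside a top-dimensional cone $\sigma$; (ii) for $u$ within $\eps$ of the relative interior of a cone $\tau \in \Sigma^*$, $dH_{mon}^{\sF,\delta}(u) \in \sF|_\sigma + \tau^\perp$ for any top-dimensional $\sigma \supset \tau$; (iii) the Hessian $d^2 H_{mon}^{\sF,\delta}(u)$ has image in $\tau^\perp$ in a neighbourhood of $\tau$.

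From this, \eqref{it:Lmon_in_neigh} and \eqref{it:TLmon} follow once $\delta$ is taken small enough: any $u$ is within $\eps$ of the relative interior of some cone $\tau$, so $(u, dH_{mon}^{\sF,\delta}(u))$ reduces modulo $M^*$ to an $\eps$-neighbourhood of $\tau \times \tau^\perp \subset \mathbb{L}_{\Sigma^*}$, and its tangent space $\{(a, d^2 H_{mon}^{\sF,\delta}(u)\cdot a) : a \in M_\R\}$ lies in $M_\R \times \tau^\perp$, fitting into the definition of $Q$.

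For \eqref{it:Lmonisreal} I would invoke the specific construction of the cutoff functions $\psi_\alpha$ in \cite[Section 1.2]{zhou2020lagrangian}. These are designed so that, for $t$ sufficiently large, the support of $1 - \psi_\alpha(t, \cdot)$ on $M^*_\R$ is carried by $\Phi = d\varphi$ (which is adapted to $\Delta^*$) into a small neighbourhood in $M_\R$ of a specific cone $\sigma_\alpha \in \Sigma^*$ with the property that $\alpha \in \sigma_\alpha^\vee \cap M$; in particular $\alpha$ annihilates $\sigma_\alpha^\perp$. Combining with the structural lemma, for $u$ in this region we have $dH_{mon}^{\sF,\delta}(u) = s + w$ with $s \in M^*$ and $w \in \sigma_\alpha^\perp$, so $\langle\alpha, dH_{mon}^{\sF,\delta}(u)\rangle = \langle\alpha, s\rangle \in \Z$, yielding $e^{i\langle\alpha, dH_{mon}^{\sF,\delta}(u)\rangle} = 1$. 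Hence $z^\alpha|_{\Lmon^\delta(\sF)} = e^{\langle\alpha, x\rangle} \in \R_{>0}$, and $(1-\psi_\alpha)z^\alpha|_{\Lmon^\delta(\sF)} \in \R_{\geq 0}$ follows from $(1-\psi_\alpha) \geq 0$. The vanishing of $\Im W_{t,1}|_{\Lmon^\delta(\sF)}$ is then immediate since each coefficient $c(\alpha)t^{\nu(\alpha)}$ is real.

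I expect the main obstacle to be the geometric claim at the start of the last paragraph: that $\Phi$ carries the support region of $1 - \psi_\alpha$ into a neighbourhood of a cone $\sigma_\alpha \in \Sigma^*$ with $\alpha \in \sigma_\alpha^\vee$. This rests on the tropical/toric correspondence between components of $M^*_\R \setminus \Pi$ and cones of $\Sigma^*$, combined with the adaptation of $\varphi$ to $\Delta^*$. A related subtlety is that \eqref{it:Lmonisreal} must hold exactly (not approximately), so one must verify that the decomposition $dH_{mon}^{\sF,\delta}(u) \in M^* + \sigma_\alpha^\perp$ holds exactly in the relevant region; this will constrain $\delta$ relative to the $t$-dependent scale of the cutoffs.
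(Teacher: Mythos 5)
Your structural lemma, and the deduction of \eqref{it:Lmon_in_neigh} and \eqref{it:TLmon} from it, essentially reproduce the paper's proof; the paper works with the minimal cone $\tau$ with $d(u,\tau) < \delta$ and checks $dH_{mon}^{\sF,\delta}(u)(\alpha_j) = \sF(\alpha_j)$ on its generators, which is the same computation your lemma packages. Two things you should fix, though.

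First, in the argument for \eqref{it:Lmonisreal} the condition you impose on $\sigma_\alpha$ is wrong and contains a type error: $\alpha \in \Delta \cap M \subset M_\R$ while $\sigma_\alpha^\vee \subset M^*_\R$, so ``$\alpha \in \sigma_\alpha^\vee$'' does not parse, and even the intended inequality-type condition would not give the conclusion you draw. The step ``$\langle \alpha, w\rangle = 0$ for $w \in \sigma_\alpha^\perp$'' requires $\alpha \in (\sigma_\alpha^\perp)^\perp = \mathrm{span}(\sigma_\alpha)$, i.e., $\alpha \in \sigma_\alpha$, not $\alpha \in \sigma_\alpha^\vee$. What the cited input (\cite[Corollary 2.41]{hanlon2019monodromy}) actually gives, and what the paper uses, is that one may choose $\varphi$ so that $\Phi$ carries $\{\psi_\alpha \neq 1\}$ into the \emph{interior of the star of $\alpha$} — not a neighbourhood of a single fixed cone. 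On the interior of the star, every cone $\sigma'$ sampled by the convolution contains $\alpha$, so $\sF|_{\sigma'}(\alpha) = \sF(\alpha)$ for all of them, giving $dH_{mon}^{\sF,\delta}(u)(\alpha) = \sF(\alpha)$ directly (no $\tau^\perp$-decomposition needed). This is the paper's cleaner route, and it avoids having to specify a single $\sigma_\alpha$ when the relevant minimal cone varies over the support.

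Second, your statement ``any $u$ is within $\eps$ of the relative interior of some cone'' elides the near-origin issue: for $|u|$ small, $u$ may be within $\delta$ of every face, the minimal relevant $\tau$ degenerates to $\{0\}$, and the structural lemma's decomposition $dH \in \sF|_\sigma + \tau^\perp$ becomes vacuous there. The paper excludes a $\delta$-dependent neighbourhood $U_\delta$ of the origin from the structural analysis and then shrinks $\delta$ so that $U_\delta \subset B_\eps(0)$, where \eqref{it:Lmon_in_neigh} and \eqref{it:TLmon} hold trivially because $\{0\} \times M^*_{S^1} \subset \mathbb{L}_{\Sigma^*}$ and $\{0\}^\perp = M^*_\R$. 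You should add this case split.
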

\begin{proof}
    For each nonzero $\alpha\in A$, we have that
	\begin{equation} \label{eq:monomialsupport}
        dH_{mon}^{\sF,\delta}(\alpha) = \sF(\alpha) 
    \end{equation}
	in the star of $\alpha$ away from a $\delta$-neighbourhood of the boundary. Since $\sF(\alpha)\in \Z$, this implies that $ \arg(z^\alpha)|_{\Lmon^\delta(\sF)}=0 $ in this region. On the other hand, by \cite[Corollary 2.41]{hanlon2019monodromy}, we can choose $\varphi$ so that the image under $\Phi$ of the region on which $\psi_\alpha(t,x) \neq 1$ is contained in the interior of the star of $\alpha$.\footnote{Note that, from \cite[Proposition 2.34]{hanlon2019monodromy} and the discussion following Proposition 2.36 in {\em loc. cit.}, the condition we impose here on $\Phi$ implies that $\varphi$ is adapted to $\Delta^*$.}    In particular, \eqref{eq:monomialsupport} holds over all $x$ satisfying $\psi_\alpha(t,x) \neq 1$ if $\delta$ is sufficiently small. Thus, we see that \eqref{eq:Lmonisreal} holds for all nonzero $\alpha$. However, \eqref{eq:Lmonisreal} holds trivially for $\alpha = 0$, completing the proof of \eqref{it:Lmonisreal}.
 
            Suppose $\left(u,dH_{mon}^{\sF,\delta}(u)\right) \in \Lmon^\delta(\sF)$. In the complement of a neighbourhood $U_\delta$ of the origin in $M_\R^*$ whose size is governed by $\delta$, there is a minimal cone $\tau \in \Sigma^*$ such that $d(u, \tau) <\delta$. Let $\alpha_1, \hdots, \alpha_k \in A$ be the primitive generators of $\tau$. We then have that $dH_{mon}^{\sF,\delta}(u)(\alpha_j) = \sF(\alpha_j)$ for $j = 1,\hdots,k$. In particular, if $v \in \tau$ is a point such that $d(u, v) < \delta$, we have
    $$ \left(v, dH_{mon}^{\sF,\delta}(u)\right) \in \tau \times \tau^\perp \subset {\mathbb L}_{\Sigma^*} $$
    and the distance between $\left(v,dH_{mon}^{\sF,\delta}(u)\right)$ and $\left(u, dH_{mon}^{\sF,\delta}(u) \right)$ is less than $\delta$. 
    This completes the proof of \eqref{it:Lmon_in_neigh}. 
    
    Finally, observe that in the interior of the set of all $u$ with minimal cone $\tau$, we have $dH_{mon}^{\sF,\delta}(u) \in \tau^\perp$, which implies that the tangent space to the graph of $dH_{mon}^{\sF,\delta}$ is contained in $M_\R \times \tau^\perp$. 
    Therefore, the point $(v,dH_{mon}^{\sF,\delta}(u),T_{(u,dH_{mon}^{\sF,\delta}(u))} \Lmon^\delta(\sF))$ is contained in $Q$, and is distance $\delta$ from $(u,dH_{mon}^{\sF,\delta}(u),T_{(u,dH_{mon}^{\sF,\delta}(u))}\Lmon^\delta(\sF))$ as required. The union of the interiors of these sets, over all cones $\tau$, cover the complement of $U$. Thus, we complete the argument by choosing $\delta < \eps$ sufficiently small that $U_\delta \subset B_\eps(0)$. 
    This completes the proof of \eqref{it:TLmon}.
\end{proof}
 We obtain as a corollary: 

\begin{cor}\label{cor:LA_arclike}
    	For $\delta$ sufficiently small and the $\aye$ constructed in \cref{lem:compatibleIandW}, the Lagrangians $\Lmon^\delta(\sF)$ are $\aye$-arclike in $\ess$.
\end{cor}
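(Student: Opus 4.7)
The corollary combines the tangent-space and $C^0$-closeness estimates of \cref{prop:tropLagInNeighOfSkeleton} with the compatibility between $\aye$ and $\Im W_{t,1}$ guaranteed by \cref{lem:compatibleIandW}. Writing $L := \Lmon^\delta(\sF) \cap \ess$, we must verify the two conditions of being $\aye$-arclike (the analogue of \cref{def:Iarclike} for $\ess$ and $\aye$): that $L$ is transverse to $\partial \ess$, and that $\aye$ is constant on a neighbourhood of $\partial L$ in $L$. The first step is to choose $\delta > 0$ small enough so that, by \cref{prop:tropLagInNeighOfSkeleton}(2), $\Lmon^\delta(\sF)$ is contained in the neighbourhood $V$ of $\mathbb{L}_{\Sigma^*}$ provided by \cref{lem:compatibleIandW}.

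With this choice, the constancy of $\aye$ is immediate. Indeed, \cref{prop:tropLagInNeighOfSkeleton}(1) asserts that $\Im W_{t,1}$ vanishes identically on $\Lmon^\delta(\sF)$. Combining this with the identity
\[ \aye^{-1}(0) \cap V \cap \Nbd^Z(\partial \ess) = (\Im W_{t,1})^{-1}(0) \cap V \cap \Nbd^Z(\partial \ess) \]
established in \cref{lem:compatibleIandW} forces $\aye \equiv 0$ on $\Lmon^\delta(\sF) \cap \Nbd^Z(\partial \ess)$, which contains a neighbourhood of $\partial L$ in $L$.

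Transversality of $L$ to $\partial \ess$ is the more delicate half, for which I would appeal to \cref{prop:tropLagInNeighOfSkeleton}(3): the tangent bundle $T\Lmon^\delta(\sF)$ is $\eps$-close to $Q = \bigcup_\sigma \sigma \times \sigma^\perp \times M_\R \times \sigma^\perp$. By construction (see \cref{lem:constructDF,lem:compatibleIandW}), the hypersurface $\partial \ess$ meets the strata of $\mathbb{L}_{\Sigma^*}$ transversely along $\Lambda_{\Sigma^*}$; indeed, \cref{prop:notangentZ} shows that the Liouville vector field $Z$, which is itself transverse to $\partial \ess$, is nowhere tangent to $\cH_{t,1}$ in a neighbourhood of $\Lambda_{\Sigma^*}$, and a parallel argument applies to each piece of $Q$. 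Since transversality is an open condition, it persists for $\Lmon^\delta(\sF)$ itself provided $\eps$ (hence $\delta$) is taken sufficiently small.

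The main obstacle will be making the last step precise: the skeleton $\mathbb{L}_{\Sigma^*}$ is stratified and singular, so transversality of $Q$ to $\partial \ess$ is a stratum-by-stratum statement, which must then be promoted to transversality of the smooth Lagrangian $\Lmon^\delta(\sF)$ via the $C^1$ approximation of \cref{prop:tropLagInNeighOfSkeleton}(3). This requires $\eps$ to be chosen small relative to the minimal angle between the strata of $Q$ and $\partial \ess$ along the compact set $\Lambda_{\Sigma^*}$, a uniform estimate that ultimately rests on \cref{prop:notangentZ}.
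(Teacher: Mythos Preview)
Your argument for constancy of $\aye$ is correct and matches the paper's proof exactly.

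The transversality argument, however, has a real gap. Your appeal to \cref{prop:notangentZ} is misplaced: that result concerns the Liouville vector field relative to the hypersurface $\cH_{t,1}$, not relative to $\Lmon^\delta(\sF)$ or to the pieces of $Q$, and there is no ``parallel argument'' that transfers it. More seriously, the openness-of-transversality strategy you sketch cannot work as stated. The set $Q$ over a point of the stratum $\sigma \times \sigma^\perp$ is the subspace $M_\R \times \sigma^\perp$, which has dimension $2n - \dim\sigma > n$ unless $\sigma$ is top-dimensional. Knowing only that the $n$-dimensional tangent space $T_p\Lmon^\delta(\sF)$ is $\eps$-close to this larger subspace does not force transversality to $\partial B$: for instance, an $n$-plane of the form $T_{\pi(p)}(\partial\cC_0) \oplus (\text{line in }\sigma^\perp)$ lies inside both $M_\R \times \sigma^\perp$ and $T\partial B$.

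The missing ingredient, which the paper uses explicitly, is that $\Lmon^\delta(\sF)$ is a Lagrangian \emph{section} of $\pi: T^*M^*_{S^1} \to M_\R$. Consequently its tangent space at each point $p$ projects isomorphically to $M_\R$ and in particular contains a unique lift $(u\partial_u, v)$ of the radial (Liouville) vector field $u\partial_u$. By \cref{prop:tropLagInNeighOfSkeleton}(3) the component $v$ is nearly in $\sigma^\perp$; and since the stratum $\sigma \times \sigma^\perp$ of $\mathbb{L}_{\Sigma^*}$ lies in $\partial B$, one has $\{0\} \times \sigma^\perp \subset T_{\tilde p}\partial B$. Subtracting this contribution reduces the question to whether $(u\partial_u, 0)$ avoids $T\partial B$, which it does because $\partial B$ is transverse to the Liouville flow. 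The paper carries out this argument as an explicit distance estimate, then uses compactness of $\Lambda_{\Sigma^*}$ to choose $\delta$ uniformly.
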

\begin{proof}
    We first check that $\Lmon^\delta(\sF)$ is transverse to the boundary of $\ess$. Recall from \Cref{lem:constructDF} and the discussion following that $\ess$ is constructed from the data of a sutured Liouville domain $(B, F)$, where $B$ is given by modifying $\Log_t^{-1}(\mathcal C_0)$. 

    The Liouville vector field $(u\partial_u,0)$ on $M_\R \times M^*_{S^1}$ is transverse to the boundary of $B$ by construction. 
    Therefore, we have $d(u\partial_u,T\partial B) > \eps$ for some $\eps>0$, where distances are measured with respect to an arbitrary Riemannian metric. 
    Now let $p \in \Lmon^\delta(\sF) \cap \partial B$. 
    As $\Lmon^\delta(\sF)$ is a Lagrangian section, the tangent vector $u\partial_u$ lifts to a vector $(u\partial_u,v) \in T_p\Lmon^\delta(\sF)$. 
    By \Cref{prop:tropLagInNeighOfSkeleton}, there exists a tangent vector $(\tilde p,(\tilde u, \tilde v)) \in Q$ which is close to $(p,(u\partial_u,v))$. 
    We may arrange that this tangent vector lies over $\partial B$, by flowing along the Liouville flow. 
    By choosing $\delta$ sufficiently small, we may guarantee that $(p,(u\partial_u,v))$ is arbitrarily close to $(\tilde p,(\tilde u,\tilde v))$; and as a consequence that $T_p\partial B$ is arbitrarily close to $T_{\tilde p}\partial B$. 
    
    Now assume that $(\tilde p,(\tilde u,\tilde v)) \in (\sigma \times \sigma^\perp) \times (M_\R \times \sigma^\perp)$. 
    Recall that by construction of $B$, $\mathbb{L}_{\Sigma^*} \cap \partial \Log_t^{-1}(\cC_0) \subset \partial B$. 
    In particular, $\Log_t(\tilde p) \times \sigma^\perp \subset \partial B$, and hence $\{0\} \times \sigma^\perp \subset T_{\tilde p} \partial B$. 
    We now have:
    \begin{align*}
        d\left((p,(u\partial_u,v)),T_p\partial B\right) & > d\left((\tilde p,(\tilde u,\tilde v)),T_{\tilde p}\partial B\right) - \eps/2 \quad\text{for $\delta$ sufficiently small}\\
        &= d\left((\tilde p,(\tilde u,0)),T_{\tilde p}\partial B\right) - \eps/2 \quad \text{as $\tilde v \in \sigma^\perp$ so $(0,\tilde{v})\in T_{\tilde p} \partial B$} \\
        & \ge d\left((\tilde p,(u\partial_u,0)),T_{\tilde p} \partial B\right) - d\left((\tilde p,(u\partial_u,0)),(\tilde p,(\tilde u,0))\right) - \eps/2 \\
        & > \eps - \eps/2 - \eps/2 \quad \text{for $\delta$ sufficiently small} \\
        & = 0.
    \end{align*}
    Therefore, $(u\partial_u,v) \in T_p\Lmon^\delta(\sF)$ is transverse to the hypersurface $T\partial B$, so $\Lmon^\delta(\sF)$ is transverse to $\partial B$ at $p$, and we may choose $\delta$ sufficiently small that this holds for all $p \in \mathbb{L}_{\Sigma^*} \cap \partial B$, by compactness. 
    We now observe that by construction of $\ess$, $\partial \ess$ agrees with $\partial B$ along a neighbourhood of $\mathbb{L}_{\Sigma^*}$. 
    By choosing $\delta$ sufficiently small, we may arrange that $\partial \Lmon^\delta(\sF)$ is contained in this neighbourhood by \Cref{prop:tropLagInNeighOfSkeleton}; thus $\Lmon^\delta(\sF)$ intersects $\partial \ess$ transversely as required.

     By \cref{lem:compatibleIandW}, the zero sets of $\aye$ and $\Im(W_{t,1})$ agree when we are close to both $\partial \ess$ and $\mathbb L_{\Sigma^*}$. By \Cref{prop:tropLagInNeighOfSkeleton}: \cref{it:Lmonisreal,it:Lmon_in_neigh}, $\Lmon^\delta(\sF)$ is contained in the intersection of the zero set of $\Im(W_{t,1})$ and a small neighbourhood of $\mathbb L_{\Sigma^*}$. Therefore $\Lmon^\delta(\sF)\cap \Nbd(\partial \ess) \subset  \aye^{-1}(0)\cap \Nbd(\partial \ess)$. 
     Thus, $\Lmon^\delta(\sF)$ is $\aye$-arclike in $\ess$.
      \end{proof}

By \cref{adjustedsectors}(\ref{arcliketransport}), we may extend (by the flow of $V$ applied to $\partial \Lmon^\delta(\sF)$) $\Lmon^\delta(\sF)$ to an $\Iprime$-arclike Lagrangian 
\begin{equation}\label{lmonprime}
(\Lmon^\delta(\sF))' \subset \Sprime.
\end{equation}
whose intersection with $\ess$ is the $\aye$-arclike $\Lmon^\delta(\sF)$ (to see $\Iprime$-arclikeness of the extension, note that tangency of $(\Lmon^\delta(\sF))'$ to $V = X_{\Iprime}$ near $\partial \Sprime$ implies both transversality to $\partial \Sprime$ and containment in an $\Iprime$-level set by \cref{lem:XItangK}).

\begin{rmk}
    Here is an equivalent way to define the extension $(\Lmon^\delta(\sF))'$ of $(\Lmon^\delta(\sF))$. 
    Observe that $(\Lmon^\delta(\sF))$ itself is tangent to $X_{\aye}$ near and in particular just outside $\partial \ess$ by \cref{lem:compatibleIandW} and \cref{lem:XItangK}, and hence determines an $\aye$-arclike Lagrangian in the enlarged sector $S_1 \supset \ess$ constructed in the proof of \cref{adjustedsectors} with deformed Liouville form $\lambda'$ (in fact $\Lmon^\delta(\sF)$ is tangent to $X_{\aye}$ over all of $S_1 \backslash \ess^{\circ}$). Now apply $\Phi$, the symplectomorphism from $\lambda'$ to $\lambda$ constructed in the same proof, to obtain $(\Lmon^\delta(\sF))'$.

    If in the construction of the deformation $\lambda'$, we modified $\lambda'$ by a compactly supported exact 1-form to equal  $\lambda$ on a compact set containing $(\Lmon^\delta(\sF)) \cap (S_1 \backslash \ess^{\circ})$, then we could further arrange that $\Phi = id$ near $\Lmon^\delta(\sF)$ and hence that $(\Lmon^\delta(\sF))'=\Lmon^\delta(\sF)$ at the expense of slightly modifying $\Sprime:= \Phi(S_1)$. 
       \end{rmk}

Since $\Sprime$ has exact boundary, we can now take the negative pushoff of $(\Lmon^\delta(\sF))'$ in the sense of \cref{sec:adjoints}, using the boundary sector $\eff \times T^*[0,1] \hookrightarrow \Sprime$ induced by $\Iprime$. 
The outcome is a cylindrical exact Lagrangian whose boundary avoids $\Sprime$
\begin{equation}\label{lmonnegativepushoff}
    \tilde{L}(\sF) := ((\Lmon^\delta(\sF))')^- \subset \Sprime;
\end{equation}
this Lagrangian, like $(\Lmon^\delta(\sF))'$ and $\Lmon^\delta(\sF)$, has an anchoring determined by $\sF$ and can be thought of as an object of $\cW(\Sprime)^{M^*}$.
The modifications involved in taking cylindrical pushoff are only made in $\eff \times T^*[0,1] \subset (\Sprime \backslash \ess)$ so in particular 
\begin{equation}\label{restrictionofLmonminus}
    \tilde{L}(\sF) \cap \ess = \Lmon^\delta(\sF).
\end{equation}
We will drop $\delta$ from the notation when appropriate. 

\subsubsection{Cylindrized tropical Lagrangian section\texorpdfstring{: $\Lconic(\sF)$}{}}\label{subsec:HanlonHicksLag}
By instead using a ``cylindrical smoothing'' of $\sF$, we can also construct, as in \cite[Section 3.1]{hanlon2022aspects}, a cylindrical Lagrangian section of $M^*_{\C^*}$ whose Legendrian boundary lies in the complement of $\mathfrak{f}_{\Sigma^*}$, that is, an object of $\mathcal{W}(M^*_{\C^*}, \mathfrak{f}_{\Sigma^*})$. 
The function $\sF$ determines an anchoring of this object, i.e., a lift to $\mathcal{W}(M^*_{\C^*},\mathfrak{f}_{\Sigma^*})^{M^*}$.

We define ${G}_{cyl}^{\sF,\delta}:M_\R \setminus \{0\} \to \mathbb{R}$ by
\begin{equation}
	 {G}_{cyl}^{\sF,\delta}(u) = \int_{M_\R} \sF(u-y) \eta_{\delta |u|} (y) \, dy =  \int_{M_\R} \sF(u - |u|y)  \eta_\delta(y) \, dy 
	 \label{eq:conicalizedTropicalPrimitive}
\end{equation}
where $|u|$ is measured with respect to the Euclidean metric.
It is then shown in \cite[Lemma 3.1]{hanlon2022aspects} that for any $\sF$ there is a constant $C_\sF$  such that
\[ \left| d{G}_{cyl}^{\sF,\delta}(\alpha) -\sF (\alpha) \right| \leq \delta C_\sF \] on the open cones
\begin{equation}
	U^\delta_\alpha = \text{cone} \{u \in M_\R \; : \; | u | = 1,  u \in \st(\alpha), \text{ and } d(u, \sigma) > \delta \text{ for all } \sigma \in \Sigma \text{ such that } \alpha \not \in \sigma \}
	\label{eq:conesUAlpha}
\end{equation}
for all nonzero $\alpha \in A$. 
In the language of \cite[Definition 2.5]{hanlon2022aspects}, $U^\delta_\alpha$ is the $\delta$-star of $\alpha$. 

Given a support function $\sF$ for $\Sigma^*$, we can set
\begin{equation} \label{eq:perturbedPL} \sF_a = \sF - a \sF_K \end{equation}
where $\sF_K$ is the support function of the canonical divisor and $0< a \ll 1$. 
We then define a smooth function $H_{cyl}^{\sF,a,\delta}$ by smoothing ${G}_{cyl}^{\sF_a,\delta}$ arbitrarily on a compact neighbourhood of the origin, which may be taken to be arbitrarily small. We denote such a neighbourhood by $\Nbd(0)$ in what follows.  The resulting $H_{cyl}^{\sF,a,\delta}$ will satisfy
\begin{equation}
     \left| dH_{cyl}^{\sF,a,\delta}(\alpha) - \sF(\alpha) + a \right| \leq \delta C_{\sF_a}
     \label{eq:argumentBound}
\end{equation} 
on $U^\delta_\alpha \setminus \Nbd(0)$. Since $\sF(\alpha)$ is an integer and $a$ is not, we may choose $\delta$ small enough that $dH_{cyl}^{\sF,a,\delta}(\alpha) \notin \Z$ on $U^\delta_\alpha \setminus \Nbd(0)$. 
On the other hand, if $u \in U^\delta_\alpha$ and $(u,\theta) \in \mathbb{L}_{\Sigma^*}$, then $\theta(\alpha) \in \Z$. 
It follows that the Lagrangian 
$$\Lconic^{a,\delta}(\sF) := \text{graph}\left(dH_{cyl}^{\sF,a,\delta}\right)$$ 
avoids ${\mathbb L}_{\Sigma^*}$ outside of the compact set $\pi^{-1}(\Nbd(0))$, for $\delta$ sufficiently small. 
Thus, $\Lconic^{a,\delta}(\sF)$ determines an object of $\mathcal{W}(M^*_{\C^*}, \mathfrak{f}_{\Sigma^*})$. Observe that all choices of $a,\delta$ with $0<a\ll 1$ and $\delta$ sufficiently small yield admissibly Hamiltonian isotopic Lagrangians in $M^*_{\C^*}$; we will therefore drop $a$ and $\delta$ from the notation when appropriate.

\subsubsection{Comparing constructions of Lagrangian sections}
\label{subsec:lcon_lmon}

In the previous two subsections we have constructed two a priori different cylindrical Lagrangians in $\W(M^*_{\C^*}, \mathfrak{f}_{\Sigma^*})$ associated to $\sF$:  the cylindrized tropical section $\Lconic^{a,\delta}(\sF)$ 
and the negative pushoff of the tropical arclike Lagrangian $\tilde{L}(\sF)$
(strictly speaking, each of these depend on small parameters, but only up to
cylindrical isotopy hence isomorphism in the Fukaya category).
\cref{cor:HHvsArclike} below asserts that they are isomorphic. The obvious
isotopy between these two objects is non-cylindrical (passing through the
non-pushed off $\Lmon^\delta(\sF)$), so doesn't immediately induce an isomorphism. 
Instead, we show two objects can be made (after a compactly supported isotopy)
to agree with the arclike Lagrangian $\Lmon^\delta(\sF)$ on a subsector $\ess$, and
also that both objects cofinally wrap away from the subsector.  From here,
locality theory for arclike Lagrangians in Liouville sectors, 
developed in \cref{sec:adjoints}, shows that when tested against objects coming
from the Fukaya category of the (quasi-equivalent) subsector $\ess$, all disks
appearing for either Lagrangian also remain in the subsector, and therefore
coincide. 

Conceptually, we may think of $\Lconic^{a,\delta}(\sF)$ and
$\tilde{L}(\sF)$ as ``generalized negative pushoffs'' of the
same arclike $\Lmon^\delta(\sF)$, in the sense that they agree with $\Lmon^\delta(\sF)$ 
within an equivalent subsector which they cofinally wrap away from. 
The technology of \cref{sec:adjoints} can be used to show that any two
generalized negative pushoffs of an arclike Lagrangian are isomorphic objects;
compare \cref{negativepushoffindependence}.

\begin{lem}[\cite{hanlon2022aspects}, Lemma 3.3]
    \label{cylwrappings}
    $\partial_{\infty}\Lconic^{a,\delta}(\sF)$ admits a cofinal wrapping in $(M^*_{\C^*},\mathfrak{f}_{\Sigma^*})$ converging to $\mathfrak{f}_{\Sigma^*}$ which does not cross $\partial_{\infty} \ess$.
\end{lem}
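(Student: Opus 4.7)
The plan is to combine the cofinal wrapping construction of \cite[Lemma 3.3]{hanlon2022aspects} with a size estimate showing the whole wrapping family stays inside $\partial_\infty \ess$. The wrapping itself is implemented by sending the perturbation parameter $a \searrow 0$: by \eqref{eq:argumentBound}, $dH_{cyl}^{\sF,a,\delta}(\alpha)$ approaches the integer $\sF(\alpha)$ on each cone $U^\delta_\alpha$, so the family $\Lconic^{a_s,\delta}(\sF)$ with $a_s \searrow 0$ is cylindrical at infinity and defines a positive Legendrian isotopy limiting to $\mathfrak{f}_{\Sigma^*}$ without crossing it. Cofinality is the standard argument from \emph{loc.~cit.}: slowed appropriately, $dH_{cyl}^{\sF,a_s,\delta}(\alpha) \bmod \Z$ traverses every non-integer residue before accumulating at $\sF(\alpha)$, dominating any competing positive wrapping in $(M^*_{\C^*},\mathfrak{f}_{\Sigma^*})$.

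The only new content beyond citing \cite{hanlon2022aspects} is that the wrapping avoids $\partial_\infty \ess$. My approach is to show that for $a,\delta$ sufficiently small, the entire interpolating family sits in an arbitrarily small cylindrical neighborhood of $\mathfrak{f}_{\Sigma^*}$. Explicitly, the bound in \eqref{eq:argumentBound} applied with $a_s \in [0,a]$ in place of $a$ gives $|dH_{cyl}^{\sF,a_s,\delta}(\alpha) - \sF(\alpha)| \le a + \delta C_{\sF_a}$ on $U^\delta_\alpha$, uniformly in $s$. Hence $\Lconic^{a_s,\delta}(\sF)$ has boundary at infinity inside a neighborhood of $\mathfrak{f}_{\Sigma^*}$ whose size may be made as small as desired by shrinking $a$ and $\delta$.

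It then remains to verify that such a neighborhood can be taken inside the interior of $\partial_\infty \ess$. This follows from the construction of $\ess$ in \cref{lem:constructDF}: the sutured data $(\Ldom, \fo)$ is chosen so that $\fo$ is a neighborhood of $\Lambda_{\Sigma^*}$ in $\cH_{t,1}$ and $\Ldom$ extends $\Log_t^{-1}(\cC_0)$. The subset $\mathfrak{f}_{\Sigma^*} \subset \partial_\infty M^*_{\C^*}$ therefore lies in the interior of $\partial_\infty \ess$, while the complement of $\ess$ at infinity corresponds to the prescribed $\fo \times \C_{\Re\le 0}$ region at positive distance from $\mathfrak{f}_{\Sigma^*}$. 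By choosing the wrapping parameters small enough that the uniform neighborhood from the previous paragraph lies inside this interior, the cofinal wrapping we have produced never crosses $\partial_\infty \ess$.

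I expect the main bookkeeping obstacle to be in the last step: one must confirm that the positive Legendrian isotopy produced by varying $a$ is a wrapping with respect to the Liouville/contact structure used to define $\partial_\infty \ess$ (in particular, that cylindricity near infinity with respect to the $\varphi$-Liouville form on $M^*_{\C^*}$ is preserved throughout the family). This is essentially the content of the cylindricity arguments in \cite[Section 3]{hanlon2022aspects}, applied here to the specific Liouville structure of \cref{sec:symp_setup}; no new geometric input is required beyond verifying the constants work out uniformly in $a_s$.
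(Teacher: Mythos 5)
Your first two paragraphs faithfully unpack the cited \cite[Lemma 3.3]{hanlon2022aspects}: the wrapping is the family $\Lconic^{a_s,\delta}(\sF)$ as $a_s\searrow 0$, and \eqref{eq:argumentBound} shows it stays in a neighborhood of $\mathfrak{f}_{\Sigma^*}$ whose size shrinks with $a,\delta$. This matches the paper's proof exactly.

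The third paragraph, however, has the geography reversed, and this is a real error rather than loose wording. You assert that ``$\mathfrak{f}_{\Sigma^*}$ lies in the interior of $\partial_\infty \ess$, while the complement of $\ess$ at infinity corresponds to the prescribed $\fo \times \C_{\Re\le 0}$ region at positive distance from $\mathfrak{f}_{\Sigma^*}$.'' In fact the opposite holds: $\ess$ is built in \cref{lem:constructDF} and the surrounding discussion by \emph{excising} the region $\R_{>0}\times\R_{|t|<\eps}\times\fo$ from $M^*_{\C^*}$, and by \cite[Lemma 2.32]{ganatra2020covariantly} the complement $M^*_{\C^*}\setminus\ess^\circ$ is exactly the completion region $\eff\times\C_{\Re\le 0}$, with the stop $\fo$ (and hence its core, which projects to $\mathfrak{f}_{\Sigma^*}$) sitting at $-\infty$ inside that complement. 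The conical skeleton $\mathbb{L}_{\Sigma^*}$ leaves $\Ldom$ through $\fo$, so $\mathfrak{f}_{\Sigma^*}=\partial_\infty\mathbb{L}_{\Sigma^*}$ lies in $\partial_\infty M^*_{\C^*}\setminus\partial_\infty\ess$, not in the interior of $\partial_\infty\ess$. The correct statement (and the one the paper actually uses, and which \cref{arclikesubsector} requires) is that a sufficiently small neighborhood of $\mathfrak{f}_{\Sigma^*}$ is \emph{disjoint} from $\partial_\infty\ess$. If your paragraph were taken at face value — putting $\mathfrak{f}_{\Sigma^*}$ and the wrapping inside $\partial_\infty\ess$ — the wrapping would live on the wrong side of $\partial\ess$ and the argument in \cref{cor:HHvsArclike}, which needs the wrapping to happen entirely in $\partial_\infty M^*_{\C^*}\setminus\partial_\infty\ess$, would not go through. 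Since your size estimate in the second paragraph is fine, the fix is simply to flip the containment in the third: the shrinking neighborhood of $\mathfrak{f}_{\Sigma^*}$ avoids $\partial_\infty\ess$ because $\mathfrak{f}_{\Sigma^*}$ is in its complement.
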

\begin{proof}
The cofinal wrapping is exhibited in {\em loc. cit.}. As also shown there, the boundary at infinity of $\Lconic^{a,\delta}(\sF)$ and its wrappings can be made to take place in an arbitrarily small neighborhood of $\mathfrak{f}_{\Sigma^*}$, which is, in particular, disjoint from $\partial_{\infty} \ess$.
\end{proof}

\begin{prop}\label{interpolatecyltomon}
    There is a Lagrangian $\Lmc^{a,\delta}(\sF)$ in $M^*_{\C^*}$ which differs from $\Lconic^{a,\delta}(\sF)$ by a compactly supported Hamiltonian isotopy and satisfies 
\[ \Lmc^{a,\delta}(\sF) \cap \ess = \Lmon^\delta(\sF) \]
for sufficiently small $\delta$ and $a$.
\end{prop}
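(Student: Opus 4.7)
The strategy is to construct $\Lmc^{a,\delta}(\sF)$ as the image of $\Lconic^{a,\delta}(\sF)$ under the time-$1$ flow of a compactly supported Hamiltonian whose effect in a neighbourhood of $\Lmon^\delta(\sF) \cap \ess$ is precisely the $M^*_{S^1}$-translation taking the primitive $H_{cyl}^{\sF,a,\delta}$ to the primitive $H_{mon}^{\sF,\delta}$. The key preliminary observations are: (i) $\Lmon^\delta(\sF) \cap \ess$ is compact by \cref{cor:LA_arclike} together with the definition of $\aye$-arclike (\cref{def:Iarclike}); and (ii) for $a$ sufficiently small, $\Lconic^{a,\delta}(\sF) \cap \ess$ is also compact, because the cylindrical asymptotic of $\Lconic^{a,\delta}(\sF)$ along each ray $\sigma$ of $\Sigma^*$ sits at $M^*_{S^1}$-coordinate $\approx -a\sF_K|_\sigma$ and thus, for $a$ smaller than the (fixed) width of the collar of $\fo$ used in \S\ref{sec:Liouvsec} to define $\ess$, lies outside $\ess$.

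With this in hand, I would set $f := H_{mon}^{\sF,\delta} - H_{cyl}^{\sF,a,\delta}: M_\R \to \R$ and pick a cutoff $\rho: M_\R \to [0,1]$ of compact support equal to $1$ on a compact set $K \subset M_\R$ containing the projection $\pi\bigl(\Lmon^\delta(\sF) \cap \ess\bigr) \cup \pi\bigl(\Lconic^{a,\delta}(\sF) \cap \ess\bigr)$, with $\pi: M^*_{\C^*} \to M_\R$ as in \eqref{eq:Leg_trans_cd}. Define $\tilde H := \pi^*(\rho f)$. Since $\pi$ has compact toric fibre $M^*_{S^1}$ and $\rho$ is compactly supported, $\tilde H$ is compactly supported. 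A direct computation in coordinates $(u,\theta)$ with $\omega = du \wedge d\theta$ gives $X_{\tilde H} = \partial_u(\rho f)\,\partial_\theta$, so the time-$1$ flow is the $u$-dependent fibre translation $(u,\theta) \mapsto (u, \theta + \partial_u(\rho f)(u))$. Over $K$ where $\rho \equiv 1$ this translates $\theta$ by $dH_{mon} - dH_{cyl}$ and hence sends $\Lconic^{a,\delta}(\sF) = \{(u, dH_{cyl}(u))\}$ onto $\Lmon^\delta(\sF) = \{(u, dH_{mon}(u))\}$.

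Setting $\Lmc^{a,\delta}(\sF) := \phi_1^{\tilde H}(\Lconic^{a,\delta}(\sF))$, the desired identity $\Lmc^{a,\delta}(\sF) \cap \ess = \Lmon^\delta(\sF) \cap \ess$ then follows: on $\pi^{-1}(K)$ we have $\Lmc = \Lmon$ by the above; outside $\pi^{-1}(\supp \rho)$ the flow is trivial so $\Lmc = \Lconic$, and $\Lconic \cap \ess$ has no part there by the choice of $K$. The main obstacle is controlling $\Lmc$ on the transition annulus $\supp \rho \setminus K$: there $\Lmc$ is the graph of $d(H_{cyl} + \rho f)$, an intermediate section, and one must arrange that this graph does not re-enter $\ess$. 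This is the point where the freedom in the choice of $\rho$ matters: since both $\Lmon \cap \ess$ and $\Lconic \cap \ess$ are compact, $K$ may be enlarged and the annulus concentrated so that the transition lies over $u$-values whose $M^*_{S^1}$-fibre meets $\ess^c$ in a strip wide enough to contain the partially-interpolated $\theta$-shift, in the spirit of the sandwiching/adjustment arguments used in \cref{adjustedsectors}.
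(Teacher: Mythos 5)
Your overall strategy is close to the paper's: build an interpolated primitive that equals $H_{mon}^{\sF,\delta}$ over a large ball, equals $H_{cyl}^{\sF,a,\delta}$ outside a slightly larger ball, and produce the compactly supported Hamiltonian isotopy by taking the flow of the difference. The one real difference is the interpolation scheme itself: you take the naive convex combination $(1-\rho)H_{cyl}+\rho H_{mon}$ via a cutoff $\rho$, whereas the paper instead interpolates the \emph{construction parameters} (the mollifier scale from $\delta$ to $\delta|u|$ via $\rho_1$, and the $a$-perturbation via $\rho_2$). Both schemes can in principle be made to work; the paper's version yields cleaner formulas for the derivative, which matters for the step your proposal is missing.

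The gap is exactly the step you flag and then wave away. You correctly identify that on the annulus $\operatorname{supp}\rho\setminus K$ the Lagrangian $\Lmc$ is the graph of $d\bigl(H_{cyl}+\rho f\bigr)=(1-\rho)\,dH_{cyl}+\rho\,dH_{mon}+f\,d\rho$, and that one must show this graph does not meet $\ess$; but the mechanism you invoke — enlarge $K$ and ``concentrate the annulus'' so that the $M^*_{S^1}$-fibre of $\ess^c$ over the annulus is a ``wide enough strip'' — does not work. You have no control over the width of the $\ess^c$-fibre as $|u|\to\infty$; that width is fixed by the choice of sutured pair $(B,\fo)$ and in fact shrinks in suitable scales, so pushing the annulus out does not help. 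There is no ``sandwiching'' dichotomy here. What is actually needed — and what the paper supplies — is a uniform bound
\[
\bigl|d\bigl((1-\rho)H_{cyl}+\rho H_{mon}\bigr)(\alpha)-\sF(\alpha)\bigr|\;\longrightarrow\;0
\qquad\text{as } (\delta,a)\to 0
\]
on each $U^\delta_\alpha\setminus\Nbd(0)$, which (via the argument of \cref{prop:tropLagInNeighOfSkeleton}) shows the \emph{entire} $\Lmc$ lies in $B_\eps(\mathbb L_{\Sigma^*})$. Once one knows $\Lmc\subset B_\eps(\mathbb L_{\Sigma^*})$ and $\ess\cap B_\eps(\mathbb L_{\Sigma^*})\subset\pi^{-1}(B_R(0))$, the identity $\Lmc\cap\ess=\Lmon$ follows immediately, with no case analysis over the annulus. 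In your scheme the delicate term is $f\,d\rho$: you would need to note that on the (fixed, compact) annulus $f=H_{mon}^{\sF,\delta}-H_{cyl}^{\sF,a,\delta}$ is $O(\delta+a)$ while $|d\rho|$ is bounded, so $f\,d\rho\to 0$. That estimate is not present in your write-up, and without it the proof does not close. (A smaller point: the $d\rho\cdot f$ term is also why the paper prefers parameter interpolation — the analogous ``cutoff derivative'' term there appears already multiplied by a quantity manifestly of size $O(\delta)$.)
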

\begin{proof}
    Pick $\eps>0$ and $R > 0$ so that  $\pi(\ess\cap B_\eps(\mathbb L_{\Sigma^*})) \subset B_R(0) \subset M_\R$.
Let $\rho_1: M_\R\to \R_{\ge 0}$ be smooth, equal to $|u|$ outside $B_{R + b}(0)$ for some $b > 0$, and equal to $1$ on $B_R(0)$, and let $\rho_2: M_\R\to [0,1]$ be smooth, equal to $1$ outside $B_{R + b}(0)$, and equal to $0$ on $B_R(0)$.  

We now perform a similar construction to \eqref{eq:conicalizedTropicalPrimitive} by defining
\begin{equation}
  {G}_{mc}^{\sF,\delta} (u) = \int_{M_\R} \sF(u-y) \eta_{\delta \rho_1 (u)} (y) \, dy 
\end{equation} 
for $u \in M_\R$. 
We note that ${G}_{mc}^{\sF,\delta}$ agrees with ${G}_{cyl}^{\sF,\delta}$ on the complement of $B_{R+b}(0)$, and with $H_{mon}^{\sF,\delta}$ on $B_R(0)$. We now set
\[ H_{mc}^{\sF,a,\delta}(u) = G^{\sF,\delta}_{mc}(u) - \rho_2(u)aG^{\sF_K,\delta}_{mc}(u) \]
for all $u \in M_\R$ and define $\Lmc^{a,\delta}(\sF)$ to be the graph of $dH_{mc}^{\sF,a,\delta}$. As before, we note that $H^{\sF, a, \delta}_{mc}$ agrees with $H^{\sF, a, \delta}_{cyl}$ in the complement of $B_{R+b}(0)$ and with $H^{\sF,\delta}_{mon}$ on $B_{R}(0)$. 

To show that $\Lmc^{a, \delta}(\sF)$ agrees with $\Lmon^\delta(\sF)$ on $\ess$, it suffices to show that it lies in a small neighbourhood of $\mathbb L_{\Sigma^*}$ as we have chosen $\eps$ and $R$ so that $\ess \cap B_\eps(\mathbb{L}_{\Sigma^*} )\subset \pi^{-1}(B_R(0))$. 
By applying the same argument as in \cref{prop:tropLagInNeighOfSkeleton}, we can deduce that there exists a choice of $\delta$ and $a$ so that $\Lmc^{a,\delta}(\sF)$ is contained in $B_\eps(\mathbb L_{\Sigma^*})$ by obtaining a uniform bound on $|dH_{mc}^{\sF, a, \delta}(\alpha) - \sF(\alpha)|$ on $U^\delta_\alpha \setminus \Nbd(0)$ for all $\alpha \in A$ that goes to zero as $\delta$ and $a$ go to zero.

We will now produce such a bound. Let $\sF_\sigma$ be the linear function given by restricting $\sF$ to any $\sigma \in \Sigma^*(\dim Y^*)$. Then, for all $u \in U^\delta_\alpha \setminus \Nbd(0)$, we have
\begin{align*} 
    \left| (dG_{mc}^{\sF, \delta})_u(\alpha) - \sF(\alpha) \right| &= \left| d\rho_1(\alpha) \right| \left| \int_{M_\R} d\sF_{u - \rho_1(u)y}(y) \eta_\delta(y) \, dy \right| \\
    &\leq \left| d\rho_1(\alpha) \right| \int_{M_\R} \left| d\sF_{u - \rho_1(u)y} \right| \left| y \right| \eta_\delta(y) \, dy \\
        &\leq \delta | d\rho_1(\alpha)| \max_{\sigma \in \Sigma^*(\dim Y^*)} \left| \sF_\sigma \right| 
\end{align*}
which uniformly tends to $0$ as $\delta$ goes to zero as $|d\rho_1|$ is bounded. The same calculation shows that for all $u \in U^\delta_\alpha$ with $|u| \geq R + b$ (so that $\rho_2 \equiv 1$) we have
\[ \left| d\left(\rho_2aG^{\sF_K,\delta}_{mc}\right)(\alpha)- a \right| \leq \delta C \]
for some constant $C$. Finally, inside of $B_{R+b}(0)$, the norm of  $d\left(\rho_2 G^{\sF_K,\delta}_{mc} \right)$ is bounded so 
\[ \left|  d\left(\rho_2 aG^{\sF_K,\delta}_{mc} \right) \right| \leq a C' \]
for some constant $C'$. Putting all these estimates together, we obtain the desired bound on $|dH_{mc}^{\sF, a, \delta}(\alpha) - \sF(\alpha)|$. 

Therefore, we have shown that $\Lmc^{a,\delta}(\sF) \subset B_\eps(\mathbb{L}_{\Sigma^*})$ for any small $\eps > 0$. 
In particular, it follows that
\[\Lmc^{a,\delta}(\sF) \cap \ess = 
\Lmc^{a, \delta}(\sF) \cap \ess\cap \pi^{-1}(B_R(0))
= \Lmon^\delta(\sF) \]
as required. We also have, by construction, that the flow of $H_{mc}^{\sF,a,\delta} - H_{cyl}^{\sF, a, \delta}$ is a Hamiltonian isotopy, supported in the compact set $\pi^{-1}(B_{R+b}(0))$, from $\Lconic^{a,\delta}(\sF)$ to $\Lmc^{a,\delta}(\sF)$.
\end{proof}

\begin{prop}
    \label{cor:HHvsArclike}
    There is an isomorphism of objects of $\W(M^*_{\C^*}, \mathfrak{f}_{\Sigma^*})^{M^*}$ 
    between  $\Lconic^{a,\delta}(\sF)$ and $\tilde{L}(\sF)$.
\end{prop}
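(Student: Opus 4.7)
The plan is to show that both $\Lconic^{a,\delta}(\sF)$ and $\tilde L(\sF)$ are ``generalized negative pushoffs'' of the common arclike Lagrangian $\Lmon^\delta(\sF) \subset \ess$, and then to invoke the abstract uniqueness result for negative pushoffs proved in \cref{sec:adjoints} (namely, the analogue of \cref{negativepushoffindependence} referenced in the preamble to the proposition). First I would replace $\Lconic^{a,\delta}(\sF)$ by the isomorphic (via a compactly supported Hamiltonian isotopy) Lagrangian $\Lmc^{a,\delta}(\sF)$ provided by \cref{interpolatecyltomon}, which satisfies $\Lmc^{a,\delta}(\sF) \cap \ess = \Lmon^\delta(\sF)$; by construction we also have $\tilde L(\sF) \cap \ess = \Lmon^\delta(\sF)$ by \eqref{restrictionofLmonminus}. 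Thus the two Lagrangians restrict to the same arclike object inside the subsector $\ess$.

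Next I would verify the asymptotic cofinal wrapping away from $\ess$ for both objects. For $\Lconic^{a,\delta}(\sF)$ this is exactly the content of \cref{cylwrappings}: the boundary at infinity wraps cofinally towards $\mathfrak{f}_{\Sigma^*}$ without crossing $\partial_\infty \ess$, and the same property is inherited by $\Lmc^{a,\delta}(\sF)$ since the modification is compactly supported. For $\tilde L(\sF)$, which is by definition the negative pushoff of $(\Lmon^\delta(\sF))'$ inside $\Sprime$ using the boundary sector embedding $\eff \times T^*[0,1] \hookrightarrow \Sprime$ of \cref{adjustedsectors}, the required cofinal wrapping into $\fo \subset \partial_\infty(M^*_{\C^*},\fo)$ is built into the definition of negative pushoff (together with the equivalence $\cW(\Sprime) \simeq \cW(M^*_{\C^*},\mathfrak{f}_{\Sigma^*})$ of \eqref{eq:wrappedstoskel} and the identification of stops $\fo \rightsquigarrow \mathfrak{f}_{\Sigma^*}$).

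Having established that both Lagrangians agree with the arclike $\Lmon^\delta(\sF)$ on $\ess$ and both cofinally wrap through the complementary region towards the stop, I would then invoke the abstract statement from \cref{sec:adjoints}: any Lagrangian in $(M^*_{\C^*},\mathfrak{f}_{\Sigma^*})$ which restricts on $\ess$ to a given arclike Lagrangian $L_0$ and which admits a cofinal wrapping into the stop $\fo$ (without crossing $\partial_\infty \ess$) is canonically isomorphic to the negative pushoff of $L_0$. Applied to $\Lmon^\delta(\sF)$ this gives $\Lconic^{a,\delta}(\sF) \simeq \tilde L(\sF)$ in $\cW(M^*_{\C^*},\mathfrak{f}_{\Sigma^*})$. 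The main obstacle is packaging this invocation correctly; morally, the argument in \cref{sec:adjoints} shows that when testing either object against any object pulled back from $\ess$, every holomorphic disk must remain inside $\ess$, so the Floer-theoretic Yoneda modules restricted along $\ess \hookrightarrow (M^*_{\C^*},\mathfrak{f}_{\Sigma^*})$ coincide with that of $\Lmon^\delta(\sF)$, from which representability forces the two objects to be quasi-isomorphic.

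Finally, to promote the isomorphism to $\cW(M^*_{\C^*},\mathfrak{f}_{\Sigma^*})^{M^*}$, I would check that the anchorings match. Both $\Lconic^{a,\delta}(\sF)$ and $\tilde L(\sF)$ are graphs (or negative pushoffs of graphs) of (smoothings of) differentials of functions obtained from the same integral piecewise-linear $\sF$, so the induced lifts to the universal cover $T^*M^*_\R$ differ only by a homotopy of smoothings/pushoffs and therefore determine the same anchoring up to homotopy. Since all constructions used are canonical in $\sF$, this promotes the isomorphism of underlying objects in $\cW(M^*_{\C^*},\mathfrak{f}_{\Sigma^*})$ to an isomorphism of $M^*$-graded objects, completing the proof.
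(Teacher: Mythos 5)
Your proposal is correct and follows essentially the same route as the paper's proof: replace $\Lconic^{a,\delta}(\sF)$ by $\Lmc^{a,\delta}(\sF)$ via \cref{interpolatecyltomon}, observe that both $\Lmc^{a,\delta}(\sF)$ and $\tilde L(\sF)$ restrict to $\Lmon^\delta(\sF)$ on $\ess$ and admit cofinal wrappings avoiding $\partial_\infty\ess$ (by \cref{cylwrappings} and \cref{negativepushoffforwardstopped} respectively), and then conclude by comparing pullback Yoneda modules along the subsector inclusion. The paper's proof makes the ``uniqueness of generalized negative pushoffs'' you invoke precise by applying \cref{arclikesubsector} once to $i'\circ i:\ess\to(M^*_{\C^*},\mathfrak{f}_{\Sigma^*})$ and once to $i:\ess\to\Sprime$ to identify both pullback modules with $CF^*(\Lmon^\delta(\sF),-)$, then using that $i'\circ i$ is a quasi-equivalence together with Yoneda — exactly the argument you sketch in your third paragraph.
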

\begin{proof}
    We will use the following notation for the tautological sector embeddings: $i: \ess \to \Sprime$, $i': \Sprime \to (M^*_{\C^*}, \mathfrak{f}_{\Sigma^*})$ and $i' \circ i: \ess \to (M^*_{\C^*}, \mathfrak{f}_{\Sigma^*})$. The induced pushforward functors on wrapped Fukaya categories are all equivalences indicated by the left and right vertical arrows in \Cref{fig:HHvsarclike}.

\begin{figure}
    \[
\begin{tikzpicture}

\fill[rounded corners,fill=gray!20]   (-9.5,-4) rectangle (-5.5,-6.5);

\fill[rounded corners,fill=gray!20]  (2.5,-4) rectangle (5.5,-6.5);

\fill[rounded corners,fill=gray!20]  (-9.5,-0.5) rectangle (5.5,-2.5);

\node (v4) at (-9,-5) {$\Lconic$};

\node (v3) at (3.5,-5) {$\tilde{L}$ };

\node (v6) at (-6.5,-5) {$\Lmc^{a,\delta} $};

\node (v2) at (-6.5,-1.5) {$(i' \circ i)^* \hom_{\cW(M^*_{\C^*}, \mathfrak{f}_{\Sigma^*})}( \Lmc^{a,\delta}, -)$};

\node(v1)  at (-1,-5) {$\Lmc^{a,\delta} \cap \ess= \Lmon^\delta= \tilde{L}\cap \ess$};

\node (v7) at (-1,-1.5) {$CF^*(\Lmon^\delta,-)$};
\node (v5) at (3.5,-1.5) {$i^* \hom_{\cW(\Sprime)}( \tilde{L}, -)$};
\draw  (v6) edge[black, ->, dotted] node[midway, above] {$\cap \ess$} (v1);
\draw  (v3) edge[black, ->,dotted] node[midway, above] {$\cap \ess$} (v1);
\draw  (v1) edge[black, ->,dotted] node[midway, fill=white] {\ref{arclikemodule}} (v7);
\draw  (v4) edge node[midway, fill=gray!20] {\ref{interpolatecyltomon}} (v6);
\draw  (v2) edge[black, ->] node[midway,below ]{\ref{arclikesubsector}} (v7);
\draw  (v5) edge[black, ->] node[midway, below]{\ref{arclikesubsector}} (v7);
\node at (-7.5,-6) {$\mathcal W(M^*_{\C}, \mathfrak f_{\Sigma^*})^{M^*}$};
\node at (4,-6) {$\mathcal W(\Sprime)$};
\node at (-1,-1) {$\operatorname{Mod} \mathcal W(\ess)$};
\draw  (v6) edge[->] node[midway, fill=white]{$(i'\circ i)^* \circ \mathcal Y_{(-)}$} (v2);
\draw  (v3) edge[->] node[midway, fill=white]{$i^* \circ \mathcal Y_{(-)}$}  (v5);
\end{tikzpicture}\]
\caption{Schematic diagram of the proof of \cref{cor:HHvsArclike}.}  \label{fig:HHvsarclike}
\end{figure}
    
We prove the isomorphism by exhibiting isomorphisms indicated in \Cref{fig:HHvsarclike} by horizontal arrows. 
    First, $\Lconic^{a,\delta}(\sF)$ is isomorphic to $\Lmc^{a,\delta}(\sF)$ (which satisfies all of the hypotheses of \cref{interpolatecyltomon}) because they differ by a compactly-supported Hamiltonian isotopy. Now, since $\partial_{\infty} \Lmc^{a,\delta}(\sF) = \partial_{\infty}\Lconic^{a,\delta}(\sF)$ possesses a cofinal wrapping which does not cross $\partial \ess$ by \cref{cylwrappings}, and since $\Lmc^{a,\delta}(\sF) \cap \ess$ is the $\aye$-arclike Lagrangian $\Lmon^\delta(\sF)$, \cref{arclikesubsector} applied to $i' \circ i: \ess \to (M^*_{\C^*}, \mathfrak{f}_{\Sigma^*})$ implies that there is an isomorphism of $\cW(\ess)$ modules 
\begin{equation}\label{arclikecutiso1}
    (i' \circ i)^* \hom_{\cW(M^*_{\C^*}, \mathfrak{f}_{\Sigma^*})}( \Lmc^{a,\delta}(\sF), -) \cong CF^*(\Lmon^\delta(\sF),-),
\end{equation}
where $CF^*(\Lmon^\delta(\sF),-)$ is the ``arclike'' $\cW(\ess)$-module constructed by applying \cref{arclikemodule} to $\Lmon^\delta(\sF)$.

Similarly, the negative pushoff $\tilde{L}(\sF)$ also satisfies $\tilde{L}(\sF)\cap \ess = \Lmon^\delta(\sF)$ (see \eqref{restrictionofLmonminus}), and $\partial_{\infty} \tilde{L}(\sF)$ admits a cofinal wrapping contained entirely in $\partial_{\infty} (F \times T^*[0,1]) \subset \partial_{\infty} \Sprime$ which in particular avoids $\partial_{\infty} \ess$ (see \cref{negativepushoffforwardstopped}). Hence, \cref{arclikesubsector}, this time applied to $i: \ess \to \Sprime$ again implies that
\begin{equation}\label{arclikecutiso2}
    i^* \hom_{\cW(\Sprime)}( \tilde{L}(\sF), -) \cong CF^*(\Lmon^\delta(\sF),-).
\end{equation}
Since we can view $\tilde{L}(\sF)$ as living in $\cW(M^*_{\C^*}, \mathfrak{f}_{\Sigma^*})$ via the pushforward $i'$, and $i'$ is a quasi-equivalence, we also know  $\hom_{\cW(\Sprime)}( \tilde{L}(\sF), -) \cong (i')^* \hom_{\cW(M^*_{\C^*}, \mathfrak{f}_{\Sigma^*})}(\tilde{L}(\sF), -)$, from which we collectively deduce that
\begin{equation}\label{arclikecutiso3}
(i' \circ i)^* \hom_{\cW(M^*_{\C^*}, \mathfrak{f}_{\Sigma^*})}( \tilde{L}(\sF), -) \cong CF^*(\Lmon^\delta(\sF),-);
\end{equation}
in other words, \eqref{arclikecutiso1} and \eqref{arclikecutiso3} imply that 
$$(i' \circ i)^* \hom_{\cW(M^*_{\C^*}, \mathfrak{f}_{\Sigma^*})}( \tilde{L}(\sF), -) \cong (i' \circ i)^* \hom_{\cW(M^*_{\C^*}, \mathfrak{f}_{\Sigma^*})}( \Lmc^{a,\delta}(\sF), -).$$ 
Now, $i' \circ i$ is a quasi-equivalence, so the same isomorphism holds before applying $(i' \circ i)^*$. Therefore Yoneda implies $\tilde{L}(\sF) \cong \Lmc^{a,\delta}(\sF)$.
\end{proof}

\section{Powers of an ample line bundle}\label{sec:A0_B0}
From here on, we drop decorations and use $\cap : \mafuk^{M^*}\to \partial \mafuk^{M^*}$ to denote the cap functor which makes the  diagram from  \cref{prop:GS_Li} commute:
\begin{equation}\label{eq:Pic_sec_cd}\begin{tikzcd}
			\Pic_{dg}^{M^*}(\partial Y^*)  \arrow[leftrightarrow]{r} & \partial \mafuk^{M^*}  \\
			\Pic_{dg}^{M^*}(Y^*) \arrow{u}{ i^! } \arrow[leftrightarrow]{r}{A} & \mafuk^{M^*} \arrow{u}{\cap}.\end{tikzcd}\end{equation}

Recall that all categories have objects indexed by the integral piecewise-linear functions $\sF$. 

Let $\sF_\Delta$ denote the function corresponding to the toric boundary divisor; that is, $\sF_\Delta$ is equal to $-1$ on the generator of each ray of $\Sigma^*$. 
We now restrict the diagram \eqref{eq:Pic_sec_cd} to the respective subcategories with objects indexed by $k\sF_\Delta$. 
We denote the corresponding subcategories by
\begin{equation}
    \label{eq:Pic_sec_cd2}\begin{tikzcd}
			\cB_0  \arrow[leftrightarrow]{r}{\mir0} & \cA_0  \\
			\tilde\cB_0 \arrow{u}{ i^! } \arrow[leftrightarrow]{r} & \tilde \cA_0 \arrow{u}{\cap}.\end{tikzcd}
\end{equation}
 The objects of $\cB_0$ are $\cO(k) = i^*\cO_{Y^*}(k \sF_\Delta)$ and those of $\tilde \cB_0$ are $\mathcal{E}_k = \cO_{Y^*}( (k+1) \sF_\Delta) [1]$ so that $i^! \mathcal{E}_k = \cO(k)$ (see, for example, \cite[Section 3.4]{huybrechts2006fourier}). 
The objects of $\tilde \cA_0$ are the mirror tropical Lagrangian sections $\tilde{L}_k$ to $\mathcal{E}_k$, and those of $\cA_0$ are $L_k = \cap \tilde{L}_k$ (their boundaries). 
In an $A_\infty$ category $\cC$, we use the convention that the cochain-level morphism spaces are denoted $hom^*_\cC(-,-)$, and their cohomologies by $Hom^*_\cC(-,-)$.

The main result of this Section is Lemma \ref{lem:F0respectsM}, which shows that the quasi-isomorphism $\mir0$ above can be made to respect the natural $\mathbb{Z}\oplus M^*$-gradings on both sides. Along the way, we also prove (Corollary \ref{cor:SH_HH}) that the closed--open map
$$\cC \cO: SH^*(\eff;\Bbbk) \to \HH^*(\cA_0)$$
from the symplectic cohomology of $\eff$ to the Hochschild cohomology of $\cA_0$ is a $\Z \oplus M^*$-graded Lie algebra isomorphism. 

\subsection{Hochschild invariants and Calabi--Yau structure}

The symplectic cohomology of $F$ admits a grading by $\Z \oplus M^*$, where the $\Z$ component comes from the Conley--Zehnder index and the $M^* = H_1(M^*_{\C^*})$ component comes from the homology class of the orbit included into $M^*_{\C^*}$. Note that because $\cW(\eff)^{M^*}$ is $\Z\oplus M^*$ graded, we can define a $\Z\oplus M^*$-graded version of  $\HH^*(\cW(\eff)^{M^*})$ (cf. Section \ref{subsec:Ainf_term}).

\begin{prop}\label{prop:COsurj}
The closed--open map
$$\cC \cO: SH^*(\eff;\Bbbk) \to \HH^*(\cW(\eff)^{M^*})$$
is a $\Z \oplus M^*$-graded isomorphism.
\end{prop}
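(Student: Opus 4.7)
The plan is to prove the proposition in two steps: first establish the $\Z$-graded isomorphism, then upgrade to the $\Z \oplus M^*$-graded statement by tracking the $M^*$-grading on both sides.

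For the $\Z$-graded isomorphism, the natural approach is to invoke a generation criterion for the closed--open map, which reduces the problem to exhibiting a split-generating collection of objects in $\cW(\eff)$ satisfying Abouzaid's split-generation criterion. Such a collection can be extracted from the Gammage--Shende quasi-equivalence $\cW(\eff) \simeq D^b_{dg}Coh(\partial Y^*)$ of \cref{thm:GS}: transferring to the $B$-side, it suffices to note that a finite collection of line bundles generates $D^b_{dg}Coh(\partial Y^*)$ (indeed, the objects $\cO(k)$ used elsewhere in the proof will do). Alternatively, one may appeal directly to the computations of \cite{ganatra2020symplectic, ganatra2021log}, which establish that $\cC\cO$ is an isomorphism for log Calabi--Yau affine hypersurfaces of the form $\eff \cong X_{t,\kaehl} \setminus D$. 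Either route yields that $\cC\cO$ is a $\Z$-graded isomorphism onto the (ungraded) Hochschild cohomology of $\cW(\eff)$.

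For the $M^*$-refinement, I would observe that the $M^*$-gradings on both sides have a common origin in the inclusion $\eff \hookrightarrow M^*_{\C^*}$: on $SH^*(\eff)$ the $M^*$-component is the homology class of a Hamiltonian orbit, while on $\HH^*(\cW(\eff)^{M^*})$ it is the $M^*$-shift of anchorings effected by a Hochschild cochain. The closed--open map is defined by a moduli space of pseudoholomorphic disks carrying one interior puncture asymptotic to an orbit $\gamma$, with boundary on a cyclic sequence of anchored Lagrangian branes. Since each anchoring is a lift to the universal cover of $M^*_{\C^*}$, one can lift the disk boundary; by a standard covering-space argument, the difference between the output anchoring and the sum of input anchorings equals the homology class $[\gamma] \in M^* = H_1(M^*_{\C^*})$. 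Thus, disks contributing to $\cC\cO(\gamma)$ produce Hochschild cochains of $M^*$-degree exactly $[\gamma]$, and summing over orbits gives the required decomposition into graded pieces.

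Combining the two steps: the ungraded isomorphism, restricted to the $M^*$-graded subcomplexes produced by the refinement, yields a graded isomorphism in each fixed degree, hence a $\Z \oplus M^*$-graded isomorphism overall. The main obstacle will be the bookkeeping in step two, where one must carefully match the index-theoretic conventions for Conley--Zehnder indices and anchoring shifts with the topological conventions for homology classes of orbits, but this is routine once the covering-space setup is fixed. No new Floer-theoretic input beyond existing closed--open computations and the Gammage--Shende equivalence is needed.
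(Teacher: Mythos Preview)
Your treatment of the $M^*$-grading (step two) is essentially the same as the paper's: both argue that the boundary of a closed--open disk, read via anchorings, recovers the homology class of the interior orbit in $M^* = H_1(M^*_{\C^*})$, so $\cC\cO$ respects the extra grading.

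The gap is in step one. Neither of your proposed routes actually delivers the $\Z$-graded isomorphism. The Gammage--Shende equivalence gives you a split-generating collection (the mirrors of the $\cO(k)$), but split-generation is \emph{not} the same as satisfying Abouzaid's criterion: the criterion says ``if the restricted open--closed map hits the unit, then the collection split-generates,'' and the converse fails in general. So knowing your objects generate does not tell you that $\cO\cC$ hits the identity, which is what \cite[Theorem~1.1]{Ganatrathesis} needs as input. Your alternative citation to \cite{ganatra2020symplectic,ganatra2021log} is also off: those papers compute $SH^*$ via the log PSS morphism and its spectral sequence, but say nothing about $\cC\cO$. The paper's actual argument invokes the structural results of \cite{chantraine2017geometric,ganatra2018sectorial,Ganatrathesis,Gao} showing that for Weinstein manifolds the open--closed map $\cO\cC:HH_{*-n}(\cW(\eff)) \to SH^*(\eff)$ is an isomorphism (via generation by cocores of critical handles); in particular it hits the identity, and then Ganatra's theorem applies.

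There is a second, smaller point you gloss over. The target is $\HH^*(\cW(\eff)^{M^*})$, the Hochschild cohomology of the \emph{anchored} category, and your ``restrict the ungraded isomorphism to graded pieces'' move presumes this coincides with $\HH^*(\cW(\eff))$. Since wrapped hom-spaces are infinite-dimensional, the product-versus-sum decomposition is not automatic. The paper handles this by observing that the cocores are contractible, hence admit anchorings, so the same generation-and-$\cO\cC$ argument runs directly in the $\Z\oplus M^*$-graded setting.
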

\begin{proof}	
	Given simply connected Lagrangians $L_0, \ldots, L_k$ and intersections  $x_i\in L_{i-1}\cap L_{i}$  with $i\in \{1, \ldots, k\}$ and  $x_0\in L_0\cap L_k$, we compute the $M^*$-grading of the Hochschild cochain $(x_1^\vee\otimes \cdots \otimes x_k^\vee)\otimes x_0$. First, pick paths $\gamma_i: I\to L_i$ from   $x_i$ to $x_{i+1}$. Then the grading of the Hochschild cochain is the homology class of the loop $\gamma_0\cdots \gamma_k$. 

	The structure coefficient of the closed-open map evaluated at  $y$ an orbit and $(x_1^\vee\otimes \cdots \otimes x_k^\vee)\otimes x_0$ a Hochschild cochain is given by a count of decorated $J$-holomorphic disks. The decorations impose the following constraints: an interior marked point labels a puncture limiting towards the orbit $y$;  cyclically labelled boundary marked points are required to map to the $x_i$. Additionally, the $(x_i, x_{i+1})$ component of the boundary maps to   $L_i$. It is immediate that  $[y]=[\partial D^2]=[\gamma_0\cdots \gamma_k]$.

	To check that $\cC \cO: SH^*(\eff;\Bbbk) \to \HH^*(\cW(\eff))$ is an isomorphism, it suffices by \cite[Theorem 1.1]{Ganatrathesis} to show that $\cO\cC:HH_{*-n}(\cW(\eff))\to SH^*(\eff)$ hits the identity; this holds because the map is an isomorphism \cite{chantraine2017geometric, ganatra2018sectorial, Ganatrathesis, Gao}. 
    As the wrapped Fukaya category is generated by cocores of critical handles \cite{chantraine2017geometric, ganatra2018sectorial} which are contractible and therefore admit anchorings, one can carry out the same argument for the $\Z \oplus M^*$-graded versions.
 \end{proof}

Recall that (after shifting $\mathbb{Z}$-gradings up by one) symplectic cohomology has the structure of a graded Lie algebra (see e.g. \cite[\S 2.1]{lekilipascaleff16}). This Lie algebra structure is compatible with $M^*$-gradings. Hochschild cohomology also has a graded Lie algebra structure (again after shifting  $\mathbb{Z}$-gradings up by one; see \cref{eq:Hochdif}) which is also compatible with $M^*$ gradings. 

 \begin{lem}\label{COLie}
In the situation of \cref{prop:COsurj}, $\cC \cO$ intertwines Lie algebra structures.
 \end{lem}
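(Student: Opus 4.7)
The plan is to upgrade $\cC\cO$ from a graded isomorphism to a morphism of BV algebras, from which compatibility of the induced Lie brackets follows formally. Recall that on $SH^*(\eff)$, the graded Lie bracket is the derived bracket of the BV operator $\Delta$ (rotating the asymptotic marker at the output orbit) with respect to the pair-of-pants product; on $\HH^*(\cW(\eff)^{M^*})$, the Gerstenhaber bracket likewise agrees with the derived bracket of Connes' cyclic operator $B$ with respect to the cup product. Thus it suffices to show that $\cC\cO$ respects products and intertwines $\Delta$ with $B$.

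First I would invoke the standard fact that $\cC\cO$ is a unital algebra homomorphism with respect to the pair-of-pants product on $SH^*$ and the cup product on $\HH^*$. This is proved by counting disks with two negative interior punctures, one positive interior puncture, and cyclically arranged boundary marked points, and is contained in the main results of \cite{Ganatrathesis} already cited in the proof of \cref{prop:COsurj}. Compatibility with the $M^*$-grading is automatic: the moduli spaces used to define all these structures preserve homology classes of boundary loops, since the output loop class is the concatenation of the input loop classes.

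Next I would verify the BV/Connes compatibility by comparing two parametrized moduli spaces of disks with one interior puncture and cyclically ordered boundary marked points. On one family, the asymptotic marker at the interior puncture is rotated once, yielding $\cC\cO \circ \Delta$; on the other, the boundary insertion labels are cyclically rotated through the interior puncture, yielding $B \circ \cC\cO$. A one-parameter interpolation between these two realizations of "rotation" gives a chain homotopy, hence the intertwining on cohomology. This moduli-space argument is essentially the main technical obstacle, but it is well established in the literature (compare \cite{Ganatrathesis} and the works on BV structures on symplectic cohomology).

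Finally, given the product and BV/Connes compatibility, the Lie bracket compatibility is purely algebraic: on both sides, the bracket is computed by the BV formula
\[\{a,b\} = (-1)^{|a|}\bigl(\Delta(a\cdot b) - \Delta(a)\cdot b - (-1)^{|a|}a\cdot \Delta(b)\bigr),\]
with $B$ in place of $\Delta$ on the Hochschild side, and any unital algebra map intertwining $\Delta$ with $B$ automatically intertwines these derived brackets. Since every ingredient respects the $\Z\oplus M^*$-grading, the resulting Lie algebra isomorphism is $\Z\oplus M^*$-graded, as desired.
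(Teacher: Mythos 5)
Your approach differs from the paper's, and it has a genuine gap. The paper proves the statement directly: one builds closed--open moduli spaces with two interior inputs and arbitrarily many boundary inputs and uses them to exhibit a chain homotopy between $[\cC\cO(-),\cC\cO(-)]$ and $\cC\cO([-,-])$ --- the same template as the proof that $\cC\cO$ is a ring map, with the specific moduli spaces taken from the construction of the $L_\infty$-enhanced closed--open map in \cite{pomerleanoseidel}. You instead propose to factor through a BV comparison: show $\cC\cO$ intertwines $\Delta$ on $SH^*$ with ``Connes' operator $B$'' on $\HH^*$, and then appeal to the BV derived-bracket formula on both sides.

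The gap is in the Hochschild half of that plan. Connes' operator $B$ is a canonical degree $-1$ operator on Hochschild \emph{chains}; it does not act on $CC^*(\cW(\eff))$ or $\HH^*(\cW(\eff))$ without additional structure. To get a BV operator on $\HH^*$ one needs a (smooth) Calabi--Yau structure on $\cW(\eff)$ identifying $\HH^*$ with a shift of $\HH_*$, and then one needs the further theorem (Ginzburg, Tradler, Menichi, \dots) that the transported operator squares to zero and that its derived bracket recovers the Gerstenhaber bracket. Neither the existence of that identification nor the bracket comparison is recorded in your argument, and establishing them is at least as much work as the direct moduli-space homotopy; the claim that ``the Gerstenhaber bracket likewise agrees with the derived bracket of Connes' cyclic operator $B$'' is not a general fact and cannot be invoked for free. (Separately, the description of the ring-homomorphism moduli space is off --- one wants disks with two interior \emph{inputs} degenerating, not two negative and one positive interior puncture --- but the underlying fact that $\cC\cO$ is a ring map is indeed available from \cite{Ganatrathesis}.) Once all the missing BV machinery is in place your last paragraph is a correct formal deduction, but as written the argument does not close.
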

 \begin{proof}
This is a now-standard argument involving using closed-open moduli spaces to construct a homotopy between $[\cC \cO (-),\cC\cO(-)]$ and $\cC\cO ([-,-])$, analogous to the argument that $\cC\cO$ is a ring homomorphism \cite[Proposition 5.3]{Ganatrathesis}. The relevant geometric moduli spaces appear as a special case ($m=2$, $d$ arbitrary) of the moduli spaces used in \cite[Proposition 6.2.1]{pomerleanoseidel} (the relevant statements of results in {\em loc. cit.} are about the existence of bulk-deformed closed-open maps, but the geometric moduli spaces in particular show $\cC \cO$ is an $L_{\infty}$-algebra homomorphism, which is more than we need).
 \end{proof}

 \begin{rmk} \label{rmk:quadraticham1} The definition of wrapped Fukaya categories in \cite[\S 3]{ganatra2020covariantly} uses a `(categorical) localization' model for wrapped Floer theory. On the other hand, the construction of closed-open (and open-closed) maps in \cite{Ganatrathesis} uses quadratic Hamiltonians for the construction of wrapped Floer groups (and symplectic cohomology). These two models for the wrapped Fukaya category are shown to be canonically quasi-equivalent in \cite[Proposition 2.6]{sylvan2019orlov}. \end{rmk}

\begin{prop}\label{prop:HHA_0}
The restriction map 
\begin{align} \label{eq:gradedHHrestriction} \HH^*(\cW(\eff)^{M^*}) \to \HH^*(\cA_0) \end{align}
is a $\Z \oplus M^*$-graded isomorphism. 
\end{prop}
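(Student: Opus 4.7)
My strategy is to factor the restriction map through the closed-open map and then invoke a generation criterion. By \cref{prop:COsurj}, the closed-open map $\cC\cO : SH^*(\eff) \to \HH^*(\cW(\eff)^{M^*})$ is a $\Z \oplus M^*$-graded isomorphism. By naturality of the closed-open map under inclusion of a full $A_\infty$ subcategory, the diagram
\[
\begin{tikzcd}
SH^*(\eff) \arrow[r, "\cC\cO"] \arrow[rd, "\cC\cO"'] & \HH^*(\cW(\eff)^{M^*}) \arrow[d, "\mathrm{res}"] \\
& \HH^*(\cA_0)
\end{tikzcd}
\]
commutes, so $\mathrm{res}$ is an isomorphism if and only if the diagonal closed-open map to $\HH^*(\cA_0)$ is an isomorphism. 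The argument of \cref{COLie} should carry through verbatim to show that this diagonal map is also a $\Z \oplus M^*$-graded Lie algebra homomorphism, so the resulting isomorphism will automatically respect the Lie structure as well.

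To show that $\cC\cO: SH^*(\eff) \to \HH^*(\cA_0)$ is an isomorphism, I would invoke (a $\Z \oplus M^*$-equivariant refinement of) Ganatra's automatic generation theorem from \cite{Ganatra2016}: it suffices to show that $\cA_0$ split-generates $\Perf\,\cW(\eff)^{M^*}$, or equivalently that the companion open-closed map $\cO\cC: \HH_*(\cA_0) \to SH^{*-n}(\eff)$ hits the unit. This split-generation I would extract from the mirror equivalence already assembled in \cref{prop:GS_Li}: under the Gammage-Shende quasi-equivalence $\Perf\,\cW(\eff) \simeq D^b_{\mathsf{dg}}Coh(\partial Y^*)$, the objects of $\cA_0$ correspond to tensor powers $\cO(k)$ of the restriction to $\partial Y^*$ of the ample toric line bundle $\cL_{\Delta^*}$, and these split-generate $\perf_{\mathsf{dg}}(\partial Y^*)$ because the twists of an ample line bundle already split-generate perfect complexes on any projective scheme. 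Transporting this along the Gammage-Shende equivalence gives the required split-generation on the A-side.

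The main obstacle, as I see it, is twofold. First, one must be careful about the fact that $\cA_0$ split-generates only the compact/perfect subcategory $\cF(\eff) \simeq \perf_{\mathsf{dg}}(\partial Y^*)$ rather than all of $\cW(\eff) \simeq D^b_{\mathsf{dg}}Coh(\partial Y^*)$ (which differ since $\partial Y^*$ is typically singular); one must therefore justify why generation of the perfect subcategory suffices to force $\cC\cO: SH^*(\eff) \to \HH^*(\cA_0)$ to be an isomorphism. This should follow because $\HH^*(-)$ is Morita-invariant and the open-closed map is compatible with the module-theoretic thick closure, so once $\cO\cC$ hits the unit of $SH^0$, the composite $SH^* \to \HH^*(\cA_0)$ is forced to be an isomorphism regardless of what it factors through. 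Second, one must carefully set up the $\Z \oplus M^*$-graded (i.e., $T$-equivariant) refinement of the split-generation and closed-open formalism — the grading/anchoring on the A-side should match the $T$-character grading on the B-side, and this compatibility was essentially already verified in the construction of the mirror functor in \cref{prop:GS_Li}.
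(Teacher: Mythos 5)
Your strategy has a genuine gap at its core: the split-generation you want to invoke is false. You correctly note that $\cA_0$ is mirror (under Gammage--Shende) to the twists $\cO(k)$, and that these split-generate $\perf_{\mathsf{dg}}(\partial Y^*)$. But Gammage--Shende gives an equivalence $\Perf\cW(\eff)\simeq D^b_{\mathsf{dg}}Coh(\partial Y^*)$, and since $\partial Y^*$ is singular, $\perf_{\mathsf{dg}}(\partial Y^*)\subsetneq D^b_{\mathsf{dg}}Coh(\partial Y^*)$ is a \emph{proper} subcategory. Transporting along the mirror equivalence therefore shows that $\cA_0$ split-generates $\perf\fuk(\eff)$ (this is exactly \cref{prop:perf_hms}), \emph{not} all of $\Perf\cW(\eff)$. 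Consequently the open-closed map $\cO\cC:\HH_*(\cA_0)\to SH^{*-n}(\eff)$ \emph{cannot} hit the unit: if it did, Abouzaid/Ganatra's generation criterion would force $\cA_0$ to split-generate the whole wrapped category, contradicting the proper inclusion. So the premise your proof hinges on is simply not available. The paragraph where you acknowledge the issue and appeal to Morita invariance is circular: Morita invariance of $\HH^*$ is exactly the statement that would follow \emph{if} $\cA_0$ were a generating subcategory, which it is not.

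The paper's proof sidesteps this by working entirely on the B-side with a fact specific to proper schemes: by \cite[Corollary B.5.1(i)]{preygel2011thom}, the restriction $\HH^*(D^bCoh(\partial Y^*))\to\HH^*(\cB_0)$ is an isomorphism even though $\cB_0$ generates only $\perf(\partial Y^*)$. (Hochschild \emph{cohomology} of $D^bCoh$ and $\perf$ agree for a proper scheme; Hochschild \emph{homology} generally does not, which is precisely why the $\cO\cC$-based criterion fails here.) One then transports this across the Gammage--Shende equivalence to get the $\Z$-graded statement, and separately upgrades to the $\Z\oplus M^*$-grading using the finite-dimensionality (in each cohomological degree) of $\HH^*(\cA_0^\sharp)$ together with \eqref{eq:gradednongradedHH1} — a step your proposal also leaves unaddressed, and which is not automatic (a priori an ungraded isomorphism between graded objects need not be graded, and the product over $M^*$ need not collapse to a direct sum without finiteness input).
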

\begin{proof}
Let $\HH^*(\cA_0^{\sharp})$ denote the Hochschild cohomology of $\cA_0$ viewed only as a $\mathbb{Z}$-graded category (in other words, forgetting $M^*$-gradings). We first argue that the restriction map 
\begin{align} \label{eq:HHrestriction2} \HH^*(\cW(\eff)) \to \HH^*(\cA_0^{\sharp}) \end{align} is an isomorphism. To see this, by  \Cref{prop:GS_Li} \eqref{it:top_arr_GS}, it suffices to check that the restriction map from the Hochschild cohomology of $D^bCoh(\partial Y^*)$ to that of the full subcategory with objects $\cO(i)$ is an isomorphism. This follows from \cite[Corollary B.5.1(i)]{preygel2011thom}.  The same corollary also implies that $\HH^*(\cA_0^{\sharp})$ is finite-dimensional in each $\mathbb{Z}$-grading (because it's isomorphic to endomorphisms of the diagonal $\Delta \subset \partial Y^* \times \partial Y^*$). Because each of the hom-spaces in $\cA_0$ is finite dimensional over $\Bbbk$, we have by \eqref{eq:gradednongradedHH1} that $$\HH^*(\cA_0^{\sharp}) \cong \prod_{\vec{q} \in M^*} \HH^{*\oplus \vec{q}}(\cA_0) \cong \bigoplus_{\vec{q} \in M^*}\HH^{*\oplus \vec{q}}(\cA_0),$$
where the second isomorphism uses the aforementioned finite-dimensionality of $\HH^*(\cA_0^{\sharp}).$ From the proof of Proposition \ref{prop:COsurj}, we also have that $\HH^*(\cW(F))$ is a direct sum of its $M^*$-graded pieces. As \eqref{eq:HHrestriction2} is an isomorphism, it follows that \eqref{eq:gradedHHrestriction} is as well. 
\end{proof}
Combining \cref{prop:COsurj} and \cref{prop:HHA_0} with the fact that the restriction map on Hochschild cohomology is a Lie algebra homomorphism, we obtain:

\begin{cor}\label{cor:SH_HH}
The closed--open map
$$\cC \cO: SH^*(\eff;\Bbbk) \to \HH^*(\cA_0)$$
is a $\Z \oplus M^*$-graded isomorphism of Lie algebras.\qed
\end{cor}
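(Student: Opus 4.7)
The plan is to deduce \cref{cor:SH_HH} as a direct composition of the two isomorphisms already in hand, together with a naturality check. Namely, I would combine \cref{prop:COsurj} and \cref{prop:HHA_0} to obtain, by composition, a $\Z \oplus M^*$-graded isomorphism
$$SH^*(\eff;\Bbbk) \xrightarrow{\cC\cO} \HH^*(\cW(\eff)^{M^*}) \xrightarrow{\text{restr}} \HH^*(\cA_0).$$
The first task is then to verify that this composition agrees with the closed--open map $\cC\cO:SH^*(\eff;\Bbbk) \to \HH^*(\cA_0)$ computed directly on the subcategory $\cA_0 \subset \cW(\eff)$. This is the standard naturality of the closed--open map with respect to inclusions of full $A_\infty$ subcategories: a Hochschild cochain associated to a symplectic orbit $y$ is built from moduli spaces of holomorphic discs with an interior puncture asymptotic to $y$ and cyclically ordered boundary marked points on Lagrangians chosen from the category; restricting the choice of boundary Lagrangians to lie in $\cA_0$ realizes precisely the restriction functor $\HH^*(\cW(\eff)^{M^*}) \to \HH^*(\cA_0)$. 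Therefore the composite is $\cC\cO$ for $\cA_0$, and is a $\Z \oplus M^*$-graded isomorphism.

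It remains to upgrade this to an isomorphism of Lie algebras. The first factor $\cC\cO:SH^*(\eff;\Bbbk) \to \HH^*(\cW(\eff)^{M^*})$ is a Lie algebra homomorphism by \cref{COLie}. The second factor --- restriction to a full $A_\infty$ subcategory --- is a Lie algebra homomorphism by a standard property of the Gerstenhaber bracket: restriction to a full subcategory is induced by an $A_\infty$-functor which is the identity on hom-complexes between its source objects, and restriction along such a functor is strictly compatible with the Gerstenhaber bracket at the chain level (this is well known; see e.g.\ the standard references on Hochschild (co)homology of $A_\infty$-categories). Composing two Lie algebra homomorphisms yields a Lie algebra homomorphism, which together with the previous paragraph gives that $\cC\cO:SH^*(\eff;\Bbbk) \to \HH^*(\cA_0)$ is a $\Z \oplus M^*$-graded Lie algebra isomorphism.

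There is essentially no hard step here: both of the serious inputs (namely the identification of $SH^*$ with full Hochschild cohomology, and the reduction from $\cW(\eff)^{M^*}$ to the small subcategory $\cA_0$) have already been established in \cref{prop:COsurj} and \cref{prop:HHA_0}. The only substantive thing to be careful about is the chain-level naturality used in both the commutativity of the triangle relating the closed--open maps, and the fact that restriction of Hochschild cochains commutes with the Gerstenhaber bracket; both are standard and can be stated in one line each. If a reader wanted more detail on the Lie-algebra compatibility of restriction, one could point to the analogous chain-level homotopies used in the proof of \cref{COLie} (compare \cite[Proposition 5.3]{Ganatrathesis} and \cite[Proposition 6.2.1]{pomerleanoseidel}), but no new geometric argument is required.
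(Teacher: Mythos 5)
Your proof is correct and takes essentially the same route as the paper's: the paper obtains the corollary directly by combining \cref{prop:COsurj}, \cref{COLie}, and \cref{prop:HHA_0} with the standard fact that restriction of Hochschild cohomology to a full subcategory is a Lie algebra homomorphism. Your added remarks about the naturality of $\cC\cO$ under restriction are a (correct) elaboration of a step the paper leaves implicit.
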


We recall that a linear map $\phi:HH_{n}(\cC) \to \Bbbk$ is called a \emph{weak proper $n$-Calabi--Yau structure} on $\cC$ if the pairing
\begin{align*}
(-,-)_\phi: Hom^i_\cC(X,Y) \otimes Hom^{n-i}_\cC(Y,X)    & \to \Bbbk \\
(a,b)_\phi &:= \phi([a \bullet b])
\end{align*}
is a perfect pairing, for all pairs of objects $X$ and $Y$ and $i \in \Z$. 

Compact Fukaya categories of $2n$-dimensional Liouville domains always admit weak proper $n$-Calabi--Yau structures \cite[(12j)]{seidel2008fukaya}, and in particular, $\cA_0$ does.

\begin{lem}\label{lem:HH_minus_n}
    $HH_n(\cA_0)$ has rank 1 and is concentrated in degree $0 \in M^*$. Furthermore, the map $Hom^n_{\cA_0}(L_i,L_i) \to HH_n(\cA_0)$ is an isomorphism for any $i$.
\end{lem}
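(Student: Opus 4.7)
My approach combines the weak proper $n$-Calabi--Yau structure on $\cA_0$ inherited from the compact Fukaya category of $\eff$ (via \cite[(12j)]{seidel2008fukaya}), with associated trace $\phi: HH_n(\cA_0) \to \Bbbk$, together with the quasi-equivalence $\mir0:\cA_0 \xrightarrow{\sim} \cB_0$ of \cref{prop:GS_Li} and the fact (recorded in the introduction) that $\cB_0$ split-generates $\Perf\dbdg Coh(\partial Y^*)$.

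I would first identify the low-degree hom spaces. The $\Z$-graded quasi-equivalence $\mir0$ gives
\[ \Hom^0_{\cA_0}(L_i,L_i) \cong \Hom^0_{\cB_0}(\cO(i),\cO(i)) = H^0(\partial Y^*,\cO) \cong \Bbbk, \]
using that $\partial Y^*$ is connected as an ample anticanonical divisor in the smooth Fano $Y^*$. Perfectness of the weak CY pairing $\Hom^0(L_i,L_i) \otimes \Hom^n(L_i,L_i) \to \Bbbk$ then forces $\Hom^n(L_i,L_i) \cong \Bbbk$, and the composition $\Hom^n(L_i,L_i) \to HH_n(\cA_0) \xrightarrow{\phi} \Bbbk$, which is $b \mapsto (1_{L_i},b)_\phi$, is nonzero. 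In particular, the map $\Hom^n(L_i,L_i) \to HH_n(\cA_0)$ is injective and $\dim HH_n(\cA_0) \geq 1$.

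For the upper bound $\dim HH_n(\cA_0) \leq 1$, I would pass to the $B$-side: Morita invariance of Hochschild homology, combined with $\mir0$ and split-generation, gives $HH_n(\cA_0) \cong HH_n(\Perf\dbdg Coh(\partial Y^*))$ as $\Bbbk$-modules. Since $\partial Y^* = -K_{Y^*}$ is a proper Gorenstein scheme of dimension $n$ with $\omega_{\partial Y^*} \cong \cO_{\partial Y^*}$, the Serre functor on $\Perf\dbdg Coh(\partial Y^*)$ is the shift $[n]$, and Serre duality at the Hochschild level yields $HH_n(\Perf\dbdg Coh(\partial Y^*)) \cong HH^0(\Perf\dbdg Coh(\partial Y^*))^\vee \cong \Bbbk$. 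Combining with the lower bound gives $\dim HH_n(\cA_0) = 1$, so the injective map $\Hom^n(L_i,L_i) \to HH_n(\cA_0)$ is an isomorphism. For the $M^*$-concentration: $\phi$ is canonical hence $M^*$-equivariant and concentrated in $M^*$-degree $0$, so vanishes on $M^*$-nonzero components of $HH_n(\cA_0)$; since $HH_n(\cA_0)$ is 1-dimensional it sits in a single $M^*$-grading, and this must be $0$ for $\phi$ to be nonzero.

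The main obstacle I foresee is the $B$-side computation $HH_n(\Perf\dbdg Coh(\partial Y^*)) \cong \Bbbk$. While the shift-Serre functor on $\Perf\dbdg Coh$ of a proper Gorenstein CY scheme is standard, extracting from it the explicit Hochschild duality $HH_n \cong (HH^0)^\vee$ and computing $HH^0 \cong \Bbbk$ for the singular toric scheme $\partial Y^*$ will require careful justification, potentially via direct toric-combinatorial arguments or an alternative $A$-side argument using \cref{cor:SH_HH} to reduce the calculation to an analysis of low-degree Reeb orbits on $\eff$ in each $M^*$-grading.
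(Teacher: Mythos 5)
Your lower-bound argument (perfectness of the Calabi--Yau pairing forces $Hom^n(L_i,L_i) \cong \Bbbk$ and makes $Hom^n(L_i,L_i) \to HH_n(\cA_0)$ injective) matches the paper's exactly. Where you diverge is the upper bound. The paper stays entirely on the $A$-side: it applies the duality $HH_n(\cA_0) \cong HH^0(\cA_0)^\vee$ coming from the weak proper CY structure (\cite[Lemma A.2]{Sheridan2013}), then invokes \cref{cor:SH_HH} to identify $HH^0(\cA_0) \cong SH^0(\eff)$, and finally computes $SH^0(\eff) \cong \Bbbk$ via log symplectic cohomology (\cite[Theorem 1.1]{ganatra2020symplectic}). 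Your route transports $HH_n$ to the $B$-side by Morita invariance and then appeals to Serre/CY duality for the singular proper Gorenstein scheme $\partial Y^*$. This is a legitimate alternative, but note that it would be cleaner to apply the \emph{same} CY duality you already used on the $A$-side to reduce to $HH^0(\cA_0)^\vee$, and only \emph{then} pass to the $B$-side to compute $HH^0(\cA_0) \cong HH^0(\Perf_{dg}(\partial Y^*)) \cong \Gamma(\partial Y^*,\cO) \cong \Bbbk$; that avoids having to discuss the Serre functor of a singular scheme at all. The paper's approach has the virtue of not needing the $B$-side equivalence as an input, and of feeding into the later $SH$-computations in \cref{sec:Adef} which are needed anyway; your route buys you independence from the log-$SH$ machinery at the cost of some care about $\Perf$ vs.\ $D^b_{\mathsf{dg}}Coh$ for the singular $\partial Y^*$.

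Two concrete issues with your write-up. First, your $M^*$-concentration argument ("$\phi$ is canonical hence $M^*$-equivariant") is both unjustified as written and a detour: once you know $Hom^n_{\cA_0}(L_i,L_i) \to HH_n(\cA_0)$ is an isomorphism, the statement is immediate, because this map is $M^*$-graded and the source, being an endomorphism space, sits in $M^*$-degree $0$. This is exactly what the paper does, and it also sidesteps a potential circularity: the $M^*$-gradedness of $\phi$ is precisely what \cref{lem:compat_CY} establishes, and that lemma cites \cref{lem:HH_minus_n}. Second, your stated obstacle --- justifying $HH_n(\Perf_{dg}(\partial Y^*)) \cong \Bbbk$ directly --- is real (the Serre functor on $\Perf$ of a singular scheme is the tensor with the dualizing complex, which lands in $D^b_{\mathsf{dg}}Coh$ rather than $\Perf$), and it is one reason the paper routes through $SH^0(\eff)$ instead. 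If you adopt the modification above (dualize first on the $A$-side, only compute $HH^0$ on the $B$-side) you avoid this entirely, since $HH^0$ of a proper scheme is just its ring of global functions.
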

\begin{proof}
The weak proper $n$-Calabi--Yau structure on $\cA_0$ induces an isomorphism 
$$HH_i(\cA_0) \cong HH^{n-i}(\cA_0)^\vee$$
(see \cite[Lemma A.2]{Sheridan2013}). 
Applying Corollary \ref{cor:SH_HH}, we find that $HH_n(\cA_0)$ is dual to $SH^0(F)$, which has rank $1$ spanned by the unit in this case, by a straightforward application of \cite[Theorem 1.1]{ganatra2020symplectic} (see \S \ref{sec:logPSS} for further discussion of these results). 

We now note that as the composition 
$$Hom^0(L_i,L_i) \otimes Hom^n(L_i,L_i) \to Hom^n(L_i,L_i) \to HH_n(\cA_0) \xrightarrow{\phi} \Bbbk$$
is a perfect pairing, it must be non-zero. 
As both $Hom^n(L_i,L_i)$ and $HH_n(\cA_0)$ have rank $1$, the map $Hom^n(L_i,L_i) \to HH_n(\cA_0)$ must be an isomorphism. 

This map clearly respects the $M^*$-grading, and $Hom^n(L_i,L_i)$ is concentrated in degree $0 \in M^*$ for any $i$, hence $HH_n(\cA_0)$ is too.
\end{proof}

If $\cC$ is a $\Z \oplus M^*$-graded $A_\infty$ category, then a graded weak proper $n$-Calabi--Yau structure is a weak proper $n$-Calabi--Yau structure $\phi$ which vanishes on $HH_{n \oplus \vec{p}}$ for $\vec{p} \neq 0$. 
In this case, the pairing on morphism spaces respects the $M^*$-grading. 

\begin{lem}\label{lem:compat_CY}
    Both $\cA_0$ and $\cB_0$ admit $M^*$-graded weak proper $n$-Calabi--Yau structures, which are respected by the isomorphism $HH_{n}(\cA_0) = HH_{n}(\cB_0)$ induced by the quasi-isomorphism $\cA_0 \simeq \cB_0$.
\end{lem}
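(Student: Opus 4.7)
Here is how I would approach the proof.

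First, for $\cA_0$: the compact Fukaya category $\cF(\eff)$ of the $2n$-dimensional Liouville manifold $\eff$ carries a weak proper $n$-CY structure by Seidel \cite[(12j)]{seidel2008fukaya}, and restricting along the inclusion $\cA_0 \hookrightarrow \cF(\eff)$ produces such a structure $\phi_A$ on $\cA_0$. To see that $\phi_A$ is automatically $M^*$-graded, I would invoke Lemma \ref{lem:HH_minus_n}: it asserts that $HH_n(\cA_0)$ is one-dimensional and concentrated in $M^*$-degree $0$. Hence $HH_{n\oplus\vec{p}}(\cA_0)=0$ for every $\vec{p}\neq 0$, so $\phi_A$ vanishes on these groups trivially, i.e., it is $M^*$-graded. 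Nondegeneracy of the induced pairing on $\cA_0$ has already been used in the proof of Lemma \ref{lem:HH_minus_n}.

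For $\cB_0$: by adjunction applied to the anticanonical divisor $\partial Y^*=-K_{Y^*}\subset Y^*$, the dualizing sheaf satisfies $\omega_{\partial Y^*}\cong\cO_{\partial Y^*}$, and a canonical trivialization is provided by the Poincar\'e residue of the toric volume form $\Omega^{Y^*}=d\log z_1\wedge\cdots\wedge d\log z_n$ along $\partial Y^*$. Because this residue is $M_{\G_m}$-invariant, the induced Serre functor on $\mathrm{Perf}(\partial Y^*)$ is naturally isomorphic to $[n]$ in an $M^*$-equivariant way, producing a weak proper $n$-CY structure $\phi_B$ on $\mathrm{Perf}(\partial Y^*)$. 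Restricting to the subcategory $\cB_0$ (whose objects $\cO(k)$ are perfect and $M^*$-equivariant) yields a weak proper $n$-CY structure on $\cB_0$ whose associated pairings respect the $M^*$-grading — i.e., vanish on $HH_{n\oplus\vec{p}}(\cB_0)$ for $\vec{p}\neq 0$ — since the trace map factors through the $M^*$-degree-$0$ component of $H^n(\partial Y^*,\cO)$.

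Finally, for compatibility: the quasi-equivalence $\mir0\colon \cB_0\to\cA_0$ induces an isomorphism $HH_n(\cB_0)\cong HH_n(\cA_0)$, and by Lemma \ref{lem:HH_minus_n} both sides are one-dimensional. Both $\phi_A$ and the transported $(\mir0)_*\phi_B$ are nonzero linear functionals on this rank-one space (each is nondegenerate when evaluated on a self-hom pairing of any generator), hence they differ by a nonzero scalar. Rescaling $\phi_B$ by this scalar finishes the proof. The main thing requiring care is Step~2 — checking that the Serre-duality CY structure on $\mathrm{Perf}(\partial Y^*)$ is indeed $M^*$-graded, which reduces to the $M_{\G_m}$-equivariance of the residue trivialization of $\omega_{\partial Y^*}$ and of the trace $H^n(\partial Y^*,\cO_{\partial Y^*})\to \Bbbk$; everything else is formal given Lemma \ref{lem:HH_minus_n}.
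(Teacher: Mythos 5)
Your proof is correct but follows a genuinely different route from the paper's on the $\cB_0$ side, and the two are worth contrasting. The paper does not construct a Calabi--Yau structure on $\cB_0$ from algebraic geometry at all: it simply \emph{defines} $\phi_B$ as the pushforward of $\phi_A$ under the isomorphism $\HH_n(\cA_0) \cong \HH_n(\cB_0)$, so that the compatibility assertion is tautological and nondegeneracy of the $\cB_0$-pairing follows immediately from full faithfulness of $\mir0$. The only non-trivial content in the paper's proof is then $M^*$-gradedness of both functionals, which it deduces (on the $\cA_0$ side) from Lemma \ref{lem:HH_minus_n} and (on the $\cB_0$ side) from the observation that $Hom^n_{\cB_0}(\cO(i),\cO(i)) \to \HH_n(\cB_0)$ is an $M^*$-graded isomorphism with source $H^n(\partial Y^*,\cO)$ concentrated in $M^*$-degree $0$. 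Note that the paper cannot simply push the grading statement across the quasi-isomorphism, since $M^*$-grading compatibility of $\cA_0 \simeq \cB_0$ is only established afterwards (in Lemmas \ref{lem:grading_on_coh_level} and \ref{lem:F0respectsM}, which cite the present lemma); the separate direct argument on $\cB_0$ is needed precisely for this reason. Your construction of an independent $\phi_B$ via the Poincar\'e-residue trivialization of $\omega_{\partial Y^*}$ and $M_{\G_m}$-equivariant Serre duality on $\Perf(\partial Y^*)$ is sound: $\partial Y^*$ is proper and Gorenstein with $\omega_{\partial Y^*} \cong \cO_{\partial Y^*}$ by adjunction, the residue and trace are torus-equivariant, and the scalar-matching step at the end is legitimate since $\HH_n$ is one-dimensional by Lemma \ref{lem:HH_minus_n} and rescaling preserves both nondegeneracy and gradedness. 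What you gain is an intrinsic, named CY structure on the $B$-side (which one might want for other purposes, e.g.\ comparing to algebraic traces); what the paper's route buys is efficiency and the avoidance of any Serre-duality machinery for singular Gorenstein schemes, since everything is pulled across an isomorphism that already exists.
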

\begin{proof}
    The weak proper $n$-Calabi--Yau structure on $\cA_0$ induces one on $\cB_0$ via the isomorphism $HH_n(\cA_0) = HH_n(\cB_0)$. 
    By Lemma \ref{lem:HH_minus_n}, $HH_n(\cA_0)$ is concentrated in degree $0 \in M^*$, hence the Calabi--Yau structure on $\cA_0$ is $M^*$-graded (indeed, the weak proper Calabi--Yau structure on the $M^*$-graded compact Fukaya category is always $M^*$-graded for general reasons).  
    On $\cB_0$, we observe that the map $Hom^n_{\cB_0}(\cO(i),\cO(i)) \to HH_n(\cB_0)$ is $M^*$-graded, and an isomorphism by Lemma \ref{lem:HH_minus_n}. 
    As $Hom^n_{\cB_0}(\cO(i),\cO(i)) = H^n(\cO)$ is graded in degree $0 \in M^*$ for any $i$, so is $HH_n(\cB_0)$, hence the Calabi--Yau structure on $\cB_0$ is also $M^*$-graded.
\end{proof}

\subsection{Upgrading to \texorpdfstring{an $M^*$-graded}{a graded} quasi-isomorphism}

\begin{lem}\label{lem:structure_A0}
    The maps
    \begin{equation}
        \label{eq:Hom_cd}
    \begin{tikzcd}
        Hom^k_{\cB_0}(\cO(i), \cO(j) ) \arrow[leftrightarrow]{r}{\mir0^k} & Hom^k_{\cA_0}(L_i,L_j) \\
        Hom^k_{\tilde \cB_0}(\tildeoi,  \mathcal{E}_j ) \arrow{u}{ i^! } \arrow[leftrightarrow]{r} & Hom^k_{\tilde \cA_0}(\tilde L_i,\tilde L_j) \arrow{u}{\cap}
    \end{tikzcd}
    \end{equation}
    arising in the diagram \eqref{eq:Pic_sec_cd2} have the following properties:
    \begin{enumerate}
    \item \label{it:AO_1} $Hom^k(\cO(i), \cO(j)) = Hom^k(L_i,L_j) = 0$ unless $k=0$ or $n$.
    \item \label{it:AO_2} the vertical maps are surjective for $k=0$;
    \item \label{it:AO_3} the top arrows respect the Serre duality pairings induced by the Calabi--Yau structures chosen in Lemma \ref{lem:compat_CY} in the sense that 
    $$(F^i_0 a,F^{n-i}_0 b) = (a,b)$$
    for any $a \in Hom^i_{\cB_0}(\cO(j),\cO(k))$ and $b \in Hom_{\cB_0}^{n-i}(\cO(k),\cO(j))$. 
                                \end{enumerate} 
\end{lem}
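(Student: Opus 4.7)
The plan is to prove (1) first on the $B$-side, then transport to the $A$-side using the quasi-isomorphism $\mir0$; then (2) reduces to a surjectivity statement about restriction of line bundles from $Y^*$ to $\partial Y^*$; and (3) is formal from \cref{lem:compat_CY}.

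For (1), I would first observe that by adjunction $\omega_{\partial Y^*} \cong (\omega_{Y^*} \otimes \cO_{Y^*}(\partial Y^*))|_{\partial Y^*} \cong \cO_{\partial Y^*}$, since $\sF_\Delta = -K_{Y^*}$ for a Fano toric variety, so $\partial Y^*$ is Gorenstein with trivial dualizing sheaf and Serre duality holds on it. Then $\Hom^k_{\cB_0}(\cO(i),\cO(j)) = H^k(\partial Y^*, \cO_{\partial Y^*}(d\sF_\Delta))$ with $d = j - i$, and it suffices by Serre duality to handle $d \geq 0$. The short exact sequence
\[
0 \to \cO_{Y^*}((d-1)\sF_\Delta) \to \cO_{Y^*}(d\sF_\Delta) \to \cO_{\partial Y^*}(d\sF_\Delta) \to 0
\]
and its long exact cohomology sequence, combined with Demazure vanishing for nef line bundles on smooth toric varieties (applied to $d \geq 1$) and Kodaira vanishing for the Fano $Y^*$ (applied when $d-1 = -1$, using $H^k(Y^*, K_{Y^*}) = H^{n+1-k}(Y^*, \cO_{Y^*})^{\vee}$), collapse to give $H^k(\partial Y^*, \cO(d\sF_\Delta)) = 0$ for $1 \leq k \leq n-1$. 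The $A$-side vanishing then follows from the quasi-isomorphism $\mir0: \cB_0 \simeq \cA_0$ established in diagram \eqref{eq:Pic_sec_cd2}.

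For (2), I would use the same long exact sequence to identify $i^!$ on $\Hom^0$ with the restriction map $H^0(Y^*, \cO(d\sF_\Delta)) \to H^0(\partial Y^*, \cO(d\sF_\Delta))$. The cokernel sits inside $H^1(Y^*, \cO((d-1)\sF_\Delta))$, which vanishes for all $d \in \Z$ by the Demazure/Kodaira argument just described (for $d \geq 1$ it's nef; for $d = 0$ it's $K_{Y^*}$ and one uses $H^1(Y^*, K_{Y^*}) = H^n(Y^*, \cO_{Y^*})^{\vee} = 0$; for $d < 0$ both source and target vanish). Hence $i^!$ is surjective on $\Hom^0$. Surjectivity of $\cap$ on $\Hom^0$ is then a formal consequence of commutativity of \eqref{eq:Pic_sec_cd2}, together with the fact that the horizontal arrows are quasi-isomorphisms.

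For (3), the claim is that $\mir0$ respects the Serre pairings on $\Hom$-spaces coming from a weak proper $n$-Calabi--Yau structure. By \cref{lem:compat_CY}, the weak proper Calabi--Yau structures $\phi_{\cA_0}: HH_n(\cA_0) \to \Bbbk$ and $\phi_{\cB_0}: HH_n(\cB_0) \to \Bbbk$ correspond under the isomorphism $HH_n(\cA_0) \cong HH_n(\cB_0)$ induced by the quasi-isomorphism $\mir0$. Because any $A_\infty$ quasi-equivalence induces a map on Hochschild homology compatible with the maps $\Hom^i(X,Y) \otimes \Hom^{n-i}(Y,X) \to HH_n$ given by composition followed by inclusion, we get $(\mir0^i(a), \mir0^{n-i}(b)) = \phi_{\cA_0}([\mir0^i(a) \bullet \mir0^{n-i}(b)]) = \phi_{\cB_0}([a \bullet b]) = (a,b)$ as required. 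The main (mild) obstacle is book-keeping the $\Z \oplus M^*$-grading through these arguments, but this is addressed by the previous lemmas: (1) the vanishing is purely $\Z$-graded; (2) the map $i^!$ is manifestly $M^*$-graded; and in (3) the $M^*$-gradedness of $\phi$ was noted in \cref{lem:compat_CY}.
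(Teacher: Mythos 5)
Your proposal is correct and follows the same route as the paper, which simply asserts that items (1) and (2) are ``easily verified on $\cB_0$'' and transported to $\cA_0$ by commutativity, and that (3) holds by the choice of Calabi--Yau structures in \cref{lem:compat_CY}. You have supplied the routine B-side cohomology verifications (adjunction giving $\omega_{\partial Y^*} \cong \cO$, the long exact sequence plus nef vanishing and Serre duality for (1), and identification of $i^!$ with the restriction map on $\Hom^0$ together with the $H^1$-vanishing for (2)) that the paper leaves implicit.
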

\begin{proof}
    Items \eqref{it:AO_1} and \eqref{it:AO_2} are easily verified on $\cB_0$, and the result follows for $\cA_0$ by commutativity of the diagram. Item \eqref{it:AO_3} follows from the fact that the Calabi--Yau structures are chosen so as to be respected by the mirror equivalence.
\end{proof}

\begin{lem}\label{lem:grading_on_coh_level}
The cohomological isomorphism $H^*(\cA_0) \simeq H^*(\cB_0)$ respects $\Z \oplus M^*$-gradings.
\end{lem}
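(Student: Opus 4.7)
The plan is to leverage the structure results in Lemma~\ref{lem:structure_A0} together with the $M^*$-graded nature of the bottom horizontal and vertical maps in diagram~\eqref{eq:Pic_sec_cd2}. The $\Z$-grading is preserved automatically because $F_0$ is a cohomological quasi-isomorphism. For the $M^*$-grading, I would use Lemma~\ref{lem:structure_A0}\eqref{it:AO_1} to reduce the verification to the two nonzero cohomological degrees $k=0$ and $k=n$.

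For $k=0$, my approach is lifting. Given an $M^*$-homogeneous class $a \in Hom^0_{\cB_0}(\cO(i),\cO(j))$ of $M^*$-degree $\vec{p}$, Lemma~\ref{lem:structure_A0}\eqref{it:AO_2} guarantees that $i^!$ is surjective on $Hom^0$; since $i^!$ is $M^*$-graded, I can lift $a$ to some $\tilde a \in Hom^0_{\tilde\cB_0}(\tildeoi, \mathcal{E}_j)$ of the same $M^*$-degree $\vec{p}$. Cohomological commutativity of \eqref{eq:Pic_sec_cd2} then yields $F_0(a) = \cap(A(\tilde a))$, and since both $A$ and $\cap$ are $M^*$-graded, $F_0(a)$ has $M^*$-degree $\vec{p}$ as required.

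For $k=n$, I would bootstrap off the $k=0$ case using Serre duality. By Lemma~\ref{lem:compat_CY}, both $\cA_0$ and $\cB_0$ carry compatible $M^*$-graded weak proper $n$-Calabi--Yau structures, so the Serre pairings are $M^*$-graded and respected by $F_0$ (Lemma~\ref{lem:structure_A0}\eqref{it:AO_3}). Given an $M^*$-homogeneous class $b \in Hom^n_{\cB_0}(\cO(j), \cO(i))$ of degree $\vec{p}$ and any $M^*$-homogeneous $a' \in Hom^0_{\cA_0}(L_j,L_i)$ of degree $\vec{q}$, compatibility gives
$$(a', F_0(b))_{\cA_0} = (F_0^{-1}(a'), b)_{\cB_0}.$$
By the $k=0$ step applied to $F_0^{-1}$ (valid since $F_0$ is a cohomological equivalence), $F_0^{-1}(a')$ has $M^*$-degree $\vec{q}$. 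Hence the right-hand side vanishes unless $\vec{q}+\vec{p}=0$, and non-degeneracy together with $M^*$-gradedness of the pairing on $\cA_0$ force $F_0(b)$ to have $M^*$-degree $\vec{p}$.

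I do not anticipate a genuine obstacle here; the essential input is already packaged in Lemmas~\ref{lem:structure_A0} and~\ref{lem:compat_CY}. The one point requiring some care is bookkeeping around the Serre duality conventions, namely confirming that the Calabi--Yau pairing is $M^*$-additive so that the degree-matching reads $\vec{q}+\vec{p}=0$, and that $F_0^{-1}$ inherits $M^*$-gradedness on degree-zero morphisms directly from the $k=0$ argument.
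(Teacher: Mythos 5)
Your proposal is correct and follows essentially the same route as the paper: reduce to degrees $0$ and $n$ via Lemma~\ref{lem:structure_A0}\eqref{it:AO_1}, handle $k=0$ by lifting along the $M^*$-graded surjection $i^!$ and invoking commutativity of \eqref{eq:Hom_cd}, and handle $k=n$ by bootstrapping from the $k=0$ case through the $M^*$-graded Serre pairing of Lemma~\ref{lem:compat_CY} and item~\eqref{it:AO_3}. Your phrasing of the degree-$n$ step via $F_0^{-1}$ on $Hom^0$ is an equivalent bookkeeping variant of the paper's argument, which instead tests $F^n_0 b$ against $F^0_0 a$ for $a$ ranging over $Hom^0_{\cB_0}$.
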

\begin{proof}
    As $\cap$, $ i^! $, and the bottom arrow in the diagram \eqref{eq:Hom_cd} respect the $M^*$-grading, it follows from Lemma \ref{lem:structure_A0} \eqref{it:AO_2} that $\mir0^0$ respects the $M^*$-grading.
    It follows from the $i=0$ case of Lemma \ref{lem:structure_A0} \eqref{it:AO_3}, together with the fact that the pairings respect the $M^*$-grading because the Calabi--Yau structures were chosen in Lemma \ref{lem:compat_CY} to be $M^*$-graded, that the $F^n_0$ respect the $M^*$-grading too.
\end{proof}

 \begin{lem} \label{lem:F0respectsM} There exists a $\Z \oplus M^*$-graded quasi-isomorphism $\cA_0 \simeq \cB_0$ inducing the isomorphism from Lemma \ref{lem:grading_on_coh_level} at the level of cohomology. \end{lem}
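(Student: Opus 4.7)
The plan is to modify the underlying $\Z$-graded quasi-isomorphism $\mir0: \cB_0 \to \cA_0$ from \eqref{eq:Pic_sec_cd2} into a $\Z \oplus M^*$-graded one via $A_\infty$-obstruction theory on minimal models. Since each morphism complex of $\cA_0$ and $\cB_0$ splits as a direct sum of $M^*$-graded subcomplexes, I will choose $M^*$-equivariant splittings (cycles / boundaries / complement) and run Kadeishvili's homological perturbation to obtain minimal models $\cA_0^{\min}$ and $\cB_0^{\min}$ which are $\Z \oplus M^*$-graded minimal $A_\infty$-categories on the cohomologies, together with $\Z \oplus M^*$-graded quasi-equivalences to $\cA_0$ and $\cB_0$ respectively. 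The transferred $\Z$-graded functor $\bar F_0: \cB_0^{\min} \to \cA_0^{\min}$ has linear term equal to the cohomological isomorphism $H(\mir0)$, which is $\Z \oplus M^*$-graded by \cref{lem:grading_on_coh_level}.

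Next, I will construct a $\Z \oplus M^*$-graded $A_\infty$-quasi-isomorphism $\tilde F: \cB_0^{\min} \to \cA_0^{\min}$ with $\tilde F^1 = H(\mir0)$ by induction on arity. Assuming $\tilde F^1, \ldots, \tilde F^{k-1}$ have been constructed in $M^*$-degree zero and satisfy the $A_\infty$-functor equations through order $k-1$, I seek $\tilde F^k$ of $M^*$-degree zero extending them. Equivalently, starting from $\bar F_0$, the idea is to use $A_\infty$-natural transformations to successively eliminate the non-$M^*$-degree-zero components of the higher $\bar F_0^k$. At each stage, the obstruction to performing such a modification is a Hochschild cohomology class in a specific $M^*$-graded summand of $\HH^*(\cB_0^{\min}, \cA_0^{\min})$, viewed as a bimodule via the graded linear term.

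To see that the obstructions vanish, note that by \cref{prop:HHA_0} and \cref{cor:SH_HH} the relevant Hochschild cohomology is identified with $SH^*(\eff)$ as a $\Z \oplus M^*$-graded object, and by \cref{lem:HH_minus_n} and \cref{lem:structure_A0} together with the $M^*$-graded Calabi--Yau duality coming from \cref{lem:compat_CY}, each $\Z \oplus M^*$-graded piece of this Hochschild cohomology is finite dimensional. Consequently $\HH^*(\cB_0^{\min}, \cA_0^{\min})$ decomposes as a genuine direct sum $\bigoplus_{\vec p \in M^*} \HH^{*,\vec p}$ rather than merely a direct product. Since $\bar F_0$ is a $\Z$-graded $A_\infty$-functor, the total obstruction (the sum over all $\vec p$-components) vanishes in the ungraded Hochschild cohomology, and the direct-sum decomposition forces each $\vec p$-graded obstruction to vanish individually. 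This allows the inductive modification to proceed, producing $\tilde F$; since $\tilde F^1 = H(\mir0)$ is an isomorphism, $\tilde F$ is a $\Z \oplus M^*$-graded quasi-isomorphism inducing the desired cohomology-level map.

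The main obstacle I anticipate is the careful setup of the obstruction theory, specifically ensuring that the $M^*$-decomposition of Hochschild cohomology behaves as a direct sum, so that ungraded vanishing implies graded vanishing piece-by-piece, and that the gauge freedom of $A_\infty$-natural transformations respects the $M^*$-grading. Once this structural fact is in hand, the remainder of the argument is a routine induction using standard $A_\infty$-obstruction theory.
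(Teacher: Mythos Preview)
Your overall strategy---pass to $M^*$-graded minimal models of $\cA_0$ and $\cB_0$, transport the ungraded quasi-isomorphism to a strict one between minimal models with $M^*$-graded linear part, then run an inductive obstruction argument---is exactly the approach the paper takes (\cref{lem:Mgradedqi}, \cref{prop:obstructions}). The divergence, and the gap, is in how you argue that the obstructions vanish.

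At stage $k$ of the induction you have built $\tilde F^1,\dots,\tilde F^{k-1}$, all of $M^*$-degree $0$. The obstruction cocycle at this stage is assembled from these $\tilde F^{<k}$ together with the $M^*$-graded $A_\infty$ structures, so it already lies in the $M^*$-degree-$0$ piece of the Hochschild complex. There is no ``sum over all $\vec p$-components'' to decompose; the direct-sum decomposition is relevant only for choosing an $M^*$-degree-$0$ \emph{primitive} once you know the class is a coboundary, not for proving it is one. And the fact that $\bar F_0$ is an $A_\infty$ functor does not show this: the obstruction depends on the modified data $\tilde F^{<k}$, which for $2\le j\le k-1$ differ from $\bar F_0^{<k}$ (simply projecting each $\bar F_0^j$ to its degree-$0$ part fails because of cross terms such as $\tilde\mu^2((\bar F_0^{1+n})_{\vec p},(\bar F_0^{1+n})_{-\vec p})$ appearing at order $2+2n$). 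To make your idea work you would have to track the ungraded formal diffeomorphism $G=\bar F_0\circ(\tilde F^{<k})^{-1}$ through the induction and invoke Lemma~\ref{lem:Hoch}(2) for the pair $(\mu',\tilde\mu)$ at each step, which your proposal does not do.

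The paper instead fills this gap by proving the vanishing $HH^{2}_{-kn}(\mathcal{C},\mu_{\mathcal{C}_{H^*(\cB_0)}})=0$ for all $k>1$ (\cref{cor:vanish}), by reducing to Polishchuk's computation for the extended homogeneous coordinate ring (\cref{thm:Polishchuk}). With this in hand the cocycle at each stage is automatically a coboundary, and one then projects the primitive to $M^*$-degree $0$ using the product decomposition of Hochschild cochains, which requires only finite-dimensionality of hom-spaces rather than the facts about $SH^*$ you invoke.
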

 \begin{proof} This is Lemma \ref{lem:Mgradedqi}. \end{proof}

\subsection{Split-generating the compact Fukaya category}
In this short subsection, we prove a split-generation result --- and therefore an HMS result --- for compact exact Lagrangians in $F$ which is not needed elsewhere in the text, but which may be of independent interest. We let $\fuk(F)$ denote the $\mathbb{Z}$-graded compact (exact) Fukaya category of $F$. Let $\perf_{\mathsf{dg}}(\partial Y^*) \subset \dbdg Coh(\partial Y^*)$ the full subcategory of perfect complexes (see \cref{sec:splitgeneration} for an explicit dg model of $\perf_{\mathsf{dg}}(-)$, and also compare \cref{rmk:perfuniqueness}).

\begin{prop}\label{prop:perf_hms} The subcategories $\cB_0 \subset \perf(\partial Y^*)$ and $\cA_0 \subset \fuk(F)$ split-generate, and in particular, we have a quasi-equivalence
    $$\perf \fuk(F) \simeq \perf_{\mathsf{dg}}(\partial Y^*).$$
\end{prop}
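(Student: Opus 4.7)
The plan is to separately establish the two split-generation claims; the ``in particular'' statement then follows formally by combining them with the $A_\infty$ quasi-isomorphism $\cA_0 \simeq \cB_0$ of \cref{prop:GS_Li} and \cref{lem:F0respectsM}, giving
$$
\perf \fuk(F) \simeq \perf \cA_0 \simeq \perf \cB_0 \simeq \perf_{\mathsf{dg}}(\partial Y^*).
$$

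For the algebraic split-generation of $\perf_{\mathsf{dg}}(\partial Y^*)$ by $\cB_0$, I would note that under the hypotheses of \cref{main:higherdim}, $\Sigma^*$ is smooth and $\Delta^*$ is reflexive, so $Y^*$ is a smooth Fano toric variety and the anticanonical bundle $\cO_{Y^*}(\sF_\Delta)$ is ample. Its restriction $\cO(1) = i^*\cO_{Y^*}(\sF_\Delta)$ is therefore an ample line bundle on the projective scheme $\partial Y^*$, and it is classical (e.g., via Orlov's theorem on split-generation, or following Bondal--Van den Bergh) that on any projective scheme the powers $\{\cO(i)\}_{i \in \Z}$ of an ample line bundle split-generate the dg category of perfect complexes: one uses ampleness iteratively to resolve any perfect complex by direct sums of shifts of $\{\cO(i)\}$, with the process terminating because perfect complexes have bounded Tor-amplitude. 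This yields $\perf \cB_0 \simeq \perf_{\mathsf{dg}}(\partial Y^*)$.

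For the symplectic split-generation of $\fuk(F)$ by $\cA_0$, I would apply the automatic split-generation criterion of \cite{Ganatra2016, Sanda2021}, the same mechanism invoked in the proof of \cref{main:higherdim}. The key input is that by \cref{cor:SH_HH}, the closed-open map $\cC \cO: SH^*(F;\Bbbk) \to \HH^*(\cA_0)$ is an isomorphism. Combined with the weak proper $n$-Calabi--Yau structure on $\cA_0$ from \cref{lem:compat_CY} and the non-degenerate Poincar\'e-duality pairing on symplectic cohomology of a Liouville domain, a Cardy-type relation dualizes this into the statement that the open-closed map $\cO \cC: \HH_*(\cA_0) \to SH^{*-n}(F;\Bbbk)$ is an isomorphism, and so in particular hits the unit in $SH^0(F;\Bbbk)$. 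The cited generation criterion then yields split-generation of $\fuk(F)$ by $\cA_0$.

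The main obstacle will be verifying that the generation criterion applies in the \emph{compact} (rather than wrapped) Fukaya setting: Abouzaid's original criterion asserts split-generation of the \emph{wrapped} category from the open-closed map hitting the unit in $SH^0$, while here we need split-generation of the compact subcategory $\fuk(F) \subset \cW(F)$. The automatic generation results of \cite{Ganatra2016, Sanda2021} provide the bridge via the Calabi--Yau duality of \cref{lem:compat_CY}; modulo this input, everything else is formal.
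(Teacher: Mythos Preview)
Your B-side argument is correct and matches the paper's (which cites \cite[Theorem 4]{Orlov2009}). The problem is on the A-side: your proposed route through the open--closed map does not work, and the paper takes a genuinely different approach.

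The specific flaw is the step where you ``dualize'' the isomorphism $\cC\cO: SH^*(F) \to HH^*(\cA_0)$ into the claim that $\cO\cC: HH_*(\cA_0) \to SH^{*-n}(F)$ hits the unit. There is no Poincar\'e duality pairing on $SH^*(F)$ of a Liouville domain that would make this go through; $SH^*(F)$ is infinite-dimensional and self-duality fails. More tellingly, your conclusion would be too strong: if $\cO\cC|_{\cA_0}$ hit the unit, Abouzaid's criterion would give split-generation of the \emph{wrapped} category $\cW(F)$, hence $\Perf \cA_0 \simeq \Perf \cW(F) \simeq D^b_{dg}Coh(\partial Y^*)$. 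But $\cA_0 \simeq \cB_0$ and $\Perf \cB_0 \simeq \perf_{\mathsf{dg}}(\partial Y^*)$, so you would be forcing $\perf_{\mathsf{dg}}(\partial Y^*) = D^b_{dg}Coh(\partial Y^*)$, which is false since $\partial Y^*$ is singular. The references \cite{Ganatra2016, Sanda2021} do not help here: automatic generation requires homological smoothness of the subcategory, and $\perf_{\mathsf{dg}}(\partial Y^*)$ is not smooth for singular $\partial Y^*$. In any case those criteria still conclude split-generation of the ambient wrapped category, not of the compact subcategory.

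The paper's argument avoids generation criteria entirely and instead transports the problem through the Gammage--Shende equivalence $GS: \Perf \cW(F) \xrightarrow{\sim} D^b_{dg}Coh(\partial Y^*)$. For any compact exact Lagrangian $E \in \fuk(F) \subset \cW(F)$, wrapped Floer cohomology $HW^*(E,L)$ is finite-dimensional for every $L$ (since $E$ has no boundary at infinity); hence $GS(E)$ has finite-dimensional $\Hom$ with every object of $D^b_{dg}Coh(\partial Y^*)$, which forces $GS(E)$ to be perfect. Since the $\cO(i)$ split-generate $\perf_{\mathsf{dg}}(\partial Y^*)$ and $GS$ sends $L_i \mapsto \cO(i)$, it follows that the $L_i$ split-generate $E$. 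This is the missing idea.
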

\begin{proof} Because the objects of $\cB_0$ are powers of an ample line bundle, they split-generate $\perf_{\mathsf{dg}}(\partial Y^*)$ by \cite[Theorem 4]{Orlov2009}. 

View $L_i$ as objects of $\mathcal{W}(F)$. By \Cref{prop:GS_Li} \eqref{it:top_arr_GS}, the equivalence $GS$ takes $L_i$ to the line bundles $\cO(i)$. Now consider any other object $E$ of $\mathcal{F}(F) \subset \mathcal{W}(F).$ Let $GS(E)$ denote the image of $E$ under the equivalence $GS$. For any object $B \in \operatorname{Ob}(\dbdg Coh(\partial Y^*))$, we have that:  \begin{align} \operatorname{dim}_{\Bbbk}(H^*(\operatorname{Hom}^*_{\dbdg Coh(\partial Y^*)}(GS(E), B)))<\infty. \end{align} 
It is well-known (\cite[Proof of Prop B.1]{Efimov} or \cite[Remark 1.2.6]{BNP}) that this implies that $GS(E)$ is perfect. Hence $GS(E)$ is split-generated by $\cO(i)$. It follows that $E$ is split-generated by $L_i$.  \end{proof}


\section{Deforming the \texorpdfstring{$A$}{A}-side}\label{sec:Adef}

In this section, we make the computations needed to show that the subcategory $\cA_R$ of the relative Fukaya category $\fuk(X_{t,\kaehl},D,\Nef)$ having the same objects as $\cA_0$, is a versal deformation of the latter. 
That is, we verify the hypotheses of the versality criterion from \cite[Proposition A.6]{Ganatra2023integrality} (the key results are \cref{lem:Adefgen},
\cref{prop:totalobstruction}, and \cref{lem:Avanishing}). 
In order to make these computations, we identify Hochschild cohomology with symplectic cohomology (via \cref{cor:SH_HH}), which in turn is identified with log symplectic cohomology by \cite{ganatra2021log,ganatra2020symplectic}. 
We also prove a new result, \cref{prop: apCOPSS}, which identifies the classes in log symplectic cohomology which correspond to the first-order deformation classes under this identification. 
The required computations then take place in the log symplectic cohomology, where they are purely topological.
\subsection{Symplectic cohomology} \label{sec:logPSS}

  We wish to understand $SH^*(\mathcal{H}_{t,0})$ using the log PSS map from \cite{ganatra2021log,  ganatra2020symplectic}. We let $H^*_{log}(X_{t,\kaehl},D)$ denote the log cohomology of the pair $(X_{t,\kaehl},D)$, introduced in \cite[\S 3.2]{ganatra2021log}. $H^*_{log}(X_{t,\kaehl},D)$ is a certain $\Z \oplus M^*$-graded $\Bbbk$-module defined topologically from the pair $(X_{t,\kaehl},D).$ We denote the $M^*$-graded pieces of $H^*_{log}(X_{t,\kaehl},D)$ by  $H^{*\oplus \vec{q}}_{log}(X_{t,\kaehl},D)$ for $\vec{q} \in M^*$. 

Recall that the generators of $H^*_{log}(X_{t,\kaehl},D)$ have the form $\alpha \tee^{\vec{v}}$ where $\vec{v} \in \Z_{\ge 0}^{\Xizero}$\footnote{Note that here we use the connectedness condition.} and $\alpha \in H^*(S_{\vec{v}})$ for some manifold $S_{\vec{v}}$, which is empty if and only if the corresponding stratum
\begin{align}\label{eq:stratum_contributing}
     \bigcap_{\vec{p}:v_{\vec{p}} \neq 0} D_{\vec{p}}
     \end{align}
is empty. 
We will call $\vec{v}$ such that $S_{\vec{v}} \neq \emptyset$ \emph{contributing}. 

By \cite[Equation (3.20)]{ganatra2021log}, the $\Z$-degree of such a chain is
\begin{align}\label{eq:zdegree}
    \deg_\Z\left(\alpha \tee^{\vec{v}}\right) = \deg(\alpha) + 2|\vec{v}|
\end{align}
(where $|\vec{v}| := \sum_{\vec{p} \in \Xizero} v_{\vec{p}}$), while the $M^*$-degree is
\begin{align}
\label{eq:Mdegree}   \deg_{M^*}(\vec{v}) = \sum_{\vec{p} \in \Xizero} v_{\vec{p}} \cdot \vec{p}.
\end{align}

\begin{lem}\label{lem:Mdegunique}
    If $\vec{v}_1$ and $\vec{v}_2$ are both contributing, and $\deg_{M^*}(\vec{v}_1) = \deg_{M^*}(\vec{v}_2)$, then $\vec{v}_1 = \vec{v}_2$. 
    In particular, if $\vec{v}$ is contributing, then we have
    \begin{align}\label{eq:Hlog_m_unique}
        H^{* \oplus \deg_{M^*}(\vec{v})}(X_{t,\kaehl},D) \cong H^{*-2|\vec{v}|}(S_{\vec{v}}).
    \end{align}
\end{lem}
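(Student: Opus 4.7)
The argument is essentially combinatorial and rests on two facts implied by the MPCP hypothesis: the fan $\Sigma_\kaehl$ is simplicial, and its rays are in bijection with elements of $\Xizero$ via $\vec{p} \mapsto \R_{\ge 0}\vec{p}$. The first observation I would make is that if $\vec{v}$ is contributing, then the non-emptiness of the stratum \eqref{eq:stratum_contributing} forces the support $\mathrm{supp}(\vec{v}) := \{\vec{p} : v_{\vec{p}} \neq 0\}$ to index the rays of a single (simplicial) cone of $\Sigma_\kaehl$; this is the standard description of when a toric stratum is non-empty.

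I would then set $u := \deg_{M^*}(\vec{v}_1) = \deg_{M^*}(\vec{v}_2)$ and let $\tau_i$ denote the cone of $\Sigma_\kaehl$ whose rays are indexed by $\mathrm{supp}(\vec{v}_i)$. Since the coefficients $v_{i,\vec{p}}$ are strictly positive on these generators, the expression $u = \sum_{\vec{p}} v_{i,\vec{p}} \cdot \vec{p}$ places $u$ in the relative interior of $\tau_i$. Because the relative interiors of distinct cones of a fan are pairwise disjoint, this forces $\tau_1 = \tau_2 =: \tau$. Finally, simpliciality of $\Sigma_\kaehl$ implies the ray generators of $\tau$ are linearly independent, so the representation of $u$ as a positive real combination of them is unique, yielding $\vec{v}_1 = \vec{v}_2$.

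For the isomorphism \eqref{eq:Hlog_m_unique}, I would appeal to the direct sum decomposition of $H^*_{log}(X_{t,\kaehl},D)$ into summands indexed by contributing $\vec{v}$, each of the form $H^{*}(S_{\vec{v}})$ with $\Z$-degree shifted by $2|\vec{v}|$ per \eqref{eq:zdegree} and $M^*$-degree equal to $\deg_{M^*}(\vec{v})$ per \eqref{eq:Mdegree}. The injectivity of $\vec{v} \mapsto \deg_{M^*}(\vec{v})$ on contributing vectors, just established, leaves exactly one summand in the prescribed $M^*$-degree, yielding the asserted isomorphism.

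I do not anticipate any serious obstacle, as the argument is elementary. The only step meriting care is the identification of the support of a contributing $\vec{v}$ with the rays of a cone of $\Sigma_\kaehl$, but this is standard toric geometry under the MPCP condition, and is implicit in the recollection of $H^*_{log}$ immediately before the lemma.
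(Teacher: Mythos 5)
Your proof is correct and follows essentially the same approach as the paper's: identify $\deg_{M^*}(\vec{v})$ as lying in the relative interior of the simplicial cone indexed by $\mathrm{supp}(\vec{v})$, use disjointness of relative interiors to force the cones to coincide, and then use linear independence of the ray generators to recover the coefficients. The only difference is that you make the disjointness-of-relative-interiors step explicit whereas the paper leaves it implicit.
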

\begin{proof}
    Note that if $\vec{v}$ is contributing, then $\deg_{M^*}(\vec{v})$ lies in the relative interior of the cone of $\Sigma_\kaehl$ corresponding to the toric orbit of $Y_\kaehl$ in which \eqref{eq:stratum_contributing} lies. 
    Furthermore, as $\Sigma_\kaehl$ is simplicial, the generators $\vec{p}$ for which $v_{\vec{p}} \neq 0$ form a basis for this cone. 
    In particular, the point $\deg_{M^*}(\vec{v})$ determines the non-zero coordinates $v_{\vec{p}}$; the other coordinates are all $0$. 
    Thus the $M^*$-degree of a contributing $\vec{v}$ determines $\vec{v}$ uniquely, as required. 
\end{proof}

\begin{lem}\label{lem:Hlog_comp}
    We have:
    \begin{enumerate}[label=(\roman*)]
    \item \label{it:hlog0} In $M^*$-grading  $0$: \begin{align}  H^{*\oplus 0} _{log}(X_{t,\kaehl},D) \cong H^*(\mathcal{H}_{t,0}).  \end{align}
    \item \label{it:hlogp} In $M^*$-grading $\vec{p}$, for $\vec{p} \in \Xizero$:
    \begin{align} H_{log}^{*\oplus \vec{p}}(X_{t,\kaehl},D)\cong H^{*-2}(SD_{\vec{p}}),  \end{align} where $SD_{\vec{p}}$ denotes the normal $S^1$ bundle to the interior of $D_{\vec{p}}$. In particular, for $\vec{p} \in \Xizero$, $H^{2\oplus \vec{p}} _{log}(X_{t,\kaehl},D)$ has rank one, generated by the class $\gamma_{\vec{p}}$ corresponding to the identity in $H^0(SD_{\vec{p}})$. 
    \item \label{it:hlogother} In $M^*$-grading $\vec{q} \notin \Xizero \cup \{0\}$, $H_{log}^{i \oplus \vec{q}}(X_{t,\kaehl},D) = 0$ for $i \le 3$.
    \end{enumerate}
\end{lem}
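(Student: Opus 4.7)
All three parts follow from Lemma \ref{lem:Mdegunique} together with the degree formulas \eqref{eq:zdegree} and \eqref{eq:Mdegree}, once one identifies the strata $S_{\vec{v}}$ for $|\vec{v}| \le 1$.

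For \ref{it:hlog0}, I would note that the only $\vec{v} \in \Z_{\ge 0}^{\Xizero}$ with $\deg_{M^*}(\vec{v}) = 0$ is $\vec{v} = 0$, so only a single sector contributes. Applying \eqref{eq:Hlog_m_unique} reduces the statement to the identification $S_{\vec{0}} = X_{t,\kaehl} \setminus D = \mathcal{H}_{t,0}$, which is built into the definition of log cohomology from \cite[\S 3.2]{ganatra2021log} (the $\vec{v} = 0$ stratum is the open complement of $D$).

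For \ref{it:hlogp}, I would similarly observe that for $\vec{p} \in \Xizero$ the unique $\vec{v}$ with $\deg_{M^*}(\vec{v}) = \vec{p}$ is $\vec{v} = \vec{e}_{\vec{p}}$, for which $|\vec{v}| = 1$. By the definition of $S_{\vec{v}}$ in \cite{ganatra2021log}, this stratum is precisely the unit normal $S^1$-bundle $SD_{\vec{p}}$ to the interior of $D_{\vec{p}}$, and \eqref{eq:Hlog_m_unique} gives the desired isomorphism with a shift of $2$. The rank-one claim then reduces to verifying that $SD_{\vec{p}}$ is connected and nonempty for $\vec{p} \in \Xizero$. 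This follows from \cref{lem:connectedness} and the connectedness condition: since $\vec{p} \in \Xizero$ lies in the interior of a face $S_{\vec{p}}$ of codimension $\ge 2$, the dual face $S^*_{\vec{p}}$ has dimension $\ge 1$, and in the borderline case $\dim(S^*_{\vec{p}}) = 1$ the connectedness hypothesis rules out the disconnected scenario.

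For \ref{it:hlogother}, the key observation is that if $\vec{v}$ is contributing with $\deg_{M^*}(\vec{v}) = \vec{q} \notin \Xizero \cup \{0\}$, then necessarily $|\vec{v}| \ge 2$: the cases $|\vec{v}| = 0, 1$ give exactly $\vec{q} = 0$ or $\vec{q} \in \Xizero$ (using that the rays of $\Sigma_\kaehl$ are generated by the elements of $\Xizero$, which are the indexing set of $\vec{v}$). Then \eqref{eq:zdegree} forces $\deg_\Z \ge 2|\vec{v}| \ge 4$, so all cohomology vanishes in degrees $\le 3$.

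The main step requiring attention is the identification $S_{\vec{e}_{\vec{p}}} \cong SD_{\vec{p}}$ in \ref{it:hlogp}, and the parallel identification in \ref{it:hlog0}; everything else is combinatorial bookkeeping. These identifications are part of the standard setup of log cohomology in \cite{ganatra2021log,ganatra2020symplectic} but should be cited explicitly.
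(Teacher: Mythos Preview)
Your proposal is correct and follows the same route as the paper's proof, which also derives all three parts from Lemma~\ref{lem:Mdegunique} (equivalently \eqref{eq:Hlog_m_unique}) together with the degree formula \eqref{eq:zdegree}. You supply extra detail the paper leaves implicit---the identifications $S_{\vec{0}} = \mathcal{H}_{t,0}$ and $S_{\vec{e}_{\vec{p}}} = SD_{\vec{p}}$, and the appeal to the connectedness condition for the rank-one claim---but the argument is the same.
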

\begin{proof}
    Given Lemma \ref{lem:Mdegunique}, \cref{it:hlog0} follows from the case $\vec{v} = 0$ of \eqref{eq:Hlog_m_unique}, while \cref{it:hlogp} follows from the case where $v_{\vec{q}} = \delta_{\vec{p},\vec{q}}$. 
    \cref{it:hlogother} follows from \eqref{eq:zdegree} and the fact that all other $\vec{v}$ have $|\vec{v}| \ge 2$.
\end{proof}

The log cohomology $H^*_{log}(X_{t,\kaehl},D)$ carries a $\Z \oplus M^*$-graded ring structure (see \cite[Definition 3.4]{ganatra2020symplectic}), which we recall in the special case that we need for our applications.  Let $\mathring{X}_{\vec{p}} \subset X_{t,\kaehl} $ be the complement of all of the divisor components different from $D_{\vec{p}}.$ The real oriented blowup $\hat{X}_{\vec{p}}$ of $\mathring{X}_{\vec{p}}$ along the open stratum of $D_{\vec{p}}$ is then a manifold with boundary $SD_{\vec{p}}$.  The blowup $\hat{X}_{\vec{p}}$ is homotopy equivalent to $\mathcal{H}_{t,0}$ and there is thus a canonically defined restriction map:  \begin{align} r_{\vec{p}}^*: H^*(\mathcal{H}_{t,0}) \to H^*(SD_{\vec{p}}). \end{align}  Given $\alpha_1 \in H^{q_1 \oplus 0} _{log}(X_{t,\kaehl},D)= H^{q_1}(\mathcal{H}_{t,0})$ and $\alpha_2 \in H^{q_2 \oplus \vec{p}} _{log}(X_{t,\kaehl},D) = H^{q_2-2}(SD_{\vec{p}})$,  their product is given by \begin{align} \alpha_1 \ast_{log} \alpha_2 := r_{\vec{p}}^*(\alpha_1) \cup \alpha_2 \in H_{log}^{q_1+q_2 \oplus \vec{p}}(X_{t,\kaehl},D) = H^{q_1+q_2-2}(SD_{\vec{p}}). \end{align}

To define the log PSS map, we rule out sphere bubbling using the following lemma: 

 \begin{lem} \label{lem:modifyprimitive} Fix $\vec{p} \in \Xizero$. We can represent the K{\"a}hler class $[\omega]=[\omega_\kaehl]$ as a sum \begin{align} [\omega] = \sum_{\vec{q} \in \Xizero} \kaehl'_{\vec{q}} \cdot PD(D_{\vec{q}}), \end{align} where $\kaehl'_{\vec{q}}> 0$ for $\vec{q} \neq \vec{p}$, and $\kaehl'_{\vec{p}}=0$. \end{lem}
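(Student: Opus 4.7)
The plan is to modify the given representative $(\kaehl_{\vec{q}})_{\vec{q} \in \Xizero}$ by subtracting a suitable linear relation in $H^2(X_{t,\kaehl};\R)$ which zeroes out the $\vec{p}$-coefficient while keeping all other coefficients strictly positive. The relations I will use are the standard toric ones: for each $m \in M$, the character $\chi^m$ is a rational function on $Y_\kaehl$ whose divisor is $\sum_{\vec{q} \in \Xizero} \langle m, \vec{q}\rangle \cdot D^Y_{\vec{q}}$, and restricting to $X_{t,\kaehl}$ (which intersects each toric stratum transversely under the MPCS assumption) yields a rational function on $X_{t,\kaehl}$ with divisor $\sum_{\vec{q}} \langle m, \vec{q}\rangle \cdot D_{\vec{q}}$, giving
\[\sum_{\vec{q} \in \Xizero} \langle m, \vec{q}\rangle \cdot PD(D_{\vec{q}}) \;=\; 0 \quad \text{in } H^2(X_{t,\kaehl};\R).\]
Consequently, for every $m \in M_\R$ the modified coefficients $\kaehl'_{\vec{q}} := \kaehl_{\vec{q}} - \langle m, \vec{q}\rangle$ still represent $[\omega_\kaehl]$. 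The problem therefore reduces to producing $m \in M_\R$ with $\langle m, \vec{p}\rangle = \kaehl_{\vec{p}}$ and $\langle m, \vec{q}\rangle < \kaehl_{\vec{q}}$ for every $\vec{q} \in \Xizero \setminus \{\vec{p}\}$.

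To produce such an $m$, I will exploit the strict convexity of $\psi_\kaehl$ on $\Sigma_\kaehl$, which holds under MPCP since it is precisely the statement that $D^Y_\kaehl$ is ample (as used just before \cref{lem:connectedness}); in particular $\psi_\kaehl(\vec{q}) = \kaehl_{\vec{q}}$ for every $\vec{q}\in\Xizero$. The associated moment polytope
\[P_\kaehl := \{m \in M_\R : \langle m, v\rangle \le \psi_\kaehl(v) \text{ for all } v \in M^*_\R\}\]
is then full-dimensional and bounded, with face lattice anti-isomorphic to $\Sigma_\kaehl$; its facets are in bijection with the rays of $\Sigma_\kaehl$, i.e., with $\Xizero$. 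The facet corresponding to $\vec{p}$ is $F_{\vec{p}} := \{m \in P_\kaehl : \langle m, \vec{p}\rangle = \kaehl_{\vec{p}}\}$, and any $m$ in the relative interior of $F_{\vec{p}}$ has exactly the required property: equality at $\vec{p}$ and strict inequality at every other ray generator $\vec{q} \in \Xizero$.

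I do not foresee a substantial obstacle, since both ingredients are routine toric facts. The only point needing care is that the relation $\sum_{\vec{q}} \langle m,\vec{q}\rangle\,[D_{\vec{q}}] = 0$ genuinely descends from $Y_\kaehl$ to $X_{t,\kaehl}$; this is ensured by the transversality of $X_{t,\kaehl}$ with each toric stratum, which is exactly the content of the MPCS hypothesis (and any $D_{\vec{q}}$ which happens to be empty simply contributes the zero divisor, without affecting the argument).
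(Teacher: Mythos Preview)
Your proposal is correct and takes essentially the same approach as the paper's proof: both modify the coefficients $\kaehl_{\vec{q}}$ by a linear function $m \in M_\R$ (which leaves the cohomology class unchanged) and use strict convexity of $\psi_\kaehl$ to find an $m$ that zeroes out $\kaehl_{\vec{p}}$ while keeping the other coefficients positive. The paper phrases this on $Y_\kaehl$ in terms of the piecewise-linear support function and cites \cite[Lemma 3.31]{Sheridan2017} for the existence of the appropriate linear function, whereas you make the existence explicit via the moment polytope (taking $m$ in the relative interior of the facet $F_{\vec{p}}$), which is the dual way of saying the same thing.
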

 \begin{proof} The K{\"a}hler form is restricted from the ambient toric variety $Y_\kaehl$. On $Y_\kaehl$, a class is K{\"a}hler if and only if the piecewise-linear function $\phi_\kaehl$ with value $\kaehl_{\vec{q}}$ on the generator $v_\vec{q}$ of the ray of the fan corresponding to $D_{\vec{q}}$, is strictly convex. Moreover, we have $[\omega_\kaehl] = [\omega_{\kaehl'}]$ iff $\phi_\kaehl - \phi_{\kaehl'}$ is linear. In particular, given a K{\"a}hler class $[\omega]$, we can choose $\kaehl'$ so that $\kaehl'_\vec{p} = 0$ and $\kaehl'_\vec{q} > 0$ for all  $\vec{q} \neq \vec{p}$, by adding an appropriate linear function to $\phi_\kaehl$ (cf. \cite[Lemma 3.31]{Sheridan2017}). 
\end{proof} 

\begin{cor}\label{cor:nospheres}
    Let $\tilde\omega$ be a symplectic form with $[\tilde \omega] = [\omega]$, $J$ an almost-complex structure compatible with $\tilde \omega$, and $\tilde D_{\vec{p}}$ submanifolds homologous to $D_{\vec{p}}$ for $\vec{p} \in P$. 
    Then any stable $J$-holomorphic curve $u$, with at least one non-constant component, satisfies $u \cdot \tilde D_{\vec{p}} > 0$ for at least two of the $\vec{p} \in P$. 
\end{cor}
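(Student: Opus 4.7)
The plan is to combine Lemma \ref{lem:modifyprimitive} with the basic positivity of symplectic area for non-constant $J$-holomorphic curves. The key observation is that Lemma \ref{lem:modifyprimitive} lets us represent $[\omega]$ as a \emph{strictly positive} combination of $PD(D_{\vec{q}})$'s while zeroing out the coefficient of any single chosen $D_{\vec{p}}$. Pairing this with $[u]$ then forces at least one of the remaining intersection numbers to be positive, and iterating the trick produces a second distinct $\vec{p}$.

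More concretely, I would begin by noting that since $u$ has at least one non-constant component and $J$ is compatible with $\tilde\omega$, one has $\int_u \tilde\omega > 0$. Because $[\tilde\omega] = [\omega]$ and $[\tilde D_{\vec{q}}] = [D_{\vec{q}}]$ for every $\vec{q}$, this area is computed purely homologically: for any choice of $\vec{p} \in P$, applying Lemma \ref{lem:modifyprimitive} gives coefficients $\kaehl'_{\vec{q}} > 0$ for $\vec{q} \neq \vec{p}$ and $\kaehl'_{\vec{p}} = 0$ with
\[ 0 \;<\; \int_u \tilde\omega \;=\; [\omega]\cdot [u] \;=\; \sum_{\vec{q} \neq \vec{p}} \kaehl'_{\vec{q}}\,(u \cdot \tilde D_{\vec{q}}). \]
Consequently, for every $\vec{p} \in P$, there must exist some $\vec{q} \neq \vec{p}$ with $u \cdot \tilde D_{\vec{q}} > 0$.

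To finish, I would argue by contradiction: suppose the set $S := \{\vec{q} \in P : u \cdot \tilde D_{\vec{q}} > 0\}$ has at most one element. It is non-empty by the previous paragraph (apply the inequality with any $\vec{p}_0 \in P$), so write $S = \{\vec{p}_1\}$. But then applying the argument once more with $\vec{p} = \vec{p}_1$ produces some $\vec{q} \neq \vec{p}_1$ lying in $S$, a contradiction. Hence $|S| \geq 2$.

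I do not anticipate a serious obstacle. The only mildly delicate point worth flagging is that the submanifolds $\tilde D_{\vec{p}}$ are only required to be homologous to $D_{\vec{p}}$ and need not be $J$-holomorphic, so the numbers $u \cdot \tilde D_{\vec{q}}$ are to be interpreted as topological intersection numbers rather than geometric counts; the argument above is purely homological and is unaffected by this.
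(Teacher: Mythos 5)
Your argument is correct and takes essentially the same approach as the paper: both invoke Lemma \ref{lem:modifyprimitive} to represent $[\omega]$ as a positive combination omitting any chosen $D_{\vec{p}}$, then derive a contradiction from positivity of symplectic area. The paper's version is slightly more compact—directly negating the conclusion (there exists $\vec{p}$ with $u \cdot \tilde D_{\vec{q}} \le 0$ for all $\vec{q} \neq \vec{p}$) and obtaining the contradiction $0 \ge \tilde\omega(u) > 0$ in one step—whereas you apply the key inequality twice, but the logic is equivalent.
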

\begin{proof}
    Suppose, to the contrary, that there exists $\vec{p} \in P$ such that $u \cdot \tilde{D}_{\vec{q}} \le 0$ for $\vec{q} \neq \vec{p}$. 
    Then by \cref{lem:modifyprimitive}, we have
    \begin{align}
        0 \ge  \sum_{\vec{q} \in P} \kaehl'_{\vec{q}}  \tilde{D}_{\vec{q}} \cdot u = \tilde\omega(u) > 0,
    \end{align}
    a contradiction.
\end{proof}

We recall that the construction of the log PSS map involves deforming $\omega$ to a symplectic form $\tilde{\omega}$, within the same cohomology class, so that the divisor $D$ admits a regularization in the sense of \cite[Definition 2.10]{McleanTehraniZinger} (which builds on \cite[\S 5]{Mclean2010}). Corollary \ref{cor:nospheres} implies that for each $\vec{p} \in \Xizero$, any $J$-holomorphic sphere bubble (for any almost-complex structure $J$ compatible with $\tilde{\omega}$) must positively intersect one of the divisor components $D_\vec{q}$ with $\vec{q}\neq \vec{p}$. 
It follows from this that all of the classes $H_{log}^{*\oplus \vec{p}}(X_{t,\kaehl},D)$ are tautologically admissible in the sense of \cite[\S 3.3]{ganatra2021log} and there is therefore a log PSS map:  
\begin{align} \label{eq:PSSlog} PSS^{log}: H_{log}^{*\oplus \vec{p}}(X_{t,\kaehl},D) \to SH^{*\oplus \vec{p}}(\mathcal{H}_{t,0}). \end{align}

Note that the classical PSS construction defines a map: 
\begin{align} \label{eq:PSSclass} PSS: H^{*\oplus 0} _{log}(X_{t,\kaehl},D) \to SH^{*\oplus 0}(\mathcal{H}_{t,0}).  \end{align} 

\begin{prop} \label{prop:productsSH} Given $\alpha_1 \in H^{*\oplus 0} _{log}(X_{t,\kaehl},D)$ and $\alpha_2 \in H^{* \oplus \vec{p}} _{log}(X_{t,\kaehl},D),$ 
\begin{align} PSS(\alpha_1) \ast PSS^{log}(\alpha_2)=PSS^{log}(\alpha_1 \ast_{log} \alpha_2). \end{align} \end{prop}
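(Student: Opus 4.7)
The plan is to prove this by a standard 1-parameter-family-of-moduli-spaces argument, of the same shape as the proofs that PSS is a ring homomorphism. Concretely, by Lemma \ref{lem:Mdegunique} the only contributing $\vec v$ with $\deg_{M^*}(\vec v) = \vec p$ is $\vec v = \delta_{\vec p}$ (the $\vec p$-th basis vector), so representatives of $\alpha_2$ live on $S_{\vec p}$ and carry a single simple tangency condition along $D_{\vec p}$. I would consider the moduli space $\mathcal M$ of genus-zero curves with one negative puncture at $\infty$ asymptotic to an orbit at the output, and two interior marked points $z_1,z_2$: at $z_1$ I impose an incidence with a cycle representing $\alpha_1$ (no tangency); at $z_2$ I impose the log incidence of $\alpha_2$ along $D_{\vec p}$ with tangency order $1$, exactly as in the construction of $PSS^{log}$ applied to $\alpha_2$. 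After fixing conformal data (output at $\infty$, $z_2 = 0$), $\mathcal M$ has a real 1-dimensional family parameter $r := |z_1|$.

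In the limit $r \to \infty$, the two marked points separate: a standard SFT/neck-stretching argument produces a stable building consisting of a pair-of-pants output region (which defines the symplectic cohomology product $\ast$) capped on one input by a standard PSS half-plane carrying $\alpha_1$ and on the other by a log-PSS half-plane carrying $\alpha_2$ with its tangency along $D_{\vec p}$. Counting these configurations yields $PSS(\alpha_1)\ast PSS^{log}(\alpha_2)$. In the opposite limit $r \to 0$, $z_1$ collides with $z_2$; the tangency condition at $z_2$ forces the curve to meet $D_{\vec p}$ at the collision point, so the incidence condition from $\alpha_1$ is evaluated on the image of $\alpha_1$ under the restriction map $r_{\vec p}^* \colon H^*(\mathcal H_{t,0}) \to H^*(SD_{\vec p})$. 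The resulting configuration is a single log-PSS half-plane carrying the class $r_{\vec p}^*(\alpha_1)\cup \alpha_2 = \alpha_1 \ast_{log} \alpha_2$, so the count is $PSS^{log}(\alpha_1\ast_{log}\alpha_2)$.

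The remaining codimension-one boundary of the parametrized moduli space consists of standard Floer breaking at the output puncture, which cancels in homology, and of divisor-stratum splittings controlled by the SFT neck-stretching for log PSS developed in \cite{ganatra2021log, ganatra2020symplectic}. Sphere bubbles are excluded by Corollary \ref{cor:nospheres}, while the tautological admissibility of both $H^{*\oplus 0}_{log}$ and $H^{*\oplus \vec p}_{log}$ (noted immediately before \eqref{eq:PSSlog}) ensures that no energy is lost to multiply-covered components meeting $D_{\vec p}$ with unexpected order. Assembling these contributions, the oriented count of the boundary of the parametrized moduli space gives the identity
\[
PSS(\alpha_1)\ast PSS^{log}(\alpha_2) - PSS^{log}(\alpha_1 \ast_{log}\alpha_2) = 0
\]
at the chain level up to an exact term, proving the proposition.

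The main technical obstacle is the $r\to 0$ collision analysis: one must verify that the collision of the free point constraint with the tangency locus contributes exactly $r_{\vec p}^*\alpha_1$ (no extra normal-direction correction). This is clean in our setting because $\vec v = \delta_{\vec p}$ has $|\vec v|=1$, so the local model at $z_2$ is a single first-order tangency to the smooth divisor component $D_{\vec p}$, and $r_{\vec p}^*$ is simply pullback along $SD_{\vec p}\hookrightarrow \mathcal H_{t,0}$; the required local identification of the evaluation map with $r_{\vec p}^*$ at the collision follows from the local normal-crossing model used to define both $\ast_{log}$ in \cite[Definition 3.4]{ganatra2020symplectic} and $PSS^{log}$ in \cite[\S 3.3]{ganatra2021log}.
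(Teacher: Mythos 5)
Your overall strategy is the same as the paper's: the paper's own proof simply invokes the argument of \cite[Theorem 5.12]{ganatra2020symplectic} (the interpolating moduli space of thimbles with two interior marked points, with the two boundary strata producing the two sides of the identity) and then says the \emph{only} new thing to check is that sphere bubbling does not occur in the interpolating family. Your reconstruction of the degeneration analysis is consistent with that cited argument, so up to that point you and the paper agree.

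The gap is in the sphere-bubbling step, which is the one place where the paper's proof actually has content, and which you dispatch in a single sentence: ``Sphere bubbles are excluded by Corollary \ref{cor:nospheres}.'' That corollary does not by itself exclude bubbling --- it only says that any stable $J$-holomorphic curve with a non-constant component must intersect at least two of the $\tilde D_{\vec q}$ positively. To turn this into an exclusion you must use a positivity-of-intersection argument on the \emph{remaining} thimble component: the thimbles in this moduli space intersect $D_{\vec p}$ and no other divisor component; if a non-constant sphere bubble arose, by \cref{cor:nospheres} the union of bubbles would meet some $D_{\vec q}$ with $\vec q\neq\vec p$ positively, and therefore the thimble obtained by removing the bubbles would meet $D_{\vec q}$ negatively, contradicting positivity of intersection. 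That is exactly the chain of implications the paper spells out, and it is where the specific combinatorics of the present setting (via \cref{lem:modifyprimitive}, through \cref{cor:nospheres}) enters; without it, your proof is just a restatement of the cited theorem from \cite{ganatra2020symplectic} plus an unjustified assertion at the one point where the present geometry matters.
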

\begin{proof} This follows from the argument of \cite[Theorem 5.12]{ganatra2020symplectic}. That argument interpolates between the two operations using a certain moduli space of thimbles with two marked points and it suffices to show that no sphere bubbling can arise in the interpolating moduli space. 
Suppose, to the contrary, that such a non-constant sphere bubble arose. 
In the present case, the thimbles intersect $D_{\vec{p}}$ and no other divisor component. 
By Corollary \ref{cor:nospheres}, the union of all the sphere bubbles must positively intersect one of the divisors $D_{\vec{q}}, \vec{q} \neq  \vec{p}.$ 
This would mean the thimble obtained by removing all of the sphere bubbles would have negative intersection with this divisor $D_{\vec{q}}$, but the thimbles intersect each component of $D$ non-negatively by positivity of intersection, a contradiction. 
\end{proof} 

\begin{prop}\label{prop:shgrad}
 We have:
\begin{enumerate}[label=(\roman*)]
\item \label{it:sh20} the map $PSS^{log}:H^2(\mathcal{H}_{t,0};\Bbbk) \to SH^{2 \oplus 0}(\mathcal{H}_{t,0};\Bbbk)$ is surjective. 

\item \label{it:sh2p} if $\vec{p} \in \Xizero$, then $SH^{2 \oplus \vec{p}}(\mathcal{H}_{t,0};\Bbbk)$ has rank one and is generated by the class $PSS^{log}(\gamma_{\vec{p}})$, where $\gamma_{\vec{p}}$ is the class defined in \cref{lem:Hlog_comp}. 

 \item \label{it:sh2other} $SH^{2 \oplus \vec{p}}(\mathcal{H}_{t,0};\Bbbk) = 0$ unless $\vec{p} \in \Xizero \cup \{0\}$. 

\end{enumerate}
\end{prop}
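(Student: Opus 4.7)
The plan is to deduce all three statements from the topological computation of $H^{*\oplus \vec{p}}_{log}(X_{t,\kaehl},D)$ carried out in \cref{lem:Hlog_comp}, via the log PSS map \eqref{eq:PSSlog}. The tautological admissibility of the classes involved has already been extracted in the paragraph preceding \eqref{eq:PSSlog}, where \cref{cor:nospheres} excludes sphere bubbling.

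The key input is the log PSS isomorphism theorem of Ganatra--Pomerleano \cite{ganatra2020symplectic,ganatra2021log}, which --- given admissibility and the topological structure of the snc compactification $(X_{t,\kaehl},D)$ --- promotes $PSS^{log}$ to a $\Z \oplus M^*$-graded isomorphism
\[
PSS^{log}: H^{* \oplus \vec{p}}_{log}(X_{t,\kaehl}, D) \xrightarrow{\;\sim\;} SH^{* \oplus \vec{p}}(\mathcal{H}_{t,0};\Bbbk)
\]
in the bidegrees that appear in the proposition. (The $\Z$-degrees in question are at most $3$, where by \eqref{eq:zdegree} only contributing $\vec{v}$ with $|\vec{v}| \le 1$ enter, so one only needs to know the isomorphism in this low-degree regime.)

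Granting this, each conclusion becomes a direct translation of the corresponding part of \cref{lem:Hlog_comp}. Statement (i) follows from part (i) of the lemma, since the log PSS map in $M^*$-grading zero reduces to the classical PSS map \eqref{eq:PSSclass} on $H^*(\mathcal{H}_{t,0})$. Statement (ii) follows from part (ii) of the lemma: $H^{2 \oplus \vec{p}}_{log} \cong H^0(SD_{\vec{p}})$ has rank one (the connectedness condition ensures $SD_{\vec{p}}$ is connected), generated by $\gamma_{\vec{p}}$, and the isomorphism transports this generator to $PSS^{log}(\gamma_{\vec{p}})$. Statement (iii) is a translation of part (iii) of the lemma, which provides vanishing of $H^{i \oplus \vec{p}}_{log}$ for $i \le 3$ and $\vec{p} \notin \Xizero \cup \{0\}$.

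The principal obstacle is verifying that the log PSS isomorphism of \cite{ganatra2020symplectic,ganatra2021log} applies in our setting. Beyond the admissibility already established via \cref{cor:nospheres}, the remaining hypotheses are routine checks on the snc pair $(X_{t,\kaehl},D)$ which follow from the MPCS and connectedness assumptions, together with $X_{t,\kaehl}$ being a smooth Calabi--Yau hypersurface in the toric variety $Y_\kaehl$. In fact one only needs a low-degree isomorphism, which can alternatively be read off from the action-filtration spectral sequence of \cite{ganatra2021log} degenerating in the range $|\vec v|\le 1$ where at most one $\vec{v}$ contributes in each bidegree.
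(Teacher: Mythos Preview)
Your approach is essentially that of the paper, but there is a gap in the way you invoke the literature. What \cite{ganatra2020symplectic} actually provides (Theorem~1.1 there) is not a black-box ``log PSS isomorphism theorem'' but a convergent spectral sequence
\[
E_r^{*-q,q} \Rightarrow SH^*(\mathcal{H}_{t,0}), \qquad \bigoplus_q E_1^{*-q,q} \cong H^*_{log}(X_{t,\kaehl},D),
\]
compatible with the $M^*$-grading, for which the identification at $E_1$ is the associated graded of (log) PSS. To get from here to the statements of the proposition one must argue that this spectral sequence \emph{degenerates at $E_1$}. The paper does this globally, not just ``in the range $|\vec v|\le 1$'': higher differentials applied to $\alpha\tee^{\vec v}$ land in terms $\tilde\alpha\tee^{\tilde{\vec v}}$ with $\tilde{\vec v}\neq\vec v$ but the same $M^*$-degree, and \cref{lem:Mdegunique} says no such $\tilde{\vec v}$ can contribute. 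This is the step you only gesture at in your final sentence (``at most one $\vec v$ contributes in each bidegree''); it is precisely the content of \cref{lem:Mdegunique}, and it should be the core of the argument rather than a parenthetical alternative.

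Once degeneration is in hand, your translation from \cref{lem:Hlog_comp} is correct and matches the paper: on $M^*$-degree $0$ the $E_1$-identification is the associated graded of the classical PSS map, giving surjectivity; on $M^*$-degree $\vec p\in\Xizero$ it is the associated graded of $PSS^{log}$, so $PSS^{log}(\gamma_{\vec p})$ generates the rank-one group; and the vanishing for $\vec p\notin\Xizero\cup\{0\}$ is immediate from convergence. So the fix is simply to make the spectral sequence argument primary and to cite \cref{lem:Mdegunique} explicitly for degeneration, rather than appealing to an isomorphism theorem that is not stated in that form in the references.
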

\begin{proof}
By \cite[Theorem 1.1]{ganatra2020symplectic}, there is a convergent spectral sequence \begin{align} E_r^{\ast-q,q} \Rightarrow SH^*(\mathcal{H}_{t,0}) \end{align} and an isomorphism \begin{align} \label{eq:PSSlow} H^*_{log}(X_{t,\kaehl},D) \cong \bigoplus_{q} E_1^{\ast-q,q}. \end{align}
The spectral sequence is also compatible with $M^*$- gradings. The higher differentials in the spectral sequence, when applied to a log cohomology class $\alpha \tee^{\vec{v}}$, only involve terms $\tilde{\alpha}\tee^{\tilde{\vec{v}}}$ with $\tilde{\vec{v}} \neq \vec{v}$. It therefore follows from Lemma \ref{lem:Mdegunique} that the spectral sequence degenerates at the $E_1$ page. With this in place, the proofs of these claims are as follows:

\cref{it:sh20} follows from the fact that on $H^{*\oplus 0} _{log}(X_{t,\kaehl},D)$, which is isomorphic to $H^*(\mathcal{H}_{t,0})$ by Lemma \ref{lem:Hlog_comp}, \eqref{eq:PSSlow} is the associated graded of the PSS map \eqref{eq:PSSclass}.

To prove \cref{it:sh2p}, we first note that by combining Lemma \ref{lem:Hlog_comp} with the collapse of the spectral sequence, we have that $SH^{2 \oplus \vec{p}}(\mathcal{H}_{t,0};\Bbbk) \cong H^{0}(SD_\vec{p})$, which implies that it has rank one. The fact that $PSS^{log}(\gamma_{\vec{p}})$ is a generator for $SH^{2 \oplus \vec{p}}(\mathcal{H}_{t,0};\Bbbk)$ follows from the fact that, on $H^{*\oplus \vec{p}} _{log}(X_{t,\kaehl},D)$, \eqref{eq:PSSlow} is the associated graded of the log PSS map \eqref{eq:PSSlog}. 
\cref{it:sh2other} is immediate from \cref{it:hlogother} of Lemma \ref{lem:Hlog_comp} and the convergence of the spectral sequence.
\end{proof}

\begin{lem}
    \label{lem:affine_lef}
    We have a canonical surjective map
    \begin{align} \label{eq:aff_lef_1} \Lambda^2 M_\Bbbk \to  H^{2 \oplus 0} _{log}(X_{t,\kaehl},D),  \end{align}
    and, for any $\vec{p} \in P$, a canonical isomorphism 
    \begin{align} \label{eq:aff_lef_2} M_\Bbbk \cong H^{3 \oplus \vec{p}} _{log}(X_{t,\kaehl},D).
    \end{align}
\end{lem}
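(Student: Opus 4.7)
The plan is to realize both canonical maps as restrictions along inclusions into the ambient torus $M^*_{\C^*}$, and to verify isomorphism using \cref{lem:Hlog_comp} to identify the log cohomology groups topologically, together with the Lefschetz hyperplane theorem for smooth affine varieties.

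First, \cref{it:hlog0} of \cref{lem:Hlog_comp} identifies $H^{2\oplus 0}_{log}(X_{t,\kaehl},D)$ with $H^2(\mathcal{H}_{t,0};\Bbbk)$. Using the standard identification $H^*(M^*_{\C^*};\Bbbk) \cong \Lambda^* M_\Bbbk$, pullback along the open inclusion $\mathcal{H}_{t,0} \hookrightarrow M^*_{\C^*}$ defines the desired map $\Lambda^2 M_\Bbbk \to H^2(\mathcal{H}_{t,0};\Bbbk)$. Since $\mathcal{H}_{t,0} = \{tW_0 = W_1\}$ is a smooth affine hypersurface in the smooth affine $n$-dimensional torus $M^*_{\C^*}$, the affine Lefschetz theorem gives that this pullback is an isomorphism in degrees $k < n-1$. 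The standing hypothesis $\dim_\C X_{t,\kaehl} = n-1 \geq 3$, i.e.\ $n \geq 4$, ensures this applies in degree $2$.

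Second, \cref{it:hlogp} of \cref{lem:Hlog_comp} identifies $H^{3\oplus \vec{p}}_{log}(X_{t,\kaehl},D)$ with $H^1(SD_{\vec{p}};\Bbbk)$; I define the canonical map $M_\Bbbk \to H^1(SD_{\vec{p}};\Bbbk)$ as pullback along the inclusions $SD_{\vec{p}} \hookrightarrow \mathcal{H}_{t,0} \hookrightarrow M^*_{\C^*}$. To check this is an isomorphism, I would use the Gysin sequence of the $S^1$-bundle $SD_{\vec{p}} \to D^\circ_{\vec{p}}$
\[
0 \to H^1(D^\circ_{\vec{p}};\Bbbk) \to H^1(SD_{\vec{p}};\Bbbk) \to H^0(D^\circ_{\vec{p}};\Bbbk) \xrightarrow{\cup e} H^2(D^\circ_{\vec{p}};\Bbbk),
\]
combined with a second application of affine Lefschetz. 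The Euler class $e$ vanishes because the normal bundle of $D_{\vec{p}}$ in $X_{t,\kaehl}$ is restricted from that of $D^Y_{\vec{p}}$ in $Y_\kaehl$, a character line bundle trivial on the open toric stratum $(D^Y_{\vec{p}})^\circ \cong (\C^*)^{n-1}$. Thus $H^1(SD_{\vec{p}};\Bbbk) \cong H^1(D^\circ_{\vec{p}};\Bbbk) \oplus \Bbbk$. Affine Lefschetz applied to the smooth affine hypersurface $D^\circ_{\vec{p}} \subset (D^Y_{\vec{p}})^\circ$ (of dimension $n-1 \geq 3$, so the theorem gives an iso in degree $1 < n-2$) yields $H^1(D^\circ_{\vec{p}};\Bbbk) \cong (\vec{p}^\perp \cap M)_\Bbbk$, and the short exact sequence $0 \to \vec{p}^\perp \cap M \to M \to \Z \to 0$, with quotient map pairing with $\vec{p}$, then assembles the pieces into $M_\Bbbk$.

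The main subtlety lies in verifying that this abstract identification of $H^1(SD_{\vec{p}};\Bbbk)$ as a rank-$n$ $\Bbbk$-vector space coincides with restriction from $M_\Bbbk = H^1(M^*_{\C^*};\Bbbk)$. This is seen by working in local toric coordinates $(w,u_1,\ldots,u_{n-1})$ near a point of $D^\circ_{\vec{p}}$ in which $D_{\vec{p}} = \{w=0\}$: a tubular neighbourhood of $D^\circ_{\vec{p}}$ in $M^*_{\C^*}$ deformation retracts onto a real $n$-torus mapping by a homotopy equivalence to the real $n$-torus underlying $M^*_{\C^*}$, with the fiber class of $SD_{\vec{p}} \to D^\circ_{\vec{p}}$ sent to $\vec{p} \in \pi_1(M^*_{\C^*}) = M^*$.
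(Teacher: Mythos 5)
Your proposal is essentially correct and follows the same overall strategy as the paper: identify the log cohomology groups via \cref{lem:Hlog_comp}, and apply the affine Lefschetz hyperplane theorem. For \eqref{eq:aff_lef_1} the two arguments are identical (the paper, being careful, claims only surjectivity, which is all the statement asserts, while you upgrade to an isomorphism in the relevant degree range — both are valid given \cite[Theorem~6.5]{dimca1992singularities}).

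For \eqref{eq:aff_lef_2}, the routes diverge in presentation, though both are built from the same two inputs (triviality of the normal circle bundle over the open toric stratum, plus Lefschetz for the hypersurface $D^\circ_{\vec p}\subset (D^Y_{\vec p})^\circ$). The paper works one level up: it passes to the real oriented blowup $\hat Y_{\vec p}$ of $\mathring Y_{\vec p}$ and observes that its boundary $S^Y_{\vec p}$ is the \emph{product} $M^*_{S^1}\times\Delta_{\vec p}$, so that $H^1(S^Y_{\vec p})\cong M_\Bbbk$ on the nose and the canonical map is literally the restriction $H^1(S^Y_{\vec p})\to H^1(SD_{\vec p})$, which is an isomorphism by K\"unneth and Lefschetz in one stroke. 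You instead run the Gysin sequence for $SD_{\vec p}\to D^\circ_{\vec p}$ directly, observe that the Euler class vanishes by triviality of the normal bundle (which is correct), and then reassemble using $0\to\vec p^\perp\cap M\to M\to\Z\to 0$. This is a legitimate alternative, but it leaves you with an \emph{abstract} rank-count isomorphism, and the final step of matching it to the canonical restriction map is where your write-up is shakiest: the sentence ``a tubular neighbourhood of $D^\circ_{\vec p}$ in $M^*_{\C^*}$ deformation retracts onto a real $n$-torus'' is not literally true (the punctured tubular neighbourhood retracts onto the full $S^1$-bundle $SD_{\vec p}$, not a torus). What you presumably intend — and what would close the gap — is a morphism of short exact sequences comparing $0\to(\vec p^\perp\cap M)_\Bbbk\to M_\Bbbk\to\Bbbk\to 0$ to the Gysin sequence, with the outer maps checked to be isomorphisms (the left by Lefschetz for $D^\circ_{\vec p}\subset(D^Y_{\vec p})^\circ$, the right because the fiber class of $SD_{\vec p}$ maps to $\vec p$), after which the five lemma finishes it. The paper's formulation via $S^Y_{\vec p}$ and the blowup functoriality square \eqref{eq:blowupfunctorial} avoids this bookkeeping entirely, which is the main thing it buys.
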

\begin{proof}
    Note that the restriction map $\Lambda^i M_\Bbbk \cong H^i(M^*_{\C^*};\Bbbk) \to H^i(\mathcal{H}_{t,0};\Bbbk)$ is surjective for $i< \dim_\C(X)$ by the affine Lefschetz hyperplane theorem \cite[Theorem 6.5]{dimca1992singularities}, and in particular for $i=2$, by our assumption that $\dim_\C(X) \ge 3$.  This gives rise to \eqref{eq:aff_lef_1}.
    
    Let $\mathring{Y}_{\vec{p}} \subset Y_\kaehl$ denote the complement of all of the divisors except $D_{\vec{p}}^{Y}.$ The real oriented blowup $\hat{Y}_{\vec{p}}$ of  $\mathring{Y}_{\vec{p}}$ along the open stratum of $D_{\vec{p}}^{Y}$ is then a manifold with boundary $S_{\vec{p}}^{Y}$,  the normal circle bundle over the open stratum of $D_{\vec{p}}^{Y}$.  It is not difficult to see that $S_{\vec{p}}^{Y}$ is homeomorphic to $M^*_{S^{1}} \times \Delta_{\vec{p}}$,  where $\Delta_{\vec{p}}$ is the open codimension-one face of the moment polytope of $Y_\kaehl$, and the inclusion map  $S_{\vec{p}}^{Y} \to \hat{Y}_{\vec{p}}$ is a homotopy equivalence. The bundle $SD_{\vec{p}}$ is also a trivial $S^1$-bundle over the interior of $D_\vec{p}$, so again employing the affine Lefschetz hyperplane theorem (together with the K{\"u}nneth formula),  we have that the restriction map $M_\Bbbk \cong H^1(S_{\vec{p}}^{Y};\Bbbk) \to H^1(SD_{\vec{p}};\Bbbk)$ is an isomorphism, yielding \eqref{eq:aff_lef_2}.
	\end{proof}

\begin{prop} \label{prop:combinatorialSH} For any $\vec{p} \in \Xizero$, the following diagram commutes:  \begin{equation} \label{eq:combinatorialbracket} \begin{tikzcd}  SH^{2 \oplus 0}(\mathcal{H}_{t,0})  \arrow[r,  "L_{\vec{p}}"] & SH^{3 \oplus \vec{p}}(\mathcal{H}_{t,0})  & \\
		\Lambda^2 M_\Bbbk  \arrow[u, "PSS"]  \arrow[r,"\iota_{\vec{p}}"]  &  M_\Bbbk \arrow[u,  "PSS_{log}"]. 
	\end{tikzcd} \end{equation}   Here,  the map $L_{\vec{p}}$ denotes the map $[PSS_{log}(\gamma_{\vec{p}}),-]$ given by Lie bracket with $PSS_{log}(\gamma_{\vec{p}})$, and $\iota_{\vec{p}}$ denotes the map given by contracting with $\vec{p} \in M^*.$ \end{prop}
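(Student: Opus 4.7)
The strategy is to compute the Lie bracket $[x,y]$ for $x = PSS^{log}(\gamma_{\vec{p}})$ and $y = PSS(a \wedge b)$ using the BV identity in the BV algebra $SH^*(\mathcal{H}_{t,0})$,
\[
[x,y] = \Delta(x \ast y) - \Delta(x) \ast y - (-1)^{|x|}\, x \ast \Delta(y),
\]
and to show that both ``correction'' terms vanish, so that $[x,y] = \Delta(x \ast y)$, which we then identify with $PSS^{log}(\iota_{\vec{p}}(a \wedge b))$ via Proposition \ref{prop:productsSH} combined with a geometric description of $\Delta$ on the log PSS image.

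\textbf{Vanishing of the correction terms.} The term $\Delta(x)$ lies in $SH^{1 \oplus \vec{p}}(\mathcal{H}_{t,0})$, which by the degeneration of the Ganatra--Pomerleano spectral sequence (as used in the proof of Proposition \ref{prop:shgrad}) is isomorphic to $H^{-1}(SD_{\vec{p}}) = 0$. For $\Delta(y)$: the class sits in $SH^{1 \oplus 0}(\mathcal{H}_{t,0}) \cong H^1(\mathcal{H}_{t,0})$, and the classical BV operator on $PSS$-images of ambient classes can be computed as contraction with the Liouville vector field. Since $\alpha = a \wedge b$ admits a representative pulled back from the $M^*_{S^1}$-translation-invariant form $\tfrac{1}{(2\pi)^2}\,d\theta_a \wedge d\theta_b$ on $M^*_{\C^*} \cong T^* M^*_{S^1}$, and the Liouville vector field inherited from $M^*_{\C^*}$ is radial in the $M^*_\R$-factor, this contraction vanishes. (Equivalently, under Viterbo's identification $SH^*(T^* M^*_{S^1}) \cong H^*(L M^*_{S^1})$, $\alpha$ lies in the constant-loop summand on which the loop-rotation $S^1$-action is trivial.)

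\textbf{Computation of $\Delta(x \ast y)$.} By Proposition \ref{prop:productsSH}, $x \ast y = PSS^{log}(r_{\vec{p}}^*(\alpha) \cup \gamma_{\vec{p}}) = PSS^{log}(r_{\vec{p}}^*(\alpha))$, where $r_{\vec{p}}^*(\alpha) \in H^2(SD_{\vec{p}})$. The main geometric input is the compatibility of log PSS with the BV structure: for $\beta \in H^*(SD_{\vec{p}})$,
\[
\Delta\bigl(PSS^{log}(\beta)\bigr) = PSS^{log}(B\beta),
\]
where $B : H^*(SD_{\vec{p}}) \to H^{*-1}(SD_{\vec{p}})$ is contraction with the fundamental class of the $S^1$-fiber of $SD_{\vec{p}} \to D_{\vec{p}}$. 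This follows from the identification of the Reeb-orbit families defining log PSS with the normal $S^1$-bundle to $D_{\vec{p}}$: the rotation $S^1$-action on Reeb orbits that implements $\Delta$ matches the fiber rotation of $SD_{\vec{p}}$. The fiber direction of $SD_{\vec{p}}$ is precisely the infinitesimal $S^1$-action on $M^*_{\C^*}$ generated by $\vec{p} \in M^*$, so
\[
B\bigl(r_{\vec{p}}^*(d\theta_a \wedge d\theta_b)\bigr) = \langle a, \vec{p}\rangle\, r_{\vec{p}}^*(d\theta_b) - \langle b, \vec{p}\rangle\, r_{\vec{p}}^*(d\theta_a).
\]
Under the identification $H^1(SD_{\vec{p}}) \cong M_\Bbbk$ of Lemma \ref{lem:affine_lef}, the right-hand side represents $\iota_{\vec{p}}(a \wedge b) \in M_\Bbbk$, yielding $[x,y] = PSS^{log}(\iota_{\vec{p}}(a \wedge b))$ as required.

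\textbf{Main obstacle.} The critical step is establishing the compatibility $\Delta \circ PSS^{log} = PSS^{log} \circ B$ for classes concentrated on a single divisor stratum. While this is geometrically transparent (the Reeb orbits around $D_{\vec{p}}$ carry a natural $S^1$-action by rotation, which is topologically the fiber rotation of $SD_{\vec{p}}$), a clean formalization requires revisiting the chain-level log PSS construction of \cite{ganatra2021log, ganatra2020symplectic} and explicitly matching the $S^1$-equivariant moduli data governing $\Delta$ with the normal-bundle geometry of $D_{\vec{p}}$. Once this compatibility is in place, all other steps are formal.
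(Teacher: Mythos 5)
Your overall strategy matches the paper's: apply the BV identity, show both correction terms vanish, use Proposition \ref{prop:productsSH} to reduce to computing $\Delta(PSS^{log}(r_{\vec{p}}^*\alpha))$, and then convert this $\Delta$ into a topological contraction. However, you have accurately located — but not closed — the real gap, which is the compatibility $\Delta \circ PSS^{log} = PSS^{log} \circ B_{\vec{p}}$. The paper does not re-derive this from the chain-level moduli as you suggest would be required; it cites \cite[Lemma 4.29]{ganatra2021log}, which already gives this formula for the BV operator on the log PSS image, subject to a hypothesis that the relevant log cohomology class admits a representative of a particular form (Poincar\'e dual to proper maps from smooth oriented manifolds). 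The genuine work remaining in the paper is then to verify that hypothesis for $r_{\vec{p}}^*(\alpha) \in H^*(SD_{\vec{p}})$, and the paper does this by functoriality of the real oriented blowup (\cite{arone-kankaanrinta10}) to exhibit $r_{\vec p}^*(\alpha)$ as a restriction of a class on $S_{\vec{p}}^{Y} \cong M^*_{S^1} \times \Delta_{\vec{p}}$, where one has explicit geometric representatives. Without knowing about \cite[Lemma 4.29]{ganatra2021log} and the need to check its hypotheses, your proposal has a hole exactly where you flagged one.

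Two further minor points. First, your justification for $\Delta(PSS(\alpha))=0$ is not quite right as stated: the BV operator on images of classical $PSS$ is not ``contraction with the Liouville vector field''; the standard reason is that constant capping orbits are fixed by the loop rotation (and note that Viterbo's isomorphism for $T^*M^*_{S^1}$ does not apply literally to the hypersurface $\mathcal{H}_{t,0}$). The paper simply records this as standard. Second, your use of the spectral sequence degeneration to kill $\Delta(PSS^{log}(\gamma_{\vec{p}})) \in SH^{1\oplus\vec{p}} \cong H^{-1}(SD_{\vec{p}}) = 0$ is a valid alternative to the paper's appeal to \cite[Lemma 4.29]{ganatra2021log} for the same vanishing; this is a legitimate simplification and buys you one fewer invocation of the technical lemma, though it does not avoid the main use of it.
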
 
\begin{proof} Let $\alpha$ be a class in $H^2(\mathcal{H}_{t,0})$ and  let $\Delta$ denote the BV operator on symplectic cohomology.   It is standard that  $\Delta(PSS(\alpha))=0$; furthermore, $\Delta(PSS_{log}(\gamma_{\vec{p}}))=0$ by
\cite[Lemma 4.29]{ganatra2021log}.  Applying the BV-equation,  we therefore have 
\begin{align} [PSS_{log}(\gamma_{\vec{p}}),PSS(\alpha)]=  \Delta(PSS(\alpha) * PSS_{log}(\gamma_{\vec{p}})). \end{align} 
Applying Proposition \ref{prop:productsSH},  this becomes: \begin{align} \label{eq:intermediatebracket} [PSS_{log}(\gamma_{\vec{p}}),PSS(\alpha)]= \Delta(PSS_{log}(r_{\vec{p}}^*(\alpha))), \end{align} where in this equation we view $r_{\vec{p}}^*(\alpha)$ as an element in $H^{4 \oplus \vec{p}} _{log}(X_{t,\kaehl},D)$.

To compute the BV-operator,  note that the $S^1$-action on $SD_{\vec{p}}$ induces a degree $-1$ cohomology operation: \begin{align} B_{\vec{p}}: H^*( SD_{\vec{p}}) \to H^{*-1}(SD_{\vec{p}}).  \end{align} 

 By functoriality of the real oriented blowup \cite{arone-kankaanrinta10}, we have a commutative square:  \begin{equation}\label{eq:blowupfunctorial} \begin{tikzcd}   SD_{\vec{p}}  \arrow[r] \arrow[d] & S_{\vec{p}}^{Y} \arrow[d]& \\
		 \hat{X}_{\vec{p}}  \arrow[r]  &  \hat{Y}_{\vec{p}} 
	\end{tikzcd} \end{equation}  

It follows $r_p^*(\alpha)$ can also be viewed as the restriction of a class on $S_{\vec{p}}^{Y} \cong M^*_{S^{1}} \times \Delta_{\vec{p}}.$ Observe that all of the Borel--Moore homology classes on $M^*_{S^{1}} \times \Delta_{\vec{p}}$ can be canonically represented by $\Bbbk$-linear combinations of submanifolds. In particular, by intersecting these submanifolds with $SD_{\vec{p}}$, we see that $r_p^*(\alpha)$ is Poincar\'{e} dual to a class which can be represented by $\Bbbk$-linear combinations of proper maps from a smooth, oriented manifold. We can therefore combine \eqref{eq:intermediatebracket} with  \cite[Lemma 4.29]{ganatra2021log} again to show that \begin{align} \label{bracket} [PSS_{log}(\gamma_{\vec{p}}),PSS(\alpha)]= PSS_{log}(B_{\vec{p}}(r_{\vec{p}}^*(\alpha))). \end{align}
To conclude,  note that the inclusion of circle bundles $SD_{\vec{p}} \subset S_{\vec{p}}^{Y}$ is $S^1$-equivariant and consequently there is a commutative diagram \begin{equation}\label{eq:squareS1} \begin{tikzcd}   \Lambda^2M_{\Bbbk} \cong H^2(S_{\vec{p}}^{Y})  \arrow[r] \arrow[d, "B_{\vec{p}}^{Y}"] & H^2(SD_{\vec{p}}) \arrow[d, "B_{\vec{p}}  "]& \\
		 M_{\Bbbk} \cong H^1(S_{\vec{p}}^{Y}) \arrow[r, "\cong"]  &   H^1(SD_{\vec{p}}) 
	\end{tikzcd} \end{equation}  

 On $S_{\vec{p}}^{Y}$, the $S^1$-action is just given by the action of the circle $\operatorname{exp}(\vec{p})$ (here, we are again identifying $S_{\vec{p}}^{Y}$ with $M^*_{S^{1}} \times \Delta_{\vec{p}}$). It is then straightforward to see that $B_{\vec{p}}^{Y}=\iota_{\vec{p}}$ from which the result follows immediately.    \end{proof}

\subsection{Relative Fukaya category}
\label{subsec:relativeFukayaCategory}
Since we assume the connectedness condition holds, the intersection of each component $D^Y_{\vec{p}}$ with $X_{t,\kaehl}$ is either connected (when $\vec{p} \in \Xizero$) or empty (otherwise), by Lemma \ref{lem:connectedness}. 
Thus we have $H^2(X_{t,\kaehl},X_{t,\kaehl} \setminus D;\R) \cong \R^{\Xizero}$. 
Recall the notion of a `nice' cone $\Nef \subset \R^{\Xizero}$ from \cite[Definition 3.20]{Sheridan2017}.

\begin{lem}
There exists a nice cone $\Nef \subset \R^{\Xizero}$ containing $\kaehl$ in its interior.
\end{lem}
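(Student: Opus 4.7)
The plan is to unwind the definition of nice cone from \cite[Definition 3.20]{Sheridan2017} and verify that a sufficiently small rational polyhedral cone around $\kaehl$ satisfies its axioms. Informally, a nice cone $\Nef \subset \R^{\Xizero} \cong H^2(X_{t,\kaehl},X_{t,\kaehl}\setminus D;\R)$ should be a closed rational polyhedral cone contained in the positive orthant (so that each nonzero element induces strictly positive linking numbers), all of whose interior members are represented by compactification K\"ahler potentials on $(Y_\kaehl,D^Y)$ in the sense of \cref{def:comp_pot}, and satisfying a convexity axiom ensuring that the induced Novikov-style filtration on the relative Fukaya category has well-behaved graded pieces.

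First I would invoke the MPCS hypothesis: since $\psi_\kaehl$ is strictly convex on $\Sigma_\kaehl$, the toric $\R$-Cartier divisor $D^Y_\kaehl = \sum_{\vec p \in \Xizero}\kaehl_{\vec p}\cdot D^Y_{\vec p}$ is ample on $Y_\kaehl$, and since $X_{t,\kaehl}$ avoids the non-smooth locus of $Y_\kaehl$, its restriction $\sum_{\vec p \in \Xizero}\kaehl_{\vec p}\cdot D_{\vec p}$ is ample on $X_{t,\kaehl}$. Under the identification $H^2(X_{t,\kaehl},X_{t,\kaehl}\setminus D;\R) \cong \R^{\Xizero}$ supplied by the connectedness condition (cf.\ \cref{lem:connectedness}), together with the hypothesis $\kaehl \in (\R_{>0})^{\Xizero}$, this places $\kaehl$ in the open intersection $U$ of the K\"ahler cone and the positive orthant.

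Next I would construct $\Nef$ as any rational polyhedral cone with apex at the origin whose intersection with a hyperplane transverse to the ray through $\kaehl$ is a compact convex polytope contained in $U$ and containing $\kaehl$ in its interior. By construction $\kaehl$ lies in the relative interior of $\Nef$, every nonzero element of $\Nef$ is ample with strictly positive linking numbers, and a compactification K\"ahler potential realizing each $\kaehl' $ in the interior of $\Nef$ is produced by applying \cref{lem:comp_pot_exist}.

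The main step requiring care is Sheridan's convexity axiom, which demands a uniform lower bound of the form $\langle \kaehl',\beta\rangle \geq c\cdot \langle \kaehl,\beta\rangle$ for all $\kaehl' \in \Nef$ and all classes $\beta$ in the closure of the cone spanned by effective curves meeting $D$ non-negatively. This follows from the compactness of a transverse slice of $\Nef$, the openness of the ample cone, and the standard duality between ample and effective cones, provided $\Nef$ is chosen small enough that its closed transverse slice lies strictly inside the open ample cone. Thus the whole argument reduces to this single compactness estimate, which is routine once the preceding paragraphs have been set up.
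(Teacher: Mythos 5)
The paper's own proof is a single sentence: it invokes \cite[Corollary 3.34]{Sheridan2017}, whose hypotheses (connected components of $D$ indexed by $P$, relative K\"ahler form restricted from an ambient $\snc$ pair) are supplied directly by the connectedness condition and the construction in \cref{sec:rel_kahl}. You take a completely different route, reconstructing the conclusion from scratch, but you explicitly admit you are guessing at what \cite[Definition 3.20]{Sheridan2017} says (``Informally, a nice cone \dots \emph{should} be \dots''). This is the central gap. The axioms in Sheridan's definition are what make the relative Fukaya category well-defined over the completed monoid ring (roughly: rational polyhedrality so that the dual monoid is finitely generated, strict positivity of linking numbers so that every relevant curve class has strictly positive energy, and the existence of a relative K\"ahler form with those linking numbers). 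Without knowing the exact axioms, you cannot know whether your ``sufficiently small rational polyhedral cone around $\kaehl$ inside the K\"ahler cone $\cap$ positive orthant'' actually satisfies them, and your third paragraph's ``convexity axiom'' with the uniform lower bound $\langle \kaehl',\beta\rangle \ge c\langle \kaehl,\beta\rangle$ is an invention whose correspondence to the real definition is unverified.

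A secondary issue: even granting a correct definition, you would be re-deriving Sheridan's Corollary 3.34 rather than applying it, which duplicates work the paper is entitled to cite. The two inputs the paper names (connectedness, ambient restriction) are precisely the hypotheses of that corollary; your first two paragraphs use the same inputs but then take on the burden of the whole verification. If you want to argue in this style, you must quote Definition 3.20 verbatim and check each clause; as written, the argument is not a proof but a plausibility sketch, and the weight falls exactly on the axiom you cannot state.
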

\begin{proof}
Follows from \cite[Corollary 3.34]{Sheridan2017}, by the connectedness condition and the fact that $\kaehl$ was restricted from a relative K\"ahler form on $(Y_\kaehl,D^Y)$.
\end{proof}

We will consider the relative Fukaya category $\fuk(X_{t,\kaehl},D,\Nef)$, defined as in \cite{perutz2022constructing}. 
There are some choices involved in the construction, which we now specify. 
First, the category will be $\Z \oplus M^*$-graded. 
To define its coefficient ring, let $\mathrm{NE}_A \subset H_2(X_{t,\kaehl},X_{t,\kaehl} \setminus D) \cong \Z^{\Xizero}$ be the cone dual to $\Nef$ and let $ \Bbbk[\mathrm{NE}_A]$ be the monoid ring generated by $\mathrm{NE}_A$. 
For $u \in \mathrm{NE}_A$, we equip the generator $\nov^{u} \in \Bbbk[\mathrm{NE}_A]$ with the grading $0 \oplus [\partial u]$, where $[\partial u]$ is the class of the boundary of $u$ in $H_1(M^*_{\C^*}) = M^*$. The coefficient ring of the relative Fukaya category will be $R_A = \Bbbk[[\mathrm{NE}_A]]$, the $\Z \oplus M^*$-graded completion of $\Bbbk[\mathrm{NE}_A]$. 

The relative Fukaya category $\fuk(X_{t,\kaehl},D,\Nef)$ \cite{perutz2022constructing,Sheridan2017} is a $\Z \oplus M^*$-graded, $R_A$-linear, curved filtered $A_\infty$ category. 
It is a deformation of the $\Bbbk$-linear exact Fukaya category $\fuk(X_{t,\kaehl} \setminus D)$. Let us recall the key elements of the construction. Let $h_1=h^{X}$ denote the K{\"a}hler potential for $\omega$ chosen in \Cref{sec:rel_kahl} and let $c$ be a constant which is larger than the biggest critical value of $h_1$. Then the subspace $$\LS_1:=\lbrace h_1 \leq c \rbrace$$ is a Liouville domain with completion $\widehat{\mathcal{H}}_{t,0}$. An object of the relative Fukaya category is a compact exact Lagrangian $L \subset \LS_1$, equipped with a grading and spin structure. 
The grading consists of a lift of the phase function $L \to S^1 = \R/\Z$ determined by $\Omega^X$ to $\R$, together with a lift of $L$ to the covering space of $X_{t,\kaehl} \setminus D$ classified by the map 
$$\pi_1(X_{t,\kaehl} \setminus D) \to H_1(M^* \otimes_\Z \C^*) = M^*.$$

Morphisms are Hamiltonian chords between Lagrangians. 
The grading of the Lagrangians allows us to specify a degree of each chord (in the grading group $\Z \oplus M^*$), so that our morphism spaces are graded $R_A$-modules. 
The definition of the $\Z$-grading, using the lift of the phase function, is standard; the $M^*$-grading is defined to be the difference between the lifts of the Lagrangians at the ends of the chord. Let $\cA_R$ denote the resulting curved deformation of the category $\cA_0$. \vskip 5 pt

 Let $\vec{p} \in \Xizero$, and $\nov_\vec{p} \cdot a_\vec{p}$ be the corresponding first-order deformation class of $\cA_R$.  In the discussion below, we recall only those technical details concerning the definition of this class which will be relevant to the proof of Proposition \ref{prop: apCOPSS} ---  for full details, see \cite[\S 4.4]{Sheridan2017}.
We set $\mathcal{J}(X_{t,\kaehl})$ to be the space of $\omega$-compatible almost-complex structures on $X_{t,\kaehl}$.   By \cref{lem:modifyprimitive}, we can choose a different K{\"a}hler potential $h_2$ for $\omega$ which is infinite along $\cup_{\vec{q}\neq \vec{p}}D_\vec{q}.$ We let $\LS_2$ be a Liouville domain corresponding to $h_2$ which is chosen so that there is a proper containment: \begin{align} \LS_1 \subset \LS_2. \end{align}
Set $\lf_1=-dh_1 \circ J_0$ and $\lf_2=-dh_2\circ J_0$, where $J_0$ denotes the standard integrable almost-complex structure. \vskip 5 pt

Using the Liouville flow, we can and will assume that all of the Lagrangians $L_i$ lie in $\LS_1.$ We let $\mathbf{L}=(L^{(0)},... ,L^{(s)})$ be a tuple of objects in $\mathcal{A}_0$.  We set $\mathfrak{R}_i(\mathbf{L})$ to be the moduli space of holomorphic discs with boundary punctures $\zeta_0, \zeta_1, . . . , \zeta_s$ in cyclic order and $i$ interior marked points, where $i=0$ or $1$, modulo biholomorphisms of the disc; when $i=1$, we denote the interior marked point by $z_{int}$. We label the boundary components of the discs by $L^{(j)}$. We denote the universal curve by $U_{\mathfrak{R}_i(\mathbf{L})} \to \mathfrak{R}_i(\mathbf{L})$. We make universal choices of strip-like ends $\varepsilon_j$ at each puncture (one negative end at $\zeta_0$, the remaining ends positive). We let: \begin{align} \mathcal{K}_{\mathfrak{R}_i(\mathbf{L})}:= \Omega^1(U_{\mathfrak{R}_i(\mathbf{L})}/\mathfrak{R}_i(\mathbf{L}),  C^\infty(X_{t,\kaehl})), \quad \mathcal{J}_{\mathfrak{R}_i(\mathbf{L})}:= C^{\infty}(\mathcal{U}_{\mathfrak{R}_i(\mathbf{L})}, \mathcal{J}(X_{t,\kaehl})).\end{align}  We choose universal families of perturbation data $(K,J) \in \mathcal{K}_{\mathfrak{R}_i(\mathbf{L})} \times \mathcal{J}_{\mathfrak{R}_i(\mathbf{L})}.$ As usual, these choices (including the choice of strip-like ends) must be made consistently with previously chosen Floer data and the gluing operation for surfaces. 
 
Our arguments will involve working with different classes of perturbation data. 
For the definition of the affine Fukaya category $\fuk(X_{t,\kaehl} \setminus D)$, and the first-order deformation class $a_{\vec{p}}$, we assume that $(K,J) \in \mathcal{K}^{aff}_{\mathfrak{R}_i(\mathbf{L})} \times \mathcal{J}^{aff}_{\mathfrak{R}_i(\mathbf{L})}$, where
\begin{align}    
\mathcal{K}^{aff}_{\mathfrak{R}_i(\mathbf{L})} &:= \lbrace K \in \mathcal{K}_{\mathfrak{R}_i(\mathbf{L})}, K_r=0 \text{ outside } \LS_1 \text{ for all } r \in \mathfrak{R}_i(\mathbf{L})\rbrace\\
\mathcal{J}^{aff}_{\mathfrak{R}_i(\mathbf{L})} &:= \lbrace J \in \mathcal{J}_{\mathfrak{R}_i(\mathbf{L})}, J_r=J_0 \text{ outside } \LS_1 \text{ for all } r \in \mathfrak{R}_i(\mathbf{L}) \rbrace. \end{align}
 
Label the boundary punctures by chords $y_0$ from $L^{(s)}$ to $L^{(0)}$ and $y_j$ from $L^{(j-1)}$ to $L^{(j)}$. The class $a_\vec{p}$ is then defined by counting solutions $(r,u)$, where $r \in \mathfrak{R}_1(\mathbf{L})$ and $u$ is a solution to Floer's equation: 
\begin{equation} \label{infinitesimalmodulispace1}
\left\{
\begin{aligned}
& u: \Sigma_r \to X, \\
& (du-X_{K_r})^{0,1}=0, \\
& u(z) \in L^C \text{ for } z\in C \subset \partial \Sigma_r,\\
& u(\varepsilon_\zeta(s,t))\to y_\zeta(t) \text{ for } \zeta \text{ a boundary puncture}
\end{aligned}
\right.
\end{equation}
which additionally satisfies: 
\begin{equation} \label{infinitesimalmodulispace2}
\left\{
\begin{aligned}
& u \cdot D_{\vec{p}}=1;  u \cdot D_{\vec{q}}=0, \vec{q}\neq \vec{p} \\
& u(z_{int})\in D_{\vec{p}}.
\end{aligned}
\right.
\end{equation}

The following is a minor variation on the argument from \cite[\S 4.4]{Sheridan2017}:

\begin{lem} \label{lem:definingavecp} The count of rigid solutions to \eqref{infinitesimalmodulispace1}, \eqref{infinitesimalmodulispace2} defines a class $a_{\vec{p}} \in HH^{2\oplus \vec{p}}(\cA_0).$ 
This class is independent of the choice of perturbation data $(K,J)$.
\end{lem}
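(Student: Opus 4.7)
The plan is to follow Sheridan's argument \cite[\S 4.4]{Sheridan2017} essentially verbatim, with the crucial modification that the positivity-of-intersection arguments specific to hypersurfaces in projective space are replaced by the Batyrev-adapted no-bubbling statement \cref{cor:nospheres}. Three items must be verified: compactness of the relevant moduli space, that its oriented count defines a Hochschild cocycle of the claimed bidegree, and independence from the auxiliary choice of $(K,J)$.

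For compactness, since $(K,J) \in \mathcal{K}^{aff}_{\mathfrak{R}_1(\mathbf{L})} \times \mathcal{J}^{aff}_{\mathfrak{R}_1(\mathbf{L})}$, Floer's equation reduces to $\bar\partial_{J_0} u = 0$ outside $\LS_1$, so the standard maximum principle applied to the K\"ahler potential $h_1$ rules out escape to infinity, while boundary disc bubbling is controlled by consistency of the perturbation data with gluing and produces only the codimension-one strata needed for the Hochschild cocycle condition. The real danger is sphere bubbling inside $X_{t,\kaehl}$: any non-constant sphere bubble is $J$-holomorphic for some compatible $J$, and by \cref{cor:nospheres} must positively intersect at least two of the divisor components $D_{\vec{q}}$. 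However, the intersection conditions in \eqref{infinitesimalmodulispace2} force the total intersection number of any Gromov limit with $D_{\vec{q}}$ to be $\delta_{\vec{q},\vec{p}}$; combined with positivity of intersection, this forces any sphere bubble to have intersection zero with every $D_{\vec{q}}$, a contradiction. The same argument passes through a homotopy of perturbation data, yielding independence of $(K,J)$ via the usual one-parameter cobordism argument whose codimension-one boundary exhibits the two counts as Hochschild-cohomologous.

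Granted compactness, the standard Maslov/Chern class dimension count, adjusted for the tangency constraint $u(z_{\mathrm{int}}) \in D_{\vec{p}}$ and the intersection $u \cdot D_{\vec{p}} = 1$, shows that rigid solutions produce a cochain of cohomological degree $2$. The $M^*$-grading is immediate from the definition of the grading on $\cA_0$: since $u$ has intersection $1$ with $D_{\vec{p}}$ and $0$ with all other $D_{\vec{q}}$, the loop traced out by $\partial u$ represents the class $\vec{p}$ under $H_1(X_{t,\kaehl} \setminus D) \to M^*$, which is precisely the difference between the $M^*$-grading of input and output chords for this cochain. The Hochschild cocycle condition follows from the codimension-one boundary analysis of $\mathfrak{R}_1(\mathbf{L})$, with boundary disc bubbling producing exactly the terms of $\partial_{CC} a_{\vec{p}}$. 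The main (and essentially the only) obstacle is the sphere-bubbling analysis, which is resolved by \cref{cor:nospheres}; the latter is what the connectedness condition together with \cref{lem:modifyprimitive} was arranged to deliver.
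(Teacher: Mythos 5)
Your use of \cref{cor:nospheres} together with positivity of intersection to rule out sphere bubbles is a valid alternative to the paper's argument; it is in fact essentially the argument the paper uses later in the proof of \cref{prop: apCOPSS}, where the $\mathcal{J}^{\mathrm{log}}$ perturbation data are not amenable to the simpler approach. Here, however, the paper takes a cleaner route: by \cref{lem:modifyprimitive} one can choose a second K\"ahler potential $h_2$ for $\omega$ which is finite along $D_{\vec{p}}$ but infinite along $\cup_{\vec{q} \neq \vec{p}} D_{\vec{q}}$, with associated Liouville domain $\LS_2 \supset \LS_1$. The integrated maximum principle with respect to $\lf_2 = -dh_2 \circ J_0$ confines all solutions to $\LS_2$; since $\omega = d\lf_2$ is exact on $\LS_2$, there are no non-constant holomorphic spheres there, and sphere bubbling is excluded without further argument.

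There is, however, a genuine gap in your compactness discussion. You claim that ``the standard maximum principle applied to the K\"ahler potential $h_1$ rules out escape to infinity,'' but the curves being counted satisfy $u \cdot D_{\vec{p}} = 1$ and therefore necessarily cross $D_{\vec{p}}$, along which $h_1 = +\infty$; a maximum principle with $h_1$ would force the moduli space to be empty. The correct argument uses $h_2$, and your proposal does not mention $h_2$, $\lf_2$, $\LS_2$, or \cref{lem:modifyprimitive} at all. This omission also shows up in your treatment of disc bubbling, which is too vague: the paper records the explicit observation \eqref{eq:ububbleDp} that any disc bubble component $u_{bubble}$ satisfies $u_{bubble}\cdot D_{\vec{p}} \ge 0$, and if this vanishes then $u_{bubble}(\Sigma_r)\subset \LS_1$ (by yet another application of the integrated maximum principle). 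That confinement is precisely what is needed to identify the codimension-one boundary strata of the compactified moduli space with the terms of the Hochschild differential when invoking \cite[\S 4.4]{Sheridan2017}, and it is not subsumed by ``consistency of the perturbation data with gluing.''
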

 \begin{proof} Following the standard strategy in Floer theory, we consider the compactifications of moduli spaces of dimension $\leq 1$. We note that all of the Lagrangians $L_i$ are Lagrangian spheres and hence are exact for both the primitives $\lf_1$ and $\lf_2$. The integrated maximum principle (see \cite[Lemma 4.3]{Sheridan2017}) implies that all of our solutions are confined to lie in $\LS_2$. In particular, no sphere bubbling can occur in our moduli spaces.  Another application of the integrated maximum principle implies that for any disc bubble component $u_{bubble}$:
\begin{align} \label{eq:ububbleDp} 
u_{bubble} \cdot D_{\vec{p}} \geq 0; \quad \text{ if }  u_{bubble} \cdot D_{\vec{p}}= 0, u_{bubble}(\Sigma_r) \subset \LS_1. 
\end{align}
With these two observations, the argument of \cite[\S 4.4]{Sheridan2017} implies that this construction still defines a Hochschild cocycle, and that the corresponding Hochschild cohomology class is independent of the choice of perturbation data.
\end{proof}

The class $a_{\vec{p}}$ is equal to the first-order deformation class of the relative Fukaya category (cf. \cite[\S 9.4]{perutz2022constructing} and \cite[Assumption 5.2]{Sheridan2017}).

As noted above, the definition of the log PSS morphism requires deforming the symplectic form in the same cohomology class so that the divisors $D$ admit a regularization. By a Moser argument, we can equivalently recast this as deforming the divisors $D_{\vec{q}}$ to symplectic divisors $\tilde{D}_{\vec{q}}$ which admit a regularization (see \cite[\S 3]{ganatra2021log} or \cite[\S 5]{Mclean2010}). Note that we can assume that this deformation is supported in an arbitrarily small neighbourhood of the codimension two strata of $D$. In particular, we can (and do) assume that: 
\begin{itemize} 
    \item the divisors $\cup_{\vec{q}} \tilde{D}_{\vec{q}}$ remain disjoint from $\LS_1$; 

    \item the divisors $\cup_{\vec{q} \neq \vec{p}} \tilde{D}_\vec{q}$ remain disjoint from $\LS_2$, and $\tilde{D}_{\vec{p}} \cap \mathcal{S}_2 = D_{\vec{p}}\cap \mathcal{S}_2$.
\end{itemize}

\begin{prop}\label{prop: apCOPSS}
We have 
\begin{align} 
    a_{\vec{p}} = \cC \cO(PSS^{log}(\gamma_\vec{p})) \in HH^{2\oplus \vec{p}}(\cA_0).
\end{align} 
\end{prop}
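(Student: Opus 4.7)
The plan is to verify the equality at the chain level by constructing an interpolating moduli space whose $R=0$ end counts the discs defining $a_{\vec p}$ and whose $R=\infty$ end counts the broken configurations defining $\cC\cO(PSS^{log}(\gamma_{\vec p}))$. Concretely, fix a representative for $\gamma_{\vec p}$ as (a chain-level cycle Poincaré dual to) a generic point of $SD_{\vec p}$, so that $PSS^{log}(\gamma_{\vec p})$ is represented by the cocycle in $SC^{2\oplus\vec p}(\widehat{\mathcal H}_{t,0})$ counting rigid holomorphic thimbles $u\colon \mathbb{C}\to X_{t,\kaehl}$ with $u\cdot \tilde D_{\vec p}=1$, $u\cdot \tilde D_{\vec q}=0$ for $\vec q\neq\vec p$, and positive asymptote an orbit $y$. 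By functoriality of the closed-open map, $\cC\cO(PSS^{log}(\gamma_{\vec p}))$ is then represented, for each tuple $\mathbf L=(L^{(0)},\ldots,L^{(s)})$ of objects of $\cA_0$, by counts of two-level configurations: a disc as in \eqref{infinitesimalmodulispace1} with one interior puncture asymptotic to an orbit $y$, glued to a thimble asymptotic to the same orbit.

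Next, I would build a parametrized moduli space $\mathfrak R^R_1(\mathbf L)$, for $R\in[0,\infty]$, of the discs of \eqref{infinitesimalmodulispace1} carrying one interior marked point along with a pre-glued thimble of neck length $R$, subject to the constraints \eqref{infinitesimalmodulispace2} on the total glued map. At $R=\infty$ the configuration breaks into the two-level object just described, reproducing $\cC\cO(PSS^{log}(\gamma_{\vec p}))$ on the nose. At $R=0$ the thimble is collapsed, leaving exactly a disc with an interior marked point on $\tilde D_{\vec p}$ with total intersection $1$ with $\tilde D_{\vec p}$ and $0$ with the other $\tilde D_{\vec q}$; this is the count defining $a_{\vec p}$ via the Moser isotopy between $D$ and $\tilde D$. (Here we use that $\tilde D_{\vec p}$ can be chosen to agree with $D_{\vec p}$ on $\LS_2$ and that by integrated maximum principle the relevant discs live in $\LS_2$.) The one-dimensional strata of $\mathfrak R^R_1(\mathbf L)$ therefore provide a chain homotopy between the Hochschild cocycles representing $a_{\vec p}$ and $\cC\cO(PSS^{log}(\gamma_{\vec p}))$, modulo boundary contributions from standard strip breaking at boundary punctures, which assemble into the Hochschild differential of a cochain of degree $1\oplus\vec p$.

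The main technical obstacle is excluding bubbling in the interpolating moduli space. Sphere bubbles are ruled out by \cref{cor:nospheres}: any non-constant sphere bubble must positively intersect some $\tilde D_{\vec q}$ with $\vec q\neq\vec p$, which is forbidden by \eqref{infinitesimalmodulispace2}. Disc bubbles with non-zero intersection with $\tilde D_{\vec q}$ for $\vec q\neq \vec p$ are similarly forbidden, while disc bubbles with zero intersection with every divisor are confined to $\LS_1$ by the integrated maximum principle (as in \cref{lem:definingavecp}) and produce only the expected $A_\infty$-pre-composition strip breakings. One must also check that the thimble components appearing in the $R\to\infty$ limit cannot themselves break further in ways inconsistent with the cocycle representative of $PSS^{log}(\gamma_{\vec p})$, which follows from the admissibility of $\gamma_{\vec p}$ recorded in \S\ref{sec:logPSS} together with \cref{cor:nospheres}. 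The $\Z\oplus M^*$-grading is preserved throughout: the thimble contributes $2\oplus\vec p$, the gluing along $y$ preserves the grading of $y$, and the disc factor contributes the Hochschild cochain degree, matching the statement.
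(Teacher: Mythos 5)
Your proposal correctly reproduces the first half of the paper's argument --- the degeneration-of-domain picture identifying a log-adapted version of the deformation class, call it $a_{\vec p}^{log}$ (the count of discs with an interior marked point constrained to $\tilde D_{\vec p}$ and satisfying \eqref{infinitesimalmodulispace3}), with $\cC\cO(PSS^{log}(\gamma_{\vec p}))$. The $R$-parametrized moduli space you describe, with a pre-glued thimble of neck length $R$, is exactly this standard argument, and your treatment of sphere and disc bubbling via \cref{cor:nospheres} is sound.

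However, there is a genuine gap in the transition from your $R=0$ end to $a_{\vec p}$ as actually defined. You assert that collapsing the thimble leaves ``the count defining $a_{\vec p}$ via the Moser isotopy between $D$ and $\tilde D$,'' and support this by noting $\tilde D_{\vec p} \cap \LS_2 = D_{\vec p}\cap \LS_2$ together with the integrated maximum principle confining discs to $\LS_2$. This handles the change of divisor, but not the change of \emph{perturbation data}: the class $a_{\vec p}$ is defined using $(K,J) \in \mathcal{K}^{aff}\times\mathcal{J}^{aff}$, where $J=J_0$ outside $\LS_1$, whereas the perturbation data implicit in your interpolating family (needed for the thimble count and for the constraint that the marked point lands on the divisor to be well-behaved) are of $\mathcal{J}^{log}$ type, i.e.\ $J$ preserving each $\tilde D_{\vec q}$. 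These two families do not coincide, and worse, requiring $J$ to preserve $\tilde D_{\vec p}$ is potentially inconsistent with requiring $J$ to be of contact type along $\partial\LS_2$ --- which is exactly what your invocation of the integrated maximum principle needs. You cannot simply choose data in the intersection. The paper's proof devotes its main technical effort to precisely this point: it introduces two intermediate families of almost-complex structures, $\mathcal{J}^0 \supset \mathcal{J}^{aff}$ (contact type along both $\partial\LS_1$ and $\partial\LS_2$, no divisor constraint) and $\mathcal{J}^1 \supset \mathcal{J}^{log}$ (contact type along $\partial\LS_1$, preserving $\tilde D_{\vec q}$ only for $\vec q\neq\vec p$), checks that each yields a well-defined Hochschild class independent of choices, and observes $\mathcal{J}^0\cap\mathcal{J}^1\neq\emptyset$, giving a chain $a_{\vec p}=a^0_{\vec p}=a^1_{\vec p}=a^{log}_{\vec p}$. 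Your proposal would need some such bridge between perturbation schemes before the $R=0$ end can be identified with $a_{\vec p}$.
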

\begin{proof}
We first define a version of the class, $a_{\vec{p}}^{log}$, which is adapted to the log PSS setup. To prepare for this, we introduce the class of perturbation data $\mathcal{K}^{log}_{\mathfrak{R}_i(\mathbf{L})} \times \mathcal{J}^{log}_{\mathfrak{R}_i(\mathbf{L})}$ consisting of all $(K^{log},J^{log})$ such that for each $r \in \mathfrak{R}_i(\mathbf{L})$: 
\begin{enumerate}[label=(\alph*)]
\item For $i=0$, $K_r^{log}$ has non-positive curvature outside of $\LS_1$;
\item $X_{K_{r}^{log}}$ preserves each component $\tilde{D}_\vec{q}$;
\item $J_r^{log}$ satisfies 
    \begin{align} 
        \lf_1= -dh_1\circ J_r \text{ on a contact shell about } \partial \LS_1, 
    \end{align} 
\item $J_r^{log}$ makes each component $\tilde{D}_\vec{q}$ into an almost-complex submanifold.
\end{enumerate}
We choose perturbation data $(K^{log},J^{log}) \in \mathcal{K}^{log}_{\mathfrak{R}_i(\mathbf{L})} \times \mathcal{J}^{log}_{\mathfrak{R}_i(\mathbf{L})}$ such that for $i=0$, the restriction of our data to the union of $\LS_1$ and a small contact shell of $\partial \LS_1$ agree with the perturbation data previously chosen for the affine Fukaya category.  
The integrated maximum principle implies that any Floer solution $u$ satisfying  $u \cdot \tilde{D}_\vec{q}=0$ for all $\vec{q} \in P$ is confined to $\LS_1$. Therefore, while the perturbation data $(K^{log},J^{log})$ for $i=0$ may not agree with the data previously chosen for the affine Fukaya category, the resulting moduli spaces are isomorphic; so the disagreement does not affect the affine Fukaya category. 

We now define the class $a_{\vec{p}}^{log}$ by counting rigid solutions to \eqref{infinitesimalmodulispace1}, for the perturbation data $(K^{log},J^{log})$, which satisfy
\begin{equation} \label{infinitesimalmodulispace3}
\left\{
\begin{aligned}
& u \cdot \tilde{D}_{\vec{p}}=1;  u \cdot \tilde{D}_{\vec{q}}=0, \vec{q}\neq \vec{p} \\
& u(z_{int})\in \tilde{D}_{\vec{p}}.
\end{aligned}
\right.
\end{equation}

To control the compactification of dimension $\leq 1$ moduli spaces, note first that by positivity of intersection, for any disc bubble component $u_{bubble}$, we have that for all $ \vec{q} \in P$,  
\begin{align} \label{eq:randombubblelog} 
u_{bubble} \cdot \tilde{D}_\vec{q} \geq 0. 
\end{align}  
As in the proof of \cref{prop:productsSH}, Corollary \ref{cor:nospheres}  then rules out sphere bubbling in the moduli spaces and it follows that equality holds in \eqref{eq:randombubblelog} for $\vec{q} \neq \vec{p}$. From here, we have that if $u_{bubble} \cdot \tilde{D}_\vec{p}= 0$ as well, the component $u_{bubble}$ does not carry the interior marked point (again using  positivity of intersection) and so $u_{bubble}(\Sigma_r) \subset \LS_1$ by the integrated maximum principle. It now follows, as in the proof of Lemma \ref{lem:definingavecp}, that the argument of \cite[\S 4.4]{Sheridan2017} goes through to define a Hochschild cocycle, whose corresponding Hochschild cohomology class is independent of choices. Finally, a standard degeneration of domain argument (see \cite[Theorem 1.1]{tonkonog2019symplectic} for the analogous result in the context of log Calabi--Yau varieties) shows that 
\begin{align} 
a_\vec{p}^{log}=\cC \cO(PSS^{log}(\gamma_\vec{p})). 
\end{align} 
  \vskip 5 pt

It remains to prove that $a_\vec{p}^{log} = a_\vec{p}$. 
 The key technical issue requiring attention here is that the definition of the log PSS map uses almost-complex structures which preserve $\cup_{\vec{q}} \tilde{D}_{\vec{q}}$, and in particular, do not belong to $\mathcal{J}^{aff}$. Moreover, requiring the almost-complex structure to preserve $\tilde{D}_{\vec{p}}$ is potentially inconsistent with requiring the almost-complex structure to be of contact type along $\partial \LS_2,$ which is needed for the argument of Lemma \ref{lem:definingavecp}. 
 We resolve this by enlarging the families of almost-complex structures used to define $a_\vec{p}^{log}$ and $a_\vec{p}$, to $\mathcal{J}^1$ and $\mathcal{J}^0$ respectively, so that one can still define Hochschild cohomology classes using perturbation data from these enlarged families, the classes are still independent of choices, and the families have non-empty intersection.
 
We first enlarge the family of almost-complex structures used to define $a_\vec{p}$. Let $\mathcal{J}^0_{\mathfrak{R}_i(\mathbf{L})} \subset \mathcal{J}_{\mathfrak{R}_i(\mathbf{L})}$ be the subset of families of almost-complex structures which for all $r \in \mathfrak{R}_1(\mathbf{L})$ satisfy:   
\begin{align} 
\label{eq:contactshellS1} \lf_1= -dh_1\circ J_r \text{ on a contact shell about } \partial \LS_1, 
\end{align} 
\begin{align} 
    \lf_2= -dh_2 \circ J_r \text{ on a contact shell about } \partial \LS_2. 
\end{align}
We choose $(K^0,J^0) \in \mathcal{K}^{aff}_{\mathfrak{R}_i(\mathbf{L})} \times \mathcal{J}^0_{\mathfrak{R}_i(\mathbf{L})}$ such that for $i=0$, the choice agrees with our previous choice of perturbation data in the definition of the affine Fukaya category.

By the same argument as in the definition of $a_\vec{p}$, we can define a Hochschild cohomology class $a^0_{\vec{p}}$ by counting solutions to \eqref{infinitesimalmodulispace1} (for perturbation data $(K^0,J^0)$) which now satisfy \cref{infinitesimalmodulispace3}. This class is independent of choices. 
As in the proof of Lemma \ref{lem:definingavecp}, all solutions satisfying \eqref{infinitesimalmodulispace3} or \eqref{infinitesimalmodulispace2} are contained in $\mathcal{S}_2$. Because  $\tilde{D}_{\vec{p}} \cap \mathcal{S}_2 = D_{\vec{p}}\cap \mathcal{S}_2$, solutions satisfy \eqref{infinitesimalmodulispace3} if and only if they satisfy \eqref{infinitesimalmodulispace2}. 
Now observing that $\mathcal{J}^{aff}_{\mathfrak{R}_1(\mathbf{L})} \subset \mathcal{J}^0_{\mathfrak{R}_1(\mathbf{L})}$, it is immediate that the class $a^0_{\vec{p}}$ agrees with the previously defined $a_{\vec{p}}$. \vskip 5 pt 

Now we enlarge the family of perturbation data used to define $a_{\vec{p}}^{log}$. 
We define  $\mathcal{J}^1_{\mathfrak{R}_1(\mathbf{L})} \subset \mathcal{J}_{\mathfrak{R}_1(\mathbf{L})}$ to be the space of families of almost-complex structures $J^1$ such that $J_r^1$ satisfies \eqref{eq:contactshellS1} and makes each component $\tilde{D}_\vec{q}$, $\vec{q} \neq \vec{p}$ into an almost-complex submanifold.  

We choose $(K^1,J^1) \in \mathcal{K}^{aff}_{\mathfrak{R}_i(\mathbf{L})} \times \mathcal{J}^1_{\mathfrak{R}_i(\mathbf{L})},$ such that for $i=0$ our choice agrees with the previously chosen $(K^{log},J^{log})$ over the union of $\LS_1$ and a small contact shell. We obtain a Hochschild cocycle $a_{\vec{p}}^1$ whose corresponding Hochschild cohomology class is independent of choices. To see this, note that positivity of intersection still implies that for any disc bubble component $u_{bubble}$ (arising in a compactification of dimension $\leq 1$ moduli spaces), 
\begin{align} \label{eq:randombubblelog2} 
u_{bubble} \cdot \tilde{D}_\vec{q} \geq 0, \vec{q} \neq \vec{p}. 
\end{align} 
As before, Corollary \ref{cor:nospheres} then rules out sphere bubbling in the moduli spaces and it follows that equality holds in \eqref{eq:randombubblelog2}. We again have that if $u_{bubble} \cdot \tilde{D}_\vec{p} \leq 0$, $u_{bubble}(\Sigma_r) \subset \LS_1$, this time by the integrated maximum principle. This implies that the argument of \cite[\S 4.4]{Sheridan2017} again goes through. Because $\mathcal{J}^{log}_{\mathfrak{R}_1(\mathbf{L})} \subset \mathcal{J}^1_{\mathfrak{R}_1(\mathbf{L})}$, we have $a_{\vec{p}}^1 = a_{\vec{p}}^{log}.$   

Finally, we observe that $\mathcal{J}^1_{\mathfrak{R}_1(\mathbf{L})} \cap \mathcal{J}^0_{\mathfrak{R}_1(\mathbf{L})}$ is non-empty; thus we have $a_{\vec{p}}^1=a^0_{\vec{p}}.$ 
Thus $a_{\vec{p}} = a_{\vec{p}}^{log}$ as required.
\end{proof}

\subsection{A-side versality}\label{aversality}

\begin{lem}\label{lem:Adefgen}
 Let $\vec{p} \in \Xizero$, and $\nov_\vec{p} \cdot a_\vec{p}$ be the corresponding first-order deformation class of $\cA_R$. 
Then $a_\vec{p}$ spans $\HH^{2 \oplus \vec{p}}(\cA_0) \cong \Bbbk$. 
\end{lem}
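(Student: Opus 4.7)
The plan is to combine three results that have already been established in the excerpt. First, Proposition~\ref{prop:shgrad}\cref{it:sh2p} asserts that $SH^{2\oplus \vec{p}}(\cH_{t,0};\Bbbk)$ has rank one and is generated by the log PSS class $PSS^{log}(\gamma_{\vec{p}})$, where $\gamma_{\vec{p}}$ is the fundamental class in $H^{2\oplus \vec{p}}_{log}(X_{t,\kaehl},D)$ coming from $H^0(SD_{\vec{p}})$ as in Lemma~\ref{lem:Hlog_comp}. Second, by the Liouville equivalence from Lemma~\ref{lem:tailor_equivalence} (which descends to an equivalence of Fukaya categories via Lemma~\ref{lem:compare_vol_form}), we may identify $SH^*(\cH_{t,0})$ with $SH^*(F)$ as $\Z\oplus M^*$-graded vector spaces. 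Third, Corollary~\ref{cor:SH_HH} provides a $\Z \oplus M^*$-graded isomorphism $\cC\cO: SH^*(F;\Bbbk) \xrightarrow{\sim} \HH^*(\cA_0)$. Applied in degree $2 \oplus \vec{p}$, this yields $\HH^{2\oplus\vec{p}}(\cA_0) \cong \Bbbk$, generated by $\cC\cO(PSS^{log}(\gamma_{\vec{p}}))$.

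To conclude, I would invoke Proposition~\ref{prop: apCOPSS}, which identifies the first-order deformation class precisely with this element:
\[ a_{\vec{p}} = \cC\cO(PSS^{log}(\gamma_{\vec{p}})). \]
Since $PSS^{log}(\gamma_{\vec{p}})$ spans the one-dimensional space $SH^{2\oplus\vec{p}}(F;\Bbbk)$, its image $a_{\vec{p}}$ spans $\HH^{2\oplus\vec{p}}(\cA_0)$, which is the desired statement.

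In this sense the lemma is essentially a bookkeeping step: all of the substantive work has been carried out in the preceding subsections. The genuine content lies in (i) the log PSS computation of $SH^{2\oplus\vec{p}}$ via the degeneration of the spectral sequence from \cite{ganatra2020symplectic}, which relies on the uniqueness of contributing multi-indices in a given $M^*$-degree (Lemma~\ref{lem:Mdegunique}); and (ii) the geometric identification in Proposition~\ref{prop: apCOPSS} of the deformation class with the closed--open image of the log PSS class, which required an interpolation of almost complex structures between those used to define the relative Fukaya category and those compatible with the regularized divisors. The only potential subtlety here would be to verify that the identification $SH^*(\cH_{t,0}) \cong SH^*(F)$ respects the $M^*$-grading, but this is immediate from the fact that the Liouville isomorphism of Lemma~\ref{lem:tailor_equivalence} is induced by a homotopy in $M^*_{\C^*}$ preserving the first homology, so loop classes of orbits are transported correctly. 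Thus the proof of Lemma~\ref{lem:Adefgen} is a one-line assembly of Propositions~\ref{prop:shgrad}, \ref{prop: apCOPSS}, and Corollary~\ref{cor:SH_HH}.
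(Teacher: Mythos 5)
Your proof is correct and is essentially identical to the paper's: both assemble Proposition~\ref{prop: apCOPSS} (identifying $a_{\vec{p}} = \cC\cO(PSS^{log}(\gamma_{\vec{p}}))$), Proposition~\ref{prop:shgrad}\eqref{it:sh2p} (rank-one computation of $SH^{2\oplus\vec{p}}$), and Corollary~\ref{cor:SH_HH} (the closed--open isomorphism). The extra remark you make about transporting the $M^*$-grading across the Liouville isomorphism $\widehat{\cH}_{t,0}\cong\eff$ is a genuine but minor point that the paper handles implicitly (via Lemma~\ref{lem:compare_vol_form}); it does not change the argument.
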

\begin{proof} 
    By \cref{prop: apCOPSS}, $a_\vec{p} = \cC \cO(PSS^{log}(\gamma_\vec{p})).$
The result now follows from  \cref{cor:SH_HH}, together with the fact that $SH^{2 \oplus \vec{p}}(\mathcal{H}_{t,0};\Bbbk) \cong \Bbbk$ is spanned by the class $PSS^{log}(\gamma_\vec{p})$ by Proposition \ref{prop:shgrad}.
\end{proof}
 
For any $\vec{p} \in \Xizero$, we define the `$\vec{p}$ obstruction map'
\begin{align*}
    \Obs_\vec{p}: \HH^{2\oplus 0}(\cA_0) & \to \HH^{3\oplus \vec{p}}(\cA_0)\\
    \Obs_\vec{p}(\alpha) &:= [a_\vec{p},\alpha],
\end{align*}
where $[-,-]$ denotes the Gerstenhaber bracket.

\begin{prop}\label{prop:totalobstruction}
Consider the `total obstruction map'
\begin{align*}
    \Obs: \HH^{2\oplus 0 } (\cA_0) &\to \bigoplus_{\vec{p} \in \Xizero} \HH^{3\oplus \vec{p}}(\cA_0),\\
    \Obs(\alpha) &:= \sum_{\vec{p} \in \Xizero}\Obs_\vec{p}(\alpha).
\end{align*}
We have that $ker(\Obs)=0$. 
\end{prop}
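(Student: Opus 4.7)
The plan is to translate the obstruction map entirely to symplectic cohomology, where it becomes the combinatorial contraction map of Proposition \ref{prop:combinatorialSH}, and then deduce injectivity from the topology of the torus. By Corollary \ref{cor:SH_HH}, $\cC\cO$ is a $\Z\oplus M^*$-graded Lie algebra isomorphism, and by Proposition \ref{prop: apCOPSS}, $a_\vec{p}=\cC\cO(PSS^{log}(\gamma_\vec{p}))$. Hence for any $y=\cC\cO(x)\in\HH^{2\oplus 0}(\cA_0)$, we have $\Obs_\vec{p}(y)=\cC\cO(L_\vec{p}(x))$ with $L_\vec{p}=[PSS^{log}(\gamma_\vec{p}),-]$. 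It therefore suffices to show that $x\in SH^{2\oplus 0}(\mathcal{H}_{t,0};\Bbbk)$ with $L_\vec{p}(x)=0$ for every $\vec{p}\in\Xizero$ must vanish.

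By Proposition \ref{prop:shgrad}(\ref{it:sh20}), the classical $PSS$ map is surjective onto $SH^{2\oplus 0}(\mathcal{H}_{t,0})$; by Lemma \ref{lem:affine_lef}, the map $\Lambda^2 M_\Bbbk\to H^{2\oplus 0}_{log}(X_{t,\kaehl},D)=H^2(\mathcal{H}_{t,0};\Bbbk)$ is surjective. So write $x=PSS(\bar\alpha)$ where $\bar\alpha$ is the image of some $\alpha\in\Lambda^2 M_\Bbbk$. Then Proposition \ref{prop:combinatorialSH} gives
\[L_\vec{p}(x)=PSS^{log}(\iota_\vec{p}(\alpha))\in SH^{3\oplus \vec{p}}(\mathcal{H}_{t,0}).\]
So $L_\vec{p}(x)=0$ translates, assuming $PSS^{log}$ is injective in $M^*$-grading $\vec{p}$, into $\iota_\vec{p}(\alpha)=0\in M_\Bbbk$ for every $\vec{p}\in\Xizero$.

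For this injectivity, I would extract from the proof of Proposition \ref{prop:shgrad} the fact that the spectral sequence of \cite[Theorem 1.1]{ganatra2020symplectic} degenerates at $E_1$ (by Lemma \ref{lem:Mdegunique}). Restricted to $M^*$-grading $\vec{p}\in\Xizero$, Lemma \ref{lem:Mdegunique} also asserts that only $\vec{v}=e_\vec{p}$ contributes, so $H^{*\oplus \vec{p}}_{log}(X_{t,\kaehl},D)$ is concentrated in a single column of the spectral sequence. The $E_1$-degeneration then forces $PSS^{log}$ to be an isomorphism in $M^*$-grading $\vec{p}$, hence injective. This is the only new Floer-theoretic input needed; the rest of the argument is combinatorial.

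To conclude, I need the elements of $\Xizero$ to span $M^*_\Bbbk$. The MPCP condition (implied by MPCS) asserts that the rays of the simplicial fan $\Sigma_\kaehl$ are generated by $\Xizero$, so the elements of $\Xizero$ span $M^*_\R$; combined with the condition on characteristic, which requires $\charac(\Bbbk)\nmid |M^*/L|$ with $L=\langle \Xizero\rangle$, the surjection $L\otimes \Bbbk\twoheadrightarrow M^*_\Bbbk$ exhibits $\Xizero$ as a spanning set for $M^*_\Bbbk$. Since the pairing $M^*\otimes \Lambda^2 M\to M$ given by contraction is nondegenerate (e.g.\ by putting $\alpha$ in standard symplectic form $\sum e_i\wedge f_i$), the vanishing of $\iota_\vec{p}(\alpha)$ on a spanning set forces $\alpha=0\in\Lambda^2 M_\Bbbk$. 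Therefore $\bar\alpha=0$, $x=PSS(\bar\alpha)=0$, and $y=0$, as required. The main potential obstacle is pinning down injectivity of $PSS^{log}$ in $M^*$-grading $\vec{p}$ with $\Z$-degree $3$ from the spectral-sequence argument, but this is essentially the same degeneration analysis already carried out in Proposition \ref{prop:shgrad}.
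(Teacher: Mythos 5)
Your proof is correct and follows the same route as the paper's (which is terser, saying the result follows ``immediately'' from \cref{prop:combinatorialSH} once everything is translated to $SH^*$). You have correctly identified the one step that the paper's one-line conclusion suppresses, namely the injectivity of $PSS^{log}$ on $H^{3\oplus\vec{p}}_{log}(X_{t,\kaehl},D)\cong M_\Bbbk$ for $\vec{p}\in \Xizero$, which is needed to pass from $PSS^{log}(\iota_\vec{p}(\alpha))=0$ to $\iota_\vec{p}(\alpha)=0$; and your justification via the $E_1$-degeneration established inside the proof of \cref{prop:shgrad}, combined with \cref{lem:Mdegunique} pinning the $M^*$-grading-$\vec{p}$ part of $H^*_{log}$ to a single column, is exactly the right argument and mirrors the one used there for degree~$2$. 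The remaining ingredients --- surjectivity of $PSS$ onto $SH^{2\oplus 0}$ via \cref{prop:shgrad}~\eqref{it:sh20} and \cref{lem:affine_lef}, the commutative square of \cref{prop:combinatorialSH}, spanning of $\Xizero$ in $M^*_\Bbbk$ from the MPCP/characteristic hypotheses, and nondegeneracy of the contraction $M^*\otimes\Lambda^2 M\to M$ --- are all used as in the paper. In short, this is a faithful and somewhat more explicit rendition of the intended argument rather than a genuinely different route.
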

\begin{proof} 
    By \cref{cor:SH_HH} and \cref{prop: apCOPSS}, we translate $\Obs_\vec{p}$ into  $[PSS^{log}(\gamma_\vec{p}), -]$ (and hence $\Obs$ into $\sum_{\vec{p} \in \Xizero} [PSS^{log}(\gamma_\vec{p}), -]$).  Now the result follows immediately from \cref{prop:combinatorialSH} together with the fact that, by our assumption on the characteristic of $\Bbbk$, the vectors $\vec{p}$ span $M^*_{\Bbbk}$. 
\end{proof}

\begin{lem}\label{lem:Avanishing} For any $\vec{p} \notin \Xizero \cup \lbrace 0 \rbrace$, $\HH^{2 \oplus \vec{p}}(\cA_0)=0$.  \end{lem}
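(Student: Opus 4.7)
The plan is to transport the vanishing from symplectic cohomology to Hochschild cohomology via the closed--open map. First, I would invoke \cref{cor:SH_HH}, which provides a $\Z\oplus M^*$-graded isomorphism
\[
\cC\cO: SH^*(\eff;\Bbbk) \xrightarrow{\sim} \HH^*(\cA_0).
\]
In particular, in bidegree $2\oplus \vec{p}$ this restricts to an isomorphism $SH^{2\oplus \vec{p}}(\eff;\Bbbk) \cong \HH^{2\oplus \vec{p}}(\cA_0)$, so it suffices to prove that $SH^{2\oplus \vec{p}}(\eff;\Bbbk) = 0$ for $\vec{p} \notin \Xizero \cup \{0\}$.

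Next, I would reduce the computation of $SH^*(\eff;\Bbbk)$ to that of $SH^*(\cH_{t,0};\Bbbk)$. By \cref{lem:tailor_equivalence}, there is a Liouville isomorphism $\widehat{\cH}_{t,0} \cong \widehat{\cH}_{t,1} = \eff$, and the compatibility of grading/orientation data verified in the proof of \cref{lem:compare_vol_form} ensures that this Liouville isomorphism induces a $\Z\oplus M^*$-graded isomorphism on symplectic cohomology. Thus we have a $\Z\oplus M^*$-graded identification
\[
SH^{2\oplus \vec{p}}(\eff;\Bbbk) \cong SH^{2\oplus \vec{p}}(\cH_{t,0};\Bbbk).
\]

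Finally, I would apply \cref{prop:shgrad}\ref{it:sh2other}, which states precisely that $SH^{2\oplus \vec{p}}(\cH_{t,0};\Bbbk) = 0$ for all $\vec{p} \notin \Xizero \cup \{0\}$. Stringing these three identifications together yields $\HH^{2\oplus \vec{p}}(\cA_0) = 0$ as desired. There is no genuine obstacle here: all the hard work has already been done in establishing the log-PSS computation of $SH^*$, the collapse of the spectral sequence via \cref{lem:Mdegunique}, and the closed--open isomorphism of \cref{cor:SH_HH}.
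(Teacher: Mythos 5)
Your proof is correct and follows exactly the paper's argument: \cref{cor:SH_HH} to pass from Hochschild to symplectic cohomology, then \cref{prop:shgrad}\ref{it:sh2other} to get the vanishing. You are simply more explicit than the paper in flagging the Liouville identification $\eff \cong \widehat{\cH}_{t,0}$ (via \cref{lem:tailor_equivalence} and \cref{lem:compare_vol_form}), which the paper leaves implicit when it writes $SH^{2\oplus\vec{p}}(\cH_{t,0})$ immediately after invoking a statement about $SH^*(\eff)$.
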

\begin{proof} By Corollary \ref{cor:SH_HH}, $\HH^{2 \oplus \vec{p}}(\cA_0) \cong SH^{2 \oplus \vec{p}}(\mathcal{H}_{t,0}).$ By Proposition \ref{prop:shgrad}, $SH^{2 \oplus \vec{p}}(\mathcal{H}_{t,0})=0$ for any $\vec{p} \notin \Xizero \cup \lbrace 0 \rbrace.$ \end{proof}

 \begin{rmk} \label{rmk:quadraticham2} As noted in Remark \ref{rmk:quadraticham1}, the arguments of \cite{Ganatrathesis} which enter into the proof of Corollary \ref{cor:SH_HH} (by way of Proposition \ref{prop:COsurj}) use quadratic Hamiltonians for the construction of symplectic cohomology. On the other hand, the constructions of \cite{ganatra2020symplectic, ganatra2021log} use (directed systems of) Hamiltonians of linear growth as the model for symplectic cohomology. These two models for symplectic cohomology are compared in \cite[Lemma 18.1]{rittertop}. \end{rmk}

\section{Deforming the B-side}\label{sec:Bdef}

In \cref{sec:Bsidedgcategory}, we give an explicit chain-level model of a natural $\Z\oplus M^*$-graded lift of the dg category $\cB_R$ deforming $\cB_0$ defined in the introduction, and in \cref{subsec:vers}, we analyze the associated first-order deformation classes of $\cB_0$. The key outputs of this construction and analysis are \cref{lem:Btopologicallfree} and \cref{lem:Bdefgen}, which collectively show that $\cB_R$ and its first-order deformation classes satisfy conditions needed to subsequently appeal to the versality criterion in \cite[Proposition A.6]{Ganatra2023integrality}.

\subsection{The dg category \texorpdfstring{$\cB^{dg}_R$}{BdgR}}
\label{sec:Bsidedgcategory}
We set $R_B=\Bbbk[[\dm_{\vec{p}}]]_{\vec{p} \in P}$ and regard this as an $M^*$-graded ring by letting $\dm_{\vec{p}}$ have degree $-\vec{p} \in M^*$. We let $\fm_B$ denote the unique maximal ideal of $R_B$. To simplify notation, throughout this section, we will drop the subscript and denote this ideal simply by $\fm$. 

Next, let $Y^*_R := Y^* \times_{Spec(\Bbbk)} Spec(R_B)$, and consider the subscheme
$$X^*_R := \{W_\dm = 0\} \subset Y^*_R,$$
where $W_\dm$ is as in \eqref{eqn:Wb} (with $r_{\vec{p}}$ interpreted as above). 

\begin{lem} \label{lem:XRflat} The variety $X_R^*$ is flat over $R_B$.  \end{lem}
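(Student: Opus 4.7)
The plan is to reduce flatness of $X_R^*$ over $R_B$ to a local question on affine charts of $Y^*$ and then to show that $W_\dm$ is a non-zero-divisor on $\mathcal{O}_{Y^*_R}$. I would cover $Y^*$ by the standard affine charts $U_\sigma = \mathsf{Spec}(A_\sigma)$ with $A_\sigma = \Bbbk[\sigma^\vee \cap M^*]$, one for each maximal cone $\sigma$ of $\Sigma^*$. Each such $\sigma$ corresponds to a vertex $\vec{p}_\sigma$ of $\Delta^*$, and on $U_\sigma$ the section $z^{\vec{p}_\sigma}$ is a nowhere-vanishing trivialization of $\cL_{\Delta^*}$. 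Under this trivialization, $W_\dm|_{U_\sigma}$ becomes the element
$$f_\sigma := -z^{-\vec{p}_\sigma} + \sum_{\vec{p} \in \Xizero} \dm_{\vec{p}}\, z^{\vec{p} - \vec{p}_\sigma} \in A_\sigma \otimes_\Bbbk R_B = A_\sigma[[\dm_{\vec{p}}]];$$
every exponent $\vec{p} - \vec{p}_\sigma$ lies in $\sigma^\vee$ because $\vec{p}_\sigma$ is a vertex of $\Delta^*$, so each $z^{\vec{p} - \vec{p}_\sigma}$ is a genuine regular monomial on $U_\sigma$.

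Next, I would verify that $f_\sigma$ is a non-zero-divisor in $A_\sigma[[\dm_{\vec{p}}]]$ by a leading-term argument in the $\fm_B$-adic filtration. The term of $f_\sigma$ of $\dm$-degree $0$ is the monomial $-z^{-\vec{p}_\sigma} \in A_\sigma$, which is nonzero and hence a non-zero-divisor because $A_\sigma$ is a domain. If $g = \sum_i g_i \in A_\sigma[[\dm_{\vec{p}}]]$ (with $g_i$ of $\dm$-degree exactly $i$) satisfies $f_\sigma g = 0$ and $g \neq 0$, then taking the smallest $i_0$ with $g_{i_0} \neq 0$, the $\dm$-degree-$i_0$ component of $f_\sigma g$ equals $(-z^{-\vec{p}_\sigma}) \cdot g_{i_0} \neq 0$, a contradiction.

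Finally, I would deduce flatness from the resulting short exact sequence (the Koszul resolution of $\cO_{X_R^*}$)
$$0 \longrightarrow \cL_{\Delta^*}^{-1}|_{Y^*_R} \xrightarrow{\ W_\dm\ } \cO_{Y^*_R} \longrightarrow \cO_{X_R^*} \longrightarrow 0,$$
whose exactness is precisely the local non-zero-divisor property just established. Since $\cL_{\Delta^*}^{-1}|_{Y^*_R}$ and $\cO_{Y^*_R}$ are pulled back from $Y^*$ along the flat base change $\mathsf{Spec}(R_B) \to \mathsf{Spec}(\Bbbk)$, they are flat over $R_B$; the long exact sequence of $\mathrm{Tor}^{R_B}_*(-, R_B/\fm_B)$ then forces $\mathrm{Tor}_1^{R_B}(\cO_{X_R^*}, R_B/\fm_B) = 0$, and hence $\cO_{X_R^*}$ is flat over $R_B$.

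There is no serious obstacle here: the only part requiring care is ensuring the constant-in-$\dm$ term of the local expression for $W_\dm$ is a non-zero-divisor, which follows from integrality of the affine toric chart and the fact that $-\vec{p}_\sigma$ lies in $\sigma^\vee$ (using reflexivity of $\Delta^*$, so that $\vec{0}$ lies in its interior).
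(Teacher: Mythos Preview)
Your argument is correct but takes a genuinely different route from the paper. The paper observes that $X_R^*$ is a hypersurface in the smooth variety $Y_R^*$, hence Cohen--Macaulay of dimension $\dim(R_B) + n$, and then applies the miracle flatness theorem over the regular base $R_B$, using only that the closed fibre $\partial Y^*$ has dimension $n$. You instead exhibit the two-term flat resolution of $\cO_{X_R^*}$ coming from the defining section and check regularity of $W_\dm$ chart-by-chart. Your approach is more elementary in that it only uses integrality of the affine toric charts rather than smoothness of $Y^*$; the paper's is a one-line appeal to commutative algebra once Cohen--Macaulayness is in hand.

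Two small points on the write-up. First, $A_\sigma \otimes_\Bbbk R_B$ is not literally $A_\sigma[[\dm_{\vec{p}}]]$ (the tensor product does not contain arbitrary power series with coefficients in $A_\sigma$), though this is harmless: the ring embeds in $R_B[M^*]$, which is a domain, so $f_\sigma$ is a non-zero-divisor for free. Second, the implication ``$\mathrm{Tor}_1^{R_B}(\cO_{X_R^*}, R_B/\fm_B) = 0$, hence flat'' is not automatic for arbitrary $R_B$-modules; you are tacitly invoking the local criterion for flatness at points over the closed point, and then need openness of the flat locus together with properness of $X_R^* \to \mathsf{Spec}(R_B)$ to propagate. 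Alternatively, the same leading-term computation shows $W_\dm \bmod \fp$ is a non-zero-divisor on $A_\sigma \otimes_\Bbbk (R_B/\fp)$ for every prime $\fp \subset R_B$, so $\mathrm{Tor}_1^{R_B}(\cO_{X_R^*}, R_B/\fp) = 0$ for all $\fp$ and flatness follows directly.
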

\begin{proof} $X_R^*$ is a hypersurface in a smooth variety and hence Cohen-Macaulay of Krull dimension $\operatorname{dim}(R_B)+n$. The result now follow follows from the ``miracle flatness theorem'' (\cite[\href{https://stacks.math.columbia.edu/tag/00R4}{Tag 00R4}]{stacks-project}
    ) if we note that $\operatorname{dim}(\partial Y^*)=n$.  \end{proof}

Let $\mathbb{G}_m := \mathbb{G}_m(\Bbbk)$. 
Note that we have the toric $M_{\mathbb{G}_m}$-action on $Y^*$, and also an $M_{\mathbb{G}_m}$-action on $Spec(R_B)$, via its $M^*$-grading; these combine to give an $M_{\mathbb{G}_m}$-action on $Y^*_R$. 
We observe that this action preserves $X^*_R$, because $W_\dm$ has degree $0 \in M^*$ (using the fact that $z^{\vec{p}}$ has degree $\vec{p}$, and $r_{\vec{p}}$ has degree $-\vec{p}$).
\begin{itemize} \item Let $Y^*=\cup_{\sigma}\mathcal{V}_{\sigma}$ be the standard toric covering of $Y^*$ by affine spaces $\mathcal{V}_{\sigma} \cong \mathbb{A}^{n+1}_{\Bbbk}$ indexed by the maximal cones of the fan and let $\partial Y^*=\cup_{\sigma}\mathcal{W}_{\sigma}$ be the covering of $\partial Y^*$ given by intersecting $\mathcal{V}_{\sigma}$ with $\partial Y^*$.  \item Let $Y_R^*=\cup_{\sigma}\mathcal{V}_{\sigma,R}$ denote the induced cover of $Y_R^*$ and $X_R^*=\cup_{\sigma}\mathcal{W}_{\sigma,R}$ the induced affine cover of $X_R^*$. \end{itemize}

We next define an $R_B$-linear dg category $\cB^{dg}_R$ whose objects are  $\cO(i)$. The morphism spaces in this category are given by \v{C}ech complexes of internal $\underline{Hom}$ sheaves: 
\begin{align} 
\hom_{\cB^{dg}_{R}}(\cO(i),\cO(j)):= \check{C}^*(\lbrace \mathcal{W}_{\sigma,R} \rbrace,\underline{Hom}(\cO(i),\cO(j))). 
\end{align}
Composition in the category is induced by the shuffle product together with the local tensoring operation on internal $\underline{Hom}$ sheaves (cf. \cite[\S 5a]{Seidel2003}). We choose the equivariant lift of $\cO(i)$ corresponding to the support functions $i \sF_\Delta$ (as in \S \ref{sec:A0_B0}), so that this dg category becomes $\mathbb{Z}\oplus M^*$-graded. We note that the $\Bbbk$-linear dg category $\cB^{dg}_R \otimes_{R_{B}} R_B/\fm$ is a model for $\cB_0.$

\begin{lem} \label{lem:Cechflat} The $R_B$-modules $\hom_{\cB^{dg}_{R}}(\cO(i),\cO(j))$ are flat. \end{lem}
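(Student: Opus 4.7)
My plan is to show flatness termwise in the \v{C}ech complex. In cohomological degree $k$, the morphism space is the finite product
\[
\prod_{\sigma_0,\ldots,\sigma_k} \Gamma\!\left(\mathcal{W}_{\sigma_0,R}\cap\cdots\cap\mathcal{W}_{\sigma_k,R},\ \underline{Hom}(\cO(i),\cO(j))\right),
\]
where the cover is indexed by the (finite) set of maximal cones of $\Sigma^*$. Since a finite product of flat $R_B$-modules is flat, it suffices to show that each individual factor is flat over $R_B$.

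To handle a single factor, I would use three short observations in sequence. First, each intersection $\mathcal{W}_{\sigma_0,R}\cap\cdots\cap\mathcal{W}_{\sigma_k,R}$ is affine: every $\mathcal{W}_{\sigma,R} = \mathcal{V}_{\sigma,R}\cap X^*_R$ is a closed subscheme of the affine chart $\mathcal{V}_{\sigma,R}$ and hence affine, and a finite intersection of affine opens in the separated scheme $Y^*_R$ remains affine. Second, the coordinate ring of such an intersection is flat over $R_B$: by \cref{lem:XRflat}, $\cO_{X^*_R}$ is $R_B$-flat, and flatness is inherited by open subschemes since localization preserves flatness. Third, the sheaf $\underline{Hom}(\cO(i),\cO(j))\cong \cO(j-i)$ is the restriction to $X^*_R$ of a toric line bundle on $Y^*$, and toric line bundles trivialize on each maximal affine chart $\mathcal{V}_\sigma$ (which has trivial Picard group); this triviality descends to $\mathcal{W}_{\sigma,R}$ and to any further intersection. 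Combining these, the sections over an intersection form a rank-one free module over its $R_B$-flat coordinate ring, hence are $R_B$-flat.

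No step is a genuine obstacle: everything follows from \cref{lem:XRflat}, separatedness of toric varieties, and the triviality of toric line bundles on affine toric opens. The only subtlety worth flagging is the switch from products to direct sums that would be needed if one wanted finer multigraded flatness statements — but since the cover and each \v{C}ech term involve only finitely many indices, ordinary flatness of each graded piece is automatic from the componentwise argument above.
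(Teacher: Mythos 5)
Your proof is correct and follows essentially the same route as the paper's: reduce to showing that the coordinate rings of the finite overlaps $\cap_{\sigma_i}\mathcal{W}_{\sigma_i,R}$ are $R_B$-flat, which follows from \cref{lem:XRflat}, after observing that the line bundle $\underline{Hom}(\cO(i),\cO(j))\cong\cO(j-i)$ trivializes on each overlap so that the \v{C}ech terms are just finite sums of these coordinate rings. The paper's proof is terser and leaves the affineness of the overlaps, the preservation of flatness under localization, and the trivialization of the line bundle implicit; you spell these out, which is fine.
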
 
\begin{proof} 
The \v{C}ech complexes $\check{C}^*(\lbrace \mathcal{W}_{\sigma,R} \rbrace,\underline{Hom}(\mathcal{O}(i),\mathcal{O}(j)))$ are finite direct sums of global functions $\Gamma(\cap_{\sigma_i}\mathcal{W}_{\sigma_i,R}, \mathcal{O}_{\cap_{\sigma_i}\mathcal{W}_{\sigma_i,R}})$ on  finite overlaps $\cap_{\sigma_i}\mathcal{W}_{\sigma_i,R}$ of the open cover. By Lemma \ref{lem:XRflat}, the coordinate rings $\Gamma(\cap_{\sigma_i}\mathcal{W}_{\sigma_i,R},\mathcal{O}_{\cap_{\sigma_i}\mathcal{W}_{\sigma_i,R}})$ are flat over $R_B$ and thus $\check{C}^*(\lbrace \mathcal{W}_{\sigma,R} \rbrace,\underline{Hom}(\mathcal{O}(i),\mathcal{O}(j)))$ is flat over $R_B$.  
\end{proof}

\begin{lem}\label{lem:coh_proj}
The cohomology groups $H^*(\hom_{\cB^{dg}_R}(\mathcal{O}(i),\mathcal{O}(j)))$ are free $R_B$-modules of finite rank, for any $i,j \in \mathbb{Z}$. 
\end{lem}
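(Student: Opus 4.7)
The complex $\hom^{\bullet}_{\cB^{dg}_R}(\mathcal{O}(i),\mathcal{O}(j))$ is the standard \v{C}ech model computing the sheaf cohomology $R\Gamma(X^*_R,\mathcal{O}(j-i))$. Since $Y^*$ is a complete toric variety, $Y^*_R \to \mathrm{Spec}(R_B)$ is proper, and hence the closed subscheme $X^*_R \subset Y^*_R$ is also proper over $R_B$. Combined with \cref{lem:XRflat}, the structure morphism $\pi: X^*_R \to \mathrm{Spec}(R_B)$ is proper and flat, and the coherent sheaf $\mathcal{O}(j-i)$ is flat over the base. It follows that $R\pi_*\mathcal{O}(j-i)$ is a perfect complex of $R_B$-modules; in particular, each $H^k(X^*_R,\mathcal{O}(j-i))$ is finitely generated over $R_B$.

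To upgrade finite generation to freeness, the plan is to apply Grauert's cohomology and base change theorem: $R^k\pi_*\mathcal{O}(j-i)$ is locally free of finite rank iff the fiber dimension function $t \mapsto \dim_{\kappa(t)} H^k(X^*_t,\mathcal{O}(j-i))$ is locally constant on $\mathrm{Spec}(R_B)$. Since $R_B$ is local Noetherian and in particular $\mathrm{Spec}(R_B)$ is connected, this reduces to verifying that the fiber dimension is constant along the family.

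To verify constancy, I would exploit the defining short exact sequence of sheaves on $Y^*_R$,
\[ 0 \to \mathcal{O}_{Y^*_R}(j-i-1) \xrightarrow{\cdot W_\dm} \mathcal{O}_{Y^*_R}(j-i) \to \mathcal{O}_{X^*_R}(j-i) \to 0, \]
and the resulting long exact cohomology sequence. By flat base change from $\Bbbk$ to $R_B$, we have $H^k(Y^*_R, \mathcal{O}(\ell)) \cong H^k(Y^*,\mathcal{O}(\ell)) \otimes_\Bbbk R_B$, which is free of finite rank and of constant fiber dimension on $\mathrm{Spec}(R_B)$. Moreover, since $\Sigma^*$ is smooth and $Y^*$ is Fano, the Bott--Danilov--Demazure vanishing theorems for line bundles on smooth Fano toric varieties imply that each $H^*(Y^*,\mathcal{O}(\ell))$ is concentrated in a single cohomological degree, which simplifies the long exact sequence and yields short exact sequences of the form
\[ 0 \to \operatorname{coker}(W_\dm^{k-1}) \to H^k(X^*_R, \mathcal{O}(j-i)) \to \ker(W_\dm^{k+1}) \to 0, \]
where $W^k_\dm$ denotes the $R_B$-linear map induced on $H^k(Y^*_R,-)$.

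The principal obstacle is then to show that the kernel and cokernel of each $W^k_\dm$ (between free $R_B$-modules of finite rank) are themselves free of constant fiber rank. Lower semi-continuity of rank of an $R_B$-linear map between free modules, upper semi-continuity of fiber cohomology dimensions, and the constancy of Euler characteristics along the flat family collectively pin down these ranks. At the closed point $\dm = 0$, $W_\dm$ specializes to $-z^{\vec 0}$, the canonical section of $\mathcal{L}_{\Delta^*}$ cutting out $\partial Y^*$; using the explicit Cox-ring description of the cohomology of line bundles on $Y^*$, the rank of the map at the closed point can be computed explicitly, after which the above constraints force the ranks at every other point of $\mathrm{Spec}(R_B)$ to agree, yielding freeness. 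Finite rank follows from the finite rank of the ambient free $R_B$-modules arising from $H^*(Y^*,\mathcal{O}(\ell))$.
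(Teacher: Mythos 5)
Your approach is genuinely different from the paper's, and more elaborate. The paper applies the Hartshorne cohomology-and-base-change theorem (III.12.11) directly to $\mathcal{O}(j-i)$ on the flat family $X^*_R \to \mathrm{Spec}(R_B)$: it takes as input only the vanishing $H^q(\partial Y^*, \mathcal{O}(j-i))=0$ for $q\notin\{0,n\}$ on the special fiber, then item (a) propagates this vanishing to the whole family, and item (b) (applied twice, using the vanishing of the degree above $0$ and below $n$) delivers freeness of $H^0$ and $H^n$ with no further geometric input. You instead route through the defining short exact sequence on $Y^*_R$, Bott--Danilov--Demazure vanishing for the ambient toric line bundles, flat base change from $\Bbbk$, and Grauert. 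Your route essentially re-derives the special-fiber vanishing that the paper uses as a black box; it is more self-contained but also longer.

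There is a soft spot in your final paragraph. You write that ``lower semi-continuity of rank\dots, upper semi-continuity of fiber cohomology dimensions, and the constancy of Euler characteristics\dots collectively pin down these ranks.'' As stated this does not close the argument: upper semi-continuity of $h^k$ together with constancy of $\chi$ does \emph{not} force constancy of the individual $h^k$ — for instance, with cohomology concentrated in degrees $0$ and $n$ with $n$ odd, $h^0$ and $h^n$ could jump simultaneously while preserving $\chi$. What actually forces constancy is the rank statement alone, but it must be combined with the observation that the rank of $W^k_\dm$ at the closed point is already \emph{extremal}: on the relevant degree, $W^0_\dm(\fm)$ is multiplication by the nonzerodivisor $-z^{\vec 0}$ on $H^0(Y^*,\mathcal{O}(j-i-1))$, hence injective of maximal rank, and dually $W^n_\dm(\fm)$ is surjective. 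Lower semi-continuity of matrix rank then says the rank can only increase away from $\fm$; since it is already maximal there and $\mathrm{Spec}(R_B)$ is local, the rank is constant on the whole base, and $\ker$ and $\operatorname{coker}$ of $W^k_\dm$ are free. You gesture at ``computing the rank explicitly at the closed point'' but never state the extremality that makes the lower semi-continuity conclusive; this needs to be said. There is also a minor index slip: the short exact sequence should read $0 \to \operatorname{coker}(W^k_\dm) \to H^k(X^*_R,\mathcal{O}(j-i)) \to \ker(W^{k+1}_\dm) \to 0$ (that is, $\operatorname{coker}(W^k_\dm)$, not $\operatorname{coker}(W^{k-1}_\dm)$).
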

\begin{proof}
It suffices to prove that the groups $H^q(X^*_R,\mathcal{O}(j-i))$ are all free over $R_B.$ This is a straightforward application of the ``cohomology and base change'' theorem \cite[Theorem 3.12.11]{Hartshorne}; note that theorem applies because $X_R^*$ is flat over $R_B$ by Lemma \ref{lem:XRflat}.  Namely, note that at the fiber over $\fm \in Spec(R_B)$, $H^q(\partial Y^*, \mathcal{O}(j-i))=0$ if $q \notin \lbrace 0,n \rbrace.$ Item (a) of \cite[Theorem 3.12.11]{Hartshorne} then implies that 
\begin{align} \label{eq:vanishingHomology} 
H^q(X^*_R,\mathcal{O}(j-i))=0 \text{ if } q \notin \lbrace 0,n \rbrace. 
\end{align}

For $q=0$, note that by item (b) of \cite[Theorem 3.12.11]{Hartshorne}, the vanishing of $H^1(X^*_R, \mathcal{O}(j-i))$ from \eqref{eq:vanishingHomology} implies that the natural map:
\begin{align} 
\phi^0: H^0(X^*_R, \mathcal{O}(j-i))\otimes_R R/\mathfrak{m} \to H^0(\partial Y^*, \mathcal{O}(j-i)) 
\end{align} 
is surjective. Applying item (b) of \cite[Theorem 3.12.11]{Hartshorne} again, we see that 
$H^0(X^*_R, \mathcal{O}(j-i))$ is free. (Note that condition (i) of part (b) of \cite[Theorem 3.12.11]{Hartshorne} is vacuous in this case.) The same argument applies for $q=n$ if we note that $H^{n+1}(X^*_R, \mathcal{O}(j-i))=0$ (automatically for dimension reasons) and $H^{n-1}(X^*_R, \mathcal{O}(j-i))=0$ by \eqref{eq:vanishingHomology}.  
\end{proof}

To put ourselves in a setting appropriate for formal deformation theory arguments, it will be important to arrange that the morphism spaces in our dg category are complete with respect to the $\fm$-adic filtration. For the remainder of this section, given a graded $R_B$-module $N$, we will let $N^{\wedge}$ denote its graded completion with respect to the $\fm$-adic filtration. 

\begin{defn} We let $\cB_R$ denote the dg category whose objects are given by $\mathcal{O}(i)$ and whose morphism spaces are given by the $M^*$-graded $\fm$-adic completions of the corresponding morphism spaces in $\cB_R^{dg}$. In other words: 
\begin{align} 
    \hom_{\cB_R}(\mathcal{O}(i),\mathcal{O}(j)):= \hom_{\cB_R^{dg}}(\mathcal{O}(i),\mathcal{O}(j))^{\wedge}. 
\end{align} 
\end{defn} 

\begin{lem} \label{lem:formalgaga} 
The natural functor $i^{\wedge}: \cB^{dg}_R \to \cB_R$ is a quasi-isomorphism. 
\end{lem}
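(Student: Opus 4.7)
The aim is to show that for each pair $i, j$, the natural $\fm$-adic completion map
$$C := \hom_{\cB^{dg}_R}(\cO(i), \cO(j)) \to \hat{C} := \hom_{\cB_R}(\cO(i), \cO(j))$$
induces an isomorphism on cohomology; since $i^\wedge$ is the identity on objects and compatible with compositions, this will suffice.

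The key inputs are that $C$ is a bounded complex of flat $R_B$-modules (Lemma \ref{lem:Cechflat}) with free, finitely generated cohomology groups $H^n(C)$ (Lemma \ref{lem:coh_proj}). The first step is to show that for any $R_B$-module $N$, the natural map $H^n(C) \otimes_{R_B} N \to H^n(C \otimes_{R_B} N)$ is an isomorphism. Since $C$ is a bounded complex of flat modules, $C \otimes_{R_B} N$ computes the derived tensor product $C \otimes^L_{R_B} N$; and since each $H^q(C)$ is flat, the universal coefficient (hyper-Tor) spectral sequence for $C \otimes^L_{R_B} N$ degenerates, yielding the claim. Applied with $N = R_B/\fm^k$, this gives the key identification
$$H^n(C/\fm^k C) \cong H^n(C)/\fm^k H^n(C).$$

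Next, I would identify $\hat{C}$ with the termwise inverse limit $\lim_k C/\fm^k C$. Since the tower $\{C/\fm^k C\}_k$ has termwise surjective transition maps, the standard Milnor exact sequence yields, for each $n$,
$$0 \to \lim\nolimits^1_k H^{n-1}(C/\fm^k C) \to H^n(\hat{C}) \to \lim_k H^n(C/\fm^k C) \to 0.$$
The $\lim\nolimits^1$ term vanishes because the tower $\{H^{n-1}(C)/\fm^k H^{n-1}(C)\}_k$ has surjective transition maps and therefore satisfies Mittag--Leffler. Hence $H^n(\hat{C}) \cong \lim_k H^n(C)/\fm^k H^n(C) = H^n(C)^\wedge$. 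Finally, $H^n(C)$ is a finitely generated (indeed free) module over the complete Noetherian local ring $R_B$, hence already $\fm$-adically complete, so $H^n(C)^\wedge = H^n(C)$.

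The main subtlety lies in the first step: the identification of $H^n(C/\fm^k C)$ with $H^n(C)/\fm^k H^n(C)$ crucially uses both the flatness of $C$ and the flatness of $H^*(C)$; without the latter, nontrivial $\operatorname{Tor}^{R_B}_i(H^*(C), R_B/\fm^k)$ contributions would obstruct the identification. The remaining ingredients (the Milnor exact sequence for surjective towers of complexes, and the completeness of finitely generated modules over complete Noetherian local rings) are standard homological-algebraic facts.
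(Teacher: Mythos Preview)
Your proof is correct and rests on the same two inputs as the paper's (that $C$ is a bounded complex of flat $R_B$-modules by Lemma~\ref{lem:Cechflat}, and that $H^*(C)$ is free of finite rank by Lemma~\ref{lem:coh_proj}), but the argument is organized differently. The paper constructs an explicit chain map $f: H^*(C) \to C$ by sending free generators to cocycle representatives, checks that $f$ remains a quasi-isomorphism after tensoring with each $\fm^q$ (using flatness of $C$ and freeness of $H^*(C)$), and then compares the spectral sequences associated to the $\fm$-adic filtrations on the source and target of $i^\wedge \circ f$; since both filtrations are complete and exhaustive, the map is a quasi-isomorphism. Your route instead computes $H^n(C/\fm^k C)$ directly via the universal coefficient (hyper-Tor) spectral sequence and then passes to the inverse limit via the Milnor sequence. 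Both arguments are standard homological algebra; yours is perhaps slightly more streamlined, while the paper's has the mild advantage of exhibiting an explicit quasi-inverse on cohomology. One small remark: your appeal to Noetherianity in the last step is unnecessary, since a finitely generated \emph{free} module over any $\fm$-adically complete ring is automatically complete.
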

\begin{proof} Fix $i,j \in \mathbb{Z}.$ We wish to show that 
    \begin{align}\label{eq:iwedgeij} 
    i^{\wedge}: \hom_{\cB_R^{dg}}(\mathcal{O}(i),\mathcal{O}(j)) \to \hom_{\cB_R}(\mathcal{O}(i),\mathcal{O}(j)) 
\end{align} 
is a quasi-isomorphism.  
By Lemma \ref{lem:coh_proj}, the cohomology groups $H^*(\hom_{\cB^{dg}_R}(\mathcal{O}(i),\mathcal{O}(j)))$ are free $R$-modules of finite rank. Thus, we may define a quasi-isomorphism 
\begin{align} \label{eq:freespectralsequenceargument} 
f:H^*(\hom_{\cB^{dg}_R}(\mathcal{O}(i),\mathcal{O}(j))) \to \hom_{\cB_R^{dg}}(\mathcal{O}(i),\mathcal{O}(j))) 
\end{align} 
by sending each basis element to a chain-level representative (where the LHS is regarded as a chain complex with vanishing differential). Moreover, for any $q \geq 0$, the induced maps  
\begin{align*} 
  \fm^q\otimes_{R_{B}} H^*(\hom_{\cB^{dg}_R}(\mathcal{O}(i),\mathcal{O}(j))) \to \fm^q\otimes_{R_{B}} \hom_{\cB_R^{dg}}(\mathcal{O}(i),\mathcal{O}(j))
\end{align*} given by tensoring $f$ with $\fm^q$ are also quasi-isomorphisms. This follows because $\hom_{\cB_R^{dg}}(\mathcal{O}(i),\mathcal{O}(j))$ is a bounded complex of flat modules by Lemma \ref{lem:Cechflat} and $H^*(\hom_{\cB^{dg}_R}(\mathcal{O}(i),\mathcal{O}(j)))$ is free of finite rank. Using flatness again to identify $\fm^q\otimes_{R_{B}} \hom_{\cB_R^{dg}}(\mathcal{O}(i),\mathcal{O}(j)) \cong  \fm^q\hom_{\cB_R^{dg}}(\mathcal{O}(i),\mathcal{O}(j))$, it follows that $f$ is a filtered quasi-isomorphism, namely, for any $q \geq 0$, the maps: 
\begin{align} 
  f_q:  \fm^qH^*(\hom_{\cB^{dg}_R}(\mathcal{O}(i),\mathcal{O}(j))) \to \fm^q\hom_{\cB_R^{dg}}(\mathcal{O}(i),\mathcal{O}(j))
\end{align}
are quasi-isomorphisms.

Now to prove the lemma, it suffices to prove that the composition $i^\wedge \circ f$ is a quasi-isomorphism. The $E_1$-pages of the spectral sequences induced by the $\fm$-adic filtrations on these chain complexes are all given by 
\begin{align} 
\bigoplus_q E_1^{*-q,q} \cong H^*(\hom_{\cB_0}(\mathcal{O}(i),\mathcal{O}(j)))\otimes Gr^q_{\fm}(R_B), 
\end{align}
and the induced maps between them are isomorphisms because $f$ is a filtered quasi-isomorphism.  
As the $\fm$-adic filtrations on the source and target of $i^\wedge \circ f$ are complete and exhaustive (the former because it is free of finite rank, the latter by construction), the map is a quasi-isomorphism, as required. 
\end{proof}

\begin{rmk} [Cf. Section 5b in \cite{Seidel2003}] Lemma \ref{lem:formalgaga} can be viewed as a special case of the formal GAGA theorem \cite[Section 4]{EGA}.  \end{rmk}

\begin{defn}\label{def:top_free_mod}
    Let $N$ be a graded $R_B$-module. 
    We say that $N$ is \emph{topologically free} if there exists an isomorphism $N \simeq ((N/\fm N) \otimes R_B)^{\wedge}$ lifting the natural map $N \to N/\fm N$. 
\end{defn}

\begin{defn}[Cf. Remark 2.2 in \cite{Seidel2003}]\label{def:top_cat}
     A \emph{topological} $R_B$-linear dg category is a dg category whose hom-spaces are topologically free $R_B$-modules.
\end{defn}

\begin{lem} \label{lem:Btopologicallfree} 
    The category $\cB_R$ is a topological $R_B$-linear dg category. 
\end{lem}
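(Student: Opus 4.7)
The plan is to construct, for each pair $i, j$, an explicit $R_B$-linear isomorphism between $\hom_{\cB_R}(\cO(i),\cO(j))$ and the $\fm$-adic completion of $\hom_{\cB_0}(\cO(i),\cO(j)) \otimes_\Bbbk R_B$ lifting the mod-$\fm$ quotient. The main input will be the flatness of the uncompleted hom-complex over $R_B$ proved in Lemma \ref{lem:Cechflat}; the rest is a standard manipulation of flat complete modules.

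Concretely, I would write $N_0 := \hom_{\cB^{dg}_R}(\cO(i),\cO(j))$ so that $\hom_{\cB_R}(\cO(i),\cO(j)) = N_0^\wedge$ is $\fm$-adically complete by construction, and set $M := N_0/\fm N_0 = \hom_{\cB_0}(\cO(i),\cO(j))$, a $\Z \oplus M^*$-graded $\Bbbk$-vector space. Choosing a homogeneous $\Bbbk$-basis of $M$ and lifting each basis element to a homogeneous element of $N_0$ produces a graded $R_B$-linear map
\[
\phi_0 : M \otimes_\Bbbk R_B \longrightarrow N_0
\]
which reduces to the identity on $M$ modulo $\fm$.

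The next step is to verify that $\phi_0$ is an isomorphism modulo every power of $\fm$, for which I would argue on associated graded modules. Flatness of $N_0$ (Lemma \ref{lem:Cechflat}) applied to the short exact sequence $0 \to \fm^k \to R_B \to R_B/\fm^k \to 0$ identifies $\fm^k N_0$ with $\fm^k \otimes_{R_B} N_0$, whence $\mathrm{gr}^k_\fm N_0 \cong (\fm^k/\fm^{k+1}) \otimes_\Bbbk M$. The map $\phi_0$ is visibly the canonical identification on each associated graded piece, so by induction on $k$ and the five lemma it is an isomorphism modulo $\fm^k$ for every $k \ge 1$. Passing to $\fm$-adic completions yields the desired graded $R_B$-linear isomorphism
\[
\hat\phi_0 : (M \otimes_\Bbbk R_B)^\wedge \xrightarrow{\sim} N_0^\wedge = \hom_{\cB_R}(\cO(i),\cO(j))
\]
lifting the natural map to $M$, exhibiting the hom-space as topologically free. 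I anticipate no serious obstacle: the only nontrivial content is the flatness assertion of Lemma \ref{lem:Cechflat}, which has already been established.
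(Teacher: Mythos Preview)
Your proof is correct and takes a genuinely different route from the paper's. The paper argues one \v{C}ech summand at a time: using smoothness of $\Sigma^*$ to choose coordinates $\xi_1,\ldots,\xi_n$ on each affine chart, it writes the local defining equation as $-\xi_1\cdots\xi_j + \sum_\alpha b_\alpha \xi^\alpha = 0$ with $b_\alpha \in \fm$, exhibits an explicit monomial topological basis indexed by the central-fibre monomials, and then checks surjectivity via Nakayama for complete modules and injectivity by a direct factorization argument. Your approach is more abstract and more economical: you use only the flatness of the uncompleted \v{C}ech complex over $R_B$ (Lemma~\ref{lem:Cechflat}) to identify $\fm^k N_0/\fm^{k+1} N_0 \cong (\fm^k/\fm^{k+1}) \otimes_\Bbbk M$, then finish with the five lemma and an inverse limit. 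In particular, your argument never invokes the smoothness of $\Sigma^*$; the paper itself notes (in the introductory remark on where smoothness is used) that this hypothesis is inessential for Lemma~\ref{lem:Btopologicallfree}, and your proof confirms that cleanly. The paper's argument, by contrast, has the minor advantage of producing an explicit topological basis.
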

\begin{proof} 
As in Lemma \ref{lem:Cechflat}, the \v{C}ech complexes $\check{C}^*(\lbrace \mathcal{W}_{\sigma,R} \rbrace,\underline{Hom}(\mathcal{O}(i),\mathcal{O}(j)))$ are finite direct sums of global functions on finite overlaps of the open cover. Such overlaps are indexed by (not necessarily maximal) cones $\sigma$ of $\Sigma^*$; we denote the overlap indexed by $\sigma$ by $\mathcal{W}_{\sigma,R}$, sitting inside the corresponding open subset $\mathcal{V}_{\sigma,R} \subset Y^*_R$. 
It then suffices to prove that $(\Gamma(\mathcal{W}_{\sigma,R}, \mathcal{O}_{\mathcal{W}_{\sigma,R}}))^{\wedge}$ is topologically free. 

We may choose a basis of $M_\R$ such that the first $j$ basis vectors form a basis for $\sigma$ (because $\Sigma^*$ is smooth). 
This induces an isomorphism $\mathcal{V}_{\sigma,R} \cong (\mathbb{A}_\Bbbk)^j \times (\mathbb{G}_{m}(\Bbbk))^{n-j} \times Spec(R_B)$. We may choose coordinates on $\mathcal{V}_{\sigma,R}$ so that its ring of functions is given by $R_B[\xi_1,\cdots,\xi_j, \xi_{j+1}^{\pm},\cdots,\xi_n^{\pm}]$ and so that the defining function $W_b$ for $\mathcal{W}_{\sigma,R}$ is given by: $W_b =-\xi_1\cdots \xi_j + \sum_{\alpha} b_\alpha \xi^{\alpha}$ where $\alpha \in (\mathbb{Z}^{\geq 0})^j \times (\mathbb{Z})^{n-j}$ and  $b_\alpha \in \fm.$ 

Now take $\mathcal{W}_\sigma$ to be the hypersurface in $\mathcal{V}_\sigma=Spec(\Bbbk[\xi_1,\cdots,\xi_j, \xi_{j+1}^{\pm},\cdots,\xi_n^{\pm}])$ cut out by $-\xi_1\cdots \xi_j=0.$ Let $S \subset (\mathbb{Z}^{\geq 0})^j \times (\mathbb{Z})^{n-j}$ denote the set of vectors such that one of the first $j$ entries is zero. The set $\lbrace \xi^{\alpha} \rbrace_{\alpha \in S}$ defines a $\Bbbk$-basis for the ring of functions on $\mathcal{W}_\sigma$. For the remainder of the argument, we set $\mathcal{Q}=R_B[\xi_1,\cdots,\xi_j, \xi_{j+1}^{\pm},\cdots,\xi_n^{\pm}]$. Consider the natural map of $R_B$-modules: 
\begin{align*}
\bigoplus_{\alpha \in S} R_B \cdot \xi^{\alpha} \to \frac{\mathcal{Q}}{(W_b)} \cong \Gamma(\mathcal{W}_{\sigma,R}, \mathcal{O}_{\mathcal{W}_{\sigma,R}}) 
\end{align*} and let 
\begin{align}\label{eq:completedtopbasis} 
\left(\bigoplus_{\alpha \in S} R_B \cdot \xi^{\alpha}\right)^{\wedge} \to \left(\frac{\mathcal{Q}}{(W_b)}\right)^{\wedge} 
\end{align} 
denote its graded completion. 

 The map \eqref{eq:completedtopbasis} is surjective by Nakayama's lemma for complete modules \cite[\href{https://stacks.math.columbia.edu/tag/00M9}{Tag 00M9 (i)}]{stacks-project}.\footnote{The reference \cite[\href{https://stacks.math.columbia.edu/tag/00M9}{Tag 00M9 (i)}]{stacks-project} is stated for standard completions as opposed to graded completions. However, the proof carries over without change to graded completions.} To prove this map is injective, suppose to the contrary that $f_0$ is a non-zero $\fm$-adically convergent series which is in the kernel of \eqref{eq:completedtopbasis}. Note that the sequence 
 \begin{align} 
 0 \to \mathcal{Q}^{\wedge} \xrightarrow{W_b \cdot} \mathcal{Q}^{\wedge} \to \left(\frac{\mathcal{Q}}{(W_b)}\right)^{\wedge} \to 0 
 \end{align} 
 is exact by \cite[\href{https://stacks.math.columbia.edu/tag/00M9}{Tag 00M9 (iii)}]{stacks-project} and Lemma \ref{lem:XRflat}. This means that   
\begin{align} \label{eq:factoring} 
f_0=W_b f_1 \in \mathcal{Q}^{\wedge} 
\end{align}
for some $f_1 \in \mathcal{Q}^{\wedge}$. As $\mathcal{Q}^\wedge$ is complete and $f_1 \neq 0$, there exists $r \ge 0$ such that $f_1 \in \fm^r\mathcal{Q}^{\wedge} \setminus \fm^{r+1} \mathcal{Q}^{\wedge}.$ This means we can expand 
$$f_1=\sum_\alpha c_\alpha \xi^\alpha+  O(\fm^{r+1}) $$ 
with $c_\alpha \notin \fm^{r+1}$ for some $\alpha$.  Equation \eqref{eq:factoring} then implies that 
\begin{align} \label{eq:factoring2} 
f_0= \sum_\alpha -c_\alpha (\xi_1\cdots\xi_j\cdot \xi^\alpha) +  O(\fm^{r+1}).
\end{align}  
But this is impossible since the right-hand side of \eqref{eq:factoring2} involves monomials $\xi^\alpha$ with $\alpha \notin S.$ 
\end{proof}

\subsection{B-side versality}\label{subsec:vers}

 Throughout this section, we fix $\vec{p} \in P$. 
 Set  $\Bbbk_\eps := \Bbbk[\eps]/\eps^2$ and consider the homomorphism $j_{\vec{p}}:R_B \to\Bbbk_\eps$ defined by letting $j_{\vec{p}}(r_\vec{p}) = \epsilon$,  $j_{\vec{p}}(r_\vec{q})=0$ for $\vec{q} \neq \vec{p}.$ 
The homomorphism $j_{\vec{p}}$ is  $\mathbb{Z}\oplus M^*$-graded if $\eps$ is given degree $(0,-\vec{p}) \in \Z \oplus M^*$. Consider the base change of $\cB_R$ along $j_{\vec{p}}$, 
\begin{align} \label{eq:cBepsilon} 
   \cB_\eps:=\cB_R\otimes_{R_{B}}\Bbbk_\eps. 
\end{align} 
The category $\cB_\eps$ is a $\Z \oplus M^*$-graded infinitesimal deformation of $\cB_0$ and hence gives rise to a first-order deformation class $b_{\vec{p}} \in \HH^{2\oplus \vec{p}}(\cB_0)$. The main result of this section is the following:

\begin{lem}\label{lem:Bvers}
The class $b_{\vec{p}}$ is non-zero, for each $\vec{p} \in \Xizero$.
\end{lem}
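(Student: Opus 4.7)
The plan is to identify $b_{\vec{p}}$ with an explicit Kodaira--Spencer class of the hypersurface family $X^*_R \to \Spec R_B$ and show its non-triviality by a combinatorial argument on lattice points of $\Delta^*$. The Hochschild $2$-cocycle representing $b_{\vec{p}}$ can be read off the explicit \v{C}ech model $\cB^{dg}_R$ of \Cref{sec:Bsidedgcategory}, using \Cref{lem:formalgaga} to pass between $\cB_R$ and its uncompleted version. Classically, for a dg category of the form ``coherent complexes on a flat family,'' the first-order deformation class is identified with the Kodaira--Spencer class of the family, here at $\dm=0$ in the direction $r_{\vec{p}}$. Since the family is cut out in $Y^*_R$ by $W_{\dm}=0$ with $\partial W_{\dm}/\partial r_{\vec{p}}\vert_{\dm=0} = z^{\vec{p}}$, the KS class is controlled by the section $z^{\vec{p}}$ of the normal bundle.

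First, I would reduce the problem to an identification in the ambient category $\dbdg Coh(\partial Y^*)$. Arguing as in the proof of \Cref{prop:HHA_0} (using \cite[Corollary B.5.1]{preygel2011thom} together with the fact that powers of an ample line bundle generate the derived category of $\partial Y^*$, and with the $M^*$-weight decomposition), the natural restriction map $HH^{2\oplus \vec{p}}(\dbdg Coh(\partial Y^*)) \to HH^{2\oplus \vec{p}}(\cB_0)$ is an isomorphism, and is compatible with restriction of deformation classes along the inclusion $\cB_0 \hookrightarrow \dbdg Coh(\partial Y^*)$. Thus it suffices to show that the KS class of the family $X^*_R$ in the direction $r_{\vec{p}}$ is nonzero in $HH^{2\oplus \vec{p}}(\dbdg Coh(\partial Y^*))$.

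Next, I would invoke the standard HKR-style description of first-order hypersurface deformations. Applied to the anticanonical divisor $\partial Y^* \subset Y^*$ (whose normal bundle $N_{\partial Y^*/Y^*}$ is $\cO(1)\vert_{\partial Y^*}$), the Kodaira--Spencer class of a direction $z^{\vec{p}} \in H^0(\partial Y^*, \cO(1)\vert_{\partial Y^*})$ is non-trivial precisely when $z^{\vec{p}}$ is not in the image of the restriction map $H^0(Y^*, T_{Y^*}) \to H^0(\partial Y^*, \cO(1)\vert_{\partial Y^*})$, since classes in the image correspond exactly to first-order deformations of $\partial Y^*$ inside $Y^*$ that are absorbable by an infinitesimal automorphism of the ambient smooth toric $Y^*$.

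The main remaining step, and the main obstacle, is the purely combinatorial verification that for $\vec{p} \in \Xizero$ the section $z^{\vec{p}}$ lies \emph{outside} this image. Here one uses Demazure's description of $H^0(Y^*, T_{Y^*})$ in terms of ``Demazure roots'' of the smooth fan $\Sigma^*$: the image of the map above is spanned by those monomials $z^{\vec{q}}$ with $\vec{q} \in \Xinonzero$ arising from such roots, and these are supported on $\Xinonzero \setminus \Xizero$ (i.e., on vertices and lattice points on codimension-one faces of $\Delta^*$). The hypothesis $\vec{p} \in \Xizero$ then rules out precisely the problematic directions, giving $b_{\vec{p}} \neq 0$. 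This is likely the content of (and requires) \Cref{lem:points} from the paper, and it is where the smoothness of $\Sigma^*$ is essential — without smoothness, the Demazure-type description of vector fields on $Y^*$ and the resulting cokernel computation become more subtle.
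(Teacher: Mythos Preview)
Your overall strategy is reasonable in spirit but differs from the paper's and has a genuine gap. The paper does not use HKR, Kodaira--Spencer classes, or Demazure roots. Instead, it proves non-triviality directly at the level of $H^0(\cB_\eps)$, the deformed extended homogeneous coordinate ring: \Cref{lem:Sepsnontriv} exhibits explicit structure constants of the deformed multiplication that cannot be matched by any graded $\Bbbk_\eps$-algebra isomorphism with the undeformed ring. The combinatorial input is \Cref{lem:points}, which is \emph{not} a statement about Demazure roots; it is an explicit construction of lattice points $\tilde n_1,\tilde n_2 \in \partial C(\Delta^*)$ with prescribed incidence properties relative to $\vec p$. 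Then \Cref{lem:Hnontriv} is a short algebraic argument showing that non-triviality of $H^*(\cB_\eps)$ forces the deformation class to be nonzero in Hochschild cohomology. So the paper's proof is entirely elementary and self-contained.

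Your approach would need substantial additional justification. First, $\partial Y^*$ is not smooth (it is a normal-crossings divisor), so the ``standard HKR-style description'' and the normal-bundle exact sequence you invoke do not apply without modification; you would need a version of HKR and of the tangent/normal sequence valid for this singular variety, or a direct argument identifying the deformation class in $HH^2$. Second, your criterion --- that $b_{\vec p}=0$ iff $z^{\vec p}$ lies in the image of $H^0(Y^*,T_{Y^*}) \to H^0(\partial Y^*,\cO(1)\vert_{\partial Y^*})$ --- is stated for embedded deformations modulo \emph{ambient} automorphisms; to conclude about the abstract deformation class you would also need that every first-order automorphism of $\partial Y^*$ lifts to $Y^*$, which you have not argued. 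Your intuition that the relevant dichotomy is ``root versus non-root'' is correct (and the paper acknowledges this in the remark following \Cref{lem:points}), but turning that intuition into a proof along your lines requires machinery the paper deliberately avoids.
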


\begin{rmk}
    The proof of \cref{lem:Bvers} uses our assumption that $\Sigma^*$ is smooth (see the proof of \cref{lem:points}). However, the proof works just as well if all $2$-dimensional cones of $\Sigma^*$ are smooth (i.e., if $Y^*$ is smooth away from its strata of codimension $\ge 3$). 
    \end{rmk}

Before giving the proof of Lemma \ref{lem:Bvers}, we note the following consequence:  

\begin{lem}\label{lem:Bdefgen}
The class $b_{\vec{p}}$ spans $\HH^{2 \oplus \vec{p}}(\cB_0)$, for any $\vec{p} \in \Xizero$.
\end{lem}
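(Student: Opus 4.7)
The plan is to deduce spanning from a dimension count plus non-vanishing. By \cref{lem:F0respectsM}, there is a $\Z \oplus M^*$-graded quasi-isomorphism $\cA_0 \simeq \cB_0$, so taking Hochschild cohomology yields a $\Z \oplus M^*$-graded isomorphism
\[
\HH^{* \oplus \vec{p}}(\cA_0) \;\cong\; \HH^{* \oplus \vec{p}}(\cB_0).
\]

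Applying \cref{lem:Adefgen} on the A-side, we learn that $\HH^{2\oplus \vec{p}}(\cA_0) \cong \Bbbk$, spanned by the first-order deformation class $a_{\vec{p}}$. Transporting across the above isomorphism, $\HH^{2 \oplus \vec{p}}(\cB_0)$ is also one-dimensional over $\Bbbk$.

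By \cref{lem:Bvers}, the class $b_{\vec{p}}$ is non-zero in $\HH^{2 \oplus \vec{p}}(\cB_0)$. Any non-zero element of a one-dimensional $\Bbbk$-vector space spans, so $b_{\vec{p}}$ spans $\HH^{2 \oplus \vec{p}}(\cB_0)$, as claimed.

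The one subtlety worth flagging is that the argument is bootstrapped off the A-side: we are using the HMS-style equivalence $\cA_0 \simeq \cB_0$ (in its $M^*$-graded refinement) to import the dimension computation of $\HH^{2 \oplus \vec{p}}$ from symplectic cohomology of $\cH_{t,0}$ via \cref{cor:SH_HH}. So the only real content here is the (already proved) non-vanishing \cref{lem:Bvers} --- the rest is just combining it with earlier results.
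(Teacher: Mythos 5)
Your proof is correct and follows essentially the same route as the paper: both deduce that $\HH^{2\oplus\vec{p}}(\cB_0)$ is one-dimensional from the $\Z\oplus M^*$-graded identification with the A-side (the paper goes via $\HH^{2\oplus\vec{p}}(\cA_0)\simeq SH^{2\oplus\vec{p}}(F;\Bbbk)$ and \cref{prop:shgrad}, you cite \cref{lem:Adefgen}, which rests on the same facts), and then combine with the non-vanishing from \cref{lem:Bvers}.
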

\begin{proof}
We have
$$\HH^{2 \oplus \vec{p}}(\cB_0) \simeq \HH^{2 \oplus \vec{p}}(\cA_0) \simeq SH^{2 \oplus \vec{p}}(F;\Bbbk),$$
and we know that the latter space has rank one by Proposition \ref{prop:shgrad}. 
As $b_{\vec{p}} \neq 0$ by Lemma \ref{lem:Bvers}, it must span $\HH^{2 \oplus \vec{p}}(\cB_0)$. 
\end{proof}

The key step in the proof of Lemma \ref{lem:Bvers} will be to show that the corresponding deformation of homogeneous coordinate rings is non-trivial. We start by recalling the homogeneous coordinate ring of $(Y^*,\cL_{\Delta^*})$. It is the $\Z \oplus M^*$-graded algebra
$$S_{\Delta^*} := \Bbbk[C(\Delta^*) \cap \Z \oplus M^*],$$
where $C(\Delta^*) := \R_{\ge 0} \cdot \{\{1\} \times \Delta^*\} \subset \R \times M^*_\R$. 
The element $z^{(j,n)}$ has degree $(j,n)$. Let $\vec{p} \in \Xinonzero$, and let $X^*_{\vec{p}}$ be the $\Bbbk_\eps$-scheme with defining equation $\{z^0 = \eps z^{\vec{p}}\}$; this is an infinitesimal deformation of the hypersurface $X^*_0 = \{z^0 = 0\}$. It has $\Z \oplus M^*$-graded homogeneous coordinate ring $S^\eps := \left(S_{\Delta^*} \otimes_\Bbbk \Bbbk_\eps \right)/(z^0 - \eps z^{\vec{p}})$, where as above we give $\eps$ degree $(0,-\vec{p}) \in \Z \oplus M^*$. 

We can describe $S^\eps$ explicitly as follows. 
It is generated, as a $\Bbbk_\eps$-module, by monomials $z^{\tilde{n}}$ for $\tilde{n} \in \partial C(\Delta^*) \cap \Z \oplus M^*$. 
(Indeed, it is easy to use the relation $z^0 = \eps z^{\vec{p}}$ to write any other monomial in terms of these.) 
It is straightforward to check that the multiplication is given by
$$ z^{\tilde{n}_1} \cdot z^{\tilde{n}_2} = \delta z^{\tilde{n}_1+\tilde{n}_2} + \eps \cdot  \delta z^{\tilde{n}_1+\tilde{n}_2+(0,\vec{p})},$$
where we use the abbreviation
$$\delta z^{\tilde{n}} := \left\{ 
    \begin{array}{rl} 
        z^{\tilde{n}} & \text{if $\tilde{n} \in \partial C(\Delta^*)$} \\
		0 & \text{otherwise}.
    \end{array}\right. $$

Now, consider the first-order deformation $\cB_\eps$ of $\cB_0$ corresponding to $b_{\vec{p}}$. 
It is $M^*$-graded and linear over $\Bbbk_\eps := \Bbbk[\eps]/\eps^2$, where $\eps$ has degree $-\vec{p} \in M^*$.
Note that
$$\Hom^0_{\cB_\eps}(\cL^i,\cL^j)_n = S^\eps_{(j-i,n)},$$
for $n \in M^*$. 
We will denote the generator of $\Hom^0_{\cB_\eps}(\cL^i,\cL^j)$ corresponding to $z^{(j-i,n)}$ by $z_i^{(j-i,n)}$. 
\begin{lem}\label{lem:Sepsnontriv}
If $\vec{p} \in \Xizero$, then $H^0(\cB_\eps)$ is non-trivial, as a $\Z \oplus M^*$-graded deformation of $H^0(\cB_0)$. 
\end{lem}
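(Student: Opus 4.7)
The plan is to reduce the claimed non-triviality of the graded algebra deformation to a non-triviality of a local complete ring. Specifically, if $S^\eps$ were isomorphic to $S^0 \otimes_\Bbbk \Bbbk_\eps$ as a $\Z \oplus M^*$-graded $\Bbbk_\eps$-algebra via an isomorphism reducing to the identity modulo $\eps$, then taking $\mathrm{Proj}$ would yield a $\Bbbk_\eps$-scheme isomorphism $X^*_{\vec p} \xrightarrow{\sim} \partial Y^* \times_\Bbbk \mathrm{Spec}(\Bbbk_\eps)$ that is the identity on the special fiber. I would localize this at a closed point of an appropriate codimension-two toric stratum, complete, and extract an isomorphism of complete local $\Bbbk_\eps$-algebras reducing to the identity mod $\eps$; a Zariski cotangent-space computation will then yield the desired contradiction.

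To identify the right local model I would use $\vec p \in \Xizero$ together with the smoothness (hence simplicity) of $\Sigma^*$. The minimal face $S_{\vec p}$ of $\Delta^*$ containing $\vec p$ in its interior has codimension $\geq 2$, and since $\Delta^*$ is simple, $S_{\vec p}$ is the intersection of exactly $\mathrm{codim}(S_{\vec p})$ facets; I would pick two of them, $F_1, F_2$. Let $\sigma\in\Sigma^*$ denote the smooth two-dimensional cone spanned by the primitive inward normals $v_{F_1},v_{F_2}\in M$. On the affine toric chart $U_\sigma \subset Y^*$ there are coordinates $(x_1,x_2,y_1^{\pm 1},\dots,y_{n-2}^{\pm 1})$ with $D_{F_i}\cap U_\sigma = \{x_i=0\}$ and no other toric divisors meeting $U_\sigma$. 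Then $z^0$ trivializes to $x_1 x_2 \cdot u$ for a unit $u$; the standard toric divisor formula $\mathrm{div}(z^{\vec p}) = \sum_F (1+\langle \vec p, v_F\rangle) D_F$, combined with $\langle \vec p, v_{F_i}\rangle = -1$ (since $\vec p \in F_i$), shows that $z^{\vec p}|_{U_\sigma}$ is itself a unit $w$. Consequently, $X^*_{\vec p}\cap U_\sigma$ is cut out by $x_1 x_2 = \eps\cdot(w/u)$, which is a genuine first-order smoothing of the node $\{x_1 x_2 = 0\}$.

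The heart of the argument is then a local ring comparison. Pick a closed point $y\in U_\sigma$ with $x_1(y)=x_2(y)=0$ and let $\widehat{\mathcal O}_y$ denote the completed local ring of $Y^*$ at $y$. A hypothetical trivialization would specialize to a $\Bbbk_\eps$-algebra isomorphism reducing to the identity modulo $\eps$ between
\[ R_1 := (\widehat{\mathcal O}_y\otimes_\Bbbk \Bbbk_\eps)/(x_1x_2 - \eps w_0) \quad\text{and}\quad R_2 := (\widehat{\mathcal O}_y\otimes_\Bbbk \Bbbk_\eps)/(x_1x_2), \]
where $w_0\in\widehat{\mathcal O}_y$ is a unit representing $w/u$. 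In $R_1$, the relation $\eps = (x_1x_2)w_0^{-1}$ places $\eps$ in the square of the maximal ideal $\mathfrak m_1$, so the Zariski cotangent space $\mathfrak m_1/\mathfrak m_1^2$ has dimension $n$, with basis given by the classes of $x_1, x_2$, and the torus coordinates. In $R_2$, on the other hand, $\eps\notin\mathfrak m_2^2$ and $\mathfrak m_2/\mathfrak m_2^2$ has dimension $n+1$. Since these dimensions are isomorphism invariants of local rings, no isomorphism $R_1 \cong R_2$ can exist, giving the contradiction. The main obstacle will be the careful bookkeeping: verifying that a $\Z \oplus M^*$-graded algebra isomorphism descends via $\mathrm{Proj}$ and localization to a local ring isomorphism at $y$ that reduces to the identity on the special fiber.
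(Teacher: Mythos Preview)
Your approach is different from the paper's, and the geometric idea—that $X^*_{\vec{p}}$ is a first-order smoothing of the node along a codimension-two toric stratum, detectable via a Zariski cotangent space computation—is correct and illuminating. The paper instead argues directly and combinatorially: it constructs explicit lattice points $\tilde n_1, \tilde n_2 \in \partial C(\Delta^*)$ (the auxiliary Lemma~\ref{lem:points}) with $\tilde n_i + (0,\vec{p}) \notin \partial C(\Delta^*)$, $\tilde n_1 + \tilde n_2 \notin \partial C(\Delta^*)$, and $\tilde n_1 + \tilde n_2 + (0,\vec{p}) \in \partial C(\Delta^*)$, then derives a contradiction by computing $F_\eps(z_0^{\tilde n_1}) \cdot F_\eps(z_i^{\tilde n_2})$ two ways. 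These lattice points are essentially a homogeneous-coordinate-ring avatar of your local equation $x_1 x_2 = \eps \cdot (\text{unit})$, so the two arguments detect the same phenomenon.

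However, your first step is more than bookkeeping. The statement concerns $H^0(\cB_\eps)$ as a \emph{category} (objects $\cO(i)$, $\Hom(\cO(i),\cO(j)) = S^\eps_{(j-i,*)}$), and a category isomorphism $F_\eps$ reducing to the identity mod $\eps$ need not come from a graded ring isomorphism of $S^\eps$: the category has extra automorphisms given by conjugation by arbitrary units $\psi_i \in \Aut(\cO(i)) = \Bbbk_\eps^*$, one for each $i$, that do not descend to ring automorphisms. So the passage to $\mathrm{Proj}$ requires justification—for instance, one could argue that the high-degree multiplication maps $S^\eps_k \otimes S^\eps_1 \to S^\eps_{k+1}$, which are visible as compositions in the category, determine $\mathrm{Proj}$—but this needs to be written out. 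The paper bypasses the issue: since each $M^*$-graded piece of $\Hom$ is at most one-dimensional, $F_\eps$ has the explicit form $F_\eps(z_i^{\tilde n}) = z_i^{\tilde n} + \eps\, a(i,\tilde n)\, z_i^{\tilde n+(0,\vec{p})}$, and the functor equation is then tested on the pair $(\tilde n_1, \tilde n_2)$ directly, without ever leaving the category.
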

\begin{proof}
Suppose $\cB_\eps$ were a trivial graded deformation of $\cB_0$. 
Then there would exist a graded isomorphism $F_\eps: H^0(\cB_\eps) \xrightarrow{\sim} H^0(\cB_0)^{triv}$, with $F_0 = \id$. 
As each $M^*$-graded piece of a morphism space in $H^0(\cB_0)$ has dimension $\le 1$, and $\eps$ has degree $(0,-\vec{p})$, this isomorphism must take the form
$$F_\eps(z_i^{\tilde{n}}) = z_i^{\tilde{n}} + \eps a(i,\tilde{n}) z_i^{\tilde{n} + (0,\vec{p})},$$
where $a(i,\tilde{n}) \in \Bbbk$ vanishes if $\tilde{n} + (0,\vec{p}) \notin \partial C(\Delta^*)$. 

By Lemma \ref{lem:points} below, there exist points $\tilde{n}_1$, $\tilde{n}_2$ in $\partial C(\Delta^*) \cap \Z \oplus M^*$ such that $\tilde n_j + (0,\vec{p}) \notin \partial C(\Delta^*)$, and in particular $F^\eps(z_i^{\tilde n_j}) = z_i^{\tilde n_j}$, for both $j$; $\tilde n_1 + \tilde n_2 \notin \partial C(\Delta^*)$, and in particular $\delta z^{\tilde n_1 + \tilde n_2} = 0$; and $\tilde n_1+\tilde n_2 + (0,\vec{p}) \in \partial C(\Delta^*)$, and in particular $\delta z^{\tilde n_1+\tilde n_2+(0,\vec{p})} = z^{\tilde n_1+\tilde n_2+(0,\vec{p})}$. 
Thus, from
$$F^\eps(z_0^{\tilde{n}_1}) \cdot_{H^0(\cB_0)^{triv}} F^\eps(z_i^{\tilde{n}_2}) = F^\eps(z_0^{\tilde{n}_1} \cdot_{H^0(\cB_\eps)} z_i^{\tilde{n}_2}),$$
we obtain
$$z^{\tilde{n}_1} \cdot_{H^0(\cB_0)^{triv}} z^{\tilde{n}_2} = F^\eps\left(\delta z_0^{\tilde{n}_1+\tilde{n}_2} + \eps \cdot \delta z_0^{\tilde{n}_1+\tilde{n}_2+(0,\vec{p})}\right),$$
and hence (using $F^0 = \id$) that
$$0 = \eps z_0^{\tilde n_1+\tilde n_2+(0,\vec{p})},$$
a contradiction.
\end{proof}

\begin{lem}\label{lem:points}
If $\vec{p} \in \Xizero$, then there exist points $\tilde{n}_1$, $\tilde{n}_2$ in $\partial C(\Delta^*) \cap \Z \oplus M^*$, such that
\begin{itemize}
    \item $\tilde n_i + (0,\vec{p}) \notin C(\Delta^*)$, for both $i$;
    \item $\tilde n_1 + \tilde n_2 \notin \partial C(\Delta^*)$;
    \item $\tilde{n}_1 + \tilde{n}_2+ (0,\vec{p}) \in \partial C(\Delta^*)$.\end{itemize}
\end{lem}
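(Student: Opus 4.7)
The plan is to construct $\tilde n_1, \tilde n_2$ explicitly using a basis of $M$ adapted to $\vec p$ provided by the smoothness of $\Sigma^*$.

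Let $S$ be the face of $\Delta^*$ containing $\vec p$ in its relative interior, so $k := \operatorname{codim}(S) \ge 2$ by the assumption $\vec p \in \Xizero$. Write $\Delta^* = \{u \in M^*_{\R} : \langle u, \rho\rangle \ge -1 \text{ for every ray } \rho \text{ of } \Sigma^*\}$, so that for $j > 0$ one has $(j, n) \in C(\Delta^*)$ iff $\langle n, \rho\rangle \ge -j$ for all $\rho$, with $\partial C(\Delta^*)$ corresponding to equality for some $\rho$. Let $\rho_1, \ldots, \rho_k$ be the primitive generators of $\sigma_S$; then $\langle \vec p, \rho_i\rangle = -1$ for $i \le k$, while for every other ray $\rho$ one has $\langle \vec p, \rho\rangle \ge 0$ (an integer strictly greater than $-1$). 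By the smoothness of $\Sigma^*$, $\rho_1, \ldots, \rho_k$ extend to a $\Z$-basis $\rho_1, \ldots, \rho_n$ of $M$; denote the dual basis in $M^*$ by $e_1, \ldots, e_n$.

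Choose distinct $a, b \in \{1, \ldots, k\}$, set $v := \sum_{c \in \{1, \ldots, k\} \setminus \{a, b\}} e_c$, and for a sufficiently large integer $j > 0$ define
\[ \tilde n_1 := \bigl(j,\, j\vec p + e_b + v\bigr), \qquad \tilde n_2 := \bigl(j,\, j\vec p + e_a + v\bigr). \]
All three required conditions then follow from a mechanical computation of the pairings with the rays. Concretely, $\langle j\vec p + e_b + v, \rho_i\rangle$ equals $-j$ for $i = a$, equals $-j + 1$ for $i \in \{1, \ldots, k\} \setminus \{a\}$, and equals $j \langle \vec p, \rho\rangle + \langle e_b + v, \rho\rangle$ for $\rho$ outside $\sigma_S$. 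Hence $\tilde n_1$ lies on $\partial C(\Delta^*)$ via $\rho_a$, and shifting by $(0, \vec p)$ pulls the pairing with $\rho_a$ down to $-j - 1 < -j$, violating membership in $C(\Delta^*)$; the analogous statements for $\tilde n_2$ hold via $\rho_b$. For the sum one finds $\langle n_1 + n_2, \rho\rangle \ge -2j + 1$ for every ray $\rho$, so $\tilde n_1 + \tilde n_2$ is in the interior of $C(\Delta^*)$, while adding $(0, \vec p)$ saturates exactly the $\rho_a$-inequality at $-2j$ and keeps the others $\ge -2j$, placing $\tilde n_1 + \tilde n_2 + (0, \vec p)$ on $\partial C(\Delta^*)$.

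The only delicate ingredient is choosing $j$ large enough to dominate the bounded quantities $\langle e_b + v, \rho\rangle$, $\langle e_a + v, \rho\rangle$, and $\langle e_a + e_b + 2v, \rho\rangle$ for rays $\rho$ outside $\sigma_S$; this is always possible because $\langle \vec p, \rho\rangle + 1 \ge 1$ for such rays, so a term linear in $j$ dominates any fixed bounded offset. The smoothness hypothesis on $\Sigma^*$ enters only to produce the dual basis vectors $e_a, e_b$, and the argument would adapt to merely simplicial $\Sigma^*$ at the cost of passing to an appropriate finite-index sublattice.
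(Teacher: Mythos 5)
Your proof of the lemma is correct. Both your argument and the paper's exhibit $\tilde{n}_1, \tilde{n}_2$ explicitly and verify the required conditions by computing pairings with ray generators of $\Sigma^*$, but the constructions differ. You work with the minimal face $S$ of $\Delta^*$ containing $\vec{p}$ in its relative interior (of codimension $k \geq 2$) and a $\Z$-basis of $M$ extending the $k$ primitive generators of the smooth cone $\sigma_S$. The paper instead fixes an arbitrary codimension-two face $F$ of $\Delta^*$ containing $\vec{p}$, picks a rational point $q$ in the relative interior of $F$, decomposes $-\vec{p} = v_1 + v_2$ along the two boundary facets of the cone $C_q := \R_{\ge 0}(\Delta^* - q)$ (cut out by $\{m_1\ge 0\}\cap\{m_2\ge 0\}$ with $m_1,m_2$ part of a $\Z$-basis), and takes $\tilde{n}_i = (K, Kq + v_i)$ for $K$ a suitably large multiple of the denominator of $q$. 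The identity $v_1 + v_2 + \vec{p} = 0$ makes the third condition immediate in that formulation, since $\tilde{n}_1 + \tilde{n}_2 + (0, \vec{p}) = 2K(1,q)$ visibly lies on $\partial C(\Delta^*)$ as $q \in \partial\Delta^*$, whereas your version requires the more delicate $j$-large estimate you carry out. One byproduct: your argument needs smoothness of $\sigma_S$, which may have any dimension from $2$ to $n$, while the paper uses smoothness only of the two-dimensional cone $\sigma_F$; this weaker usage is precisely what the remark following \cref{lem:Bvers} exploits. Under the blanket hypothesis that $\Sigma^*$ is smooth, both versions go through.

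Your closing remark, that the argument adapts to merely simplicial $\Sigma^*$ by passing to a finite-index sublattice, is incorrect as stated. If the dual basis vectors $e_a, e_b$ lie only in $M^* \otimes \mathbb{Q}$ and must be scaled by an integer $N > 1$ to land in $M^*$, the pairing $\langle (2j+1)\vec{p} + Ne_a + Ne_b + 2Nv, \rho_a\rangle = -2j + (N-1)$ then strictly exceeds $-2j$, so $\tilde{n}_1 + \tilde{n}_2 + (0,\vec{p})$ lies in the interior of $C(\Delta^*)$ rather than on its boundary. Saturating that inequality genuinely requires $\langle e_a, \rho_a\rangle = 1$, and this is destroyed by any nontrivial scaling.
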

\begin{proof}
Because $\vec{p} \in \Xizero$, it lies on a facet $F$ of $\Delta^*$ having codimension $2$ (it may lie on the boundary of $F$). 
Let us choose a rational point $q$ in the interior of $F$. 
Consider the convex cone $C_{q} := \R_{\ge 0} \cdot (\Delta^* - q)$, which contains $-\vec{p}$ in its interior. 
 
Because $F$ has codimension $2$, $C_q$ is an intersection of half spaces, $\{m_1 \ge 0\} \cap \{m_2 \ge 0\}$ for $m_i \in M$ linearly independent; because $\Sigma^*$ is smooth, we may choose $m_1, m_2 \in M$ to be a subset of a basis. 
Therefore, any element in $C_q \cap M^*$ can be written as a sum of two vectors in $M^*$, one pointing along each boundary facet of $C_q$. In particular, we have $-\vec{p} = v_1 + v_2$. 

There exists $k \in \Z_{>0}$ such that $kq \in M^*$. 
By taking $K$ to be a sufficiently large multiple of $k$ we may then arrange that $q+\frac{1}{K} v_i \in \partial \Delta^*$ for each $i$, and $q + \frac{1}{2K}(v_1 + v_2)$ lies in the interior of $\Delta^*$.  
We now set $\tilde{n}_i = (K,Kq+v_i)$. 

We now check that $\tilde n_1$ and $\tilde n_2$ have the desired properties:
\begin{itemize}
    \item $\tilde{n}_i \in \Z \oplus M^*$: this follows because we chose $K$ so that $Kq \in M^*$.
    \item $\tilde n_i \in \partial C(\Delta^*)$: this follows because $q + \frac{1}{K} v_i \in \partial \Delta^*$.
    \item $\tilde{n}_i + (0,\vec{p}) \notin C(\Delta^*)$: this follows because $-\vec{p}$ points from $v_i$ into the interior of $C_q$, so $v_i + \vec{p} \notin C_q$; and hence, $q + \frac{1}{K}(v_i+\vec{p}) \notin \Delta^*$.
    \item $\tilde n_1 + \tilde n_2 \notin \partial C(\Delta^*)$: this follows because $q + \frac{1}{2K}(v_1+v_2)$ lies in the interior of $\Delta^*$, and in particular, does not lie in $\partial \Delta^*$.
    \item $\tilde{n}_1 + \tilde{n}_2+ (0,\vec{p}) \in \partial C(\Delta^*)$: this follows because $v_1+v_2 = -\vec{p}$, and $q \in \partial \Delta^*$.
\end{itemize}
\end{proof}

\begin{rmk}
Lemma \ref{lem:Sepsnontriv} is false if $\vec{p} \notin \Xizero$: indeed, the points $\vec{p} \in \Xinonzero\setminus ( \{0\} \cup \Xizero)$ are known as `roots' of $Y^*$ and give rise to one-parameter families of automorphisms of the toric variety which trivialize the corresponding deformations of the hypersurface (see \cite[Section 6.1]{coxkatz}).
\end{rmk}

We now observe that the deformation $\cB_\eps$ defined in \eqref{eq:cBepsilon} is a graded uncurved first-order deformation of the $A_\infty$ category $\cB_0$. 
It satisfies $\Hom^1_{\cB_0}(L,L) = \Hom^2_{\cB_0}(L,L) = 0$ for all objects $L$ of $\cB_0$, and furthermore, $\Hom^*_{\cB_0}(K,L)$ is concentrated in two degrees which differ by $n \ge 3$, for all pairs of objects $K,L$; in particular, it is concentrated in degrees, no two of which differ by $1$. 
This ensures that $H^*(\cB_\eps) \simeq H^*(\cB_0) \otimes \Bbbk_\eps$ is a flat deformation of $H^*(\cB_0)$, by considering the spectral sequence associated to the $\eps$-adic filtration on $(\cB_\eps,\mu^1_\eps)$: the $E_1$ page is $H^*(\cB_0) \otimes \Bbbk_\eps$, and the differential on the $E_1$ page is necessarily $0$. 
\begin{lem}\label{lem:Hnontriv}
Suppose that $\cB_\eps$ is any graded uncurved first-order deformation of $\cB_0$, $\Hom^*_{\cB_0}(L,L) = 0$ for $* = 1,2$, and $\Hom^*_{\cB_0}(K,L)$ is concentrated in degrees, no two of which differ by $1$.
If $H^*(\cB_\eps)$ is non-trivial as a graded first-order deformation of $H^*(\cB_0)$, then the corresponding Maurer--Cartan element $KS(\partial/\partial \eps) \in \HH^{2\oplus -\operatorname{deg}(\eps)}(\cB_0)$ is non-zero.
\end{lem}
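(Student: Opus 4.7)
The plan is to argue the contrapositive: assuming $KS(\partial/\partial \eps) = 0$, I will trivialize the $A_\infty$ deformation $\cB_\eps$ and then pass to cohomology to conclude that $H^*(\cB_\eps)$ is the trivial graded deformation of $H^*(\cB_0)$. First, a sanity-check on setup: under the hypotheses on $\Hom^*_{\cB_0}(K,L)$ (concentration in degrees, no two differing by $1$), the spectral sequence associated with the $\eps$-adic filtration on morphism complexes of $\cB_\eps$ collapses at $E_1$ for degree reasons, so the cohomology $H^*(\cB_\eps)$ is a topologically free $\Bbbk_\eps$-module and therefore constitutes a bona fide graded first-order deformation of $H^*(\cB_0)$. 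The vanishing $\Hom^1(L,L) = \Hom^2(L,L) = 0$ ensures there are no units or obstructions interfering at this point.

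Next, I will represent $\cB_\eps$ by a Hochschild cocycle. The uncurved first-order deformation $\cB_\eps$ corresponds to a cocycle $\phi = (\phi_n)_{n\ge 1} \in CC^{2 \oplus -\deg(\eps)}(\cB_0)$ with class $KS(\partial/\partial\eps) \in \HH^{2\oplus -\deg(\eps)}(\cB_0)$. The assumption $KS(\partial/\partial\eps) = 0$ means $\phi = [\mu, \psi]$ for some Hochschild cochain $\psi = (\psi_n)_{n\ge 1} \in CC^{1\oplus -\deg(\eps)}(\cB_0)$, where $[-,-]$ denotes the Gerstenhaber bracket with the $A_\infty$ structure $\mu$ of $\cB_0$ and $\delta := [\mu,-]$ is the Hochschild differential.

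I then use $\psi$ to build an $A_\infty$ functor $F \colon \cB_0 \otimes_{\Bbbk} \Bbbk_\eps \to \cB_\eps$ which is the identity on objects and whose structure maps are
\[ F^1 := \id + \eps\,\psi_1, \qquad F^n := \eps\,\psi_n \text{ for } n\ge 2. \]
The first-order $A_\infty$ functor equations, modulo $\eps^2$, reduce to the single identity $\phi = \delta\psi$, which is exactly our hypothesis; so $F$ is a well-defined $A_\infty$ functor. Since $F^1$ reduces to the identity modulo $\eps$, $F$ is a quasi-equivalence of $\Bbbk_\eps$-linear $A_\infty$ categories.

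Finally, I pass to cohomology. Applying $H^*(-)$ yields an isomorphism of $\Bbbk_\eps$-linear graded algebras
\[ H^*(F)\colon H^*(\cB_0 \otimes_\Bbbk \Bbbk_\eps) \xrightarrow{\sim} H^*(\cB_\eps). \]
The source is the trivial deformation $H^*(\cB_0) \otimes_\Bbbk \Bbbk_\eps$, since the product of chains $(a + \eps a')(b + \eps b') = ab + \eps(ab' + a'b)$ descends directly to cohomology. As $H^*(F)$ reduces to the identity modulo $\eps$, it provides the desired trivialization of $H^*(\cB_\eps)$ as a graded first-order deformation of $H^*(\cB_0)$, contradicting the hypothesis. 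The only subtle point — and the step that merits care — is verifying that the $A_\infty$ functor equations genuinely reduce to $\phi = \delta\psi$ once one unpacks the Gerstenhaber bracket sign conventions and keeps track of the $\Z \oplus M^*$-grading; everything else is formal first-order deformation theory.
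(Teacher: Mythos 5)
Your proof is correct, but it takes a genuinely different route from the paper. You argue the contrapositive by directly constructing the gauge transformation: if $KS(\partial/\partial\eps) = 0$, then $\phi = \delta\psi$, and the formal diffeomorphism $F$ with $F^1 = \id + \eps\psi_1$, $F^{\ge 2} = \eps\psi_{\ge 2}$ is an $A_\infty$ isomorphism $\cB_0 \otimes \Bbbk_\eps \to \cB_\eps$; passing to cohomology, $H^*(F)$ is a $\Z\oplus M^*$-graded $\Bbbk_\eps$-algebra isomorphism reducing to the identity modulo $\eps$, which is precisely a trivialization of the graded algebra deformation. The paper instead first passes to a minimal model $\bar\cB_0$ (requiring a separate argument that minimal models and the induced deformation make sense over $\Bbbk_\eps$, which is not a field), uses the degree hypotheses to show $\bar\beta^0 = \bar\beta^1 = 0$ and $\gamma^0 = 0$, expands the coboundary equation $\bar\beta = \partial\gamma$ by length, and reads off triviality of the algebra deformation from the length-$2$ component $\bar\beta^2 = \partial^2\gamma^1$. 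Your approach is cleaner and makes the degree-concentration hypotheses appear mostly only through the prerequisite that $H^*(\cB_\eps)$ is a flat deformation (established just above the lemma), rather than entering the body of the proof; what the paper's approach buys in exchange is an explicit length-by-length identification of the $A_\infty$ cocycle with the classical Hochschild cocycle of the graded algebra, which makes the obstruction-theoretic content very concrete. One small point worth making explicit in your write-up: the trivializing cochain $\psi$ can indeed be chosen homogeneous of bidegree $(1, -\deg\eps)$ because the Hochschild complex decomposes along the $M^*$-grading (the paper's equation \eqref{eq:gradednongradedHH1}), so the coboundary relation $\phi = \delta\psi$ can be solved within the fixed graded piece.
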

\begin{proof}
Our first step is to show that we may assume that $\cB_\eps$ is minimal. 
We do this by replacing it with a minimal model $\bar{\cB}_\eps$; we need to prove that this exists, as the base ring $\Bbbk_\eps$ is not a field.

Let $\beta \in CC^2(\cB_0)$ be the Hochschild cocycle corresponding to the deformation $\cB_\eps$; we want to show that $[\beta] \neq 0$. 
Let $\bar{\cB}_0$ be a minimal model for $\cB_0$ (which exists because the base ring $\Bbbk$ is a field), and let $\bar{\beta}$ be a Hochschild cocycle representing the image of $[\beta]$ under the isomorphism $\HH^*(\cB_0) \simeq \HH^*(\bar{\cB}_0)$. 
Let $\bar{\cB}_\eps$ denote the first-order deformation of $\bar{\cB}_0$ corresponding to $\bar{\beta}$. 
Note that $\bar{\beta}^0_L = 0$ for all $L$, because $\hom^2_{\bar{\cB}_0}(L,L) = \Hom^2_{\cB_0}(L,L) = 0$, so this deformation is uncurved. 
Also note that $\bar{\beta}^1 = 0$ by our assumption that $\hom^*_{\bar{\cB}_0}(L,L) = \Hom^*_{\cB_0}(L,L)$ is concentrated in degrees, no two of which differ by $1$. 
Thus $\bar{\beta}$ has length $\ge 2$ (recall from Appendix \ref{subsec:Ainf_term} that the `length' of a Hochschild cochain is the number of inputs). 

It is easy to show from the definitions that the $A_\infty$ quasi-isomorphism $F_0:\cB_0 \xrightarrow{\sim} \bar{\cB}_0$ extends to a (possibly curved) $A_\infty$ homomorphism $F_\eps:\cB_\eps \to \bar{\cB}_\eps$; the curvature $F^0_\eps$ lies in $hom^1_{\bar{\cB}_0}(L,L) = \Hom^1_{\cB_0}(L,L) = 0$ by hypothesis, so in fact $F_\eps$ is uncurved. 
Thus it induces an isomorphism $H^*(\cB_\eps) \simeq H^*(\bar{\cB}_\eps)$. 
Thus we have shown that $\bar{\cB}_\eps$ is a deformation of $\bar{\cB}_0$ satisfying all the hypotheses we imposed on $\cB_\eps$; so it suffices to show that $[\bar{\beta}] \neq 0$.

Suppose, to the contrary, that $\bar{\beta} = \partial \gamma$ for some $\gamma \in CC^1(\bar{\cB}_0)$. 
Recall that we associate to a Hochschild cochain $\varphi^s$ of length $s \ge 0$, and cohomological degree $t$, the bidegree $(s+t,s) \in \Z \oplus \Z$. 
We expand $\gamma = \sum_{s \ge 0} \gamma^s$ where $\gamma^s$ is the part of length $s$; and we expand the Hochschild differential as $\partial = \sum_{s \ge 0} \partial^s$, where $\partial^s$ is the part coming from $\mu^s$, which changes bidegree by adding $(1,s-1)$. 

Note that $\gamma^0 = 0$, because $hom^1_{\bar{\cB}_0}(L,L) = 0$; and $\partial^s = 0$ for $s \le 1$ because $\mu^s = 0$. 
Thus, the length two part of the equation $\bar{\beta} = \partial \gamma$ says that $\bar{\beta}^2 = \partial^2 \gamma^1$. 
It is well-known that this is equivalent to the associative algebra $H^*(\bar{\cB}_\eps)$ being a trivial deformation of $H^*(\bar{\cB}_0)$, which contradicts our hypothesis; thus $[\bar{\beta}] \neq 0$ as required.
\end{proof}

\begin{proof}[Proof of Lemma \ref{lem:Bvers}] 
Follows by combining \cref{lem:Sepsnontriv} and \cref{lem:Hnontriv}.
\end{proof}

\section{Proof of the main result}\label{sec:main_result}

\subsection{Applying versality}

Consider the quasi-isomorphism $\mir0: \cB_0 \to \cA_0$ from \eqref{eq:Pic_sec_cd2}. 
The upshot of Sections \ref{sec:Adef} and \ref{sec:Bdef} is that we have prepared the way to apply Proposition A.6 from \cite{Ganatra2023integrality}, to construct an extension of $\mir0$ to a curved filtered quasi-isomorphism $F: \cB_R \to \cA_R$. 
In the first place, we have that $\cA_R$ is topologically free over $R_A = \Bbbk[[\NE_A]]$ (where $\NE_A$ is `nice' by construction), because its $\operatorname{hom}$-spaces consist of finite rank free modules; and $\cB_R$ is topologically free over $R_B = \Bbbk[[(\Z_{\ge 0})^P]]$ by Lemma \ref{lem:Btopologicallfree}.

\begin{prop}\label{prop:apply_vers}  
There exists an $M^*$-graded $\Bbbk$-algebra homomorphism $\Psi^*: R_B \to R_A$, sending $\nov_p \mapsto \nov_p \cdot \phi_p$ for some units $\phi_p \in R_A$ of degree $0 \in M^*$, together with a curved filtered quasi-isomorphism $F: \Psi^*\cB_R \to \cA_R$. 
\end{prop}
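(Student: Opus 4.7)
The plan is to invoke the versality criterion \cite[Proposition A.6]{Ganatra2023integrality} by checking that each of its hypotheses has been established in the preceding sections, and then to extract the mirror map $\Psi^*$ and the quasi-isomorphism $F$ from its output. Schematically, that criterion takes as input a quasi-isomorphism at order $0$ (here $\mir0: \cB_0 \to \cA_0$ from \eqref{eq:Pic_sec_cd2}) between the special fibers of two topologically free, $\Z \oplus M^*$-graded curved filtered deformations, together with control on the Hochschild cohomology of the target in the $M^*$-degrees arising as generators of the parameter ring; under those conditions, it produces a graded algebra homomorphism $\Psi^*: R_B \to R_A$ and a curved filtered quasi-isomorphism $F: \Psi^*\cB_R \to \cA_R$ lifting $\mir0$.

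First I would verify the structural hypotheses. Topological freeness of $\cA_R$ over $R_A$ is built into the construction of the relative Fukaya category in \cref{subsec:relativeFukayaCategory}, and $\NE_A$ is nice by construction. Topological freeness of $\cB_R$ over $R_B$ is \cref{lem:Btopologicallfree}, and the passage from $\cB^{dg}_R$ to its $\fm$-adic completion is a quasi-isomorphism by \cref{lem:formalgaga} (so that $\cB_R \otimes_{R_B} \Bbbk = \cB_0$). Both deformations are $\Z \oplus M^*$-graded and generated in $M^*$-degrees $\vec{p} \in \Xizero$, with first-order classes $a_{\vec p}$ (on the $A$-side) and $b_{\vec p}$ (on the $B$-side) living in $\HH^{2 \oplus \vec{p}}$ of the respective special fibers.

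Next I would match the first-order deformation classes. By \cref{lem:Adefgen}, $a_{\vec p}$ spans the one-dimensional space $\HH^{2 \oplus \vec p}(\cA_0)$; by \cref{lem:Bdefgen}, $b_{\vec p}$ spans $\HH^{2 \oplus \vec p}(\cB_0)$. Transporting $b_{\vec p}$ across the cohomological isomorphism induced by $\mir0$ therefore gives a nonzero scalar multiple of $a_{\vec p}$, which is precisely the ``matching of first-order terms up to units'' required to produce the units $\phi_{\vec p} \in R_A$ appearing in the statement of the mirror map.

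Finally, and this is the step that carries the real content, I would feed into the versality criterion the two obstruction-theoretic inputs that drive the order-by-order extension. The vanishing result \cref{lem:Avanishing}, $\HH^{2 \oplus \vec q}(\cA_0) = 0$ for $\vec q \notin \Xizero \cup \{0\}$, ensures that no spurious obstruction classes can appear in unwanted $M^*$-degrees. The injectivity of the total obstruction map in \cref{prop:totalobstruction} ensures that any would-be obstruction to extending $F$ (which \emph{a priori} lives in $\bigoplus_{\vec p \in \Xizero} \HH^{3 \oplus \vec p}(\cA_0)$) is uniquely the image under $\Obs$ of a degree-$2\oplus 0$ correction, hence can be absorbed by modifying the mirror map $\Psi^*$ by a higher-order perturbation of the units $\phi_{\vec p}$. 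The main subtlety to watch is purely bookkeeping: one must ensure that the chosen corrections at each order are compatible with the $\Z \oplus M^*$-grading and with the filtration by the maximal ideals of $R_A$ and $R_B$, so that the inductive extension converges in the $\fm$-adic topology to a genuine graded algebra homomorphism and a genuine curved filtered $A_\infty$ functor. With those checks in place, \cite[Proposition A.6]{Ganatra2023integrality} produces $(\Psi^*, F)$ with the required properties.
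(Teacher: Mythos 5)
Your proposal is correct and takes the same route as the paper: invoke the versality criterion of \cite[Proposition A.6]{Ganatra2023integrality}, with hypothesis (1) supplied by \cref{lem:Adefgen} and \cref{lem:Bdefgen}, hypothesis (2) by \cref{prop:totalobstruction}, and hypothesis (3) by \cref{lem:Avanishing}, together with the structural inputs (topological freeness of $\cA_R$ and $\cB_R$, niceness of $\NE_A$) recorded just before the statement. The paper's proof is a terse citation of exactly these inputs; you have simply unfolded what each hypothesis is doing, which is consistent with and faithful to the paper's argument.
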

\begin{proof} 
This follows from the versality criterion \cite[Proposition A.6]{Ganatra2023integrality}, whose hypotheses we now verify. Hypothesis (1) of the versality criterion is verified on the A-side by Lemma \ref{lem:Adefgen}, and on the B-side by Lemma \ref{lem:Bdefgen}. Hypothesis (2) is verified by Proposition \ref{prop:totalobstruction}, and Hypothesis (3) by Lemma \ref{lem:Avanishing}.
\end{proof}

\subsection{Specialization to Novikov field}

We consider the $\Bbbk$-algebra homomorphism
\begin{align*}
a(\kaehl)^*: R_A &\to \Lambda \qquad\text{sending}\\
\nov^\beta &\mapsto \nov^{\kaehl(\beta)}
\end{align*}
associated to the class $\kaehl$ in the interior of $\Nef$ (defined in \cite[Lemma 5.20]{Sheridan2017}). 
Let $\cA_\kaehl := \cA_R \otimes_{R_{A}} \Lambda$, where $\Lambda$ is regarded as an $R_A$-algebra via $a(\kaehl)^*$. 
It is a curved filtered $A_\infty$ category, where the filtration is the $\nov$-adic one, so we may define $\cA_\kaehl^\bc$ as in \cite[Section 2.2]{perutz2022constructing}. 

On the other hand, consider the homomorphism $b(\kaehl)^* := a(\kaehl)^* \circ \Psi^*$. 
Let $\cB_\kaehl := \cB_R \otimes_{R_{B}} \Lambda$, where now $\Lambda$ is regarded as an $R_B$-algebra via $b(\kaehl)^*$. 
By Proposition \ref{prop:apply_vers}, we have a curved filtered quasi-isomorphism 
$$F_\kaehl: \cB_\kaehl \to \cA_\kaehl,$$
and therefore a quasi-equivalence
$$F_\kaehl^\bc: \cB_\kaehl^\bc \to \cA_\kaehl^\bc$$
by \cite[Theorem 2.12]{perutz2022constructing}. 

Note that $\cB_R$, and hence $\cB_\kaehl$, are uncurved and cohomologically unital by construction; so we have an embedding $\cB_\kaehl \hookrightarrow \cB_\kaehl^\bc$ by equipping each object with the $0$ bounding cochain. 
Thus we have a quasi-embedding
\begin{equation}
\cB_\kaehl \hookrightarrow \cA_\kaehl^\bc.
\end{equation}

\subsection{Smoothness of the mirror}

\begin{lem}\label{lem:smooth}
The $\Lambda$-variety $X^*_{b(\kaehl)}$ is smooth.
\end{lem}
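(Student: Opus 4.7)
\smallskip

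The plan is to deduce smoothness from tropical geometry, exactly analogous to how smoothness of the A-side hypersurface $X_{t,\kaehl}$ was established in Section \ref{sec:trop_setup}. The key point is that the valuation data $\val(b(\kaehl)_{\vec{p}}) = \kaehl_{\vec{p}}$ feeds directly into the MPCS hypothesis.

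First, I would describe the tropicalization of $X^*_{b(\kaehl)}$. Since the dense torus of $Y^*_\Lambda$ has character lattice $M^*$, the valuation map sends its $\Lambda$-points to $M_\R$, and the tropical polynomial associated to $W_{b(\kaehl)} = -z^{\vec{0}} + \sum_{\vec{p} \in \Xizero} b(\kaehl)_{\vec{p}} \cdot z^{\vec{p}}$ is
\begin{equation*}
    \mathrm{Trop}(W_{b(\kaehl)})(x) = \max\left\{\,0,\ \max_{\vec{p}\in\Xizero}\bigl(\langle \vec{p},x\rangle - \kaehl_{\vec{p}}\bigr)\,\right\}, \qquad x \in M_\R.
\end{equation*}
The Newton polytope of $W_{b(\kaehl)}$ is $\Delta^*$, and the corner locus of $\mathrm{Trop}(W_{b(\kaehl)})$ is, by Legendre duality, combinatorially dual to the regular subdivision of $\Delta^*$ induced by $\psi_\kaehl$; that is, precisely the subdivision of $\Delta^*$ determined by the simplicial fan $\Sigma_\kaehl$.

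Next, I would invoke the MPCS hypothesis to conclude that this regular subdivision of $\Delta^*$ is unimodular (in the sense that every simplex has affine volume $1$), away from the vertices of $\Delta^*$ which correspond to the toric fixed points of $Y$. Indeed, MPCS says that $\Sigma_\kaehl$ is simplicial with rays through $\Xizero$ and that $Y_\kaehl$ is smooth away from $Y_{\kaehl,0}$, which is equivalent (via toric duality) to the dual simplices of $\Delta^*$ being unimodular, except possibly at the vertices of $\Delta^*$. Combined with smoothness of $\Sigma^*$ (so that $Y^*$ is itself smooth), the classical tropical smoothness theorem (cf.\ \cite{mikhalkin2004decomposition} and the argument of Lemma \ref{lem:tailor_comp}, with the roles of $\Delta$ and $\Delta^*$ swapped) implies that the closure $X^*_{b(\kaehl)} \subset Y^*_\Lambda$ intersects each toric stratum smoothly and transversely.

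To make this stratum-by-stratum check explicit, for each cone $\sigma \in \Sigma^*$, the dual face $S^*_\sigma \subseteq \Delta^*$ collects precisely the lattice points $\vec{p} \in \Xinonzero$ whose monomial $z^{\vec{p}}$ does not vanish on the toric stratum $Y^*_\sigma$; the restriction of $W_{b(\kaehl)}$ to a toric chart containing $Y^*_\sigma$ is therefore a Laurent polynomial with Newton polytope $S^*_\sigma$, and the induced regular subdivision of $S^*_\sigma$ inherits unimodularity from the MPCS subdivision of $\Delta^*$. The main subtlety — and the one requiring most care — is to handle the toric fixed points of $Y$ (i.e., vertices of $\Delta^*$), where unimodularity of the triangulation may fail; however, at these points the monomial $z^{\vec{0}} = 1$ has valuation $0$ and dominates, so $X^*_{b(\kaehl)}$ avoids the corresponding toric stratum of $Y^*_\Lambda$ entirely (mirroring the observation in the paper that $X_{t,\kaehl}$ avoids $Y_{\kaehl,0}$). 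This completes the reduction, and smoothness follows.
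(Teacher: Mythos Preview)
Your tropical approach is the right idea and matches the paper's, but the crucial step --- deducing unimodularity of the regular subdivision of $\Delta^*$ from the MPCS condition --- is incorrect. MPCS says $Y_\kaehl$ is smooth away from $Y_{\kaehl,0}$; in fan language, a cone $\tau \in \Sigma_\kaehl$ is smooth whenever the smallest cone of $\Sigma$ containing $\tau$ is \emph{non-maximal}. But every top-dimensional cone of $\Sigma_\kaehl$ lies inside a maximal cone of $\Sigma$ and so corresponds to a point of $Y_{\kaehl,0}$; MPCS says nothing about these. Hence the top-dimensional simplices in the subdivision of $\Delta^*$ --- which are exactly the ones controlling smoothness of $X^*_{b(\kaehl)}$ at points of the dense torus of $Y^*_\Lambda$ --- need not have affine volume $1$. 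Your attempted workaround (that $X^*_{b(\kaehl)}$ avoids the bad strata) cannot help: the potential singularity coming from a non-unimodular top simplex sits over a vertex of the tropical variety, hence in the dense torus, not on the toric boundary of $Y^*$. There is also a side confusion in your writeup: vertices of $\Delta^*$ are rays of $\Sigma$, hence toric divisors of $Y$, not fixed points of $Y$.

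The paper's proof repairs this by invoking the \emph{condition on characteristic}, which you did not use: $\charac(\Bbbk)$ does not divide the affine volume of any simplex in the subdivision. With this in place, together with smoothness of $\Sigma^*$ and simplicialness of $\Sigma_\kaehl$ (the latter from MPCP, not the stronger MPCS), the paper appeals directly to \cite[Proposition~B.1]{Ganatra2023integrality}, a tropical smoothness criterion over non-archimedean fields that packages exactly the stratum-by-stratum argument you were sketching.
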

\begin{proof}
The result follows from \cite[Proposition B.1]{Ganatra2023integrality}, whose hypotheses we now verify. 
Firstly, the coefficient field $\Lambda$ is of the necessary type, and the fan $\Sigma^*$ of the toric variety $Y^*$ is smooth. 
We recall that $X^*_{b(\kaehl)} \subset Y^*$ is the vanishing locus of the function 
$$\sum_{\vec{p} \in \Xinonzero} c_{\vec{p}} \cdot z^{\vec{p}},$$
where 
$$ c_{\vec{p}} = \left\{\begin{array}{ll}
										-1 & \vec{p}=0;\\
										b(\kaehl)^*(\nov_{\vec{p}}) & \vec{p} \in \Xizero;\\
										0 & \text{otherwise.}
										\end{array}\right.$$
As a consequence, we have
$$ \val(c_{\vec{p}}) = \left\{\begin{array}{ll}
										0 & \vec{p}=0;\\
										\kaehl_{\vec{p}} & \vec{p} \in \Xizero\qquad\text{by Lemma \ref{lem:bplambda} below;}\\
										\infty & \text{otherwise.}
										\end{array}\right.$$ 
Therefore, the smallest concave function $\psi:\Delta^* \to \R$ such that $\psi(\vec{p}) \ge -\val(c_{\vec{p}})$ is precisely $-\psi_\kaehl$, where $\psi_\kaehl$ is the function from the introduction. 
The induced subdivision of $\Delta^*$ into domains of linearity of $\psi_\kaehl$ is a decomposition by simplices, by our assumption that $\Sigma_\kaehl$ is simplicial; and the affine volumes of the simplices are not divisible by $\charac(\Lambda) = \charac(\Bbbk)$, by the condition on characteristic. 
Thus the hypotheses of \cite[Proposition B.1]{Ganatra2023integrality} are satisfied, so $X^*_{b(\kaehl)}$ is smooth. 
\end{proof}

\begin{lem}\label{lem:bplambda}
We have $\val\left(b(\kaehl)^*(\nov_{\vec{p}})\right) = \kaehl_{\vec{p}}$.
\end{lem}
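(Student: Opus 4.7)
The plan is to unpack the definition $b(\kaehl)^* = a(\kaehl)^* \circ \Psi^*$ and separately analyze the two factors of $\Psi^*(\nov_\vec{p}) = \nov_\vec{p} \cdot \phi_\vec{p}$ supplied by \cref{prop:apply_vers}. First, I would pin down what the symbol ``$\nov_\vec{p}$'' on the right (inside $R_A$) is: it must denote the monomial $\nov^{\beta_\vec{p}}$, where $\beta_\vec{p} \in \NE_A \subset H_2(X_{t,\kaehl}, X_{t,\kaehl} \setminus D) \cong \Z^{\Xizero}$ is the class dual to $[D_\vec{p}]$, i.e. $\beta_\vec{p} \cdot [D_\vec{q}] = \delta_{\vec{p},\vec{q}}$. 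This identification is forced by the grading: $\Psi^*$ is $M^*$-graded, $\nov_\vec{p} \in R_B$ has $M^*$-degree $-\vec{p}$, and $\phi_\vec{p}$ has $M^*$-degree $0$ by the statement of \cref{prop:apply_vers}, so the other factor must contribute $M^*$-degree $-\vec{p}$; among the monoid basis $\{\nov^\beta\}_\beta$ of $R_A$ this picks out $\beta = \beta_\vec{p}$ (all other contributions can be absorbed into $\phi_\vec{p}$).

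Next, I would compute each factor. Using $a(\kaehl)^*(\nov^\beta) = \nov^{\kaehl(\beta)}$ and the fact from \S\ref{sec:rel_kahl} that $[\omega_\kaehl]$ is Poincar\'e dual to $\sum_\vec{q} \kaehl_\vec{q} [D_\vec{q}]$, I get $\kaehl(\beta_\vec{p}) = \beta_\vec{p} \cdot [\omega_\kaehl] = \kaehl_\vec{p}$, so that $\val(a(\kaehl)^*(\nov^{\beta_\vec{p}})) = \kaehl_\vec{p}$. For the unit factor, I would expand $\phi_\vec{p} = c_0 + \sum_{\beta \in \NE_A \setminus \{0\},\, [\partial \beta] = 0} c_\beta \nov^\beta$ (only $\beta$ with $[\partial \beta] = 0$ occur because of the $M^*$-grading), then observe that $\phi_\vec{p}$ being a unit in the complete local ring $R_A$ forces $c_0 \in \Bbbk^\times$. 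Since $\kaehl$ lies in the interior of the nice cone $\Nef$, one has $\kaehl(\beta) > 0$ for every $\beta \in \NE_A \setminus \{0\}$; hence $a(\kaehl)^*(\phi_\vec{p}) = c_0 + (\text{terms of strictly positive valuation})$, giving $\val(a(\kaehl)^*(\phi_\vec{p})) = 0$.

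Combining, $\val(b(\kaehl)^*(\nov_\vec{p})) = \val(a(\kaehl)^*(\nov^{\beta_\vec{p}})) + \val(a(\kaehl)^*(\phi_\vec{p})) = \kaehl_\vec{p} + 0 = \kaehl_\vec{p}$, as required. The only real subtlety I foresee is the notational matching of ``$\nov_\vec{p}$'' in $R_A$ with the monomial $\nov^{\beta_\vec{p}}$, which is a matter of fixing grading conventions consistently; once that is in place the argument reduces to the two short valuation computations above.
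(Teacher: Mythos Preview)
Your proposal is correct and follows essentially the same approach as the paper's proof: decompose $b(\kaehl)^* = a(\kaehl)^* \circ \Psi^*$, use $\Psi^*(\nov_\vec{p}) = \nov_\vec{p} \cdot \phi_\vec{p}$ from \cref{prop:apply_vers}, and add the valuations of the two factors. The paper is terser---it treats $a(\kaehl)^*(\nov_\vec{p}) = \nov^{\kaehl_\vec{p}}$ as ``by definition'' via the identification $H_2(X_{t,\kaehl},X_{t,\kaehl}\setminus D) \cong \Z^{\Xizero}$ (rather than invoking the $M^*$-grading), and for the unit factor simply notes $\phi_\vec{p} \neq 0$ in $R_A/\fm_A$ to conclude $\val(a(\kaehl)^*(\phi_\vec{p})) = 0$---but the content is the same.
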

\begin{proof} 
Note that $a(\kaehl)^*(\nov_{\vec{p}}) = \nov^{\kaehl_{\vec{p}}}$ by definition. By Proposition \ref{prop:apply_vers}, the homomorphism $\Psi^*$ sends $\nov_{\vec{p}} \mapsto \nov_{\vec{p}} \cdot \phi_{\vec{p}}$ for some units $\phi_{\vec{p}} \in R_A.$ In particular, we have $\phi_{\vec{p}} \neq 0 \in R_A/\fm_A$, so $\val(a(\kaehl)^*(\phi_{\vec{p}})) = 0$. Thus we have
\begin{align*}
    \val\left(b(\kaehl)^*(\nov_{\vec{p}})\right) &= \val\left(a(\kaehl)^*(\Psi^*(\nov_{\vec{p}}))\right) \\
    &= \val\left(a(\kaehl)^*(\nov_{\vec{p}})\right) + \val\left(a(\kaehl)^*(\phi_{\vec{p}})\right) \\
    &= \kaehl_{\vec{p}}
\end{align*}
as required.
\end{proof}
\subsection{Split-generation}\label{sec:splitgeneration}
Let $X^*_{b(\kaehl)}:=\cup_{\sigma}\mathcal{W}_{\sigma,b(\kaehl)}$ be the affine cover of $X^*_{b(\kaehl)}$ given by base-changing the affine cover of $X^*_{R}$ introduced in \S \ref{sec:Bsidedgcategory}.

 We introduce a dg category $\check{\mathcal{S}}(X^*_{b(\kaehl)})$ whose objects are algebraic vector bundles and whose morphism spaces are given by \v{C}ech complexes of internal $\underline{Hom}$ sheaves: 
\begin{align} 
\hom_{\check{\mathcal{S}}(X_{b(\kaehl)}^*)}(E_0,E_1):= \check{C}^*\left(\lbrace \mathcal{W}_{\sigma,{b(\kaehl)}} \rbrace,\underline{Hom}(E_0,E_1)\right). 
\end{align}

\begin{defn} 
\label{defn:cechmodelperf} We define $\operatorname{Perf}_{dg}(X_{b(\kaehl)}^*):=\Perf \check{\mathcal{S}}(X_{b(\kaehl)}^*).$  
\end{defn}

\begin{rmk}\label{rmk:perfuniqueness} 
    It is standard (\cite[Lemma 5.1]{Seidel2003}) that $\operatorname{Perf}_{dg}(X_{b(\kaehl)}^*)$ gives a dg-enhancement of the triangulated category of perfect complexes on $X^*_{b(\kaehl)}$. We also note that by the results of \cite{orlovlunts2009, stellariuniqueness}, all dg-enhancements of this triangulated category are quasi-equivalent. 
\end{rmk}

\begin{proof}[Proof of Theorem \ref{main:higherdim}]
By construction, the category $\cB^{dg}_\kaehl := \cB^{dg}_R \otimes_{R_{B}} \Lambda$ is a strict full subcategory of $\operatorname{Perf}_{dg}(X^*_{b(\kaehl)})$ containing the objects $\cO(i)$. Moreover, these objects split generate $\operatorname{Perf}_{dg}(X^*_{b(\kaehl)})$ by \cite[Theorem 4]{orlov2009remarks}. 
The quasi-isomorphism $\cB^{dg}_R \simeq \cB_R$ induces a quasi-isomorphism $\cB^{dg}_\kaehl \simeq \cB_\kaehl$. 
As $X^*_{b(\kaehl)}$ is smooth, $\operatorname{Perf}_{dg}(X^*_{b(\kaehl)}) \cong D^b_{dg}Coh(X^*_{b(\kaehl)})$. 
Thus we have constructed a quasi-embedding
$$D^b_{dg}Coh(X^*_{b(\kaehl)}) \simeq \perf(\cB_\kaehl) \hookrightarrow \perf(\cA_\kaehl^\bc) \hookrightarrow \perf \fuk(X_{t,\kaehl},D,\kaehl;\Lambda)^\bc;$$
this completes the proof of \cref{it:HMS_embed}.

To prove \cref{it:HMS_generate}, we first observe that because $X^*_{b(\kaehl)}$ is smooth, $\cB_\kaehl$ is homologically smooth by \cite[Corollary 4.4]{Lunts2016}. 
By our assumption that $\fuk(X_{t,\kaehl},D,\kaehl;\Lambda)^\bc$ satisfies the hypotheses enumerated in \cite[Section 2.5]{sheridan2021homological}, we may apply automatic split-generation \cite{Ganatra2016, Sanda2021},
which shows that the image of $\cB_\kaehl$ in $\fuk(X_{t,\kaehl},D,\kaehl;\Lambda)^\bc$ split-generates, because $\cB_\kaehl$ is homologically smooth. 
The result follows.
\end{proof}

\appendix
\section{Adjoints to sectorial inclusions for arclike Lagrangians and the cap functor} \label{sec:adjoints}

Let $i:Y \to X$ be an inclusion of Liouville sectors and $i_*: \cW(Y) \to \cW(X)$ the corresponding functor between partially wrapped Fukaya categories defined in \cite[Section 3.6]{ganatra2020covariantly}. The functor $i_*$ induces a pull-back functor on module categories $i^*:\operatorname{Mod}\mathcal W(X)\to \operatorname{Mod}\mathcal W(Y)$, the ``large'' right adjoint of $i_*$.  In general, the pullback of a representable (i.e., Yoneda) module $i^*\mathcal Y_L$ is not representable, reflecting the possibility that $i_*$ may not have a right adjoint $\cW(X) \to \cW(Y)$ on the level of the smaller categories. The first goal of this appendix is to exhibit a nice sub-class of Lagrangians in $X$, `negative pushoffs of arclike Lagrangians' for which there is a clear geometric interpretation of the functor $i^*$, and to deduce from this the existence of a partial right adjoint on the level of the smaller categories (see \Cref{thm:arclikerepresentable} and \Cref{cor:partialrightadjointgeneral}). 

The second goal of this appendix is to then apply these general statements to the case that  $X = S$ is a Liouville sector with $Y = \Nbd(\partial S) = \eff \times T^*[0,1]$ the boundary sector (strictly speaking, we may need to deform $S$ to obtain boundary coordinates $\Nbd(\partial S) = \eff \times T^*[0,1]$; see \cref{boundarydeformation}. We will assume this is already done for the discussion here). 
The composition of the fully faithful stabilization functor $stab: \W(\eff) \hookrightarrow \W(\eff \times T^*[0,1])$ with the sector inclusion functor $i_*$ is often denoted $\cup: \W(\eff) \to \W(S)$ and called the {\em cup functor}. \Cref{lem:def_capl} below asserts the existence of a partially defined right adjoint $\cap$, the {\em cap functor}, to $\cup$ on the level of the smaller categories.

We say that a Lagrangian submanifold $K$ of a Liouville sector $X$ is {\em arclike} (\Cref{def:arclike}) if:
\begin{itemize} 
    \item $K$ intersects $\partial X$ transversely in $\partial K$ and 
    \item there exists an $\alpha$-linear defining function $I:\partial X \to \R$ (in the sense of \cite[Definition 2.4]{ganatra2020covariantly}) which is constant over $\partial K$, for some $\alpha > 0$. For this $I$, we say $K$ is `$I$-boundary-arclike'. 
\end{itemize} 
We note that by \cref{lem:alpha_alpha'} below, if $K$ is arclike and $\alpha>0$ is \emph{any} positive number, there exists an $\alpha$-linear defining function $I:\partial X \to \mathbb{R}$ for which $K$ is $I$-boundary-arclike. By restricting to a level of such an $I$, $\partial K$ can be viewed as an exact Lagrangian submanifold in the boundary sector $\eff = \partial X / \R$, inheriting all of its brane data from $K$. By \cref{lem:IFunctionAdaptedToK}, such an $I$ function can always be extended to an $\alpha$-linear function defined on some neighbourhood $\Nbd^Z(\partial X)$ of $\partial X$, which we call an `extended $\alpha$-defining function', while still containing $K \cap \Nbd^Z(\partial X)$ in a single level-set --- if this condition holds for a given $K$ and extended $I$, we say $K$ is `$I$-arclike' (see \cref{def:Iarclike}).

Associated to an arclike Lagrangian $K$, there is a {\em (cylindrized) negative pushoff} $K^-$ (see \S\ref{subsec:arclike}), which is an object of $\cW(X)$ that (a priori depends on a choice of extended 1-defining function for which $K$ is $I$-arclike but) is well-defined up to isomorphism (see \cref{negativepushoffindependence}). 
The construction of this negative pushoff requires $X$ to have {\em exact} boundary in the sense of \cite[Definition 2.10]{ganatra2020covariantly} so that $I$ induces a Liouville identification $\Nbd^Z(\partial X) \cong \eff \times T^*[0,1]$, but this condition can always be arranged by a convex-at-infinity deformation of Liouville sectors which preserves the wrapped Fukaya category and the notion of $I$-arclike, see \cref{boundarydeformation}; hence, there is an induced isomorphism class of $K^-$ even if $X$ does not have exact boundary.
\begin{thm}\label{thm:arclikerepresentable}
  Let $i:Y\to X$ be an inclusion of Liouville sectors.
  Let $K\subset X$ be a (compact exact)  Lagrangian $K$ which is arclike for $X$ such that $K \cap Y$ is arclike for $Y$. Then there is an isomorphism of Yoneda modules
  \[i^* \mathcal{Y}_{K^-} \simeq \mathcal{Y}_{(K \cap X)^-}.\]
  In particular, there are functorial isomorphisms $Hom_{H\cW(X)}(i(L),K^-)= Hom_{H\cW(Y)}(L,(K \cap X)^-)$ for any object $L$ of $\W(Y)$ (recall $Hom_{H\cC}(B,A):= H^*(\hom_{\cC}(A,B))$ for an $\ainf$ category $\cC$).
  \label{thm:arclikeRestriction}
\end{thm}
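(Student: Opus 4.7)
The plan is to exhibit the desired isomorphism of Yoneda modules by producing a geometric identification of the Floer complexes $\hom_{\cW(X)}(i_* L, K^-)$ and $\hom_{\cW(Y)}(L, (K\cap Y)^-)$ for every test object $L$ of $\cW(Y)$, functorial enough to upgrade to a module isomorphism. The main content is a confinement argument: every holomorphic strip (and polygon) counted on the $X$-side already lies in $Y$.

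First, I would make the setup compatible. Using \cref{lem:IFunctionAdaptedToK} and the arclike hypotheses for both $K \subset X$ and $K \cap Y \subset Y$, I would choose extended $1$-defining functions $I_X$ for $\partial X$ and $I_Y$ for $\partial Y$, together with compatible exact-boundary deformations $X'$ and $Y'$ as in \cref{boundarydeformation} (which preserve both the Fukaya categories and the arclike condition, invoking \cref{negativepushoffindependence} for independence of the negative pushoff up to isomorphism). Along the overlap $i(Y) \cap \Nbd^Z(\partial X)$, I would arrange that $I_X$ and $I_Y$ coexist so that the cylindrical collars $\eff_X \times T^*[0,1]$ and $\eff_Y \times T^*[0,1]$ (where $\eff_X = \partial X$, $\eff_Y = \partial Y$) glue compatibly. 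With this, one can construct $K^-$ in $X$ and $(K\cap Y)^-$ in $Y$ so that, inside the common cylindrical neighborhood, $K^- \cap Y$ literally equals (or is cylindrically isotopic to) $(K \cap Y)^-$.

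Second, I would analyze the Hamiltonian wrapping used for $i_*L$ and $K^-$. Because $L$ is compactly supported in $Y$, the pushforward $i_*L$ is contained in $i(Y)$. The negative pushoff $K^-$ has the property that outside of $Y$ its boundary-at-infinity lies in the forward-stopped region dictated by $I_X$ (compare \cref{negativepushoffforwardstopped}): no Reeb chord of $i_*L$ can terminate on $K^-$ in this forward-stopped region. Consequently the Floer generators on both sides are in bijection already at the level of chords.

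The main obstacle is the confinement of holomorphic strips and polygons. My approach is to view $\partial Y \subset X$ (or more precisely the hypersurface in $X$ along which $Y$ is glued to $X \setminus Y$) as a Lagrangian-boundary obstacle using the integrated maximum principle applied to an appropriate plurisubharmonic function built from $I_X$ away from $\partial X$; this is a variant of the arguments in \cite{ganatra2020covariantly, ganatra2020symplectic, ganatra2018sectorial} for sector inclusions (e.g.\ \cite[Lemma 3.4]{ganatra2018sectorial}). Concretely, near the interface, the cylindrical structure produced by $I_X$ provides a hypersurface on which the $\bar\partial$-equation can be shown to forbid outward escape: because $K^-$ was pushed off in the negative Reeb-like direction, all generators lie on the $Y$-side, and the maximum principle then rules out any disk component escaping into $X \setminus Y$. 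Applied to the moduli spaces underlying $\mu^k$ with inputs from $\cW(Y)$ and output paired with $K^-$, this produces a chain-level isomorphism of $\cW(Y)$-modules.

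Finally, I would assemble the result. The confinement gives a bijection of all moduli spaces of holomorphic polygons defining the $\cW(Y)$-module structures on $i^*\cY_{K^-}$ and $\cY_{(K\cap Y)^-}$, respecting perturbation data chosen compatibly on both sides. This bijection preserves virtual dimension and signs, and is compatible with the $A_\infty$-operations and continuation maps used to define the action of $\cW(Y)$. Hence it yields the desired isomorphism $i^*\cY_{K^-} \simeq \cY_{(K\cap Y)^-}$ of Yoneda modules, and taking cohomology yields the functorial identifications $\Hom_{H\cW(X)}(i(L), K^-) \cong \Hom_{H\cW(Y)}(L, (K\cap Y)^-)$.
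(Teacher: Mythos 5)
There is a genuine structural gap in the proposal, stemming from an attempt to directly compare $K^-$ and $(K\cap Y)^-$ geometrically.

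The claim in your first step that ``inside the common cylindrical neighborhood, $K^- \cap Y$ literally equals (or is cylindrically isotopic to) $(K\cap Y)^-$'' is false. The negative pushoff $K^-$ only modifies $K$ near $\partial X$, so $K^- \cap Y = K \cap Y$ near $\partial Y$; in particular $K^-\cap Y$ still touches $\partial Y$. By contrast, $(K\cap Y)^-$ is pushed off near $\partial Y$ and by construction avoids $\partial Y$. These two objects are not the same, nor cylindrically isotopic, nor even both objects of $\W(Y)$ (the former is not). Because of this, your later confinement step also fails: when you try to apply a maximum-principle argument at the interface $\partial Y$ to strips and polygons with boundary on $K^-$ (and wrappings of $i_*L$), you do not have the needed control, since $K^-$ crosses $\partial Y$ with no arclike (level-set) condition there. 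The maximum-principle/open-mapping confinement at $\partial Y$ is only available for the \emph{arclike} Lagrangian $K$, not its pushoff $K^-$.

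The paper resolves this by introducing an intermediary that you omit: the auxiliary ``arclike module'' $CF^*(K,-)$ of \cref{arclikemodule}, defined over $\W(X)$ using the compact arclike $K$ itself (which touches $\partial X$ so is not an object of $\W(X)$, but still defines a module via the open mapping theorem, \cref{openmappingboundary}). The theorem then follows in two steps: (i) \cref{arclikerestriction} shows $i^*CF^*(K,-) \simeq CF^*(K\cap Y,-)$, where now confinement is applied to $K$ near $\partial Y$, which is legitimate precisely because $K\cap Y$ is $I_Y$-arclike; and (ii) \cref{negativepushoffrep} shows $CF^*(K,-) \simeq \hom_{\W(X)}(K^-,-)$ and likewise $CF^*(K\cap Y,-)\simeq\hom_{\W(Y)}((K\cap Y)^-,-)$, via a cofinal wrapping argument that replaces $K$ by a sequence of wrappings of $K^-$ converging into $\partial X$ without crossing $\partial_\infty Y$ (so that the continuation maps are isomorphisms against $L \subset Y$, cf. \cref{arclikesubsector}). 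It is the representability via $CF^*(K,-)$, not a geometric coincidence of the two pushoffs, that links $K^-$ and $(K\cap Y)^-$. Your second step (``no Reeb chord of $i_*L$ can terminate on $K^-$ in this forward-stopped region, consequently the generators are in bijection'') conflates the forward-stopped property with a statement about chords; the actual mechanism is that the cofinal wrapping of $K^-$ does not cross $\partial_\infty Y$, so the colimit over wrappings stabilizes at the first stage for $L$ supported in $Y$.
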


\begin{cor}\label{cor:partialrightadjointgeneral}
    Given $i: Y \to X$ as above, let $\cP({Y \subset X}) \subset \W(X)$ denote the full subcategory consisting of negative pushoffs (in $X$) of Lagrangians $K$ such that $K \subset X$ and $(K \cap Y) \subset Y$ are both arclike.
        Then, $i_*: \W(Y) \to \W(X)$ has a partially defined right adjoint along $\cP({Y \subset X})$, \[i^*: \cP({Y \subset X}) \to \W(Y),\] which on the level of objects sends $K^- \mapsto (K \cap Y)^-$ (up to isomorphism).
\end{cor}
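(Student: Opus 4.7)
The plan is to deduce the Corollary formally from Theorem~\ref{thm:arclikerepresentable} via a standard Yoneda-theoretic argument: representability of pullback-of-Yoneda modules under $i^*$ on the subcategory $\cP(Y \subset X)$ is equivalent to the existence of a partial right adjoint to $i_*$ defined on that subcategory.

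More precisely, I would proceed in three steps. First, consider the composition
\[\Phi := i^* \circ \mathcal{Y}_{\W(X)}\big|_{\cP(Y \subset X)} : \cP(Y \subset X) \longrightarrow \operatorname{Mod}\W(Y),\]
which sends an object $K^-$ to the pulled-back module $L \mapsto \hom_{\W(X)}(i_*L, K^-)$. Theorem~\ref{thm:arclikerepresentable} asserts exactly that $\Phi(K^-) \simeq \mathcal{Y}_{(K \cap Y)^-}$, so the essential image of $\Phi$ is contained in the essential image of the cohomologically fully faithful Yoneda embedding $\mathcal{Y}_{\W(Y)} : \W(Y) \hookrightarrow \operatorname{Mod}\W(Y)$.

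Second, I would apply Lemma~\ref{lem:const_fun_easy} to the pair $(\mathcal{Y}_{\W(Y)}, \Phi)$, which produces an $A_\infty$-functor $i^* : \cP(Y \subset X) \to \W(Y)$ together with a natural quasi-isomorphism $\mathcal{Y}_{\W(Y)} \circ i^* \simeq \Phi$; on objects, this $i^*$ sends $K^- \mapsto (K \cap Y)^-$ up to quasi-isomorphism, as required. Third, to verify the partial-adjunction property, evaluate the natural quasi-isomorphism $\Phi(K^-) \simeq \mathcal{Y}_{i^*(K^-)}$ of $\W(Y)$-modules at an object $L \in \W(Y)$ to obtain
\[\hom_{\W(X)}(i_*L, K^-) = \Phi(K^-)(L) \simeq \mathcal{Y}_{i^*(K^-)}(L) = \hom_{\W(Y)}(L, i^*(K^-)),\]
and observe that this lifts the chain-level module quasi-isomorphism provided by Theorem~\ref{thm:arclikerepresentable} to a quasi-isomorphism of $\W(Y)$--$\cP(Y \subset X)$ bimodules, which is precisely the data of a partially defined right adjoint in the sense of Appendix~\ref{sec:adjoints}.

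All of the real geometric content lives in Theorem~\ref{thm:arclikerepresentable} (whose proof is the main business of the appendix); the present corollary is a purely formal consequence and the only subtlety is the standard $A_\infty$-bookkeeping involved in promoting a pointwise representability statement to a genuine functor with a coherent natural transformation, which is already handled by the Yoneda-lifting lemma invoked in Step~2.
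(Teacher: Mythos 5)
Your argument is correct and follows essentially the same route as the paper, which simply cites \cref{cor:partial_adjoint} (itself a one-line consequence of \cref{cor:representable_bimodules} and hence of \cref{lem:const_fun_easy}). You have unpacked that packaged statement, applying \cref{lem:const_fun_easy} directly to the pair $(\mathcal{Y}_{\W(Y)}, i^*\circ\mathcal{Y}|_{\cP})$ and then reading off the bimodule isomorphism; the only cosmetic point worth noting is that the paper's chain-level convention for the Yoneda module of $K^-$ is $\hom_{\W(X)}(K^-,-)$, so the pulled-back module should read $L \mapsto \hom_{\W(X)}(K^-, i_*L)$ at the chain level (which recovers your $\Hom_{H\W(X)}(i_*L,K^-)$ after passing to cohomology under the paper's order-reversing $\Hom$ convention).
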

\begin{proof}
    Apply \Cref{cor:partial_adjoint} to \Cref{thm:arclikerepresentable}.
\end{proof}

In the body of this article, we make use of the following application of the results above.
Let $(\Ldom,\fo)$ be a sutured Liouville domain, and $S$ the associated Liouville sector as in \cite[Definition 2.14]{ganatra2020covariantly}.  Fix an extended $\alpha=1$-defining function $I: \Nbd^Z(\partial S) \to \R$. 
After a deformation of Liouville structure near $\partial S$ as in \cite[Proposition 2.28]{ganatra2020covariantly} (see \cref{boundarydeformation}), the function $I$ determines a boundary sector inclusion: $i: \eff \times T^*[0,1] \hookrightarrow S$ covering a neighbourhood of $\partial S$, under which $I$ becomes the projection to $\R$ in $T^*[0,1] = [0,1]\times \R$, and where $\eff$ is the completion of $\fo$.
By composing the induced functor $i_*$ with stabilization $stab: \cW(\eff) \to \cW(\eff \times T^*[0, 1])$ \cite[\S 8.4]{ganatra2018sectorial}, one obtains a cup functor $\cup: \cW(\eff)\to \cW(S)$.  
$I$ also determines a collection of $I$-arclike Lagrangians, which are (implicitly shrinking $\eff \times T^*[0,1]$ if necessary or flowing) also $I$-arclike when intersected with $\eff \times T^*[0,1]$. By applying the above results to $i: \eff \times T^*[0,1] \to S$, we will obtain:

\begin{thm}\label{lem:def_capl}
    Fix an extended $\alpha=1$-defining function $I$, and let $\cP(S) \subset \W(S)$ denote the associated full subcategory of negative pushoffs of $I$-arclike Lagrangians in $S$. The induced cup functor $\cup: \cW(\eff) \to \cW(S)$ has a partially defined right adjoint along $\cP(S)$, called the {\em cap functor} and denoted 
    \begin{equation}
    \cap: \cP(S) \to \cW(\eff),
\end{equation} 
    with image contained in the compact Fukaya category $\fuk(\eff) \subset \cW(\eff)$ and which sends $L^-$ to an object isomorphic to $\partial L$. 
\end{thm}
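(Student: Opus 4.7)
The plan is to construct $\cap$ by composing a quasi-inverse of the stabilization with the partial right adjoint supplied by \Cref{cor:partialrightadjointgeneral} applied to the boundary sector inclusion $i:\eff\times T^*[0,1]\hookrightarrow \Sprime$. Since $\cup=i_*\circ stab$ by \eqref{cupeqn}, and right adjoints reverse composition, this will automatically be a partial right adjoint to $\cup$.

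First, after (if necessary) deforming $\Sprime$ so it has exact boundary as in \Cref{adjustedsectors} --- which does not affect $\cW(\Sprime)$ nor the notion of $I$-arclikeness --- the extended $1$-defining function $I$ furnishes an embedding $i:Y:=\eff\times T^*[0,1]\hookrightarrow \Sprime$ under which $I$ becomes the cotangent coordinate on the $T^*[0,1]$ factor. I claim $\cP(\Sprime)\subset\cP(Y\subset\Sprime)$: for any $I$-arclike $L\subset\Sprime$, the condition $L\cap\Nbd^Z(\partial\Sprime)\subset\{I=0\}$ together with transversality to $\partial\Sprime$ forces $L\cap Y$ to be arclike in $Y$ with respect to the restriction of $I$ (which is still an extended $1$-defining function on $\partial Y$). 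Consequently \Cref{cor:partialrightadjointgeneral} furnishes a partial right adjoint
\[ i^*:\cP(\Sprime)\longrightarrow\cW(Y) \qquad\text{sending}\qquad L^-\longmapsto (L\cap Y)^-. \]

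Next, recall that $stab:\cW(\eff)\to\cW(Y)$ is the K\"unneth embedding with the cotangent fibre $\{x=\tfrac{1}{2}\}$ of $T^*[0,1]$. Since this fibre split-generates $\cW(T^*[0,1])$, the stabilization functor is a quasi-equivalence by \cite{ganatra2018sectorial}; fix a quasi-inverse $stab^{-1}$ and set
\[ \cap := stab^{-1}\circ i^*:\cP(\Sprime)\longrightarrow\cW(\eff). \]
The adjunction isomorphism $\hom_{\cW(\eff)}(K,\cap L^-)\simeq\hom_{\cW(Y)}(stab(K),i^*L^-)\simeq\hom_{\cW(\Sprime)}(i_*stab(K),L^-)=\hom_{\cW(\Sprime)}(\cup K,L^-)$ (combining the quasi-inverse property of $stab^{-1}$ with \Cref{cor:partialrightadjointgeneral}) promotes $\cap$ to a partial right adjoint of $\cup$ along $\cP(\Sprime)$ in the sense of \Cref{sec:adjoints}.

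The main geometric step, which I expect to be the key obstacle, is the object-level identification $\cap(L^-)\cong\partial L$. Since $L$ is $I$-arclike, $L\cap\Nbd^Z(\partial \Sprime)$ lies in the zero section of the $T^*[0,1]$-direction and flows under $X_I$ from $\partial L\subset\partial\Sprime$, so $L\cap Y$ is isotopic (through arclike Lagrangians of $Y$) to the product $\partial L\times[0,1]$ in $\eff\times[0,1]\subset Y$. A direct inspection of the negative pushoff operation in $Y$ --- which renders this arclike Lagrangian cylindrical at the negative end of the $T^*[0,1]$-coordinate while leaving the $\eff$-factor unchanged --- exhibits $(L\cap Y)^-$ as $stab(\partial L)$, i.e.\ the product of $\partial L$ with the appropriately oriented and graded cotangent fibre. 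Applying $stab^{-1}$ recovers $\partial L$, which is compact since $L$ is, and therefore defines an object of $\cF(\eff)\subset\cW(\eff)$ as claimed.
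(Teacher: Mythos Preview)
Your proposal is correct and follows essentially the same approach as the paper. The paper packages the argument slightly differently: rather than first producing the functor $i^*$ via \Cref{cor:partialrightadjointgeneral} and then composing with $stab^{-1}$, it works at the module level throughout, showing directly (in \Cref{cor:capIsomorphism}) that $\cup^*\mathcal{Y}_{L^-}\simeq\mathcal{Y}_{\partial L}$ and then invoking \Cref{cor:partial_adjoint} once; this sidesteps the need to discuss a quasi-inverse to $stab$, using only its full faithfulness. Your key geometric identification $(L\cap Y)^-\simeq stab(\partial L)$ is exactly the content of \Cref{lem:boundarystabilization}, and your observation that $L\cap Y=\partial L\times\gamma$ (after possibly shrinking $Y$) is the same reduction the paper makes. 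One minor slip: $I$-arclikeness means $L\cap\Nbd^Z(\partial S)\subset\{I=c\}$ for some constant $c$, not necessarily $c=0$; and you should note that $L\cap Y$ is arclike at \emph{both} boundary components of $Y$ (the paper handles this via \Cref{lem:boundarystabilization}).
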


After a brief subsection on algebraic preliminaries, the remaining subsections are devoted to the study of arclike Lagrangians and their negative pushoffs, and, from there, the proofs of \Cref{thm:arclikerepresentable} and \Cref{lem:def_capl}.

\subsection{Results about \texorpdfstring{$\ainf$}{A-infinity} functors}
\label{section:adjoints}
To begin, we recall a few standard facts about modifying and constructing $\ainf$ functors which are used repeatedly in this appendix and elsewhere in the paper. The first fact concerns our ability to modify an $\ainf$ functor $\cF$ by replacing every object $\cF(X)$ by an isomorphic object. More generally, it allows us to modify $\cF$ to take values in any fully faithfully embedded category of the codomain with the same or larger essential image: 
\begin{lem}\label{lem:const_fun_easy}
Let $\cA \xrightarrow{\cF} \cC \xleftarrow{\cG} \cB$ be $A_\infty$ functors, with $\cG$ cohomologically fully faithful, such that the image of $H(\cF)$ is contained in the essential image of $H(\cG)$ (i.e., for every object $X$ of $\cA$, there exists an object $Y$ of $\cB$ such that $\cF X$ is quasi-isomorphic to $\cG Y$). 
Then, there exists an $A_\infty$ functor $\cI: \cA \to \cB$, unique up to quasi-isomorphism, such that $\cF$ is naturally quasi-isomorphic to $\cG \circ \cI$. 
If $\cF$ is furthermore cohomologically fully faithful and has the same essential image as $\cG$, then $\cI$ is a quasi-equivalence.
\end{lem}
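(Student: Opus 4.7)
The plan is to factor $\cF$ (up to natural quasi-isomorphism) as $\cG \circ \cI$ by first replacing $\cF$ with a quasi-isomorphic functor that lands in the essential image $\cC' \subset \cC$ of $\cG$, and then composing with a quasi-inverse of $\cG: \cB \to \cC'$. Since $\cG$ is cohomologically fully faithful, its corestriction $\cG: \cB \to \cC'$ is an $A_\infty$ quasi-equivalence, so we may choose an $A_\infty$ quasi-inverse $\cH: \cC' \to \cB$ with natural quasi-isomorphisms $\cG \circ \cH \simeq \id_{\cC'}$ and $\cH \circ \cG \simeq \id_\cB$.

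Next, I would construct an $A_\infty$ functor $\cF': \cA \to \cC'$ together with a natural quasi-isomorphism $\cF \simeq \cF'$. For each object $X$ of $\cA$, the hypothesis supplies a quasi-isomorphism $\phi_X: \cF(X) \to \cG(Y_X)$ in $\cC$. The standard object-wise replacement procedure for $A_\infty$ functors (pick Floer data $\phi_X$ and inductively adjust the higher components of $\cF$ to yield a new $A_\infty$ functor $\cF'$ with $\cF'(X) = \cG(Y_X)$, producing a natural quasi-isomorphism in the process) then gives $\cF'$; the existence and uniqueness up to natural quasi-isomorphism of $\cF'$ is routine homotopy-theoretic bookkeeping in the dg-nerve / twisted-complex model of $A_\infty$ functor categories. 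Setting $\cI := \cH \circ \cF'$ yields $\cG \circ \cI = \cG \circ \cH \circ \cF' \simeq \cF' \simeq \cF$.

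For uniqueness, if $\cI_1$ and $\cI_2$ both satisfy $\cG \circ \cI_j \simeq \cF$, then
\[ \cI_1 \simeq \cH \circ \cG \circ \cI_1 \simeq \cH \circ \cF \simeq \cH \circ \cG \circ \cI_2 \simeq \cI_2, \]
where we use $\cH \circ \cG \simeq \id_\cB$. For the final claim, assume $\cF$ is also cohomologically fully faithful with the same essential image as $\cG$. Essential surjectivity of $\cI$: given $Y \in \cB$, the object $\cG(Y)$ lies in the essential image of $\cF$, so $\cG(Y) \simeq \cF(X)$ for some $X$; then $\cG(\cI(X)) \simeq \cF(X) \simeq \cG(Y)$, and cohomological full faithfulness of $\cG$ forces $\cI(X) \simeq Y$. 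Cohomological full faithfulness of $\cI$: the chain-level map $\cI^1: \hom_\cA(X_0,X_1) \to \hom_\cB(\cI X_0, \cI X_1)$ fits into a commutative-up-to-homotopy triangle with $\cF^1$ and $\cG^1 \circ \cI^1 \simeq (\cG \cI)^1 \simeq \cF^1$; since $\cF^1$ and $\cG^1$ are quasi-isomorphisms, so is $\cI^1$.

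The main (mild) obstacle is the object-wise replacement step building $\cF'$ from $\cF$ together with the data $\{\phi_X\}$: one must verify that one can indeed adjust the higher $\ainf$ components of $\cF$ inductively so that the resulting $\cF'$ is an honest $\ainf$ functor and the accompanying natural transformation is a quasi-isomorphism. This is essentially the statement that the essential image of an $A_\infty$ functor is closed under quasi-isomorphism of objects, a well-known consequence of homotopy transfer along the $\phi_X$'s, and causes no difficulty.
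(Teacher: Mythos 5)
Your proof is correct and follows essentially the same approach as the paper's: restrict $\cC$ to the essential image $\cC'$ of $\cG$, invert $\cG$ there using \cite[Theorem 2.9]{seidel2008fukaya}, and compose with $\cF$. The objectwise replacement producing $\cF'$ is superfluous: since the essential image $\cC'$ is by definition the \emph{full} subcategory of $\cC$ on objects quasi-isomorphic to some $\cG(Y)$, the hypothesis guarantees every $\cF(X)$ already lies in $\cC'$, and fullness of $\cC'\subset\cC$ makes $\cF$ factor through $\cC'$ on the nose with no adjustment to the higher components.
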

\begin{proof}
    We may replace $\cC$ with the essential image of $\cG$. 
    As $\cG$ is cohomologically fully faithful, it is a quasi-equivalence onto this essential image. 
    Thus, we may invert it, see \cite[Theorem 2.9]{seidel2008fukaya}, and set $\cI = \cG^{-1} \circ \cF$ ($\cG^{-1}$ is unique up to quasi-isomorphism; hence so is $\cI$).
    If $\cF$ is cohomologically fully faithful and has the same essential image as $\cG$, then it is a quasi-equivalence onto the essential image of $\cG$, so $\cI$ is also a quasi-equivalence.
\end{proof}
The second fact, really a corollary of the first, concerns the construction of functors from representable bimodules. 
Recall that given an $\ainf$ functor $f: \cA \to \cB$, one can associate an $\ainf$ $\cA\!-\!\cB$ bimodule, the {\em right graph of $f$} $\Gamma_f^R = \hom_{\cB}(f(-), -)$ and a $\cB\!-\!\cA$ bimodule, the {\em left graph of $f$} $\Gamma_f^L = \hom_{\cB}(-, f(-))$ (these are the two-sided pullbacks of the diagonal bimodule $\cB_{\Delta}$  along $(f, id)$ and $(id,f)$ respectively).\footnote{The confusing convention arises because when we define the cohomology category of an $A_\infty$ category, the order of the inputs gets swapped: $\Hom^*_{H\cA}(X,Y) \coloneqq H^*(hom^*(Y,X))$. In particular, after taking cohomology, our definitions of left/right graph (and subsequent definitions of left/right adjoint functors) agree with the usual ones. We remark that the cohomology category of an $A_\infty$ category was defined in \cite[Section 3.2]{Sheridan2015a} as the \emph{opposite} of the definition just stated (opposite in the usual sense, i.e., no Koszul signs involved); however that definition, when combined with the definition of the $A_\infty$ category associated to a dg category [\emph{op. cit.}, Definition 3.4], does not agree with the standard definition of the cohomology category of a dg category. The last-named author apologizes.}
An $\cA\!-\!\cB$ bimodule $\cP$ is said to be {\em right-representable} (respectively {\em left-representable}) if for all $x \in \cA$, $\cP(x,-)$ (respectively $\cP(-,y)$ for all $y \in \cB$) is representable, i.e., isomorphic to a Yoneda module. 
\begin{cor}\label{cor:representable_bimodules}
    $\cP$ is right representable if and only if there exists a functor $F: \cA\to \cB$ (unique up to isomorphism) such that the right graph of $F$ is isomorphic to $\cP$. Similarly, if $\cP$ is left representable, it must be isomorphic to the left graph of a (unique up to isomorphism) functor $G: \cB \to \cA$.
\end{cor}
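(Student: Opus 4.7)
The ``if'' direction is immediate: by construction $\Gamma_F^R(x,-) = \hom_\cB(F(x),-) = \mathcal Y_{F(x)}$ is a Yoneda module for every $x \in \cA$. Similarly $\Gamma_G^L(-,y) = \mathcal Y_{G(y)}$ is a Yoneda module for every $y$. So we focus on the converse and on uniqueness.

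The plan is to use the standard currying correspondence between $\cA\!-\!\cB$ bimodules and $A_\infty$ functors $\cA \to \operatorname{Mod}\!-\!\cB$. Given an $\cA\!-\!\cB$ bimodule $\cP$, we produce an $A_\infty$ functor
\[ \tilde{\cP} \colon \cA \to \operatorname{Mod}\!-\!\cB, \qquad x \mapsto \cP(x,-), \]
by packaging the $\cA$-action structure maps of $\cP$ as the higher-order components of $\tilde{\cP}$. The bimodule structure equations are exactly the $A_\infty$-functor equations for $\tilde{\cP}$. Under this correspondence, the right graph $\Gamma_F^R$ corresponds to the composition $\mathcal Y \circ F$ of $F$ with the Yoneda embedding $\mathcal Y \colon \cB \hookrightarrow \operatorname{Mod}\!-\!\cB$; two bimodules are isomorphic if and only if the corresponding functors are naturally quasi-isomorphic.

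Now assume $\cP$ is right representable. By hypothesis, for each $x \in \cA$ there exists $F(x) \in \cB$ with $\cP(x,-) \simeq \mathcal Y_{F(x)}$; equivalently, the functor $\tilde{\cP}$ takes values, up to isomorphism, in the essential image of $\mathcal Y$. Since $\mathcal Y$ is cohomologically fully faithful (Yoneda), we are in the setting of \cref{lem:const_fun_easy} applied to $\cF = \tilde{\cP}$ and $\cG = \mathcal Y$. That lemma produces an $A_\infty$ functor $F \colon \cA \to \cB$, unique up to quasi-isomorphism, such that $\mathcal Y \circ F$ is naturally quasi-isomorphic to $\tilde{\cP}$. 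Undoing the currying correspondence, this says exactly $\Gamma_F^R \simeq \cP$. Uniqueness of $F$ (up to quasi-isomorphism) is precisely the uniqueness clause of \cref{lem:const_fun_easy}.

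For the left-representable statement, repeat the argument with the contravariant currying $\tilde{\cP}^{op} \colon \cB \to \operatorname{Mod}\!-\!\cA^{op}$, $y \mapsto \cP(-,y)$, and the Yoneda embedding on the other side; under this correspondence, left graphs of functors $G \colon \cB \to \cA$ correspond to compositions $\mathcal Y \circ G$, and the same application of \cref{lem:const_fun_easy} yields $G$. The only step requiring any care is verifying that the currying dictionary (bimodules $\leftrightarrow$ functors into module categories, and graphs $\leftrightarrow$ compositions with Yoneda) is an equivalence of $A_\infty$ structures on the nose, rather than merely up to homotopy; this is standard but notationally involved, and is the main piece of bookkeeping in the argument.
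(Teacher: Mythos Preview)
Your proof is correct and follows essentially the same approach as the paper: both view the bimodule $\cP$ as a functor $\cA \to \operatorname{Mod}\cB$ via currying, then apply \cref{lem:const_fun_easy} with $\cG$ the Yoneda embedding to extract $F$ (with uniqueness), and handle the left-representable case by currying on the other side. Your write-up is somewhat more explicit about the currying dictionary, but the argument is the same.
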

\begin{proof}
    For the first assertion, the ``if'' direction is tautological. For the ``only if,'' viewing $\cP$ as a functor from $\cP: \cA \to \operatorname{Mod} \cB$, apply Lemma \ref{lem:const_fun_easy} to $\cA \xrightarrow{\cP} \operatorname{Mod} \cB \xleftarrow{\cY} \cB$ where $\cY$ is the Yoneda embedding.  The Lemma also tells us $F$ is unique up to isomorphism and that the graph of $F$, which as a functor $\cA \to \operatorname{Mod} \cB$ is $\cY \circ F$, is isomorphic to $\cP$. The second assertion is similar (and begins by viewing $\cP$ as $\cP: \cB^{op} \to \operatorname{Mod}\cA^{op}$).
\end{proof}
We will apply \cref{cor:representable_bimodules} to the problem of constructing adjoints or partial adjoints to functors.  
Recall that $f: \cA \to \cB$ is {\em right adjoint to} (or {\em has left adjoint})  $g: \cB \to \cA$ if there is an isomorphism of $\cA\!-\!\cB$ bimodules $\Gamma_f^R = \Gamma_g^L$. 
More generally, given a subcategory $i: \cC \subset \cB$, we say that $f: \cA \to \cB$ has a {\em partially-defined left adjoint} along $\cC$ given by $g: \cC \to \cA$ if there is an isomorphism of $\cA\!-\!\cC$ bimodules between the $\cC$-restricted right graph of $f$, $(\Gamma_f^R)|_{\cA \times \cC} = \hom_{\cB}(f(-),i(-)) $ and the left graph of $g$, $\Gamma_g^L = \hom_{\cA}(-, g(-))$.
An immediate consequence of Corollary \ref{cor:representable_bimodules} is:
\begin{cor}\label{cor:partial_adjoint}
    A functor $f: \cA \to \cB$ has a partially defined left adjoint along $i: \cC \subset \cB$ if and only if the right graph of $f$ restricted to $\cC$, $(\Gamma_f^R)|_{\cA \times \cC} = \hom_{\cB}(f(-),i(-))$ is right representable; that is, if for every $c \in \cC$, if the Yoneda module $i^*\hom_{\cB}(-,c)$ is representable. \qed
\end{cor}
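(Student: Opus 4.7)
The plan is to derive this as an essentially immediate corollary of Corollary \ref{cor:representable_bimodules}. First I would unpack the definition of a partially defined left adjoint to $f$ along $\cC$: it is by definition the data of a functor $g: \cC \to \cA$ together with an isomorphism of $\cA$-$\cC$ bimodules
\[
(\Gamma_f^R)|_{\cA \times \cC} = \hom_{\cB}(f(-), i(-)) \;\cong\; \hom_{\cA}(-, g(-)) = \Gamma_g^L.
\]
The forward implication is then tautological: given such $g$, for each fixed $c \in \cC$ the $\cA$-module $\hom_{\cB}(f(-), i(c))$ is represented by the object $g(c) \in \cA$.

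For the reverse direction, assume that for every $c \in \cC$, the $\cA$-module $\hom_{\cB}(f(-), i(c))$ is representable. By the definition preceding Corollary \ref{cor:representable_bimodules}, this is precisely the statement that the $\cA$-$\cC$ bimodule $(\Gamma_f^R)|_{\cA \times \cC}$ is \emph{left-representable}: for every $y = c$ in the right-hand category $\cC$, the module $\cP(-, y)$ is Yoneda. I would then invoke the second half of Corollary \ref{cor:representable_bimodules}, which produces a functor $g: \cC \to \cA$, unique up to quasi-isomorphism, together with an isomorphism $\Gamma_g^L \simeq (\Gamma_f^R)|_{\cA \times \cC}$. By the unpacking above, this $g$ is exactly a partial left adjoint to $f$ along $\cC$.

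The content here is entirely formal, and there is no real obstacle to overcome beyond bookkeeping. The only subtlety worth checking is that the variance conventions for \emph{left} and \emph{right} graphs line up correctly with the direction of the adjunction: the right graph $\Gamma_f^R$ of $f$ has $f$ in the contravariant slot, while the left graph $\Gamma_g^L$ has $g$ in the covariant slot, so that an iso between them witnesses $g$ as left adjoint to $f$ in the usual sense after passing to cohomology. Tracing this through is routine and completes the argument.
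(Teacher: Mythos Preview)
Your proposal is correct and matches the paper's approach: the paper marks this corollary with \qed, treating it as an immediate consequence of Corollary~\ref{cor:representable_bimodules}, which is exactly what you do. Your observation that the relevant condition is \emph{left}-representability of the $\cA$-$\cC$ bimodule (so that the second half of Corollary~\ref{cor:representable_bimodules} applies to produce $g:\cC\to\cA$) is the right bookkeeping check, and the paper's use of ``right representable'' in the statement is a minor terminological slip that you have correctly untangled.
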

\begin{rmk}\label{rmk:modifyingg}
In the setting of \cref{cor:partial_adjoint}, suppose that there exists a subcategory $\cD \subset \cA$ such that for every $c \in \cC$, $i^*\hom_\cB(-,c)$ is representable by an object of $\cD$.  
Then, by applying \cref{lem:const_fun_easy}, we may assume that the partially-defined left adjoint $g$ has image contained in $\cD$.
\end{rmk}

\subsection{Holomorphic curves in Liouville sectors with Lagrangians touching the boundary}

Let $X$ be a Liouville sector and fix a projection $\pi: \Nbd^Z (\partial X) \to \C_{\Re \geq 0}$ as in \cite[Definition 2.26]{ganatra2020covariantly}. We begin by establishing a slight strengthening of the open mapping theorem argument from \cite[Lemma 2.41]{ganatra2020covariantly}, which will be used to confine disks with boundary on arclike Lagrangians in the following subsections. 

\begin{prop} \label{openmappingboundary} Let $\Sigma$ be a Riemann surface with boundary. Suppose we have fixed a $J$ on $X$ such that $\pi: \Nbd^Z (\partial X) \to \C_{\Re \geq 0}$ is $J$-holomorphic, and let $u: \Sigma \to X$ be a $J$-holomorphic map. If $u^{-1}\pi^{-1}(\C_{\epsilon \ge \Re \ge 0})$ is compact and its intersection with $\partial \Sigma$ maps under $\pi \circ u$ to a horizontal segment $\Gamma$, then $u^{-1}\pi^{-1}(\C_{\epsilon >\Re>0})$ is either empty or consists of closed components of $\Sigma$ along which $u$ is constant.
\end{prop}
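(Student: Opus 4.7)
The plan is to extend the proof of the standard open mapping result \cite[Lemma 2.41]{ganatra2020covariantly} by means of Schwarz reflection to accommodate the boundary data prescribed along $\partial \Sigma$. Set $V := u^{-1}\pi^{-1}(\mathbb{C}_{\epsilon \geq \Re \geq 0})$ (compact by hypothesis) and $f := \pi \circ u|_{V} \colon V \to \mathbb{C}_{\epsilon \geq \Re \geq 0}$. Then $f$ is holomorphic since both $u$ and $\pi$ are. The topological boundary $\partial V$ (inside $\Sigma$) splits into an interior part $\partial V \cap \mathrm{int}(\Sigma)$, where $\Re(f) = \epsilon$, and the $\partial \Sigma$ part $V \cap \partial \Sigma$, which by hypothesis is mapped into the horizontal line $\{\Im = c\} \supset \Gamma$ for some $c \in \R$.

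The key move is to reflect $f$ holomorphically across this horizontal line. After possibly shrinking $\epsilon$ slightly to make the interior and $\partial \Sigma$ pieces of $\partial V$ meet transversally, and then smoothing corners, standard Schwarz reflection extends $f$ to a holomorphic map $\tilde f \colon \tilde V \to \mathbb{C}$, where $\tilde V$ is the compact Riemann surface obtained by doubling $V$ along $V \cap \partial \Sigma$, with involution $\iota$ and $\tilde f(\iota x) = \overline{f(x)} + 2ic$. Because reflection across a horizontal line preserves real parts, the image of $\tilde f$ is still contained in $\{0 \leq \Re \leq \epsilon\}$, and the boundary of $\tilde V$ now consists of (two copies of) the locus $\Re(f) = \epsilon$.

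I would then argue component-by-component on $\tilde V$: if a component $\tilde W$ is closed (boundaryless), $\tilde f|_{\tilde W}$ is a holomorphic map from a closed Riemann surface to $\mathbb{C}$, hence constant; if $\tilde W$ has boundary, the nonnegative harmonic function $\epsilon - \Re(\tilde f)$ vanishes on $\partial \tilde W$, so the maximum principle forces $\Re(\tilde f) = \epsilon$ on $\tilde W$, whence $\tilde f$ is constant (with real part $\epsilon$) by the Cauchy--Riemann equations. Descending to $V$, $f$ is constant on each connected component, and any component $W$ on which this constant $z_0$ satisfies $\Re(z_0) \in (0, \epsilon)$ must come from the closed-$\tilde W$ case. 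In particular, $W$ contains no points with $\Re(f) = \epsilon$, so it has no interior boundary inside $\Sigma$; being both closed in $\Sigma$ (as a connected component of the closed set $V$) and open in $\Sigma$, it is a closed connected component of $\Sigma$ on which $u$ is $\pi$-constant, as required.

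The main obstacle I expect is not the Schwarz reflection argument itself, but the bookkeeping at the corners of $V$ where the two boundary pieces meet; this is handled by the transversality/smoothing reduction mentioned above, so that the doubled surface $\tilde V$ is genuinely a compact Riemann surface with boundary to which the maximum principle applies cleanly.
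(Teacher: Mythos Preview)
Your reflection argument is correct and genuinely different from the paper's proof. The paper argues by contradiction without any doubling: set $S = \C_{\epsilon \ge \Re \ge 0}$, $f = \pi \circ u$ on $P = f^{-1}(S)$, and suppose some connected component $Q$ of $P$ meets $\mathrm{int}(S)$ but is not a component of $\Sigma$ on which $f$ is constant. Then $K = f(Q)$ is compact with nonempty interior (open mapping at a non-locally-constant interior point). Set $S' = \mathrm{int}(S) \setminus \Gamma$; since $\Gamma$ is a horizontal segment, $S'$ is dense and all of its connected components are unbounded. Now $Q' = f|_Q^{-1}(S')$ consists entirely of interior points of $\Sigma$ (because $f(\partial \Sigma \cap P) \subset \Gamma$ is disjoint from $S'$), so $K' := f(Q')$ is open by open mapping; it is also closed in $S'$ (as $K \cap S'$) and nonempty, hence a union of unbounded components of $S'$ --- contradicting compactness of $K$.

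The trade-offs: the paper's route sidesteps all of your corner bookkeeping and, as they remark, works verbatim for any closed $\Gamma \subset S$ with empty interior whose complement in $S$ has only unbounded components; your Schwarz reflection uses essentially that $\Gamma$ lies on a single line. Your approach, on the other hand, makes the maximum principle completely transparent once the doubling is in place. The one place your write-up is thin is the reduction ``shrink $\epsilon$ for transversality, then smooth corners'': Sard applied to $\Re f|_{\partial \Sigma}$ does make $V \cap \partial \Sigma$ a finite union of arcs and the two boundary pieces transverse for generic $\epsilon$, but it is worth noting that you do not actually need to smooth --- the maximum principle for $\epsilon - \Re(\tilde f)$ applies to any compact domain with $\tilde f$ continuous and harmonic in the interior, corners or not.
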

\begin{proof}
    The proof follows \cite[Lemma 2.41]{ganatra2020covariantly} closely, and immediately extends to any more general $\Gamma$ which is closed in $S := \C_{\epsilon \ge \Re \ge 0}$, has empty interior and for which $S \backslash \Gamma$ consists only of unbounded components. Let $P = (\pi \circ u)^{-1}(S)$, which is compact by hypothesis. Consider $f = \pi \circ u: P \to S$. Suppose for a contradiction that $f^{-1}(\mathrm{int}(S))$ is neither empty nor a disjoint union of closed components along which $f$ is constant. 
It follows that there exists a connected component $Q$ of $P$, such that $f(Q)$ intersects $\mathrm{int}(S)$ and $Q$ is not a component of $\Sigma$ on which $f$ is constant. We have:
\begin{itemize}
\item $Q$ is compact (as it is a closed subset of $P$, which is compact by hypothesis), hence $K:=f(Q)$ is compact.
\item As $f|_Q^{-1}(\mathrm{int}(S))$ is open and non-empty, $Q$ contains an interior point $q$. As $Q$ is not a component of $\Sigma$ on which $f$ is constant, $f$ is not locally constant at $q$, by unique continuation. Therefore, by the open mapping theorem, $K$ contains a neighbourhood of $f(q)$. Therefore $K$ has non-empty intersection with the set $S' = \mathrm{int}(S) \setminus \Gamma$, seeing as the latter is dense in $S$ (as $\Gamma$ has empty interior).
\item Let $Q' = f^{-1}(S')$. Note that $Q'$ is non-empty by the previous point, is open because $S'$ is open (as $\Gamma$ is closed), and consists entirely of interior points of $\Sigma$ because $u(\partial \Sigma) \subset \Gamma$ is disjoint from $S'$. Therefore, by the open mapping theorem, $K' := u(Q')$ is open and non-empty.
\item In addition to being open and non-empty, $K' = K \cap S'$ is also closed in $S'$, as $K$ is compact; hence it is a non-empty union of connected components of $S'$. However all connected components of $S'$ are unbounded, contradicting the compactness of $K$.
\end{itemize} 
\end{proof}

\subsection{Arclike Lagrangians and their negative pushoffs}\label{subsec:arclike}

Let $X$ be a Liouville sector and $K$ a compact exact Lagrangian submanifold-with-boundary, intersecting $\partial X$ transversely along its boundary $\partial K \subset \partial X$.

\begin{defn}\label{def:arclike}
    For an $\alpha$-defining function $I: \partial X \to \R$ and a $K$ as above, we say $K$ is {\em $I$-boundary-arclike} if $I$ is constant along $\partial K$. Call $K$ {\em arclike} if one of the following equivalent conditions are satisfied (compare \cite[Definition 2.4]{ganatra2020covariantly}):
    \begin{itemize}
        \item For some $\alpha > 0$, $K$ is $I$-boundary-arclike for some $\alpha$-defining function $I$.
        \item For every $\alpha > 0$,  $K$ is $I$-boundary-arclike for some $\alpha$-defining function $I$.
        \item The projection of $\partial K \hookrightarrow \partial X$ to the quotient by the characteristic foliation $F:=\partial X / \R$ is an embedding.
    \end{itemize}
\end{defn}

\begin{lem}\label{lem:alpha_alpha'}
    The three conditions appearing in Definition \ref{def:arclike} are equivalent.
\end{lem}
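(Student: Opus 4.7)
The plan is to show $(2) \Rightarrow (1) \Rightarrow (3) \Rightarrow (2)$. The implication $(2) \Rightarrow (1)$ is immediate, and the meaningful content concerns the equivalence of the ``level-set'' descriptions with the geometric ``embedded-in-$F$'' description, together with an argument that the scaling factor $\alpha$ can be prescribed arbitrarily.

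For $(1) \Rightarrow (3)$, suppose $K$ is $I$-boundary-arclike for some $\alpha$-defining function $I: \partial X \to \R$. Then $\partial K$ lies in a single level set $\Sigma = I^{-1}(c)$. The defining property of an $\alpha$-defining function is that $dI$ pairs with the characteristic vector field $X_{\mathrm{char}}$ to give the constant $\alpha > 0$; equivalently, $I$ restricts to a strictly monotone (linear at infinity) function on each characteristic leaf of $\partial X$. Consequently $\Sigma$ meets each characteristic leaf in exactly one point and the induced map $\Sigma \to F = \partial X/\R$ is a diffeomorphism. The composition $\partial K \hookrightarrow \Sigma \xrightarrow{\sim} F$ is therefore an embedding.

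For $(3) \Rightarrow (2)$, fix any $\alpha > 0$ and any $\alpha$-defining function $I_0$ on $\partial X$ (such functions exist by the definition of a Liouville sector; if one has some $\alpha_0$-defining function one may rescale by $\alpha/\alpha_0$). Combining $I_0$ with the characteristic projection $\pi_F: \partial X \to F$ produces a diffeomorphism $(\pi_F, I_0): \partial X \xrightarrow{\sim} F \times \R$, under which characteristic leaves become the fibers $\{f\} \times \R$ and $I_0$ becomes the projection $\alpha \cdot \mathrm{pr}_\R$ outside a compact set. Condition $(3)$ says that $\bar K := \pi_F(\partial K) \subset F$ is a compact embedded submanifold, and that $\partial K$ is the graph of a smooth function $h: \bar K \to \R$ (the second component of $(\pi_F, I_0)|_{\partial K} \circ (\pi_F|_{\partial K})^{-1}$). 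Using a tubular neighborhood of $\bar K$ in $F$ together with a bump function supported in a small neighborhood of $\bar K$, extend $h$ to a smooth compactly supported function $\tilde h: F \to \R$. Then
\[
I(f, s) := I_0(f, s) - \alpha\,\tilde h(f)
\]
agrees with $I_0$ outside a compact set, satisfies $dI(X_{\mathrm{char}}) = dI_0(X_{\mathrm{char}}) = \alpha$ since $\tilde h \circ \pi_F$ is constant along characteristic leaves, and vanishes identically on $\partial K$; it is therefore an $\alpha$-defining function witnessing that $K$ is $I$-boundary-arclike.

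The only point that requires some care is that the modified $I$ remain a bona fide $\alpha$-defining function in the sense of \cite{ganatra2020covariantly}, but since subtracting $\alpha\,\tilde h \circ \pi_F$ changes $I_0$ only by a function pulled back from $F$ and supported in a compact set, the linear-at-infinity condition and the constraint $dI(X_{\mathrm{char}}) = \alpha$ are both preserved. Compactness of $\partial K$ (hence of $\bar K$) is what makes the smooth extension step unproblematic.
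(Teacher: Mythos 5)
Your overall plan — reduce to the implication $(3) \Rightarrow (2)$ and construct a modified defining function that vanishes on $\partial K$ by subtracting a pullback from $F$ — is the same as the paper's. But the construction as written has a genuine gap, and a secondary arithmetic slip.

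The gap: you write $I(f,s) := I_0(f,s) - \alpha\,\tilde h(f)$ and claim this "agrees with $I_0$ outside a compact set" because $\tilde h$ is compactly supported on $F$. That is not true on $\partial X$. In the splitting $\partial X \cong F \times \R_s$, the set where $I \neq I_0$ is $\operatorname{supp}(\tilde h) \times \R$, which is unbounded in the $s$-direction. The $\alpha$-linearity requirement $ZI = \alpha I$ outside a compact subset of $\partial X$ must hold in particular as $|s| \to \infty$ over $\operatorname{supp}(\tilde h)$, and there $Z(\tilde h \circ \pi_F) = Z_F(\tilde h)$ has no reason to equal $\alpha \tilde h \circ \pi_F$; a generic compactly supported $\tilde h$ fails this. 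So your $I$ is not an $\alpha$-defining function. The cure is to additionally cut off the modification in the $s$-direction — but a naive cutoff introduces an $s$-derivative that can make $dI$ vanish on the characteristic foliation. The paper deals with exactly this in Lemma~\ref{lem:modify_I}: one chooses a compactly supported extension $\tilde f$ of $\pi^*f|_{F_0 \times [-N,N]}$ and then rescales the $s$-coordinate ($\tilde f(y,s) \mapsto \tilde f(y,s/K)$ for $K \gg 0$) to force $|\partial_s \tilde f| < 1/2$, so that $\partial_s(I - \tilde f) > 1/2 > 0$ and $dI$ stays positive on the characteristic foliation while $I = I_0$ near infinity. Your proof has no substitute for this step.

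A secondary error: with $\tilde h$ extending $h = I_0 \circ (\pi_F|_{\partial K})^{-1}$, your formula gives $I|_{\partial K} = (1-\alpha) I_0|_{\partial K}$, which is constant only when $\alpha = 1$. The correct subtraction is $I := I_0 - \tilde h \circ \pi_F$ (no $\alpha$ factor), as in the paper. Also, the aside that one can convert an $\alpha_0$-defining function to an $\alpha$-defining function "by rescaling by $\alpha/\alpha_0$" is not right as stated, since $Z(cI) = \alpha_0 (cI)$ for any constant $c$; changing the slope requires a nonlinear reparametrization near infinity.
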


We make some preparatory remarks prior to the proof of \cref{lem:alpha_alpha'}. 
    Let $F = \partial X / \R$ denote the leaf space of the characteristic foliation on $\partial X$, and $\pi: \partial X \to F$ the quotient map. Fix some $\alpha$-defining function $I: \partial X \to \R$, inducing an identification $\partial X \stackrel{(\pi,I)}{\to} F \times \R$ under which $\pi$ is projection to the first factor. We observe there is a large space of modifications of $I$ coming from compactly supported functions on $F$ (suitably cut off):
    \begin{lem}\label{lem:modify_I}
    Let $f: F \to \R$ be any function with compact support contained inside $F_0 \subseteq F$ and
    with $\pi^*f: \partial
    X \to \R$ the lift. Then for any $N>0$ there is a compactly supported
    modification $\tilde{I}: \partial X \to \R$ of $I$ to an
    $\alpha$-defining function that is equal to $I - \pi^*f$ on $F_0 \times [-N , N]$.
\end{lem}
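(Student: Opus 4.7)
\medskip

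\noindent\emph{Proof proposal.} The plan is to exploit the identification $\partial X \xrightarrow{(\pi, I)} F \times \R$ and build $\tilde{I}$ by subtracting from $I$ a suitably cut-off lift of $f$. Concretely, choose a smooth function $\chi \colon \R \to [0,1]$ with $\chi \equiv 1$ on $[-N,N]$, $\operatorname{supp}(\chi)$ compact, and with derivative small enough that
\[
  \|\chi'\|_\infty \cdot \|f\|_\infty \;<\; 1,
\]
which one arranges simply by widening the transition region of $\chi$. Then, writing points of $\partial X$ as $(x,s)\in F\times\R$ via $(\pi,I)$, set
\[
  \tilde{I}(x,s) \;:=\; s \;-\; \chi(s)\, f(x).
\]
By construction $\tilde{I}$ differs from $I$ only on $\operatorname{supp}(f)\times\operatorname{supp}(\chi)$, which is compact; and on $F_0\times[-N,N]$ we have $\chi(s)=1$ and $f$ supported inside $F_0$, so $\tilde I = I - \pi^* f$ there, as required.

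The first two bullet conditions in the lemma are thus immediate; the only substantive point is that $\tilde I$ remains an $\alpha$-defining function. Since the modification is compactly supported, the $\alpha$-linearity at infinity is inherited from $I$ automatically, so one only has to check the non-degeneracy condition (that $d\tilde I$ is everywhere non-zero along the characteristic direction on $\partial X$). In the $(x,s)$ coordinates this direction is $\partial_s$, and a direct computation gives
\[
  d\tilde I \;=\; \bigl(1 - \chi'(s)\, f(x)\bigr)\, ds \;-\; \chi(s)\, df,
  \qquad
  d\tilde I(\partial_s) \;=\; 1 - \chi'(s)\, f(x),
\]
which is strictly positive everywhere by the choice $\|\chi'\|_\infty \|f\|_\infty <1$.

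The main obstacle I anticipate is just pinning down the precise form of the ``$\alpha$-defining function'' condition from \cite[Definition 2.4]{ganatra2020covariantly} in the present setup and confirming that compactly-supported perturbations preserve it; once the relevant conditions are named, the verification reduces to the derivative bound above and is routine. No holomorphic-curve input is needed.
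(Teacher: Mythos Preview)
Your proposal is correct and takes essentially the same approach as the paper: both cut off $\pi^*f$ in the $\R$-direction and arrange the derivative bound so that $d\tilde I$ stays positive on the characteristic direction, with $\alpha$-linearity at infinity inherited because the modification is compactly supported. The only cosmetic difference is that the paper first picks an arbitrary compactly supported extension and then rescales $t \mapsto t/K$ to force $|\partial_t \tilde f| < 1/2$, whereas you achieve the same bound directly by widening the transition region of $\chi$ so that $\|\chi'\|_\infty\|f\|_\infty < 1$.
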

\begin{proof}
Pick an arbitrary compactly supported extension $\tilde{f}: F \times \R \to \R$ of $\pi^*f|_{F_0 \times [-N \times N]}$ (e.g., multiply $\pi^*f$ by a function of $I$ which is equal to $1$ for $|I| \le N$ and is compactly supported).  As $\tilde{f}$ is compactly supported, $|\partial_t \tilde{f}|$ is bounded, where $t$ denotes the coordinate on the $\R$ factor.  Therefore, by rescaling $\tilde{f}(y,t) \mapsto \tilde{f}(y,\frac{t}{K})$ for sufficiently large $K$ we obtain a function equal to 
$\pi^*f$ on $F_0 \times [-NK,NK] \supset F_0 \times [-N,N]$ where $|\partial_t \tilde{f}| < \frac{1}{2}$ everywhere. 
Now, set $\tilde{I}:= t - \tilde{f}(y,t) = I - \tilde{f}$. Observe that $\tilde{I}=I - \pi^*f$ on $F_0 \times [-N,N]$ and satisfies $\partial_t \tilde{I} > \frac{1}{2} >0$, implying that $d\tilde{I}$ is positive on the characteristic foliation of $\partial X$. Also at infinity $\tilde{I} = I$ which is $\alpha$-linear.
\end{proof}

\begin{proof}[Proof of Lemma \ref{lem:alpha_alpha'}]
     Evidently, the second condition implies the first condition implies the third (if $K$ is $I$-boundary-arclike, then the embedding $\partial K \subset \partial X \stackrel{(\pi,I)}{\to} F \times \R$ is contained in $F \times \{c\}$ for some $c$, and in particular embeds into $F$).
        Therefore it suffices to prove that the third condition implies the second.
%
    To do so, fix any $\alpha > 0$ and any $\alpha$-defining function $I : \partial X \to \R$ inducing $\partial X \cong  F \times \R_t$, and denote by $\overline{\partial K}:=\pi(\partial K)  \subset F$. Assuming the third condition, $\overline{\partial K}$ is a submanifold of $F$ and
    there is a function $f: \overline{\partial K} \to \R$ whose graph is $\partial K$, namely $f:= I|_{\partial K} \circ \pi^{-1}|_{\overline{\partial K}}$.
Extend $f$ to a compactly supported function on $F$ whose support lies in a Liouville domain $F_0$ containing $\partial K$. By Lemma \ref{lem:modify_I}, we may modify $I$ to another defining function $\tilde{I}$ which equals  $I - \pi^* f$ on $F_0 \times [-N,N]$ where $N$ is such that $\partial K \subset F_0 \times [-N,N]$. Now, at any point $p \in \partial K$, $\tilde{I}(p) = I(p) - \pi^* f(p) = I(p) - I|_{\partial K}(p) = 0$.
\end{proof}

It will be helpful for the following Lemma to explicitly present our given Liouville sector $X$ as the completion of a compact codimension-0 domain $X^{in}$ with respect to the Liouville vector field along one of the boundary faces of $X^{in}$, analogous to the presentation of a Liouville manifold as the completion of a Liouville domain. As in the proof of \cite[Lemma 2.5]{ganatra2020covariantly}, we produce $X^{in}$ by first picking a contact type hypersurface $Y$ in $X$ (sufficiently close to $\infty$) transversely intersecting $\partial X$ in the region where $Z$ is tangent to $\partial X$, whose projection to $\partial_{\infty} X$ via Liouville flow is a diffeomorphism. The flow by $Z$ from $Y$ determines a symplectization region $\mathbb{R}_{\geq 0} \times Y \subset X$, and by definition $X^{in}$ is the complement of this region $X \backslash (\mathbb{R}_{\geq 0} \times Y)$. $X^{in}$ is a manifold with corners with two boundary faces, $Y$ and $(\partial X)^{in}$ (which is a codimension-0 subdomain of $\partial X$). 

As in \cite[\S 2.6]{ganatra2020covariantly}, it is useful to consider extensions of an $\alpha$-defining function $I: \partial X \to \R$ to $\alpha$-linear functions $I:\Nbd^Z(\partial X) \to \R$, which we call here {\em extended $\alpha$-defining functions}. 
\begin{defn}\label{def:Iarclike}
    Let $K$ be a compact exact Lagrangian-with-boundary $K \subset X$ which intersects $\partial X$ transversely along $\partial K \subset \partial X$, and $I:\Nbd^Z(\partial X) \to \R$ an extended $\alpha$-defining function. $K$ is called {\em $I$-arclike} if, after possibly shrinking $\Nbd^Z(\partial X)$, $I$ is constant along $K \cap \Nbd^Z(\partial X)$.
\end{defn}
Evidently, if $K$ is $I$-arclike for a given $I$, then $K$ is also $I|_{\partial X}$-boundary-arclike (and therefore arclike). Conversely we have:
\begin{lem}
    \label{lem:IFunctionAdaptedToK}
    If $K$ is arclike, then for any $\alpha>0$ there exists an extended $\alpha$-defining function $I:\Nbd^Z(\partial X) \to \R$ for which $K$ is $I$-arclike.
\end{lem}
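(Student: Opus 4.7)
The plan is to build $I$ in two stages: first choose an extended $\alpha$-defining function whose restriction to $\partial X$ already vanishes along $\partial K$; then modify it, by a compactly supported perturbation in the interior, so that all of $K \cap \Nbd^Z(\partial X)$ lands in the zero level set.

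For the first stage, I would invoke \cref{lem:alpha_alpha'} to produce an $\alpha$-defining function $I_0 : \partial X \to \R$ with $I_0|_{\partial K} = 0$, and then extend $I_0$ to an extended $\alpha$-defining function $I_1$ on $\Nbd^Z(\partial X)$ by any standard construction (e.g.\ pull back along the Liouville flow from infinity and smooth, cf.\ \cite[\S 2.6]{ganatra2020covariantly}). The restriction $f := I_1|_K$ is a smooth function on $K$ that vanishes along $\partial K$. Since $K$ meets $\partial X$ transversely, local product coordinates near $\partial K$ let me smoothly extend $f$ to a function $\tilde f$ defined in a neighborhood of $K \cap \Nbd^Z(\partial X)$ inside $\Nbd^Z(\partial X)$ satisfying the two prescriptions
\[
\tilde f|_K = f \quad \text{near } \partial K, \qquad \tilde f|_{\partial X} = 0;
\]
the compatibility of these prescriptions along $K \cap \partial X = \partial K$ is exactly $f|_{\partial K} = 0$, so a routine partition-of-unity argument produces $\tilde f$. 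Choose a smooth cutoff $\chi$ equal to $1$ on a neighborhood $N$ of (a collar of $\partial K$ in) $K \cap \Nbd^Z(\partial X)$ and supported in a slightly larger set where $\tilde f$ is defined, and set $\tilde I := I_1 - \chi\,\tilde f$.

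The remaining work is verification: $\tilde I|_{\partial X} = I_0$ because $\chi\,\tilde f$ vanishes on $\partial X$; $\tilde I$ is $\alpha$-linear at infinity because the perturbation $\chi\,\tilde f$ is compactly supported; and by construction $\tilde I|_K = f - f = 0$ on $K \cap N$. After shrinking $\Nbd^Z(\partial X)$ so that $K \cap \Nbd^Z(\partial X) \subset N$, the function $\tilde I$ is the desired extended $\alpha$-defining function with respect to which $K$ is $\tilde I$-arclike. The only step that carries any substance is the construction of $\tilde f$, and this is the place where the transversality of $K$ to $\partial X$ is essential: without it, the two prescribed values on $K$ and $\partial X$ could not be glued to a single smooth extension.
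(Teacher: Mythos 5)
Your proof is correct, but it takes a genuinely different route from the paper's. Both arguments begin identically: fix (via \cref{lem:alpha_alpha'}) an $\alpha$-defining function $I_0$ on $\partial X$ vanishing along $\partial K$, then extend. The paper does the extension in one pass: it chooses a collar identification $\Nbd((\partial X)^{in}) \cong (\partial X)^{in} \times [0,\epsilon)$ induced by an inward-pointing vector field that is simultaneously tangent to $K$ (using transversality of $K$ to $\partial X$), tangent to the other boundary face $Y$ of $X^{in}$, and commuting with $Z$ near $Y$; pulling back $I_0$ along the collar projection then yields an extension that is constant on $K$ essentially by construction. You instead take any extended $\alpha$-defining function $I_1$, read off the error $f = I_1|_K$ (which vanishes along $\partial K$), extend $f$ to a $\tilde f$ satisfying $\tilde f|_K = f$ near $\partial K$ and $\tilde f|_{\partial X} = 0$ (here transversality enters, to make the two prescriptions compatible), and subtract the compactly supported correction $\chi\,\tilde f$. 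Both hinge on the same transversality hypothesis, and both produce a valid extended $\alpha$-defining function: yours uses that $\chi\tilde f$ vanishes on $\partial X$ and is compactly supported (using compactness of $K$) to preserve the defining-function and $\alpha$-linearity conditions, while the paper's uses the constraints placed on the collar vector field. Your correct-after-the-fact strategy is arguably more elementary and more robust (it makes no demands on the vector field near $Y$), at the cost of being a two-step rather than one-step construction; the paper's one-step construction is perhaps more geometric and gives more explicit control on the resulting product decomposition near $\partial X$.

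One small point to tighten: you define $f := I_1|_K$, but you should say $f := I_1|_{K \cap \Nbd^Z(\partial X)}$, since $I_1$ is only defined on $\Nbd^Z(\partial X)$. This does not affect the argument, since all that is needed is to control $\tilde I$ on $K$ near $\partial X$ (Definition \ref{def:Iarclike} permits shrinking $\Nbd^Z(\partial X)$), but the statement as written is slightly imprecise.
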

\begin{proof}
    For any $\alpha$, fix an $\alpha$-defining function $I:\partial X \to \R$ for which $K$ is $I$-boundary-arclike, which exists by \cref{lem:alpha_alpha'}.  With respect to a choice of domain $X^{in}$ completing to $X$ as described above \cref{lem:alpha_alpha'}, we will extend $I$ first along the intersection $(\Nbd^Z (\partial X)) \cap X^{in}$, then extend to all of $\Nbd^Z (\partial X)$ by $\alpha$-linearity. 
    Recall that the boundary of $X^{in}$ is a union of two faces: $(\partial X)^{in} \subset \partial X$, and $Y$, along which $Z$ is outward pointing.
    Over $\Nbd^Z (\partial X) \cap X^{in}$ one can construct an extension by pulling back $I$ along the projection to $(\partial X)^{in}$ associated to a choice of collar identification $\Nbd (\partial X)^{in} \cong (\partial X)^{in} \times [0,\epsilon)$ induced by an inward pointing vector field defined near $(\partial X)^{in}$ that is tangent to $Y$. We may further choose this inward pointing vector field to commute with $Z$ near $Y$ (so that the resulting extension of $I$ remains $\alpha$-linear near $Y$ hence smoothly extends to $\Nbd^Z (\partial X)$), and to be tangent to $K$ (here we use the requirement that $K$ is transverse to $\partial X$). This implies that the extension --- also called $I$ by abuse of notation --- is constant along $K$.
\end{proof}

Henceforth we fix such an extension $I$ for some $\alpha$ for which $K$ is $I$-arclike.
Since $I|_{K}$ is constant near $\partial X$, it follows that: 
\begin{lem}\label{lem:XItangK}
    At points of $K$ near $\partial X$, the Hamiltonian vector field $\vX_I$ is tangent to $K$. 
\end{lem}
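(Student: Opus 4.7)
The proof is essentially immediate from the definition of $I$-arclike and the Lagrangian condition, so the plan will be short.

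The plan is to unpack the $I$-arclike hypothesis into a vanishing statement for $dI$ along $K$, and then convert that into tangency of $\vX_I$ using the Lagrangian property. First, by \cref{def:Iarclike} and our standing assumption that $K$ is $I$-arclike, after possibly shrinking $\Nbd^Z(\partial X)$ the function $I$ is constant along $K \cap \Nbd^Z(\partial X)$. Hence $dI$ vanishes on $TK$ at every point of $K$ near $\partial X$.

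Next, recall that the Hamiltonian vector field $\vX_I$ is characterized by $\omega(\vX_I,\cdot) = dI$. Combined with the vanishing $dI|_{TK}=0$ near $\partial X$, this gives $\omega(\vX_I,v) = 0$ for every $v \in T_pK$ and every $p \in K$ near $\partial X$. In other words, $\vX_I$ lies in the symplectic orthogonal $(T_pK)^\omega$. Since $K$ is Lagrangian, $(T_pK)^\omega = T_pK$, and we conclude $\vX_I \in T_pK$ as claimed.

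There is no real obstacle here: the lemma is a direct consequence of the definition of $I$-arclike combined with the Lagrangian property of $K$, and the slight shrinking of $\Nbd^Z(\partial X)$ in \cref{def:Iarclike} is exactly what allows us to say the tangency holds in a (possibly smaller) neighbourhood of $\partial X$ in $K$.
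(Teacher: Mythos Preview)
Your proof is correct and follows essentially the same approach as the paper: constancy of $I$ along $K$ near $\partial X$ gives $dI|_{TK}=0$, hence $\vX_I \in (T_pK)^\omega = T_pK$ by the Lagrangian condition. The paper phrases this via the inclusion $T_pK \subset T_p I^{-1}(c)$ and then takes symplectic orthogonals, but the argument is the same.
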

\begin{proof}
    Constancy of $I$ along $K$ implies $T_p K \subset T_p I^{-1}(c)$
    (where $I|_{K} \equiv c$), or equivalently that $\mathrm{span}(\vX_I) = (T_p I^{-1}(c))^{\omega \perp}$ is contained in $T_p K = (T_p K)^{\omega \perp}$.
\end{proof}

By \cite[Proposition 2.25]{ganatra2020covariantly}, the extended $\alpha$-linear function $I$ determines a product decomposition 
\begin{equation}\label{productdecomp}
    (\Nbd^Z(\partial X),\lf_X|_{\Nbd^Z(\partial X)}) = (\eff \times \C_{Re \ge 0}, \lf_{\eff} + \lf^\alpha_\C + df),
\end{equation}
over neighbourhoods of $\partial X$ and $\eff \times \{Re = 0\}$ respectively, where the coordinates on $\C_{Re \ge 0}$ are $R+i I = x + iy$, 
$\lf_{\eff} = (\lf_X)|_{\eff := I^{-1}(0)}$, $\lf^\alpha_\C$ is the Liouville form for $\omega_{\C} = dx \wedge dy$ associated to Liouville vector field $Z_{\C}^{\alpha} = (1-\alpha)x \frac{\partial}{\partial x} + \alpha y \frac{\partial}{\partial y}$, and the function $f$ is a function on $\eff \times \C_{Re \geq 0}$ satisfying properties detailed in {\em loc. cit.} 
This product decomposition is induced by defining 
\begin{equation}
    \label{eq:Rfunction}
    R: \Nbd^Z (\partial X) \to \R,
\end{equation}
to be the unique function characterized by the properties $R|_{\partial X}= 0$, $dR(\vX_I) = -1$. 

\begin{lem}\label{arcimage}
    If $K$ is $I$-arclike, then $R|_K$ is a submersion from $K$ to $\R$ over $\Nbd (\partial K):= \Nbd^Z (\partial X) \cap K$.
\end{lem}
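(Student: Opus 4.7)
The plan is to observe that this is essentially a one-line consequence of the preceding two lemmas. Since $\mathbb{R}$ is one-dimensional, showing that $R|_K$ is a submersion at a point $p \in K \cap \Nbd^Z(\partial X)$ amounts to exhibiting some tangent vector $v \in T_p K$ with $dR(v) \neq 0$.

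First I would invoke Lemma \ref{lem:XItangK}: since $K$ is $I$-arclike, the Hamiltonian vector field $\vX_I$ is tangent to $K$ at every point of $K \cap \Nbd^Z(\partial X)$ (after possibly shrinking $\Nbd^Z(\partial X)$ so that $I|_K$ is actually constant, as in Definition \ref{def:Iarclike}). Then I would apply the defining property of $R$ from \eqref{eq:Rfunction}, namely $dR(\vX_I) = -1$. Taking $v = \vX_I|_p \in T_p K$, we have $dR|_{T_p K}(v) = -1 \neq 0$, so $dR|_{T_p K} : T_p K \to \mathbb{R}$ is surjective, i.e.\ $R|_K$ is a submersion at $p$.

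There is no real obstacle here; the only thing to verify is that the neighbourhood on which $\vX_I$ is tangent to $K$ (guaranteed by \cref{lem:XItangK}) and the neighbourhood on which $R$ and the product decomposition \eqref{productdecomp} are defined can be arranged to coincide, which is immediate by shrinking $\Nbd^Z(\partial X)$ if necessary. So the entire argument fits in a couple of lines and requires no further input beyond the two preparatory lemmas.
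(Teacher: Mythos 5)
Your proof is correct and follows the paper's own one-line argument exactly: invoke \cref{lem:XItangK} to get $\vX_I$ tangent to $K$ near $\partial X$, then use $dR(\vX_I) = -1$ to conclude $dR|_{TK} \neq 0$, hence $R|_K$ is a submersion. The only addition you make is the (harmless, correct) remark about shrinking $\Nbd^Z(\partial X)$ to align the relevant neighbourhoods.
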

\begin{proof}
By \Cref{lem:XItangK}, $\vX_I$ must be tangent to $K$, and $dR(\vX_I) = -1$, hence $dR|_{TK} \neq 0$.  
\end{proof}

It follows that in the product coordinates $\eff \times \C_{Re \geq 0}$, $K$ simply takes the form $\partial K \times \gamma$ where $\gamma$ is an arc of the form $\{Im = c\}$. By restricting to $I^{-1}(c)$, we see that $\partial K$ is exact for $\lf_X|_{I^{-1}(c)} = \lf_{\eff} + df|_{I^{-1}(c)}$, hence exact for $\lf_{\eff}$. Let $g_K: K \to \R$ denote a chosen primitive of $\lf_X|_{K}$. The restricted primitive for $\partial K$ with respect to $\lf_{\eff}$ is by definition $g_{\partial K}:= (g_K)|_{\partial K} - f|_{\partial K}$. Pick a primitive $g_{\gamma}$ for $\gamma \subset \C_{Re \geq 0}$ with respect to $\lf_{\C}^{\alpha}$ which is equal to $0$ at $(R,I) = (0,c)$. Now observe that in $F \times \C_{Re \geq 0}$, with respect to the Liouville form $\lf_X = \lf_{\eff} + \lf_{\C}^{\alpha} + df$, our (globally defined) primitive $g_K$ for $K$ must be equal to $g_{\partial K} + g_{\gamma} + f$ (as both are primitives of $K = \partial K \times \gamma$ in this region which agree along $\partial K \times (0,c)$).

Next, we explain how, if $X$ has {\em exact boundary} in the sense of \cite[Definition 2.10]{ganatra2020covariantly}, to modify an arclike Lagrangian $K$ in $X$ into a cylindrical Lagrangian 
\begin{equation}
  K^-\subset X
  \label{def:negativePushoff}
\end{equation}
avoiding $\partial X$ (and hence defining an object of $\W(X)$), called its {\em (cylindrized) negative pushoff}. By construction, this modification is supported in an arbitrarily small cylindrical-at-infinity neighbourhood of $\partial X$. The exactness condition on the boundary is used to obtain an embedding of Liouville sectors $\eff \times T^*[0,1] \hookrightarrow \Nbd^Z(X)$ covering a neighborhood of the boundary, or equivalently a Liouville identification $\Nbd^Z(X) \cong \eff \times T^*[0,1]$ up to a compactly supported exact 1-form. As we recall, the exact boundary condition can always be arranged up to a deformation:

\begin{rmk}[Boundary sectors]
\label{boundarydeformation}

    We recall here how an extended $\alpha=1$-defining function $I$ induces a boundary sector inclusion $(\eff \times T^*[0,1], \lf_{\eff} + \lf_{T^*[0,1]}) \hookrightarrow X$ after possibly deforming the Liouville form near $\partial X$ (cf. the proof of \cite[Theorem 2.28]{ganatra2020covariantly}). First, $I$ induces an associated product decomposition \eqref{productdecomp}, where $\lf_{\C}^1 = \lf_{T^*\R_{\geq 0}}$. Now, after applying the convex-at-infinity Liouville deformation $\lf_X \leadsto \lf_X'$ described in the proof of \cite[Proposition 2.28]{ganatra2020covariantly}, we can arrange for $\lf_X'$ in coordinates to simply equal $\lf_{\eff} + \lf_{T^*\R \geq 0}$ for $R$ near $0$ and equal $\lf_X$ for $R$ away from $0$ (concretely in coordinates \eqref{productdecomp}, $\lf_X'= \lf_{\eff} + \lf_{T^* \R_{\geq 0}} + d(\varphi(R) f)$ for a cutoff function $\varphi$ which equals $0$ for $R \in [0,\epsilon]$ and $1$ for $R \notin [0,2\epsilon]$).  The outcome is a (cylindrical-at-infinity) Liouville sector embedding $\eff \times T^*[0,\epsilon] \to (X,\lf_X')$. Now identify $T^*[0,\epsilon] = T^*[0,1]$ via $(R,I) \mapsto (\frac{1}{\epsilon} R, \epsilon I)$. 

    If $X$ already has exact boundary in the sense of \cite[Definition 2.10]{ganatra2020covariantly}, then after possibly modifying $\lf_{\eff}$ by a compactly supported exact 1-form, one can make the $df$ term in \eqref{productdecomp} compactly supported. This gives a boundary sector embedding $\eff \times T^*[0,1] \hookrightarrow X$ without further deforming $\lf_X$; see the discussion below \cite[Definition 2.33]{ganatra2020covariantly} for more details. Indeed, the deformation from \cite[Proposition 2.28]{ganatra2020covariantly} was used to show that any $X$ can be deformed (in a homotopically unique way) to have exact boundary.
\end{rmk}

Returning to the construction of $K^{-}$, use \Cref{lem:alpha_alpha'} and  \Cref{lem:IFunctionAdaptedToK} to
fix an extended $\alpha=1$-defining function $I$ for which $K$ is $I$-arclike. 
As we are assuming $X$ has exact boundary (or implicitly deforming $\lf_X$ to do so as in \cref{boundarydeformation}),
we may consider the resulting boundary sector embedding $\eff \times T^*[0,1] \hookrightarrow X$ (where $\partial X$ is the image of $\eff \times T^*[0,1]|_{0}$); in this smaller region, \Cref{arcimage} implies that $K$ must have the form $\partial K \times \gamma$, where $\gamma: [0,1] \hookrightarrow T^*[0,1]$ is of the form $\{I =c \}$ (this constant may differ from the original $c$ as we may scale $I$ per \cref{boundarydeformation}). The Liouville deformation performed in \cref{boundarydeformation} also changes the primitive in this region to $g_{\partial K} + g_{\gamma}$.
Let $\gamma^-$ be a smoothing of $\{R = \epsilon, I\geq c\} \cup \{R \geq \epsilon, I = c\}$; concretely, one could take
$\gamma^-:=(\gamma^-_R,\gamma^-_I): (0, 1] \to T^*[0,1] = [0,1]_R \times \R_I$ any properly embedded curve satisfying:
\begin{itemize}
    \item $\gamma^-$ agrees with $\gamma$ (i.e., the segment $I = c$) near $t = 1$  
    \item $\gamma^-$ lies on the cotangent fiber at $R = \epsilon$ near $t = 0$
    \item $\gamma^-_R$ is weakly monotonically increasing in $t$ and $\gamma^-_I$ is weakly monotonically decreasing in $t$. In particular, since $d\gamma^- \neq 0$, $\gamma^-_R$ is strictly monotonically increasing near $t=1$ and $\gamma^-_I$ is strictly monotonically decreasing near $t=0$.
\end{itemize}
Let $K^-_{split}:= (K \backslash (\partial K \times \gamma)) \cup (\partial K \times \gamma^-)$. $K^-_{split}$ is evidently Lagrangian and, since the codimension-0 inclusions of $ K \backslash (\partial K \times \gamma)$ into both $K$ and $K^-_{split}$ are homotopy equivalences, $K^-_{split}$ is exact and inherits all brane data from choices of such data on $K$. Moreover, outside a compact set, $K^-_{split} = \partial K \times (T^*_{\epsilon})^+$ where $(T^*_{\epsilon})^+ = \{R = \epsilon, I> 0\}$ is the positive part of the cotangent fiber to $\epsilon$. $K^-_{split}$ inherits a primitive of the form $g_{\partial K} + g_{\gamma^-}$ (which extends to $g_K$ up to a constant shift in the region where $K^-_{split}$ is unmodified), where without loss of generality $g_{\gamma^-}$ vanishes outside a compact subset of $\gamma^-$.
Applying the cylindrization procedure of \cite[\S 7.2]{ganatra2018sectorial} in the product chart $\eff \times T^*[0,1]$, we therefore obtain a cylindrical exact Lagrangian $K^-$ which is well-defined and independent of choice of $\epsilon$ or smoothing up to isotopy. A priori $K^-$ depends on the choice of 1-defining $I$ function for which $K$ is $I$-arclike; however we will see that any two choices give isomorphic objects (see \cref{negativepushoffindependence}).
\begin{rmk}
    The definition of cylindrization of a Lagrangian equal (outside a compact set) to $\partial K \times \gamma^-$ in $\eff \times T^*[0,1]$ given in \cite[\S 7.2]{ganatra2018sectorial} involves flowing near infinity (and outside the aforementioned compact set) by an isotopy taking place in the larger stopped convex completion $\eff \times (\C, \{\pm \infty\})\supset \eff \times T^*[0,1]$, which is well-defined up to contractible choice. However, as {\em loc. cit.} describes, the cylindrization isotopy can be made to be supported in an arbitrarily small neighborhood of $\mathfrak{f} \times \partial_{\infty} \gamma^-$ (where $\mathfrak{f}$ is the core of $F$ and using the fact that $\partial_{\infty} \partial K = \emptyset$) --- and in particular within $\eff \times T^*[0,1]$.
\end{rmk}
\begin{lem} \label{negativepushoffforwardstopped}
   The negative pushoff $K^-$ is forward stopped, i.e., it admits a cofinal positive wrapping converging into $\partial X$.
\end{lem}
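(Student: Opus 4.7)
The plan is to exhibit the cofinal positive wrapping explicitly inside the boundary sector chart near $\partial X$. First, I will invoke \cref{boundarydeformation} to reduce to the case that $X$ has exact boundary, so that the extended $\alpha = 1$-defining function $I$ used to construct $K^-$ determines a boundary sector embedding $\eff \times T^*[0,1] \hookrightarrow X$ identifying $\partial X$ with $\eff \times \{R = 0\}$. By the construction of the negative pushoff, outside a compact set $K^-$ takes the product form $\partial K \times \gamma^-_\infty$, where $\gamma^-_\infty \subset T^*[0,1]$ is the portion of $\gamma^-$ along which $\gamma^-_R \equiv \epsilon$ and $\gamma^-_I \to +\infty$. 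In particular, $\partial_\infty K^-$ lies in $\partial K \times \{R = \epsilon\}$, inside the region of $\partial_\infty(\eff \times T^*[0,1])$ where $I \to +\infty$, a region disjoint from $\partial_\infty \partial X$ but ``above'' the stop in the natural sense.

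Next, since $\partial K$ is compact, it will suffice to produce a cofinal positive wrapping of $\gamma^-_\infty$ inside the Liouville sector $T^*[0,1]$ whose Legendrian at infinity converges into the boundary component $\{R = 0\}$; the full wrapping on $K^-$ is then obtained by taking the product with the identity on $\partial K$ and extending trivially outside the boundary sector chart. My main tool will be an explicit admissible Hamiltonian of the form $H = \chi(R) \cdot I$ on $T^*[0,1]$, where $\chi : [0,1] \to \R_{\geq 0}$ is a smooth cutoff vanishing near $R = 0, 1$ and equal to $1$ near $R = \epsilon$. A direct computation using the Liouville form $I\,dR$ gives $X_H = -\chi(R)\partial_R + \chi'(R) I \partial_I$, which along $\{R = \epsilon, I \gg 0\}$ reduces to $-\partial_R$. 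The function $H$ is non-negative wherever $I \geq 0$, is linear-at-infinity in the Liouville sense, and has Hamiltonian vector field tangent to $\partial(T^*[0,1])$ (thanks to $\chi(0) = \chi(1) = 0$), so it generates an admissible positive isotopy. Its time-$t$ flow drags the end of $\gamma^-_\infty$ from $R = \epsilon$ down to $R = \epsilon - t$. Running a sequence of such Hamiltonians $H_n$ with cutoffs $\chi_n$ whose support extends down to $R_n \to 0$, for times $t_n \to \epsilon$, produces a family of positive wrappings whose limiting Legendrian at infinity is $\partial K \times \{R = 0,\, I = +\infty\} \subset \partial_\infty \partial X$.

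The main obstacle I expect is verifying cofinality in the poset of positive wrappings of $\partial_\infty K^-$. I plan to handle this by a standard convexity argument: any competing positive wrapping $\psi^t$ of $\partial_\infty K^-$ can, for any prescribed neighborhood $U$ of $\partial_\infty \partial X$, be dominated by some iterate of the $H_n$-flow in the sense of positive isotopies once $t$ is large enough, since once the Legendrian is pushed sufficiently close to $\{R = 0\}$ it cannot escape that region under further positive motion. Apart from this verification, the construction is essentially local to the product region $\eff \times T^*[0,1]$, where $K^-$ is already in the form making the explicit Hamiltonian computation transparent.
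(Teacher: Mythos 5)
Your geometric plan---use the product form $\partial K \times (T^*_{\epsilon})^+$ of $K^-_{split}$ near $\partial X$ and wrap the $T^*[0,1]$ factor towards $\{R=0\}$---matches the paper's strategy, and the explicit Hamiltonians $H_n=\chi_n(R)\cdot I$ do generate positive isotopies pushing $\partial_\infty K^-$ towards $\partial X$ (with the Liouville form $-I\,dR$ on the $T^*[0,1]$ factor, your convention $\iota_{X_H}\omega=-dH$ makes $H>0$ the positivity condition and produces the $-\chi\partial_R$ term you compute, consistent with the Reeb flow at $I=+\infty$ moving toward $R=0$).

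The genuine gap is the cofinality step, which you correctly flag as the obstacle but do not resolve. Your ``standard convexity argument'' is not a standard fact and is not substantiated: a competing positive wrapping of $\partial_\infty K^-$ is free to move in the $\eff$-direction (where $\partial K$ lives) as well as in the $T^*[0,1]$-direction, and nothing in your sketch confines it near $\{R=0\}$ or produces the required positive isotopy from an arbitrary competing wrapping into some term $L_n$ of your sequence. What you are implicitly trying to re-derive is exactly the content of \cite[Proposition 7.5]{ganatra2018sectorial}, which says that suitable cylindrizations of products of cofinal wrappings remain cofinal; the paper's proof simply takes a product-like cofinal wrapping of $K^-_{split}$ and cites that proposition, together with \cite[Lemma 3.8]{ganatra2018sectorial} to transfer the statement from stopped Liouville manifolds to Liouville sectors (a subtlety your write-up also omits). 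You also gloss over the fact that $K^-$ is the cylindrization of $K^-_{split}$, so its Legendrian at infinity is not literally $\partial K\times\{R=\epsilon,I=+\infty\}$, though this is minor. Without invoking or reproving the cofinality-of-products machinery, the claim that your sequence $\{L_n\}$ is cofinal is unjustified.
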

\begin{proof} 
    Choose a product-like cofinal wrapping of $K^-_{split}$ converging to $\partial X$. Now, \cite[Proposition 7.5]{ganatra2018sectorial} says that (suitable cylindrizations of) products of cofinal wrappings remain cofinal. While the original result was stated for stopped Liouville manifolds, the result remains true in this case because wrappings of a given Lagrangian $L$ in a Liouville sector $Y$ are cofinal if and only if they are cofinal when thought of as being wrappings in the stopped convex completion $(\bar{Y}, \partial_{\infty} \bar{Y} \backslash (\partial_{\infty} Y)^{\circ})$ (cf. \cite[Lemma 3.8]{ganatra2018sectorial}).
\end{proof} 

\begin{rmk}
    Dually, there is a corresponding {\em (cylindrized) positive pushoff} $K^+$, obtained by applying the same construction to $K$ using the curve $\gamma^+$ defined by negating the $I$ coordinate of $\gamma^-$. Identical arguments imply $K^+$ is `backward stopped', i.e., cofinally negatively wraps into the stop.
\end{rmk}

For the next lemma, recall that for any Liouville manifold $\eff$ there is a stabilization functor $stab: \W(\eff) \to \W(\eff \times T^*[0,1])$, which sends $L$ to the cylindrization of $L \times [\mathrm{fiber}]$ \cite[\S 8.4]{ganatra2018sectorial}.
\begin{lem}\label{lem:boundarystabilization}
    Let $\gamma = \{I = c\} \subset T^*[0,1] = [0,1]_R \times \R_I$, and let $V \subset F$ be a compact exact Lagrangian. Then $V \times \gamma \subset \eff \times T^*[0,1]$ is $I$-arclike near $0$ and $-I$-arclike near $1$, and with respect to these functions, its negative pushoff $(V \times \gamma)^-$ is isotopic to the stabilization $stab(V)$.
\end{lem}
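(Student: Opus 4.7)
The arclike claims are immediate from the product structure: near the $R=0$ boundary of $\eff \times T^*[0,1]$, the fiber coordinate $I$ of $T^*[0,1]$ is a standard $\alpha=1$-linear defining function (its differential is positive on the characteristic foliation spanned by $\partial_I$), and near $R=1$ the symmetric convention makes $-I$ into the defining function. Since $V \times \gamma = V \times \{I=c\}$ has $I = c$ constant on its $R=0$ boundary and $-I = -c$ constant on its $R=1$ boundary, it is $I$-arclike and $(-I)$-arclike respectively; both functions extend into collars of $\partial X$ via the product structure, and $V \times \gamma$ lies in a single level set of each.

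For the main claim, the strategy is to reduce to a 2-dimensional geometric statement by factoring through the product. Because $V$ is compact and the $\eff$-factor has no sector boundary, the negative pushoff construction only modifies the $T^*[0,1]$-factor at each of the two components of $\partial X$; explicitly, $\gamma = \{I=c\}$ is replaced by the cylindrization $\gamma^- \subset T^*[0,1]$ of the smoothing of
\[
\{R = \epsilon,\, I \ge c\} \;\cup\; \{R \in [\epsilon, 1-\epsilon],\, I = c\} \;\cup\; \{R = 1-\epsilon,\, I \le c\},
\]
so $(V \times \gamma)^- = V \times \gamma^-$. By definition $stab(V)$ is the cylindrization of $V \times \gamma_{\mathrm{fib}}$ for a cotangent fiber $\gamma_{\mathrm{fib}} = \{R = R_0\} \subset T^*[0,1]$. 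It thus suffices to exhibit a Lagrangian isotopy between $\gamma^-$ and $\gamma_{\mathrm{fib}}$ through properly embedded exact Lagrangian curves in $T^*[0,1]$ disjoint from $\partial T^*[0,1]$, and then take the product with $V$ and cylindrize.

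Both curves are properly embedded with the same topological type (one asymptotic end at $I \to +\infty$ and one at $I \to -\infty$). At infinity in the Liouville sector $T^*[0,1]$, which has no additional stops, their Legendrian endpoints lie in two disjoint interval-components of $\partial_\infty T^*[0,1]$; the endpoints of $\gamma^-$, at $(R=\epsilon, I=+\infty)$ and $(R=1-\epsilon, I=-\infty)$, can be moved to those of $\gamma_{\mathrm{fib}}$, at $(R_0, \pm\infty)$, by Legendrian isotopies within these intervals. My plan is to write down an explicit Hamiltonian on $T^*[0,1]$ whose time-one flow realizes this isotopy, combining a compactly-supported piece that straightens the bridge of $\gamma^-$ with a linear-at-infinity piece that aligns the asymptotic $R$-coordinates.

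The main technical obstacle is constructing the Hamiltonian above so that the intermediate curves remain exact and cylindrical at infinity, with primitives matching up as needed. This amounts to standard bookkeeping with primitives in the 2D strip $[0,1] \times \R$, using the fact that both $\gamma^-$ and $\gamma_{\mathrm{fib}}$ admit globally defined primitives for $\lambda = I\,dR$ that agree outside a compact set after a constant shift, and that the absence of stops ensures the Legendrian isotopy at infinity does not encounter any obstruction.
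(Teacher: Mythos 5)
Your proposal is correct and takes essentially the same approach as the paper: observe that the pushoff construction is local to the $T^*[0,1]$ factor so the modified Lagrangian is a cylindrization of $V \times \tilde\gamma$ for a curve $\tilde\gamma$ that smooths the broken path with vertical tails at $R = \epsilon$ and $R = 1-\epsilon$, note that $\tilde\gamma$ is isotopic to a cotangent fiber in $T^*[0,1]$, and then cylindrize the induced product isotopy (the paper cites \cite[Lemma 7.3]{ganatra2018sectorial} for that last step, which is the lemma your phrase ``take the product with $V$ and cylindrize'' implicitly invokes). The only stylistic difference is that you propose spelling out an explicit Hamiltonian for the 2D isotopy, while the paper is more explicit about the rescaling embeddings used to run the pushoff near each boundary component and more terse about the 2D isotopy itself.
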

\begin{proof}
    The first assertion (that $I$ near $0$ and $-I$ near $1$ are $(\alpha=1)$ defining functions, and that they make $V \times \gamma$ arclike near $0$ and $1$) is tautological. We can then run the negative pushoff construction near $R=0$ by using the embedding 
    $$\eff \times T^*[0,1] \xrightarrow{(f,R,I) \mapsto (f,\delta R, I/\delta)} \eff \times T^*[0,\delta] \hookrightarrow \eff \times T^*[0,1],$$  and near $R=1$ by using the embedding 
    $$\eff \times T^*[0,1] \xrightarrow{(f,R,I) \mapsto (f,1-\delta R,- I/\delta)} \eff \times T^*[1-\delta,1] \hookrightarrow \eff \times T^*[0,1].$$
        The resulting negative pushoff is a cylindrization of $\partial K \times \tilde{f}$ where $\tilde{f}$, a smoothing of $\{R = \delta\epsilon, I \geq c\}\cup \{\delta\epsilon \leq R \leq 1-\delta\epsilon, I = c\} \cup \{R = 1-\delta\epsilon, I \leq c\}$, is isotopic to a cotangent fiber. By cylindrizing the induced isotopy between $\partial K \times [\mathrm{fiber}]$ and $\partial K \times \tilde{f}$ as in \cite[Lemma 7.3]{ganatra2018sectorial} we obtain a cylindrical isotopy between $(V \times \gamma)^-$ and $stab(V)$.
\end{proof}

\subsection{Wrapped Floer theory with arclike Lagrangians}
Let $X$ be a Liouville sector and $K$ an arclike Lagrangian in $X$. 
Using \Cref{lem:alpha_alpha'} and  \Cref{lem:IFunctionAdaptedToK} to fix an
extended $\alpha= \frac{1}{2}$-defining function $I$ for which $K$ is
$I$-arclike, we obtain by \Cref{arcimage}  a map $\pi: \Nbd^Z (\partial X)
\to \C_{\Re \geq 0}$ in the sense of \cite[Def
2.26]{ganatra2020covariantly} such that $\pi|_{K \cap \Nbd^Z (\partial X)}$
submerses $K \cap \Nbd^Z (\partial X)$ onto a horizontal segment $\Gamma$. 
\begin{lem}\label{arclikemodule}
$K$ induces a well-defined module $CF^*(K,-)$ over the wrapped Fukaya category $\W(X)$ (defined implicitly using a fixed choice of $\pi$ as above), whose cohomology is given by ordinary (rather than wrapped) Floer homology. The resulting module is independent up to isomorphism of choices of Floer data made in its construction.
\end{lem}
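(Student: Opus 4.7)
The plan is to construct $CF^*(K,-)$ directly, as an $A_\infty$-module over $\cW(X)$ whose structure maps $\mu^{k|1}$ count pseudoholomorphic $(k+2)$-gons one of whose boundary arcs lies on $K$ while the remaining $k+1$ arcs lie on suitably wrapped representatives of test objects $L_0,\dots,L_k\in\cW(X)$, following the localization model of \cite{ganatra2020covariantly}. Since $K$ is compact, only the test Lagrangians need cofinal positive wrapping, and the chain-level complex $CF^*(K,L)$ is obtained as the colimit of Floer cohomologies with these wrapped representatives of $L$; the $A_\infty$-module structure over $\cW(X)$ is then built into this colimit by the usual continuation-map formalism. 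The almost complex structures are chosen so that $\pi:\Nbd^Z(\partial X)\to\C_{\Re\ge 0}$ is $J$-holomorphic near $\partial X$, and Floer data on the $K$-ends is chosen to be a small time-independent perturbation since $K$ is compact, making the generators transverse intersection points and realizing ordinary Floer cohomology on cohomology.

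The main analytic obstacle is compactness of the polygon moduli spaces, which separates into two regions. Away from $\partial X$ the usual integrated maximum principle and cylindrical $J$ arguments confine disks, since all boundary components are cylindrical at the $X$-infinity away from $\partial X$. Near $\partial X$ one uses the arclikeness of $K$ crucially: by \Cref{arcimage} the projection $\pi$ sends $K\cap\Nbd^Z(\partial X)$ submersively onto a horizontal segment $\Gamma\subset\C_{\Re\ge 0}$, while each cylindrical test Lagrangian $L_i$ lies in $\pi^{-1}(\{\Re\ge\eps_i\})$ for some $\eps_i>0$. Picking $\eps$ smaller than all $\eps_i$, any disk $u$ has $u^{-1}\pi^{-1}(\{0\le\Re\le\eps\})$ compact with boundary mapping into $\Gamma$, so the hypotheses of \Cref{openmappingboundary} are met and $u$ cannot enter the open strip $\pi^{-1}(\{0<\Re<\eps\})$. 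This is the crux of the argument.

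The choice $\alpha=\tfrac12$ plays two roles that make this compatible with the standard $\cW(X)$ wrapping setup: first, quadratic functions such as $\tfrac12 I^2$ are then $\alpha=1$-linear at infinity and hence admissible as wrapping Hamiltonians in $\cW(X)$; second, by \Cref{lem:XItangK} the vector field $\vX_I$ is tangent to $K$ near $\partial X$, so wrapping does not drag $K$ away from $\Gamma$ or create spurious intersections with $K$ in the arclike region. Having compactness, the $A_\infty$-module equations follow by the standard boundary analysis of $1$-dimensional moduli spaces, where the only possible degenerations are strip/polygon breakings at Hamiltonian chords (both interior-bubbling and escape through $\partial X$ being excluded). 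Independence of $CF^*(K,-)$ up to isomorphism under Floer-data choices (including $J$, perturbations, and cofinal wrapping sequence) follows by the usual parametrized moduli-space argument: one constructs continuation module homomorphisms between any two choices, and compactness of the interpolating moduli spaces is controlled by the same application of \Cref{openmappingboundary}, with $\Gamma$ now possibly a horizontal segment varying in an $s$-family but still closed with empty interior, so the open-mapping argument applies verbatim.
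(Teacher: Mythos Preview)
Your approach is essentially the paper's: both rely on \Cref{openmappingboundary} to confine disks near $\partial X$, using that $\pi(K\cap\Nbd^Z(\partial X))$ lies in a horizontal segment while the test Lagrangians avoid $\Nbd^Z(\partial X)$. However, there are two points where your write-up diverges from the actual argument and would need correction.

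First, your explanation of why $\alpha=\tfrac12$ is chosen is wrong. The localization model of \cite{ganatra2020covariantly} uses no wrapping Hamiltonians, so the claim about $\tfrac12 I^2$ being admissible is irrelevant; and $K$ is never wrapped, so $\vX_I$ being tangent to $K$ plays no role here. The actual reason, cited in the paper from \cite[\S 2.10.1]{ganatra2020covariantly}, is that for $\alpha=\tfrac12$ there is an abundance of almost complex structures which are simultaneously cylindrical at infinity \emph{and} make $\pi$ holomorphic; for general $\alpha$ these two conditions can conflict.

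Second, your description of how the module over $\cW(X)$ is obtained is imprecise. The paper does not take a colimit to define the chain complex. Instead it appends $K$ as a final object to the directed category $\cO$ (using only test Lagrangians transverse to $K$), forming $\cO':=\langle\Z,\cO\rangle$, and then localizes at the continuation morphisms $C$ to obtain $\cW':=\langle\Z,\cW\rangle$; the module is the restriction of the Yoneda module of $K$ in $\cW'$. The statement that cohomology is ordinary Floer homology is then the separate observation that the $\cO$-module $CF^*(K,-)$ is already $C$-local: because $K$ is compact, positive isotopies $L\leadsto L^+$ stay away from $K$ at infinity, so continuation maps $HF^*(K,L^+)\to HF^*(K,L)$ are isomorphisms, hence $CF^*(K,\mathrm{cone}(c))$ is acyclic for every $c\in C$, and \cite[Lemma 3.13]{ganatra2020covariantly} gives $CF^*(K,-)\cong CF^*(K,-)_{C^{-1}}$. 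Your colimit language conflates these two steps. Independence of choices in the paper is also handled algebraically, by embedding two choices of $\cO_1',\cO_2'$ into a common $\cO_3'$ rather than by parametrized moduli spaces.
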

\begin{proof}
For any cylindrical Lagrangian $L$ which is transverse to $K$, \Cref{openmappingboundary} along with monotonicity as in \cite[Proposition 3.19]{ganatra2020covariantly} implies that the Floer chain complex $CF^*(K,L)$ is well-defined for a generic cylindrical (near infinity) $J$ which makes this $\pi : \Nbd^Z (\partial X) \to \C_{\Re \geq 0}$ holomorphic (recall from \cite[\S 2.10.1]{ganatra2020covariantly} that, as $\pi$ was built from an $\alpha= \frac{1}{2}$-defining function, there is an abundance of such $J$ satisfying both conditions simultaneously). The same arguments imply that we can define $\ainf$ operations of the form $CF^*(K,L_0) \otimes CF^*(L_0, L_1) \otimes \cdots \otimes CF^*(L_{k-1}, L_k) \to CF^*(K, L_k)$ for any tuple of $L_0, \ldots, L_k$ of cylindrical exact Lagrangians avoiding $\partial X$ mutually transverse to each other and $K$ and any surface-dependent $J$ which is uniformly cylindrical in the sense of \cite[\S 3.2]{ganatra2020covariantly}, makes $\pi : \Nbd^Z (\partial X) \to \C_{\Re \geq 0}$ holomorphic (shrinking this neighbourhood when applying \Cref{openmappingboundary} if needed so it doesn't intersect any $L_i$), and achieves transversality.

To construct the desired $\ainf$ module, modify/augment the construction of the wrapped Fukaya category $\W(X) := \cO[C^{-1}]$ from \cite[\S 3.5]{ganatra2020covariantly} as follows. First, in the construction of the directed category $\cO$, only use Lagrangians which are transversal to $K$ (imposing such a constraint on objects is unproblematic, see, e.g., \cite[Remark 3.28]{ganatra2020covariantly}). Then, add a final object $K$ to the directed category, so $\cO':= \langle \Z, \cO\rangle$. The construction of Floer data for $\cO'$ and $\ainf$ operations from that data carries through given the above discussion, in particular using \Cref{openmappingboundary}. By localizing $\cO'$ at the continuation elements $C$ in $\cO$, we obtain a semi-orthogonal decomposition $\W':= \langle \Z, \W \rangle$, and hence an induced $\W$-module $\hom_{\W'}(K,-)|_{\W}$. Note that this latter module $\hom_{\W'}(K,-)|_{\W}$, which we might call ``$CW^*(K,-)$''\footnote{For instance, its cohomology can be calculated by taking the direct limit of $HF^*(K,L^{-})$ over a cofinal collection of negative wrappings of $L$.} is by definition the localization with respect to $C$, $CF^*(K,-)_{C^{-1}}$, of the module $CF^*(K,-) = \hom_{\cO'}(K,-)|_{\cO}$ in the sense of \cite[Definition 3.17]{ganatra2020covariantly}.

Now, because $K$ is compact, the cylindrical Lagrangians $L \subset X$ and positive isotopies thereof $L \leadsto L^+$ considered in the wrapping category of $X$ do not intersect $K$ at infinity and by definition do not touch $\partial X$; hence, these isotopies stay disjoint from $\Nbd^Z (\partial X)$ (by shrinking this neighbourhood if necessary). As a result, the usual invariance arguments (cf. \cite[\S 3.3]{ganatra2020covariantly}) imply that the continuation maps $HF^*(K,L^+) \to HF^*(K,L)$ are defined and induce isomorphisms. 
This can be reformulated as saying that for any cone $cone(c)$ of any continuation element $c \in C$, $CF^*(K,cone(c))$ is acyclic. It follows from 
\cite[Lemma 3.13]{ganatra2020covariantly} that the natural map of $\cO'$-modules $CF^*(K,-) \to \hom_{\W'}(K,-)$ is a quasi-isomorphism; hence, the same is true of the restriction of these modules to $\cO$. In particular, $CF^*(K,-) \cong CW^*(K,-)|_{\cO}$. Going forward, we therefore refer to the latter module $CW^*(K,-)$ over $\W(Y)$ simply as $CF^*(K,-)$.

Finally, to check independence of choices, suppose we have made two constructions of directed categories $\cO'_1 = \langle \Z, \cO_1 \rangle$ and $\cO'_2 = \langle \Z, \cO_2 \rangle$, for same or possibly different $\cO_1$ and $\cO_2$ computing the same wrapped Fukaya category, and for different choices of Floer data for tuples containing $K$ (to extend to $\cO_i'$); which give different modules $CF^*(K,-)$ over $\cO_i[C_i^{-1}]$ for $i=1,2$. We can include $\cO_1 \coprod \cO_2$ into a sufficiently wrapped (in the sense of \cite[Definition 2.15]{ganatra2018sectorial}, see also \cite[Proposition 3.39]{ganatra2020covariantly}) $\cO_3$ consisting of Lagrangians transverse to $K$, and then build $\cO_3':= \langle \Z, \cO_3\rangle$ by inductively extending the previously chosen Floer data for tuples containing $K$ involving $\cO_1$ and $\cO_2$ separately. The construction of the previous paragraph applied to $\cO_3'$ then produces a module $CF^*(K,-)$ which restricts along the quasi-isomorphisms $\cO_i[C_i^{-1}] \stackrel{\simeq}{\to} \cO_3[C_3^{-1}]$ ($i=1,2$) to the two previously constructed modules, therefore implying they are isomorphic. 
\end{proof}

The modules constructed in \cref{arclikemodule} satisfy a nice locality property with respect to Liouville sector inclusions in the following sense:

\begin{lem}\label{arclikerestriction}
If $K$ is an arclike Lagrangian in $X$, and $Y \subset X$ is a Liouville subsector such that $K \cap Y$ is also an arclike Lagrangian in $Y$, then there is an isomorphism of $\W(Y)$-modules between $i^*CF^*(K,-)$ (where $i: \W(Y) \to \W(X)$ is the inclusion functor and $CF^*(K,-)$ is constructed by applying \Cref{arclikemodule} to $K \subset X$) and $CF^*(K \cap Y,-)$ (constructed by applying \Cref{arclikemodule} to $(K \cap Y) \subset Y$).
\end{lem}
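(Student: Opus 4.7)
The plan is to construct both $\W(Y)$-modules from a single choice of Floer data that is compatible with the sector inclusion, and then apply an open-mapping confinement argument showing that the holomorphic polygons defining the $A_\infty$ operations all remain inside $Y$. Once confinement is in place, the strict equality of the relevant moduli spaces will immediately yield the desired module isomorphism.

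First I would set up compatible auxiliary data. Using Lemma~\ref{lem:alpha_alpha'} and Lemma~\ref{lem:IFunctionAdaptedToK}, fix extended $\alpha=\tfrac12$-defining functions $I_X$ on $\Nbd^Z(\partial X)$ and $I_Y$ on $\Nbd^Z(\partial Y)$ making $K$ and $K\cap Y$ arclike, respectively. Along the common face $\partial X\cap\partial Y$, adjust $I_Y$ using the freedom in Lemma~\ref{lem:modify_I} so that it restricts to $I_X$; the resulting projections $\pi_X,\pi_Y$ to $\C_{\Re\ge 0}$ then agree where both are defined. Pick a uniformly cylindrical family of almost complex structures $J$ on $X$ that is holomorphic for both projections simultaneously; this is possible since $\alpha=\tfrac12$ lies in the range described in \cite[\S 2.10.1]{ganatra2020covariantly}. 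As in the proof of Lemma~\ref{arclikemodule}, I would then build a directed extension $\cO'_Y=\langle\Z,\cO_Y\rangle$ with final object $K\cap Y$, extend $\cO_Y$ to a larger collection $\cO_X$ of cylindrical Lagrangians in $X$ transverse to $K$, and form $\cO'_X=\langle\Z,\cO_X\rangle$ with final object $K$, choosing the Floer data on $\cO'_Y$-tuples to restrict correctly from the data on $\cO'_X$.

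The central step is a confinement argument. For any tuple $L_0,\dots,L_k$ of objects of $\cO_Y$ and any $J$-holomorphic polygon $u:\Sigma\to X$ with one boundary arc on $K$ and the remaining arcs on the $L_i$, I claim $u(\Sigma)\subset Y$. Near $\partial X$, one applies Proposition~\ref{openmappingboundary} to $\pi_X$ exactly as in Lemma~\ref{arclikemodule} to keep $u$ away from $\partial X\setminus\partial Y$. Near $\partial_+ Y := \partial Y\setminus\partial X$, the same proposition applies to $\pi_Y$: the Lagrangians $L_i$ are cylindrical in $Y$, hence avoid $\partial_+ Y$; the arc on $K$ projects to a horizontal segment under $\pi_Y$ because $K\cap Y$ is $I_Y$-arclike, a property which propagates to $K\cap\Nbd^Z(\partial_+ Y)$ by \Cref{lem:XItangK}; and compactness of the preimage of a small shell follows from monotonicity and the cylindrical behavior of the $L_i$. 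The two applications are consistent along the corner $\partial X\cap\partial Y$ precisely because of the compatibility of $\pi_X$ and $\pi_Y$ arranged above.

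Once confinement is in hand, the polygons contributing to the $A_\infty$ operations with all cylindrical inputs in $\cO_Y$ coincide strictly with those defining $CF^*(K\cap Y,-)$ on $\cO_Y$, yielding a strict equality of pre-localized modules. Since positive wrappings of cylindrical Lagrangians in $Y$ may be chosen to remain in $Y$, continuation morphisms in $\cO_Y$ are also continuation morphisms in $\cO_X$, so this identification descends along the localizations to the desired isomorphism of $\W(Y)$-modules $i^* CF^*(K,-)\simeq CF^*(K\cap Y,-)$. The main obstacle I anticipate is the simultaneous compatibility of $I_X$ and $I_Y$ (and hence of $\pi_X,\pi_Y,$ and $J$) along the corner $\partial X\cap\partial Y$: arranging $I_Y$ to agree with $I_X$ there while preserving arclikeness of $K\cap Y$ in $Y$, and choosing a single $J$ making both projections holomorphic without destroying transversality of the moduli spaces.
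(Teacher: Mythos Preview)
Your proposal is correct and takes essentially the same approach as the paper: choose compatible Floer data on $\cO_Y \subset \cO_X$ (extended by $K\cap Y$ and $K$ respectively), apply the open-mapping confinement of Proposition~\ref{openmappingboundary} near $\partial Y$ to obtain a chain-level equality $CF^*(K,-)|_{\cO_Y} = CF^*(K\cap Y,-)$ of $\cO_Y$-modules, and descend to the localizations via the fact (from Lemma~\ref{arclikemodule}) that each of these unlocalized modules already agrees with the restriction of its localized version. Your anticipated corner-compatibility obstacle is not addressed in the paper's proof and does not arise in the paper's applications of this lemma (in each use, the components of $\partial Y$ are either equal to a component of $\partial X$ or lie entirely in $\mathrm{int}(X)$), so while your caution is reasonable in full generality, it is not needed here.
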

\begin{proof}
    We work with a chain-level model for $\W(X):=\cO_X[C_X^{-1}]$ for which the chain-level inclusion functor $i: \W(Y) \to \W(X)$ comes from a directed subcategory $\cO_Y \subset \cO_X$ as in \cite[\S 3.6]{ganatra2020covariantly}. 
    In the construction of $CF^*(K,-)$ in \Cref{arclikemodule}, we can arrange (given the independence of the outcome from choices of Floer data up to isomorphism) that the Floer datum used for any tuple $(K, L_0, \ldots, L_k)$ with $L_0, \ldots, L_k \in \cO_Y$  coincides, after restricting to $Y$, with the Floer datum used for the tuple $(K \cap Y, L_0, \ldots, L_k)$ and, in particular, makes $\pi = R + i I: \Nbd^Z \partial Y \to \C_{\Re \geq 0}$ $J$-holomorphic for an $I$-function for which $K \cap Y$ is $I$-arclike. \Cref{openmappingboundary} therefore implies that the associated $\ainf$ operations for such tuples agree. Hence, we obtain the isomorphism from a chain level equality of $\cO_Y$ modules $CF^*(K \cap Y,-) = CF^*(K,-)|_{\cO_Y}$. By \Cref{arclikemodule}, $CF^*(K,-)$ is, as an $\cO_X$-module, isomorphic to the restriction of the corresponding (localized) $\W(X)$ module. Hence, we learn that the corresponding localized $\W(X)$ module restricts along $\cO_Y \to \cO_X \to \W(X)$ to $CF^*(K \cap Y,-)$. Since the restriction from $\W(X)$ to $\cO_Y$ factors through the localization $\cO_Y \to \W(Y)$, it follows that the $\W(X)$ module $CF^*(K,-)$ restricts to the corresponding (localized) $\W(Y)$ module $CF^*(K \cap Y,-)$.
\end{proof}

\begin{rmk}\label{independenceofpi}
    As constructed in \cref{arclikemodule}, the module $CF^*(K,-)$ over $\W(X)$ appears to depend implicitly on a fixed choice of $\pi$  for which $K$ is $\Im \pi$-arclike. However, thanks to \cref{arclikerestriction}, one can see that $CF^*(K,-)$ is independent of this choice of $\pi$ along the same lines as the discussion above \cite[Convention 3.1]{ganatra2020covariantly} (which addressed the independence of $\W(X)$ from $\pi$). The argument is as follows: given constructions $CF_1^*(K,-)$ over $\W_1(X)$ using $\pi_1 = R_1 + i I_1$ and $CF_2^*(K,-)$ over $\W_2(X)$ using $\pi_2 = R_2+iI_2$, one observes that there exists a trivial inclusion $X^{-\delta} \subset X$ for which $K \cap X^{-\delta}$ is still arclike (e.g., image of a small inward flow by either of the $X_{I_i}$). Now \cref{arclikerestriction} tells us that $CF_1^*(K,-)$ and $CF_2^*(K,-)$ both restrict to the same $CF^*(K \cap X^{-\delta},-)$ (constructed using any $\pi$ for which $K \cap X^{-\delta}$ is $\Im\pi$-arclike). Since trivial inclusions are quasi-equivalences \cite[Lemma 3.41]{ganatra2020covariantly}, it follows that the induced quasi-equivalence $\W_1(X) \stackrel{\sim}{\leftarrow} \W(X^{-\delta}) \stackrel{\sim}{\rightarrow} \W_2(X)$ identifies $CF_1^*(K,-)$ up to isomorphism with $CF^*_2(K,-)$. 
\end{rmk}

\begin{rmk}
    Let us look at an example where the above locality could possibly fail if $K$ is allowed to be non-compact (even if it has empty boundary). Consider $K = $ a cotangent fiber inside $Y = T^*[0,1]$, included into $X = T^*S^1$. Evidently, making the same choices as before $CF^*_X(K,-)$ as an $\cO_X$ module restricts along $\cO_Y \subset \cO_X$ to $CF^*_Y(K \cap Y,-) = CF^*_Y(K,-)$ as an $\cO_Y$ module. However, if $CW^*_X(K,-)$ (the localization of the former module) were to restrict to $CW^*_Y(K,-)$, we would learn (by testing against a fiber) that the self-wrapped Floer homology of a cotangent fiber in $T^*[0,1]$ and $T^*S^1$ agree which is false.  What fails in the above argument is that the $\cO_X$ module $CF_X^*(K,-)$ is not equivalent to its localization $CW^*_X(K,-)|_{\cO_X}$, hence arguments involving restricting $\cO_X$ modules don't necessarily descend to localized modules.
\end{rmk}

Now, let $Y \subset X$ be a subsector and $K \subset X$ a cylindrical Lagrangian such that $K^{in}:=K \cap Y$ is an arclike (and, in particular, compact) Lagrangian. Let us further assume that the Legendrian $\partial_\infty K$, which is therefore contained in $\partial_{\infty} X \backslash \partial_{\infty} Y$, admits a cofinal wrapping which does not cross $\partial\partial_\infty Y$.
Note that $K^{in}$, an arclike Lagrangian in $Y$, induces a module $CF^*(K^{in}, -)$ over $\W(Y)$ by \Cref{arclikemodule}. There is another module over $\W(Y)$ induced by $K$, namely the Yoneda module over $K$ (a $\W(X)$ module) restricted to $\W(Y)$ along the inclusion functor, $i^*\hom_{\W(X)}(K,-)$. The next result says that these modules are equal:
\begin{lem}\label{arclikesubsector}
In the situation above, there is an isomorphism of $\W(Y)$-modules between
$i^*\hom_{\W(X)}(K,-)$ and $CF^*(K^{in},-)$ (the module constructed by applying \Cref{arclikemodule} to $K^{in} \subset Y$).
\end{lem}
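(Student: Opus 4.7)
The plan is to realize both modules by a common chain-level construction, paralleling the proofs of \cref{arclikemodule} and \cref{arclikerestriction}. I will fix a chain-level localization model $\cW(X) = \cO_X[C_X^{-1}]$ as in \cite[\S 3.6]{ganatra2020covariantly}, with $\cO_Y \subset \cO_X$ realizing the inclusion functor $i$, and enlarge both to $\cO_X' := \langle \cO_X, K \rangle$ and $\cO_Y' := \langle \cO_Y, K^{in} \rangle$ by adding $K$ (respectively $K^{in}$) as a final object, following \cref{arclikemodule}. After fixing an extended $\alpha = 1/2$-defining function $I$ for which $K^{in}$ is $I$-arclike, with associated projection $\pi_Y : \Nbd^Z \partial Y \to \C_{\Re \ge 0}$, I choose Floer data for tuples $(K, L_0, \ldots, L_k)$ with $L_i \in \cO_Y$ that (a) restrict on $Y$ to the data chosen in $\cO_Y'$ for the tuple $(K^{in}, L_0, \ldots, L_k)$, and (b) make $\pi_Y$ holomorphic near $\partial Y$.

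Next I would argue that the $\ainf$ operations in $\cO_X'$ involving $K$ and objects of $\cO_Y$ coincide with those in $\cO_Y'$ involving $K^{in}$. All intersection-point inputs and outputs agree, since $K \cap L_i = K^{in} \cap L_i \subset Y^{\circ}$ (using $L_i \subset Y$). After shrinking $\Nbd^Z \partial Y$ to be disjoint from the $L_i$'s, the disk's boundary inside $\Nbd^Z \partial Y$ is constrained to lie on $K \cap \Nbd^Z \partial Y = K^{in} \cap \Nbd^Z \partial Y$, which $\pi_Y$ maps to the horizontal segment $\{\Im = c\}$ by arclikeness. Then \cref{openmappingboundary} applied to $\pi_Y$ prevents the disk from entering the interior $\pi_Y^{-1}(\C_{\Re > 0})$ of $\Nbd^Z \partial Y$, and combined with the transversality of $K$ to $\partial Y$ along $\partial K^{in}$, confines the disk's image to $Y$. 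This yields a chain-level equality $\hom_{\cO_X'}(K, -)|_{\cO_Y} = \hom_{\cO_Y'}(K^{in}, -)$ of $\cO_Y$-modules, in direct analogy with \cref{arclikerestriction}.

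It remains to descend to modules over the wrapped categories. On the $Y$ side, applying \cref{arclikemodule} to $K^{in}$ gives the $\cW(Y)$-module $CF^*(K^{in}, -)$ as the localization of $\hom_{\cO_Y'}(K^{in}, -)$ at the continuation elements $C_Y$, which is a quasi-isomorphism by the compactness of $K^{in}$. On the $X$ side, the $\cW(Y)$-module $i^* \hom_{\cW(X)}(K, -)$ is obtained by localizing $\hom_{\cO_X'}(K, -)|_{\cO_Y}$ at $C_Y$: the non-crossing hypothesis on the cofinal wrapping of $\partial_\infty K$ lets me complete the continuation data among positive wrappings of $K$ in $X$ so that these wrappings are supported at infinity away from $\partial_\infty Y$ (and away from $K^{in}$ in particular), guaranteeing cofinality for Floer theory paired with objects of $\cW(Y)$ and making the pullback coincide with the above localization. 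Assembling the two identifications produces the desired isomorphism. The main obstacle will be the confinement step: because $K$ actually crosses $\partial Y$ transversely rather than being contained in $Y$, one cannot deduce the confinement of disks to $Y$ from a direct sectorial barrier principle, and must instead extract it from the open mapping analysis of the disk's image near $\partial Y$ combined with the arclike horizontal-segment constraint on the disk's boundary along $K^{in} \cap \Nbd^Z \partial Y$; the non-crossing hypothesis at $\partial_\infty K$ plays a parallel role at infinity, ruling out the analogous escape under positive wrapping of $K$.
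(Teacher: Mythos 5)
Your chain-level confinement step is essentially right, but the final localization step has a structural gap coming from the setup.

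The issue is that you treat $K$ the same way \cref{arclikemodule} treats an arclike Lagrangian, appending it as a \emph{final} object $\cO_X' := \langle \cO_X, K\rangle$. But $K$ here is a cylindrical Lagrangian avoiding $\partial X$ and thus is already a legitimate object of $\cW(X)$; it does not need special treatment. The price of putting it as a final element is that you lose access to the cofinal wrapping sequence $\{K^i\}$ of $K$ \emph{inside} your directed category --- a final element has nothing larger than it --- and this is precisely the tool needed to compare the unwrapped module $\hom_{\cO_X'}(K,-)|_{\cO_Y}$ with the wrapped pullback $i^*\hom_{\cW(X)}(K,-)$. The paper instead inserts $K$ into $\cO_X$ as a regular element (greater than all of $\cO_Y$, but with room above it), arranges its $\cO_X$-wrapping sequence $\{K^i\}$ to realize the hypothesized cofinal wrapping in $\partial_\infty X \setminus \partial_\infty Y$, uses $\hom_{\cW(X)}(K,-) \cong \varinjlim_i \hom_{\cO_X}(K^i,-)$, and then shows that restricted to $\cO_Y$ every continuation map $\hom_{\cO_X}(K^i,-)|_{\cO_Y} \to \hom_{\cO_X}(K^{i+1},-)|_{\cO_Y}$ is an isomorphism because the isotopies $\partial_\infty K^i \leadsto \partial_\infty K^{i+1}$ never cross $\partial_\infty L$ for $L \subset Y$. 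This yields $i^*\hom_{\cW(X)}(K,-)|_{\cO_Y} \cong CF^*(K,-)|_{\cO_Y}$, which is the substantive step your sketch skips.

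Relatedly, your assertion that ``$i^*\hom_{\cW(X)}(K,-)$ is obtained by localizing $\hom_{\cO_X'}(K,-)|_{\cO_Y}$ at $C_Y$'' conflates two different things. As you note, $\hom_{\cO_X'}(K,-)|_{\cO_Y}$ is \emph{already} $C_Y$-local (since $K \cap Y = K^{in}$ is compact and wrappings of $L \subset Y$ stay away from $\partial_\infty K$), so the $C_Y$-localization is trivial and computes ordinary $HF^*(K,L)$. What you actually need --- and what the ``completing continuation data among positive wrappings of $K$'' language gestures at but cannot execute in your setup --- is the statement that $HW^*_X(K,L) \cong HF^*(K,L)$ for $L \subset Y$. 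That requires the cofinality argument over the wrapping sequence $\{K^i\} \subset \cO_X$, which your $\cO_X'$ does not contain. The remedy is to adopt the paper's convention: build $\cO_X$ with $K$ as a regular element rather than a terminal one, and only append $K^{in}$ as a final object on the $Y$-side (where it genuinely is an arclike Lagrangian rather than a cylindrical object of the wrapped category).
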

\begin{proof}

We implicitly work with a chain-level model for $\W(X):=\cO_X[C_X^{-1}]$ for which the chain-level inclusion functor $i: \W(Y) \to \W(X)$ comes from a directed subcategory $\cO_Y \subset \cO_X$. We choose $\cO_Y$ arbitrarily to consist of objects transversal to $K^{in}$.
Next, we choose $\cO_X$ by first appending $K$ to be greater than all elements of $\cO_Y$, and then continuing to add elements so that $\cO_X$ contains every isotopy class of Lagrangians in $X$ and is sufficiently wrapped in the sense of \cite[Definition 2.15]{ganatra2018sectorial} (see also \cite[Proposition 3.39]{ganatra2020covariantly}), meaning every Lagrangian $L$ added after $K$ possesses a cofinal wrapping ($L = L^0 \leadsto L^1 \leadsto \cdots$) in $X$ which is also cofinal in the poset $\cO_X$ (an ``$\cO_X$-wrapping sequence''). See \cite[\S 3.6]{ganatra2020covariantly} for one such construction. For $K$, we can also specifically arrange for its $\cO_X$-wrapping sequence $\{K^i\}$ to be induced by a cofinal collection of draggings (in the sense of \cite[Lemma 2.2]{ganatra2018sectorial})
of $K$ along the hypothesized cofinal-in-$\partial_\infty X$ wrapping $(\partial_\infty K)^t \subset (\partial_\infty X \backslash \partial_\infty Y)$.
 
Finally, in order to construct the module $CF^*(K^{in},-)$, we append a maximal element $K^{in}$ to $\cO_Y$ as in \Cref{arclikemodule} and call the result $\cO_Y'$. When choosing Floer data for $\cO_Y'$ and respectively $\cO_X$, we can arrange for any sequence of elements $L_0 > \cdots > L_k$ in $\cO_Y$ (contained in $Y$) that the Floer data chosen to define the $\ainf$ structure maps for the sequence $(K^{in}, L_0, \ldots, L_k)$ in $\cO_Y'$ (as in \Cref{arclikemodule}) and the sequence $(K, L_0, \ldots, L_k)$ in $\cO_X$
agree within $Y$ and use an almost-complex structure $J$ making $\pi = R + i I: \Nbd^Z \partial Y \to \C_{\Re \geq 0}$ $J$-holomorphic, for a $\pi$ with $I$ making $K^{in}$ arclike (this is possible because these tuples are equal when intersected with $Y$, and because each $L_i$ by definition avoids $\partial Y$ ). For this choice of Floer data, \Cref{openmappingboundary} applied to $Y$ implies that there is a chain level equality of $\cO_Y$ modules $CF^*(K^{in},-) = CF^*(K \cap Y,-) = CF^*(K,-)|_{\cO_Y}$.

By cofinality of the wrapping $\{K^i\}$, there is an isomorphism of $\cO_X$-modules
$\hom_{\W(X)}(K, -) \cong\varinjlim_i \hom_{\W(X)}(K^i,-)  \cong \varinjlim_i
\hom_{\cO_X}(K^i,-)$ (here we define $\lim_i \mathcal{M}_i$ for a sequence of
modules $\mathcal{M}_0 \to \mathcal{M}_1 \to \cdots$ as in
\cite[(3.27)]{ganatra2020covariantly}). On the other hand, when restricted to
the subcategory $\cO_Y$, there is a further isomorphism 
\[ 
    CF^*(K,-)|_{\cO_Y} = \hom_{\cO_X}(K,-)|_{\cO_Y} = \hom_{\cO_X}(K^0,-)|_{\cO_Y} \stackrel{\cong}{\to} \varinjlim_i \hom_{\cO_X}(K^i,-)|_{\cO_Y},
\]
as the isotopies $\partial_{\infty}K^i \leadsto \partial_{\infty}K^{i+1}$ don't cross into $Y$ by hypothesis, hence don't cross $\partial_{\infty} L$ for any $L \subset Y$, implying that $HF^*(K^i,L) \stackrel{\cong}{\to} HF^*(K^{i+1},L)$ for each $i$ (see, e.g., \cite[Lemma 3.26]{ganatra2020covariantly}). From the previous paragraph, we know that as $\cO_Y$-modules, $CF^*(K,-) = CF^*(K^{in},-)$. So all together we obtain an isomorphism of $\cO_Y$-modules $i^*\hom_{\W(X)}(K, -)|_{\cO_Y} \cong CF^*(K^{in},-)$. 
Now, the $\cO_Y$-module $i^*\hom_{\W(Y)}(K,-)|_{\cO_Y}$ is by construction $C_Y$-local, i.e., pulled back from the $\W(Y) = \cO_Y[C_Y^{-1}]$ module $i^*\hom_{\W(Y)}(K,-)$, as is $CF^*(K^{in},-)$ by \Cref{arclikemodule}. Given that the pullback to $\cO_Y$ induces a fully faithful functor from $\W(Y)$ modules to $\cO_Y$ modules, we conclude that the $\W(Y)$-modules $i^*\hom_{\W(Y)}(K,-)$ and $CF^*(K^{in},-)$ (constructed using \Cref{arclikemodule}) are isomorphic as desired.
\end{proof}

For the next result, let us return to the setting of an arclike Lagrangian $K$ in a Liouville sector $X$. 
Assume $X$ has been already deformed to have exact boundary as in \cref{boundarydeformation} so that there is a boundary sector inclusion of $F \times T^*[0,1]$ for any extended $\alpha=1$-defining function. 
Although $K$ itself is not an object of $\W(X)$ (because it doesn't avoid $\partial X$), the module $CF^*(K,-)$ defined in \Cref{arclikemodule} is in fact representable by an object in $\W(X)$:

\begin{prop}\label{negativepushoffrep}
    The module $CF^*(K,-)$ is representable by the negative pushoff  $K^- \in \W(X)$ of $K$ (defined in \S \ref{subsec:arclike}). That is, the $\W(X)$-modules $CF^*(K,-)$ and $\hom_{\W(X)}(K^-, -)$ are isomorphic.
\end{prop}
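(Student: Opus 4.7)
The plan is to identify, for each test Lagrangian $L \in \W(X)$, the wrapped Floer group $\hom_{H\W(X)}(K^-, L)$ with the arclike Floer group $H^*(CF^*(K,L))$ of \cref{arclikemodule}, and then promote this to an isomorphism of $\ainf$ modules. The first step is to replace $\hom_{H\W(X)}(K^-,L)$ by an ordinary Floer group: since $K^-$ is forward stopped (\cref{negativepushoffforwardstopped}), a cofinal positive wrapping $K^- \leadsto (K^-)^t$ converges into $\partial X$, and any cylindrical $L$ disjoint from a sufficiently small neighborhood of $\partial X$ will have stabilized intersections with $(K^-)^t$ after finite wrapping time. Hence the continuation maps $HF^*((K^-)^t,L)\to HF^*((K^-)^{t'},L)$ are isomorphisms for all sufficiently large $t<t'$, and $\hom_{H\W(X)}(K^-,L) \cong HF^*(K^-,L)$ computed with some fixed initial Floer data (a standard argument for forward-stopped objects, compare \cite[\S 3.3]{ganatra2020covariantly}).

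Next I would produce a chain-level identification $CF^*(K^-,L) \cong CF^*(K,L)$. Choose a projection $\pi:\Nbd^Z(\partial X) \to \C_{\Re \ge 0}$ associated to an $\alpha=\tfrac{1}{2}$-linear extended defining function $I$ for which $K$ is $I$-arclike (\cref{lem:IFunctionAdaptedToK}), and shrink $\Nbd^Z(\partial X)$ so that $L$ is disjoint from $\pi^{-1}(\{\Re < \epsilon\})$, where $\epsilon$ is the parameter fixed in the construction of $K^-$. Then $K^- \cap L = K \cap L$ as sets of transverse intersection points, since $K$ and $K^-$ agree outside $\pi^{-1}(\{\Re < \epsilon\})$ and $L$ is disjoint from that region. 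For holomorphic strips, fix a compatible $J$ making $\pi$ $J$-holomorphic and apply \cref{openmappingboundary}: on the $K$-side, the boundary image in $\Nbd^Z(\partial X)$ lies on $\pi(K \cap \Nbd^Z(\partial X)) = \gamma = \{I=c\}$, while on the $K^-$-side it lies on $\pi(K^- \cap \Nbd^Z(\partial X)) = \gamma^- \subset \{\Re \ge \epsilon\}$; in both cases the proposition confines the strip to the region where $K = K^-$. Choosing matching Floer data for the two problems, the moduli spaces of Floer strips biject, yielding an isomorphism of Floer chain complexes.

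To upgrade this to a quasi-isomorphism of $\ainf$ modules, I would repeat the same confinement argument for the polygonal moduli spaces defining the higher module operations, using coherent Floer data that matches what is used to build $CF^*(K,-)$ in \cref{arclikemodule}. Concretely this is done by adding both $K$ (as an arclike top element, as in \cref{arclikemodule}) and $K^-$ (as an ordinary top element) to a single directed category $\cO'$ of cylindrical test Lagrangians transverse to both, arranging matching Floer data for all tuples $(K^{\pm},L_0,\ldots,L_k)$, and showing via \cref{openmappingboundary} that the corresponding structure maps agree under the bijection of intersection points. I expect the main technical obstacle to be the non-compact tail of $K^-$ along $\pi^{-1}(\{\Re = \epsilon\}) \cap \{I \to +\infty\}$: one must rule out holomorphic polygons that escape along this tail rather than stay in the region where $K = K^-$. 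This should follow from an integrated maximum principle for $I$ applied on the cylindrical end (using that $I$ is constant along the tail of $K^-$ and that $L_i$'s are cylindrical with bounded $I$), mirroring the no-escape estimates underlying the construction of $\cW$ itself; localizing the resulting directed $\cO'$-module at the continuation elements as in the proof of \cref{arclikemodule} then yields the desired isomorphism of $\W(X)$-modules $\hom_{\W(X)}(K^-,-) \simeq CF^*(K,-)$.
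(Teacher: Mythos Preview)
Your approach is different from the paper's, and while the intuition is right, there is a genuine gap that prevents it from going through as stated.

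The paper does not attempt a direct chain-level matching between $CF^*(K^-,L)$ and $CF^*(K,L)$ for arbitrary $L \in \W(X)$. Instead it passes to a slightly smaller subsector $X^{-\delta} \subset X$ (the inward pushoff), arranges that $K^- \cap X^{-\delta} = K \cap X^{-\delta}$ and that a cofinal wrapping of $K^-$ stays outside $X^{-\delta}$ at infinity, and then applies the already-proved \cref{arclikesubsector} (to get $i^*\hom_{\W(X)}(K^-,-) \cong CF^*(K \cap X^{-\delta},-)$) together with \cref{arclikerestriction} (to get $CF^*(K \cap X^{-\delta},-) \cong CF^*(K,-)|_{\W(X^{-\delta})}$). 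Since the trivial inclusion $\W(X^{-\delta}) \to \W(X)$ is a quasi-equivalence, this suffices.

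The gap in your argument is precisely what the subsector trick is designed to handle: you write ``shrink $\Nbd^Z(\partial X)$ so that $L$ is disjoint from $\pi^{-1}(\{\Re < \epsilon\})$, where $\epsilon$ is the parameter fixed in the construction of $K^-$.'' But an object $L$ of $\W(X)$ is only required to be disjoint from $\partial X$, not from any prescribed neighbourhood, so for a fixed $\epsilon$ there will be test Lagrangians entering the region where $K^- \neq K$. For such $L$ you lose the bijection $K^- \cap L = K \cap L$ at the outset, and your moduli-space matching collapses. You cannot vary $\epsilon$ with $L$ without destroying the coherence needed for the $\ainf$ module structure. The fix is exactly to restrict to a subcategory of test Lagrangians contained in $X^{-\delta}$, which is essentially the paper's argument.

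A secondary issue: your confinement step for $K^-$ invokes \cref{openmappingboundary} with boundary image $\gamma^-$, but that proposition is stated for a horizontal segment $\Gamma$. The extension noted in its proof (to closed $\Gamma$ with empty interior whose complement in the strip has only unbounded components) may well cover $\gamma^-$, but you would need to check this, and also to deal with the cylindrization of $K^-$ at infinity, which modifies $\gamma^-$ further. The paper sidesteps this entirely: by restricting to $X^{-\delta}$, the only Lagrangian touching the boundary is the arclike $K \cap X^{-\delta}$, whose image is genuinely a horizontal segment, so \cref{openmappingboundary} applies as stated. Finally, your worry about curves escaping along the non-compact tail of $K^-$ is not the main obstruction; once the test Lagrangians are confined to $X^{-\delta}$, both the generator set and the curves are confined by the arclike open-mapping argument, and the tail plays no role.
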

\begin{proof}
    Let $X^{-\delta}$ denote a sufficiently small inward pushoff of $X$ (by definition this is the image of $X$ under the flow generated by $-X_I$ for $I$ a fixed defining function for $X$, cf., \cite[Proof of Lemma 3.4]{ganatra2018sectorial}). Since the inclusion functor $\W(X^{-\delta}) \to \W(X)$ is a quasi-equivalence \cite[Lemma 3.41]{ganatra2020covariantly}, it suffices to prove the stated isomorphism of modules after restriction to $\W(X^{-\delta})$.
    We can arrange the construction of the negative pushoff $K^-$ (which is forward stopped by \Cref{negativepushoffforwardstopped}) to satisfy the following conditions by performing the relevant modifications in a $\eff \times T^*[0,1]$ subsector contained in a sufficiently small neighbourhood of $\partial X$:
    \begin{itemize}
        \item  $K^- \cap X^{-\delta}$ is arclike and equal to $K \cap X^{-\delta} = K \backslash (\gamma' \times \partial K)$ for a subsegment $\gamma' \subset \gamma$;
        \item there exists a cofinal wrapping $\{(K^-)^t\}_{t \geq 0}$ in $X$ of  $K^-$ which does not cross into $X^{-\delta}$ at infinity and for which $(K^-)^t \cap X^{-\delta} = K \cap X^{-\delta}$.
    \end{itemize}

    \Cref{arclikesubsector}, applied to the inclusion $X^{-\delta} \subset X$ therefore implies that there is an isomorphism of $\W(X^{-\delta})$-modules between the restriction of the Yoneda module $\hom_{\W(X)}(K^-,-)|_{\W(X^{-\delta})}$ and $CF^*(K \cap X^{-\delta},-)$ (the latter module constructed by applying \Cref{arclikemodule} to $(K \cap X^{-\delta})$ in $X^{-\delta}$). On the other hand, since $K$ is arclike in $X$ and $K \cap X^{-\delta}$ is arclike in $X^{-\delta}$, \Cref{arclikerestriction} implies that there is an isomorphism of $\W(X^{-\delta})$-modules $CF^*(K \cap X^{-\delta},-) = CF^*(K,-)|_{\W(X^{-\delta})}$. 
    Putting these together, we obtain an isomorphism of $\W(X^{-\delta})$-modules $\hom_{\W(X)}(K^-,-)|_{\W(X^{-\delta})} = CF^*(K,-)|_{\W(X^{-\delta})}$, which as argued earlier completes the proof. 
\end{proof}
\begin{rmk}\label{negativepushoffindependence}
    As constructed, $K^-$ a priori depends on a choice of $\alpha=1$-extended defining function $I$ for which $K$ is $I$-arclike. However, given \cref{negativepushoffrep} and the independence of the module $CF^*(K,-)$ from choices (see \cref{arclikemodule} and \cref{independenceofpi}), Yoneda implies that $K^-$ is well-defined up to isomorphism in $\W(X)$.
\end{rmk}

\begin{rmk} \label{arclikedeformation}
    Suppose $K$ is $I$-arclike in $X$ but $\partial X$ is not exact. 
    Observe that the deformation $\lambda_X \leadsto \lambda_X'$ recalled in \cref{boundarydeformation} preserves $I$ and (if performed close enough to $\partial X$) the $I$-arclikeness of $K$. Under the resulting deformation invariance quasi-isomorphism $\cW(X) \cong \cW(X')$, we claim that the resulting arclike modules $CF^*(K,-; \lambda_X)$ and $CF^*(K,-; \lambda_X')$ are isomorphic. In particular, the representability of the latter module by $K^- \in \cW(X, \lambda_X')$ implies the representability of the former by an isomorphic object. To see the claim, let $X^{-\delta}$ again denote a sufficiently small inward pushoff of $X$ as in the above Lemma so that $K \cap X^{-\delta}$ is $I$-arclike in $X^{-\delta}$ and so that the deformation $\lambda_X \leadsto \lambda_X'$ is supported in $X \backslash (X^{-\delta})^\circ$. We therefore have $X^{-\delta} \hookrightarrow (X, \lambda_X')$, which by deformation invariance/invariance under trivial inclusions \cite[Lemma 3.41]{ganatra2020covariantly} gives us a zig-zag of quasi-equivalences $ \cW(X) \stackrel{\sim}{\leftarrow} \cW(X^{-\delta}) \stackrel{\sim}{\rightarrow} \cW(X,\lambda_X')$. Under these quasi-equivalences \cref{arclikerestriction} implies that $CF^*(K,-; \lambda_X)$ and $CF^*(K,-; \lambda_X')$ have isomorphic restrictions to $\cW(X^{-\delta})$, namely $CF^*(K \cap X^{-\delta},-)$. Therefore the induced quasi-equivalence $\cW(X) \simeq \cW(X,\lambda_X')$ carries $CF^*(K,-; \lambda_X)$ to $CF^*(K,-; \lambda_X')$ as desired.
\end{rmk}

\begin{proof}[Proof of \cref{thm:arclikeRestriction}]
    If the boundaries of $Y$ and $X$ are exact, then combining \Cref{arclikerestriction} with Proposition \ref{negativepushoffrep} applied to both $K$ in $X$ and $K^{in} = K \cap Y$ in $Y$ , we see that $i^*\hom_{\W(X)}(K^-,-)$ is isomorphic to the Yoneda module over the cylindrized negative pushoff of $K\cap Y$, $\hom_{\W(Y)}((K \cap Y)^-,-)$.

    In general, we may deform $Y$ near $\partial Y$ and $X$ near $\partial X$ as in \cref{boundarydeformation} (see also \cref{arclikedeformation}) without changing the arclike property of $K$ and $K \cap Y$, thereby reducing to the above case. 
\end{proof}

Now we return to the setting that $(\Ldom,\fo)$ is a sutured Liouville domain, $S$ the associated Liouville sector (deformed as in \cref{boundarydeformation}), $I:\Nbd^Z(\partial S) \to \R$ an extended $1$-defining function, and $\cup: \W(\eff) \to \W(S)$ the corresponding cup functor defined above the statement of \Cref{lem:def_capl}.
Let $L \subset S$ be an $I$-arclike Lagrangian. For what follows, we think of the boundary $\partial L \subset \partial S$ as a compact exact Lagrangian in $F$, either via restricting to the level set of $I$ containing $\partial L$ or by projection along the characteristic foliation. 
\begin{prop}
  \label{cor:capIsomorphism}
  In the situation described above, the pullback $\cup^* \mathcal{Y}_{L^-}$ is representable by $\partial L$; that is, there is an isomorphism of $\W(\eff)$-modules $\cup^* \mathcal{Y}_{L^-} \simeq \mathcal{Y}_{\partial L}.$
\end{prop}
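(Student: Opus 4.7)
The plan is to reduce the statement to the two main technical inputs of this appendix: \cref{thm:arclikeRestriction} for arclike Lagrangians under sector inclusions, and \cref{lem:boundarystabilization} which identifies the negative pushoff of a product Lagrangian in a boundary sector with a stabilization. By construction (cf. \cref{cupeqn}), the cup functor factors as $\cup = i_* \circ stab$, where $stab: \W(\eff) \to \W(\eff \times T^*[0,1])$ is the stabilization embedding and $i: \eff \times T^*[0,1] \hookrightarrow S$ is the boundary sector inclusion determined by $I$ (after implicitly deforming $S$ via \cref{boundarydeformation} so that it has exact boundary and the chosen $I$ determines such an inclusion). Consequently $\cup^* = stab^* \circ i^*$, and the natural approach is to compute $i^* \mathcal{Y}_{L^-}$ first and then apply $stab^*$.

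To apply \cref{thm:arclikeRestriction} to $i$, I would first check both arclike hypotheses. That $L$ is arclike in $S$ is immediate from the $I$-arclike assumption. For the intersection, note that in the product coordinates on $\eff \times T^*[0,1] = \eff \times [0,1]_R \times \R_I$, the function $I$ restricts to a $1$-defining function for $\eff \times \{R=0\}$, and \cref{arcimage} combined with the $I$-arclike hypothesis forces $L \cap (\eff \times T^*[0,1])$ to have the product form $\partial L \times \gamma$, where $\gamma$ is (the intersection with the boundary sector of) a horizontal segment $\{I = c\}$. This is manifestly $I$-arclike in the boundary sector, so \cref{thm:arclikeRestriction} yields an isomorphism of $\W(\eff \times T^*[0,1])$-modules
\[
i^* \mathcal{Y}_{L^-} \simeq \mathcal{Y}_{(\partial L \times \gamma)^-}.
\]

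Next I would invoke \cref{lem:boundarystabilization}, which identifies $(\partial L \times \gamma)^-$ up to cylindrical isotopy (hence isomorphism in $\W(\eff \times T^*[0,1])$) with the stabilization $stab(\partial L)$, producing
\[
i^* \mathcal{Y}_{L^-} \simeq \mathcal{Y}_{stab(\partial L)}.
\]
Finally, pulling back by $stab^*$ and using that $stab$ is a fully faithful $A_\infty$ embedding (see \cite[\S 8.4]{ganatra2018sectorial}) gives
\[
\cup^* \mathcal{Y}_{L^-} \;=\; stab^* i^* \mathcal{Y}_{L^-} \;\simeq\; stab^*\mathcal{Y}_{stab(\partial L)} \;\simeq\; \mathcal{Y}_{\partial L},
\]
as required. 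The conceptual content is entirely absorbed by \cref{thm:arclikeRestriction}; there is no genuine obstacle here, only verification that the $I$-arclike hypothesis is exactly what is needed so that, in the boundary chart, $L$ takes the product form $\partial L \times \gamma$ to which \cref{lem:boundarystabilization} directly applies.
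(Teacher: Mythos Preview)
Your proof is correct and follows essentially the same approach as the paper: factor $\cup^* = stab^* \circ i^*$, apply \cref{thm:arclikeRestriction} to identify $i^*\mathcal{Y}_{L^-}$ with the Yoneda module of $(\partial L \times \gamma)^-$, invoke \cref{lem:boundarystabilization} to identify this with $stab(\partial L)$, and use full faithfulness of $stab$. The paper adds one small remark you glossed over, namely that one may need to shrink the boundary subsector (or flow $L$) so that $L \cap (\eff \times T^*[0,1])$ is exactly $\partial L \times \gamma$, and it cites \cite[Theorem 1.5]{ganatra2018sectorial} rather than \cite[\S 8.4]{ganatra2018sectorial} for full faithfulness of stabilization.
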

\begin{proof}
    The subsector $\eff \times T^*[0,1] \subset S$ determined by $I$ has the property that the restriction of $I$ and its associated $R$-function $R$ are simply the projection to the factors of $T^*[0,1] = [0,1]_R \times \R_I$. It follows (see \cref{arcimage} and the discussion that follows) that
    by shrinking the subsector $\eff \times T^*[0,1] \subset S$ (or by flowing $L$), we may assume that $L \cap (\eff \times T^*[0,1]) = \partial L \times \gamma$ where $\gamma: [0,1] \hookrightarrow T^*[0,1]$ is of the form $\{ I = c\}$. By \Cref{lem:boundarystabilization}, we learn that $(L \cap (\eff \times T^*[0,1]))^- = (\partial L \times \gamma)^-$ is isomorphic to the stabilization $stab(\partial L)$ of $\partial L$.
    Let $i: \eff \times T^*[0,1] \subset S$ denote the embedding defined above the statement of \Cref{lem:def_capl}. Theorem \ref{thm:arclikerepresentable} implies that $i^* \mathcal{Y}_{L^-} \simeq \mathcal{Y}_{(L \cap (\eff \times T^*[0,1])^-}$ which we have just argued is isomorphic to $\mathcal{Y}_{stab(\partial L)}$. Now pull back by $stab$ to conclude, as $stab^*i^* = \cup^*$ and $stab^*\mathcal{Y}_{stab(\partial L)}$ is isomorphic to $\mathcal{Y}_{\partial L}$ by full-faithfulness of $stab$ (which is a special case of \cite[Theorem 1.5]{ganatra2018sectorial}).
\end{proof}

\begin{proof}[Proof of \Cref{lem:def_capl}]
    Apply Corollary \ref{cor:partial_adjoint} to \Cref{cor:capIsomorphism}. To arrange that $\cap$ has image contained in the compact Fukaya category, we apply \cref{rmk:modifyingg}.
\end{proof}

\section{Liouville structures on submanifolds of \texorpdfstring{$(\C^*)^n$}{ the complex torus}}\label{sec:liouville_struc}

In this section, we will consider a (real) submanifold $A$ of the complex manifold 
$$M^*_{\C^*} \cong M^*_\R \times M^*_{S^1}$$
for some lattice $M^*$. 

Let $\varphi:M^*_\R \to \R$ be a strictly convex, proper, bounded-below function. 
We will denote its pullback to $M^*_{\C^*}$ by $\varphi$ also. 
Being strictly convex, $\varphi$ is the potential for a K\"ahler form $\omega = d\lf$, with Liouville one-form $\lf = -d^c\varphi$. 

The main results of this appendix give various conditions under which the Liouville structures for different classes of K\"{a}hler potentials can be compared. A version of the comparison of Liouville structures for homogeneous potentials of weight 2 has also recently been obtained in \cite[Theorem 9.2]{spenko2024hms}.

\begin{lem}[cf. Section 4b of \cite{seidel2006biased}]\label{lem:hol_comp_hom}
Suppose that $A$ is a complex submanifold which admits a smooth toric compactification (Definition \ref{def:smtorcomp}), and $\varphi$ is a convex combination of a compactification K\"ahler potential (Definition \ref{def:comp_pot}) and a potential which is homogeneous of weight $2$ (Definition \ref{def:homogeneous}). 
Then $A \cap \{\varphi \le C\}$, equipped with the Liouville one-form $\lf|_A$, is a Liouville domain for sufficiently large $C$; as a consequence, the Liouville completion is independent of $C$ for $C$ sufficiently large. 
\end{lem}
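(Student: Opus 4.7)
I aim to show, for $C$ sufficiently large:
\textup{(a)} $C$ is a regular value of $\varphi|_A$, so $A \cap \{\varphi \le C\}$ is a smooth compact manifold-with-boundary;
\textup{(b)} the Liouville vector field $Z_A$ of $\lf|_A$ is outward-pointing along the boundary.
Together these give the Liouville domain property. Independence of the completion then follows because, for $C < C'$ both above the critical-value threshold, the ``collar'' $A \cap \{C \le \varphi \le C'\}$ is swept out by $Z_A$-flow and therefore embeds canonically into the symplectization of $A \cap \{\varphi = C\}$ sitting inside either Liouville completion.

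In the toric coordinates $(x,\theta)$ on $M^*_{\C^*}$ the expressions $\lf = \sum_j \partial_{x_j}\varphi\,d\theta_j$ and $\iota_Z\omega = \lf$ give $Z = \sum_{j,k}(H_\varphi^{-1})_{jk}(\partial_{x_k}\varphi)\,\partial_{x_j}$ and hence $Z(\varphi) = \nabla\varphi^\top H_\varphi^{-1}\nabla\varphi > 0$ wherever $\nabla\varphi \neq 0$, by strict convexity of $\varphi$. Since $A$ is a complex submanifold, $\varphi|_A$ is a K\"ahler potential for the induced K\"ahler form $\omega|_A$, and the analogous computation in any holomorphic chart on $A$ gives $Z_A(\varphi|_A) > 0$ wherever $d(\varphi|_A) \neq 0$; this is \textup{(b)} once \textup{(a)} is known. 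Moreover $\varphi$ is proper on $M^*_{\C^*}$, as $M^*_{S^1}$ is compact and $\varphi$ is proper on $M^*_\R$, and $A = X_A \cap M^*_{\C^*}$ is closed in $M^*_{\C^*}$ (being the trace of the closed $X_A \subset Y_{\Sigma_A}$); thus $\varphi|_A$ is proper, and \textup{(a)} reduces to showing that the set of critical points of $\varphi|_A$ is bounded in $A$.

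For this boundedness I argue by contradiction. If there existed critical points $p_n \in A$ of $\varphi|_A$ with $\varphi(p_n) \to \infty$, properness of $\varphi|_A$ and compactness of $X_A$ would yield, after passing to a subsequence, a limit $p_\infty \in X_A \setminus A$ on some toric stratum $Y_\sigma$. In local toric coordinates $(w_\sigma, z_\sigma) \in \C^{\dim \sigma} \times (\C^*)^{n - \dim \sigma}$ near $p_\infty$, with $Y_\sigma = \{w_\sigma = 0\}$, the compactification K\"ahler potential behaves asymptotically like $\varphi_1 \sim -\sum_i \kappa_i\log|w_{\sigma,i}|$, while the weight-$2$ homogeneous potential behaves like $\varphi_2 \sim \sum_{ij} a_{ij}\log|w_{\sigma,i}|\log|w_{\sigma,j}|$ (each up to smooth terms with bounded derivatives), with positive linking numbers $\kappa_i$ and a positive-definite matrix $a_{ij}$. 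Hence the convex combination $\varphi$ has $d\varphi$ whose $d\log|w_{\sigma,i}|$-components diverge in absolute value as $w_\sigma \to 0$. The transversality of $X_A$ to $Y_\sigma$ built into \Cref{def:smtorcomp} guarantees that for $p$ near $p_\infty$ the projection $T_pX_A \to \C^{\dim\sigma}$ onto the normal directions is surjective, so $d(\varphi|_A)$ inherits the divergent $d\log|w_\sigma|$-components and is in particular nonzero near $p_\infty$, contradicting the existence of the $p_n$.

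The main obstacle is the local asymptotic analysis in the last paragraph, which requires the precise forms prescribed in \Cref{def:comp_pot} and \Cref{def:homogeneous}. What makes it go through cleanly is that both permitted types of potential diverge \emph{in the same direction}, namely away from the toric stratum, so the required nondegeneracy of $d\varphi$ is robust under taking convex combinations.
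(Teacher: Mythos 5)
Your overall strategy is sound and in fact isomorphic to the paper's: show that for large $C$ the level set $\{\varphi|_A = C\}$ is a regular, compact level set on which the Liouville field of $\lf|_A$ is outward-pointing, and reduce the outward-pointing condition (using holomorphy of $A$) to $d\varphi|_A \neq 0$ outside a compact set. However, the local asymptotic analysis you substitute for the paper's angle framework has a genuine gap.

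The claim that a weight-$2$ homogeneous potential behaves like $\sum_{ij} a_{ij}\log|w_{\sigma,i}|\log|w_{\sigma,j}|$ plus smooth bounded error near the stratum $Y_\sigma$ is false: a general strictly convex function $\varphi$ homogeneous of weight $2$ in $x = \log|w|$ is not a quadratic form in $x$. More importantly, even for genuine quadratic forms, the components of $d\varphi$ \emph{tangent} to the stratum need not stay bounded as $w_\sigma \to 0$. Example: take $\varphi(x_1,x_2) = x_1^2 + x_1 x_2 + x_2^2$ and send $x_1 \to -\infty$ with $x_2$ bounded; then $\partial\varphi/\partial x_2 = x_1 + 2x_2 \to -\infty$ as well. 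So the tacit ``normal components diverge, tangential components bounded'' picture fails, and your final inheritance step --- deducing $d(\varphi|_A) \neq 0$ from surjectivity of the projection $T_pX_A \to \C^{\dim\sigma}$ --- does not close: $d\varphi|_{T_pA}$ is a sum of a normal and a tangential contribution, and with both potentially unbounded one must rule out cancellation, which the argument doesn't do.

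The paper's proof sidesteps exactly this issue. Rather than working coordinate-by-coordinate, it shows (\cref{lem:comp_con}) that the existence of a smooth toric compactification forces $A$ to be asymptotically cylindrical, i.e.\ $\angle(T_pA, E) \to 0$ where $E$ is the Euler vector field; and it shows (\cref{cor:comb_ang}, combining \cref{lem:comp_ang} and \cref{lem:ZpropE}) that for every convex combination $\varphi_s$ of a compactification potential and a $2$-homogeneous potential, $\angle(E, \nabla\varphi_s) \le c < \pi/2$ near infinity, uniformly in $s$. Those two angle bounds combine (\cref{lem:hol_crit}) to give $\angle(T_pA, \nabla\varphi_s) < \pi/2$ outside a compact set, which is exactly the nonvanishing of $d\varphi_s|_A$ you need --- and because it is phrased in terms of angles relative to the Euler direction rather than raw components, the cancellation problem never arises and the bound automatically survives convex interpolation. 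To repair your proof you would essentially need to reintroduce the angle estimates, at which point you recover the paper's argument.
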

\begin{proof}
Combines Lemmas \ref{lem:hol_crit} and \ref{lem:comp_con}, and Corollary \ref{cor:comb_ang} below.
\end{proof}

\begin{lem}\label{lem:change_comp_hom}
Suppose that $A$ is a complex submanifold admitting a smooth toric compactification, $\varphi_0$ is a compactification K\"ahler potential, and $\varphi_1$ a potential which is homogeneous of weight $2$. 
Then the corresponding Liouville completions $\widehat{(A,\lf_0)}$ and $\widehat{(A,\lf_1)}$ are Liouville isomorphic. 
\end{lem}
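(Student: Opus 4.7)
The plan is to prove this via a Moser-style argument applied to the convex linear interpolation $\varphi_s := (1-s)\varphi_0 + s\varphi_1$ for $s \in [0,1]$, followed by an appeal to the standard invariance of Liouville structures under convex-at-infinity deformations.

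First, I would observe that for each $s \in [0,1]$ the potential $\varphi_s$ is itself a convex combination of a compactification K\"ahler potential and a homogeneous-of-weight-2 potential, and hence satisfies the hypotheses of Lemma \ref{lem:hol_comp_hom}. Thus the Liouville one-form
\[
\lambda_s := -d^c \varphi_s = (1-s)\lambda_0 + s\lambda_1
\]
presents $A \cap \{\varphi_s \le C\}$ as a Liouville domain for $C$ sufficiently large, with a well-defined completion $\widehat{(A,\lambda_s)}$ that stabilizes for $C$ large.

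The next step is to upgrade $(\lambda_s)_{s\in[0,1]}$ to a deformation of Liouville structures that is convex at infinity in the sense of \cite{cieliebakeliashberg2012}. Concretely, one needs to exhibit, for each $s_0 \in [0,1]$, a compact domain $D \subset A$ such that the Liouville vector fields $Z_s$ associated to $\lambda_s$ are outward-pointing along $\partial D$ for all $s$ in a neighbourhood of $s_0$. A natural candidate is $D = A \cap \{\varphi_{s_0} \le C\}$ for sufficiently large $C$; the uniform analysis underlying Lemma \ref{lem:hol_comp_hom} (via Lemmas \ref{lem:hol_crit}, \ref{lem:comp_con} and Corollary \ref{cor:comb_ang}) should yield the outward-pointing condition uniformly on a neighbourhood of $s_0$, and compactness of $[0,1]$ then upgrades this to a global convex-at-infinity structure for the family.

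Granted this, the desired Liouville isomorphism between $\widehat{(A,\lambda_0)}$ and $\widehat{(A,\lambda_1)}$ follows from the standard fact that convex-at-infinity deformations produce Liouville isomorphisms, cf.\ \cite[Proposition 11.8]{cieliebakeliashberg2012} (invoked for precisely this purpose in the proof of Lemma \ref{adjustedsectors}). The main obstacle will be verifying the outward-pointing condition uniformly in $s$ near the toric boundary divisor of the smooth toric compactification of $A$: in the region where $\varphi_0$ is of compactification type (so behaves like a logarithmic potential) and $\varphi_1$ grows quadratically, one must check that their convex combination retains the required positivity of $Z_s \cdot \varphi_s$ on a uniform level set. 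The estimates already encoded in the proof of Lemma \ref{lem:hol_comp_hom} should carry over with only minor bookkeeping to give this uniformity, after which the conclusion is formal.
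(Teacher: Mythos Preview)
Your proposal is correct and follows essentially the same approach as the paper: linearly interpolate the potentials, verify via Lemmas \ref{lem:hol_crit}, \ref{lem:comp_con} and Corollary \ref{cor:comb_ang} that the sublevel sets $A \cap \{\varphi_s \le C\}$ are Liouville domains uniformly in $s$ for large $C$, and then invoke \cite[Proposition 11.8]{cieliebakeliashberg2012}. Note that Lemma \ref{lem:hol_crit} is already stated for a smooth family $\varphi_s$, so the uniformity you flag as ``the main obstacle'' is in fact already built into that lemma and requires no further bookkeeping.
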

\begin{proof}
Consider the family of potentials $\{\varphi_s\}_{s \in [0,1]}$ linearly interpolating between $\varphi_0$ and $\varphi_1$. 
 
It follows from Lemmas \ref{lem:hol_crit} and \ref{lem:comp_con}, and Corollary \ref{cor:comb_ang}, that $A \cap \{\varphi_s \le C\}$ is a Liouville subdomain for all $s$, for $C$ sufficiently large. 
Thus we have a homotopy of Liouville domains associated to these potentials, which in turn induces an isomorphism of Liouville manifolds $\widehat{(A,\lf_0)} \cong \widehat{(A,\lf_1)}$ by \cite[Proposition 11.8]{cieliebakeliashberg2012}.
\end{proof}

\begin{lem}\label{lem:hom_Liouv}
Suppose that $A$ admits a smooth toric compactification, and is $\delta$-approximately holomorphic (Definition \ref{def:approx_hol}) for some $0<\delta <\pi/6$; and $\varphi$ is homogeneous of weight $2$.
Then $A \cap \{\varphi \le C\}$, equipped with the Liouville one-form $\lf|_A$, is a Liouville domain for sufficiently large $C$; as a consequence, the Liouville completion is independent of $C$.
\end{lem}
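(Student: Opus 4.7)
The plan is to follow the same template as \cref{lem:hol_comp_hom}, reducing the Liouville-domain property to the three ingredients (a) properness/compactness of the sublevel sets, (b) non-degeneracy of $\omega|_A$ so that a Liouville vector field $Z_A$ on $A$ exists, and (c) transversality of $Z_A$ to the level sets $\{\varphi=C\}\cap A$, i.e.\ the positivity statement $d\varphi|_A(Z_A)>0$ on the boundary for $C$ sufficiently large. The combinatorics of angles at infinity is then packaged into a single appeal to \cref{cor:comb_ang}, exactly as in the proofs of \cref{lem:hol_comp_hom} and \cref{lem:change_comp_hom}.

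First, I would observe that, because $\varphi$ is proper and bounded below, the sublevel sets $\{\varphi\le C\}\subset M^*_{\C^*}$ are compact. The smooth toric compactification of $A$ shows that $A$ is a closed submanifold of $M^*_{\C^*}$ meeting a suitable neighbourhood of the toric boundary in a controlled way, so $A\cap\{\varphi\le C\}$ is a compact manifold with (possibly empty) boundary, and for generic large $C$ the boundary is transverse. This is the analogue of \cref{lem:comp_con}.

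Next I would check non-degeneracy of $\omega|_A$. Since $\varphi$ is strictly convex, the ambient Kähler form $\omega=dd^c\varphi$ restricts positively to any complex subspace of $TM^*_{\C^*}$. The $\delta$-approximate holomorphicity (Definition \ref{def:approx_hol}) means that at each point of $A$, the tangent space $T_pA$ makes an angle at most $\delta$ with some complex subspace, and the hypothesis $\delta<\pi/6$ is well within the bound that guarantees $\omega|_{T_pA}$ remains non-degenerate. Hence $(A,\omega|_A)$ is symplectic and the Liouville vector field $Z_A$ dual to $\lf|_A$ via $\omega|_A$ is defined.

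The heart of the proof is the positivity $d\varphi(Z_A)>0$ on $A\cap\{\varphi=C\}$ for $C\gg 0$. Here the role of weight-$2$ homogeneity enters: the ambient Liouville vector field $Z$ satisfies $d\varphi(Z)=2\varphi$, and away from a compact set $Z$ is (positively proportional to) the radial vector field $x\partial_x$ in the $M^*_\R$ factor. The component of $Z$ tangent to $A$ differs from $Z$ only by its $\omega$-orthogonal projection onto $(T_pA)^{\omega\perp}$; when $A$ is complex, this orthogonal projection is also the ordinary orthogonal projection and preserves level sets of $\varphi$ approximately, giving $d\varphi(Z_A)>0$ automatically. For $\delta$-approximately holomorphic $A$, the same statement holds quantitatively up to an angle error bounded by $\delta$, and the threshold $\delta<\pi/6$ is exactly what is needed to conclude positivity (just as in the earlier angle-combination arguments). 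Formally, this step should be an application of \cref{cor:comb_ang} in the regime where the compactification-type potential is absent and only the homogeneous part survives; the angle bound $\pi/6$ matches that used in the proof of \cref{lem:hol_comp_hom}.

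Finally, once we know $A\cap\{\varphi\le C\}$ is a Liouville subdomain for every sufficiently large $C$, independence of the Liouville completion from $C$ is immediate: the nested family $\{A\cap\{\varphi\le C\}\}_{C\gg 0}$ carries compatible Liouville structures, so Liouville flow identifies any two of the completions (cf.\ the final step of \cref{lem:hol_comp_hom}). The main obstacle will be the quantitative angle analysis in Step 3, i.e.\ pinning down the precise way in which $\delta<\pi/6$ controls the projection of the homogeneous Liouville vector field onto $TA$; assuming the statement of \cref{cor:comb_ang} is already phrased so as to handle this case (as its use in \cref{lem:hol_comp_hom} suggests), the entire argument then reduces to a short invocation of the three cited lemmas.
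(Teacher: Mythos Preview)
Your overall strategy is right, but you have misidentified the key lemma for the positivity step, and this is a genuine gap. You propose to invoke \cref{cor:comb_ang}, but that corollary is purely an ambient-space statement bounding $\angle(E,\nabla\varphi_s)$; it does not mention $A$ or $\delta$ at all. It feeds into \cref{lem:hol_crit}, whose proof crucially uses that for a \emph{holomorphic} submanifold the $\omega$-orthogonal projection to $TA$ coincides with the metric-orthogonal projection, giving $Z_A(\varphi)=|Z_A|^2\ge 0$. For $\delta$-approximately holomorphic $A$ this identity fails, and no angle bound on $\nabla\varphi$ alone recovers it. (Incidentally, the $\pi/6$ threshold does not appear in \cref{lem:hol_comp_hom} at all; that lemma is about genuinely holomorphic $A$.)

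The paper instead uses \cref{lem:hom_crit}, which carries out exactly the quantitative analysis you gesture at: one writes $Z_A=\pi^\omega(\nabla\varphi)$, compares it with $\pi^\perp(\nabla\varphi)$, and bounds the discrepancy using \cref{lem:piomegabound}, which is precisely where the hypothesis $\delta<\pi/6$ enters (to ensure $2\cos\delta-1>0$). The toric compactification hypothesis is used via \cref{lem:comp_con}, but not for compactness as you suggest---rather, it supplies the asymptotic cylindricality $\angle(TA,E)\to 0$ that \cref{lem:hom_crit} needs as input. So the paper's proof is: \cref{lem:comp_con} gives asymptotic cylindricality, \cref{lem:hom_crit} then gives $Z_A(\varphi)>0$ near infinity, and \cite[Proposition~11.8]{cieliebakeliashberg2012} handles independence of $C$.
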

\begin{proof}
Follows from Lemmas \ref{lem:hom_crit} and \ref{lem:comp_con}, together with \cite[Proposition 11.8]{cieliebakeliashberg2012}.
\end{proof}

\begin{lem}\label{lem:change_tailoring}
Suppose that $(A_t)_{t \in [0,1]}$ is a family of submanifolds which admits a family of smooth toric compactifications, and is $\delta$-approximately holomorphic for some $0<\delta<\pi/6$; and $\varphi$ is homogeneous of weight $2$. 
Then the Liouville completions $\widehat{(A_0,\lf_0)}$ and $\widehat{(A_1,\lf_1)}$ are Liouville isomorphic. 

\end{lem}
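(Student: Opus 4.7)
The plan is to mimic the strategy of Lemma \ref{lem:change_comp_hom}: produce a smooth homotopy of Liouville domains interpolating between $(A_0,\lf_0)$ and $(A_1,\lf_1)$ and then invoke \cite[Proposition 11.8]{cieliebakeliashberg2012} to upgrade the homotopy to a Liouville isomorphism of completions. Unlike in Lemma \ref{lem:change_comp_hom}, the potential $\varphi$ is fixed throughout; only the submanifold varies with $s \in [0,1]$ along the given family $(A_s)$.

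First I would verify that $A_s \cap \{\varphi \le C\}$ is a Liouville subdomain for all $s \in [0,1]$, for one and the same constant $C$. By Lemma \ref{lem:hom_Liouv} applied at each individual $s$, this holds pointwise for $C$ sufficiently large depending on $s$. The key step is to promote this to uniformity in $s$. To do so, I would reread the proofs of Lemmas \ref{lem:hom_crit} and \ref{lem:comp_con} and note that the estimates used there (transversality of the Liouville vector field to $\{\varphi=C\}\cap A_s$, and the approximate-holomorphicity bound with $\delta < \pi/6$) depend continuously on $s$ once the smooth toric compactification of $A_s$ is fixed. Since $[0,1]$ is compact and the family of smooth toric compactifications is smooth in $s$, we can choose a single $C$ so that $\{\varphi \le C\}$ contains all critical points of $\varphi|_{A_s}$ and so that $\lf_s$ is outward-pointing along $A_s \cap \{\varphi = C\}$ for every $s \in [0,1]$.

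With this uniform $C$ fixed, the collection $\{(A_s \cap \{\varphi \le C\}, \lf_s)\}_{s\in[0,1]}$ is a smooth one-parameter family of Liouville domains in the usual sense. Because the ambient symplectic form $\omega = d(-d^c\varphi)$ is fixed and only the submanifold varies, all the data vary smoothly with $s$, so we have a Liouville homotopy. Applying \cite[Proposition 11.8]{cieliebakeliashberg2012} then produces a Liouville isomorphism of the completions $\widehat{(A_0,\lf_0)} \cong \widehat{(A_1,\lf_1)}$.

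The main obstacle is the uniformity statement above: making the choice of $C$ work simultaneously for all $s$. This reduces to checking that the constants in Lemmas \ref{lem:hom_crit} and \ref{lem:comp_con} (where the $\delta < \pi/6$ hypothesis enters to control the angle between the Liouville field and $TA_s$) can be taken locally uniformly in $s$. Given that the family is $\delta$-approximately holomorphic with a \emph{fixed} $\delta$ and that the smooth toric compactification varies smoothly with $s$ (so the behavior near the toric boundary is controlled uniformly), this uniformity should follow from a straightforward compactness argument on $s \in [0,1]$; this is the only nontrivial check and is essentially the same kind of estimate already used in the proof of Lemma \ref{lem:hom_Liouv}.
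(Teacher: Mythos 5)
Your approach matches the paper's: the same chain of lemmas, a single uniform sublevel set $C$, and the same invocation of \cite[Proposition 11.8]{cieliebakeliashberg2012}. However, the ``main obstacle'' you identify --- upgrading pointwise-in-$s$ constants to uniform ones by a compactness argument --- is not actually there, because Lemmas \ref{lem:comp_con} and \ref{lem:hom_crit} are already stated as family results: Lemma \ref{lem:comp_con} takes a family $(A_t)_{t\in[0,1]}$ with a family of smooth toric compactifications as input and concludes that the family is \emph{uniformly} asymptotically cylindrical (a uniform-in-$t$ notion by Definition \ref{def:asymp_con}), and Lemma \ref{lem:hom_crit} takes a uniformly asymptotically cylindrical, $\delta$-approximately holomorphic family as input and concludes $Z_{A_t}(\varphi)>0$ outside a \emph{single} compact set independent of $t$. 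So rather than applying Lemma \ref{lem:hom_Liouv} pointwise and re-deriving uniformity, one should just apply these two family lemmas directly; this immediately yields a uniform $C$ and a family of Liouville domains $(A_s\cap\{\varphi\le C\},\lf_s)$, to which Cieliebak--Eliashberg applies. This is exactly the paper's (very terse) proof, and your argument is correct but does redundant work recreating uniformity that is already packaged in the cited lemmas.
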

\begin{proof}
The proof follows the pattern of that of \Cref{lem:change_comp_hom}, again using \cite[Proposition 11.8]{cieliebakeliashberg2012}. The key step is to show that the sublevel sets $A_t \cap \{ \varphi \le C\}$ are all Liouville subdomains, for $C$ sufficiently large. This follows from the family versions of Lemmas \ref{lem:hom_crit} and \ref{lem:comp_con}. 
\end{proof}

\subsection{Angles}

We recall some material from \cite{donaldson1996symplectic} and  \cite[Section 8]{cieliebak2007symplectic}. 

Given a finite-dimensional real inner product space $(V,\langle \cdot, \cdot \rangle)$, the angle between two non-zero vectors $x,y \in V$ is defined to be 
$$\angle(x,y) := \cos^{-1}\left(\frac{\langle x,y \rangle}{|x||y|}\right) \in [0,\pi].$$
The angle between a linear subspace $X \subset V$ and a vector $y \in V$ is
$$\angle(X,y) := \inf_{0 \neq x \in X} \angle(x,y) \in [0,\pi/2].$$
We remark that $\angle(X,y) = \angle(\pi^\perp y,y)$, where $\pi^\perp$ denotes orthogonal projection to $X$.\footnote{Proof: if $\angle(x,y) = \theta$, we may rescale $x$ so that $|x| = |y|\cos \theta$, then $|x-y| = |y|\sin \theta$. Thus the minimum of $\angle(x,y)$ is achieved when $|x-y|$ is minimal, which is precisely for $x = \pi^\perp(y)$.}
The angle between linear subspaces $X,Y \subset V$ is
$$\angle(X,Y) := \sup_{0 \neq y \in Y} \inf_{0 \neq x \in X} \angle(x,y) \in [0,\pi/2].$$

\begin{lem}\label{lem:equiv_angles}
Suppose that we have two inner products satisfying
$$c |x|^2_1 \le |x|^2_2 \le C|x|^2_1$$
for all $x \in V$, for some $c,C>0$. 
Then
$$\cos \angle_2(x,y) \ge 1 - \frac{C}{c}\left(1-\cos \angle_1(x,y)\right).$$
In particular, $\angle_1(x_j,y_j) \to 0$ if and only if $\angle_2(x_j,y_j) \to 0$.
\end{lem}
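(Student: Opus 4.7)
The plan is to reduce the inequality to a statement about squared distances via the polarization identity, and then exploit the uniform equivalence of norms. First, observe that both sides of the desired inequality are homogeneous of degree $0$ in $x$ and $y$ separately, so without loss of generality we may rescale so that $|x|_1 = |y|_1 = 1$. Under this normalization, $\cos\angle_1(x,y) = \langle x,y\rangle_1$ and $|x-y|_1^2 = 2 - 2\cos\angle_1(x,y)$, so $1-\cos\angle_1(x,y) = \tfrac{1}{2}|x-y|_1^2$.

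Next, I would compute $\cos\angle_2(x,y)$ in terms of $|x-y|_2^2$, using the polarization identity $2\langle x,y\rangle_2 = |x|_2^2 + |y|_2^2 - |x-y|_2^2$ to get
$$\cos\angle_2(x,y) = \frac{|x|_2^2 + |y|_2^2 - |x-y|_2^2}{2|x|_2|y|_2}.$$
Applying AM--GM to $|x|_2^2 + |y|_2^2 \ge 2|x|_2|y|_2$ yields $\cos\angle_2(x,y) \ge 1 - \tfrac{|x-y|_2^2}{2|x|_2|y|_2}$. The norm equivalence then gives $|x-y|_2^2 \le C|x-y|_1^2 = 2C(1-\cos\angle_1(x,y))$ and $|x|_2|y|_2 \ge c|x|_1|y|_1 = c$; substituting these bounds produces exactly the claimed inequality $\cos\angle_2(x,y) \ge 1 - \tfrac{C}{c}(1-\cos\angle_1(x,y))$.

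Finally, for the ``in particular'' clause: $\angle_i(x_j,y_j) \to 0$ is equivalent to $\cos\angle_i(x_j,y_j) \to 1$, since each $\angle_i$ takes values in $[0,\pi/2]$. The main inequality then shows that $\cos\angle_1 \to 1$ implies $\cos\angle_2 \to 1$, and swapping the roles of $\langle\cdot,\cdot\rangle_1$ and $\langle\cdot,\cdot\rangle_2$ (the hypotheses are symmetric up to replacing the pair $(c,C)$ with $(1/C,1/c)$) gives the reverse implication. There are no substantive obstacles here; the only minor care needed is the initial normalization step, since the quantities being compared are not symmetric in $x$ and $y$ under arbitrary rescaling in the second inner product.
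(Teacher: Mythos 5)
Your proof is correct and follows essentially the same route as the paper's: rescale so that $|x|_1 = |y|_1$, express $\cos\angle_2$ via polarization, apply AM--GM to the numerator, and then bound $|x-y|_2^2$ and $|x|_2|y|_2$ using the norm equivalence. One small inaccuracy in the "in particular" step: by the paper's definition, the angle $\angle(x,y)$ between two \emph{vectors} takes values in $[0,\pi]$, not $[0,\pi/2]$ (the restriction to $[0,\pi/2]$ applies when one argument is a subspace); however this does not affect your argument, since $\cos$ is a homeomorphism $[0,\pi]\to[-1,1]$, so $\angle_i(x_j,y_j)\to 0$ is still equivalent to $\cos\angle_i(x_j,y_j)\to 1$.
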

\begin{proof}
We may rescale $x$ and $y$ so that $|x|_1 = |y|_1$, then
$$\cos \angle_1(x,y) = 1 - \frac{|x-y|^2_1}{2|x|_1|y|_1}.$$
Then we have
\begin{align*}
\cos \angle_2(x,y) &= \frac{|x|_2^2 + |y|_2^2 - |x-y|_2^2}{2|x|_2|y|_2} \\
& \ge 1 - \frac{C|x-y|_1^2}{2c|x|_1|y|_1} \\
&= 1 - \frac{C}{c} \left(1-\cos \angle_1(x,y)\right)
\end{align*}
as required.
\end{proof}

Now suppose that $\omega$ is a linear symplectic structure on $V$, $J$ a compatible complex structure, and $\langle \cdot, \cdot \rangle = \omega(\cdot,J(\cdot))$ the corresponding inner product. 
The \emph{angle} of a real subspace $X \subset V$ is $\angle(V) := \angle(JV,V)$. 
Note that $\angle(V) = 0$ if and only if $V$ is $J$-holomorphic. 

This notion is closely related to the `K\"ahler angle' $\theta(X)$ associated to an oriented real linear subspace, which was introduced in \cite{donaldson1996symplectic}: in particular, in the case that $X$ is of codimension $2$, $\angle(X) = \min\{\theta(X), \pi - \theta(X)\}$ by \cite[Lemma 8.3 (d)]{cieliebak2007symplectic}.

\begin{lem}\label{lem:pair_orthog}
If $x \in X$ and $y \in X^\omega$ (the symplectic orthogonal complement), then
$$|\langle x, y \rangle| \le |x||y|\sqrt{2-2\cos\angle(X)}.$$
\end{lem}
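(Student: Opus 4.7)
The plan is to translate the symplectic orthogonality $y \in X^\omega$ into an ordinary orthogonality involving $J$, and reduce the desired bound to a Cauchy--Schwarz estimate on the component of $y$ in $X$. In fact, I would first prove the stronger inequality $|\langle x, y\rangle| \le |x|\,|y|\sin\angle(X)$, which implies the stated one because $\sin\theta = 2\sin(\theta/2)\cos(\theta/2) \le 2\sin(\theta/2) = \sqrt{2 - 2\cos\theta}$ for $\theta \in [0,\pi/2]$.

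The setup is as follows. Decompose $y = y_X + y_\perp$ orthogonally with $y_X \in X$ and $y_\perp \in X^\perp$. Since $x \in X$ and $y_\perp \perp X$, Cauchy--Schwarz gives $|\langle x, y\rangle| = |\langle x, y_X\rangle| \le |x|\,|y_X|$, so it suffices to prove $|y_X| \le |y|\sin\angle(X)$. To exploit the hypothesis $y \in X^\omega$, observe that the identity $\omega(u,v) = -\langle u, Jv\rangle$ (from $\langle\cdot,\cdot\rangle = \omega(\cdot,J\cdot)$ together with $J^2 = -1$) shows that $y \in X^\omega \Leftrightarrow Jy \perp X$; projecting onto $X$, this becomes $\pi_X(Jy_X) + \pi_X(Jy_\perp) = 0$.

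At this point I would extract two complementary estimates from the equality of principal angles $\angle(X, JX) = \angle(JX, X) = \angle(X)$ (valid since $X$ and $JX$ are equidimensional). The first is that every $u \in JX$ satisfies $|\pi_X(u)| \ge |u|\cos\angle(X)$; applied to $u = Jy_X$ and using $|Jy_X| = |y_X|$, this gives $|\pi_X(Jy_X)| \ge |y_X|\cos\angle(X)$. The second is that every $u \in X^\perp$ satisfies $|\pi_X(Ju)| \le |u|\sin\angle(X)$. This follows by a duality argument: for a unit $v \in X$ we have $\langle Ju, v\rangle = -\langle u, Jv\rangle = -\langle u, \pi_{X^\perp}(Jv)\rangle$ since $u \in X^\perp$, and the first estimate applied to $Jv \in JX$ yields $|\pi_{X^\perp}(Jv)| \le |v|\sin\angle(X)$. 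Applied to $u = y_\perp$, this gives $|\pi_X(Jy_\perp)| \le |y_\perp|\sin\angle(X)$.

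Combining these with the relation $\pi_X(Jy_X) = -\pi_X(Jy_\perp)$ yields $|y_X|\cos\angle(X) \le |y_\perp|\sin\angle(X)$. Substituting $|y_\perp|^2 = |y|^2 - |y_X|^2$ and rearranging then yields $|y_X| \le |y|\sin\angle(X)$, completing the proof. I do not anticipate any serious obstacle; the most delicate point is simply to set up the two angle inequalities correctly, and in particular to invoke the symmetry $\angle(X, JX) = \angle(JX, X)$, while the limiting case $\angle(X) = 0$ (where $X$ is $J$-holomorphic, so $X^\omega = X^\perp$) is handled directly by the chain of inequalities forcing $y_X = 0$.
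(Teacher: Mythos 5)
Your proof is correct, and it proves the sharper bound $|\langle x,y\rangle|\le |x|\,|y|\sin\angle(X)$, which implies the stated one via $\sin\theta\le 2\sin(\theta/2)=\sqrt{2-2\cos\theta}$. The route is genuinely different from the paper's, which is considerably shorter: there one writes $\langle x,y\rangle=\omega(y,Jx)=\omega(y,Jx-\tilde x)$ for any $\tilde x\in X$ (using $y\in X^\omega$), picks $\tilde x\in X$ with $|\tilde x|=|x|$ and $\angle(Jx,\tilde x)\le\angle(X)$, and concludes directly from $|\omega(u,v)|\le|u||v|$ together with the law-of-cosines estimate $|Jx-\tilde x|\le|x|\sqrt{2-2\cos\angle(X)}$. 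Your argument instead orthogonally decomposes $y=y_X+y_\perp$, converts $y\in X^\omega$ into $\pi_X(Jy)=0$, and balances a lower bound $|\pi_X(Jy_X)|\ge|y_X|\cos\angle(X)$ against an upper bound $|\pi_X(Jy_\perp)|\le|y_\perp|\sin\angle(X)$ obtained by duality. Both angle estimates are valid; the symmetry $\angle(X,JX)=\angle(JX,X)$ you invoke holds (either by the general fact that $\pi_X|_Y$ and $\pi_Y|_X$ are adjoints between equidimensional spaces and hence share singular values, or more directly here because $J$ is an isometry with $J^2=-1$, so $\angle(X,JX)=\angle(JX,J^2X)=\angle(JX,X)$). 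The trade-off is that your argument is more involved but recovers the optimal constant $\sin\angle(X)$; the paper's lossier constant $\sqrt{2-2\cos\angle(X)}$ is all that is needed downstream (Corollary~\ref{cor:ang_symp} and Lemma~\ref{lem:piomegabound}), so the shorter proof is preferred there.
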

\begin{proof}
There exists $\tilde{x} \in X$ with $\angle(Jx,\tilde{x}) \le \angle(X)$, and we may rescale it so that $|\tilde{x}| = |x|$. 
It follows easily that $|Jx - \tilde{x}| \le |x| \sqrt{2-2\cos\angle(X)}$. 
Thus
$$|\langle x, y \rangle| = |\omega(y,Jx)| = |\omega(y,Jx - \tilde{x})| \le |y||Jx-\tilde{x}| \le |y||x|\sqrt{2-2\cos\angle(X)} $$
as claimed.
\end{proof}

\begin{cor}\label{cor:ang_symp}
If $\angle(X) < \pi/2$, then $X$ is symplectic.
\end{cor}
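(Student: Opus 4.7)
\emph{Proof plan.} I will prove the contrapositive: if $X \cap X^\omega \neq \{0\}$, then $\angle(X) \geq \pi/2$.

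The first step is to convert symplectic orthogonality into honest inner product orthogonality. Using the defining relation $\langle a, b\rangle = \omega(a, Jb)$ together with the $J$-invariance $\omega(Ja, Jb) = \omega(a, b)$, one computes $\langle Ja, b\rangle = \omega(Ja, Jb) = \omega(a, b)$, i.e.\
\[\omega(a, b) = \langle Ja, b\rangle\]
for all $a, b \in V$. Consequently, $x \in X^\omega$ if and only if $\langle Jx, x'\rangle = 0$ for every $x' \in X$, i.e.\ $Jx \perp X$; and since $J$ is an isometry of $(V, \langle \cdot, \cdot\rangle)$, this is equivalent to $x \perp JX$. Thus $X \cap X^\omega = X \cap (JX)^\perp$.

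Now pick any nonzero $x \in X \cap X^\omega = X \cap (JX)^\perp$. The orthogonal projection $\pi^\perp_{JX}(x)$ vanishes, so by the remark following the definition of the angle between a subspace and a vector, $\angle(JX, x) = \pi/2$. Taking the supremum over $x \in X$ in the definition $\angle(X) := \angle(JX, X)$ then yields $\angle(X) \geq \pi/2$, contradicting the hypothesis $\angle(X) < \pi/2$.

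No real obstacle arises; the only bookkeeping is verifying the identity $\omega(a,b) = \langle Ja, b\rangle$, where one must keep careful track of the sign and $J$ conventions. I note in passing that a direct appeal to Lemma \ref{lem:pair_orthog} with $x = y \in X \cap X^\omega$ only yields $|x|^2 \leq |x|^2 \sqrt{2 - 2\cos\angle(X)}$, and hence the weaker bound $\angle(X) \geq \pi/3$; so the sharp $\pi/2$ bound asserted in the corollary is best obtained via the direct orthogonality argument above rather than as a corollary of Lemma \ref{lem:pair_orthog}.
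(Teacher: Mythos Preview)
Your proof is correct and proves the corollary with the sharp threshold $\pi/2$. The paper's proof is different: it simply cites Lemma~\ref{lem:pair_orthog}, presumably via the argument you sketch at the end (take $x=y\in X\cap X^\omega$), which as you correctly observe only yields $\angle(X)\ge\pi/3$. Your direct argument---computing $X\cap X^\omega = X\cap(JX)^\perp$ and then observing that any nonzero vector in this intersection witnesses $\angle(JX,\cdot)=\pi/2$---is more elementary and establishes the full statement as written. For the paper's applications the distinction is immaterial (the corollary is only invoked with $\angle(X)<\pi/6$, see the remark after Definition~\ref{def:approx_hol}), but your proof is the one that actually matches the corollary as stated.
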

\begin{proof}
By Lemma \ref{lem:pair_orthog}, $X \cap X^\omega = \{0\}$.
\end{proof}

\begin{lem}\label{lem:piomegabound}
Suppose that $\angle(X) < \pi/6$, and let $\pi^\omega:V \to X$ denote the symplectic orthogonal projection. Then for any $v \in V$ we have
$$|\pi^\omega(v)| \le \frac{|v|}{\sqrt{2\cos\angle(X) - 1}}.$$
\end{lem}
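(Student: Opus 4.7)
The plan is straightforward once one uses the hypothesis $\angle(X) < \pi/6 < \pi/2$ to invoke Corollary B.7 and conclude that $X$ is symplectic, so that $V$ admits the symplectic orthogonal decomposition $V = X \oplus X^\omega$. For any $v \in V$, I would write $v = x + y$ with $x := \pi^\omega(v) \in X$ and $y \in X^\omega$, and then expand
$$|v|^2 = |x|^2 + 2\langle x, y\rangle + |y|^2.$$

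The key input is Lemma B.6 (\cref{lem:pair_orthog}), which, on setting $\alpha := \sqrt{2 - 2\cos\angle(X)}$, gives $|\langle x, y\rangle| \le \alpha |x||y|$. Combining this with the expansion above yields
$$|v|^2 \ge |x|^2 - 2\alpha|x||y| + |y|^2.$$

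The remaining step is an elementary one-variable minimization: viewing the right-hand side as a quadratic in $|y|$ with $|x|$ fixed, its infimum over $|y| \ge 0$ is attained at $|y| = \alpha|x|$ and equals $(1-\alpha^2)|x|^2 = (2\cos\angle(X) - 1)|x|^2$. The hypothesis $\angle(X) < \pi/6$ (indeed anything strictly smaller than $\pi/3$ would suffice) ensures that $2\cos\angle(X) - 1 > 0$, so one can divide through and take square roots to obtain the claimed bound.

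I do not anticipate any real obstacle here: the estimate is essentially a weighted Cauchy--Schwarz once Lemma B.6 is in hand, and no finer geometric information about $X$ is needed. The threshold $\pi/6$ appearing in the hypothesis is not sharp for this lemma --- the argument works for any $\angle(X) < \pi/3$ --- and is presumably stated this way only for consistency with \cref{lem:hom_Liouv} and \cref{lem:change_tailoring}, which invoke this bound.
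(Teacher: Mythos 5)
Your proof is correct and matches the paper's argument in substance: both decompose $v = x + y$ along $V = X \oplus X^\omega$, apply \cref{lem:pair_orthog} to bound the cross term, and then extract the coefficient $2\cos\angle(X) - 1$ from the resulting quadratic in $|y|$ (the paper completes the square where you minimize over $|y|$, but these are the same computation). Your remark that the hypothesis could be relaxed to $\angle(X) < \pi/3$ is accurate and the stricter $\pi/6$ is indeed just what is needed downstream in \cref{lem:hom_crit} and \cref{lem:hom_Liouv}.
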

\begin{proof}
Let $x \in X$ and $y \in X^\omega$, so that $\pi^\omega(x+y) = x$. 
We have
\begin{align*}
|x+y|^2 &= |x|^2+|y|^2 + 2\langle x,y\rangle \\
&\ge |x|^2 + |y|^2 - |x||y| \sqrt{2-2\cos\angle(X)} \quad\text{by Lemma \ref{lem:pair_orthog}}\\
& = |x|^2 + \left(|y| - |x|\sqrt{2-2\cos\angle(X)}\right)^2 - |x|^2(2-2\cos\angle(X))\\
& \ge (2\cos\angle(X) - 1)|x|^2
\end{align*}
from which the result follows. 
\end{proof}

\subsection{\texorpdfstring{Conditions on $A$ and $\varphi$}{Conditions on A or phi}}

A choice of basis for $M^*$ determines coordinates $x_j$ on $M^*_\R$, $\theta_j$ on $M^*_{S^1}$, and holomorphic coordinates $z_j = \exp(x_j + i \theta_j)$ on $M^*_{\C^*}$.

Let $E \in T(M^*_\R)$ be the Euler vector field (equal to $\sum_j x_j \partial_{x_j}$ in coordinates). 
We shall also use $E$ for the corresponding vector field in $T(M^*_{\C^*})$.
If $\varphi:M^*_\R \to \R$ is strictly convex, let $\langle \cdot,\cdot \rangle_\varphi$ denote the K\"ahler metric associated to the K\"ahler form with potential $\varphi$: it is the product of the Riemannian metrics on $M^*_\R$ and $M^*_{S^1}$ given by the Hessian of $\varphi$. 

\begin{defn}\label{def:homogeneous}
We say that $\varphi:M^*_\R \to \R$ is \emph{$k$-homogeneous (near $\infty$)} if  $E(\varphi) = k \cdot \varphi$ (outside a compact set).
\end{defn}

We will call the Riemannian metric associated to a $2$-homogeneous strictly convex function, a $2$-homogeneous metric. 
For example, a Euclidean metric on $M^*_\R$ induces a $2$-homogeneous metric on $M^*_{\C^*}$ (we take $\varphi(x) = |x|^2$). 
A straightforward compactness argument shows that any two $2$-homogeneous metrics are uniformly equivalent:
$$c |v|_{\varphi_1}^2 \le |v|_{\varphi_2}^2 \le C|v|_{\varphi_1}^2$$
for any tangent vector $v$ to $M^*_{\C^*}$, for some constants $c,C>0$ depending on the $\varphi_i$. 

\begin{defn}\label{def:asymp_con}
We say that the submanifold $A \subset M^*_{\C^*}$ is \emph{asymptotically cylindrical} if 
$$\lim_{a \to \infty} \angle(T_aA,E_a) = 0,$$
where the angle is taken with respect to any $2$-homogeneous metric. 
More precisely, for every $\delta>0$, there exists a subset $U_\delta \subset A$ with compact complement, such that for all $a \in U_\delta$, $\angle(T_aA,E_a) < \delta$. 
(Note that the choice of $2$-homogeneous metric doesn't matter, by Lemma \ref{lem:equiv_angles}, and our observation that any two such metrics are uniformly equivalent.)

More generally, we say that a family of submanifolds $(A_t)_{t \in [0,1]}$ is \emph{uniformly asymptotically cylindrical} if for every $\delta>0$, there exists $U_\delta \subset M^*_{\C^*}$ with compact complement, such that for all $t$ and for all $a \in A_t \cap U_\delta$, $\angle(T_a A_t,E_a) < \delta$.
\end{defn}

\subsection{A criterion for holomorphic submanifolds}

The Liouville vector field $Z$ corresponding to the Liouville one-form $\lf$ is  equal to $\nabla \varphi$, where the gradient is taken with respect to the K\"ahler metric induced by $\varphi$. 
We will always assume that $\omega_A = \omega|_A$ is a symplectic form. 
It has Liouville one-form $\lf_A = \lf|_A$, and Liouville vector field $Z_A$. 
It is immediate from the definitions that
$$Z_A = \pi^\omega(\nabla \varphi),$$ 
where $\pi^\omega:T(M^*_{\C^*}) \to TA$ is the $\omega$-orthogonal projection.

Suppose that $A$ is a holomorphic submanifold, i.e., $\angle(TA) = 0$. 
Then the $\omega$-orthogonal projection $T(M^*_{\C^*}) \to TA$ coincides with the orthogonal projection.

\begin{lem}\label{lem:hol_crit}
Suppose that $A$ is an asymptotically cylindrical holomorphic submanifold, and $\varphi_s: M^*_\R \to \R$ is a smooth family of potentials parametrized by $s \in [0,1]$ such that outside of some compact set, $\angle(E,\nabla \varphi_s) \le c$ for all $s$, for some $c<\pi/2$ independent of $s$, where the angle and the gradient $\nabla$ are taken with respect to some fixed $2$-homogeneous metric.
Then for sufficiently large $C$, the sublevel sets $A \cap \{ \varphi_s \le C\}$ are Liouville domains, for all $s$.
\end{lem}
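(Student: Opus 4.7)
The plan is to verify the two defining properties of a Liouville domain for $(A \cap \{\varphi_s \le C\},\, \lf|_{A})$: compactness, and outward-pointing Liouville vector field $Z_A$ along the boundary. Compactness is essentially automatic: $\varphi_s : M^*_{\R} \to \R$ is proper and bounded below, so its pullback to $M^*_{\C^*} = M^*_\R \times M^*_{S^1}$ (the torus factor being compact) has compact sublevel sets, and $A$ is closed. What remains is to show that $C$ can be chosen, uniformly in $s \in [0,1]$, so that $\varphi_s|_A$ has no critical points on $\{\varphi_s \ge C\} \cap A$; then any such $C$ will be a regular value of $\varphi_s|_A$, and $Z_A$ will point outward on $A \cap \{\varphi_s = C\}$.

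The reduction of the outward-pointing condition to a critical-point statement rests on the following algebraic identity. Because $A$ is holomorphic, the symplectic orthogonal projection onto $TA$ agrees with the $g_{\varphi_s}$-orthogonal projection, where $g_{\varphi_s}$ is the K\"ahler metric induced by $\varphi_s$; and the ambient Liouville vector field is $\nabla^{g_{\varphi_s}}\varphi_s$. Hence $Z_A$ is the $g_{\varphi_s}$-orthogonal projection of $\nabla^{g_{\varphi_s}}\varphi_s$ to $TA$, and
\[
d\varphi_s(Z_A) \;=\; \bigl\langle \nabla^{g_{\varphi_s}} \varphi_s,\, Z_A \bigr\rangle_{g_{\varphi_s}} \;=\; |Z_A|^2_{g_{\varphi_s}},
\]
which is strictly positive exactly when $d\varphi_s|_{T_a A} \ne 0$. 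Crucially, the vanishing of this restriction is a property of the $1$-form $d\varphi_s$ alone and is independent of the metric used to represent it as a gradient; in particular, we are free to verify the non-vanishing using the fixed $2$-homogeneous metric $g$ supplied by the hypothesis.

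The core step is a spherical triangle inequality for angles. Fix $\delta > 0$ with $\delta + c < \pi/2$. Asymptotic cylindricity of $A$ produces a compact set $K_1$ such that for $a \in A \setminus K_1$, $\angle_g(T_a A, E_a) < \delta$; since this infimum is attained on the unit sphere of $T_a A$, we obtain a unit vector $x \in T_a A$ with $\angle_g(x, E_a) < \delta$. By hypothesis, there exists a compact $K_2$ independent of $s$ with $\angle_g(E_a, \nabla^g\varphi_s(a)) \le c$ outside $K_2$. Since the angle between nonzero vectors is spherical distance on the unit sphere, it satisfies the triangle inequality, so on $A \setminus (K_1 \cup K_2)$,
\[
\angle_g\bigl(x,\, \nabla^g\varphi_s(a)\bigr) \;\le\; \angle_g(x, E_a) + \angle_g\bigl(E_a,\, \nabla^g\varphi_s(a)\bigr) \;<\; \delta + c \;<\; \pi/2,
\]
so $\langle x, \nabla^g\varphi_s(a)\rangle_g > 0$ and hence $d\varphi_s(x) > 0$, forcing $d\varphi_s|_{T_a A} \ne 0$.

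Finally, $[0,1]$ is compact and $\varphi_s$ depends smoothly (hence continuously) on $s$, so $\sup_{s,\,a \in K_1 \cup K_2}\varphi_s(a)$ is finite; choosing $C$ greater than this supremum ensures that $A \cap \{\varphi_s = C\}$ lies outside $K_1 \cup K_2$ for every $s$, at which point the previous paragraphs deliver both regularity of $C$ and outward-pointing $Z_A$ along the boundary. I do not expect a serious obstruction; the only non-routine point is the triangle-inequality bound, which is precisely what forces the hypothesis $c < \pi/2$ to appear and is essentially sharp.
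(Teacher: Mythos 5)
Your proof is correct and follows essentially the same route as the paper's: reduce to non-vanishing of $d\varphi_s|_{T_aA}$ via the identity $d\varphi_s(Z_{A,s}) = |Z_{A,s}|^2$ (which uses holomorphicity to identify symplectic-orthogonal with metric-orthogonal projection), then combine $\angle(T_aA,E_a)\to 0$ and $\angle(E,\nabla\varphi_s)\le c < \pi/2$ by the spherical triangle inequality, and finish by the compactness of $\varphi(K\times[0,1])$. You make two points explicit that the paper leaves implicit — that the spherical triangle inequality is what produces the bound $\angle(T_aA,\nabla\varphi_s) \le c' < \pi/2$, and that the non-vanishing of $d\varphi_s|_{T_aA}$ is independent of which metric is used to realize $\nabla\varphi_s$, so one may freely work with the fixed $2$-homogeneous metric — but the substance is identical.
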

\begin{proof}
We must show that $Z_{A,s}(\varphi_s) > 0$ along $\{\varphi_s = C\}$. 
Using the fact that $A$ is holomorphic, we have that $Z_{A,s}(\varphi_s) = |Z_{A,s}|^2$, so it suffices to prove that $Z_{A,s} \neq 0$. 
This is equivalent to $d\varphi_s|_A \neq 0$, which is equivalent to $\angle(T_aA,\nabla \varphi_s) \neq \pi/2$. 
We have $\angle(E,\nabla \varphi_s) \le c < \pi/2$ outside a compact set, and $\angle(T_aA,E_a) \to 0$ as $a \to \infty$. 
Hence, for $a$ outside some compact set $K$, we have $\angle(T_a A,\nabla \varphi_s) \le c' < \pi/2$ for all $s$. 
Now $\varphi(K \times [0,1])$ is a compact subset of $\R$, so it suffices to take $C$ larger than its supremum. 
\end{proof}

\begin{rmk}
The proof of \cref{lem:hol_crit} works for any continuous family of potentials parametrized by a compact topological space, but we only need the case of a point and an interval.
\end{rmk}

\subsection{A criterion for \texorpdfstring{$2$-homogeneous}{2-homogeneous} potentials}

Let $\varphi:M^*_\R \to \R$ be strictly convex, proper, bounded below, and $2$-homogeneous near $\infty$. 
We will use the associated $2$-homogeneous metric on $M^*_{\C^*}$ below. 

\begin{defn}\label{def:approx_hol}
Let $0 \le \delta < \pi/2$. We say that $A$ is \emph{$\delta$-approximately holomorphic} if $\angle(TA) < \delta$. (Note that $\angle(TA) = \angle_\varphi(TA)$ depends on $\varphi$.)
\end{defn}

Note that if $A$ is $\delta$-approximately holomorphic, then $\omega_A := \omega|_A$ is symplectic by Corollary \ref{cor:ang_symp}, with Liouville one-form $\lf_A = -d^c\varphi|_A$.

\begin{lem}[Proposition 2.7 of \cite{zhou2020lagrangian}]\label{lem:ZpropE}
We have $\angle(Z, E) = 0$ in the locus where $\varphi$ is $2$-homogeneous (in particular, near $\infty$).
\end{lem}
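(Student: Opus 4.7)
\textbf{Proof plan for \cref{lem:ZpropE}.} The plan is to work in the natural coordinate system on $M^*_{\C^*}$ and compute $Z$ directly. Choose a basis for $M^*$, yielding coordinates $(x_j,\theta_j)$ on $M^*_\R \times M^*_{S^1}$. Since $\varphi$ is pulled back from $M^*_\R$, a short calculation gives
\[
\lambda = -d^c\varphi = \sum_j \varphi_j\, d\theta_j, \qquad \omega = d\lambda = \sum_{j,k} \varphi_{jk}\, dx_k \wedge d\theta_j,
\]
where $\varphi_j = \partial\varphi/\partial x_j$ and $\varphi_{jk} = \partial^2\varphi/\partial x_j\partial x_k$. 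The induced K\"ahler metric in these coordinates is $g = \sum_{j,k}\varphi_{jk}(dx_j\,dx_k + d\theta_j\,d\theta_k)$.

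The core of the argument is to show that in the $2$-homogeneous region one has $\iota_E\omega = \lambda$, which by non-degeneracy of $\omega$ forces $Z = E$ (and hence $\angle(Z,E)=0$). By the Euler identity applied to the hypothesis $E(\varphi)=2\varphi$, differentiating $\sum_j x_j\varphi_j = 2\varphi$ in $x_k$ yields
\[
\sum_j \varphi_{jk}\, x_j \;=\; \varphi_k.
\]
Substituting into the formula for $\omega$ gives
\[
\iota_E\omega \;=\; \sum_{j,k}\varphi_{jk}x_k\, d\theta_j \;=\; \sum_j \varphi_j\, d\theta_j \;=\; \lambda,
\]
so $E$ solves the defining equation $\iota_Z\omega = \lambda$ for the Liouville vector field; hence $Z=E$ on this region.

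There is no real obstacle: the statement reduces to Euler's identity for degree-$2$ homogeneous functions together with the explicit form of $\omega$ and $\lambda$ on $M^*_{\C^*}$. The only mildly subtle point is getting the sign conventions right in the definition of $d^c$ (so that $\lambda = \sum_j \varphi_j d\theta_j$ rather than its negative), but once those are fixed the computation above is immediate. As a sanity check, one can also verify directly that the gradient $\nabla^g\varphi = \sum_{j,k} g^{jk}\varphi_k\,\partial_{x_j}$ equals $\sum_k x_k\,\partial_{x_k} = E$ using the same Euler identity $\sum_j\varphi_{jk}x_j = \varphi_k$.
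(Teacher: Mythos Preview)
Your argument is correct. The paper does not give its own proof of this lemma; it simply records the statement and attributes it to \cite[Proposition 2.7]{zhou2020lagrangian}. Your computation---differentiating the Euler identity $\sum_j x_j\varphi_j=2\varphi$ to obtain $\sum_j x_j\varphi_{jk}=\varphi_k$, and then checking $\iota_E\omega=\lambda$---is exactly the standard proof (and indeed shows the stronger fact $Z=E$, not merely $\angle(Z,E)=0$). The paper's formula $-d^c\varphi=\sum_j \varphi_j\,d\theta_j$ in \S\ref{subsec:skeleton} confirms your sign conventions.
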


\begin{lem}\label{lem:nablaphiinf}
We have
\begin{enumerate}
\item \label{it:1} $\lim_{a \to \infty} \varphi(a)/|E_a| = \infty$, and
\item \label{it:2} $\lim_{a \to \infty} |\nabla \varphi| = \infty$
\end{enumerate}
(where `$\lim_{a \to \infty}$' is to be interpreted as in Definition \ref{def:asymp_con}).
\end{lem}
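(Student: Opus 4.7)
The plan is to exploit the interplay between the $2$-homogeneity of $\varphi$ and the fact that the Kähler metric $\langle\cdot,\cdot\rangle_\varphi$ is itself built from the Hessian of $\varphi$. The key algebraic observation, which I would first establish in local coordinates $x_j$ on $M^*_\R$, is that Euler's identity applied to the $1$-homogeneous functions $\partial_i \varphi$ yields
\begin{equation*}
  \sum_j (\partial_i\partial_j \varphi)(a)\, a^j \;=\; (\partial_i \varphi)(a)
\end{equation*}
in the region where $\varphi$ is $2$-homogeneous. From the computation $\omega = c\,\partial\bar\partial\varphi$ one checks that $\langle \partial_{x_i},\partial_{x_j}\rangle_\varphi$ is a positive multiple of $(\partial_i\partial_j\varphi)(a)$, so the identity above says exactly that the Hessian metric sends the coordinate vector $E_a = \sum a^j\partial_{x_j}$ to the covector $d\varphi_a$, up to a positive constant.

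Two consequences follow immediately. First, $|E_a|^2_\varphi$ is a positive multiple of $\langle E, E\rangle_\varphi \propto \sum a^i \partial_i\varphi = E(\varphi) = 2\varphi(a)$, so $|E_a|_\varphi$ and $\sqrt{\varphi(a)}$ are proportional. Second, the gradient $\nabla\varphi$ — defined by $\langle \nabla \varphi, v\rangle_\varphi = d\varphi(v)$ — is a positive multiple of $E$, which is consistent with (and in fact refines) Lemma \ref{lem:ZpropE}. In particular, $|\nabla \varphi|_\varphi$ is also proportional to $\sqrt{\varphi(a)}$.

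Given these identifications, both statements reduce to showing that $\varphi(a)\to\infty$ as $a\to\infty$ in $M^*_{\C^*}$ (in the sense of Definition \ref{def:asymp_con}, i.e., outside compact subsets). Since $\varphi$ is pulled back from $M^*_\R$ along the projection with compact fiber $M^*_{S^1}$, it suffices to see $\varphi\to\infty$ on $M^*_\R$; and this is standard from the assumption that $\varphi$ is proper and bounded below on $M^*_\R$ (its sublevel sets are compact). Then $\varphi(a)/|E_a| \sim \sqrt{\varphi(a)}\to\infty$ gives \ref{it:1}, and $|\nabla\varphi|_\varphi\sim \sqrt{\varphi(a)}\to\infty$ gives \ref{it:2}.

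The only subtlety, and hence the part that warrants the most care, is pinning down the precise way the Kähler metric on $M^*_{\C^*}$ restricts to the Hessian metric in the $x$-directions — i.e., justifying that $\langle \partial_{x_i},\partial_{x_j}\rangle_\varphi$ is a positive multiple of $\partial_i\partial_j\varphi$, with a constant independent of $i,j$. This is a short direct computation from $\omega = i\partial\bar\partial\varphi$ using $J\partial_{x_i} = \partial_{\theta_i}$, but it is the one place where conventions need to be fixed, and it is what makes the rest of the argument purely formal. Everything else is bookkeeping with Euler's identity and the properness of $\varphi$.
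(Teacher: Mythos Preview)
Your argument is correct and rests on the same algebraic core as the paper's, namely Euler's identity for the $2$-homogeneous $\varphi$. The organizational difference is minor: the paper proves \ref{it:1} first by a bare scaling argument (along a ray, $\varphi$ is eventually quadratic while $|E|$ is eventually linear), and then deduces \ref{it:2} from \ref{it:1} via the identity $|\nabla\varphi|\,|E| = 2\varphi$ (obtained from $E(\varphi)=2\varphi$ and $\angle(E,\nabla\varphi)=0$). You instead compute both $|E|_\varphi$ and $|\nabla\varphi|_\varphi$ explicitly as proportional to $\sqrt{\varphi}$ using the Hessian relation $\sum_j(\partial_i\partial_j\varphi)a^j = \partial_i\varphi$, reducing both parts simultaneously to the properness of $\varphi$. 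Your route is slightly more explicit (it re-proves Lemma \ref{lem:ZpropE} as a by-product by showing $\nabla\varphi$ is a positive scalar multiple of $E$), while the paper's is a bit more modular; the content is the same.
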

\begin{proof}
\eqref{it:1} follows from the fact that along any ray, $\varphi$ is strictly convex and eventually quadratic, while $|E|$ is eventually linear.

For \eqref{it:2}, note that as $\varphi$ is $2$-homogeneous near $\infty$, we have 
\begin{align*}
E (\varphi) &= 2\varphi \\
\Rightarrow \langle \nabla \varphi, E \rangle &= 2\varphi \\
\Rightarrow |\nabla \varphi| |E| &= 2\varphi \quad \text{by Lemma \ref{lem:ZpropE}} \\
\Rightarrow |\nabla \varphi| &= \frac{2 \varphi}{|E|}.
\end{align*}
The result now follows from \eqref{it:1}.
\end{proof}

\begin{lem}\label{lem:hom_crit}
Suppose that $\varphi$ is homogeneous of weight $2$ near $\infty$, and $(A_t)_{t \in [0,1]}$ is a family of submanifolds which are uniformly asymptotically cylindrical and $\delta$-approximately holomorphic for some $0<\delta<\pi/6$. 
Then $Z_{A_t}(\varphi)>0$ outside a compact subset of $M^*_{\C^*}$. 
\end{lem}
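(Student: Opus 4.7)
Since $\omega$ is K\"ahler with respect to the metric induced by $\varphi$, the ambient Liouville vector field is $Z = \nabla \varphi$ and the Liouville vector field on $A_t$ is $Z_{A_t} = \pi^{\omega}_{T A_t}(\nabla \varphi)$, the symplectic-orthogonal projection. Thus
\[ Z_{A_t}(\varphi) = d\varphi(Z_{A_t}) = \langle \nabla \varphi,\, \pi^{\omega}_{T A_t}\nabla \varphi\rangle_{\varphi}. \]
The plan is to show that the right-hand side is bounded below by a positive multiple of $|\nabla \varphi|^2$ outside a fixed compact set (uniformly in $t$), and then invoke Lemma \ref{lem:nablaphiinf}\eqref{it:2} to conclude. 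Throughout, angles are computed with respect to $\varphi$'s 2-homogeneous metric, which is uniformly equivalent to any other 2-homogeneous metric by Lemma \ref{lem:equiv_angles}, so our hypotheses transfer freely.

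First I control the angle between $T_a A_t$ and $\nabla \varphi|_a$. By Lemma \ref{lem:ZpropE}, $\angle(\nabla\varphi, E) = 0$ near infinity, meaning the two vectors are positively proportional. Hence $\angle(T_a A_t, \nabla \varphi|_a) = \angle(T_a A_t, E_a)$, and the uniform asymptotic cylindricality hypothesis gives: for every $\eta > 0$ there is a compact set $K_\eta \subset M^*_{\C^*}$ such that for all $t$ and all $a \in A_t \setminus K_\eta$, this angle is less than $\eta$.

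Next I decompose $\nabla\varphi = v + w$ with $v := \pi^{\omega}_{TA_t}\nabla\varphi \in T_a A_t$ and $w \in (T_a A_t)^\omega$. Since $\angle(T_a A_t) < \delta < \pi/6$, Lemma \ref{lem:pair_orthog} yields
\[ Z_{A_t}(\varphi) = |v|^2 + \langle w, v\rangle \;\ge\; |v|\bigl(|v| - \sqrt{2 - 2\cos\delta}\,|w|\bigr). \]
To control the factor in parentheses, I perform the orthogonal decomposition $\nabla\varphi = n^{\parallel} + n^{\perp}$ with $n^{\parallel} \in T_a A_t$; by the previous paragraph, $|n^{\parallel}| \ge \cos\eta \cdot |\nabla\varphi|$ and $|n^{\perp}| \le \sin\eta \cdot |\nabla\varphi|$. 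Writing $n^{\perp} = (n^{\perp})_X + (n^{\perp})_{X^\omega}$ with respect to the symplectic splitting, Lemma \ref{lem:piomegabound} bounds $|(n^{\perp})_X| \le |n^{\perp}|/\sqrt{2\cos\delta - 1}$, and a parallel bound (applying Lemma \ref{lem:piomegabound} to $X^{\omega}$, which has the same angle as $X$ in the relevant codimension-$2$ case, or arguing by triangle inequality) controls $|w| = |(n^{\perp})_{X^\omega}|$ by a constant multiple of $|n^{\perp}|$. Since $v = n^{\parallel} + (n^{\perp})_X$, we then obtain $|v| \ge \cos\eta\,|\nabla\varphi| - \sin\eta \cdot |\nabla\varphi|/\sqrt{2\cos\delta-1}$ and $|w| \le C\sin\eta \cdot |\nabla\varphi|$ for an absolute constant $C$ depending only on $\delta$. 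Choosing $\eta$ sufficiently small depending on $\delta < \pi/6$ forces $|v| \ge c|\nabla\varphi|$ and $|v| - \sqrt{2 - 2\cos\delta}\,|w| \ge c'|\nabla\varphi|$ for some $c, c' > 0$, hence $Z_{A_t}(\varphi) \ge cc'|\nabla\varphi|^2$ on $A_t \setminus K_\eta$.

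Combining with Lemma \ref{lem:nablaphiinf}\eqref{it:2}, which says $|\nabla\varphi| \to \infty$ at infinity, we conclude $Z_{A_t}(\varphi) > 0$ outside some compact subset of $M^*_{\C^*}$ uniformly in $t$. The main technical obstacle is reconciling the symplectic-orthogonal projection $\pi^\omega$ defining $Z_{A_t}$ with the metric-orthogonal decomposition used to encode the near-tangency of $\nabla\varphi$ to $T A_t$: this is exactly the role played by the quantitative estimates of Lemmas \ref{lem:pair_orthog} and \ref{lem:piomegabound}, whose constants remain bounded precisely because the hypothesis $\delta < \pi/6$ keeps $2\cos\delta - 1$ uniformly positive.
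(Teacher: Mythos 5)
Your proof is correct and follows the same high-level strategy as the paper's (reduce to showing $Z_{A_t}(\varphi)/|\nabla\varphi|^2$ is bounded below uniformly, using \cref{lem:ZpropE} to transfer the cylindricality hypothesis to a statement about $\angle(TA_t,\nabla\varphi)$, controlling the gap between symplectic and metric projection via \cref{lem:piomegabound}, and closing with \cref{lem:nablaphiinf}\eqref{it:2}). The difference is in how you organize the estimate. You decompose $\nabla\varphi = v + w$ with $v = \pi^\omega\nabla\varphi \in TA_t$ and $w \in (TA_t)^\omega$, expand $Z_{A_t}(\varphi) = |v|^2 + \langle w,v\rangle$, and need \cref{lem:pair_orthog} to handle the cross-term, followed by estimates on $|v|$ and $|w|$ that go through the metric decomposition. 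The paper instead writes $\pi^\omega\nabla\varphi = \pi^\omega(\nabla\varphi - \pi^\perp\nabla\varphi) + \pi^\perp\nabla\varphi$, which produces $\langle\pi^\perp\nabla\varphi,\nabla\varphi\rangle = |\pi^\perp\nabla\varphi|^2$ for free (orthogonality of $\pi^\perp$) plus a single error term bounded by Cauchy--Schwarz and \cref{lem:piomegabound}. This is shorter and never touches \cref{lem:pair_orthog}. Your version is slightly longer, and the parenthetical remark that $X^\omega$ ``has the same angle as $X$ in the relevant codimension-$2$ case'' is not something \cref{lem:piomegabound} gives you (that lemma only bounds projection onto the subspace whose angle is controlled); the triangle-inequality alternative you offer, $|w| = |n^\perp - \pi^\omega n^\perp| \le (1 + 1/\sqrt{2\cos\delta-1})|n^\perp|$, is the right way to make that step airtight. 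With that route taken, the constants all work out, and the conclusion is uniform in $t$ as required.
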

\begin{proof} 
Note that as $A_t$ is uniformly asymptotically cylindrical, Lemma \ref{lem:ZpropE} implies that
$$\lim_{a \to \infty} \angle(\pi^\perp_t \nabla \varphi,\nabla \varphi) = \lim_{a \to \infty} \angle(TA_t,\nabla \varphi) = 0$$
uniformly in $t$, from which it follows that 
\begin{align}
\label{eq:perplim}\lim_{a \to \infty} \frac{|\nabla \varphi - \pi^\perp_t \nabla \varphi|}{|\nabla \varphi|} &= \lim_{a \to \infty} \sin\angle(\nabla \varphi,\pi^\perp_t\nabla \varphi) = 0, \qquad \text{and}\\
\label{eq:perplim2}\lim_{a \to \infty} \frac{|\pi^\perp_t \nabla \varphi|}{|\nabla \varphi|} &= \lim_{a \to \infty} \cos\angle(\nabla \varphi,\pi^\perp_t\nabla \varphi) = 1
\end{align}
 uniformly in $t$.

By Lemma \ref{lem:piomegabound}, we have
$$|\pi^\omega_t(\nabla \varphi - \pi^\perp_t \nabla \varphi)| \le \frac{|\nabla \varphi - \pi^\perp_t \nabla \varphi|}{\sqrt{2\cos \delta - 1}}.$$
Combining with \eqref{eq:perplim}, it follows that
\begin{equation}
\label{eq:perplim3}
\lim_{a \to \infty}\frac{|\pi^\omega_t(\nabla \varphi - \pi^\perp_t \nabla \varphi)|}{|\nabla \varphi|} = 0
\end{equation}
uniformly in $t$.

We now have
\begin{align*}
\frac{Z_A(\varphi)}{|\nabla \varphi|^2} &= \frac{\langle \pi^\omega_t(\nabla \varphi),\nabla \varphi \rangle}{|\nabla \varphi|^2}\\
&= \frac{\langle \pi^\omega_t(\nabla \varphi - \pi^\perp_t \nabla \varphi) + \pi^\perp_t \nabla \varphi,\nabla \varphi\rangle}{|\nabla \varphi|^2}\\
& \ge \frac{|\pi^\perp_t \nabla \varphi|^2}{|\nabla \varphi|^2} - \frac{|\pi^\omega_t(\nabla \varphi - \pi^\perp_t \nabla \varphi)|}{|\nabla \varphi|}.
\end{align*}
Applying \eqref{eq:perplim2} and \eqref{eq:perplim3}, we see that the RHS $\to 1$ uniformly in $t$, as $a \to \infty$. 
The result now follows from Lemma \ref{lem:nablaphiinf} \eqref{it:2}.
\end{proof}

\subsection{Submanifolds admitting a toric compactification are asymptotically cylindrical}

Let $\Sigma$ be a simplicial fan in $M^*_\R$, giving rise to a toric orbifold $Y_\Sigma$. 
The complement of the toric boundary divisor in $Y_\Sigma$ is the complex manifold $M^*_{\C^*}$. 

\begin{defn}\label{def:smtorcomp}
A \emph{smooth toric compactification} of a submanifold $A \subset M^*_{\C^*}$ is a smooth submanifold $X \subset Y_\Sigma$, for some simplicial fan $\Sigma$, such that $A = X \cap M^*_{\C^*}$, $X$ avoids the orbifold points, and the intersection of $X$ with every stratum of the toric boundary divisor is transverse. 
\end{defn}

\begin{lem}\label{lem:comp_con}
If $(A_t)_{t \in [0,1]}$ is a family of submanifolds which admits a family $(X_t)_{t \in [0,1]}$ of smooth toric compactifications, then it is uniformly asymptotically cylindrical.
\end{lem}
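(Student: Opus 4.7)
The statement is local near the toric boundary: since each $X_t$ is a submanifold of $Y_\Sigma$ whose intersection with $M^*_{\C^*}$ is $A_t$, a point of $A_t$ leaves every compact subset of $M^*_{\C^*}$ exactly when the corresponding point of $X_t$ approaches the toric boundary $Y_\Sigma \setminus M^*_{\C^*}$. The set $\mathcal{B} := \bigcup_{t \in [0,1]} \{t\} \times (X_t \cap (Y_\Sigma \setminus M^*_{\C^*}))$ is a compact subset of $[0,1] \times Y_\Sigma$ (by continuous dependence of $X_t$ on $t$ and compactness of $[0,1]$). So it suffices to establish, for each $(t_0,p) \in \mathcal{B}$, a local angle estimate of the following form: there exists a neighbourhood $U$ of $(t_0,p)$ in $[0,1]\times Y_\Sigma$ such that for $(t,a) \in U$ with $a \in A_t$, $\angle(T_a A_t,E_a) \to 0$ as $a \to p$ in $Y_\Sigma$ (uniformly in $t$). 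A standard cover argument then yields \cref{def:asymp_con}.

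Fix such a pair $(t_0,p)$ with $p$ in the toric stratum $Y_\sigma$ corresponding to a cone $\sigma$ of complex codimension $k$. After a unimodular change of basis of $M^*$ putting the rays of $\sigma$ as (say) the first $k$ basis vectors, one may introduce toric coordinates $(w_1,\dots,w_k,z_{k+1},\dots,z_n)$ on a neighbourhood $V$ of $p$ with $Y_\sigma \cap V = \{w = 0\}$, related to the $M^*_{\C^*}$ coordinates by $w_i = e^{-(x_i+i\theta_i)}$ for $i \le k$ and $z_j = e^{x_j+i\theta_j}$ for $j > k$. By the transversality of $X_{t_0}$ to $Y_\sigma$ (and possibly a further linear change in the $z$-coordinates), one can arrange
\[
T_p X_{t_0} \;=\; \mathrm{span}_\C\bigl(\partial_{w_1},\dots,\partial_{w_k},\partial_{z_{k+1}},\dots,\partial_{z_m}\bigr),\qquad m = \dim_\C X_{t_0}.
\]
The implicit function theorem then expresses $X_t$ for $t$ near $t_0$ as a graph $\{z_l = g_l^{(t)}(w,z_{k+1},\dots,z_m) : l > m\}$ with $dg_l^{(t_0)}(p) = 0$ and with the derivatives of $g_l^{(t)}$ uniformly $C^0$-bounded on $V$ for $(t,a)$ near $(t_0,p)$.

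Equip $M^*_{\C^*}$ with the $2$-homogeneous metric given by the Euclidean structure on the $(x,\theta)$-coordinates (legitimate by \cref{lem:equiv_angles}). Writing $E_a = \sum_i x_i(a)\partial_{x_i}$, as $a \to p$ along $A_t$ one has $x_i(a) = -\log|w_i(a)| \to +\infty$ for $i \le k$ while $x_j(a)$ stays bounded for $j > k$. Thus every limit direction of the unit vector $\hat E_a := E_a/|E_a|$ lies in $\mathrm{span}_\R(\partial_{x_i} : i \le k)$. On the other hand, using the identities $\partial_{w_i} = -w_i^{-1}\tfrac{\partial}{\partial(x_i+i\theta_i)}$ and $\partial_{z_l} = z_l^{-1}\tfrac{\partial}{\partial(x_l+i\theta_l)}$, one can write a holomorphic frame for $T_a X_t$ as
\[
V_i = -w_i(a)^{-1}\tfrac{\partial}{\partial(x_i+i\theta_i)} + O(1)\quad (i \le k), \qquad W_j = z_j(a)^{-1}\tfrac{\partial}{\partial(x_j+i\theta_j)} + O(1)\quad (k<j \le m),
\]
where the $O(1)$ remainders (coming from the terms $z_l(a)^{-1}\,\partial g_l^{(t)}/\partial w_i$ or $z_l(a)^{-1}\,\partial g_l^{(t)}/\partial z_j$) are uniformly bounded in the $(x,\theta)$-Euclidean norm for $(t,a)$ in a neighbourhood of $(t_0,p)$. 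Since $|w_i(a)|^{-1} \to \infty$ while the other coefficients remain bounded, after rescaling each $V_i$ by $|w_i(a)|$ the real tangent space $T_a A_t = \mathrm{span}_\R(V_i,iV_i,W_j,iW_j)$ converges in the Grassmannian to $\mathrm{span}_\R(\partial_{x_i},\partial_{\theta_i} : 1 \le i \le m)$; this limit contains every limit direction of $\hat E_a$, so $\angle(T_aA_t,E_a) \to 0$ uniformly for $t$ near $t_0$.

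The main technical point is this final scaling analysis: one must verify that the dominant components of the $V_i$ blow up at the rate $|w_i(a)|^{-1}$ and therefore truly swamp the bounded correction terms coming from the graph functions $g_l^{(t)}$. Once that is confirmed via the explicit coordinate change, the remainder of the argument is routine: uniformity in $t$ and over $p \in X_t \cap \partial Y_\Sigma$ is obtained from the compactness of $\mathcal{B}$ together with the continuity of the graph description of $X_t$.
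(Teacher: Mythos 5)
Your argument is correct and shares the paper's key ideas: localize near a toric-boundary point, use the graph description of $X_t$ afforded by transversality, and estimate angles in $(x,\theta)$-coordinates. The paper's presentation is shorter in two respects. First, it argues by contradiction via a sequence $(a_j,t_j) \to (p,s)$ rather than by a finite open cover of $\mathcal{B}$, which is logically equivalent but avoids the cover-and-union bookkeeping. Second, and more substantively, instead of tracking a full holomorphic frame for $T_a A_t$ and analyzing its Grassmannian limit after rescaling, it produces a single tangent vector $\tilde v_t \in T_a A_t$ that lifts the ``partial Euler'' field $v = \sum_{j \le k} u_j\partial_{u_j} + v_j\partial_{v_j}$ via the graph, and then observes that $|E - \tilde v_t|$ stays bounded while $|E| \to \infty$; this immediately gives $\angle(T_aA_t, E) \le \angle(\tilde v_t, E) \to 0$ with no need to rescale frame vectors or extract Grassmannian limits. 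Producing one good tangent vector is cheaper than controlling the whole tangent space, and it is the main point you should internalize from the paper's version.

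One small inaccuracy in your write-up: you cannot in general arrange $dg_l^{(t_0)}(p) = 0$ by a linear change of $z$-coordinates alone. The $w$-coordinates are fixed by the toric structure, so while a $GL_{n-k}(\C)$ change in $(z_{k+1},\dots,z_n)$ lets one arrange $T_p X_{t_0} \cap T_p Y_\sigma = \mathrm{span}_\C(\partial_{z_{k+1}},\dots,\partial_{z_m})$, the complementary $k$-dimensional piece of $T_p X_{t_0}$, which projects isomorphically onto $\mathrm{span}_\C(\partial_{w_i})$, is in general a graph over that span rather than the span itself, so $\partial g_l/\partial w_i|_p$ can be nonzero. This does not affect your conclusion --- boundedness, not vanishing, of the graph derivatives is what your estimate actually uses, and that follows from smoothness of $g_l^{(t)}$ alone --- but the stated claim should be weakened to just the existence of a graph description over $\mathrm{span}_\C(\partial_{w_1},\dots,\partial_{w_k},\partial_{z_{k+1}},\dots,\partial_{z_m})$.
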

\begin{proof}
Suppose to the contrary that there exists a sequence of points $a_j \in A_{t_j}$ converging to $\infty$, with $\angle(T_{a_j}A_{t_j},E) > \delta>0$.
Taking a subsequence, we may assume that the $(a_j,t_j)$ converge to a point $(p,s)$ with $p$ in the toric boundary and $s \in [0,1]$. 

Suppose that $p$ lies in the torus orbit $Y_\sigma$ corresponding to the cone $\sigma$ of $\Sigma$. 
As $Y$ is smooth along $Y_\sigma$ by hypothesis, we may choose a basis $(m_j)_{j=1}^n$ of $M^*_{\C^*}$ such that $m_1,\ldots,m_k$ span $\sigma$. 
The choice of basis determines coordinates $x_j$ on $M^*_\R$, $\theta_j$ on $M^*_{S^1}$, and holomorphic coordinates $z_j = \exp(x_j+i\theta_j)$ on $M^*_{\C^*}$. 
These extend to coordinates $(z_j)_{j=1}^n \in \C^k \times (\C^*)^{n-k}$ on a Zariski neighbourhood of $Y_\sigma$, with $Y_\sigma = \{z_1 = \ldots = z_k = 0\}$. 

In a neighbourhood of $p$ in $Y$ we may choose a real coordinate system $(u_1,v_1,\ldots,u_k,v_k,w_{2k+1},\ldots,w_{2n})$ such that:
\begin{itemize}
    \item $p = (0,\ldots,0)$ in these coordinates;
    \item $z_j = u_j+iv_j$ for $j=1,\ldots,k$;
    \item $(w_{2k+1},\ldots,w_{2n})$ are related to $(x_{k+1},\theta_{k+1},\ldots,x_n,\theta_n)$ by an affine-linear change of variables;
    \item $(u_1,v_1,\ldots,u_k,v_k,w_{2k+1},\ldots,w_{\dim X})$ define a coordinate system on a neighbourhood of $p \in X_s$. 
\end{itemize}
In these coordinates, $X_t$ can then be written as the graph of a smooth function
$$F_t:U \to \R^{2n-\dim X}$$
for some $U \subset \R^{\dim X}$ an open neighbourhood of the origin, and all $t$ sufficiently close to $s$.

Now consider the Euler vector field $v = \sum_{j=1}^k u_j \partial/\partial u_j + v_j \partial/\partial v_j$ on $U$. 
It admits the following lift to $A_t$:
$$\tilde v_t = v + \sum_{j=\dim X + 1}^{2n} v(F_t^j) \frac{\partial}{\partial w_j}.$$
Thus we have
$$E - \tilde{v}_t = \sum_{j>k} x_j \frac{\partial}{\partial x_j} - \sum_{j=\dim X + 1}^{2n} v(F_t^j) \frac{\partial}{\partial w_j}.$$
We claim that $|E-\tilde{v}_{t_j}|$ is bounded, as $(a_j,t_j) \to (p,s)$. 
Indeed, this follows from the fact that all of the following quantities are bounded: $|\partial/\partial x_j|$, $|\partial/\partial \theta_j|$ (these are bounded for any $2$-homogeneous metric, by compactness); $|\partial/\partial w_j|$ (as these are a linear combination of $\partial/\partial x_j$ and $\partial/\partial \theta_j$); $x_j$ for $j>k$ (as $z_j(p) \neq 0$); $v(F_t^j)$ (this follows as $v$ is smooth and vanishes at $0$, so $v(F_t^j)$ is smooth and vanishes at $0$). 

On the other hand, $|E|$ is unbounded (as it is eventually linear with positive slope along any ray). 
It follows that $\angle(E,\tilde{v}_t) \to 0$ as $a \to \infty$; as $\tilde{v}_t \in TA_t$ by construction, the claim follows.
\end{proof}

\subsection{Compactification K\"ahler potentials}

\begin{defn}\label{def:comp_pot}
Let $\Sigma$ be a simplicial fan in $M^*_\R$, determining a toric orbifold $Y_\Sigma$ with toric boundary divisor $D$. 
We say that a strictly convex, proper, bounded below function $\varphi: M^*_\R \to \R$ is a \emph{compactification K\"ahler potential (for $\Sigma$)} if the corresponding K\"ahler form extends to an orbifold K\"ahler form on $Y_\Sigma$, so that $(\omega,\varphi)$ define a relative K\"ahler form on $(Y_\Sigma,D)$ in the sense of \cite[Definition 3.2]{Sheridan2017}, and $d\varphi$ extends to a moment map $d\varphi: Y_\Sigma \to M_\R$ for the action of $M^*_{S^1}$ on $Y_\Sigma$. 
\end{defn}

\begin{lem}
\label{lem:comp_pot_exist}
Let $\sum_{i \in \Sigma(1)} \kaehl_i D_i$ be an ample $\R$-divisor on $Y_\Sigma$, with all $\kaehl_i>0$. 
Then there exists a compactification K\"ahler potential $\varphi$ for $\Sigma$ with linking numbers $\kaehl_i$. 
The image of the moment map $d\varphi$ is the moment polytope 
$$\Delta_\kaehl = \{m \in M_\R: v_i(m) \ge -\kaehl_i \text{ for all $i \in \Sigma(1)$}\} \subset M_\R.$$
\end{lem}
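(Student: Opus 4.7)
The plan is to use Guillemin's explicit construction of a toric K\"ahler potential from the moment polytope. First I observe that because $\sum_{i \in \Sigma(1)} \kaehl_i D_i$ is ample, the polytope $\Delta_\kaehl$ is simple and its normal fan equals $\Sigma$; in particular the facets of $\Delta_\kaehl$ are indexed by the rays $v_i \in \Sigma(1)$, and the linear functionals $\ell_i(m) := \langle v_i, m\rangle + \kaehl_i$ are strictly positive on the interior of $\Delta_\kaehl$ and vanish precisely on the facet corresponding to $v_i$.

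Next I define $u: \mathrm{int}(\Delta_\kaehl) \to \R$ by the Guillemin formula
\[
u(m) = \tfrac{1}{2}\sum_{i \in \Sigma(1)} \ell_i(m)\log \ell_i(m),
\]
and set $\varphi: M^*_\R \to \R$ to be the Legendre dual of $u$, i.e.\ $\varphi(x) = \sup_{m \in \Delta_\kaehl}\bigl(\langle m,x\rangle - u(m)\bigr)$. A direct computation of the Hessian of $u$ shows $D^2 u = \tfrac{1}{2}\sum_i \ell_i^{-1}\, v_i\otimes v_i$, which is positive-definite because the $v_i$ span $M^*_\R$; moreover $|du(m)| \to \infty$ as $m$ approaches $\partial \Delta_\kaehl$. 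It follows that $\varphi$ is smooth and strictly convex on all of $M^*_\R$, that $d\varphi: M^*_\R \to \mathrm{int}(\Delta_\kaehl)$ is a diffeomorphism (so $\varphi$ is proper and bounded below), and that $\varphi - \psi_\kaehl$ is bounded, where $\psi_\kaehl$ is the support function.

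Having produced $\varphi$, I next verify that $\omega := dd^c \varphi$ extends to an orbifold K\"ahler form on $Y_\Sigma$, and that $d\varphi$ extends to a moment map $Y_\Sigma \to \Delta_\kaehl$. The verification is local on each equivariant affine chart $\mathcal{V}_\sigma \subset Y_\Sigma$ indexed by a maximal cone $\sigma \in \Sigma$, with orbifold cover by $\C^n$ determined by the primitive generators of $\sigma$; passing to Guillemin's action-angle-type symplectic coordinates shows that $\omega$ and the moment map $\mu = d\varphi$ extend smoothly in the orbifold sense, with $\mu(\mathcal{V}_\sigma) = \Delta_\kaehl \cap (\text{star of the vertex dual to }\sigma)$. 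Assembling these local extensions gives the global moment map with image $\Delta_\kaehl$. The linking numbers are read off from Guillemin's formula: along the component $D_i$ (where $\ell_i = 0$), the K\"ahler potential $\varphi$ has logarithmic blow-up with leading coefficient $\kaehl_i$, so the Poincar\'e dual of $[\omega]$ is $\sum_i \kaehl_i[D_i]$ and the relative K\"ahler form condition of \cite[Definition 3.2]{Sheridan2017} holds with linking numbers $\kaehl_i$.

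I expect the main obstacle to be the local verification at the torus-fixed orbifold points, where one must match the Legendre dual of $u$ with a smooth K\"ahler potential in equivariant holomorphic coordinates (the standard issue being that the Legendre transform is implicit). This is handled by working in symplectic rather than complex toric coordinates near each fixed point, where Guillemin's formula becomes explicit; the fact that $\Sigma$ is only simplicial (not smooth) enters only through the passage to the finite orbifold cover of each chart, under which the analysis is identical to the smooth case.
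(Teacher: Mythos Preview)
Your proposal is correct and follows the same approach as the paper: the paper's proof consists of a single sentence citing Guillemin's formula for toric K\"ahler potentials \cite{guillemin1994kaehler} and its orbifold extension \cite{calderbank2001guillemin}, and your argument is precisely an unpacking of that construction.
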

\begin{proof}
The claim follows from Guillemin's formula for toric K\"{a}hler potentials \cite{guillemin1994kaehler} extended to orbifolds in \cite{calderbank2001guillemin}.
\end{proof}

\begin{lem}\label{lem:comp_ang}
Suppose that $\varphi$ is a compactification K\"ahler potential. Then $\angle(E,\nabla \varphi) \le c$ near $\infty$ for some $c<\pi/2$, where the angle and gradient are taken with respect to some $2$-homogeneous metric.
\end{lem}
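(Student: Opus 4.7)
The plan is to work with a convenient $2$-homogeneous metric, namely the one $g_0$ induced by $\psi_0(x) = \tfrac{1}{2}|x|^2$ in some fixed basis of $M^*$, and then transfer to arbitrary $2$-homogeneous metrics via Lemma \ref{lem:equiv_angles}. In the coordinates $(x_j,\theta_j)$ coming from this basis, $g_0$ is the flat K\"ahler metric, and we have the explicit identifications $|E|_{g_0} = |x|_{\mathrm{Eucl}}$ and $|\nabla_{g_0}\varphi|_{g_0} = |d\varphi|_{\mathrm{Eucl}}$, while $\langle E,\nabla\varphi\rangle_{g_0} = E(\varphi) = \langle d\varphi(x),x\rangle$. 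Because $d\varphi$ is a moment map with image contained in the bounded polytope $\Delta_\kaehl$ (Lemma \ref{lem:comp_pot_exist}), $|\nabla_{g_0}\varphi|_{g_0}$ is bounded above by some constant $C_\Delta := \max_{m\in \Delta_\kaehl}|m|$.

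The crucial point is a linear lower bound $E(\varphi)(x) \ge \epsilon|x| - C$ for $|x|$ large. The hypothesis that all $\kaehl_i > 0$ means $0$ lies in the interior of $\Delta_\kaehl$, so we can choose $\epsilon>0$ with $B(0,\epsilon) \subset \Delta_\kaehl$. The Legendre-transform characterization of $\varphi$ (or, more elementarily, the inequality $\varphi(x)\ge \langle m,x\rangle - \varphi^*(m)$ for any $m$ in the interior of $\Delta_\kaehl$, where $\varphi^*$ is bounded on the compact set $\overline{B(0,\epsilon/2)}$ by continuity) yields
\begin{equation*}
\varphi(x) \;\ge\; \tfrac{\epsilon}{2}\,|x| - C_1
\end{equation*}
for some constant $C_1$. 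Convexity of $\varphi$ then gives $\varphi(0) \ge \varphi(x) - \langle d\varphi(x),x\rangle$, so
\begin{equation*}
E(\varphi)(x) \;=\; \langle d\varphi(x),x\rangle \;\ge\; \varphi(x)-\varphi(0) \;\ge\; \tfrac{\epsilon}{2}\,|x| - C_2.
\end{equation*}

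Combining the three estimates, for $|x|$ sufficiently large we obtain
\begin{equation*}
\cos \angle(E,\nabla\varphi) \;=\; \frac{E(\varphi)}{|E|_{g_0}\,|\nabla_{g_0}\varphi|_{g_0}} \;\ge\; \frac{\tfrac{\epsilon}{2}|x| - C_2}{|x|\cdot C_\Delta} \;\ge\; \frac{\epsilon}{4 C_\Delta} \;>\;0,
\end{equation*}
so $\angle(E,\nabla\varphi) \le \arccos\!\big(\tfrac{\epsilon}{4C_\Delta}\big) < \pi/2$ near infinity in the metric $g_0$. Finally, since any two $2$-homogeneous metrics are uniformly equivalent (as noted just after Definition \ref{def:homogeneous}), Lemma \ref{lem:equiv_angles} shows that the bound persists, with a possibly different constant $c<\pi/2$, for any other choice of $2$-homogeneous metric. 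The main (mild) obstacle is simply to package Legendre duality into a clean linear lower bound on $\varphi$ without invoking Guillemin's formula; the strategy above avoids that by using only the existence of a ball around $0$ inside $\Delta_\kaehl$ and convexity.
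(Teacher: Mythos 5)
Your proof is correct in its essentials, but takes a genuinely different route from the paper. The paper argues by contradiction: it supposes $E(\varphi)/|E|\to d\le 0$ along a sequence, passes to the toric compactification, takes a subsequential limit landing in some orbit $Y_\sigma$, and then computes the limit explicitly in coordinates adapted to $\sigma$ to derive a contradiction with $\kaehl_j>0$. You instead give a direct convex-analytic argument: the Fenchel--Young inequality $\varphi(x)\ge\langle m,x\rangle-\varphi^*(m)$ (finite for $m$ in the interior of $\Delta_\kaehl$, which contains a ball about $0$ precisely because all $\kaehl_i>0$), together with the first-order convexity inequality $\langle d\varphi(x),x\rangle\ge\varphi(x)-\varphi(0)$, yields a linear-in-$|x|$ lower bound on $E(\varphi)$, and then the moment-map boundedness of $|d\varphi|$ closes the estimate. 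Both arguments ultimately rest on the same two inputs ($|d\varphi|$ is bounded since the moment image is $\Delta_\kaehl$; all $\kaehl_i>0$), but your route stays entirely on $M^*_\R$ and uses only standard convexity, while the paper's uses the geometry of the compactification. Yours is arguably the more elementary and self-contained of the two.

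One small repair is needed at the very end. Lemma \ref{lem:equiv_angles} compares $\angle_1(x,y)$ and $\angle_2(x,y)$ for the \emph{same} pair of vectors in two metrics, but here $\nabla_g\varphi$ itself changes when the metric $g$ changes, so the lemma does not literally apply; and even for fixed vectors, the displayed inequality $\cos\angle_2\ge 1-\tfrac{C}{c}(1-\cos\angle_1)$ guarantees $\cos\angle_2>0$ only when $\cos\angle_1$ is sufficiently close to $1$, not merely bounded away from $0$. The cleanest fix is to observe that the transfer step is unnecessary: the statement of the lemma only requires the bound for \emph{some} $2$-homogeneous metric, and you have exhibited it for $g_0$. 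Alternatively, if you do want the bound for an arbitrary $2$-homogeneous $g$, use the metric-independent identity $\cos\angle_g(E,\nabla_g\varphi)=E(\varphi)/(|E|_g|d\varphi|_{g^*})$: the numerator $E(\varphi)$ is intrinsic and the two norms in the denominator are each uniformly comparable between $g$ and $g_0$, so the lower bound on the cosine persists up to a multiplicative constant.
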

\begin{proof}
It suffices to show that $E(\varphi)/(|d\varphi||E|)$ is bounded below near $\infty$. 
Note that $|d\varphi|$ is bounded above (as it lies in $\Delta$ which is compact), so it suffices to bound $E(\varphi)/|E|$ below.

Suppose, to the contrary, that there exists a sequence $p_i \in M^*_\R$ with $E(\varphi)/|E| \to d$ for some $d \le 0$. 
We consider the $p_i$ as lying inside $M^*_\R \times \{1\} \subset M^*_\R \times M^*_{S^1} \subset Y_\Sigma$. 
Taking a subsequence, we may assume that the $p_i$ converge to a point $y$ lying in some orbit $Y_\sigma$. 
Then $d\varphi_{p_i}$ converges to $d\varphi_y$, which lies on a facet of $\Delta_\kaehl$ contained in the affine space where $\langle m,v_j \rangle = -\kaehl_j$ for all rays $v_j$ of $\sigma$. 

We choose a basis $(m_j)_{j=1}^n$ for $M^*_\R$ such that $m_j = v_j$ for $j=1,\ldots,k$, where $v_j$ are the primitive vectors spanning $\sigma$ (this need not come from a basis for $M^*$). 
The basis induces coordinates $x_j$ on $M^*_\R$, with $x_j(p_i) \to -\infty$ for $j \le k$ and $x_j(p_i)$ bounded for $j > k$. 
Thus we have, as $i \to \infty$:
\begin{align}
\nonumber\frac{E(\varphi)}{|E|} & = \frac{ \sum_{j=1}^n x_j \frac{\partial \varphi}{\partial x_j}}{|E|} \\
\nonumber &\to \frac{\sum_{j=1}^n x_j d\varphi_y(m_j)}{|E|}\\
\label{eq:des_bd} & \to \frac{-\sum_{j=1}^k x_j \kaehl_j}{|\sum_{j=1}^k x_j \partial_{x_j}|}.
 \end{align}
In the last line, we have used the fact that $|E| \to \infty$, while $x_j$ is bounded for $j > k$, as is $d\varphi_y(m_j)$ for any $j$. 

Now observe that as $x_j \to -\infty$ for $j \le k$, we may truncate our sequence so that $x_j<0$ for all $j \le k$. 
Then we have
\begin{align*}
-\sum_{j=1}^k x_j \kaehl_j &\ge \max_{j=1,\ldots,k}(-x_j) \cdot \min_{j=1,\ldots,k} \kaehl_j \\
&\ge \min_{j=1,\ldots,k} \kaehl_j \cdot \frac{1}{\sqrt{k}}\sqrt{\sum_{j=1}^k x_j^2} \\
&\ge \min_{j=1,\ldots,k} \kaehl_j \cdot \frac{1}{\sqrt{k}} \cdot c \left|\sum_{j=1}^k x_j \partial_{x_j}\right|.
\end{align*}
The last line uses the fact that $\sqrt{\sum_j x_j^2} = |\sum_j x_j \partial_{x_j}|'$ with respect to the Euclidean metric $|\cdot|'$, which is uniformly equivalent to $|\cdot|$.
This gives the desired lower bound for \eqref{eq:des_bd}.
\end{proof}

\begin{cor}\label{cor:comb_ang}
Suppose that $\varphi_0$ is a compactification K\"ahler potential, $\varphi_1$ a homogeneous potential of weight $2$, and $\varphi_s = (1-s) \varphi_0 + s \varphi_1$. 
Then $\angle (E, \nabla \varphi_s ) \le c <\pi/2$, for some $c$ independent of $s$, where the angle and gradient are taken with respect to some fixed $2$-homogeneous metric.
\end{cor}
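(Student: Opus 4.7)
The plan is to observe that the bound $\angle(E,\nabla\varphi_s)\le c<\pi/2$ is equivalent to the inequality $\langle E,\nabla\varphi_s\rangle \ge \cos(c)\cdot |E|\,|\nabla\varphi_s|$ in the fixed $2$-homogeneous metric, and then to derive such an inequality for $\varphi_s$ by averaging the corresponding inequalities for $\varphi_0$ and $\varphi_1$. The key point is that in a \emph{fixed} metric the gradient is linear in the potential, so $\nabla\varphi_s=(1-s)\nabla\varphi_0+s\nabla\varphi_1$.

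First, I would apply Lemma~\ref{lem:comp_ang} to obtain a constant $c_0<\pi/2$ and a compact set $K$ outside of which $\angle(E,\nabla\varphi_0)\le c_0$, equivalently
\[
\langle E,\nabla\varphi_0\rangle \;\ge\; \cos(c_0)\,|E|\,|\nabla\varphi_0|.
\]
Next, by Lemma~\ref{lem:ZpropE}, $\nabla\varphi_1$ is positively proportional to $E$ wherever $\varphi_1$ is $2$-homogeneous (in particular outside some compact set), so there $\angle(E,\nabla\varphi_1)=0$ and
\[
\langle E,\nabla\varphi_1\rangle \;=\; |E|\,|\nabla\varphi_1|.
\]

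The final step is to combine these via linearity and the triangle inequality:
\[
\langle E,\nabla\varphi_s\rangle = (1-s)\langle E,\nabla\varphi_0\rangle+s\langle E,\nabla\varphi_1\rangle
\;\ge\; \cos(c_0)\,|E|\bigl((1-s)|\nabla\varphi_0|+s|\nabla\varphi_1|\bigr)
\;\ge\; \cos(c_0)\,|E|\,|\nabla\varphi_s|,
\]
uniformly in $s\in[0,1]$, outside a fixed compact set. Hence $\angle(E,\nabla\varphi_s)\le c_0<\pi/2$, as required.

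There is no real obstacle here; the only small technical points to verify are that $\nabla\varphi_s\neq 0$ near infinity (so that the angle is defined), which follows from the fact that $\langle E,\nabla\varphi_1\rangle=2\varphi_1>0$ near infinity combined with the non-negativity of $\langle E,\nabla\varphi_0\rangle$; and that the compact sets produced by Lemmas~\ref{lem:comp_ang} and~\ref{lem:ZpropE} can be taken jointly, which is immediate by taking their union.
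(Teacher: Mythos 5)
Your proof is correct and fills in exactly the combination the paper intends: its own proof of this corollary simply reads ``Follows immediately from Lemmas~\ref{lem:comp_ang} and~\ref{lem:ZpropE},'' and your argument---linearity of $\nabla$ in a fixed metric, the lower bound $\langle E,\nabla\varphi_0\rangle\ge\cos(c_0)|E||\nabla\varphi_0|$ from Lemma~\ref{lem:comp_ang}, the collinearity $\langle E,\nabla\varphi_1\rangle=|E||\nabla\varphi_1|$ from Lemma~\ref{lem:ZpropE}, and the triangle inequality $|\nabla\varphi_s|\le(1-s)|\nabla\varphi_0|+s|\nabla\varphi_1|$---is the natural way to make that ``immediately'' precise.
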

\begin{proof}
Follows immediately from Lemmas \ref{lem:comp_ang} and \ref{lem:ZpropE}.
\end{proof}

\section{Quasi-isomorphisms which respect gradings}\label{sec:qiso_grad}

The goal of this appendix is to show that the quasi-isomorphism $\cA_0 \simeq \cB_0$ can be made to respect the $\Z \oplus M^*$-gradings on both categories. 

\subsection{\texorpdfstring{$A_\infty$}{A infinity} terminology} \label{subsec:Ainf_term}
We follow the conventions concerning $A_\infty$ categories from \cite{Sheridan2015a}.  Let $\Bbbk$ be a field.  Recall that a pre-$A_\infty$ category $\calC$ consists of a collection of objects,  and a graded $\Bbbk$-vector space $hom^{*}(X_0,X_1)$ for each pair of objects $X_0, X_1.$ Set 
\begin{align*} \mathcal{C}(X_0,X_1,\cdots, X_s):= hom^{*}(X_0,X_1)[1]\otimes \cdots \otimes hom^{*}(X_{s-1},X_s)[1].  \end{align*} Define the Hochschild cochains (of length $s$) to be given by:
 \begin{align*} CC^*(\calC)^s:= \prod_{X_{0},  X_{1}, \cdots, X_{s}} Hom^*(\mathcal{C}(X_0,X_1,\cdots, X_s),  \cC(X_0,X_s))[-1] \end{align*} We then set \begin{align*} CC^*(\calC):= \prod_{s\geq 0}  CC^*(\calC)^s  \end{align*}  Given $a_1\otimes \cdots \otimes a_s \in \mathcal{C}(X_0,X_1,\cdots, X_s)$,  it is convenient to introduce the notation \begin{align} \epsilon_j(a_1\otimes \cdots \otimes a_s)= |a_1|+\cdots +|a_j|-j \in \mathbb{Z}/2\mathbb{Z} \end{align}  

Given two Hochschild cochains $\phi, \psi$,  their Gerstenhaber product is defined by
\begin{align} \label{eq:Hochdif} \phi \circ \psi(a_1, \cdots, a_s):= \sum_{j,k}(-1)^{(|\psi|-1) \cdot \epsilon_j} \phi^*(a_1,\cdots,  \psi^*(a_{j+1},  \cdots),a_{k+1}, \cdots), \end{align} and their Gerstenhaber bracket is given by
\begin{align*} \label{eq:Gerstenhaberb} [\phi, \psi]:= \phi \circ \psi-(-1)^{(|\phi|-1)(|\psi|-1)}\psi \circ \phi. \end{align*}
This bracket operation turns $CC^*(\calC)$ into a graded Lie algebra.   
\begin{defn} An $A_\infty$ category is a pre-$A_\infty$ category $\calC$ equipped with an $A_\infty$-structure,  that is an element $\mu \in CC^2(\calC)$ with $\mu \circ \mu=0$ and $\mu^0=0$.  \end{defn}  

Given an $A_\infty$ category $(\calC,\mu)$,  we define the Hochschild cohomology differential  to be given by \begin{align} d_\mu: CC^*(\calC) \to CC^{*+1}(\calC) \\ d_{\mu}(-):=[\mu, -].  \nonumber \end{align} The Hochschild cohomology is by definition $HH^*(\calC,\mu) := H^*((CC^*(\calC),[\mu,-])).$

\begin{rmk} \label{rem:Gdegree} Suppose that $\calC$ carries a grading by some free abelian group $G$ and that the $A_\infty$ structure $\mu$ respects this grading. We say that a Hochschild cochain $\phi$ has $G$-grading $g \in G$ if whenever $a_1,\cdots, a_s$ are homogeneous elements,  \begin{align} \operatorname{deg}_G(\phi(a_1,\cdots, a_s))= \operatorname{deg}_G(a_1)+\cdots + \operatorname{deg}_G(a_s)+g.  \end{align} It is elementary to see that Hochschild cochains of cohomological $G$-degree $g$ define a subcomplex \begin{align}CC^{*}_{g}(\calC,\mu) \subset CC^{*}(\calC,\mu).\end{align}  We let $HH^{*}_{g}(\calC,\mu):=H^*(CC^{*}_{g}(\calC,\mu))$. Furthermore, if $\mathcal{C}$ has finite dimensional hom-spaces, we have a decomposition: \begin{align} \label{eq:gradednongradedHH1} CC^*(\mathcal{C}, \mu) \cong \prod_{g \in G} CC^*_{g}(\mathcal{C},\mu). \end{align} \end{rmk}

\begin{defn} Let $\calC, \tilde{\calC}$ be two pre-$A_\infty$ categories. A pre-functor $F: \calC \to \tilde{\calC}$ is an assignment on the level of objects together with a collection of linear maps \begin{align}  \label{eq:multilinearmaps} F^s: \calC(X_0, \cdots,  X_s) \to \tilde{\calC}(F(X_0),  F(X_s)) \end{align} for every sequence of objects $X_0,\cdots, X_s \in \operatorname{Ob}(\mathcal{C})$ (with $s \geq 1$).  \end{defn}

 A strict formal diffeomorphism from $\calC$ to itself is a pre-functor $F:\calC \to \calC$ such that the assignment on objects is the identity and $F^1=\operatorname{id}$.\footnote{This is a slightly more restrictive notion of formal diffeomorphism than the one introduced in \cite[\S (1c)]{seidel2008fukaya}.}

\begin{defn} Let $(\calC, \mu)$,  $(\tilde{\calC}, \tilde{\mu})$ be two $A_\infty$-categories. An $A_\infty$ functor $F: (\calC,\mu) \to (\tilde{\calC},\tilde{\mu})$ is a pre-functor between the two pre-$A_\infty$ categories which satisfies: 
\begin{align} \label{eq:Ainffunctor}  \sum {\tilde{\mu}}^*(F^*(a_1, \cdots),  F^*(a_{j_{1}+1}\cdots),  F^*(a_{j_{k}+1}, \cdots, a_s))= \sum (-1)^{\epsilon_{j}} F^*(a_1, \cdots, \mu^*(a_{j+1},  \cdots),\cdots, a_s).  \end{align}  
\end{defn} 

 Let $(\calC, \mu)$,  $(\calC, \tilde{\mu})$ be two $A_\infty$-structures on a pre-$A_\infty$ category $\calC$.  An $A_\infty$ functor $F:(\calC, \mu) \to (\calC, \tilde{\mu})$ will be called a strict isomorphism if it arises from a strict formal diffeomorphism. Conversely, given a strict formal diffeomorphism $F$ and an $A_\infty$ structure $(\calC,\mu)$, there is a unique $A_\infty$ structure \begin{align} \label{eq: action} (\calC, F_*(\mu)) \end{align} such that $F$ defines a strict isomorphism $F: (\calC, \mu) \to (\calC,F_*(\mu))$ (\cite[\S (1c)]{seidel2008fukaya}).

\subsection{Obstructions in Hochschild cohomology} \label{subsec:Obstr_hh}
Let $\mathcal{D}$ be a $\Bbbk$-linear graded category.  To match the conventions from \cite{Sheridan2015a},  we view the compositions in $\mathcal{D}$ as being given by operations \begin{align} Hom_{\mathcal{D}}^{*}(X_1,X_2)\otimes Hom_\mathcal{D}^{*}(X_0,X_1) \to Hom_\mathcal{D}^{*}(X_0,X_2). \end{align}  

 This determines an $A_\infty$ category $(\mathcal{C},\mu_{\mathcal{C}_\mathcal{D}})$ with the same objects by setting: \begin{align*} hom_{\mathcal{C}}^*(X,Y):=Hom_{\mathcal{D}}^*(X,Y) \\ \mu_{\mathcal{C}_\mathcal{D}}^1(f)=0, \quad \mu_{\mathcal{C}_\mathcal{D}}^2(f,g):= (-1)^{|f|} g \cdot f , \quad \mu_{\mathcal{C}_\mathcal{D}}^k=0,  k \geq 3.   \end{align*} 

In this context, the $A_\infty$ operations respect the cohomological degree grading. As a consequence by \Cref{rem:Gdegree}, we can consider the Hochschild cochains of cohomological degree $t$, $CC^{*}_{t}(\calC,\mu_{\mathcal{C}_\mathcal{D}}).$ 
Going forward,  we assume that \begin{align} \label{eq:concentrd} Hom_{\mathcal{D}}^*(X,Y) \textrm{ is concentrated in degrees 0 and } d \textrm{ for any } X,Y \in Ob(\mathcal{D}) \end{align} and some $d>0.$  We then have that \begin{align} CC^{*}_{t}(\mathcal{C}, \mu_{\mathcal{C}_\mathcal{D}}) = 0 \text{ unless } t=kd.  \end{align}  

We say that $(\mathcal{C}, \mu)$ is an admissible $A_\infty$ structure on $\mathcal{C}$ if \begin{align} \mu^k= \mu^k_{\mathcal{C}_\mathcal{D}},  \quad k \leq 2.  \end{align} The following is \cite[Lemma 2.2]{Polishchuk} (stated for categories as opposed to algebras) and is easily verified by comparing \eqref{eq:Hochdif} with \eqref{eq:Ainffunctor}.  

\begin{lem} \label{lem:Hoch} Let $\mu$ and $\tilde{\mu}$ be two admissible $A_\infty$-structures on $\mathcal{C}$.  Assume that $\mu^k = \tilde{\mu}^{k}$ for $k< s$, where $s \geq 3$.  \begin{enumerate} \item Set $c(a_1,\cdots,a_s):=(\tilde{\mu}^s-\mu^s)(a_1,\cdots,a_s).$ Then $c \in CC^{2}_{2-s}(\mathcal{C},  \mu_{\mathcal{C}_\mathcal{D}})$ is a Hochschild cocycle.  \item Suppose $\tilde{\mu}= F_*(\mu)$ (recall \eqref{eq: action}) for some formal diffeomorphism $F$.  Then $c$ is a Hochschild coboundary, i.e.,  we have \begin{align} [c]=0 \in HH^2_{2-s}(\mathcal{C}, \mu_{\mathcal{C}_\mathcal{D}}).  \end{align}  \end{enumerate}  \end{lem}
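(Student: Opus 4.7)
For \emph{Part (1)}, the plan is direct. Set $\delta := \tilde{\mu} - \mu$, viewed as a Hochschild cochain; by hypothesis $\delta^k = 0$ for $k < s$ and $\delta^s = c$. Subtracting the $A_\infty$ relations $\mu \circ \mu = 0$ and $\tilde{\mu} \circ \tilde{\mu} = 0$ yields the identity $[\mu, \delta] + \delta \circ \delta = 0$ in $CC^3(\calC)$. Isolate its length-$(s+1)$ component. The quadratic $\delta \circ \delta$ term requires factors $\delta^{k_1}, \delta^{k_2}$ with $k_1 + k_2 = s + 2$ and both $k_i \ge s$, which is impossible for $s \ge 3$. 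Among contributions to $[\mu, \delta]$ with input counts $(i, k)$ satisfying $i + k = s + 2$ and $k \ge s$, the case $(i,k) = (1, s+1)$ vanishes by admissibility ($\mu^1 = 0$), leaving only $(i, k) = (2, s)$, which produces precisely $[\mu^2, c]$. Since $\mu^2 = \mu^2_{\calC_\mathcal{D}}$ by admissibility, this gives $[\mu_{\calC_\mathcal{D}}, c] = 0$, so $c$ is a cocycle. The $G$-grading $2 - s$ is read off from the length $s$ and cohomological degree $2$ of $c$.

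For \emph{Part (2)}, the plan is to analyze the $A_\infty$ functor equation for $F \colon (\calC, \mu) \to (\calC, \tilde{\mu})$ at input length $s$. Using $F^1 = \mathrm{id}$, $\tilde{\mu}^1 = 0$, and $\tilde{\mu}^k = \mu^k$ for $k < s$, the equation rearranges to an explicit formula for $c = \tilde{\mu}^s - \mu^s$ as a combinatorial sum of terms $\mu^r(F^{i_1}, \ldots, F^{i_r})$ with $2 \le r < s$ and $\pm F^{j+1+l}(a_1, \ldots, \mu^i(\ldots), \ldots, a_s)$ with $(j, i, l) \neq (0, s, 0)$, $j + 1 + l \ge 2$. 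In the simplest case $s = 3$ one reads off directly that $c = \pm[\mu^2, F^2]$, exhibiting $c$ as a Hochschild coboundary. More generally, the terms ``linear in $F^{s-1}$'' (i.e.\ arising when all other $F$-factors are $F^1 = \mathrm{id}$) assemble into $\pm[\mu^2, F^{s-1}]$, suggesting $F^{s-1}$ as the natural candidate primitive in the special case where $F^j = 0$ for all $2 \le j \le s - 2$.

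To handle the full equation for general $s$, the plan is to reduce to this special case by factoring the strict formal diffeomorphism as $F = F_{(\ge s - 1)} \circ G_{s-2} \circ \cdots \circ G_2$, where each $G_k$ is an elementary strict formal diffeomorphism with only $G_k^1 = \mathrm{id}$ and $G_k^k$ non-trivial, chosen iteratively to annihilate the degree-$k$ component of the running partial product, so that $F_{(\ge s-1)}$ has trivial components in degrees $2$ through $s - 2$. Applying each action $(G_k)_*$ successively to $\mu$ and tracking the cumulative change at order $s$ then expresses $c$ as a sum of Hochschild coboundaries of the form $[\mu^2, -]$. The principal obstacle is controlling these cumulative changes: a single elementary $G_k$ can in principle affect $\mu$ at all orders $\ge k$ via cross-terms in the functor equation that involve multiple copies of $G_k^k$ interacting with higher $\mu^r$ ($r \ge 3$); one must show that each such cross-term likewise lies in the image of $[\mu^2, -]$, which will follow by combining the $A_\infty$ relations for $\mu$ with the cocycle condition already established in Part (1).
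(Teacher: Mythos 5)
Your proof of Part (1) is correct and is essentially the standard argument (identical in spirit to Polishchuk's): isolating the length-$(s{+}1)$ component of $\tilde\mu\circ\tilde\mu-\mu\circ\mu=0$ and observing that the only surviving contribution is $[\mu^2,c]$. Note, though, that the paper does not actually prove this lemma; it simply cites \cite[Lemma~2.2]{Polishchuk} and declares the verification elementary, so you are attempting something more explicit than the paper itself does.

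For Part (2), your treatment of the base case $s=3$ and of the special case where $F^j=0$ for $2\le j\le s-2$ is correct: under that hypothesis the functor equation at length $s$ reduces to $c=\pm[\mu^2,F^{s-1}]$, exhibiting $c$ as a coboundary. However, your reduction of the general case to this special case via the factorization $F = F_{(\ge s-1)}\circ G_{s-2}\circ\cdots\circ G_2$ has a genuine gap. You acknowledge that the ``cumulative changes'' from each elementary $(G_k)_*$ produce cross-terms, and you assert these are coboundaries ``by combining the $A_\infty$ relations for $\mu$ with the cocycle condition already established in Part~(1).'' This is not substantiated. Concretely, already at $s=4$ with $F^2\neq 0$ (and $[\mu^2,F^2]=0$ forced by $\tilde\mu^3=\mu^3$), the functor equation gives, up to signs,
\begin{equation}
c \;=\; -[\mu^2,F^3]\;-\;[\mu^3,F^2]\;-\;\mu^2(F^2,F^2).
\end{equation}
The first summand is manifestly $\partial F^3$; but $[\mu^3,F^2]$ is the Gerstenhaber bracket of two cocycles (so a cocycle, not obviously a coboundary, and its cohomology class is $[[\mu^3],[F^2]]$ which need not vanish a priori), and $\mu^2(F^2,F^2)=F^2\smile F^2$ is the cup square of a degree-one class, again not obviously exact. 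Neither the cocycle condition on $c$ (which you already know) nor the $A_\infty$ relations $[\mu^2,\mu^3]=0$, $\mu^3\circ\mu^3=0$ give you an evident primitive for these two terms. Your outline does not identify the mechanism by which these cancel or become exact, and that step is the whole content of the general case.

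One further remark that may be useful: in the only place the paper applies Lemma~\ref{lem:Hoch}(2) directly, namely the base step $k=1$ of Proposition~\ref{prop:obstructions} (with $s=d+2$), the degree constraint \eqref{eq:concentrd} forces $\mu^k=0$ for $3\le k\le d+1$ and $F^j=0$ for $2\le j\le d$, so all cross-terms vanish and the special case you did prove applies verbatim. At subsequent steps $k>1$, the paper invokes the vanishing of $HH^2_{0\oplus -kd}$, which already makes $c$ a coboundary once Part~(1) guarantees it is a cocycle. So while your Part~(2) argument as written does not establish the Lemma in the generality stated, the special case you have proved is the one that is actually load-bearing downstream; it would be worth checking whether Polishchuk's Lemma~2.2(ii) is in fact stated with the hypothesis that the gauge transformation agree with the identity through length $s-2$, in which case the paper's phrasing is slightly more general than its source and your argument covers precisely what is needed.
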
 

\begin{prop} \label{prop:obstructions} 
Suppose $\mathcal{D}$ is a proper, graded $\Bbbk$-linear category satisfying \eqref{eq:concentrd}  and  \begin{align} HH^{2}_{-kd}(\mathcal{C}, \mu_{\mathcal{C}_\mathcal{D}})=0,  \quad k>1. \end{align} Suppose further that  $\mathcal{C}$ carries a $G$-grading and that $\mu, \tilde{\mu}$ are two admissible $A_\infty$ structures which preserve this grading. If there is a strict isomorphism \begin{align} F:(\mathcal{C},\mu) \xrightarrow{\sim} (\mathcal{C},\tilde{\mu}),  \end{align} then there exists a strict isomorphism \begin{align} \label{eq:strictisomorphism} \tilde{F} :(\mathcal{C},\mu) \xrightarrow{\sim} (\mathcal{C},\tilde{\mu}).\end{align} which preserves the $G$-grading.   \end{prop}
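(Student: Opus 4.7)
The plan is to construct $\tilde F$ order by order. Set $\tilde F^1 = \operatorname{id}$, which is trivially $G$-graded of degree $0$. At $s=2$, admissibility gives $\mu^2 = \tilde\mu^2 = \mu^2_{\mathcal{C}_\mathcal{D}}$, so no correction is needed. For the inductive step, suppose $G$-graded (degree $0$) components $\tilde F^1, \ldots, \tilde F^{s-1}$ have been constructed, and let $\mu^{(s)}$ denote the pushforward of $\mu$ through the partial pre-functor $\tilde F^{\leq s-1}$; its terms up to order $s$ are well-defined from the data already chosen. Because $\tilde F^{\leq s-1}$ is $G$-graded of degree $0$ and $\mu$ is $G$-graded, so is $\mu^{(s)}$, and in particular $c^{(s)} := \tilde\mu^s - \mu^{(s),s} \in CC^2_{2-s}(\mathcal{C},\mu_{\mathcal{C}_\mathcal{D}})$ is $G$-graded of degree $0$. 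By \Cref{lem:Hoch}(1), $c^{(s)}$ is a Hochschild cocycle.

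I then show $c^{(s)}$ is a coboundary. Since hom-spaces are concentrated in degrees $0$ and $d$, the complex $CC^2_{2-s}$ is zero unless $s = kd+2$ for some $k \geq 1$. For $k > 1$ this follows directly from the vanishing hypothesis $HH^2_{-kd}(\mathcal{C},\mu_{\mathcal{C}_\mathcal{D}}) = 0$. For the remaining case $k=1$, i.e.\ $s = d+2$, the hypothesis of the proposition is used: composing $F$ with the inverse of the partial pre-functor $\tilde F^{\leq s-1}$ yields a strict isomorphism $(\mathcal{C}, \mu^{(s)}) \xrightarrow{\sim} (\mathcal{C}, \tilde\mu)$, so \Cref{lem:Hoch}(2) applies and $c^{(d+2)}$ is a coboundary.

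Write $c^{(s)} = d_{\mu_{\mathcal{C}_\mathcal{D}}}(\phi)$ for some $\phi \in CC^1_{1-s}$. The $G$-grading decomposes every Hochschild cochain pointwise as a (possibly infinite) direct sum $\phi = \sum_{g \in G} \phi_g$ of homogeneous pieces, and because the $A_\infty$ structure $\mu_{\mathcal{C}_\mathcal{D}}$ preserves the $G$-grading, the Hochschild differential does too. Comparing $G$-homogeneous components of the equation $c^{(s)} = d_{\mu_{\mathcal{C}_\mathcal{D}}}(\phi)$, and using that $c^{(s)}$ is supported in $G$-degree $0$, we obtain $c^{(s)} = d_{\mu_{\mathcal{C}_\mathcal{D}}}(\phi_0)$ with $\phi_0$ of $G$-degree $0$. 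Setting $\tilde F^s := \pm \phi_0$ (with the sign dictated by the pushforward formula) produces a $G$-graded degree-$0$ correction for which $(\tilde F^{\leq s})_*(\mu)$ agrees with $\tilde\mu$ up to order $s$; this closes the induction. Taking the limit gives the desired $G$-graded strict isomorphism $\tilde F$.

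The main obstacle is the case $s = d+2$, where the vanishing hypothesis explicitly excludes the relevant Hochschild group and where the existence of the (ungraded) isomorphism $F$ is indispensable: it is precisely at this order that the obstruction need not vanish topologically, and one must exploit \Cref{lem:Hoch}(2) and the formal inverse of a partial pre-functor to guarantee exactness. The remainder is formal obstruction-theoretic book-keeping, plus the mild observation that because $\mu_{\mathcal{C}_\mathcal{D}}$ and our partial corrections are $G$-graded, the $G$-decomposition of a primitive produces a $G$-graded primitive with no further cost.
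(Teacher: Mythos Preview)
Your approach is essentially the same as the paper's: an order-by-order obstruction-theory argument, using Lemma~\ref{lem:Hoch}(2) at the first nontrivial step $s=d+2$ (where $F$ is needed) and the Hochschild vanishing hypothesis for $k>1$, then projecting the primitive onto its $G$-degree-$0$ component (using properness) to obtain a graded correction.

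There is, however, a persistent off-by-one error in your indexing. The Hochschild differential $d_{\mu_{\mathcal{C}_\mathcal{D}}}$ (with only $\mu^2$) increases length by $1$ and preserves cohomological degree, so a primitive of $c^{(s)} \in CC^2_{2-s}$ lies in $CC^1_{2-s}$ (not $CC^1_{1-s}$) and has length $s-1$, not $s$. Consequently the correction you construct should be $\tilde F^{s-1}$, not $\tilde F^s$. Relatedly, since the order-$s$ term of the pushforward $(\tilde F^{\leq s-1})_*\mu$ depends only on $\tilde F^{\leq s-1}$ (as you correctly note), adding a new component $\tilde F^s$ cannot alter it; your inductive hypothesis and step are misaligned by one. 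The paper's proof makes this clear: at the step producing agreement to order $2+kd$, one constructs $\tilde F^{1+kd}$. Once you shift the indexing, your argument matches the paper's exactly, including the crucial observation that at $s=d+2$ the partial $\tilde F^{\leq d+1}$ is just the identity (all intermediate $\tilde F^j$ vanish for degree reasons), so your ``compose $F$ with the inverse'' reduces to the paper's direct application of Lemma~\ref{lem:Hoch}(2) to $F$ itself.
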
 

\begin{proof} To begin, notice that because $\mathcal{C}$ has finite dimensional hom-spaces, by \eqref{eq:gradednongradedHH1} we have a decomposition: \begin{align} \label{eq:gradednongradedHH2} CC^*_{-kd}(\mathcal{C},\tilde{\mu}) \cong \prod_{g \in G} CC^*_{g \oplus -kd}(\mathcal{C},\tilde{\mu})\end{align}
where $CC^*_{g \oplus -kd}(\mathcal{C},\mu_{\mathcal{C}_\mathcal{D}})$ refers to Hochschild cochains with $G$-grading $g \in G$ and cohomological grading $-kd.$ It follows in particular that $HH^{2}_{0\oplus-kd}(\mathcal{C},\mu_{\mathcal{C}_\mathcal{D}})=0$  for $k>1$.

We follow the inductive argument on the length $s$ in the proof of  \cite[Theorem 1.1 (i)]{Polishchuk}  (page 11 of \emph{loc.  cit.}).  The only difference is that we carry out the argument above in $CC^*_{0 \oplus -kd}(\mathcal{C},\mu_{\mathcal{C}_\mathcal{D}})$ to construct $\tilde{F}$ to preserve gradings. Note that we have $\tilde{F}^s = 0$ unless $s=1+kd$ for some $k$.  Let us begin with $k=1$.  By Lemma \ref{lem:Hoch} (2),  we have $\partial_{\mu_{\mathcal{C}_\mathcal{D}}} F^{1+d} = \tilde{\mu}^{d+2} - \mu^{d+2}$.  Because $\mu$, $\tilde{\mu}$, and $\partial_{\mu_{\mathcal{C}_\mathcal{D}}}$ respect the $G$-grading, we have \begin{align} \partial_{\mu_{\mathcal{C}_\mathcal{D}}} \tilde{F}^{1+d} = \tilde{\mu}^{d+2} - \mu^{d+2} \end{align} for some $\tilde{F}^{1+d} \in CC^*_{(0 \oplus -kd)}(\mathcal{C},\mu_{\mathcal{C}_\mathcal{D}}).$ Putting it together with $\tilde{F}^1 = F^1=\id$, $\tilde{F}^{1+d}$ defines a graded formal diffeomorphism $ \tilde F$ such that $(\tilde F _*\mu)^s = \tilde \mu^s$ for $s \le d+2$.

Now we take $k>1$ and proceed by induction.  For a given $k$,  we now are in the situation of Lemma \ref{lem:Hoch} where there is an equivariant formal diffeomorphism which makes the $A_\infty$ structures agree up to operations of length $2+(k-1)d$.  As $HH^{2}_{(0 \oplus -kd)}(\mathcal{C},\mu_{\mathcal{C}_\mathcal{D}}) = 0$,  we can apply Lemma \ref{lem:Hoch} to find $\tilde{F}^{1+kd} \in CC^*_{(0 \oplus -kd)}(\mathcal{C},\mu_{\mathcal{C}_\mathcal{D}})$ which makes the $A_\infty$ structures agree to order $s=2+kd$.  Composing this sequence of formal diffeomorphisms produces the desired $\tilde{F}.$ Note that the infinite product of formal diffeomorphisms involved in this construction converges because the length increases at each stage. \end{proof} 

\subsection{Obstructions for \texorpdfstring{$H^*(\mathcal{B}_0)$}{H*B0}} 

We now compute the obstructions from Section \ref{subsec:Obstr_hh} in the case where $\mathcal{D}= H^*(\mathcal{B}_0)$. 
Note that this category satisfies \eqref{eq:concentrd} with $d=n$ (cf. \Cref{lem:structure_A0}). 
Our main computation is as follows:

\begin{prop} \label{lem:HHcomp} We have
\begin{align} HH^{j}_{-kn}(\mathcal{C}, \mu_{\mathcal{C}_\mathcal{D}})=0 \end{align}  
for all $j<2k$.\end{prop}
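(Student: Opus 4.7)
The vanishing statement is the natural analogue, in our categorical setting, of results of Polishchuk on Hochschild cohomology of graded algebras concentrated in degrees $0$ and $n$; accordingly the proof would follow the strategy of \cite{Polishchuk}. The first step is to understand the explicit form of the cochain complex in internal degree $-kn$. A cochain $\phi \in CC^j_{-kn}(\mathcal{C}, \mu_{\mathcal{C}_\mathcal{D}})$ of length $s$ represents a multilinear operation $B^{\otimes s} \to B$ of cohomological degree $-kn$, where $B = H^*(\mathcal{B}_0)$. Since the cochain-degree equals internal-degree plus length, one has $s = j + kn$. Using the concentration of hom-spaces of $\mathcal{D}$ in degrees $0$ and $n$ (Lemma \ref{lem:structure_A0}), each $\phi$ decomposes as $\phi = \phi_k + \phi_{k+1}$, where $\phi_\ell$ is supported on input tuples with exactly $\ell$ factors lying in $B^n$ and maps into $B^{(\ell - k)n}$; in particular non-trivial cochains require $s \geq k$.

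Next, I would use the $M^*$-graded weak proper $n$-Calabi-Yau pairing supplied by Lemma \ref{lem:compat_CY} to identify the two summands of this decomposition: the perfect pairing $\mathrm{Hom}^0 \otimes \mathrm{Hom}^n \to \Bbbk$ allows one to interchange the roles of ``top'' and ``bottom'' inputs, effectively producing a self-duality on the bigraded cochain complex that exchanges $\phi_k$ and $\phi_{k+1}$ with a shift. Combining this with a Koszul-style bimodule resolution of $B$ yields a filtered complex (equivalently, a spectral sequence) whose $E_1$-page is computable in terms of Ext-groups over the subcategory $B^0 \subset B$ generated by the degree-$0$ hom-spaces, plus their Serre-dual shifts.

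The crucial step is then the combinatorial/Hilbert-series count on this $E_1$-page: in internal degree $-kn$, a cochain has $s - \ell$ slots devoted to $B^0$-inputs and $\ell \in \{k,k+1\}$ slots devoted to $B^n$-inputs; in the range $j < 2k$ (equivalently $s < k(n+2)$), the constraint $\ell \geq k$ together with the bound $s < k(n+2)$ leaves too few $B^0$-slots to support nonzero cohomology, given the grading constraints on the resolution (one uses $n \geq 3$ here, ensuring that the shift structure of the relevant Ext-groups prevents overlap between distinct internal degrees).

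The main obstacle will be in carefully tracking signs, bimodule actions, and the bigrading through the Koszul resolution and the Serre-duality identification, and in verifying that the spectral sequence degenerates as claimed. Once the vanishing $HH^j_{-kn}=0$ is established for $j < 2k$, it is exactly the input required by Proposition \ref{prop:obstructions} (applied with $d = n$) to upgrade the ungraded quasi-isomorphism $\mathcal{A}_0 \simeq \mathcal{B}_0$ to one respecting the $\mathbb{Z} \oplus M^*$-grading, completing the proof of \Cref{lem:F0respectsM}.
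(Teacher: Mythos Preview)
Your proposal takes a substantially different route from the paper, and in its current form contains a genuine gap.

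The paper does not attempt to reprove Polishchuk's vanishing from scratch in the categorical setting. Instead, it \emph{reduces} the categorical computation to Polishchuk's Theorem 3.3 for the extended homogeneous coordinate ring $A_L = \bigoplus_{p,q} \mathrm{Hom}^q(\mathcal{O},\mathcal{O}(p))$, treating that result as a black box. The reduction proceeds as follows: one decomposes the reduced Hochschild cochains $C^*_{\mathcal{C}} = C^*_{\mathcal{C}}(0) \oplus C^*_{\mathcal{C}}(n)$ by the \emph{output} degree (not by the number of degree-$n$ inputs, as you do). At the cochain level there is an identification $C^*_{\mathcal{C}} \cong \prod_{i\in\mathbb{Z}} C^*_A$ indexed by the source object $\mathcal{O}(i)$, but this does not respect the differential. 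The paper then introduces filtrations --- on $C^*_{\mathcal{C}}(0)$ by the value of $s$ in the output $\hom(\mathcal{O}(i),\mathcal{O}(i+s))$, and on $C^*_{\mathcal{C}}(n)$ by the number $r_0+r_1$ of consecutive degree-zero inputs at the two ends --- chosen precisely so that on the associated graded the identification with $\prod_i C^*_A(\bullet)^{gr}$ becomes an isomorphism of complexes. Polishchuk's acyclicity for $C^*_A(\bullet)^{gr}$, together with bounded-above completeness of the filtrations, finishes the argument.

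Your proposal instead invokes the Calabi--Yau pairing to set up a self-duality exchanging $\phi_k$ and $\phi_{k+1}$, a Koszul-style bimodule resolution, and a Hilbert-series count on an $E_1$-page. None of these steps is carried out; in particular, the claimed self-duality does not obviously interact well with the Hochschild differential, and the ``too few $B^0$-slots'' argument is not an actual cohomology computation. More importantly, you never confront the structural issue that makes the categorical case nontrivial: the extra $\prod_{i\in\mathbb{Z}}$ indexing and the fact that the Hochschild differential mixes the factors. The paper's filtrations are the device that tames this, and that device (or a substitute for it) is missing from your sketch.
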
 

In order to prove \Cref{lem:HHcomp}, we introduce the `extended homogeneous coordinate ring': \begin{align} A_L:=\bigoplus_{p \oplus q \in \mathbb{Z}\oplus \mathbb{Z}} \operatorname{Hom}^q(\mathcal{O},\mathcal{O}(p)),  \end{align} with its natural ring structure.  Because $A_L$ is graded by $p \oplus q \in \mathbb{Z}\oplus \mathbb{Z}$, we can consider the $\Z \oplus \Z$-graded Hochschild cochains, $CC^{*}_{p \oplus q}(A_L).$ 
Our proof of \Cref{lem:HHcomp} is modelled on the following result of Polishchuk:

\begin{thm}[Theorem 3.3 of \cite{Polishchuk}] \label{thm:Polishchuk}
We have
    \begin{align}
        HH^{j}_{0\oplus -kn}(A_L)=0 \end{align} 
        for all $j<2k$.
\end{thm}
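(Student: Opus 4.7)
My plan is to follow Polishchuk's approach by exploiting the Calabi--Yau structure underlying $A_L$ together with Serre duality to organize the Hochschild complex into pieces we can control. Concretely, the line bundles $\mathcal{O}(p)$ live on a Calabi--Yau variety of dimension $n$ (namely $\partial Y^*$ together with its line bundle $\mathcal{O}(1)$), so $\operatorname{Hom}^q(\mathcal{O},\mathcal{O}(p))$ is concentrated in degrees $q = 0,n$. Decompose
\begin{equation*}
    A_L \;=\; A_L^{0} \,\oplus\, A_L^{n}, \qquad A_L^{0} = \bigoplus_{p} H^{0}(\mathcal{O}(p)), \quad A_L^{n} = \bigoplus_{p} H^{n}(\mathcal{O}(p)).
\end{equation*}
By Serre duality on the Calabi--Yau, $A_L^{n}$ is identified (up to a reindexing shift) with the graded $\Bbbk$-dual of $A_L^0$; in particular, the whole bigraded algebra $A_L$ admits a Koszul-like self-duality which is the essential ingredient.

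Next, I would compute $HH^{*}_{\,0\oplus *}(A_L)$ via the bar complex $C^{\bullet}(A_L) = \prod_s \operatorname{Hom}(A_L^{\otimes s},A_L)$, making use of two compatible filtrations: (i) by length $s$, and (ii) by the number $\ell$ of tensor factors of the input that lie in $A_L^{n}$. A cochain in $CC^{j}_{\,0\oplus -kn}$ must raise total cohomological degree by $-kn$, and since each input and the output lives in degree $0$ or $n$, a simple degree count shows that the number of $A_L^{n}$-inputs $\ell$ satisfies $\ell \geq k$ (one reads off $\ell - [\text{output is in }A_L^{n}] = k$). This is the combinatorial heart of the statement: extracting cohomological grading $-kn$ forces at least $k$ ``Serre-dual'' generators among the inputs.

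With this observation in hand, I would filter $CC^{\bullet}_{\,0\oplus -kn}$ by the number $\ell$ of $A_L^{n}$-inputs and analyze the associated spectral sequence. Using Serre duality to dualize each $A_L^{n}$-input turns the $E_1$-page into a complex built from $A_L^{0}$-inputs and $k$ marked positions corresponding to (duals of) $A_L^{n}$-generators. Each such marked position contributes at least one unit of Hochschild degree from the length of the cochain \emph{and} an additional unit from the internal bidegree bookkeeping in the shifted bar complex (since the shifts $[-1]$ in $CC^*$ combine with the $n$-degree shifts to increase $j$). Tracing this carefully yields the bound $j \geq 2k$: each Serre-dual factor forces $j$ up by at least $2$.

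The main obstacle will be verifying that this spectral sequence degenerates in the required range, or equivalently, that potential differentials cannot create nonzero classes with $j<2k$. I expect this to follow from a vanishing statement: the $E_1$-entries with $j < 2k$ should themselves vanish, reducing the problem to the combinatorial degree count sketched above, rather than requiring control of differentials. If direct degeneration proves delicate, an alternative would be to use the Koszul-dual model of $A_L$ (available because $A_L^{0}$ is known to be Koszul in the cases of interest, cf.\ the smooth toric hypothesis on $\Sigma^*$) to replace the bar complex with a Koszul complex, reducing $HH^{*}_{\,0\oplus -kn}$ to $\operatorname{Ext}$-groups between explicit modules where the vanishing range becomes a direct computation.
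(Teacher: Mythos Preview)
The paper does not itself prove this theorem; it is quoted from \cite{Polishchuk} and used as input for the categorical analogue. That said, the paper rehearses Polishchuk's method while proving Proposition~\ref{lem:HHcomp}, so your outline can be compared against that sketch.

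Your degree-count argument has a genuine error. You correctly observe that a cochain with $q$-shift $-kn$ must have $\ell \in \{k,k+1\}$ inputs in $A_L^{n}$, determined by whether the output lies in degree $0$ or $n$. But the claim that each such input contributes $+2$ to $j$ is wrong: in the paper's convention $j = s + t$ with $s$ the length and $t=-kn$ the cohomological shift, so a degree-$n$ input contributes $+1$ to $s$ and $-n$ to $t$, hence $1-n \le -1$ to $j$ when $n \ge 2$. The raw combinatorics give only $j \ge k(1-n)$, which is vacuous. The vanishing for $j<2k$ is not a formal count. Polishchuk's actual argument, as the paper recounts, is structured differently: he first splits by the degree of the \emph{output} into $C^*(0)\oplus C^*(n)$ --- and since $\ell$ is determined by the output degree once $t=-kn$ is fixed, your ``filtration by $\ell$'' collapses to precisely this two-piece splitting and carries no finer information --- and then imposes further filtrations: on $C^*(0)$ by the $p$-degree of the output, and on $C^*(n)$ by $r_0+r_1$, the number of consecutive degree-zero inputs at the two ends of the tensor. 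Acyclicity of the associated graded pieces in the range $j<2k$ is then proved using the structure of sections and higher cohomology of powers of the ample line bundle together with Serre duality. Your outline does not engage with either of these filtrations or with the acyclicity step, which is where the content lies; the Koszulity fallback you propose is neither asserted nor used in Polishchuk's proof.
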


\begin{proof}[Proof of \Cref{lem:HHcomp}] We reduce the computation to that of \Cref{thm:Polishchuk}. 
As both $A_L$ and $(\mathcal{C}, \mu_{\mathcal{C}_\mathcal{D}})$ are strictly unital,  it suffices to work with reduced Hochschild cochains, which we denote by $\bar{CC}$.  We let \begin{align} C^*_A:=\bar{CC}^*_{0 \oplus -kn}(A_L) \\ C^*_{\mathcal{C}}:=\bar{CC}^*_{-kn}(\mathcal{C}, \mu_{\mathcal{C}_\mathcal{D}}).  \nonumber \end{align} 
For a given $i \in \mathbb{Z}$,  the Hochschild cochains $\psi \in C^*_{\mathcal{C}}$ with output in  $hom^*(\mathcal{O}(i),\mathcal{O}(i'))$ are identified with $C^*_A.$ This gives rise to an identification of cochains \begin{align} \label{eq:decomposition} C^*_{\mathcal{C}} \cong \prod_{i \in \mathbb{Z}} C^*_A. \end{align} 
For the moment, this is only a decomposition of cochains and not of complexes because the Hochschild differential does not respect this product decomposition. 

Next,  as in the proof of \cite[Theorem 3.3]{Polishchuk},  we decompose \begin{align} C^*_{A} = C^*_{A}(0) \oplus C^*_{A}(n) \\ C^*_{\mathcal{C}} = C^*_{\mathcal{C}}(0) \oplus C^*_{\mathcal{C}}(n) \nonumber \end{align} where $C^*_{A}(0)$,  $C^*_{\mathcal{C}}(0)$ are the vector subspaces of maps whose output lies in degree $0$ and $C^*_{A}(n),  C^*_{\mathcal{C}}(n)$ those whose output lies in degree $n.$  $C^*_{\mathcal{C}}(n)$ is a subcomplex and we give $C^*_{\mathcal{C}}(0)$ the structure of a complex via the exact sequence \begin{align} 0 \to C^*_{\mathcal{C}}(n) \to C^*_{\mathcal{C}} \to C^*_{\mathcal{C}}(0) \to 0. \end{align} 
The same considerations also apply to $C^*_{A}(0)$ and $C^*_{A}(n).$ It suffices to prove that \begin{align} \label{eq:vanishing} H^j(C^*_{\mathcal{C}}(0))= H^j(C^*_{\mathcal{C}}(n))=0 \end{align} for $j<2k.$ Let us first consider the case of $C^*_{\mathcal{C}}(0).$ For any $r \geq 0$,   let $C^*_{\mathcal{C}}(0)_r \subset C^*_{\mathcal{C}}(0)$ denote the subspace of maps whose output lies in $hom(\mathcal{O}(i),\mathcal{O}(i+s))$ for some $i \in \mathbb{Z}$ and $s \ge r$.   Let $C^*_{A}(0)_r \subset C^*_{A}(0)$ denote the subspace of maps whose output lies in $Hom(\mathcal{O},\mathcal{O}(s))$ for $s\geq r$. These define filtrations of complexes and we let  $C^*_{\mathcal{C}}(0)^{gr}$ and $C^*_{A}(0)^{gr}$ denote the associated graded complexes.  It is easy to see that the piece of the Hochschild differential which fails to respect the product decomposition \eqref{eq:decomposition} strictly increases the $r$ filtration.  As a consequence,  we have a product decomposition of complexes: \begin{align} \label{eq:decomposition2} C^*_{\mathcal{C}}(0)^{gr} \cong \prod_{i \in \mathbb{Z}} C^*_A(0)^{gr}. \end{align}   Polishchuk proves that $C^*_A(0)^{gr}$ is acyclic in the relevant ranges and so the same applies to $C^*_{\mathcal{C}}(0)^{gr}.$  The vanishing  \eqref{eq:vanishing} then follows from a spectral sequence argument, using the fact that the filtration is manifestly bounded above and complete.

For $C^*_{\mathcal{C}}(n)$,  we similarly consider the natural extension of the filtration of $C^p_A(n)$ used in \cite[Proof of Theorem 3.3]{Polishchuk}.  Namely,  given a simple tensor $a_1 \otimes \cdots \otimes a_s \in hom^{q_1}(X_0,X_1)[1]\otimes \cdots \otimes hom^{q_s}(X_{s-1},X_s)[1],$ we let $r_0$ be the number of consecutive degree-zero components $a_1,\hdots, a_{r_{0}}$ at the beginning of the tensor and $r_1$ be the number of consecutive degree-zero components $a_{s-r_1+1},\hdots,a_s$ at the end of the tensor. Let $C^*_{\mathcal{C}}(n)_r$ denote the subcomplex of Hochschild cochains which vanish on inputs with $r_0+r_1 \le r$. As above, it is not difficult to see that the associated graded complex $C^*_{\mathcal{C}}(n)^{gr}$ becomes a direct product of $\mathbb{Z}$-copies of $C^*_A(n)^{gr}$, which Polishchuk proves to be acyclic. Again the filtration is bounded above and complete, so the vanishing \eqref{eq:vanishing} follows from a spectral sequence argument.  \end{proof} 

\begin{cor} \label{cor:vanish} We have \begin{align} HH^{2}_{-kn}(\mathcal{C}, \mu_{\mathcal{C}_\mathcal{D}})=0\end{align} 
for all $k>1$. 
\end{cor}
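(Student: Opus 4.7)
The plan is to deduce this directly from Proposition \ref{lem:HHcomp}. That proposition asserts $HH^{j}_{-kn}(\mathcal{C}, \mu_{\mathcal{C}_\mathcal{D}})=0$ for all $j<2k$. Specializing to the cohomological degree $j=2$, the vanishing holds whenever $2<2k$, i.e., whenever $k>1$. Setting $j=2$ in the statement of Proposition \ref{lem:HHcomp} therefore yields the desired conclusion. No further input is needed; the corollary is essentially just a restriction of the range appearing in \ref{lem:HHcomp} to the particular cohomological degree relevant to the obstruction theory of Section \ref{subsec:Obstr_hh}. The main (and only) work has already been done in the proof of \ref{lem:HHcomp}, which reduces the computation via filtration arguments to Polishchuk's vanishing result (Theorem \ref{thm:Polishchuk}) for the extended homogeneous coordinate ring $A_L$.
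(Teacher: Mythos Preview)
Your proposal is correct and matches the paper's proof exactly: the paper simply says ``Apply Proposition \ref{lem:HHcomp} with $j=2$.'' Your additional commentary about the role of this corollary in the obstruction theory and the reduction to Polishchuk's result is accurate context but not part of the proof itself.
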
 
\begin{proof} Apply Proposition \ref{lem:HHcomp} with $j=2.$  \end{proof}

We now have everything we need to prove our main cochain-level result:

\begin{lem} \label{lem:Mgradedqi}
    There exists a $\Z \oplus M^*$-graded quasi-isomorphism $\cA_0 \simeq \cB_0$ compatible with the isomorphism $H^*(\cA_0) \cong H^*(\cB_0)$ from \Cref{lem:grading_on_coh_level}.
\end{lem}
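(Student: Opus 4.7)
The plan is to apply Proposition \ref{prop:obstructions}, which upgrades a strict isomorphism between two admissible $A_\infty$ structures on a common pre-$A_\infty$ category to a graded strict isomorphism, provided the relevant Hochschild obstructions vanish.

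First, I will pass to minimal models $\cA_0^{\min}$ and $\cB_0^{\min}$, chosen to respect the ambient $\Z \oplus M^*$-gradings (the homotopy transfer of Kadeishvili requires only graded splittings of chain complexes, which exist since we work over a field). Using the $M^*$-graded cohomological isomorphism $H^*(\cA_0) \cong H^*(\cB_0)$ from Lemma \ref{lem:grading_on_coh_level}, I will identify the underlying $\Z \oplus M^*$-graded pre-$A_\infty$ categories with a single category $\cC$ whose morphism spaces form $\cD = H^*(\cB_0)$. This produces two admissible $\Z \oplus M^*$-graded $A_\infty$ structures $\mu_A$ and $\mu_B$ on $\cC$. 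By construction of the identification, the ungraded quasi-isomorphism $F_0 \colon \cB_0 \to \cA_0$ descends, after passing to minimal models, to an $A_\infty$ quasi-isomorphism $(\cC, \mu_B) \to (\cC, \mu_A)$ whose length-$1$ component is $\operatorname{id}_\cC$, i.e., a strict isomorphism in the sense of \S\ref{subsec:Ainf_term}.

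Next, I will verify the hypotheses of Proposition \ref{prop:obstructions} with $G = M^*$: the category $\cD$ is proper, since its objects are coherent sheaves on the projective variety $\partial Y^*$; it satisfies the concentration condition \eqref{eq:concentrd} with $d = n$ by Lemma \ref{lem:structure_A0}\eqref{it:AO_1}; and the vanishing $HH^2_{-kn}(\cC, \mu_{\cC_\cD}) = 0$ for $k > 1$ is exactly Corollary \ref{cor:vanish}. Proposition \ref{prop:obstructions} will then produce a $\Z \oplus M^*$-graded strict isomorphism $(\cC, \mu_B) \to (\cC, \mu_A)$, which unfolds into a $\Z \oplus M^*$-graded quasi-equivalence $\cB_0 \simeq \cA_0$. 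Because its length-$1$ component is the identity on $\cC$, the induced cohomology isomorphism coincides with the one from Lemma \ref{lem:grading_on_coh_level}, as required.

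The only genuine obstacle in this program is the Hochschild vanishing $HH^2_{-kn}(\cC, \mu_{\cC_\cD}) = 0$, which has already been arranged via the adaptation of Polishchuk's computation carried out in Lemma \ref{lem:HHcomp} and Corollary \ref{cor:vanish}. The remainder is essentially bookkeeping; the main point of care is to set up the identification of underlying pre-$A_\infty$ categories so that the given ungraded quasi-isomorphism has length-$1$ component equal to $\operatorname{id}_\cC$ and hence becomes strict, after which Proposition \ref{prop:obstructions} performs the real work of promoting it to an $M^*$-graded one.
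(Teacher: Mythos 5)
Your proposal matches the paper's proof essentially verbatim: take $\Z \oplus M^*$-graded minimal models, transport both $A_\infty$ structures to a single pre-$A_\infty$ category built from $H^*(\cB_0)$ so that the given ungraded quasi-isomorphism becomes a strict isomorphism, and then invoke Proposition \ref{prop:obstructions} with the vanishing $HH^{2}_{-kn}(\cC,\mu_{\cC_\cD})=0$ from Corollary \ref{cor:vanish}. The argument is correct and takes the same route as the paper.
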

\begin{proof}
  We begin by constructing minimal models $\cA_{0,min} \simeq \cA_0$ and $\cB_{0,min}\simeq \cB_0$ using \cite[Proposition 1.12]{seidel2008fukaya}. The minimal model depends on a choice of contracting homotopy for each morphism space. 
We choose these contracting homotopies to respect $\mathbb{Z} \oplus M^*$-gradings. The resulting minimal models $\cA_{0,min}$, $\cB_{0,min}$ will then be $\mathbb{Z}\oplus M^*$-graded $A_\infty$ categories and the quasi-isomorphisms $\cA_{0,min} \simeq \cA_0$, $\cB_{0,min}\simeq \cB_0$ will be $\mathbb{Z}\oplus M^*$-graded. 

The quasi-isomorphism $\cA_0 \simeq \cB_0$ from \S \ref{sec:A0_B0} gives rise to a $\Z$-graded quasi-isomorphism $\cA_{0,min} \simeq \cB_{0,min}$ of these minimal models. We can view this quasi-isomorphism of minimal models as defining two quasi-isomorphic admissible $A_\infty$ structures on $(\mathcal{C}, \mu_{\mathcal{C}_{H^*(\cB_{0})}})$, the $A_\infty$ category corresponding to $H^*(\cB_0).$ By Corollary \ref{cor:vanish}, $$HH^{2}_{-kn}(\mathcal{C}, \mu_{\mathcal{C}_{H^*(\cB_{0})}})=0 $$  for all $k>1$. The result then follows from Proposition \ref{prop:obstructions}.
\end{proof}

\renewbibmacro{in:}{}
\def\bibrangedash{ -- }
\printbibliography 

\end{document}